\numberwithin{equation}{section}
\def\@tocline#1#2#3#4#5#6#7{\relax
  \ifnum #1>\c@tocdepth % then omit
  \else
    \par \addpenalty\@secpenalty\addvspace{#2}%
    \begingroup \hyphenpenalty\@M
    \@ifempty{#4}{%
      \@tempdima\csname r@tocindent\number#1\endcsname\relax
    }{%
      \@tempdima#4\relax
    }%
    \parindent\z@ \leftskip#3\relax \advance\leftskip\@tempdima\relax
    \rightskip\@pnumwidth plus4em \parfillskip-\@pnumwidth
    #5\leavevmode\hskip-\@tempdima
      \ifcase #1
       \or\or \hskip 1em \or \hskip 2em \else \hskip 3em \fi%
      #6\nobreak\relax
    \dotfill\hbox to\@pnumwidth{\@tocpagenum{#7}}\par
    \nobreak
    \endgroup
  \fi}
\definecolor{Gray}{gray}{0.9}
\pgfplotsset{compat=1.8}
\definecolor{rulecolor}{RGB}{0,71,171}
\definecolor{tableheadcolor}{gray}{0.92}
\newtheorem{theorem}{Theorem}[section]
\newtheorem{lemma}[theorem]{Lemma}
\newtheorem{proposition}[theorem]{Proposition}
\newtheorem{corollary}[theorem]{Corollary}
\theoremstyle{definition}
\newtheorem{definition}[theorem]{Definition}
\newtheorem{remark}[theorem]{Remark}
\newtheorem{assumption}[theorem]{Assumption}
\newcommand{\Irm}{\mathrm{I}}
\newcommand{\Trm}{\mathrm{T}}
\newcommand{\Acal}{\mathcal{A}}
\newcommand{\Gcal}{\mathcal{G}}
\newcommand{\Hcal}{\mathcal{H}}
\newcommand{\Scal}{\mathcal{S}}
\newcommand{\Cscr}{\mathscr{C}}
\newcommand{\Fscr}{\mathscr{F}}
\newcommand{\Lscr}{\mathscr{L}}
\newcommand{\Nscr}{\mathscr{N}}
\newcommand{\Pscr}{\mathscr{P}}
\newcommand{\Abf}{\mathbf{A}}
\newcommand{\Bbf}{\mathbf{B}}
\newcommand{\Cbf}{\mathbf{C}}
\newcommand{\Ebf}{\mathbf{E}}
\newcommand{\Gbf}{\mathbf{G}}
\newcommand{\Sbf}{\mathbf{S}}
\newcommand{\Nbb}{\mathbb{N}}
\newcommand{\Rbb}{\mathbb{R}}
\DeclareMathOperator{\id}{id}
\DeclareMathOperator{\diverg}{div}
\DeclareMathOperator{\dist}{dist}
\newcommand{\set}[2]{\left\{\, #1 \  \textup{\textbf{:}}\  #2 \,\right\}}
\newcommand{\N}{\mathbb{N}}
\newcommand{\R}{\mathbb{R}}
\newcommand{\spt}{\mathrm{spt}}
\newcommand{\Sing}{\mathrm{Sing}}
\newcommand{\eps}{\epsilon}
\newcommand{\Ebb}{\mathbb{E}}
\renewcommand{\eps}{\varepsilon}
\renewcommand*\env@matrix[1][*\c@MaxMatrixCols c]{%
    \hskip -\arraycolsep
    \let\@ifnextchar\new@ifnextchar
    \array{#1}}
\DeclareMathOperator{\Lip}{Lip}
\newcommand{\mres}{\mathbin{\vrule height 1.6ex depth 0pt width
        0.13ex\vrule height 0.13ex depth 0pt width 1.3ex}}
\def\vint_#1{\mathchoice%
    {\mathop{\kern 0.2em\vrule width 0.6em height 0.69678ex depth -0.58065ex
            \kern -0.8em \intop}\nolimits_{\kern -0.4em#1}}%
    {\mathop{\kern 0.1em\vrule width 0.5em height 0.69678ex depth -0.60387ex
            \kern -0.6em \intop}\nolimits_{#1}}%
    {\mathop{\kern 0.1em\vrule width 0.5em height 0.69678ex depth -0.60387ex
            \kern -0.6em \intop}\nolimits_{#1}}%
    {\mathop{\kern 0.1em\vrule width 0.5em height 0.69678ex depth -0.60387ex
            \kern -0.6em \intop}\nolimits_{#1}}}
\newcommand*{\RangeX}{%
    {%
        \mathpalette\@RangeOf{X}%
    }%
}
\newcommand*{\@RangeOf}[2]{%
    % #1: math style
    % #2: symbol, which is duplicated
    \sbox0{$\m@th#1\mathsf{#2}$}%
    \mathsf{#2}%
    \kern-\wd0 %
    \mkern2.75mu\relax
    \nonscript\mkern.25mu\relax
    \mathsf{#2}%
}
\newcommand{\aveint}[2]{\mathchoice%
    {\mathop{\kern 0.2em\vrule width 0.6em height 0.69678ex depth -0.58065ex
            \kern -0.8em \intop}\nolimits_{\kern -0.45em#1}^{#2}}%
    {\mathop{\kern 0.1em\vrule width 0.5em height 0.69678ex depth -0.60387ex
            \kern -0.6em \intop}\nolimits_{#1}^{#2}}%
    {\mathop{\kern 0.1em\vrule width 0.5em height 0.69678ex depth -0.60387ex
            \kern -0.6em \intop}\nolimits_{#1}^{#2}}%
    {\mathop{\kern 0.1em\vrule width 0.5em height 0.69678ex depth -0.60387ex
            \kern -0.6em \intop}\nolimits_{#1}^{#2}}}
\newcommand{\flatS}{\mathfrak{F}}
\newcommand\res{\mathop{\hbox{\vrule height 7pt width .3pt depth 0pt\vrule height .3pt width 5pt depth 0pt}}\nolimits}
\title[Frequency 1 Flat singular points and $\Hcal^{m-2}$-a.e uniqueness of tangent cones]{The fine structure of the singular set of area-minimizing integral currents III: Frequency 1 flat singular points and $\Hcal^{m-2}$-a.e uniqueness of tangent cones}
\author[C. De Lellis]{Camillo De Lellis}
\address{School of Mathematics, Institute for Advanced Study, 1 Einstein Dr., Princeton NJ 08540, USA}
\email{camillo.delellis@ias.edu}
\author[P. Minter]{Paul Minter}
\address{Department of Mathematics, Fine Hall, Princeton University, Washington Road, Princeton, NJ, 08540, USA; School of Mathematics, Institute for Advanced Study, 1 Einstein Dr., Princeton, NJ, 08540, USA.}
\email{pm6978@princeton.edu\textnormal{,} pminter@ias.edu}
\author[A. Skorobogatova]{Anna Skorobogatova}
\address{Department of Mathematics, Fine Hall, Princeton University, Washington Road, Princeton, NJ 08540, USA}
\email{as110@princeton.edu}
\begin{document}

\maketitle

\begin{abstract}
    We consider an area-minimizing integral current $T$ of codimension higher than $1$ in a smooth Riemannian manifold $\Sigma$. We prove that $T$ has a unique tangent cone, which is a superposition of planes, at $\Hcal^{m-2}$-a.e. point in its support. In combination with works of the first and third authors, we conclude that the singular set of $T$ is countably $(m-2)$-rectifiable. The techniques in the present work can be seen as a counterpart for area-minimizers, in arbitrary codimension, to those developed by Simon (\cite{Simon_cylindrical}) for multiplicity one classes of minimal surfaces and Wickramasekera (\cite{W14_annals}) for stable minimal hypersurfaces.
\end{abstract}

\tableofcontents

%\part{Introduction and preliminaries}

\section{Introduction and main results}
Let $T$ be an $m$-dimensional integral current in a complete smooth $(m+\bar n)$-dimensional Riemannian manifold $\Sigma$. We assume that $T$ is area-minimizing in some (relatively) open $\Omega\subset \Sigma$, i.e. 
\[
 \mathbf{M} (T) \leq \mathbf{M} (T+\partial S)
\]
for any $(m+1)$-dimensional integral current $S$ supported in $\Omega$. The (\textit{interior}) \textit{regular set} ${\rm Reg} (T)$ is the set of points $p\in {\rm spt} (T)\cap \Omega\setminus \spt (\partial T)$ for which there is an open ball $\Bbf$ containing $p$, a regular orientable minimal surface $\Lambda\subset \Bbf$ without boundary in $\Bbf$, and an integer $Q\in \mathbb N$ such that $T \res \Bbf = Q \llbracket \Lambda \rrbracket$. The (\textit{interior}) \textit{singular set} ${\rm Sing} (T)$ is then given by the complement in $\Omega \cap {\rm spt}\, (T)$ of ${\rm Reg} (T)$. This is the third of three papers (the others being \cites{DLSk1,DLSk2}) devoted to proving the following theorem.

\begin{theorem}\label{t:big-one}
Let $T$ be an $m$-dimensional area-minimizing integral current in a $C^{3,\kappa_0}$ complete Riemannian manifold of dimension $m+\bar{n}\geq m+2$, with $\kappa_0>0$. Then, $\Sing (T)$ is $(m-2)$-rectifiable and $T$ has a unique tangent cone at $\mathcal{H}^{m-2}$-a.e. $q\in \Sing (T)$. 
\end{theorem}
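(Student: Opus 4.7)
The plan is to reduce the theorem, via the results of the two companion papers \cite{DLSk1,DLSk2}, to an analysis of a distinguished class of singular points that we call the \emph{flat singular points of frequency $1$}. At each $q\in \Sing(T)$, monotonicity of mass ratios guarantees the existence of at least one tangent cone, and up to standard reductions this cone falls into one of three categories: (a) it has an $(m-1)$-dimensional spine with non-flat cross-section; (b) it is a superposition $\sum_i Q_i\llbracket \pi_i\rrbracket$ of at least two distinct $m$-planes; or (c) it is a single plane with integer multiplicity, $Q\llbracket \pi\rrbracket$. Cases (a) and (b), together with the type-(c) points at which Almgren's frequency is strictly larger than $1$, are handled in \cite{DLSk1,DLSk2} and yield $(m-2)$-rectifiability and $\Hcal^{m-2}$-a.e.\ uniqueness of the tangent cone on the corresponding subsets of $\Sing(T)$. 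Hence the whole theorem reduces to establishing the same conclusion on the \emph{critical stratum}, namely the flat singular points at which the limiting value $I_q(0^+):=\lim_{r\downarrow 0} I_q(r)$ of the frequency function of the $M$-normal approximation on the center manifold equals $1$.

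Once the problem is thus reduced, my strategy would be to prove a quantitative rate of decay of the tangent plane of $T$ along a dyadically shrinking family of scales at $\Hcal^{m-2}$-a.e.\ such critical point. The appropriate linear model is that of linear $Q$-valued Dir-minimizing functions on $\R^m$, which realise precisely the frequency value $1$ and hence furnish all possible blow-ups. The decay mechanism I envisage is a Lojasiewicz--Simon type inequality for the Dirichlet energy at the degree-$1$ homogeneity, transferred back to the nonlinear problem via the center manifold construction of Almgren and De Lellis--Spadaro, together with a careful comparison between the normal approximations at comparable scales. Summing the resulting infinitesimal decay over dyadic scales should yield a full modulus of continuity for the tangent plane, and hence uniqueness of the tangent cone; rectifiability of the critical stratum would then follow from a Naber--Valtorta covering built upon this quantitative uniqueness.

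The main obstacle, and the very reason the frequency-$1$ case has been separated from those treated in \cite{DLSk1,DLSk2}, is precisely the absence of a strict frequency gap at the critical value. There is no purely algebraic mechanism producing decay from one scale to the next, and one has to rule out the possibility of infinite-order oscillation of the tangent plane between flat cones of distinct tilts, in the spirit of Simon's treatment \cite{Simon_cylindrical} of cylindrical tangent cones and Wickramasekera's treatment \cite{W14_annals} of stable hypersurfaces. Technically, I expect the hardest step to be the synchronization of the center-manifold constructions across this shrinking sequence of scales: one has to control the drift of the barycenter of the approximations, build a family of center manifolds that are quantitatively compatible at all the dyadic scales simultaneously, and obtain a uniform quantitative comparison of the associated linearized problems so that the Lojasiewicz--Simon inequality can be iterated. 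This persistent synchronisation, rather than any single blow-up estimate, is the genuine new difficulty beyond \cite{DLSk1,DLSk2}.
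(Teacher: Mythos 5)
Your reduction to the stratum of flat singular points of frequency $1$ is correct and matches the structure of the argument: \cite{DLSk1,DLSk2} handle the higher strata and $\flatS_{Q,>1}$, and the present paper deals with $\flatS_{Q,1}$ together with the $\Hcal^{m-2}$-a.e.\ uniqueness of tangent cones on $\mathcal{S}^{(m-2)}(T)$. But from that point your proposed mechanism diverges from the paper's, and in two places the gap is genuine rather than merely a difference of taste.

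First, the paper does \emph{not} use a Lojasiewicz--Simon inequality, nor the center-manifold machinery, in the frequency-$1$ regime. The center manifold is the engine of \cite{DLSk1,DLSk2} for $\Irm>1$; here the decay device is a direct Simon/Wickramasekera excess-decay theorem (Theorem~\ref{c:decay}) for the two-sided $L^2$ conical excess $\mathbb{E}(T,\Sbf,\Bbf_r)$ of the current against superpositions of planes $\Sbf\in\mathscr{C}(Q,p)\setminus\mathscr{P}(p)$ with a common $(m-2)$-dimensional spine, proved by a graphical approximation and blow-up to multi-valued Dir-minimizers. The obstacle to a Lojasiewicz--Simon argument here is structural: the set of $1$-homogeneous $Q$-valued Dir-minimizers is not a smooth (or even analytic) manifold near a degenerate configuration -- the number of distinct planes, their multiplicities, and the balance of Morgan angles can all jump -- so the standard integrability/nondegeneracy hypotheses behind a Lojasiewicz inequality do not obviously hold. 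The paper sidesteps this by never invoking such an inequality: instead it makes a compactness/contradiction argument against the full class $\mathscr{C}(Q)$, together with a pruning and balancing step (Proposition~\ref{p:balancing}, Lemma~\ref{l:pruning}) that normalizes the comparison cone and handles exactly the degeneracies a Lojasiewicz approach would have to fight.

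Second, your closing plan -- ``rectifiability of the critical stratum would then follow from a Naber--Valtorta covering'' -- understates what is actually proved and needed: the paper shows that $\flatS_{Q,1}$ is $\Hcal^{m-2}$-\emph{null} (Theorem~\ref{con:stronger}(i)), not merely rectifiable. This stronger fact is what secures the $\Hcal^{m-2}$-a.e.\ uniqueness of tangent cones on $\flatS(T)$, because at a genuine frequency-$1$ flat singularity there is no obvious unique-tangent statement to propagate. The proof of nullity is a Simon-type covering dichotomy (Theorem~\ref{t:Simon}, via Lemmas~\ref{c:decay-10-1} and~\ref{l:tilt}): either the set is covered by $C^{1,\alpha}$ graphs of $(m-2)$-planes, in which case points have unique non-planar tangent cones and so cannot lie in $\flatS$, or it has zero $\Hcal^{m-2}$-measure; the first branch being contradictory, the set is null. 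Naber--Valtorta is used (and cited) for $\mathcal{S}^{(m-2)}(T)$, not for $\flatS_{Q,1}$. Finally, the ``synchronization of center manifolds across dyadic scales'' which you identify as the central difficulty does not arise; the genuinely new technical inputs are the $L^2$--$L^\infty$ height bound (Theorem~\ref{thm:main-estimate}), the graphical approximations over balanced cones away from the spine, and the Simon error/gradient/non-concentration/shift estimates at the spine (Section~\ref{p:estimates}).
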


We refer to the first work \cite{DLSk1} for the historical context and the motivation of our study, we just recall here that the regularity of ${\rm Sing}(T)$ given by Theorem \ref{t:big-one} is close to optimal due to the recent work \cite{Liu}, which shows that $\Sing(T)$ can be precised to be any fractal subset of a smooth oriented $(m-2)$-dimensional manifold. The only foreseeable improvement of Theorem \ref{t:big-one} is to show that the $(m-2)$-dimensional Hausdorff measure of $\Sing (T)$ is locally finite in $\spt (T)\setminus \spt (\partial T)$. We also recall that this is known in the special case $m=2$ by \cite{SXChang} (cf. also \cites{DLSS1,DLSS2,DLSS3}) and that the statement about the uniqueness of tangent cones is covered, in that case, by \cite{White}. 

Recall that, following Almgren's stratification theorem, we can subdivide $\Sing (T)$ into the disjoint union of 
\begin{itemize}
    \item the subset $\mathcal{S}^{(m-2)} (T)$ of points $p$ at which any tangent cone to $T$ has at most $m-2$ linearly independent directions of translation invariance;
    \item the remaining set $\Sing (T)\setminus \mathcal{S}^{(m-2)} (T)$ of those singular points at which at least one tangent cone is a flat plane (counted with some integer multiplicity $Q\geq 1$). 
\end{itemize}
Consistently with \cite{DLSk1} we use the notation $\flatS (T)$ for the latter set and we will call its elements {\em flat singular points}.
As a consequence of the general theory of Naber and Valtorta (\cite{NV_varifolds}) it follows that $\mathcal{S}^{(m-2)} (T)$ is $(m-2)$-rectifiable, therefore the main novelty of our work is to establish the rectifiability of $\flatS (T)$. In fact the techniques of the present paper, which can be seen as a counterpart in the higher codimension case (and for area-minimizing integral currents) of the seminal works by L.~Simon (\cite{Simon_cylindrical}) and N.~Wickramasekera (\cite{W14_annals}), can be used to derive an independent proof that $\mathcal{S}^{(m-2)} (T)$ is rectifiable. However the ideas in \cites{NV_Annals,NV_varifolds} are still a crucial ingredient in \cite{DLSk2}. 

\medskip

By the constancy theorem, the density $\Theta (T, p)$ at any $p\in \flatS (T)$ is a positive integer $Q$, which moreover obeys $Q>1$ by Allard's Regularity Theorem. We can therefore stratify $\flatS (T)$ as $\bigcup_{Q=2}^\infty \flatS_Q(T)$, where 
\[
    \flatS_Q (T) :=\{p\in \flatS (T): \Theta (T, p)=Q\}\, .
\]
According to \cite{DLSk1} we can further subdivide each $\flatS_Q (T)$ by introducing a suitable function 
\[
\flatS (T) \ni p\mapsto {\Irm} (T, p)\in [1, \infty)\, .
\]
Loosely speaking, $\Irm(T,p)$ detects the infinitesimal homogeneity of the ``singular behavior of $T$'' around $p$. A good illustration of this number is given in \cite{DLSk1}*{Example 1.2} in the case of classical holomorphic curves of $\mathbb{C}^2$, which by Federer's theorem are area-minimizing integral $2$-dimensional currents in $\mathbb R^4$. Consider $\Lambda := \{(w-h(z))^Q = z^p k (z) : (z,w)\in \mathbb C^2\}$ and require that
\begin{itemize}
\item $p>Q\geq 2$ are coprime integers;
\item $h$ and $k$ are holomorphic functions;
\item $k (0)\neq 0$.
\end{itemize}
If $T=\llbracket \Lambda \rrbracket$ in $\mathbb R^4 \cong \mathbb C^2$, then $\Irm (T, 0)= p/Q$. Given however the lack of precise information about the singular behavior of a general $m$-dimensional area-minimizing integral current, the actual definition of $\Irm (T, p)$ is rather involved: note for instance that we do not know that the tangent cone to $T$ at $p\in \flatS$ is unique, or even that all tangent cones are flat.   
In the papers \cites{DLSk1,DLSk2}, the first and third authors prove that:
\begin{theorem}\label{thm:I>1}
Let $T$ be an $m$-dimensional area-minimizing integral current in a $C^{3,\kappa_0}$ complete Riemannian manifold of dimension $m+\bar{n}\geq m+2$, with $\kappa_0>0$. Then $\flatS_{Q, >1} := \flatS_Q (T) \cap \{\Irm (T, \cdot) >1\}$ is $(m-2)$-rectifiable and the tangent cone is unique at every $p\in \flatS_{Q,>1}$.
\end{theorem}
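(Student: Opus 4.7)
The plan is to combine Almgren's frequency-function machinery for $Q$-valued Dir-minimizers with the quantitative stratification and rectifiable-Reifenberg theorems of Naber--Valtorta \cite{NV_varifolds}. First, at each $p\in\flatS_{Q,>1}$ and every dyadic scale $r$, I would invoke Almgren's strong approximation of $T$ by a $Q$-valued map $u_r$ on a center manifold, in the form developed in higher codimension by De~Lellis--Spadaro. The construction of $\Irm(T,p)$ in \cite{DLSk1} should yield, when $\Irm(T,p)=\alpha>1$, an almost-monotonicity of the Almgren frequency of $u_r$ across scales, and subsequential blow-ups should produce a nontrivial $\alpha$-homogeneous $Q$-valued Dir-minimizer $u_\infty$ with $\eta\circ u_\infty\equiv 0$. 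The structural gain over $\Irm=1$ flat points is precisely this: a genuine nontrivial blow-up is available, encoding the singular behavior of $T$ at $p$.

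For $(m-2)$-rectifiability, I would decompose $\flatS_{Q,>1}$ into countably many pieces along the level sets of $\Irm$ via a pinching $\alpha-\delta<\Irm<\alpha+\delta$, and argue on each piece. The almost-monotonicity combined with the pinching forces $T$ to be quantitatively symmetric (close to a homogeneous cone) at most scales around most points. Since any such cone in the $Q$-valued Dir-minimizing category with $\alpha>1$ has at most $m-2$ translation-invariant directions at its singular points (by Almgren's dimension reduction), the hypotheses of the Naber--Valtorta rectifiable-Reifenberg theorem are met, yielding $(m-2)$-rectifiability of $\flatS_{Q,>1}$.

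For uniqueness of tangent cones, I would follow a Lojasiewicz--Simon scheme. The blow-up $u_\infty$ is a critical point of the spherical Dirichlet energy on $Q$-valued maps of $S^{m-1}$, and a Lojasiewicz--Simon inequality at $u_\infty$ should bound its spherical energy gap by a power of the $L^2$-distance of the rescaled $u_r|_{\partial B_r}$ to $u_\infty$. Combined with the frequency almost-monotonicity, integrating along $\log r$ yields a summable bound on the movement of $u_r$ as $r\downarrow 0$, forcing Cauchyness of the rescalings and hence uniqueness of the tangent cone at $p$ (which will then be a superposition of planes, in fact a cone with $\alpha$-homogeneous multi-valued graph structure).

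The main obstacle I anticipate is propagating Almgren's approximation and the frequency almost-monotonicity across infinitely many scales with uniform error bounds smaller than the frequency gap $\alpha-1$: this requires an inductive center manifold construction that is robust under switching scales and compatible with the pinching parameter $\delta$, and is the most delicate technical point in the argument. A secondary obstacle is establishing the Lojasiewicz--Simon inequality for the spherical Dirichlet energy of $Q$-valued maps at an arbitrary homogeneous Dir-minimizer, since the standard analyticity or Morse-theoretic inputs available in the single-valued case do not directly apply to the multi-valued setting and will likely need to be replaced by a compactness-rigidity argument exploiting the frequency function itself.
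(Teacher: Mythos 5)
Theorem \ref{thm:I>1} is not proved in the present paper: it is attributed entirely to the companion works \cite{DLSk1} and \cite{DLSk2}, so there is no in-text proof to compare against. What the paper does tell us is that (a) the rectifiability argument in \cite{DLSk2} relies crucially on the Naber--Valtorta rectifiable Reifenberg machinery, and (b) the key analytic input is an almost-monotonicity of Almgren's frequency function taken \emph{relative to the center manifolds} constructed in \cite{DLS16centermfld}-style intervals of flattening. Your proposal matches both of these ingredients in broad outline, so the skeleton is on target.

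Where your proposal diverges, and where I would push back, is the uniqueness step. You propose a Lojasiewicz--Simon inequality for the spherical Dirichlet energy of $Q$-valued maps and then integrate a gradient-flow-type bound in $\log r$. That route is not needed for $\flatS_{Q,>1}$ and is not what is used. The whole point of the set $\{\Irm(T,\cdot)>1\}$ is that you have a \emph{frequency gap} above the planar value $1$: once $\Irm(T,p)=\alpha>1$, the almost-monotone frequency relative to the center manifold is bounded below by $\alpha-o(1)$ for all small $r$, and this directly gives a power-law decay of the normal approximation $N$ (and hence of the planar excess) at a rate strictly better than $r^2$. From that excess decay one gets a Dini bound on the oscillation of the optimal planes across dyadic scales, which immediately yields Cauchyness of the rescalings and uniqueness of the tangent plane. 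No infinite-dimensional {\L}ojasiewicz inequality is required --- and, as you note yourself, establishing one for arbitrary homogeneous $Q$-valued Dir-minimizers is a serious open issue, so relying on it would be a genuine gap in your argument. The cleaner observation is that the frequency gap itself is the quantitative stability mechanism that {\L}ojasiewicz would otherwise supply.

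Two smaller remarks. First, your rectifiability sketch waves at "quantitative symmetry at most scales" and then Naber--Valtorta, which is roughly right, but the actual mechanism in \cite{DLSk2} is to prove a summable discrete $\beta$-number bound for the set $\{\Irm>1\}$ via the frequency almost-monotonicity (with the pinching of $\Irm$ playing the role of your $\delta$-pinching), feeding into the rectifiable Reifenberg theorem; simply invoking Almgren stratification of the blow-ups $u_\infty$ is not enough, since the Naber--Valtorta input is an integral estimate on frequency drops, not a pointwise cone-splitting statement. Second, your claim that the $\alpha$-homogeneous blow-up $u_\infty$ has at most $(m-2)$-dimensional singular set is correct precisely because $\boldsymbol{\eta}\circ u_\infty\equiv 0$ and $\alpha>1$ rule out the codimension-one singularities; this is worth stating explicitly, since it is what ensures the cone-splitting used to bound $\beta$-numbers lands at dimension $m-2$ and not higher.
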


In the present paper, we handle the rectifiability question for the remaining part of $\flatS_Q(T)$, as well as the $\mathcal{H}^{m-2}$-a.e. uniqueness of tangent cone question in the remaining portion of the singular set. More precisely, we prove the following:

\begin{theorem}\label{con:stronger}
Let $T$ be as in Theorem \ref{thm:I>1}. The following holds:
\begin{itemize}
    \item[(i)] The set $\flatS_{Q,1}(T):= \flatS_Q(T) \cap \{ \Irm(T,\cdot) = 1\}$ is $\Hcal^{m-2}$-null;
    \item[(ii)] $T$ has a unique tangent cone at $\mathcal{H}^{m-2}$-a.e. $p\in \mathcal{S}^{(m-2)} (T)$.
\end{itemize}
\end{theorem}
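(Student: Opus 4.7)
The plan is to prove (i) by a blow-up and contradiction argument in the spirit of \cite{Simon_cylindrical, W14_annals}, leveraging the center-manifold hierarchy of \cite{DLSk1, DLSk2}, and to deduce (ii) from (i), Theorem~\ref{thm:I>1}, and White's uniqueness theorem for $2$-dimensional area-minimizers \cite{White} via a dimension reduction to the cross-section orthogonal to the $(m-2)$-rectifiable structure.

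For (i), suppose for contradiction that $\mathcal{H}^{m-2}(\flatS_{Q,1}(T)) > 0$ and pick a point $p_0 \in \flatS_{Q,1}$ of positive upper $(m-2)$-density in $\flatS_{Q,1}$. Relative to the center manifold at $p_0$ provided by \cite{DLSk1}, the graphical $Q$-valued approximation $u$ has, after $L^2$-normalization at a suitable sequence of decreasing scales, a blow-up limit $u_\infty$ which, by $\Irm(T,p_0)=1$ and Almgren's frequency monotonicity, is a non-trivial, homogeneous degree-$1$, Dir-minimizing $Q$-valued map on $\R^m$. The positive $(m-2)$-density of $\flatS_{Q,1}$ at $p_0$ transfers in the limit to an $(m-2)$-dimensional accumulation set of analogous frequency-$1$ singular points for $u_\infty$; combined with the degree-$1$ homogeneity, this forces $u_\infty$ to be translation invariant along an $(m-2)$-plane $V$, reducing the problem to an analysis on the cross-section $V^\perp \cong \R^2$.

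The main obstacle is the 2D endgame. Here one combines a Simon-Wickramasekera-type excess-decay statement at frequency-$1$ flat singular points, proved using the hierarchy of center manifolds and fine-blow-up analysis of \cite{DLSk1, DLSk2}, with a Naber-Valtorta-type quantitative covering that converts pointwise frequency information into an $\mathcal{H}^{m-2}$-content bound. The outcome is a dichotomy: either the iterative blow-up strictly increases the frequency, contradicting $\Irm(T,p_0)=1$; or the persistent multiplicity collapses into a mere tilt of the reference plane, so that the density assumption on $\flatS_{Q,1}$ itself is violated. Assembling this scheme in arbitrary codimension without stability assumptions, and bookkeeping the frequency across the hierarchy, is the bulk of the technical work.

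For (ii), by \cite{NV_varifolds} the non-flat stratum $\mathcal{S}^{(m-2)}(T)$ is $(m-2)$-rectifiable, so at $\mathcal{H}^{m-2}$-a.e.\ $p \in \mathcal{S}^{(m-2)}(T)$ there is an approximate tangent $(m-2)$-plane $V_p$. Upper semi-continuity of density and accumulation of $\mathcal{S}^{(m-2)}$-points along $V_p$ force every tangent cone $C$ of $T$ at $p$ to have $V_p$ inside its spine; since $p \in \mathcal{S}^{(m-2)}$, the spine is at most $(m-2)$-dimensional and therefore equals $V_p$. Consequently, $C$ factors as $V_p \otimes C^\flat$ for a $2$-dimensional area-minimizing integral cone $C^\flat$ in $V_p^\perp$, and the uniqueness of $C$ reduces to the uniqueness of $C^\flat$, which is provided by White's theorem \cite{White} for $2$-dimensional area-minimizing integral currents. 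Combined with Theorem~\ref{thm:I>1} and part (i), this yields the $\mathcal{H}^{m-2}$-a.e.\ uniqueness of tangent cones on all of $\Sing(T)$.
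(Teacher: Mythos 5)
Your sketch of part~(ii) contains a genuine gap. White's theorem in \cite{White} establishes uniqueness of tangent cones for $2$-dimensional area-minimizing integral \emph{currents}, i.e.\ at a point of a $2$-dimensional minimizer. It does not by itself say anything about the uniqueness of the $2$-dimensional cross-section $C^\flat$ of an $m$-dimensional cone. Two a priori distinct tangent cones $C_1, C_2$ of $T$ at $p$ can both have spine $V_p$ yet have different cross-sections $C_1^\flat \neq C_2^\flat$; to preclude this you would have to know that $T$ itself factors as a product near $p$ — but that is essentially what the uniqueness statement is asking, so the argument is circular. The paper instead derives~(ii) (and~(i)) from the Excess Decay Theorem~\ref{c:decay}, which is the heart of the paper and whose proof occupies Parts~\ref{p:heightbd}--\ref{p:decay-proof}: one writes $\mathcal{S}^{(m-2)}\setminus\mathcal{S}^{(m-3)}$ as a countable union of pieces $\mathcal{S}_{Q,\delta,r}$, checks via a dichotomy (Lemma~\ref{l:dimension-drop-2}) and the decay lemma (Lemma~\ref{c:decay-11-1}) that Assumption~\ref{a:Simon} holds, and then applies the Simon-style covering Theorem~\ref{t:Simon} to split each piece into an $\mathcal{H}^{m-2}$-null set and a rectifiable set $R$ on which the decay gives a power-law convergence to a \emph{unique} cone in $\Cscr(Q,p)\setminus\Pscr(p)$.

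For part~(i) your overall template (blow-up to a $1$-homogeneous Dir-minimizer, forcing translation invariance along an $(m-2)$-plane, Simon--Wickramasekera excess decay, covering argument) is in the right spirit, but two of the cited mechanisms do not match what actually works. First, the excess decay in Theorem~\ref{c:decay} is \emph{not} proved through the center-manifold hierarchy of \cite{DLSk1,DLSk2}; center manifolds only enter through the definition of $\Irm$ and in Proposition~\ref{p:coarse-blow-ups}. The decay itself rests on the $L^2$--$L^\infty$ height bound of Part~\ref{p:heightbd}, Almgren's strong Lipschitz approximation, the cone-balancing and layering machinery, and Simon's non-concentration estimates at the spine. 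Second, the covering argument used to convert decay into an $\mathcal{H}^{m-2}$-measure bound is the Simon-type Theorem~\ref{t:Simon}, not a Naber--Valtorta packing estimate. Finally, the decisive step that your proposal omits is how the contradiction is actually reached: one shows that for $E=\flatS_{Q,1,1}(T)$ Theorem~\ref{t:Simon} decomposes $E$ as $\tilde{E}\cup R$ with $\mathcal{H}^{m-2}(\tilde{E})=0$, and then that $R$ must be empty because at any $p\in R$ the decay lemma~\ref{c:decay-10-1} applies at all small scales, forcing the tangent cone at $p$ to be supported in a fixed non-planar cone of $\Cscr(Q,p)\setminus\Pscr(p)$, contradicting $p\in\flatS_{Q}(T)$. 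Without that last step the covering argument alone does not give $\mathcal{H}^{m-2}$-nullity of the whole set, only of its non-rectifiable part.
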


\subsection{Comparison with the works of Krummel and Wickramasekera} While we were completing this and the two works \cites{DLSk1,DLSk2} leading to our proof of Theorem \ref{t:big-one}, we have learned that in the works \cites{KW1,KW2,KW3}, Krummel and Wickramasekera arrived independently at a program that shows the same final result; we refer to the introduction of \cite{DLSk1} for a more general comparison between the two programs.

The present paper and \cite{KW2} are in fact the two works which are most similar. Both \cite{KW2} and this paper rely in an essential way on an new height bound: we refer to Theorem \ref{thm:main-estimate} in the first part of this paper for our precise statement. Indeed, the only difference between the two bounds seems to be that ours is stated in the more general setting of an arbitrary ambient manifold which satisfies some mild regularity assumptions, a setting which we believe can be reached equally well in \cite{KW2} at the price of some more technical work.

Building upon this height bound, both the present work and \cite{KW2} rely in a fundamental way on the estimates of \cite{Simon_cylindrical}, which are in turn used to perform a suitable blow-up analysis to prove the decay theorem which is the subject of the second part of this work. To handle the situation in which the current is very close to a plane, but much closer to a cone with $m-2$ linearly independent directions of translational invariance, both \cite{KW2} and the present work also rely on important ideas introduced by Wickramasekera in \cite{W14_annals}. 

Finally, the ideas of \cite{Simon_cylindrical} are also used in a substantial way in both papers to prove that the flat singular points at which the sheets of the surface meet with order of contact 1, namely $\flatS_{Q,1} (T)$, is $\mathcal{H}^{m-2}$-negligible. The result achieved in \cites{KW1,KW2} is in fact stronger: using our notation they actually show that $\flatS_{Q,<1+\delta}$ is $\mathcal{H}^{m-2}$-negligible for a sufficiently small $\delta= \delta (Q,m,n)$, while $\flatS_{Q,\geq 1+\delta}$ is (relatively) closed in any ball $\Bbf_r (x)$ where the current is sufficiently close to a multiplicity-$Q$ flat plane. 

While we do not pursue such a finer result here, we also believe with some additional work we can deliver these very conclusions. In fact the closedness of $\flatS_{Q,\geq 1+\delta}$ is a consequence, in \cite{KW1}, of the almost-monotonicity of the planar frequency function and a similar almost-monotonicity holds also for the Almgren frequency function relative to the center manifolds as used in \cites{DLSk1,DLSk2}. 
As for the fact that $\flatS_{Q,<1+\delta}$ is $\mathcal{H}^{m-2}$-negligible, we believe that it can be achieved with appropriate modifications of the arguments in the final part of this paper, given that at these points all the coarse blow-ups will be homogeneous with degree $d\in [1, 1+\delta]$, and thus, for a sufficiently small $\delta$, close to $1$-homogeneous Dir-minimizers. Indeed, it is an interesting question whether there is a frequency gap (depending on the number of values of the Dirichlet minimizer) for Dirichlet minimizers. Namely, if there exists $\delta = \delta(m,n,Q)>0$ such that any homogeneous $Q$-valued Dirichlet minimizer of degree $\alpha \in [1,1+\delta)$ is in fact homogeneous of degree 1. Such a result would then immediately imply that in fact the sets $\flatS_{Q,1}$ and $\flatS_{Q,<1+\delta}$ coincide.

\section{Preliminaries and notation}
\subsection{Notation}
Throughout this work, $C, C_0, C_1, \dots$ will denote constants which depend only on $m,n,\bar n, Q$. Constants depending on other parameters will typically be denoted by 
\[
\bar{C}, \bar{C}_1,\bar{C}_2,\dots \, ,
\]
with dependencies given. For $q\in \spt(T)$, the currents $T_{q,r}$ will denote the dilations $(\iota_{q,r})_\sharp T$, where $\iota_{q,r} (x):= \frac{x-q}{r}$. For $p\in\spt(T)$, $\Bbf_r(p)$ denotes the open $(m+n)$-dimensional Euclidean ball of radius $r$ centered at $p$, while $B_r(p,\pi)$ denotes the (open) $m$-dimensional disk $\Bbf_r(p)\cap \pi$ of radius $r$ centered at $p$ in the $m$-dimensional plane $\pi\subset\R^{m+\bar n}$ passing through $p$. $\Cbf_r(p,\pi)$ denotes the $(m+n)$-dimensional cylinder $B_r(p,\pi)\times \pi^\perp$ of radius $r$ centered at $p$. We let $\mathbf{p}_{\pi}: \mathbb R^{m+n}\to \pi$ denote the orthogonal projection onto $\pi$, while $\mathbf{p}_{\pi}^\perp$ denotes the orthogonal projection onto $\pi^\perp$. The plane $\pi$ is omitted if clear from the context; if the center $p$ is omitted, then it is assumed to be the origin. For example, $\Cbf_r := \Cbf_r(0,\pi)$ for $r>0$ if the choice of $\pi$ is clear from the context. $\|T\|$ denotes the mass measure induced by $T$, while $\omega_m$ denotes the $m$-dimensional Hausdorff measure of the $m$-dimensional unit disk $B_1(\pi)$. The Hausdorff distance between two subsets $A$ and $B$ of $\R^{m+\bar n}$ will be denoted by $\dist (A,B)$. 

The scalar product between vectors $v,w\in \mathbb R^{m+n}$ is denoted by $v\cdot w$ and likewise for product of matrices we will use $A\cdot B$. $|\cdot|$ will denote the Euclidean norm of vectors and the Hilbert-Schmidt norm of matrices.  
$\mathcal{H}^s$ will denote the Hausdorff $s$-dimensional measure, while in the particular instance of subsets $E$ of $\mathbb R^m$ we will use the shorthand $|E|$ for their Lebesgue measure. This convention will often also be used for the $\mathcal{H}^m$ measures of subsets $E$ of affine $m$-dimensional subspaces of $\mathbb R^{m+n}$. 

We collect here a table of additional symbols used repeatedly throughout the paper, for the reader's convenience. 
{\Small
\begin{align*}
%	& \Bbf_r(p) && \text{the open} $(m+n)$-dimensional Euclidean ball of radius $r$ centered at $p$\\
%   	& B_r(p,\pi) && \text{the $m$-dimensional disc of radius $r$ centered at $p$ in the plane $\pi$}\\
%  	& \Cbf_r(p,\pi) && \text{the $(m+n)$-dimensional cylinder $B_r(p,\pi)\times \pi^\perp$}\\
%    & \dist (A,B) && \text{the Hausdorff distance between $A,B\subset \mathbb R^{m+n}$}\\
 	& r, \rho, s, t && \text{typically denote radii}\\
    & i,j,k && \text{indices}\\
 	& \alpha, \beta, \pi && \text{$m$-dimensional planes}\\
    & \varpi && \text{$(m+\bar{n})$-dimensional plane} \\
%        & m,n,\bar{n} && \text{typically denote dimensions}\\
        &\varepsilon, \delta, \eta && \text{small numbers, with $\varepsilon$ the smallest in hierarchy}\\
        &\gamma,\kappa,\mu && \text{exponents}\\
        &\varsigma, \sigma, \tau, \varkappa && \text{parameters}\\
        &\phi,\theta,\vartheta && \text{angles}\\
        &\varphi,\psi,\chi && \text{test functions}\\
        & f,g,h,u,v,w && \text{functions, with $f$, $u$, $v$ and $w$ typically denoting multi-valued approximations}\\
        & \Psi, \Sigma && \text{$\Psi$ the parameterization of the ambient manifold $\Sigma$}\\
        & \Sigma_{p,r} &&\text{the rescaled manifold $\iota_{p,r}(\Sigma)$} \\
%        & Q && \text{multiplicity}\\
        & S,T && \text{currents}\\
        & p,q && \text{points in $\R^{m+n}$} \\
        & x,y,z,\xi,\zeta && \text{variables (typically in $m$-dimensional subspaces of $\R^{m+n}$)} \\
        &\mathbf{p}, \ \mathbf{p}^\perp &&\text{orthogonal projection, projection to orthogonal complement, respectively} \\        
        & \mathbf{1}_E && \text{indicator function of the set $E$}\\
        & \mathbf{A} && \text{the $L^\infty$ norm of the second fundamental form}\\
        &\Theta(T,p) &&\text{the $m$-dimensional Hausdorff density of $T$ at a point $p$;} \\
        & \Sing(T), \flatS (T) && \text{singular sets of $T$, with $\flatS (T)$ the flat singularities}\\
        & \flatS_Q(T) && \text{flat singularities of $T$ where the density of $T$ is $Q$}\\
        & \flatS_{Q,1}(T) && \text{points in $\flatS(T)$ with $\Irm(T,\cdot)=1$}\\
        & L, \ell(L) && L \text{ a cube, $\ell(L)$ half the side length}\\
        & A,B && \text{linear maps}\\
        & M && \text{balancing constant (c.f. Definition \ref{d:balanced})}\\
        & X && \text{vector field}\\
    %    & \mathcal{B}, \mathcal{F}, \mathcal{G} && \text{covers}\\
	& \mathbf{S} && \text{$(m-2)$-invariant cones that are superpositions of $m$-planes} \\
    & N && \text{natural number, typically denoting the number of planes in $\Sbf$}\\
        & V && \text{spines of cones} \\
    & \mathscr{P} && \text{set of $m$-dimensional planes}\\
    & \mathscr{C} && \text{set of $(m-2)$-invariant cones that are superpositions of $m$-planes}\\
 	& \mathbf{E}^p(T,\Bbf) && \text{planar excess of $T$ in the $(m+n)$-dimensional ball $\Bbf$} \\          
	& \hat{\mathbf{E}}(T,\Sbf,\Bbf), \hat{\mathbf{E}}(\Sbf,T,\Bbf) && \text{one-sided $L^2$ conical excess in $\Bbf$ ($T$ close to $\Sbf$, $\Sbf$ close to $T$, resp.)} \\
	& \mathbb{E}(T,\Sbf,\Bbf) && \text{double-sided conical excess in $\Bbf$} \\ 
 	& B_a(V) && \text{fixed tubular neighbourhood of radius $a$ of the spine $V$ being removed from $\Bbf_1$} \\  
 	& \boldsymbol{\sigma}(\Sbf) && \text{minimal pairwise Hausdorff distance between the planes in $\Sbf$ in $\Bbf_1$} \\ 
 	& \boldsymbol{\mu}(\Sbf) && \text{maximal pairwise Hausdorff distance between the planes in $\Sbf$ in $\Bbf_1$} \\
	& \Acal_Q(\R^n) && \text{the space of $Q$-tuples of vectors in $\R^n$ (c.f. \cite{DLS_MAMS})} \\
%	&\Hcal^s &&\text{the} \ s \text{-dimensional Hausdorff measure}, \ s \geq 0; \\
%	& \iota_{z,r} &&\text{the scaling map $w \mapsto \frac{w-z}{r}$ around the center $z$;} \\
%  	& T_{p,r} &&\text{the rescaled current $(\iota_{p,r})_\sharp(T)$} \\
% 	& C, C_1, C_2,\dotsc &&\text{constants depending only on $m,n,\bar{n},Q$} \\
%   	& \bar C &&\text{constants depending on some parameters} \\
% 	& \eps^* &&\text{parameters with $*$ in proofs (otherwise just subscripts)}
\end{align*}
}

Since our statements are local and invariant under dilations and translations, we will work with the following underlying assumption throughout.
\begin{assumption}\label{a:main} $m\geq 3$ and $n\geq \bar n \geq 2$ are integers.
	$T$ is an $m$-dimensional integral current in $\Sigma\cap \Bbf_{7\sqrt{m}}$ with $\partial T\mres \Bbf_{7\sqrt{m}} = 0$, where $\Sigma$ is an $(m + \bar{n})$-dimensional embedded submanifold of $\mathbb R^{m+n} = \mathbb R^{m+\bar n + l}$ of class $C^{3,\kappa_0}$ with $\kappa_0>0$. $T$ is area-minimizing in $\Sigma\cap \Bbf_{7\sqrt{m}}$. $\Sigma \cap \Bbf_{7\sqrt{m}}(p)$ is the graph of a $C^{3,\kappa_0}$ function $\Psi_p : \Trm_p\Sigma \cap \Bbf_{7\sqrt{m}}(p) \to \Trm_p\Sigma^\perp$ for every $p \in \Sigma\cap\Bbf_{7\sqrt{m}}$. Moreover
    \[
        \boldsymbol{c}(\Sigma)\coloneqq\sup_{p \in \Sigma \cap \Bbf_{7\sqrt{m}}}\|D\Psi_p\|_{C^{2,\kappa_0}} \leq \bar{\eps},
    \]
    where $\bar\eps \leq 1$ is a small positive constant which will be specified later.
\end{assumption}
    
Following the notation of \cite{DLSk1}, we let $\mathbf{A}_\Sigma$ denote the $C^0$ norm of the second fundamental form $A_\Sigma$ of $\Sigma$ in $\Bbf_{7\sqrt{m}}$. In particular, under Assumption \ref{a:main}, we have
\[
        \Abf_\Sigma \coloneqq \|A_\Sigma\|_{C^0(\Sigma\cap \Bbf_{7\sqrt{m}})} \leq C_0\boldsymbol{c} (\Sigma) \leq C_0 \bar{\eps}.
\]
We will often drop the subscript as the underlying ambient manifold will mostly be clear from the context. 

We recall that the \emph{oriented tilt-excess} of $T$ in $\Cbf_r(p,\pi_0)$ relative to an $m$-dimensional oriented plane $\pi$ is defined by
\[
\mathbf{E} (T, \mathbf{C}_r (p,\pi_0), \pi) := \frac{1}{2\omega_m r^m} \int_{\mathbf{C}_r (p,\pi_0)} |\vec{T} (x)- \vec{\pi} (x)|^2\, d\|T\| (x),
\]
while
\[
\mathbf{E} (T, \mathbf{C}_r (p,\pi_0)) := \min_{\pi\subset T_p \Sigma} \mathbf{E} (T, \mathbf{C}_r (p,\pi_0), \pi)\, 
\]
where the minimum is taken over all $m$-dimensional oriented planes $\pi\subset T_p\Sigma$ (identified with their corresponding planes in $\mathbb{R}^{n+m}$). The oriented tilt-excess of $T$ in $\Bbf_r(p)$ relative to an $m$-dimensional oriented plane $\pi$ and the optimal oriented tilt-excess in $\Bbf_r(p)$, denoted respectively by $\Ebf(T,\Bbf_r(p),\pi)$ and $\Ebf(T,\Bbf_r(p))$, are defined analogously.
The tilt-excess is morally a planar $L^2$ ``gradient" excess. We will shortly also introduce a notion of planar $L^2$ ``height" excess.

We also refer to \cite{DLSk1}*{Section 3.1} for the notion of a \emph{coarse blow-up} of $T$ at $p$, which, roughly speaking, under the assumption that $r^2 \mathbf{A}^2 \ll\mathbf{E} (T, \mathbf{B}_{6\sqrt{m} r}(p))\ll 1$ along a given family of scales $r$, gives rise to a Dir-minimizing $Q$-valued function over $B_1 (0, \pi_0)$ for some $m$-dimensional plane $\pi_0$ as a subsequential normalized limit. The graph of this Dir-minimizer approximates the rescaled current $T_{p, \rho}$ in the domain $\mathbf{C}_1 (0, \pi_0)\cap \mathbf{B}_{4}$ with a mass error which is $o (\mathbf{E} (T, \mathbf{B}_{6\sqrt{m}}))$, along with other related error estimates which are detailed in \cite{DLSk1}*{Section 3.1} and \cite{DLS14Lp}. We additionally refer the reader to \cite{DLSk1} for all other relevant notation and terminology relating to coarse blow-ups.

The properties of $\flatS_{Q,1} (T)$ that will be most useful to us here are contained within the following proposition, which is a consequence of the analysis in \cite{DLSk1} (more precisely, see (9), Proposition 4.1 and Corollary 4.3 therein).

\begin{proposition}\label{p:coarse-blow-ups}
Let $T$, $\Sigma$ and $\Abf$ be as in Assumption \ref{a:main}. For every $p\in \flatS_{Q,1} (T)$ and any sequence $r_k\downarrow 0$ with the property that $\mathbf{E}_k := \mathbf{E} (T, \mathbf{B}_{6\sqrt{m} r_k}(p))\downarrow 0$ the following holds:
\begin{itemize}
\item[(i)] For $\Abf_k\coloneqq \Abf_{\Sigma_{p,r_k}}$ we have $\lim_{k\to \infty} \mathbf{E}_k^{-1} r_k^{2-2\delta_2} \mathbf{A}_k^2  = 0$;
\item[(ii)] Any coarse blow-up $\bar f$ generated by a subsequence of $\{r_k\}$ has positive Dirichlet energy, is $1$-homogeneous, and satisfies $\boldsymbol{\eta}\circ \bar f \equiv 0$.
\end{itemize}
\end{proposition}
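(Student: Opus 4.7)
The plan is simply to extract the three conclusions from the frequency-function machinery developed for flat singular points in the first work of the series \cite{DLSk1}, and to check that every $p \in \flatS_{Q,1}(T)$ automatically satisfies the hypotheses invoked there. Recall that, in \cite{DLSk1}, the frequency $\Irm(T,\cdot)$ is constructed on $\flatS(T)$ via an Almgren-type monotonicity relative to suitable center manifolds, and $\flatS_{Q,1}(T)$ is defined as the locus on which $\Irm(T,\cdot) = 1$.

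For (i), the decisive point is that the very definition of $\Irm(T,p)$ at a point $p$ forces a quantitative lower bound on the planar excess in terms of the rescaled curvature: concretely, equation (9) of \cite{DLSk1} states that whenever the frequency is well-defined and finite at $p$ one has $r_k^{2-2\delta_2}\mathbf{A}_k^2 = o(\mathbf{E}_k)$ along every sequence $r_k \downarrow 0$ with $\mathbf{E}_k \to 0$. Applied at $p \in \flatS_{Q,1}(T)$, this yields (i) directly.

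For (ii), once (i) is in place, Proposition 4.1 of \cite{DLSk1} produces a subsequential coarse blow-up $\bar f$ of $T_{p,r_k}$ and shows it is a nontrivial $Q$-valued Dir-minimizer with positive Dirichlet energy, the nontriviality being precisely the reflection of the fact that at a frequency-$1$ point the geometric oscillation of the current sits at scale $\mathbf{E}_k^{1/2}$ around the tangent plane. The remaining structural properties, namely that $\bar f$ is $1$-homogeneous and that $\boldsymbol{\eta}\circ \bar f \equiv 0$, are the content of Corollary 4.3 of \cite{DLSk1}, which identifies the homogeneity degree of the blow-up with $\Irm(T,p)$ and uses the harmonicity of the average combined with the centering built into the definition of the coarse blow-up. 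Since the genuinely difficult work — the construction of the frequency function adapted to flat singular points in higher codimension, its almost-monotonicity, and its good behaviour under coarse blow-up — is already resolved in \cite{DLSk1}, no substantive obstacle remains at this step; the proof is a bookkeeping exercise verifying that the hypotheses of Proposition 4.1 and Corollary 4.3 of \cite{DLSk1} hold on $\flatS_{Q,1}(T)$.
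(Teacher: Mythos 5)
Your proposal is correct and follows exactly the same route as the paper: the paper gives no proof of Proposition \ref{p:coarse-blow-ups} at all beyond the single sentence pointing to equation (9), Proposition 4.1, and Corollary 4.3 of \cite{DLSk1}, and your write-up simply unpacks that citation, attributing (i) to (9), the existence and positivity of the Dirichlet energy of the blow-up to Proposition 4.1, and the $1$-homogeneity together with $\boldsymbol{\eta}\circ\bar f\equiv 0$ to Corollary 4.3, which is consistent with the paper's intent.
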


\subsection{Main decay theorem}
To prove Theorem \ref{t:big-one}, we will show that a  key excess decay theorem holds under the assumption that $T$ is much closer to a cone with exactly $m-2$ directions of translation invariance than it is to any $m$-dimensional plane. Before coming to the statement of this theorem, let us first introduce suitable notions of $L^2$ height excess of $T$ relative to such cones. We begin by defining the cones of interest.

\begin{definition}\label{def:cones}
For every fixed integer $Q\geq 2$ we denote by $\mathscr{C} (Q)$ those subsets of $\mathbb R^{m+n}$ which are unions of $N\leq Q$ $m$-dimensional planes $\pi_1, \ldots, \pi_N$ satisfying the following properties:
\begin{itemize}
    \item[(i)] $\pi_i \cap \pi_j$ is the same $(m-2)$-dimensional plane $V$ for every pair $(i,j)$ with $i<j$;
    \item[(ii)] Each plane $\pi_i$ is contained in the same $(m+\bar n)$-dimensional plane $\varpi$.
\end{itemize}
If $p\in \Sigma$, then $\mathscr{C} (Q, p)$ will denote the subset of $\mathscr{C} (Q)$ for which $\varpi = T_p \Sigma$.

$\mathscr{P}$ and $\mathscr{P} (p)$ will denote the subset of those elements of $\mathscr{C} (Q)$ and $\mathscr{C} (Q,p)$ respectively which consist of a single plane; namely, with $N=1$. For $\mathbf{S}\in \mathscr{C} (Q)\setminus \mathscr{P}$, the $(m-2)$-dimensional plane $V$ described in (i) above is referred to as the {\em spine} of $\mathbf{S}$ and will often be denoted by $V (\mathbf{S})$.
\end{definition}

We are now in a position to introduce the \emph{conical $L^2$ height excess} between $T$ and elements in $\Cscr(Q)$.
\begin{definition}\label{def:L2_height_excess}
	Given a ball $\Bbf_r(q) \subset \R^{m+n}$ and a cone $\mathbf{S}\in \mathscr{C} (Q)$, we define the \emph{one-sided conical $L^2$ height excess of $T$ relative to $\Sbf$ in $\Bbf_r(q)$}, denoted $\hat{\Ebf}(T, \mathbf{S}, \Bbf_r(q))$, by
	\[
		\hat{\Ebf}(T, \mathbf{S}, \Bbf_r(q)) \coloneqq \frac{1}{r^{m+2}} \int_{\Bbf_r (q)} \dist^2 (p, \mathbf{S})\, d\|T\|(p).
	\]
At the risk of abusing notation, we further define the corresponding \emph{reverse one-sided excess} as
 \[
\hat{\Ebf} (\mathbf{S}, T, \Bbf_r (q)) \coloneqq \frac{1}{r^{m+2}}\int_{\Bbf_r (q)\cap \mathbf{S}\setminus \Bbf_{ar} (V (\mathbf{S}))}
\dist^2 (x, {\rm spt}\, (T))\, d\mathcal{H}^m (x)\, ,
\]
where $a=a(m)$ is a dimensional constant, to be specified later. We subsequently define the \emph{two-sided conical $L^2$ height excess} as 
\[
    \mathbb{E} (T, \mathbf{S}, \Bbf_r (q)) :=
\hat{\Ebf} (T, \mathbf{S}, \Bbf_r (q)) + \hat{\Ebf} (\mathbf{S}, T, \Bbf_r (q))\, .
\]
We finally introduce the \emph{planar $L^2$ height excess} which is given by
\[
\Ebf^p (T, \Bbf_r (q)) = \min_{\pi\in \mathscr{P} (q)} \hat{\Ebf} (T, \pi, \Bbf_r (q))\, .
\]
\end{definition}

Let us now state our main excess decay theorem. This is similar in spirit to the excess decay theorem originally seen in \cite{Simon_cylindrical}*{Lemma 1}, however a crucial difference with the present setting is that in \cite{Simon_cylindrical}*{Lemma 1} there is a built-in multiplicity one assumption which in particular rules out branch point singularities a priori. Our excess decay theorem in contrast is in a higher multiplicity setting, and therefore is closer -- and indeed our proof follows a similar pattern -- to the excess decay theorems first seen in \cite{W14_annals}*{Section 13} (see also \cite{KW}*{Lemma 5.6 and Lemma 12.1} and \cite{MW}*{Theorem 2.1}).

\begin{theorem}[Excess Decay Theorem]\label{c:decay}
For every $Q,m,n$, $\bar n$, and $\varsigma>0$, there are positive constants $\varepsilon_0 = \varepsilon_0(Q,m,n,\bar n, \varsigma) \leq \frac{1}{2}$, $r_0 = r_0(Q,m,n,\bar n, \varsigma) \leq \frac{1}{2}$ and $C = C(Q,m,n,\bar n)$ with the following property. Assume that 
\begin{itemize}
\item[(i)] $T$ and $\Sigma$ are as in Assumption \ref{a:main};
\item[(ii)] $\|T\| (\Bbf_1) \leq (Q+\frac{1}{2}) \omega_m$;
\item[(iii)] There is $\mathbf{S}\in \mathscr{C} (Q, 0)\setminus\Pscr(0)$ such that 
\begin{equation}\label{e:smallness}
\mathbb{E} (T, \mathbf{S}, \Bbf_1) \leq \varepsilon_0^2 \mathbf{E}^p (T, \Bbf_1)\, 
\end{equation}
and 
\begin{equation}\label{e:no-gaps}
\Bbf_{ \varepsilon_0} (\xi) \cap \{p: \Theta (T,p)\geq Q\}\neq \emptyset \qquad \forall \xi \in V (\mathbf{S})\cap \Bbf_{1/2}\, ;
\end{equation}
\item[(iv)] $\mathbf{A}^2 \leq \varepsilon_0^2 \mathbb{E} (T, \mathbf{S}', \Bbf_1)$ for {\em any} $\mathbf{S}'\in \mathscr{C} (Q, 0)$.
\end{itemize}
Then there is a $\mathbf{S}'\in \mathscr{C} (Q,0) \setminus \mathscr{P} (0)$ such that 
\begin{enumerate}%[\normalfont(a)]
    \item [\textnormal{(a)}] $\mathbb{E} (T, \mathbf{S}', \Bbf_{r_0}) \leq \varsigma \mathbb{E} (T, \mathbf{S}, \Bbf_1)\,$ \label{e:decay} \\
    \item [\textnormal{(b)}] $\dfrac{\mathbb{E} (T, \mathbf{S}', \Bbf_{r_0})}{\mathbf{E}^p (T, \Bbf_{r_0})} 
\leq 2 \varsigma \dfrac{\mathbb{E} (T, \mathbf{S}, \Bbf_1)}{\mathbf{E}^p (T, \Bbf_1)}$ \\
    \item [\textnormal{(c)}] $\dist^2 (\Sbf^\prime \cap \Bbf_1,\Sbf\cap \Bbf_1) \leq C \mathbb{E} (T, \mathbf{S}, \Bbf_1)$\label{e:cone-change}
    \item[\textnormal{(d)}] $\dist^2 (V (\mathbf{S}) \cap \Bbf_1, V (\mathbf{S}')\cap \Bbf_1) \leq C \dfrac{\mathbb{E}(T,\mathbf{S},\Bbf_1)}{\Ebf^p(T,\Bbf_1)}$\, .\label{e:spine-change}
    \end{enumerate}
\end{theorem}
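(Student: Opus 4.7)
I would argue by contradiction and compactness in the classical Simon--Wickramasekera style. Suppose the conclusion failed for some $\varsigma>0$: pick $\eps_k \downarrow 0$ and sequences $(T_k, \Sigma_k, \Sbf_k)$ satisfying (i)--(iv) with $\eps_0 = \eps_k$, for which no $\Sbf' \in \Cscr(Q,0)\setminus \Pscr(0)$ verifies (a)--(d) at a fixed dimensional scale $r_0$ (to be chosen). Write
\[
\mathbb{E}_k := \mathbb{E}(T_k,\Sbf_k,\Bbf_1), \qquad \Ebf^p_k := \Ebf^p(T_k,\Bbf_1).
\]
Hypotheses \eqref{e:smallness} and (iv) give $\mathbb{E}_k/\Ebf^p_k \to 0$ and $\mathbf{A}_{\Sigma_k}^2/\mathbb{E}_k \to 0$, so ambient curvature is a negligible perturbation on the scale of the conical excess. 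After extracting subsequences, $\Sbf_k \to \Sbf_\infty$ in $\Cscr(Q,0)$, the spines $V(\Sbf_k) \to V_\infty := V(\Sbf_\infty)$, and the planes $\pi_k^i$ making up $\Sbf_k$ converge to planes $\pi_\infty^i$ making up $\Sbf_\infty$.

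\textbf{Fine blow-up off the cone.} Outside a tubular neighborhood $B_a(V(\Sbf_k))$ of the spine, the height bound of Theorem \ref{thm:main-estimate} supplies an $L^\infty$ control $\mathrm{dist}(\cdot,\Sbf_k)|_{\spt T_k} \lesssim \mathbb{E}_k^{1/2}$. Consequently, over each $\pi_k^i\cap \Bbf_1 \setminus B_a(V(\Sbf_k))$, the current $T_k$ decomposes as the graph of a $Q_i$-valued Lipschitz approximation $u_k^i$ with $\sum_i Q_i \leq Q$; the mass bound (ii) together with the no-gaps condition \eqref{e:no-gaps} forces $\sum_i Q_i = Q$. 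Normalizing $v_k^i := u_k^i/\mathbb{E}_k^{1/2}$ and invoking the $W^{1,2}$ and higher-integrability estimates of \cite{DLS14Lp}, one extracts a subsequential limit $v^i$ which is Dir-minimizing on $\pi_\infty^i \setminus V_\infty$ and takes values in $\Acal_{Q_i}$. The essential point is to glue the $v^i$ into a single object $v$ across $V_\infty$: the height bound combined with \eqref{e:no-gaps} forces the boundary traces of the different sheets to coincide at $V_\infty$ in an appropriately averaged sense, so $v$ extends to an ``axially symmetric'' multivalued Dir-minimizer on $\Sbf_\infty\cap \Bbf_1$ with vanishing average on $V_\infty$. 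The ratio condition $\mathbb{E}_k \ll \Ebf^p_k$ moreover guarantees that $v$ is \emph{not} merely the infinitesimal generator of a one-parameter family of rotated cones in $\Cscr(Q,0)$, since such a generator would yield a better approximation by a single plane, contradicting $\mathbb{E}_k \ll \Ebf^p_k$.

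\textbf{Decay of the blow-up and construction of $\Sbf'$.} The blow-up $v$ is precisely the object to which the cylindrical decay lemma of \cite{Simon_cylindrical}*{Lemma 1} applies in its multivalued form developed in \cite{W14_annals}*{Section 13} (see also \cite{KW}*{Lemma 5.6, Lemma 12.1}): being harmonic off $V_\infty$ with controlled growth and vanishing trace on $V_\infty$, $v$ decays towards an infinitesimal ``rotation'' $w$ of $\Sbf_\infty$ (an element of the tangent space to $\Cscr(Q,0)$ at $\Sbf_\infty$) at some dimensional scale $r_0$:
\[
\frac{1}{r_0^{m+2}}\int_{\Bbf_{r_0}}|v-w|^2 \, d\mathcal{H}^m \leq \tfrac{\varsigma}{4}\int_{\Bbf_1}|v|^2\, d\mathcal{H}^m.
\]
Exponentiating $w$ with amplitude $\mathbb{E}_k^{1/2}$ yields a cone $\Sbf_k' \in \Cscr(Q,0)\setminus \Pscr(0)$ with spine close to $V(\Sbf_k)$; unwinding the normalization produces (a), while (c) follows from the fact that the amplitude $|w| \lesssim \mathbb{E}_k^{1/2}$ controls the Hausdorff distance of the cones, and (d) from the observation that the component of $w$ translating the spine is controlled by $\Ebf^p_k$ (a pure rotation about $V_\infty$ would produce no spine motion). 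Conclusion (b) follows from (a) combined with the near-invariance of $\Ebf^p$ under restriction from $\Bbf_1$ to $\Bbf_{r_0}$, which holds modulo an error of order $\mathbb{E}_k = \SmallO(\Ebf^p_k)$. This contradicts the failure of (a)--(d) for the sequence $(T_k)$.

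\textbf{Main obstacle.} The principal technical difficulty is the construction and the gluing of the fine blow-up \emph{across} the spine $V_\infty$ — that is, justifying that the separately Dir-minimizing limits $v^i$ on the distinct planes $\pi_\infty^i$ assemble into a single object, with controlled behavior at $V_\infty$, to which Simon's cylindrical $L^2$ decay applies. This is where the new height bound of Theorem \ref{thm:main-estimate} (valid outside a tubular neighborhood of the spine) and the Wickramasekera-style treatment of branch points along $V_\infty$ enter in an essential way, and where the bulk of the work of the proof lies.
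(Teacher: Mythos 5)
Your high-level framework — contradiction, $L^\infty$ height bound, graphical decomposition near $\Sbf_k$, Dir-minimizing blow-up across the spine, and a linearized cylindrical decay — is the right one, and you have correctly flagged the gluing across the spine as the place where the height bound and the Wickramasekera-style analysis earn their keep. However, there is a genuine gap earlier, in the step where you claim the graphical decomposition.

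You assert that the $L^\infty$ bound $\dist(\cdot,\Sbf_k) \lesssim \mathbb{E}_k^{1/2}$ forces the current to split into $Q_i$-valued graphs over the individual planes $\pi_k^i$ away from the spine. This only works if $\mathbb{E}_k^{1/2}$ is \emph{much smaller than the minimal pairwise Hausdorff distance} $\boldsymbol{\sigma}(\Sbf_k)$ between the planes. But hypothesis \eqref{e:smallness} only controls $\mathbb{E}_k$ by $\eps_k^2\Ebf^p_k$, and $\Ebf^p_k$ is comparable to $\boldsymbol{\mu}(\Sbf_k)^2$ (the \emph{maximal} pairwise distance), not to $\boldsymbol{\sigma}(\Sbf_k)^2$. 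Nothing in the hypotheses prevents some planes of $\Sbf_k$ from being within distance $\ll \mathbb{E}_k^{1/2}$ of each other, in which case the current cannot be separated into per-plane sheets and your normalized approximations $v_k^i$ do not exist. This is not a technicality: it is exactly the phenomenon that forces the proof to pass through the Pruning Lemma \ref{l:pruning}, the Cone Balancing of Proposition \ref{p:balancing}, the reduction to the Weak Decay Theorem \ref{t:weak-decay} (Section \ref{s:reduction}) with a \emph{collection} of possible decay radii $r_1,\dots,r_{\bar N}$ rather than a single dimensional $r_0$, and the subsequent splitting of the blow-up argument into the collapsed ($\boldsymbol{\mu}(\Sbf_k)\to 0$, Wickramasekera-style transversal blow-up) and non-collapsed ($\boldsymbol{\mu}(\Sbf_k)$ bounded away from $0$, Simon-style orthogonal blow-up) regimes.

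Concretely: to make your argument run, you would first need to replace $\Sbf_k$ by a balanced sub-cone $\widetilde{\Sbf}_k$ for which $\mathbb{E}(T_k,\widetilde{\Sbf}_k,\Bbf_1)\ll\boldsymbol{\sigma}(\widetilde{\Sbf}_k)^2$ (while controlling how much $\mathbb{E}$ increases during pruning), and you would have to allow the decay scale $r_0$ to depend on which sub-cone the pruning lands on — which destroys the single-scale contradiction setup you are using. The paper repairs this with the multiple-radii Proposition \ref{p:decay-multiple} and then an iteration-and-absorption argument (Corollary \ref{c:control-large-interval}, Lemma \ref{l:large_interval}) to recover a single $r_0$ a posteriori. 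Your proposal is silent on all of this, and it is not a gap that can be filled in a sentence: it accounts for roughly half the length of Part \ref{p:decay-proof}. Finally, even once decomposition is available, the balancing condition on $\Sbf$ (Morgan-angle comparability, Definition \ref{d:balanced}) is needed to ensure that the linearized decay target $w$ really does exponentiate to an element of $\Cscr(Q,0)\setminus\Pscr(0)$ and to make Lemma \ref{l:spine} (spine alignment) available for conclusion (d); you do not address why $\Sbf_k$ can be taken balanced.
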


\subsection{Structure of the paper}
The majority of this paper will be dedicated to the proof of Theorem \ref{c:decay}. We begin with a refined $L^2-L^\infty$ height bound in Part \ref{p:heightbd}, which will be a key tool for the estimates in the remainder of the paper. However, we believe that this height bound is of interest in itself, and would have a number of other applications.

Part \ref{p:decay-proof} will contain the proof of the Excess Decay Theorem \ref{c:decay}. The proof consists of several key parts, modeled off the foundational works of Simon (\cite{Simon_cylindrical}) and Wickramasekera (\cite{W14_annals}); the reader familiar with these works will find many of the arguments in this part similar in spirit to those seen in these, and we will make the effort to point out the similarities and differences throughout the work to aid the reader. Section \ref{p:planes} contains some key results regarding the relative positioning and angles between pairs of planes in cones in $\Cscr(Q)\setminus\Pscr$, followed by Section \ref{p:approx}, which introduces effective graphical approximations for $T$ over such cones. In Section \ref{s:balancing}, we then use the preceding two sections to demonstrate that we may replace the initial cone $\Sbf$ in Theorem \ref{c:decay} with a \emph{balanced} cone (cf. Definition \ref{d:balanced}). In Section \ref{s:reduction} we reduce the proof of Theorem \ref{c:decay} to an a priori much weaker decay statement, in particular one where we can assume that the two-sided excess is much smaller than the \textit{minimal} angle in the cone (as opposed to the maximal angle, which is morally what \eqref{e:smallness} says). Section \ref{p:estimates} is then dedicated to the \emph{Simon estimates}, including the non-concentration estimates, at the spine of the cone in $\Cscr(Q)\setminus\Pscr$, after which, in Section \ref{p:linear} we demonstrate the excess decay conclusion at the level of the \emph{linearized problem} of multiple-valued Dirichlet minimizers. Part \ref{p:decay-proof} is then concluded with Section \ref{p:blowup}, in which we put together everything from the previous sections to conclude the proof of Theorem \ref{c:decay}.

In Part \ref{p:rect} we use Theorem \ref{c:decay} combined with a covering procedure, analogously to that done by Simon (\cite{Simon_cylindrical}), to prove Theorem \ref{t:big-one}.

\subsection*{Acknowledgments}
C.D.L. and A.S. acknowledge the support of the National Science Foundation through the grant FRG-1854147.

\part{\texorpdfstring{$L^2-L^\infty$}{L2-Linfty} Height Bound}\label{p:heightbd}
The aim of this part is to prove a generalization of Allard's tilt-excess and $L^\infty$ estimates (see \cite{Allard_72}*{Section 8}). Allard's original work, in the context of stationary varifolds, bounds the $L^\infty$ distance and $L^2$ tilt excess from a single plane with the $L^2$ distance to it. Here, we will restrict ourselves to the much smaller class of area-minimizing currents, with the additional benefit being that are able to control the $L^\infty$ distance and $L^2$ tilt excess from a \emph{finite} collection of mutually disjoint planes by the $L^2$ distance to the union of the planes. 

\section{Main statements}
For the remainder of this part, we make the following additional assumption.

\begin{assumption}\label{a:height-main}
    $T$, $\Sigma$, and $\Abf$ are as in Assumption \ref{a:main}. For some oriented $m$-dimensional plane $\pi_0\subset\R^{m+\bar n}$ passing through the origin and some positive integer $Q$, we have 
    \[
    (\mathbf{p}_{\pi_0})_\sharp T\res \Cbf_2 (0, \pi_0) = Q \llbracket B_2(\pi_0)\rrbracket\, ,
    \]
    and $\|T\|(\Cbf_2) \leq (Q+\frac{1}{2})\omega_m 2^m$.
\end{assumption}

The main result of this part is the following. 

\begin{theorem}[$L^\infty$ and tilt-excess estimates]\label{thm:main-estimate} For every $1\leq r < 2$, $Q$, and $N$, there is a positive constant $\bar{C} = \bar{C} (Q,m,n,\bar n,N,r)>0$ with the following property. Suppose that $T$, $\Sigma$, $\Abf$ and $\pi_0$ are as in Assumption \ref{a:height-main}, let $p_1, \ldots , p_N \in \pi_0^\perp$ be distinct points, and set $\boldsymbol{\pi}:= \bigcup_i p_i+\pi_0$. 
Let
\begin{equation}\label{e:L2-excess}
E := \int_{\mathbf{C}_2} \dist^2 (p, \boldsymbol{\pi}) \, d\|T\| (p)\, .
\end{equation}
Then
\begin{equation}\label{e:tilt-estimate}
\Ebf(T,\Cbf_r, \pi_0)\leq \bar{C} (E + \mathbf{A}^2)\, 
\end{equation}
and, if $E\leq 1$,
\begin{equation}\label{e:Linfty-estimate}
\spt (T) \cap \mathbf{C}_r \subset \{p : \dist (p, \boldsymbol{\pi})\leq \bar{C} (E^{1/2} + \mathbf{A})\}\, .
\end{equation}
\end{theorem}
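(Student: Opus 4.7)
My plan is to prove \eqref{e:tilt-estimate} by a standard Caccioppoli-type argument, and to derive \eqref{e:Linfty-estimate} in two stages: a crude monotonicity-based bound, followed by sharpening via clustering of the $p_i$'s and induction on $N$. For the tilt-excess, testing the first variation of area against a vector field of the form $X(p)=\chi(p)\bigl(\mathbf{p}_{\pi_0^\perp}(p)-p_{i(p)}\bigr)$, where $\chi$ is a smooth cutoff supported in $\Cbf_2$ and equal to $1$ on $\Cbf_r$ and $i(p)$ indexes the plane in $\boldsymbol{\pi}$ nearest to $p$, should yield the desired estimate once the generalized mean-curvature contribution of $\Sigma$ (giving the $\mathbf{A}^2$ term) is absorbed. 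The tilt $|\vec T-\vec \pi_0|^2$ is invariant under translations in $\pi_0^\perp$, so the multi-plane structure is essentially inert at this stage.

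For the $L^\infty$ bound, the crude step is the following: for $q\in \spt(T)\cap\Cbf_r$ with $d(q):=\dist(q,\boldsymbol{\pi})$, the ball $\Bbf_{d(q)/2}(q)$ is disjoint from $\boldsymbol{\pi}$ and, for $E\leq 1$ small and $r<2$, contained in $\Cbf_2$. The monotonicity formula (with $\mathbf{A}$-correction) gives $\|T\|(\Bbf_{d(q)/2}(q))\geq c_m (d(q)/2)^m$, hence
\[
E\;\geq\;\int_{\Bbf_{d(q)/2}(q)}\dist^2(\cdot,\boldsymbol{\pi})\,d\|T\|\;\geq\; c_m (d(q)/2)^{m+2},
\]
yielding the crude bound $d(q)\leq CE^{1/(m+2)}$. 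This localizes $\spt T$ to a thin tubular neighborhood of $\boldsymbol{\pi}$ but is coarser than the sharp $E^{1/2}$ scaling. I would then sharpen it by induction on $N$: the base case $N=1$ is the classical single-plane $L^\infty$ bound for area-minimizing currents (Allard-type, or by cylinder-cone comparison). For the inductive step, use the crude bound to cluster the heights $\{p_1,\dots,p_N\}$ into groups separated by gaps of size at least $2CE^{1/(m+2)}$; at the midheight of any such between-group gap, $\spt T$ is empty, so slicing produces an empty slice and decomposes $T$ cleanly into area-minimizing pieces---one per group---to which the inductive hypothesis applies, giving \eqref{e:Linfty-estimate} with constants degrading in $N$.

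The main obstacle is the single-cluster case, i.e.\ when all $p_i$ lie in one cluster of diameter $O(NE^{1/(m+2)})$: replacing $\boldsymbol{\pi}$ by its averaged single plane $p_\ast+\pi_0$ gives excess at most $E + Q\omega_m 2^m\,\eta^2 = O(E^{2/(m+2)})$, which, fed into the single-plane $L^\infty$ bound, yields only $d(q)\lesssim E^{1/(m+2)}$---not the desired $E^{1/2}$. To recover the sharp exponent I would exploit the $Q$-valued graphical structure coming from $(\mathbf{p}_{\pi_0})_\sharp T = Q\llbracket B_2(\pi_0)\rrbracket$, zooming into smaller sub-balls where the tilt-excess bound \eqref{e:tilt-estimate} gives a quantitative decay and allows one to peel off individual planes one at a time by a dyadic comparison procedure. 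Tracking all constants so that $\bar C$ depends only on $Q,m,n,\bar n,N,r$, and in particular obtaining the sharp exponent $1/2$ inside a single cluster of planes whose diameter is much larger than $E^{1/2}$, is where the bulk of the technical effort will concentrate.
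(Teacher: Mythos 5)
Your overall architecture (first-variation for the tilt excess, crude monotonicity bound plus clustering and induction on $N$ for the $L^\infty$ bound, with the single-cluster case as the bottleneck) matches the paper's, but two of the claims you wave off as routine are where the real work sits, and your sketch of the single-cluster mechanism does not actually close the gap.

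On the tilt-excess. Your remark that "the multi-plane structure is essentially inert" because the tilt is invariant under vertical translations is not right. The issue is not the integrand but the test vector field: you want $X$ to equal $\mathbf{p}_{\pi_0^\perp}(\cdot)-p_i$ near the $i$-th plane so that $\operatorname{div}_{\vec T}X=\tfrac12|\mathbf{p}_{\pi_0}-\mathbf{p}_{T(x)}|^2$ there, but the nearest-plane assignment $i(p)$ makes $X$ discontinuous across the midheight surfaces, so you need a genuine Lipschitz interpolation (the paper uses a Kirszbraun extension of the field defined on the disjoint disks $B_{H/4}(p_i)$, with Lipschitz constant $\leq 3$). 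More importantly, away from the tubular neighbourhood $G=\{\dist(\cdot,\boldsymbol\pi)\leq H/4\}$ the divergence identity degrades to $|\operatorname{div}_{\vec T}X|\leq C|\mathbf{p}_{\pi_0}-\mathbf{p}_{T(x)}|^2$, so the first variation only yields
\[
\mathbf{E}(T,\mathbf{C}_r)\;\leq\; C(E+\mathbf A^2)\;+\;C\!\int_{G^c}|\mathbf{p}_{\pi_0}-\mathbf{p}_{T(x)}|^2\,d\|T\|\,,
\]
and the last term is \emph{not} absorbed for free: the paper controls it via Almgren's interior estimate (giving a bound $\lesssim (E/H^2)^\gamma\,\mathbf{E}(T,\mathbf{C}_R)+\mathbf{E}(T,\mathbf{C}_R)^{1+\gamma}$) and then runs an iteration over shrinking cylinders, together with a two-variable induction on $(Q,N)$ and the combinatorial clustering lemmas to handle the regime $E\gg H^2$. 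A one-shot Caccioppoli argument does not produce \eqref{e:tilt-estimate}.

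On the $L^\infty$ bound, single-cluster case. Your diagnosis that the crude bound gives $E^{1/(m+2)}$ and that one must sharpen to $E^{1/2}$ is correct, but "zooming in where the tilt-excess bound gives quantitative decay and peeling off planes one at a time" is not the mechanism, and I don't see how to make it one. The paper's engine is an excess \emph{decay} lemma (Lemma \ref{l:decay}) of a different nature: using the strong Lipschitz approximation and the comparison with $\mathrm{Dir}$-minimizing $Q$-valued maps, one shows that after passing to scale $\rho$ the $L^2$ height excess with respect to a \emph{new} family of parallel planes $\boldsymbol\pi'$ decays like $\rho^{m+2\beta_0}$. Two features of this step are essential and are missing from your plan: (i) the new planes $\boldsymbol\pi'$ come from evaluating the approximating $\mathrm{Dir}$-minimizer at the center (or from a Poincar\'e-type average when $\mathbf A$ dominates), so they are \emph{not} a sub-collection of the original $p_i+\pi_0$ --- the number of planes can even increase, up to $Q$; and (ii) it is the H\"older continuity of the $\mathrm{Dir}$-minimizer, not tilt-excess decay, that gives the sharp $\rho^{2\alpha}$ improvement. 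One then iterates this decay until the maximal separation $M$ rescales to size $\geq 1$, at which point the monotonicity-formula separation lemmas finally split $T$ and the induction on $N$ kicks in. Without this "change of plane family at each scale" idea, the dyadic zoom you describe will get stuck at the non-sharp exponent whenever the cluster diameter sits between $E^{1/2}$ and $E^{1/(m+2)}$.
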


A translation followed by a simple scaling argument clearly gives corresponding estimates when $0$ is replaced by an arbitrary center and the scales $r$ and $2$ are replaced by two arbitrary radii $\rho<R$. We additionally record the following consequence of Theorem \ref{thm:main-estimate}, which will be used frequently in the rest of the paper.

\begin{corollary}\label{c:splitting-0}
For each pair of positive integers $Q$ and $N$, there is a positive constant $\delta = \delta (Q,m,n, \bar n, N)$ with the following properties. Assume that:
\begin{itemize}
    \item[(i)] $T$, $\Sigma$, and $\Abf$ are as in Assumption \ref{a:main};
    \item[(ii)] $T$ is area-minimizing in $\Sigma$ and for some positive $r \leq \frac{1}{4}$ and $q\in \spt(T)\cap\Bbf_1$ we have $\partial T \res \mathbf{C}_{4r} (q) = 0$, $(\mathbf{p}_{\pi_0})_\sharp T = Q \llbracket B_{4r} (q)\rrbracket$, and $\|T\| (\mathbf{C}_{2r} (q)) \leq \omega_m (Q+\frac{1}{2}) (2r)^m$;
    \item[(iii)] $p_1, \ldots, p_N\in \R^{m+n}$ are distinct points with $\mathbf{p}_{\pi_0} (p_i)= q$ and $\varkappa:=\min \{|p_i-p_j|: i<j\}$;
    \item[(iv)] $\pi_1, \ldots, \pi_N$ are oriented planes passing through the origin with 
    \begin{equation}\label{e:smallness-tilted-pi-1}
    \tau \coloneqq \max_i |\pi_i-\pi_0|\leq \delta \min\{1, r^{-1} \varkappa\}\, ;
    \end{equation}
    \item[(v)] Upon setting $\boldsymbol{\pi} = \bigcup_i (p_i+ \pi_i)$, we have 
    \begin{align}
    & (r\mathbf{A})^2 + (2r)^{-m-2} \int_{\mathbf{C}_{2r} (q)} \dist^2 (p, \boldsymbol{\pi}) d\|T\| 
    \leq \delta^2 \min \{1, r^{-2}\varkappa^2\}\, .\label{e:smallness-tilted-pi-2}
    \end{align}
\end{itemize}
Then $T\res \mathbf{C}_r (q) = \sum_{i=1}^N T_i$ where
\begin{itemize}
\item[(a)] Each $T_i$ is an integral current with $\partial T_i \res \mathbf{C}_r (q) = 0$;
\item[(b)] $\dist (q, \boldsymbol{\pi}) = \dist (q, p_i+\pi_i)$ for each $q\in \spt (T_i)$;
\item[(c)] $(\mathbf{p}_{\pi_0})_\sharp T_i = Q_i \llbracket B_r (q)\rrbracket$ for some non-negative integer $Q_i$.
\end{itemize}
\end{corollary}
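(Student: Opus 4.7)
The plan is to reduce Corollary \ref{c:splitting-0} to the $L^\infty$ bound \eqref{e:Linfty-estimate} of Theorem \ref{thm:main-estimate}. Note however that the theorem applies to parallel translates of a single plane $\pi_0$, whereas in the corollary the planes $\pi_i$ are merely $\tau$-close to $\pi_0$. To bridge this gap I would first replace $\boldsymbol{\pi}$ with its ``straightened'' cousin $\boldsymbol{\pi}':=\bigcup_i(p_i+\pi_0)$ and estimate the swap. For $p\in \mathbf{C}_{2r}(q)$, if $p_i+\pi_i$ realizes $\dist(p,\boldsymbol{\pi})$, then since $\mathbf{p}_{\pi_0}(p_i)=q$ we have
\[
\dist(p,p_i+\pi_0)\leq \dist(p,p_i+\pi_i)+C\tau\,|\mathbf{p}_{\pi_0}(p-p_i)|\leq \dist(p,\boldsymbol{\pi}) +C\tau r.
\]
Squaring, integrating, and combining with hypotheses (ii), (iv), (v) yields
\[
\int_{\mathbf{C}_{2r}(q)}\dist^2(p,\boldsymbol{\pi}')\,d\|T\|(p)\leq C\,\delta^2\min\{1,r^{-2}\varkappa^2\}\,r^{m+2}.
\]

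Next I would dilate by $\iota_{q,r}$, so that $\mathbf{C}_{2r}(q)$ becomes $\mathbf{C}_{2}(0)$ and $\mathbf{C}_r(q)$ becomes $\mathbf{C}_1(0)$, and apply Theorem \ref{thm:main-estimate} to the rescaled current $T_{q,r}$ and the parallel configuration $\{(p_i-q)/r+\pi_0\}$, at inner scale $r=1$. The standing hypothesis and projection hypothesis transport directly, and the excess bound just derived controls the quantity $E$ in \eqref{e:L2-excess}, which in particular is smaller than $1$ for $\delta$ small. Undilating and reversing the tilt swap (at the cost of an additional $C\tau r\leq C\delta\min\{r,\varkappa\}$ in the $L^\infty$ neighborhood), one obtains
\[
\spt(T)\cap \mathbf{C}_r(q)\subset \{p:\dist(p,\boldsymbol{\pi})\leq C\delta\min\{r,\varkappa\}\}.
\]

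For $\delta=\delta(Q,m,n,\bar n,N)$ small enough, the $N$ tubes $U_i:=\{p\in \mathbf{C}_r(q):\dist(p,p_i+\pi_i)<\varkappa/3\}$ are pairwise disjoint and together contain $\spt(T)\cap \mathbf{C}_r(q)$, with each $\spt(T)\cap U_i$ bounded away from $\partial U_i\cap \mathbf{C}_r(q)$. Setting $T_i:=T\res U_i$ gives integral currents satisfying $\partial T_i \res \mathbf{C}_r(q)=0$, proving (a), while (b) holds by construction of $U_i$. For (c), the projection $(\mathbf{p}_{\pi_0})_\sharp T_i$ is an integral $m$-current in $B_r(q,\pi_0)$ with zero boundary there, so by the constancy theorem it equals $Q_i \llbracket B_r(q)\rrbracket$ for some integer $Q_i$. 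Nonnegativity $Q_i\geq 0$ and $\sum_i Q_i=Q$ follow from the projection hypothesis $(\mathbf{p}_{\pi_0})_\sharp T=Q\llbracket B_{4r}(q)\rrbracket$ together with the fact that, by the above $L^\infty$ containment, each $T_i$ is supported within $C\delta r$ of an $m$-plane which itself is within $\tau\leq \delta$ of $\pi_0$, so inherits the same orientation on a.e.~horizontal slice.

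The main obstacle here is the controlled swap from the tilted planes $\pi_i$ to the parallel copies of $\pi_0$: one needs to absorb the extra $\tau r$ into the excess without losing the relative smallness with respect to the plane separation $\varkappa$. This is precisely the role of the sharpened smallness condition (iv), which quantitatively couples $\tau$ to $\varkappa/r$ and allows the single-reference-plane height bound of Theorem \ref{thm:main-estimate} to be invoked with an error that is still $o(\varkappa)$, as needed to separate the $N$ sheets of $T$ near the distinct points $p_i$.
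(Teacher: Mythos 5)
Your overall strategy matches the paper's: straighten the tilted planes $\pi_i$ to parallel copies of $\pi_0$, absorb the cost $C\tau r$ of the swap into the excess using hypothesis (iv), scale so that the ball has radius one, invoke the $L^\infty$ bound \eqref{e:Linfty-estimate} of Theorem \ref{thm:main-estimate}, split $\spt(T)$ into $N$ disjoint tubular neighborhoods, and restrict $T$ to each to get the $T_i$. Conclusions (a), (b), and the identification of the integer $Q_i$ via the constancy theorem are all handled correctly and in the same spirit as the paper.

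The gap is in the nonnegativity $Q_i\geq 0$. You argue that since each $T_i$ is supported in a thin slab around a plane close to $\pi_0$, it ``inherits the same orientation on a.e.\ horizontal slice.'' This does not follow from the $L^\infty$ height containment: a current whose support is contained in a thin slab around $\pi_0$ could still have approximate tangent $\vec T$ close to $-\vec\pi_0$ on a set of positive measure, producing negative slice multiplicities and $Q_i<0$. Ruling this out requires a bound on the \emph{tilt} excess $\Ebf(T,\Cbf_1,\pi_0)$, which is precisely the other conclusion \eqref{e:tilt-estimate} of Theorem \ref{thm:main-estimate} that you never invoke. The paper uses the tilt-excess estimate to apply Almgren's strong Lipschitz approximation (\cite{DLS14Lp}*{Theorem 2.4}), obtaining a set $K\subset B_1(\pi_0)$ of positive measure over which $T$ coincides with a Lipschitz multi-valued graph, so that the slice $\langle T,\mathbf{p}_{\pi_0},x\rangle$ for $x\in K$ is a sum $\sum_j k_j\delta_{\xi_j}$ with strictly \emph{positive} integers $k_j$. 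Decomposing this slice over the tubes $U_i$ and integrating gives $Q_i=\sum_{\xi_j\in U_i}k_j\geq 0$. Alternatively, one can close the gap with a mass argument: the tilt bound gives $\|T\|(\Cbf_r)=(Q+\Ebf(T,\Cbf_r,\pi_0))\omega_m r^m\le(Q+C\delta^2)\omega_m r^m$, while $\|T\|(\Cbf_r)=\sum_i\|T_i\|(\Cbf_r)\ge\sum_i|Q_i|\,\omega_m r^m$; combined with $\sum_i Q_i=Q$ and integrality this forces all $Q_i\ge 0$ once $C\delta^2<1$. Either way, you must bring the tilt-excess estimate into play, and the appeal to ``orientation inheritance'' as stated is not a proof.
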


\subsection{Proof of Corollary \ref{c:splitting-0}}
First of all, by scaling we can assume that $r=1$ and up to translation we can assume $q=0$. We then introduce
\[
E := \int_{\mathbf{C}_2} \dist^2 (p, \boldsymbol{\pi}) d\|T\| (p)\, .
\]
Next let $\bar{\boldsymbol{\pi}}:= \bigcup_i (p_i+\pi_0)$ and observe that 
\[
\bar{E}:= \int_{\mathbf{C}_2} \dist^2 (p, \bar{\boldsymbol{\pi}}) d\|T\| (p) \leq C E + C \tau^2 \leq C \delta^2 \min \{1, \varkappa^2\}\, ,
\]
for a constant $C = C(m,n,\bar{n},Q)$ which we stress is independent of $\delta$ and $\varkappa$. Since we also have $\mathbf{A}^2 \leq \delta^2 \varkappa^2$, we can apply Theorem \ref{thm:main-estimate} to conclude that 
\[
\spt (T) \cap \mathbf{C}_{3/2} \subset \{p:\dist (p, \bar{\boldsymbol{\pi}}) \leq C_1 \delta \varkappa\}\, ,
\]
where $C_1 = C_1 (m,n,\bar{n},Q,N)$ is again independent of $\delta$ and $\varkappa$. In particular if we choose $\delta = \delta(m,n,\bar{n},Q,N)>0$ so that $C_1\delta < 1/4$, we have
\[
\spt (T) \cap \mathbf{C}_{3/2} \subset \{p:\dist (p, \bar{\boldsymbol{\pi}}) < 4^{-1} \varkappa\}\, .
\]
Due to the definition of $\varkappa$, $\{p:\dist (p, \bar{\boldsymbol{\pi}}) < 4^{-1} \varkappa\}$ consists of $N$ disjoint open sets $U_i:=\{p: \dist (p-p_i, \pi_0) < \varkappa/4\}$. We then set $T_i := T\res \mathbf{C}_{3/2}\cap U_i$; clearly each $T_i$ is integral, has no boundary in $\mathbf{C}_{3/2}$, and $\sum_i T_i = T \res \mathbf{C}_{3/2}$. It follows also that the $T_i$'s have disjoint support and so each of them is area-minimizing. If we further take $\delta< 1/8$ such that $C_1\delta < 1/8$, we can in addition ensure that for each point $p\in U_i$, 
\begin{align}
\dist (p, p_i+\pi_i) &\leq \frac{\varkappa}{4}, \label{e:close}\\ 
\dist (p, p_j+\pi_j) &\geq \frac{\varkappa}{2} \qquad \forall j\neq i \label{e:far}\, .
\end{align}
and thus (b) is certainly satisfied. Moreover, by \cite{DLS16centermfld}*{Lemma 1.6}, observe that $(\mathbf{p}_{\pi_0})_\sharp T_i = Q_i \llbracket B_{1}\rrbracket$ for some $Q_i \in \mathbb Z$; clearly we must have $\sum_i Q_i = Q$. Again applying Theorem \ref{thm:main-estimate} we have 
\[
\Ebf(T,\Cbf_1,\pi_0) \leq C \delta^2\, .
\]
In particular, if $\delta$ is chosen small enough, by \cite{DLS14Lp}*{Theorem 2.4} we can ensure the existence of a subset $K\subset B_1(\pi_0)$ of positive measure with the property that for each $x\in K$, the slice $\langle T, \mathbf{p}_{\pi_0}, x\rangle$ (see \cite{Simon_GMT} for the definition and properties of the slicing map) given by $\sum_j k_j \delta_{\xi_j}$ for some finite collection of positive integers $k_j$ and points $\xi_j(x)\in \pi_0^\perp$. Fix any such point $x$ and observe that
\[
\langle T_i, \mathbf{p}_{\pi_0}, x\rangle = \sum_{\xi_j\in U_i} k_j \delta_{\xi_j} 
\]
while
\[
Q_i = \sum_{\xi_j \in U_i} k_j\, .
\]
It follows immediately that $Q_i$ cannot be negative, which completes the proof.
\qed

\section{Preliminaries}
In this section, we collect all the required preliminary results for the proof of Theorem \ref{thm:main-estimate}.

\subsection{Oriented and non-oriented tilt-excess} Given an $m$-dimensional plane $\pi$ and a cylinder $\mathbf{C} = \mathbf{C}_r (q, \pi)$, recall that the \textit{non-oriented tilt excess} is given by
\begin{equation}\label{e:nonoriented}
\mathbf{E}^{no} (T, \mathbf{C}):= \frac{1}{2\omega_m r^m} \int_{\mathbf{C}} |\mathbf{p}_{T (x)} - \mathbf{p}_{\pi}|^2\, d\|T\| (x)\, ,
\end{equation}
where $T (x)$ denotes the (approximate) tangent plane to $T$ at $x$. This is more generally defined for integral varifolds and does not take into consideration the orientation of $T$ and $\pi$, in contrast with the oriented tilt excess $\Ebf(T, \mathbf{C})$ defined previously.

It is obvious that $|\mathbf{p}_{\alpha} - \mathbf{p}_{\beta}|\leq C |\vec{\alpha}-\vec{\beta}|$ for a geometric constant $C=C(m,n)$ and for every pair of oriented planes, so $\Ebf^{no}(T,\Cbf) \leq C\Ebf(T,\Cbf)$. On the other hand, the opposite inequality is only true if $|\vec{\alpha}-\vec{\beta}|$ is sufficiently small, for instance, if it is no larger than $1$. In particular $\vec{\alpha}$ and $\vec{\beta}$ could be opposite orientations for the same linear subspace: in that case $|\vec{\alpha}-\vec{\beta}|=2$ while $|\mathbf{p}_{\alpha} - \mathbf{p}_{\beta}|=0$. 

Nonetheless, for area-minimizing currents as in Assumption \ref{a:main}, the nonoriented excess controls the oriented excess. The idea of the argument is borrowed from \cite{DLHMS}*{Theorem 16.1}, but we repeat it here for clarity. A simple scaling and translation argument, which is left to the reader, gives a corresponding estimate on any pair of parallel concentric cylinders. 

\begin{proposition}\label{p:o<no}
For every $1\leq r<2$ there is a constant $\bar{C}= \bar{C}(Q,m,n,\bar n, 2-r)$ such that, if $T$ , $\Sigma$ and $\Abf$ are as in Assumption \ref{a:height-main}, then
\begin{equation}\label{e:o<no}
\mathbf{E} (T, \mathbf{C}_r) \leq 
\bar{C} (\mathbf{E}^{no} (T, \mathbf{C}_2) + \mathbf{A}^2)\, .
\end{equation}
\end{proposition}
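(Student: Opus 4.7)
The proof follows the strategy of \cite{DLHMS}*{Theorem~16.1}. The key idea is to rewrite the oriented excess as a mass-defect of $T$ relative to $Q$ copies of the flat disk $B_\rho(\pi_0)$, and then bound the mass-defect via area-minimality by constructing a suitable flat competitor; the non-oriented tilt enters only through a slicing/coarea estimate for the boundary of the competitor, while the curvature of $\Sigma$ generates the $\mathbf{A}^2$ error.

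\emph{Step 1 (Oriented excess as a mass-defect).} Using $|\vec T-\vec{\pi_0}|^2 = 2(1-\langle\vec T,\vec{\pi_0}\rangle)$ together with the fact that -- by the hypothesis $(\mathbf{p}_{\pi_0})_\sharp T = Q\llbracket B_2(\pi_0)\rrbracket$ and $\partial T\res\Cbf_2=0$ --
\[
\int_{\Cbf_\rho}\langle\vec T(x),\vec{\pi_0}\rangle\,d\|T\|(x) \;=\; \mathbf{M}\bigl((\mathbf{p}_{\pi_0})_\sharp(T\res\Cbf_\rho)\bigr) \;=\; Q\omega_m\rho^m\qquad\text{for a.e.\ }\rho\in(1,2),
\]
one obtains the identity $\omega_m\rho^m\,\mathbf{E}(T,\Cbf_\rho,\pi_0) = \|T\|(\Cbf_\rho) - Q\omega_m\rho^m$. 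It therefore suffices to bound the mass-defect $\|T\|(\Cbf_\rho)-Q\omega_m\rho^m$ by $C(\mathbf{E}^{no}(T,\Cbf_2)+\mathbf{A}^2)\omega_m\rho^m$.

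\emph{Step 2 (Slice excess via coarea).} Set $d_0(x):=|\mathbf{p}_{\pi_0}(x)|$, so $\nabla d_0(x)\in\pi_0$. An elementary calculation gives the pointwise bound
\[
1 - |\nabla^T d_0(x)|^2 \;\leq\; |\mathbf{p}_{\vec T(x)} - \mathbf{p}_{\pi_0}|^2\, .
\]
Combined with the coarea formula $\int_r^{2}\mathbf{M}(\langle T,d_0,s\rangle)\,ds = \int_{\Cbf_2\setminus\Cbf_r}|\nabla^T d_0|\,d\|T\|$ and the baseline identity $(\mathbf{p}_{\pi_0})_\sharp\langle T, d_0, s\rangle = Q\llbracket\partial B_s(\pi_0)\rrbracket$ (which gives $\mathbf{M}(\langle T,d_0,s\rangle)\geq Qm\omega_m s^{m-1}$ for a.e.\ $s$), this yields
\[
\int_r^{2}\bigl[\mathbf{M}(\langle T,d_0,s\rangle) - Qm\omega_m s^{m-1}\bigr]\,ds \;\leq\; C\,\omega_m\,\mathbf{E}^{no}(T,\Cbf_2)\, .
\]
An application of Fubini then produces a good radius $r_\star\in(r,2)$, with explicit dependence on $2-r$, at which the slice excess $\mathbf{M}(\langle T,d_0,r_\star\rangle) - Qm\omega_m r_\star^{m-1}$ is bounded by $\frac{C}{2-r}\,\mathbf{E}^{no}(T,\Cbf_2)$.

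\emph{Step 3 (Competitor and conclusion).} Define a competitor for $T\res\Cbf_{r_\star}$ by
\[
S \;:=\; Q\llbracket B_{r_\star}(\pi_0)\rrbracket \;+\; \operatorname{Cone}_0\!\bigl(\langle T, d_0, r_\star\rangle - Q\llbracket\partial B_{r_\star}(\pi_0)\rrbracket\bigr),
\]
where $\operatorname{Cone}_0$ denotes the cone with apex at the origin. Then $\partial S = \partial(T\res\Cbf_{r_\star})$, and the standard cone inequality together with Step 2 bounds $\mathbf{M}(S) \leq Q\omega_m r_\star^m + C\,\mathbf{E}^{no}(T,\Cbf_2)\,\omega_m r_\star^m$. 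Projecting $S$ onto $\Sigma$ via nearest-point projection yields a competitor in $\Sigma$ whose mass differs by at most $C\mathbf{A}^2 r_\star^{m+2}$ (see the standard constructions, e.g.\ in \cite{DLS16centermfld}). Area-minimality of $T$ in $\Sigma$ then gives $\|T\|(\Cbf_{r_\star}) - Q\omega_m r_\star^m \leq C(\mathbf{E}^{no}(T,\Cbf_2)+\mathbf{A}^2)\omega_m r_\star^m$, which combined with Step 1 (and the trivial monotonicity $\rho\mapsto\|T\|(\Cbf_\rho)-Q\omega_m\rho^m$ in the sense that we may take $r_\star>r$ arbitrarily close to $r$) proves \eqref{e:o<no}.

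\emph{Main difficulty.} The subtle point, and the reason area-minimality is essential, is the slice-excess estimate in Step 2: one must bound the slice \emph{excess} $\mathbf{M}(\langle T,d_0,s\rangle)-Qm\omega_m s^{m-1}$ by $\mathbf{E}^{no}$, not the full slice mass (which cannot be small). This exploits the signed-slice identity $(\mathbf{p}_{\pi_0})_\sharp\langle T,d_0,s\rangle = Q\llbracket\partial B_s(\pi_0)\rrbracket$, which permits subtraction of the flat baseline before the coarea inequality is invoked. Without the area-minimizing hypothesis (together with this signed-projection identity), there is nothing to preclude "opposite-orientation" contributions of arbitrary size, which would make the ratio $\mathbf{E}/\mathbf{E}^{no}$ unbounded.
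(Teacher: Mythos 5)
Your approach---recasting the oriented excess as a mass defect and then comparing $T$ with a flat competitor built from a slice---is a genuinely different route from the paper's, which argues first by a soft compactness argument that $\mathbf{E}(T,\mathbf{C}_{1+r/2})$ must be qualitatively small, then applies Almgren's strong Lipschitz approximation (on whose coincidence set oriented and non-oriented excess are comparable because the Lipschitz constant is small), and finally bootstraps the estimate by shrinking the radius. However, there are real gaps in your Steps 2 and 3.

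In Step 2 you assert
\[
\int_r^2\bigl[\mathbf{M}(\langle T,d_0,s\rangle)-Qm\omega_m s^{m-1}\bigr]\,ds \leq C\omega_m\,\mathbf{E}^{no}(T,\mathbf{C}_2),
\]
combining coarea, the pointwise inequality $1-|\nabla^T d_0|^2\leq|\mathbf{p}_{\vec T}-\mathbf{p}_{\pi_0}|^2$, and the slice baseline. But these three ingredients only produce the \emph{identity}
\[
\int_r^2\bigl[\mathbf{M}(\langle T,d_0,s\rangle)-Qm\omega_m s^{m-1}\bigr]\,ds = \bigl[\|T\|(\mathbf{C}_2\setminus\mathbf{C}_r)-Q\omega_m(2^m-r^m)\bigr] - \int_{\mathbf{C}_2\setminus\mathbf{C}_r}(1-|\nabla^T d_0|)\,d\|T\|,
\]
whose first bracket is precisely the mass defect---which by your Step~1 \emph{is} the oriented excess you are trying to bound---and whose last integral is what the pointwise tilt estimate controls. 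Nothing here bounds the slice excess by $\mathbf{E}^{no}$; what comes out is the circular statement that the slice excess is the oriented excess minus a quantity controlled by $\mathbf{E}^{no}$. To see that $\mathbf{E}^{no}$ alone cannot control the slice excess, imagine $T$ made of $Q+2$ flat sheets parallel to $\pi_0$ in $\mathbf{C}_2$, two of them with reversed orientation: the projection is still $Q\llbracket B_2(\pi_0)\rrbracket$ and $\mathbf{E}^{no}=0$, yet each slice has mass $(Q+2)m\omega_m s^{m-1}$ and the slice excess is of order one. Such a $T$ is of course not area-minimizing, but your Step 2, as written, never invokes area-minimality; the ``Main difficulty'' paragraph only remarks that it is needed without actually deploying it.

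Step 3 has a separate error: the mass of $\operatorname{Cone}_0\bigl(\langle T,d_0,r_\star\rangle-Q\llbracket\partial B_{r_\star}(\pi_0)\rrbracket\bigr)$ is bounded by the mass of that difference current, and since the supports of the two $(m-1)$-currents are in general disjoint (the slice of $T$ does not lie in $\pi_0$), the mass of the difference is the \emph{sum} of the masses, roughly $2Qm\omega_m r_\star^{m-1}$, not the small slice-excess quantity. So the competitor $S$ you construct has $\mathbf{M}(S)\approx Q\omega_m r_\star^m + 2Q\omega_m r_\star^m$, which gives nothing via area-minimality. To make a slicing-plus-competitor strategy work one needs a flattening homotopy, with the added mass controlled through $\mathbf{E}^{no}$ by a deformation-theorem type estimate, not the naive cone over the slice difference. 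In short, the approach is plausible in spirit, but as written both the slice-excess estimate and the competitor bound are unjustified, and the area-minimizing hypothesis (which, as the paper's compactness step makes explicit, is what rules out orientation cancellation) is never genuinely used.
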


\begin{proof}
Since $\mathbf{E} (T, \mathbf{C}_r) \leq \frac{1}{\omega_m r^m} \|T\| (\mathbf{C}_r)$, we can without loss of generality assume that 
$$\mathbf{E}^{no} (T, \mathbf{C}_2) + \mathbf{A}^2 \leq \delta$$
for some fixed constant $\delta$, as long as in the end we choose $\delta = \delta(Q,m,n,\bar n, 2-r)>0$. 

Moreover, for any $\eta>0$, if $\delta$ is chosen to be sufficiently small depending on $\eta$, we can also assume without loss of generality that $\mathbf{E} (T, \mathbf{C}_{1+r/2}) \leq \eta$
for another fixed constant $\eta$: indeed, arguing by contradiction, if this were not true for some $\eta>0$, we could find a sequence of $\delta_k\downarrow 0$ and a sequence of $T_k$ with $\Ebf^{no}(T_k,\Cbf_2) \leq \delta_k\downarrow 0$ yet $\Ebf(T_k,\Cbf_{1+r/2})>\eta$ for all $k$. If the supports of the currents are equibounded, then this would give that the $T_k$ converge, locally in mass in $\Cbf_2$, to a union of planes which are parallel to $\pi_0$ (counted with a suitable multiplicity), which in turn would imply $\Ebf(T_k,\Cbf_{1+r/2})\to 0$, giving the desired contradiction. If the supports are not equibounded, one can resort to the height bound \cite{DLS16centermfld}*{Theorem A.1} to decompose each $T_k$ into a disjoint finite sum of area-minimizing currents $T_k^j$, each of which have equibounded supports (after translation). One can then argue as above for $T_k^j$, for each $j$.

We thus assume 
\begin{equation}\label{e:start-small}
\Lambda_0 := \mathbf{E} (T, \mathbf{C}_{1+r/2}) + \mathbf{A}^2 \leq \eta \, ,
\end{equation}
for some $\eta$ whose choice will be given later (depending only on $Q,m,n,\bar n,2-r$). We also assume
\begin{equation}\label{e:E-dominates-A^2}
\mathbf{A}^2 \leq  \mathbf{E} (T, \mathbf{C}_{1+r/2})\,  ,  
\end{equation} 
otherwise the estimate we are looking for is trivially true. In particular $\Lambda_0 \leq 2 \mathbf{E} (T, \mathbf{C}_{1+r/2})$.

Set $r_0 := 1+\frac{r}{2}$. Recalling Almgren's strong Lipschitz approximation (\cite{DLS14Lp}*{Theorem 1.4}), provided $\eta$ is sufficiently small (depending on $Q,m,n,\bar{n},2-r$), there is a set $K\subset B_{r_0/4} (\pi_0)$ and a Lipschitz $Q$-valued map $f: B_{r_0/4} \to \mathcal{A}_Q (\pi_0^\perp)$ such that 
\begin{itemize}
\item $\|T\| ((B_{r_0/4}\setminus K)\times \pi_0^\perp) \leq C \Lambda_0^{1+2\gamma}$;
\item ${\rm Lip}\, (f) \leq C \Lambda_0^{2\gamma}$;
\item $\mathbf{G}_f \res (K\times \pi_0^\perp) = T \res (K\times \pi_0^\perp)$.
\end{itemize}
$\gamma$ and $C$ are fixed positive constants depending on $Q, m, n, \bar{n}$. At the price of making $C$ larger, we can achieve the same estimates with $B_{r_0-C\Lambda_0^{2\gamma}}$ in place of $B_{r_0/4}$; this is achieved with a simple covering argument, applying the Lipschitz approximation theorem in a collection of $C\Lambda_0^{-m\beta}$ cylinders whose cross-sections are disks in $\pi_0$ of radius $\Lambda_0^\beta$ which cover $B_{r_0-C\Lambda_0^\gamma}$, for a suitable choice of $\beta = \beta(\gamma,m)$ (for a detailed argument, see \cite{DLHMS}*{Proposition 16.2}).

Moreover, if we assume that $\eta$ is small enough, by halving the exponents $2\gamma$ to $\gamma$ we can assume that the constant $C$ in all the above estimates (including in the radius of $B_{r_0-C\Lambda_0^{2\gamma}}$) is at most $\frac{1}{4}$. More precisely, for that fixed $C = C(Q,m,n,\bar{n})$ we have $C\Lambda_0^{2\gamma} = C\Lambda_0^{\gamma}\cdot\Lambda_0^{\gamma} \leq (C\eta^{\gamma})\Lambda_0^{\gamma}$, so if we choose $\eta$ small enough so that $C\eta^{\gamma/2}\leq 1/4$, we guarantee this. 
%In particular, we have an approximation on $B_{r_1} = B_{r_0-\Lambda_0^{\gamma}}$ with Lipschitz constant no larger than $\Lambda_0^\gamma/4$ and mass error $\|T\| ((B_{r_1}\srtminus K)\times \pi_0^\perp) \leq \Lambda_0^{1+\gamma}/4$.

We now set 
\[
r_1 := r_0 - \Lambda_0^\gamma\, .
\]
Summarizing we have an approximation on $B_{r_1}$, which we still denote by $f$, such that 
\begin{itemize}
\item $\|T\| ((B_{r_1}\setminus K)\times \pi_0^\perp) \leq \frac{1}{4} \Lambda_0^{1+\gamma}$;
\item ${\rm Lip}\, (f) \leq \Lambda_0^\gamma$;
\item $\mathbf{G}_f \res (K\times \pi_0^\perp) = T \res (K\times \pi_0^\perp)$.
\end{itemize}
If $\eta$ is small enough (which guarantees smallness of $\text{Lip}(f)$, and hence comparability between the oriented and non-oriented tilt-excess of $\Gbf_f$), we can estimate
\[
\mathbf{E} (\mathbf{G}_f \res (K\times \pi_0^\perp) , \mathbf{C}_{r_1}) 
\leq C  \mathbf{E}^{no} (\mathbf{G}_f \res (K\times \pi_0^\perp) , \mathbf{C}_{r_1}) 
\leq C \mathbf{E}^{no} (T, \mathbf{C}_2)
\]
so that, in particular
\[
\mathbf{E} (T, \mathbf{C}_{r_1}) \leq C \mathbf{E}^{no} (T, \mathbf{C}_2) + \frac{1}{2} \Lambda_0^{1+\gamma}\, .
\]
If $\Lambda_0^{1+\gamma} \leq \mathbf{E} (T, \mathbf{C}_{r_1})$ we are then done, provided that $r_1>r$, which may be achieved by taking $\eta$ sufficiently small. Otherwise we must have 
\[
\mathbf{E} (T, \mathbf{C}_{r_1}) \leq \Lambda_0^{1+\gamma}\, .
\]
We now iterate the above argument, inductively setting
\begin{align}
r_k &:= r_{k-1} - \Lambda_{k-1}^\gamma\\
\Lambda_k & :=  \mathbf{E} (T, \mathbf{C}_{r_k}) \leq \Lambda_{k-1}^{1+\gamma}\, .
\end{align}
We stop at a certain step if we have the desired estimate and $r_k\geq r$.

Since $\Lambda_k \leq \Lambda_0^{(1+\gamma)^k} \leq \eta^{(1+\gamma)^k}$ and $r_0-r>0$, provided that $\eta$ is small enough, the inequality $r_k \geq r$ is always guaranteed, no matter how large $k$ is. Therefore, if the procedure never stops, we conclude that $\mathbf{E} (T, \mathbf{C}_r) =0$. But of course in this case the sought-after inequality is trivially true. This completes the proof.
\end{proof}

\subsection{The case \texorpdfstring{$N=1$}{N=1}} Observe that the difficulty of Theorem \ref{thm:main-estimate} lies in the case when $N \geq 2$. Indeed, when $Q$ is arbitrary and $N=1$, the $L^2$ tilt-excess estimate \eqref{e:tilt-estimate} is an immediate consequence of the work of Allard in \cite{Allard_72} (see also \cite{DL-All}*{Proposition 4.1}), combined with Proposition \ref{p:o<no}. Regarding the $L^\infty$ estimate \eqref{e:Linfty-estimate}, this is also contained within \cite{Allard_72} when the ambient space is Euclidean (and thus $\Abf=0$). More generally the general techniques in \cite{Allard_72} would give a linear dependence of the estimate in $\Abf$: to see that in our case this can be improved to a quadratic the reader can consult, for instance, \cite{Spolaor_15}. From now on we will therefore assume the following.
\begin{proposition}\label{p:step1}
The conclusions of Theorem \ref{thm:main-estimate} holds for $N=1$ (and $Q$ arbitrary).
\end{proposition}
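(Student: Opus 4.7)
By translation we may assume $p_1=0$, so $\boldsymbol{\pi}=\pi_0$ passes through the origin and $E=\int_{\Cbf_2}\dist^2(p,\pi_0)\,d\|T\|(p)$ is the classical $L^2$ height excess relative to a single plane. The whole proof then amounts to assembling three essentially known ingredients, and the plan is to treat the two conclusions \eqref{e:tilt-estimate} and \eqref{e:Linfty-estimate} separately.

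For the tilt-excess estimate \eqref{e:tilt-estimate}, the plan is to first establish the analogous bound for the \emph{non-oriented} tilt-excess and then pass to the oriented one. Namely, since $T$ is integral, area-minimizing in $\Sigma$, and its projection onto $\pi_0$ is $Q\llbracket B_2(\pi_0)\rrbracket$ with mass bound $\|T\|(\Cbf_2)\leq(Q+\tfrac{1}{2})\omega_m 2^m$, Allard's classical interior tilt-excess inequality (see \cite{Allard_72}, or the streamlined statement in \cite{DL-All}*{Proposition 4.1}) applies and yields
\[
\mathbf{E}^{no}(T,\Cbf_{(1+r)/2},\pi_0)\leq \bar C\bigl(E+\mathbf{A}^2\bigr),
\]
with $\bar C$ depending on $Q,m,n,\bar n,2-r$. (The crucial point is that Allard's argument for the non-oriented tilt excess uses only the first variation of the underlying varifold and the $L^2$ height, neither of which is sensitive to orientation.) Applying Proposition \ref{p:o<no} to the pair of concentric cylinders $\Cbf_r\subset\Cbf_{(1+r)/2}$ then promotes this to the oriented version on $\Cbf_r$, giving exactly \eqref{e:tilt-estimate}.

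For the $L^\infty$ estimate \eqref{e:Linfty-estimate}, the plan is again to reduce to an application of Allard's interior height bound. Once the tilt-excess estimate \eqref{e:tilt-estimate} is in hand, combined with $\|T\|(\Cbf_2)\leq(Q+\tfrac{1}{2})\omega_m 2^m$ and the hypothesis that the $\pi_0$-projection has constant multiplicity $Q$, the monotonicity formula on $\Sigma$ yields density upper bounds and the standard $L^2$-to-$L^\infty$ comparison (via truncation/maximum-principle type arguments as in Allard) gives, in the Euclidean case $\Abf=0$, the desired
\[
\spt(T)\cap\Cbf_r\subset\{p:\dist(p,\pi_0)\leq\bar C E^{1/2}\}.
\]
Passing to a curved ambient manifold introduces correction terms in the monotonicity formula and in the first variation that are linearly controlled by $\Abf$; carrying them through produces the linear-in-$\Abf$ additive error stated in \eqref{e:Linfty-estimate}. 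For a careful implementation of this perturbative argument, with exactly the dependence on $\Abf$ we require, we follow the scheme of \cite{Spolaor_15}.

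The only genuine obstacle is the last step: verifying that the perturbative dependence on $\Abf$ in the $L^\infty$ bound is no worse than linear, in a setting where one does not assume an a priori $C^1$ graph structure. This is where one must invoke the area-minimality of $T$ (as opposed to mere stationarity) and run Allard-type iteration on a Lipschitz approximation, absorbing the curvature error at each scale. The tilt-excess part and the Euclidean $L^\infty$ part are, in contrast, direct citations.
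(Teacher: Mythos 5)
Your proposal is correct and takes essentially the same approach as the paper, which likewise disposes of the $N=1$ case by invoking Allard's tilt-excess and height estimates from \cite{Allard_72} (see also \cite{DL-All}*{Proposition 4.1}) together with Proposition~\ref{p:o<no} for the orientation issue and \cite{Spolaor_15} for the sharp dependence on $\Abf$ in the curved ambient case. One terminological caveat: the paper calls the improved estimate (error $\propto\Abf$ in the $\dist$-norm, i.e.\ $\propto\Abf^2$ in the $\dist^2$-norm) ``quadratic'' and the weaker Allard-only version (error $\propto\Abf^{1/2}$ in $\dist$) ``linear'', which is the opposite of the labels you use, so a reader checking against the source should keep the two conventions straight; in particular your sentence ``carrying them through produces the linear-in-$\Abf$ additive error'' could mislead one into thinking the na\"ive Allard perturbation already yields the target, when in fact the Spolaor-style argument you cite next is genuinely needed for that step.
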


\subsection{Combinatorial lemmas}\label{s:comb-lemmas} We will make use of the following combinatorial lemmas. The first one is essentially a consequence of \cite{DLS_MAMS}*{Lemma 3.8}, but since we need additional information we provide a self-contained proof. All of the proofs are deferred to Appendix \ref{ap:combinatorics}. 

\begin{lemma}\label{l:clusters-1}
Fix any positive $\bar \delta\leq\frac{1}{2}$ and a natural number $N\geq 2$. There is a constant $\bar C = \bar C (\bar \delta, N)$ with the following property. Given a set (of distinct points) $P\subset \mathbb R^n$ with cardinality $N$, there is a subset $P'\subset P$ of cardinality at least two such that:%consisting of at least two points such that
\begin{itemize}
\item[(i)] $\max \{|q-p|:q,p\in P'\} \leq \bar{C} \min \{|q-p|:q,p \in P', \ q\neq p\}$;
\item[(ii)] $\dist (p,P') \leq \bar \delta \min \{|q_1-q_2|: q_1,q_2\in P', \ q_1\neq q_2\}$ for every $p \in P$;
\item[(iii)] the points in $P\setminus P'$ can be ordered as $\{p_1, \ldots , p_J\}$ so that, setting $P_0=P$ and $P_j:= P\setminus \{p_1, \ldots, p_j\}$, the following property holds: 
\[
\dist (p_j, P_{j}) = \min\{|p-q|: p, q \in P_{j-1}, \  p\neq q\}\, .
\]
\end{itemize}
\end{lemma}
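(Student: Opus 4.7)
The plan is to argue by induction on $N$. The base case $N=2$ is trivial: we take $P'=P$, $J=0$, and $\bar{C}=1$, and all three conditions either hold with equality or are vacuous.

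For the inductive step, fix $\bar{\delta}\leq 1/2$ and set the constant $L = L(\bar{\delta},N) := \max\{N,\ 2/\bar{\delta}\}$. Let $\rho_1 \leq \rho_2 \leq \cdots \leq \rho_M$ denote the sorted pairwise distances of $P$, where $M = \binom{N}{2}$. If every consecutive ratio $\rho_{i+1}/\rho_i$ is strictly less than $L$, then $\diam(P)/\mathrm{sep}(P) < L^{M-1}$, so we take $P'=P$ with $\bar{C}=L^{M-1}$ and (ii), (iii) become vacuous. Otherwise, pick an index $i$ with $\rho_{i+1}/\rho_i \geq L$ and form the graph $G$ on $P$ whose edges are the pairs at distance at most $\rho_i$. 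Let $C_1,\ldots,C_m$ be the connected components of $G$. Because no pairwise distance of $P$ lies in the open interval $(\rho_i,\rho_{i+1})$ and any two vertices in the same component are joined by a graph-path of total Euclidean length at most $(N-1)\rho_i < L\rho_i \leq \rho_{i+1}$, we obtain the key geometric fact $\diam(C_k)\leq \rho_i$ for every $k$. Meanwhile distinct clusters are separated by at least $\rho_{i+1}\geq L\rho_i$, and a short check yields $2 \leq m \leq N-1$. We then choose a representative $r_k\in C_k$ for each cluster and apply the inductive hypothesis to the smaller set $\{r_1,\ldots,r_m\}$ with the halved parameter $\bar{\delta}/2 \leq 1/2$, obtaining a subset $P''\subset\{r_1,\ldots,r_m\}$ of cardinality at least two; we set $P' := P''$.

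Condition (i) is inherited directly from the induction. To build the ordering in (iii) we concatenate two stages. \emph{Stage I (intra-cluster):} at each step, since intercluster distances exceed $L\rho_i$ while intracluster distances are at most $\rho_i$, the current closest pair must lie inside a single cluster and we remove its non-representative member; this forces $\dist(p_j,P_j)$ to equal the current minimum automatically, and after $N-m$ such steps the surviving set is exactly $\{r_1,\ldots,r_m\}$. \emph{Stage II (inter-cluster):} we append the inductive ordering on $\{r_1,\ldots,r_m\}\setminus P''$, whose greedy-minimum property transfers verbatim. The main obstacle is the verification of (ii): for any $p\in P\cap C_k$, the triangle inequality gives
\[
\dist(p,P') \leq |p-r_k| + \dist(r_k,P') \leq \rho_i + (\bar{\delta}/2)\,\mathrm{sep}(P'),
\]
and since $\mathrm{sep}(P') \geq \mathrm{sep}(\{r_1,\ldots,r_m\}) \geq L\rho_i \geq (2/\bar{\delta})\rho_i$, the first summand is itself at most $(\bar{\delta}/2)\,\mathrm{sep}(P')$, yielding the required bound $\bar{\delta}\cdot\mathrm{sep}(P')$. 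This telescoping is exactly what dictates the calibration $L\geq 2/\bar{\delta}$ together with the halving $\bar{\delta}\mapsto\bar{\delta}/2$ at each recursion step; since the recursion depth is at most $N-2$, the resulting $\bar{C}(\bar{\delta},N) = \max\{L^{M-1},\,\bar{C}(\bar{\delta}/2,\,N-1)\}$ remains finite.
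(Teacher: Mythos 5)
Your proposal is correct, but it takes a genuinely different route from the paper's proof. The paper argues in a single pass: it greedily removes a point from the current closest pair that does not belong to the diameter-realizing pair, tracks the ratio $M_j/m_j$ against a geometric threshold $\lambda^{N-2-j}$, and stops at the first $J$ where $M_J\le\lambda^{N-2-J}m_J$; condition (ii) then follows from a single telescoping sum $\sum_{i\le J} m_{i-1}\le m_J/(\lambda-1)$, giving the clean constant $\bar C=(1+1/\bar\delta)^{N-2}$. Your approach instead sorts all $\binom{N}{2}$ pairwise distances, looks for a multiplicative gap $\rho_{i+1}/\rho_i\ge L$, clusters the points at the gap, and recurses on the cluster representatives with the tolerance halved to $\bar\delta/2$. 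This is a clustering/decomposition argument in the spirit of Gromov–Hausdorff or quasi-Assouad decompositions, and it is conceptually transparent — the gap immediately explains why (ii) holds with room to spare. The price is a larger (but still admissible) constant, roughly $\max\{N,2/\bar\delta\}^{\binom{N}{2}-1}$ at the top level of the recursion, versus the paper's $(1+1/\bar\delta)^{N-2}$. Two small technical points you should fix in a final write-up: (1) when you apply the induction to $\{r_1,\dots,r_m\}$ you actually invoke $\bar C(\bar\delta/2,m)$ rather than $\bar C(\bar\delta/2,N-1)$, so either define $\bar C(\bar\delta,N):=\max\bigl\{L^{M-1},\max_{2\le m\le N-1}\bar C(\bar\delta/2,m)\bigr\}$ or note that $\bar C$ is monotone in $N$; (2) in the no-gap case, (ii) is not vacuous but trivially satisfied since $\dist(p,P')=0$ for $p\in P'=P$. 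Neither affects the validity of the argument. Your Stage I/Stage II splitting of (iii) is correct: because intercluster distances dominate intracluster ones, the current closest pair is always intra-cluster while a cluster still has more than one surviving point, so the greedy-minimum identity holds step by step, and the inductive ordering on the representatives then extends it.
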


\begin{lemma}\label{l:clusters-2}
Fix any $0<\delta\leq \frac{1}{2}$ and $\varepsilon>0$. Given a set (of distinct points) $P\subset \mathbb R^n$ with cardinality $N\geq 2$, we can find a nonempty subset $\tilde P\subset P$ such that 
\begin{itemize}
\item[(i)] $\dist (p, \tilde P) \leq \delta^{-1}(1+\delta^{-1})^{N-2} \varepsilon$ for every $p\in P$;
\item[(ii)] Either $\tilde P$ is a singleton, or $\max \{\varepsilon, \dist (p, \tilde P)\} \leq \delta \min\set{|q_1-q_2|}{q_1,q_2\in \tilde P, \ q_1\neq q_2}$ for any $p\in P$.
\end{itemize}
\end{lemma}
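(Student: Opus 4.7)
My plan is to prove this by a greedy removal algorithm. Maintain a decreasing chain of nonempty subsets $P = \tilde P_0 \supset \tilde P_1 \supset \cdots$, stopping as soon as the current $\tilde P_k$ either satisfies condition (ii) or reduces to a singleton. At step $k$, if $|\tilde P_k| \geq 2$ and (ii) fails for $\tilde P_k$, select a pair $q,q' \in \tilde P_k$ realizing the minimum pairwise distance
\[
d_k := \min\{|q_1 - q_2| : q_1, q_2 \in \tilde P_k, \ q_1 \neq q_2\},
\]
and set $\tilde P_{k+1} := \tilde P_k \setminus \{q\}$. By construction, the algorithm terminates with a nonempty $\tilde P$ for which (ii) holds (the singleton case being vacuous), so only the bound (i) remains to be verified.

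The key quantitative step is to control $D_k := \max_{p \in P} \dist(p, \tilde P_k)$. A short triangle-inequality argument gives the recursion
\[
D_{k+1} \leq D_k + d_k,
\]
since for any $p \in P$ whose nearest point in $\tilde P_k$ was the removed $q$, one can reroute through a point $q_\star \in \tilde P_{k+1}$ nearest to $q$, with $|q - q_\star| \leq d_k$ by the choice of $q$ from a minimum-distance pair. On the other hand, the failure of (ii) at step $k$ is precisely the statement
\[
d_k < \delta^{-1} \max\{\varepsilon, D_k\}.
\]
Combining these two displays and splitting into the cases $D_k \leq \varepsilon$ and $D_k > \varepsilon$, a straightforward induction starting from $D_0 = 0$ yields
\[
D_k \leq \delta^{-1}(1+\delta^{-1})^{k-1}\varepsilon \qquad \text{for every } k \geq 1.
\]

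Since $|\tilde P_k|$ drops by one at each step, the algorithm halts by step $k = N-1$ at the latest; either it has stopped earlier with (ii) satisfied, or it reaches a singleton. In either case, the $\tilde P$ returned satisfies the bound $D \leq \delta^{-1}(1+\delta^{-1})^{N-2}\varepsilon$, which is exactly (i). The main point of care, and the only real obstacle, is calibrating the constants in the induction: the naive bound $\max\{\varepsilon,D_k\} \leq \varepsilon + D_k$ yields only $D_k \leq [(1+\delta^{-1})^k - 1]\varepsilon$, which is slightly weaker than the stated bound. The casewise analysis of $D_k \lessgtr \varepsilon$ is what recovers the claimed constant $\delta^{-1}(1+\delta^{-1})^{N-2}$.
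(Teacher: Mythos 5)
Your proof is correct and follows essentially the same strategy as the paper's: iteratively strip a point from the closest pair whenever condition (ii) fails, then track the accumulated displacement $\max_{p\in P}\dist(p,\tilde P_k)$ via a geometric bound. The recursion $D_{k+1}\leq D_k+d_k$ together with the case split on $D_k\lessgtr\varepsilon$ reproduces exactly the paper's constant $\delta^{-1}(1+\delta^{-1})^{N-2}\varepsilon$; the paper organizes the same estimate as a telescoping sum $\dist(p,P_J)\leq\sum_j\dist(p_j,P_j)$ and uses a pre-computed sequence of thresholds as its stopping criterion rather than the live quantity $D_k$, but this is a reorganization, not a different argument.
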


 \begin{lemma}\label{l:clusters-3}
Consider a set $P = \{p_1, \ldots, p_N\}\subset \mathbb R^n$ of $N\geq 2$ distinct points and set $M:= \max_{i,j} \{|p_i-p_j|\}$. Then we can decompose $P= P_1\cup P_2$ into two disjoint non-empty sets such that 
 \[
    \min \{|p_1-p_2|: p_1\in P_1, p_2\in P_2\} \geq \frac{M}{2^{N-2}}\, .
 \]
 \end{lemma}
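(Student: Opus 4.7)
The plan is to use a minimum spanning tree construction on $P$ and extract the desired bipartition from its heaviest edge. First I would form the complete graph on vertex set $P$ with edge weights $|p_i - p_j|$, and let $\Tcal$ be a minimum spanning tree; this has exactly $N-1$ edges. Let $e^\star = \{p,q\}$ be a longest edge of $\Tcal$, and let $P_1, P_2$ be the vertex sets of the two subtrees obtained by removing $e^\star$ from $\Tcal$. These two sets are nonempty and partition $P$, and they are my candidate decomposition.

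Next I would verify the standard cut property:
\[
    \min\set{|x-y|}{x \in P_1,\ y \in P_2} \geq |p-q|.
\]
Indeed, if there were $x \in P_1$ and $y \in P_2$ with $|x-y| < |p-q|$, then replacing $e^\star$ by $\{x,y\}$ in $\Tcal$ would produce a spanning tree of strictly smaller total weight, contradicting the minimality of $\Tcal$.

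It then remains to bound $|p-q|$ below. For this I would pick $p^\star, q^\star \in P$ realizing the diameter, $|p^\star - q^\star| = M$, and consider the unique path in $\Tcal$ joining them. This path has at most $N-1$ edges, and by the triangle inequality the sum of its edge lengths is at least $M$. Consequently at least one edge along the path, and \emph{a fortiori} the globally longest edge $e^\star$, has length at least $M/(N-1)$. A trivial induction gives $N-1 \leq 2^{N-2}$ for every $N \geq 2$, so
\[
    \min\set{|x-y|}{x \in P_1,\ y \in P_2} \geq \frac{M}{N-1} \geq \frac{M}{2^{N-2}},
\]
as desired. The argument is short and I do not anticipate any serious obstacles; the only delicate point is the correct invocation of the MST cut property in the second step, and the recognition that the claimed bound $M/2^{N-2}$ is in fact weaker than the bound $M/(N-1)$ that the MST path argument naturally delivers.
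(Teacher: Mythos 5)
Your proof is correct and takes a genuinely different route from the paper. The paper proceeds by induction on $N$: starting from the trivial case $N=2$, it removes from $P$ a point $p$ whose deletion leaves the diameter $M$ unchanged, applies the inductive hypothesis to $P\setminus\{p\}$ to get a split $P_1'\cup P_2'$ with separation at least $M/2^{N-3}$, and then observes by the triangle inequality that $p$ must be at distance at least $M/2^{N-2}$ from one of the two pieces, to which it is then \emph{not} attached. Each inductive step potentially halves the separation guarantee, which is exactly where the factor $2^{N-2}$ comes from. Your minimum spanning tree argument avoids the inductive loss entirely: the MST cut property gives that removing the longest tree edge $e^\star$ yields a bipartition whose cross-separation is exactly $|e^\star|$, and the path argument from the two diameter-realizing points (at most $N-1$ edges summing to at least $M$) forces $|e^\star|\geq M/(N-1)$. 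Since $N-1\leq 2^{N-2}$ for all $N\geq 2$, your bound is in fact \emph{stronger} than the one stated (strictly so for $N\geq 4$), and it is sharp for collinear equispaced configurations, which $M/2^{N-2}$ is not. The MST cut-property step is invoked correctly: replacing $e^\star$ by any shorter edge crossing the induced cut would yield a strictly lighter spanning tree, contradicting minimality. The only thing you buy by the paper's weaker inductive formulation is structural uniformity with Lemmas \ref{l:clusters-1} and \ref{l:clusters-2}, which are also phrased in terms of iterated point removal; the exponential constant is irrelevant downstream.
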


\subsection{Well-separated case}
Let us first demonstrate the validity of Theorem \ref{thm:main-estimate} under the assumption that the planes in $\boldsymbol{\pi}$ are well-separated.

\begin{lemma}\label{l:simple}
For every $1\leq \bar r<2$ there is a constant $\sigma_1 = \sigma_1 (Q,m,n,\bar n,2-\bar r)>0$ with the following property. Let $T$ be as in Assumption \ref{a:height-main}, while $p_1, \ldots, p_N\in \pi_0^\perp$ are distinct points and $\boldsymbol{\pi}:=\bigcup_i p_i+\pi_0$. Assume that
\begin{equation}\label{e:scaling-broken}
E \leq \sigma_1 \min \{H,1\}^{m+2}\, ,
\end{equation}
where $E$ is as in \eqref{e:L2-excess} and $H:= \min \{|p_i-p_j|:i \neq j\}$. 
Then 
\begin{equation}\label{e:separated}
{\rm spt}\, (T) \cap \mathbf{C}_{\bar r} \subset \{q: \dist (q, \boldsymbol{\pi})\leq \textstyle{\frac{H}{4}}\},
\end{equation}
and, in particular, all the conclusions of Theorem \ref{thm:main-estimate} hold. 
\end{lemma}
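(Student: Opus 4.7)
The plan is twofold. First, I will establish the height containment \eqref{e:separated} directly from the monotonicity formula for area-minimizing currents. Second, I will use \eqref{e:separated} to decompose $T$ on a slightly enlarged cylinder as $T=\sum_{i=1}^N T_i$, with each $T_i$ supported in a disjoint tube around a single plane $p_i+\pi_0$, and then reduce the remaining conclusions of Theorem \ref{thm:main-estimate} to the already-known $N=1$ case by applying Proposition \ref{p:step1} to each $T_i$.

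\textbf{Containment.} Suppose, for contradiction, that there exists $q\in \spt(T)\cap \Cbf_{\bar r}$ with $\dist(q,\boldsymbol{\pi})>H/4$. I set $\rho:=\min\{H/8,\,(2-\bar r)/2\}$. The constraint $\rho\leq (2-\bar r)/2$ ensures $\Bbf_\rho(q)\subset \Cbf_2$, so the global $L^2$ bound $E$ controls $\int_{\Bbf_\rho(q)}\dist^2(\cdot,\boldsymbol{\pi})\,d\|T\|$. The triangle inequality together with $\rho\leq H/8$ forces $\dist(\cdot,\boldsymbol{\pi})>H/8$ pointwise on $\Bbf_\rho(q)$, and the standard density lower bound arising from the monotonicity formula (applicable here since $\Abf\leq C_0\bar{\varepsilon}$ is small by Assumption \ref{a:main}) yields $\|T\|(\Bbf_\rho(q))\geq c_0(m)\rho^m$. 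Combining these gives $E\geq c_0(m)(H/8)^2\rho^m$. A short case analysis according to whether $H \lessgtr 4(2-\bar r)$ and $H\lessgtr 1$ shows that the right-hand side is always bounded below by a constant $c_1(m, 2-\bar r)\,\min\{H,1\}^{m+2}$; choosing $\sigma_1<c_1(m,2-\bar r)$ then contradicts \eqref{e:scaling-broken} and proves \eqref{e:separated}.

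\textbf{Reduction to $N=1$.} Applying the containment argument at a slightly larger scale $\tilde r\in(\bar r,2)$ (with a correspondingly smaller $\sigma_1$), the tubes $U_i:=\{q\in \Cbf_{\tilde r}:\dist(q,p_i+\pi_0)\leq H/4\}$ are pairwise disjoint (since $|p_i-p_j|\geq H$) and cover $\spt(T)\cap \Cbf_{\tilde r}$. Hence $T\res \Cbf_{\tilde r}=\sum_i T_i$ with $T_i:=T\res U_i$, each $T_i$ integral, boundary-free in $\Cbf_{\tilde r}$, and supported only near $p_i+\pi_0$. By \cite{DLS16centermfld}*{Lemma 1.6} coupled with a slicing argument as in the proof of Corollary \ref{c:splitting-0}, $(\mathbf{p}_{\pi_0})_\sharp T_i=Q_i\llbracket B_{\tilde r}\rrbracket$ for non-negative integers $Q_i$ with $\sum_iQ_i=Q$. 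A rescaled application of Proposition \ref{p:step1} to each non-trivial $T_i$, with the single plane $p_i+\pi_0$ and $L^2$-excess bounded by $\int_{\Cbf_{\tilde r}}\dist^2(\cdot,p_i+\pi_0)\,d\|T_i\|=\int_{\Cbf_{\tilde r}}\dist^2(\cdot,\boldsymbol{\pi})\,d\|T_i\|\leq E$, delivers both a tilt-excess estimate $\Ebf(T_i,\Cbf_{\bar r},\pi_0)\leq \bar C(E+\Abf^2)$ and an $L^\infty$ bound on $\spt(T_i)\cap\Cbf_{\bar r}$ in terms of $E^{1/2}+\Abf$. Summing over $i$ yields \eqref{e:tilt-estimate}, while the identity $\dist(q,\boldsymbol{\pi})=\dist(q,p_i+\pi_0)$ on each $\spt(T_i)$ upgrades the individual $L^\infty$ bounds to \eqref{e:Linfty-estimate}.

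\textbf{Main obstacle.} The essential technical work lies in the containment step: the radius $\rho$ must be chosen to lie simultaneously inside $\Cbf_2$ and well within the region where the distance to $\boldsymbol{\pi}$ is comparable to $H$, and the resulting lower bound must be reconciled with the scale-invariant quantity $\min\{H,1\}^{m+2}$ across all possible relative sizes of $H$, $1$, and $2-\bar r$. Once the containment is in hand, the remainder of the proof is essentially bookkeeping: the disjoint tubes furnish the decomposition, the constancy theorem and slicing guarantee positive integer projection multiplicities, and Proposition \ref{p:step1} supplies the quantitative height and tilt control for each summand.
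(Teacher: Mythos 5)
Your proof is correct and, at the level of substance, follows the paper's route: the containment \eqref{e:separated} is extracted from a Chebyshev-type comparison of $E$ with the mass $\|T\|(\Bbf_\rho(q))$ on a ball around a putative far point combined with the monotonicity lower density bound, and the remaining conclusions reduce to the $N=1$ case (Proposition \ref{p:step1}) applied to each $T_i$ in the disjoint tube decomposition — exactly the reduction given in the discussion immediately following the lemma in the paper.

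The only (cosmetic) structural difference is that the paper first proves the sub-lemma \ref{l:simpler} (the normalized case $H\geq 1$, $E\leq\sigma_2$) and then deduces Lemma \ref{l:simple} via rescaling by $H$ when $H<1$, whereas you fold both steps into a single direct argument, choosing $\rho=\min\{H/8,(2-\bar r)/2\}$ and running the case analysis in $H$ and $2-\bar r$ directly. I checked all the cases: whether $\rho$ is dictated by $H/8$ or by $(2-\bar r)/2$, and whether $H\gtrless 1$, the lower bound $E\geq c_1(m,2-\bar r)\min\{H,1\}^{m+2}$ comes out (in the case $\rho=(2-\bar r)/2<H/8$ and $H<1$, the key observation is that $(2-\bar r)^m/H^m\geq(2-\bar r)^m$ since $H<1$). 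Your inclusion of the constancy-theorem/slicing argument to ensure $(\mathbf{p}_{\pi_0})_\sharp T_i=Q_i\llbracket B_{\tilde r}\rrbracket$ with $Q_i\geq 0$ is a detail the paper's discussion omits but which is genuinely needed to invoke Proposition \ref{p:step1} under Assumption \ref{a:height-main}; it is good that you supplied it.
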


First of all, observe that it suffices to prove \eqref{e:separated} in order to conclude all the conclusions of Theorem \ref{thm:main-estimate}. Indeed, if we set $T_i := T\res \mathbf{C}_{\bar r} \cap \{q: \dist (q, p_i+\pi_0)\leq \textstyle{\frac{H}{4}}\}$, we obtain the decomposition $T\res \mathbf{C}_{\bar r} = T_1+\cdots +T_N$ with 
\begin{itemize}
\item[(i)] $\partial T_i \res \Cbf_{\bar r} = 0$;
\item[(ii)] ${\rm spt}\, (T_i) \cap {\rm spt}\, (T_j) = \emptyset$ for $i\neq j$;
\item[(iii)] $\dist (q, \boldsymbol{\pi}) = \dist (q, p_i+\pi_0)$ for every $q\in {\rm spt}\, (T_i)$. 
\end{itemize}
In particular, given $r \in [1,2)$, for $\bar r \in (r,2)$ the conclusions of Theorem \ref{thm:main-estimate} can be drawn by using the case $N=1$ applied to each $T_i$.

The proof of Lemma \ref{l:simple} is based on the following yet simpler lemma. 

\begin{lemma}\label{l:simpler}
For every $1\leq \bar r<2$ there is a constant $\sigma_2 = \sigma_2 (Q,m,n,\bar{n},2-\bar r)>0$ with the following property. Let $T$, $\boldsymbol{\pi}$, $E$, and $H$ be as in Lemma \ref{l:simple}, but instead of \eqref{e:scaling-broken} assume that
\begin{equation}\label{e:scaling-not-broken}
E  \leq \sigma_2 \qquad \mbox{and} \qquad H \geq 1\, . 
\end{equation}
Then \eqref{e:separated} and all the conclusions of Theorem \ref{thm:main-estimate} hold in $\Cbf_{\bar r}$.
\end{lemma}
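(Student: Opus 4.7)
The strategy is a straightforward contradiction argument based on the monotonicity lower density bound for area-minimizing currents. I first observe that it suffices to establish the slab inclusion \eqref{e:separated}: as explained in the paragraph immediately following the statement of Lemma \ref{l:simple}, once \eqref{e:separated} is known, the pairwise disjoint half-slabs $\{q : \dist(q, p_i+\pi_0) \leq H/4\}$ (disjoint because $|p_i - p_j| \geq H > H/2$) induce a decomposition $T \res \mathbf{C}_{\bar r} = T_1 + \cdots + T_N$ with pairwise disjoint supports and $\partial T_i \res \mathbf{C}_{\bar r} = 0$, to which the $N=1$ case (Proposition \ref{p:step1}) applies on each $T_i$ after translation. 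Summing the resulting tilt and $L^\infty$ estimates then recovers \eqref{e:tilt-estimate} and \eqref{e:Linfty-estimate} on a slightly smaller cylinder than $\mathbf{C}_{\bar r}$, which is all that is needed at this point of the overall argument.

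To prove \eqref{e:separated}, I argue by contradiction: suppose there exists $q \in \spt(T)\cap \mathbf{C}_{\bar r}$ with $\dist(q, \boldsymbol{\pi}) > H/4 \geq \tfrac{1}{4}$. I set $\rho := \tfrac{1}{2}\min\{\tfrac{1}{4},\, 2-\bar r\}$, which depends only on $\bar r$ and satisfies $\rho \leq \tfrac{1}{8}$ and $\mathbf{B}_\rho(q) \subset \mathbf{C}_2$. Under Assumption \ref{a:main}, the ambient second fundamental form satisfies $\mathbf{A} \leq C_0 \bar\eps$; the standard monotonicity formula for area-minimizing currents in such a Riemannian manifold then yields a dimensional lower density bound
\[
\|T\|(\mathbf{B}_\rho(q)) \;\geq\; c_m\, \rho^m,
\]
provided $\bar\eps$ in Assumption \ref{a:main} is chosen small enough that the exponential Riemannian correction does not spoil the constant.

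Next, for every $p \in \mathbf{B}_\rho(q)$ the triangle inequality gives
\[
\dist(p, \boldsymbol{\pi}) \;\geq\; \dist(q, \boldsymbol{\pi}) - |p - q| \;>\; \frac{H}{4} - \rho \;\geq\; \frac{1}{8},
\]
using $H \geq 1$ and $\rho \leq 1/8$. Integrating the square of this pointwise lower bound against $\|T\|$ over $\mathbf{B}_\rho(q) \subset \mathbf{C}_2$,
\[
E \;\geq\; \int_{\mathbf{B}_\rho(q)} \dist^2(p, \boldsymbol{\pi})\, d\|T\|(p) \;\geq\; \frac{c_m\, \rho^m}{64}.
\]
Choosing $\sigma_2 = \sigma_2(Q,m,n,\bar n, 2-\bar r)$ strictly smaller than the right-hand side contradicts the hypothesis $E \leq \sigma_2$.

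I do not expect any substantive obstacle: the argument is elementary once the monotonicity lower density bound is in hand. The only mild care required is the selection of $\bar\eps$ in Assumption \ref{a:main} so that the Riemannian correction in the monotonicity formula is absorbed into the dimensional constant $c_m$, which is entirely standard. The interest of the lemma is not in its proof but in the scale-broken sharper variant Lemma \ref{l:simple}, which will then be deduced by rescaling (mapping $H$ to $1$ and tracking the failure of scale invariance of $E$ under that rescaling).
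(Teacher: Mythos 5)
Your proof is correct and is essentially the same argument as the paper's: a contradiction between the monotonicity-formula lower density bound $\|T\|(\mathbf{B}_\rho(q)) \gtrsim \rho^m$ and the smallness of $E$. The paper phrases the upper bound via a Chebyshev estimate on $\|T\|(\{\dist(\cdot,\boldsymbol{\pi})\geq H/8\})$ before comparing it to the monotonicity bound, and takes $\rho = \min\{2-\bar r, H/8\}$ rather than your fixed choice, but these are cosmetic differences; the reduction to \eqref{e:separated} via the disjoint-slab decomposition and the $N=1$ case is also identical.
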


\begin{proof} Observe that since $H\geq 1$, by Chebyshev's inequality we have
\[
\|T\| (\mathbf{C}_{\bar r}\cap \{\dist (\cdot, \boldsymbol{\pi})\geq \textstyle{\frac{H}{8}}\}) 
\leq 64 \sigma_2 \, . 
\]
On the other hand, for $\rho:= \min\{2-\bar r, \frac{H}{8}\}$, if ${\rm spt}\, (T) \cap  \{\dist (\cdot, \boldsymbol{\pi})\geq \textstyle{\frac{H}{4}}\}$ is not empty the monotonicity formula would give 
\[
\|T\| (\mathbf{C}_{\bar r}\cap \{\dist (\cdot, \boldsymbol{\pi})\geq \textstyle{\frac{H}{8}}\}) \geq C^{-1} \rho^m
\]
for some dimensional constant $C>0$, which yields a contradiction for a sufficiently small choice of $\sigma_2$.
\end{proof}

\begin{proof}[Proof of Lemma \ref{l:simple}]
Recall that we just need to prove \eqref{e:separated}. If $H\geq 1$ the claim follows from Lemma \ref{l:simpler}. If $H\leq 1$, we just need to show that 
\[
{\rm spt} (T) \cap \mathbf{C}_{\bar{r}H} (q) \cap \{\dist (\cdot, \boldsymbol{\pi})>\textstyle{\frac{H}{4}}\}
=\emptyset
\]
whenever the cylinder $\mathbf{C}_{2H} (q)$ is contained in the original cylinder $\mathbf{C}_2$. But then it suffices to consider the current $T_{q,H} := (\lambda_{q,H})_\sharp T$ for the map $\lambda_{q,H} (x) := \frac{x-q}{H}$ and to apply Lemma \ref{l:simpler} to $T_{q, H}$ and $\lambda_{q,H} (\boldsymbol{\pi})$: the pair falls under the assumptions provided $\sigma_1\leq \sigma_2$, after an obvious scaling argument. 
\end{proof}

We record another two simple observations which will be useful in the sequel. They are both proven in exactly the same way as Lemma \ref{l:simpler}.

%Another simple observation which is proven in exactly the same way as Lemma \ref{l:simpler} and which will be useful in the sequel is the following:

\begin{lemma}\label{l:simple-2}
For every $1\leq r<2$, there is a constant $\sigma_3 = \sigma_3(Q,m,n,\bar{n},2-r)>0$ with the following property. Let $T$ and 
$\boldsymbol{\pi}$ be as in Lemma \ref{l:simpler}, but instead of \eqref{e:scaling-not-broken} assume only that 
\begin{equation}\label{e:absolute}
E \leq \sigma_3\, . 
\end{equation}
Then ${\rm spt}\, (T) \cap \mathbf{C}_r \subset \{x: \dist (x, \boldsymbol{\pi})\leq 1\}$.
\end{lemma}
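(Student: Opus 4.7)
The plan is to proceed exactly as in the proof of Lemma \ref{l:simpler}, using only that the sublevel set $\{\dist(\cdot, \boldsymbol{\pi}) \geq 1/2\}$ carries small mass (by Chebyshev applied to $E$), together with monotonicity. The height threshold $1$ in the conclusion is large enough that no smallness of $H$ is needed, which is why we may drop the hypothesis $H \geq 1$ of Lemma \ref{l:simpler}.

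First, I would argue by contradiction: suppose there exists $q \in \spt(T) \cap \mathbf{C}_r$ with $\dist(q, \boldsymbol{\pi}) > 1$. Set $\rho := \min\{(2-r)/2, 1/4\}$, which depends only on $2-r$. For any $p \in \Bbf_\rho(q)$, the triangle inequality gives
\[
\dist(p, \boldsymbol{\pi}) \geq \dist(q, \boldsymbol{\pi}) - \rho > 1 - \tfrac{1}{4} > \tfrac{1}{2},
\]
and since $\mathbf{p}_{\pi_0}$ is $1$-Lipschitz and $q \in \mathbf{C}_r$, any such $p$ also projects into $B_2(\pi_0)$, so $\Bbf_\rho(q) \subset \mathbf{C}_2 \cap \{\dist(\cdot,\boldsymbol{\pi}) > 1/2\}$.

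By Chebyshev's inequality applied to the definition of $E$ in \eqref{e:L2-excess},
\[
\|T\|\bigl(\mathbf{C}_2 \cap \{\dist(\cdot,\boldsymbol{\pi}) \geq 1/2\}\bigr) \leq 4E \leq 4\sigma_3,
\]
so in particular $\|T\|(\Bbf_\rho(q)) \leq 4\sigma_3$. On the other hand, since $q \in \spt(T)$ and $T$ is area-minimizing in $\Sigma$ with $\Abf \leq \bar\eps$ bounded on $\Bbf_{7\sqrt m}$, the standard monotonicity formula (with ambient curvature correction, which is absorbed since $\rho$ and $\Abf$ are both bounded) gives the lower bound
\[
\|T\|(\Bbf_\rho(q)) \geq C^{-1}\omega_m \rho^m,
\]
for a dimensional constant $C$. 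Choosing $\sigma_3 = \sigma_3(Q,m,n,\bar n, 2-r) > 0$ small enough so that $4\sigma_3 < C^{-1} \omega_m \rho^m$ yields the desired contradiction.

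There is no real obstacle here; the only point requiring a line of care is ensuring $\Bbf_\rho(q) \subset \mathbf{C}_2$, which is why $\rho$ must be chosen no larger than $(2-r)/2$, making the dependence of $\sigma_3$ on $2-r$ transparent. This is the verbatim analogue of the argument given for Lemma \ref{l:simpler}, with the roles of $H/8$ and $H/4$ replaced by the absolute constants $1/2$ and $1$.
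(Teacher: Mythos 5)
Your argument is correct and is exactly the paper's intended argument: the paper states that Lemma \ref{l:simple-2} is "proven in exactly the same way as Lemma \ref{l:simpler}," namely by combining the Chebyshev bound on the mass of the far region with the monotonicity-formula lower bound on a small ball around a putative bad point, and you have carried that out verbatim with $H/8$, $H/4$ replaced by the absolute thresholds $1/2$, $1$. The observation that the hypothesis $H\geq 1$ is no longer needed (because the target height threshold is the absolute constant $1$ rather than a multiple of $H$) is precisely the point, and your choice of $\rho$ correctly records the dependence of $\sigma_3$ on $2-r$.
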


%Finally, an argument entirely analogous to Lemma \ref{l:simpler}, left to the reader, implies the following decomposition. 

\begin{lemma}\label{l:simpler-2}
For every $1\leq r<2$, there is a constant $\bar\sigma = \bar\sigma (Q,m,n,\bar{n},N,2-r)>0$ such that the following holds. 
Let $T$ and 
$\boldsymbol{\pi}$ be as in Lemma \ref{l:simpler}, but instead of \eqref{e:scaling-not-broken} assume that $M:=\max_{i,j} \{|p_i-p_j|\}\geq 1$ and
\begin{equation}\label{e:scaling-broken-2}
E\leq \bar\sigma M^2\, .
\end{equation}
Then ${\rm spt}\, (T) \cap \mathbf{C}_r \subset \{x: \dist (x, \boldsymbol{\pi}) \leq 2^{-N} M\}$.
\end{lemma}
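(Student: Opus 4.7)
The proof will follow the same contradiction scheme used for Lemma \ref{l:simpler}, with only the threshold scale adjusted to account for the fact that we can now only control the \emph{maximal} pairwise distance $M$ from below, not the minimal one $H$. The idea is to suppose for contradiction that there is a point $q \in \spt(T) \cap \Cbf_r$ with $\dist(q, \boldsymbol{\pi}) > 2^{-N}M$, derive a lower bound on the $T$-mass of a small ball around $q$ via monotonicity, and compare it with a Chebyshev-type upper bound coming from $E \leq \bar\sigma M^2$.

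More precisely, I would set $\rho := \min\{(2-r)/2,\, 2^{-N-2}M\}$. Since $q \in \Cbf_r$ we have $|\mathbf{p}_{\pi_0}(q)| < r$, so this choice of $\rho$ guarantees $\Bbf_\rho(q) \subset \Cbf_2$. Moreover the triangle inequality gives $\dist(y, \boldsymbol{\pi}) \geq 2^{-N}M - \rho \geq 2^{-N-1}M$ for every $y \in \Bbf_\rho(q)$. Applying the (almost-Euclidean) monotonicity formula, which is valid because $\Abf \leq C_0 \bar\eps$ is small and $\rho \leq 1$, together with the density lower bound $\Theta(T,q) \geq 1$ at $q \in \spt(T)$, yields
\[
\|T\|(\Bbf_\rho(q)) \geq c_0 \rho^m
\]
for a constant $c_0 = c_0(m,n,\bar n)$. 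On the other hand Chebyshev together with the hypothesis $E \leq \bar\sigma M^2$ gives
\[
\|T\|\bigl(\Cbf_2 \cap \{\dist(\cdot, \boldsymbol{\pi}) \geq 2^{-N-1}M\}\bigr) \leq \frac{E}{(2^{-N-1}M)^2} \leq 4^{N+1}\bar\sigma.
\]

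Combining these two bounds and using $M \geq 1$ to estimate $\rho^m \geq \min\{((2-r)/2)^m,\, 2^{-(N+2)m}\}$, we obtain
\[
c_0 \min\{((2-r)/2)^m,\, 2^{-(N+2)m}\} \leq 4^{N+1}\bar\sigma,
\]
which is false once $\bar\sigma = \bar\sigma(Q, m, n, \bar n, N, 2-r)$ is chosen sufficiently small. This is the desired contradiction and establishes the claimed inclusion.

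No delicate combinatorial or separation argument (of the flavor of Lemmas \ref{l:clusters-1}--\ref{l:clusters-3}) is needed — unlike Lemma \ref{l:simple}, we are not decomposing $T$ into disjoint pieces but only asking that $\spt(T)$ lie in a coarse tubular neighborhood, so the exponential-in-$N$ threshold $2^{-N}M$ is already weak enough that the Chebyshev inequality dominates directly. The only mild subtlety is ensuring $\rho^m$ does not degenerate in the comparison, which is precisely why one imposes $M \geq 1$; this is also the sole source of the $N$-dependence in $\bar\sigma$.
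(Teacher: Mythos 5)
Your argument is correct and follows exactly the route the paper intends: the paper states that Lemma \ref{l:simpler-2} is "proven in exactly the same way as Lemma \ref{l:simpler}," i.e.\ by a Chebyshev bound on the mass in the far region combined with a monotonicity lower bound around a hypothetical bad point, and your proof is precisely that with the scale $\rho$ tuned to the threshold $2^{-N}M$. The use of $M\geq 1$ to keep $\rho$ bounded below and the resulting $N$-dependence of $\bar\sigma$ match the paper's intent.
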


Combining Lemma \ref{l:simpler-2} with the combinatorial Lemma \ref{l:clusters-3}, we get the following separation lemma.

 \begin{lemma}\label{l:simple-3}
 Under the assumptions of Lemma \ref{l:simpler-2}, there is a decomposition $\boldsymbol{\pi}=\boldsymbol{\pi}^1\cup \boldsymbol{\pi}^2$ and $T\res \Cbf_{r} = T_1 + T_2$ such that 
\begin{itemize}
\item[(i)] The sets $\boldsymbol{\pi}^1, \boldsymbol{\pi}^2$ are disjoint, non-empty, and unions of a subset of the planes in $\boldsymbol{\pi}$;
\item[(ii)] $\partial T_i \res \Cbf_r = 0$ for $i=1,2$;
\item[(iii)] ${\rm spt}\, (T_1) \cap {\rm spt}\, (T_2) = \emptyset$;
\item[(iv)] $\dist (q, \boldsymbol{\pi}) = \dist (q, \boldsymbol{\pi}^i)$ for every $q\in {\rm spt}\, (T_i)$ and each $i=1,2$. 
\end{itemize}
 \end{lemma}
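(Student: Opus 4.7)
The plan is to combine the combinatorial splitting of Lemma \ref{l:clusters-3} with the height bound of Lemma \ref{l:simpler-2} directly. First, I will apply Lemma \ref{l:clusters-3} to the point set $P := \{p_1,\ldots,p_N\} \subset \pi_0^\perp$ to obtain a disjoint non-empty decomposition $P = P^1 \cup P^2$ with
\[
D := \min\{|p-q| : p \in P^1,\, q \in P^2\} \geq M/2^{N-2}.
\]
Setting $\boldsymbol{\pi}^i := \bigcup_{p_j \in P^i}(p_j + \pi_0)$, and using that these planes are parallel translates of $\pi_0$ by vectors in $\pi_0^\perp$, the Euclidean distance between the two sets $\boldsymbol{\pi}^1$ and $\boldsymbol{\pi}^2$ equals exactly $D$. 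This gives property (i) immediately.

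Next, I will invoke Lemma \ref{l:simpler-2}, whose hypotheses hold by assumption, to conclude
\[
\spt(T) \cap \Cbf_r \subset \{x : \dist(x, \boldsymbol{\pi}) \leq d\}, \qquad d := 2^{-N}M = D/4.
\]
Consequently $\spt(T) \cap \Cbf_r$ lies in $A_1 \cup A_2$, where $A_i$ denotes the closed $d$-tubular neighborhood of $\boldsymbol{\pi}^i$. The triangle inequality, together with $D \geq 4d$, immediately yields $A_1 \cap A_2 = \emptyset$, so $\spt(T) \cap \Cbf_r$ is partitioned into two mutually separated closed pieces.

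For the splitting of $T$ itself, I will fix disjoint open sets $V_1, V_2 \subset \Cbf_r$ with $A_i \subset V_i$ (for instance the open $\tfrac{3d}{2}$-tubular neighborhoods of $\boldsymbol{\pi}^i$, which are disjoint since $D \geq 4d > 3d$) and define $T_i := T \res V_i$. Then $T \res \Cbf_r = T_1 + T_2$, and properties (iii) and (iv) follow immediately: the supports lie in the disjoint closed sets $A_i$, and for each $q \in \spt(T_i)$ one has $\dist(q, \boldsymbol{\pi}^{3-i}) \geq D - d > d \geq \dist(q, \boldsymbol{\pi}^i)$, which forces $\dist(q,\boldsymbol{\pi}) = \dist(q,\boldsymbol{\pi}^i)$. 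For (ii), since $\partial T \res \Cbf_r = 0$ by Assumption \ref{a:height-main}, the two currents $\partial T_1 \res \Cbf_r$ and $\partial T_2 \res \Cbf_r$ are supported in the disjoint closed sets $\overline{V_1}, \overline{V_2}$ and sum to zero, forcing each of them to vanish individually. Each step is a direct consequence of results already at our disposal, so I do not anticipate any substantial obstacle; the lemma is essentially a corollary of Lemmas \ref{l:simpler-2} and \ref{l:clusters-3}.
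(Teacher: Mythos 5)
Your proof is correct and takes exactly the route the paper intends: the paper's own "proof" is the single sentence preceding the lemma ("Combining Lemma \ref{l:simpler-2} with the combinatorial Lemma \ref{l:clusters-3}, we get the following separation lemma"), and you have simply written out the details, using Lemma \ref{l:clusters-3} to get the decomposition with separation $D\geq M/2^{N-2}=4d$ and Lemma \ref{l:simpler-2} to confine $\spt(T)\cap\Cbf_r$ to the disjoint $d$-tubular neighborhoods of $\boldsymbol{\pi}^1$ and $\boldsymbol{\pi}^2$. The only cosmetic caveat is that the "open $\tfrac{3d}{2}$-tubular neighborhoods" should be understood as intersected with $\Cbf_r$ when you write $V_i\subset\Cbf_r$, but this does not affect the argument.
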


Arguing as in the proof of Lemma \ref{l:simple}, we can draw the following further conclusion.

 \begin{lemma}\label{l:simple-4}
For every $1\leq r <2$, there is a constant $\tilde\sigma = \tilde\sigma (Q,m,n,\bar{n},N,2-r)>0$ such that the following holds. 
Let $T$ and 
$\boldsymbol{\pi}$ be as in Lemma \ref{l:simple}, but instead of \eqref{e:scaling-broken}, for $M\coloneqq\max_{i,j}\{|p_i-p_j|\}$ assume that
\[
E\leq \tilde{\sigma} \min\{M,1\}^{m+2}\, .
\]
Then there is a decomposition $\boldsymbol{\pi}=\boldsymbol{\pi}^1\cup \boldsymbol{\pi}^2$ and $T\res \Cbf_{r} = T_1 + T_2$, as in Lemma \ref{l:simple-3}.
\end{lemma}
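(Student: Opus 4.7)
The plan is to mirror the reduction of Lemma \ref{l:simple} to Lemma \ref{l:simpler}: combinatorially group the planes via Lemma \ref{l:clusters-3}, and then establish a height bound which is proved by Lemma \ref{l:simpler-2} directly when $M \geq 1$ and by local rescaling when $M < 1$. Throughout, I fix $\tilde\sigma \leq \bar\sigma$, where $\bar\sigma$ is the constant from Lemma \ref{l:simpler-2} corresponding to the given $r$ and $N$.

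First, I apply Lemma \ref{l:clusters-3} to the set $P = \{p_1, \ldots, p_N\}$ to obtain disjoint non-empty $P_1, P_2$ with $P = P_1 \sqcup P_2$ and
\[
\min\{|p-q| : p\in P_1,\ q\in P_2\} \geq M/2^{N-2}\, .
\]
Setting $\boldsymbol{\pi}^j := \bigcup_{p \in P_j}(p + \pi_0)$ and $U_j := \{x : \dist(x, \boldsymbol{\pi}^j) < M/2^{N-1}\}$, the combinatorial lower bound forces $U_1 \cap U_2 = \emptyset$. The central step is then the height bound $\spt(T) \cap \Cbf_r \subset U_1 \cup U_2$. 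If $M \geq 1$, this is Lemma \ref{l:simpler-2} directly, since $E \leq \tilde\sigma \leq \bar\sigma \leq \bar\sigma M^2$ yields $\spt(T) \cap \Cbf_r \subset \{\dist(\cdot, \boldsymbol{\pi}) \leq 2^{-N} M\} \subset U_1 \cup U_2$. If $M < 1$, then for each $q \in \pi_0$ with $\Cbf_{2M}(q) \subset \Cbf_2$, the rescaled current $T_{q,M} = (\lambda_{q,M})_\sharp T$ satisfies Assumption \ref{a:height-main} on $\Cbf_2$ with rescaled union of planes $\lambda_{q,M}(\boldsymbol{\pi})$ of maximal pairwise distance $1$, rescaled second fundamental form $M\Abf$ still small, and rescaled $L^2$-distance integral at most $M^{-m-2}E \leq \tilde\sigma$. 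Hence Lemma \ref{l:simpler-2} applies to the rescaled data and gives, after undoing the rescaling, $\spt(T) \cap \Cbf_{\bar r M}(q) \subset \{\dist(\cdot, \boldsymbol{\pi}) \leq 2^{-N}M\}$ for some $\bar r < 2$; covering $\Cbf_r$ by such cylinders, exactly as in the proof of Lemma \ref{l:simple}, yields the height bound on the full cylinder $\Cbf_r$.

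Given the height bound, I set $T_j := T \res (\Cbf_r \cap U_j)$. Since $U_1$ and $U_2$ are disjoint, so are the supports of $T_1$ and $T_2$, and $T_1 + T_2 = T \res \Cbf_r$. Because the decomposition takes place across an open gap $\Cbf_r \setminus (U_1 \cup U_2)$ where $T$ carries no mass, each $T_j$ is an integral current with $\partial T_j \res \Cbf_r = 0$ inherited from $\partial T \res \Cbf_r = 0$. For $x \in \spt(T_j) \subset U_j$, the identity $\dist(x, \boldsymbol{\pi}) = \dist(x, \boldsymbol{\pi}^j)$ is immediate from the gap between $U_1$ and $U_2$, giving properties (i)--(iv) of Lemma \ref{l:simple-3}. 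The only (mild) technical obstacle is the covering argument for $M < 1$ when $M$ is not much smaller than $2-r$: here I either introduce an auxiliary intermediate radius $r' \in (r,2)$ as in the proof of Lemma \ref{l:simple}, or note that in this regime $M$ is bounded below by a constant depending only on $r$, so the rescaling can be performed at a fixed scale rather than scale $M$.
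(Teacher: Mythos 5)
Your proof is correct and follows the same route the paper intends: apply Lemma \ref{l:clusters-3} once to split $P$ into $P_1\sqcup P_2$ with separation $\geq M/2^{N-2}$, establish the height bound $\spt(T)\cap\Cbf_r\subset\{\dist(\cdot,\boldsymbol{\pi})\leq 2^{-N}M\}$ by quoting Lemma \ref{l:simpler-2} directly when $M\geq 1$ and by rescaling to scale $M$ plus a covering argument when $M<1$ (precisely the mechanism the paper uses to pass from Lemma \ref{l:simpler} to Lemma \ref{l:simple}, with $M$ playing the role of $H$), and then restrict $T$ to the disjoint tubular neighborhoods of $\boldsymbol{\pi}^1$ and $\boldsymbol{\pi}^2$. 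The ``mild technical obstacle'' you flag at the end is not actually one: with $r<\bar r<2$ the required inclusion $(2-\bar r)M\leq 2-r$ holds automatically since $M<1<\tfrac{2-r}{2-\bar r}$, so the auxiliary radius $\bar r\in(r,2)$ already resolves it, as in the proof of Lemma \ref{l:simple}.
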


\section{Proof of tilt-excess estimate}
The aim of this section is to prove \eqref{e:tilt-estimate}. This will be done independently to the proof of $L^\infty$ estimate \eqref{e:Linfty-estimate}; we defer the latter to the next section. We begin with two ``approximate" estimates on the oriented tilt-excess.

\subsection{First lemma} In this lemma we derive a first approximate estimate.

\begin{lemma}\label{l:tilt-1}
For every pair of radii $1\leq r<R \leq 2$ there are constants $\bar{C}=\bar{C}(Q,m,n,\bar{n},R-r)>0$ and $\gamma = \gamma (Q,m,n,\bar n)>0$ such that the following holds. Let $T$, $\Sigma$ and $\Abf$ be as in Assumption \ref{a:height-main}, suppose that $p_1, \ldots, p_N\in \pi_0^\perp$ are distinct points, and let $\boldsymbol{\pi}:=\bigcup_i p_i+\pi_0$. Assume that $E$ is as in \eqref{e:L2-excess} and let $H:= \min \{|p_i-p_j|:i \neq j\}$. Then
\begin{equation}\label{e:tilt-est-1}
\mathbf{E} (T, \mathbf{C}_r)\leq \bar{C} (E+ \mathbf{A}^2) + \bar{C} \left(\frac{E}{H^2}\right)^\gamma \mathbf{E} (T, \mathbf{C}_R) + \bar{C}
\mathbf{E} (T, \mathbf{C}_R)^{1+\gamma}\, .
\end{equation}
\end{lemma}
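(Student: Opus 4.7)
The strategy is to (i) reduce to the non-oriented tilt-excess via Proposition~\ref{p:o<no}, (ii) apply a Caccioppoli inequality for area-minimizing currents with a partition of unity subordinate to disjoint neighborhoods of the planes $p_i+\pi_0$, and (iii) handle the remaining ``transition region'' via Chebyshev combined with higher integrability of the tilt density. One may assume $\Lambda := \mathbf{E}(T,\mathbf{C}_R)+\Abf^2$ is smaller than a universal constant $\eta$, for otherwise $\mathbf{E}(T,\mathbf{C}_r)$ is bounded by the mass ratio, which is in turn absorbed into the term $\bar C\,\mathbf{E}(T,\mathbf{C}_R)^{1+\gamma}$ by enlarging $\bar C$.

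\emph{Localized Caccioppoli.} Fix $R'\in(r,R)$; by Proposition~\ref{p:o<no}, it suffices to bound $\mathbf{E}^{no}(T,\mathbf{C}_{R'})+\Abf^2$. Consider the neighborhoods $U_i:=\{x\in\mathbf{C}_R:|\mathbf{p}_{\pi_0}^\perp x-p_i|<H/4\}$: since $H$ is the minimum pairwise separation of the $p_i$'s, they are pairwise disjoint and $\dist(x,\boldsymbol{\pi})=|\mathbf{p}_{\pi_0}^\perp x-p_i|$ on $U_i$. Pick cutoffs $\chi_i\equiv 1$ on $\mathbf{C}_{R'}\cap\{|\mathbf{p}_{\pi_0}^\perp x-p_i|\le H/8\}$ and vanishing outside $\mathbf{C}_R\cap U_i$, with $|\nabla\chi_i|\le C(H^{-1}+(R-R')^{-1})$. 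The classical Caccioppoli inequality for area-minimizing currents in $\Sigma$, applied with the constant shift $c=p_i\in\pi_0^\perp$, gives
\[
\int\chi_i^2|\mathbf{p}_T-\mathbf{p}_{\pi_0}|^2\,d\|T\|\le C\!\int\dist^2(x,\boldsymbol{\pi})|\nabla\chi_i|^2\,d\|T\|+C\Abf^2\!\int\chi_i^2\,d\|T\|.
\]
Summing over $i$ bounds the non-oriented tilt-excess over $\bigcup_i\{\chi_i\equiv 1\}$ by $C(E+\Abf^2)$, with constants depending on $R-R'$ and $N$.

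\emph{Transition region and higher integrability.} On $B:=\mathbf{C}_{R'}\setminus\bigcup_i\{\chi_i\equiv 1\}$ we have $\dist(x,\boldsymbol{\pi})\ge H/8$, so Chebyshev gives $\|T\|(B)\le CE/H^2$. A pointwise bound $|\mathbf{p}_T-\mathbf{p}_{\pi_0}|^2\le 4$ would only yield a linear $CE/H^2$ contribution, which has the wrong form. Instead, Almgren's strong Lipschitz approximation combined with a Gehring-type reverse-H\"older iteration produces a universal $\gamma=\gamma(Q,m,n,\bar n)>0$ such that $|\mathbf{p}_T-\mathbf{p}_{\pi_0}|^2\in L^{1+\gamma}$ on $\mathbf{C}_R$ with norm controlled by $\mathbf{E}(T,\mathbf{C}_R)+\Abf^2$. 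H\"older then yields $\int_B|\mathbf{p}_T-\mathbf{p}_{\pi_0}|^2\,d\|T\|\le C(E/H^2)^{\gamma/(1+\gamma)}\,(\mathbf{E}(T,\mathbf{C}_R)+\Abf^2)$, which (after relabeling $\gamma/(1+\gamma)\rightsquigarrow\gamma$) delivers the $\bar C(E/H^2)^\gamma\mathbf{E}(T,\mathbf{C}_R)$ term, with lower-order pieces absorbed into $\bar C(E+\Abf^2)+\bar C\mathbf{E}(T,\mathbf{C}_R)^{1+\gamma}$ using the smallness of $\Lambda$. Assembling both contributions and applying Proposition~\ref{p:o<no} produces \eqref{e:tilt-est-1}. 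The main obstacle is the higher-integrability step: securing the universal $\gamma>0$ genuinely uses $T$'s area-minimality (as opposed to mere stationarity), and it is what upgrades the ``obvious'' linear $E/H^2$ bound to the sublinear $(E/H^2)^\gamma\mathbf{E}(T,\mathbf{C}_R)$ form needed for the iteration/decay argument in the later sections of the paper.
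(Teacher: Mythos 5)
The overall strategy you propose (pass to the non-oriented excess, test the first variation with a vector field built from the height to the planes, then treat the transition region via higher integrability of the tilt density) is the same as the paper's. The higher-integrability step in particular is essentially \cite{DLS14Lp}*{Theorem 7.1}, which is precisely what the paper invokes, and you are right that this is where area-minimality (rather than mere stationarity) enters.

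However, your \emph{localized Caccioppoli} step has a genuine gap. Your cutoffs $\chi_i$ are compressed in the $\pi_0^\perp$ direction at the scale $H$, so $|\nabla\chi_i|\sim 1/H$ on the ``vertical'' transition shell $\{H/8\le\dist(\cdot,\boldsymbol{\pi})\le H/4\}$. On that shell $\dist(\cdot,\boldsymbol{\pi})\sim H$, so $\dist^2|\nabla\chi_i|^2\sim 1$ there, and the Caccioppoli right-hand side contributes a term of size $\|T\|(\{\dist\gtrsim H\}\cap\mathbf{C}_R)$, which by Chebyshev is only $\lesssim E/H^2$. Your claim that summing over $i$ bounds the non-oriented tilt-excess over $\bigcup_i\{\chi_i\equiv 1\}$ by $C(E+\Abf^2)$ is therefore off by an additive $CE/H^2$. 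This stray term is of the wrong form: it is not multiplied by $\mathbf{E}(T,\mathbf{C}_R)$, so when $E/H^2\gg 1$ while $\mathbf{E}(T,\mathbf{C}_R)$ is small it is not absorbed by $\bar C(E/H^2)^\gamma\mathbf{E}(T,\mathbf{C}_R)$, and it certainly is not $\bar C(E+\Abf^2)$. A pure mass bound cannot be improved by higher integrability of $|\mathbf{p}_T-\mathbf{p}_{\pi_0}|$, which only helps the separate integral $\int_B|\mathbf{p}_T-\mathbf{p}_{\pi_0}|^2\,d\|T\|$ you treat afterwards; the $E/H^2$ coming from $\int\dist^2|\nabla\chi_i|^2$ has nothing to gain from it. Since Lemma~\ref{l:tilt-1} makes no assumption that $E\le H^2$ (that only appears later in Proposition~\ref{p:tilt-2}), the conclusion as stated does not follow.

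The paper avoids this by \emph{not} introducing any $H$-scale cutoff at all. It builds a single vector field $X(x)=\bar X(\mathbf{p}_{\pi_0}^\perp x)$, where $\bar X$ equals $y-p_i$ on each disk $B_{H/4}(p_i)$ and is extended to all of $\pi_0^\perp$ via Kirszbraun with Lipschitz constant $\le 3$ (a bound that is uniform in $H$). The radial cutoff $\chi$ acts only in the $\pi_0$ direction, so $|\nabla\chi|\le C/(R-r)$ with no $1/H$. The Lipschitz bound, $\nabla_v X=0$ for $v\in\pi_0$, and $X$ being $\pi_0^\perp$-valued force $|\operatorname{div}_{\vec T}X|\le C|\mathbf{p}_T-\mathbf{p}_{\pi_0}|^2$ on $G^c$, while on $G$ the divergence equals the tilt integrand exactly. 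The only quadratic error is then $C(R-r)^{-2}E$, and the entire contribution of $G^c$ is funnelled into the single integral $\int_{G^c}|\mathbf{p}_T-\mathbf{p}_{\pi_0}|^2\,d\|T\|$, which is exactly the quantity controllable by Almgren's higher-integrability estimate. If you want to salvage your partition-of-unity picture, you would have to replace your cutoff in $\pi_0^\perp$ by a Lipschitz extension with $O(1)$ (not $O(1/H)$) gradient, at which point you have essentially reconstructed the paper's $X$.
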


\begin{proof} We start by defining a suitable vector field $X: \mathbb R^{m+n} \to \pi_0^\perp$. In fact, we will actually define $\bar X : \pi_0^\perp \to \pi_0^\perp$, and then set $X (x) := \bar X (\mathbf{p}_{\pi_0}^\perp (x))$. Firstly, define $\tilde{X}$ on the union of the disks $B_{H/4} (p_i, \pi_0^\perp)$ by taking $\tilde{X} (y) := (y-p_i)$ on each $B_{H/4} (p_i,\pi_0^\perp)$. It is simple to check that the Lipschitz constant of $\tilde{X}$ can be bounded by $3$; indeed, if $y_1,y_2$ lie in disks $B_{H/4}(p_{i_1},\pi_0^\perp)$ and $B_{H/4}(p_{i_2},\pi_0^\perp)$ respectively, then
$$|p_{i_1}-p_{i_2}| \leq |y_1-y_2| + H/2 \leq |y_1-y_2| + |p_{i_1}-p_{i_2}|/2\ \ \Longrightarrow\ \ |p_{i_1}-p_{i_2}|\leq 2|y_1-y_2|.$$
The desired Lipschitz bound of $3$ follows immediately. We can thus use the Kirszbraun theorem to extend $\tilde{X}$ to a vector field $\bar X:\pi_0^\perp\to \pi_0^\perp$ which has the same Lipschitz constant; this determines $\bar X$, and in turn determines $X$.
In particular, we deduce that $X$ has the following properties: 
\begin{itemize}
\item[(i)] $X$ takes values in $\pi_0^\perp$;
\item[(ii)] $|X(x)|\leq 3 \dist (x, \boldsymbol{\pi})$ and $|\nabla_v X (x)|\leq 3 |v|$;
\item[(iii)] $\nabla_v X =0$ for every $v\in \pi_0$;
\item[(iv)] $X (x) = \mathbf{p}_{\pi_0}^{\perp} (x-p_i)$ if $\dist (x, p_i + \pi_0)\leq \frac{H}{4}$.
\end{itemize}
Note that by regularizing $\bar X$ via convolution, we can obtain a smooth vector field with the same properties, except that (iv) will hold in a slightly smaller tubular neighborhood of $\boldsymbol{\pi}$. We will thus ignore the regularity issues and use $X$ as a test in the first variation formula for $T$. 

In order to simplify our notation we introduce the set 
\[
G:= \left\{x\in\Cbf_2: \dist (x, \boldsymbol{\pi})\leq \frac{H}{4} \right\} =
\bigcup_i \left\{x\in\Cbf_2: \dist (x, p_i + \pi_0)\leq \frac{H}{4}\right\}
\]
and write $G^c$ for its complement in $\Cbf_2$.
Because of (iv), (see e.g. \cite{DL-All}*{Lemma 4.2}) we have 
\begin{equation}\label{e:divergence-correct}
{\rm div}_{\vec{T} (x)} X(x) = \frac{1}{2} |\mathbf{p}_{\pi_0} - \mathbf{p}_{T(x)}|^2 \qquad \forall x\in G\cap {\rm spt}\, (T),
\end{equation}
while, because of (i), (iii), and the Lipschitz regularity of $X$,  
\begin{equation}\label{e:divergence-estimate}
|{\rm div}_{\vec{T} (x)}X(x)| \leq C |\mathbf{p}_{\pi_0} - \mathbf{p}_{T(x)}|^2 \qquad \forall x\in G^c\cap {\rm spt}\, (T)\, .
\end{equation}
To see the latter, let $e_1, \ldots, e_m$ be an orthonormal base of the tangent plane to $T$ at $x$ and compute
\begin{align*}
|{\rm div}_{\vec{T} (x)} X (x)| & \leq
\sum_i |\nabla_{e_i} X (x) \cdot e_i| \stackrel{\text{(i)}}{=}
\sum_i |\nabla_{e_i} X (x) \cdot (e_i - \mathbf{p}_{\pi_0} (e_i))|\\
&\stackrel{\text{(iii)}}{=} \sum_i |(\nabla_{e_i} X (x)-\nabla_{\mathbf{p}_{\pi_0} (e_i)} X (x)) \cdot (e_i-\mathbf{p}_{\pi_0} (e_i))|\\
&\leq \sum_i |e_i - \mathbf{p}_{\pi_0} (e_i)|
|\nabla_{e_i-\mathbf{p}_{\pi_0} (e_i)} X (x)|\\
&\stackrel{\text{(ii)}}{\leq} 3 \sum_i |e_i - \mathbf{p}_{\pi_0} (e_i)|^2 \leq C |\mathbf{p}_{\vec{T} (x)} - \mathbf{p}_{\pi_0}|^2\, .
\end{align*}
Let us now choose radii $r_1 < r_2$ so that $r_1-r=r_2-r_1=R-r_2$ (i.e. $r_1 = (R+2r)/3$, $r_2 = (2R+r)/3$). Let $\tilde{\chi}:\pi_0\to \R$ be a smooth cut-off function which is identically 1 on $B_{r_1}(0,\pi_0)$, vanishes outside $\bar{B}_{r_2}(0,\pi_0)$, and satisfies $|D\tilde{\chi}|\leq 2/(r_2-r_1)$. Extend $\tilde{\chi}$ vertically to get a function $\chi:\R^{m+n}\to \R$, namely $\chi(x) = \tilde{\chi}(\mathbf{p}_{\pi_0}(x))$. We now want to take $\chi^2 X$ as a test function in the first variation formula for (the stationary varifold associated to) $T$; since $\chi^2 X$ need not have compact support in the directions orthogonal to $\pi$, to justify this simply note that since $\chi$ is supported in the cylinder $\bar{\Cbf}_{r_2}$, and since the support of $T$ is compact on any cylinder which is slightly smaller than $\Cbf_2$, we can multiply $\chi^2 X$ by another test function which is translation invariant in the $\pi_0$ variables, is $1$ in the ball of radius $2\sup\{\mathbf{p}_{\pi_0}^\perp(x): x\in \spt\|T\|\cap \bar{\Cbf}_{r_2}\}$ in the $\pi_0^\perp$ variables, and vanishes outside the ball of radius $3\sup\{\mathbf{p}_{\pi_0}^\perp(x): x\in \spt\|T\|\cap \bar{\Cbf}_{r_2}\}$; this then has compact support in all of $\Cbf_2$ and agrees with $\chi^2 X$ on $\Cbf_2\cap\spt(T)$, therefore allowing us to take $\chi^2 X$ as a test function. Hence, we have
\begin{align*}
\int_{\mathbf{C}_2} \chi^2 {\rm div}_{\vec{T} (x)} X (x) d\|T\| (x) 
= & -\int_{\mathbf{C}_2} \chi^2 X (x) \cdot \vec{H}_T (x)\, d\|T\| (x)\\
& - 2 \int_{\mathbf{C}_2} \chi (x)
\nabla_{\vec{T} (x)} \chi (x)  \cdot X(x)\, d\|T\| (x)\, ,
\end{align*}
where $\vec{H}_T$ is the generalized mean curvature of $T$. Combining this with \eqref{e:divergence-correct}, \eqref{e:divergence-estimate}, (ii), and the fact that $|\vec{H}_T (x)|\leq C\mathbf{A}$, we get
\begin{align*}
\int_{\mathbf{C}_2} \chi^2 |\mathbf{p}_{T(x)} - \mathbf{p}_{\pi_0}|^2\, d\|T\| (x)
&\leq C \int_{\mathbf{C}_{r_2}\cap G^c} |\mathbf{p}_{T(x)} - \mathbf{p}_{\pi_0}|^2\, d\|T\| (x)\\
&\quad
+ C (E+\mathbf A^2) + 2\left| \int_{\mathbf{C}_2} \chi (x)
\nabla_{\vec{T} (x)} \chi (x)  \cdot X(x)\, d\|T\| (x)\right|\, .
\end{align*}
Observe next that in light of (i), $\nabla_{\vec{T}} \chi  \cdot X = (\nabla_{\vec{T}} \chi - \nabla_{\vec{\pi}_0}\chi)  \cdot X(x)$. We can thus estimate further
\begin{align*}
2 \left|\int_{\mathbf{C}_2} \chi (x)\nabla_{\vec{T} (x)} \chi (x)  \cdot X(x)\, d\|T\| (x)\right|
&\leq \frac{C}{R-r} \int_{\mathbf{C}_{r_2}} \chi (x) |\mathbf{p}_{\pi_0}-\mathbf{p}_{T(x)}| |X(x)|\, d\|T\| (x)\\
&\leq \frac{CE }{(R-r)^2} +\frac{1}{4} \int_{\mathbf{C}_2} \chi^2 (x) |\mathbf{p}_{\pi_0}-\mathbf{p}_{T(x)}|^2\, d\|T\| (x),
\end{align*}
where in the latter inequality we have used that $ab \leq \frac{a}{2\eps} +\frac{\eps b}{2}$ for any two non-negative numbers $a$ and $b$ and a suitably small choice of $\eps >0$, combined with (ii).
In particular, we conclude that
\[
\mathbf{E}^{no} (T, \mathbf{C}_{r_1}) \leq \bar{C} (E+\mathbf{A}^2) + C\int_{\mathbf{C}_{r_2}\cap G^c} |\mathbf{p}_{T(x)} - \mathbf{p}_{\pi_0}|^2\, d\|T\| (x)\, ,
\]
for $\bar{C}$ now also dependent on $R-r$. When combined with Proposition \ref{p:o<no}, we arrive at
\[
\mathbf{E} (T, \mathbf{C}_r) \leq \bar{C} (E+\mathbf{A}^2) + C\int_{\mathbf{C}_{r_2}\cap G^c} |\mathbf{p}_{T(x)} - \mathbf{p}_{\pi_0}|^2\, d\|T\| (x)\, .
\]
where again $\bar{C} = \bar{C}(Q,m,n,\bar{n},R-r)>0$ (we recall that $r_1-r = (R-r)/3$ here).
We now use (a scaled version of) Almgren's estimate \cite{DLS14Lp}*{Theorem 7.1} (the first version of this estimate is in \cite{Almgren_regularity}*{Sections 3.24–3.26, Section 3.30(8)}), applied within $\Cbf_{r_2}$, with the choice $A = \mathbf{p}_{\pi_0}(\Cbf_{r_2}\cap G^c\cap\spt(T))\equiv B_{r_2}(\pi_0)\cap \mathbf{p}_{\pi_0}(G^c\cap\spt(T))$, to conclude that
\begin{align*}
\mathbf{E} (T, \mathbf{C}_r) &\leq \bar{C} (E+\mathbf{A}^2) + \bar{C} (\mathbf{E} (T, \mathbf{C}_R)+\mathbf{A}^2)^{1+\gamma} + \bar{C} |A|^\gamma (\mathbf{E} (T, \mathbf{C}_R)+\mathbf{A}^2)\\
&\leq \bar{C} (E+\mathbf A^2) + \bar{C} \mathbf{E} (T, \mathbf{C}_R)^{1+\gamma} + \bar{C} 
 \|T\| (G^c\cap\mathbf{C}_R)^\gamma \mathbf{E} (T, \mathbf{C}_R)\, ,
\end{align*}
where we have used that $\mathbf{A}\leq 1$ and $|A|\leq \|T\| (G^c \cap \mathbf{C}_R)$ (here, $|A|$ denotes the measure of the set $A$).

However, observe that $\dist (x, \boldsymbol{\pi}) \geq \frac{H}{4}$ for all $x\in G^c$ and thus by Chebyshev we get 
\[
\|T\| (G^c\cap \mathbf{C}_R) \leq \frac{C E}{H^2}\, ,
\]
which concludes the proof.
\end{proof}

\subsection{Second lemma} Now, we iterate Lemma \ref{l:tilt-1} to achieve a closer approximation of \eqref{e:tilt-estimate}, but under the additional assumption that the height excess of $T$ relative to the family of planes is much smaller than the minimal separation of the planes. We will remove this assumption in the next section.

\begin{proposition}\label{p:tilt-2}
For every pair of scales $1\leq r<r_0 < 2$, there are constants $\bar{C}=\bar{C}(Q,m,n,\bar n,N,r_0-r,2-r_0)>0$ and $\sigma_4=\sigma_4 (Q,m,n,\bar n,N,r_0-r,2-r_0)>0$ with the following properties.
Assume $T$, $\boldsymbol{\pi}$ and $H$ are as Lemma \ref{l:tilt-1}. If in addition we have
\begin{equation}\label{e:very-small}
E \leq \sigma_4 \min\{H^2, 1\} \, ,
\end{equation}
then
\begin{equation}\label{e:tilt-estimate-2}
\mathbf{E} (T, \mathbf{C}_r) \leq \bar{C} (E + \mathbf{A}^2) + \bar{C} \left(\frac{E}{H^2}\right) \mathbf{E} (T, \mathbf{C}_{r_0})\, .
\end{equation}
\end{proposition}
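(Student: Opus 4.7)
The strategy is to iterate Lemma \ref{l:tilt-1} a fixed number of times $K = \lceil 1/\gamma \rceil$ over an increasing sequence of intermediate radii $r = s_0 < s_1 < \cdots < s_K = r_0$ equally spaced in $[r, r_0]$. Schematically, each application of Lemma \ref{l:tilt-1} to the pair $(s_{j-1}, s_j)$ multiplies the coefficient of the tail term $\mathbf{E}(T, \mathbf{C}_{s_j})$ by $(E/H^2)^\gamma$, so that after $K$ iterations the effective coefficient on $\mathbf{E}(T, \mathbf{C}_{r_0})$ becomes $(E/H^2)^{K\gamma}$. Since $K\gamma \geq 1$ and $E/H^2 \leq \sigma_4 \leq 1$ by \eqref{e:very-small}, this is bounded by $E/H^2$, which is exactly the improvement claimed.

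To make this iteration rigorous, the principal obstacle is controlling the nonlinear remainder $\bar{C}\mathbf{E}(T, \mathbf{C}_R)^{1+\gamma}$ in Lemma \ref{l:tilt-1} at each step, which does not a priori decay under iteration. For this we first establish an a priori smallness bound: for any $\eta > 0$, provided $\sigma_4 = \sigma_4(\eta, Q, m, n, \bar n, N, 2-r_0)$ is taken small enough, one has $\mathbf{E}(T, \mathbf{C}_s) \leq \eta$ for every $s \in [r, r_0']$ with $r_0' := (r_0 + 2)/2$. This is obtained by a standard compactness/contradiction argument: a violating sequence $(T_k, \Sigma_k, \boldsymbol{\pi}_k)$ with $E_k, \mathbf{A}_k^2 \to 0$ admits, by the mass bound in Assumption \ref{a:height-main}, the support confinement from Lemma \ref{l:simple-2} (applicable since $E_k \leq \sigma_3$ eventually), and the compactness of integral currents, a subsequential weak limit $T_\infty$ which is area-minimizing in $\R^{m+n}$ and supported on at most $N$ planes parallel to $\pi_0$. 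The projection identity $(\mathbf{p}_{\pi_0})_\sharp T_\infty \res \mathbf{C}_2 = Q\llbracket B_2(\pi_0)\rrbracket$ passes to the limit, forcing all sheets of $T_\infty$ to carry positive multiplicities coherently oriented with $\vec{\pi}_0$, so $\vec{T}_\infty = \vec{\pi}_0$ $\|T_\infty\|$-a.e.\ and therefore $\mathbf{E}(T_\infty, \mathbf{C}_{r_0'}) = 0$, contradicting the assumed lower bound along the sequence.

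With this preliminary smallness in hand, the nonlinear remainder at each step satisfies $\mathbf{E}(T, \mathbf{C}_{s_j})^{1+\gamma} \leq \eta^\gamma \mathbf{E}(T, \mathbf{C}_{s_j})$, so Lemma \ref{l:tilt-1} applied to $(s_{j-1}, s_j)$ yields
\[
\mathbf{E}(T, \mathbf{C}_{s_{j-1}}) \leq \bar{C}(E + \mathbf{A}^2) + \bar{C}\bigl((E/H^2)^\gamma + \eta^\gamma\bigr)\mathbf{E}(T, \mathbf{C}_{s_j})\, .
\]
Iterating this inequality $K$ times and summing the resulting geometric series produces a bound of the desired form, modulo absorbing the residual $\eta^\gamma$ contribution into the target $(E/H^2)$-coefficient on $\mathbf{E}(T, \mathbf{C}_{r_0})$. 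This final absorption is the main technical point: since $\sigma_4$ is free to depend on $K$, one takes $\sigma_4$ small enough that the compactness yields $\eta$ satisfying $\eta^{K\gamma}$ dominated by $(E/H^2)$ after the requisite number of iterations, using crucially that $E/H^2 \leq \sigma_4$ under \eqref{e:very-small}. The main obstacle is precisely the quantitative matching between the qualitative smallness $\eta$ obtained via compactness and the data-dependent target rate $E/H^2$; this is delivered by the freedom in choosing $\sigma_4$ relative to the iteration depth $K$.
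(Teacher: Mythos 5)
Your iteration scheme runs into a genuine and unfixable gap at exactly the point you flag as "the main technical point." After $K$ applications of Lemma \ref{l:tilt-1} with the nonlinear remainder estimated via the compactness smallness $\eta$, the coefficient of $\mathbf{E}(T,\mathbf{C}_{r_0})$ is
\[
\bigl[(E/H^2)^\gamma + \eta^\gamma\bigr]^K \;\leq\; 2^{K-1}\bigl[(E/H^2)^{K\gamma} + \eta^{K\gamma}\bigr]\,.
\]
The first term is indeed $\leq E/H^2$ when $K\gamma \geq 1$ and $E/H^2 \leq 1$. But the second term $\eta^{K\gamma}$ is a fixed positive constant: once $\sigma_4$ is chosen, $\eta = \eta(\sigma_4, Q, m, \ldots)$ is determined and is independent of the specific data $(T,\boldsymbol\pi)$. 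Meanwhile the hypothesis $E \leq \sigma_4\min\{H^2,1\}$ only gives $E/H^2 \leq \sigma_4$ from above; $E/H^2$ can be arbitrarily smaller than $\sigma_4$, and in particular smaller than $\eta^{K\gamma}$. No choice of $\sigma_4$ relative to $K$ fixes this, because the dependencies run the wrong way: making $\sigma_4$ smaller makes $\eta$ smaller, but never makes $\eta$ dominated by a quantity ($E/H^2$) that has no lower bound. The compactness argument is qualitatively correct but produces a universal smallness where a data-dependent rate is required.

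The paper's proof avoids this by never using a purely qualitative bound on $\mathbf{E}(T,\mathbf{C}_R)$. Instead it sets up a two-variable induction on $(Q,N)$, and uses the splitting Lemmas \ref{l:simple-3}, \ref{l:simple-4} to reduce to the regime $M \leq 1$ and $E/M^{m+2} \geq \tilde\sigma$; combined with the crude bound $\mathbf{E}(T,\mathbf{C}_R) \leq \bar C(2M^2 + 2E + \mathbf{A}^2)$ from Proposition \ref{p:step1}, this yields
\[
\mathbf{E}(T,\mathbf{C}_R) \leq \bar C\mathbf{A}^2 + \bar C(E/H^2)^{2/(m+2)}\,,
\]
i.e.\ a bound in terms of \emph{powers of the data ratio} $E/H^2$ rather than a universal constant. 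This is the key conversion that allows the nonlinear remainder $\bar{C}\mathbf{E}(T,\mathbf{C}_R)^{1+\gamma}$ to be absorbed into a multiple of $(E/H^2 + \mathbf{A}^2)^{\gamma/(m+2)}\mathbf{E}(T,\mathbf{C}_R)$, after which the iteration closes. Your proposal is missing both the induction on $(Q,N)$ (which underlies the reduction via splitting) and the resulting quantitative conversion, and these cannot be replaced by compactness alone.
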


\begin{proof} This estimate will be proved by a two-variable induction over $Q$ and $N$. Our inductive assumption is that
\begin{itemize}
    \item[(IH)] \emph{The estimate is valid for any pair $Q'\leq Q$ and $N'\leq N$ with $Q'+N' < Q+N$.}
\end{itemize}
By Proposition \ref{p:step1}, the case $N=1$ and arbitrary $Q$ is always valid. Moreover, due to \eqref{e:very-small}, we can assume without loss of generality that $H\leq 1$, since otherwise we can apply Lemma \ref{l:simpler}, yielding the conclusions of Theorem \ref{thm:main-estimate} in $\Cbf_r$, which in particular trivially implies \eqref{e:tilt-estimate-2}.

We can similarly assume that $M\coloneqq \max\{|p_i-p_j|\} \leq 1$ and
\begin{equation}\label{e:small-E-M}
\frac{E}{M^{m+2}}\geq \tilde{\sigma}.
\end{equation}
Indeed, if $M>1$ then we can apply Lemma \ref{l:simple-3} to decompose $T$ into $T_1,T_2$ (as in the statement of the lemma) and appeal to the induction assumption (IH). Note that the validity of the hypothesis \eqref{e:very-small} remains unchanged under such a decomposition (as $E$ can only decrease while $H$ can only increase). We can apply (IH) since if both $T_1,T_2\neq 0$, then necessarily $\Theta(T_i,\cdot)\leq Q-1$ (and so $Q$ is decreased), while if, say, $T_2 = 0$, then $Q$ remains the same for $T_1$, but $N$ decreases by at least 1. Thus, we may indeed assume that $M\leq 1$. Given this, one can additionally assume that \eqref{e:small-E-M} indeed holds via the same argument, except now invoking Lemma \ref{l:simple-4} instead of Lemma \ref{l:simple-3}.

Now observe that for every given $\pi_i\in \boldsymbol{\pi}$ we can easily estimate
\[
\int_{\mathbf{C}_2} \dist^2 (x, \pi_i)
\, d\|T\| (x) \leq 2 M^2 + 2 E\, ;
\]
Thus, using Proposition \ref{p:step1} (for any fixed plane $\pi_i\in \boldsymbol{\pi}$), we have for any $1\leq R<2$,
\begin{equation}\label{e:silly-upper-bound}
\mathbf{E} (T, \Cbf_R) \leq \bar{C} (2M^2 + 2E + \mathbf{A}^2)\, .% \leq \tilde{C} \, ,
\end{equation}
where $\bar{C}$ also depends on $2-R$. Under our current assumptions, the right-hand side of \eqref{e:silly-upper-bound} will be at most $\bar{C}$, provided we take $\sigma_4$ sufficiently small (depending only on allowed parameters). In particular, combining this with \eqref{e:small-E-M}, and the assumption $H \leq 1$ (so $E\leq 1$ also) gives
\[
\mathbf{E} (T, \Cbf_R) \leq \bar{C} \mathbf{A}^2 + \bar{C} E^{\frac{2}{m+2}}
\leq \tilde{C} \mathbf{A}^2 + \bar{C} \left(\frac{E}{H^2}\right)^{\frac{2}{m+2}}\, ,
\]
where $\bar{C}$ now also depends on $\tilde{\sigma}$. Now taking $\gamma = \gamma(Q,m,n,\bar{n})>0$ as in Lemma \ref{l:tilt-1}, the above bound clearly implies
%Thus, for $1\leq R<2$, $\gamma$ as in Lemma \ref{l:tilt-1} and $C=C(2-R, \tilde\sigma,\gamma)$ we have
\[
    \Ebf(T,\Cbf_R)^\gamma \leq \bar{C}\mathbf{A}^{2\gamma} + \bar{C} \left(\frac{E}{H^2}\right)^{\frac{2\gamma}{m+2}}\, ,
\]
where now $\bar{C}$ also depends on $\gamma$.
Using Lemma \ref{l:tilt-1} and \eqref{e:very-small} we now infer that for any $1\leq \rho < R < 2$ we have
\begin{align}
\mathbf{E} (T, \mathbf{C}_\rho) &\leq \bar{C} (E+\mathbf{A}^2) + \tilde{C} \left[\left(\frac{E}{H^2}\right)^{\frac{2\gamma}{m+2}} + \mathbf{A}^{2\gamma}\right] \mathbf{E} (T, \mathbf{C}_R)\label{e:preliminary-bound-p} \\
&\leq \bar C_1 (E+\mathbf{A}^2) + \bar C_1 \Big(\sigma_4^{\frac{\gamma}{m+2}} + \mathbf{A}^{\frac{(m+3)\gamma}{m+2}}\Big)\left(\frac{E}{H^2}+\mathbf{A}^2\right)^{\frac{\gamma}{m+2}} \mathbf{E} (T, \mathbf{C}_R)\, \nonumber
\end{align}
where $\bar C_1 =\bar C_1 C(Q,m,n,\bar n, \gamma,\tilde\sigma,R-\rho,2-R)$. By selecting $\sigma_4$ small enough we can ensure that
\[
\bar C_1 \sigma_4^{\frac{\gamma}{m+2}} \leq \frac{1}{4}\, .
\]
Next, if $\bar C_1 \mathbf{A}^{\frac{(m+3)\gamma}{m+2}} \leq \frac{1}{4}$
we infer that
\begin{equation}\label{e:second-iteration}
\mathbf{E} (T, \mathbf{C}_\rho) \leq \bar C_1 (E+ \mathbf{A}^2) + \frac{1}{2} \left(\frac{E}{H^2}+\mathbf{A}^2\right)^{\frac{\gamma}{m+2}} \mathbf{E} (T, \mathbf{C}_R)\, .
\end{equation}
Otherwise, if $\bar C_1 \mathbf{A}^{\frac{(m+3)\gamma}{m+2}} \geq \frac{1}{4}$, we can estimate $\mathbf{A}^{2\gamma}\leq \bar{C}_2 \mathbf{A}^2$ for some constant $\bar{C}_2$ with the same dependencies as $\bar{C}_1$. Then, noting that \eqref{e:silly-upper-bound} in particular gives that $\Ebf(T,\Cbf_R)\leq \bar{C}$ (as remarked previously), from \eqref{e:preliminary-bound-p} we have (with the same choice of $\sigma_4$ mentioned previously)
%and, using that $M\leq 1$ and \eqref{e:silly-upper-bound}, we reach 
\[
\mathbf{E} (T, \mathbf{C}_\rho) \leq \bar C_3 (E+\mathbf{A}^2) + \frac{1}{4} \left(\frac{E}{H^2}\right)^{\frac{\gamma}{m+2}} \mathbf{E} (T, \mathbf{C}_R)
\]
with a worse constant $\bar C_3$ (with the same dependencies as $\bar{C}$). We can therefore assume \eqref{e:second-iteration} to be valid irrespective of the value of $\mathbf{A}$. 

We are now in a position to iterate \eqref{e:second-iteration}. Fix the smallest natural number $J=J(Q,m,n,\bar n)$ such that $\frac{J\gamma}{m+2} \geq 1$ and apply \eqref{e:second-iteration} with $\rho= r_i$, $R= r_{i-1}$ for a sequence of radii $r_i$, where $r_{J} = r$, $r_{i-1} = r_{i} +\frac{r_0-r}{J}$. This, in particular, fixes the size of the difference between radii used in all the inequalities used above, and in particular it gives a uniform bound for the constants above, hence fixing the choice of $\sigma_4 = \sigma_4(Q,m,n,\bar{n},r_0-r,2-r_0)>0$. %We note moreover that the choice of $\sigma_4$ depends indeed on $r_0-r$, while $\gamma$ depends only on $m$, $n$, and $Q$.

If for some $i$ we have 
\[
\left(\frac{E}{H^2} + \mathbf{A}^2\right)^{\frac{\gamma}{m+2}}  \mathbf{E} (T, \mathbf{C}_{r_{i-1}})\leq 
\mathbf{E} (T, \mathbf{C}_{r_i})\, 
\] 
we then can absorb the second term in the right hand side of \eqref{e:second-iteration} into the left hand side to conclude that
\[
\mathbf{E} (T, \mathbf{C}_{r_i}) \leq \bar{C} (E+ \mathbf{A}^2)\, .
\]
Given that $1\leq r \leq r_i<2$ we achieve the desired estimate \eqref{e:tilt-estimate-2} in this case. Otherwise, we must have
\[
\mathbf{E} (T, \mathbf{C}_{r_i}) \leq \left(\frac{E}{H^2} + \mathbf{A}^2\right)^{\frac{\gamma}{m+2}}  \mathbf{E} (T, \mathbf{C}_{r_{i-1}})
\]
for all $i$, which leads us to 
\begin{equation}\label{e:end-of-iteration}
\mathbf{E} (T, \mathbf{C}_r) = \mathbf{E} (T, \mathbf{C}_{r_J}) \leq \left(\frac{E}{H^2} + \mathbf{A}^2 \right)^{\frac{J \gamma}{m+2}} \mathbf{E} (T, \mathbf{C}_{r_0})\, .
\end{equation}
Given that $\frac{J\gamma}{m+2}\geq 1$ (so we may write $\frac{J\gamma}{m+2} = 1+\tilde{\gamma}$ for some $\tilde{\gamma}\geq 0$), and since $\frac{E}{H^2}\leq \sigma_4 \leq 1$, we again arrive at the desired estimates \eqref{e:tilt-estimate-2} (using again that $\Ebf(T,\Cbf_{r_0})\leq \bar{C}$ from \eqref{e:silly-upper-bound} to absorb the $\Abf^2$ factor from the parenthesis into the first term on the right-hand side of \eqref{e:tilt-estimate-2}).
\end{proof}

\subsection{Proof of the tilt-excess estimate}
We are finally in a position to prove \eqref{e:tilt-estimate}. The goal is to exploit the combinatorial results in Section \ref{s:comb-lemmas} to remove the hypothesis \eqref{e:very-small} in Proposition \ref{p:tilt-2} by possibly replacing $\boldsymbol{\pi}$ with a smaller, refined sub-collection of planes $\bar{\boldsymbol{\pi}}$, and in turn conclude the tilt-excess estimate for $\bar{\boldsymbol{\pi}}$.

Let $\boldsymbol{\pi}:= \bigcup_i (p_i+\pi_0)$ be as in the statement of Theorem \ref{thm:main-estimate}. Fix a positive parameter $0<\delta\leq 1/2$ (whose choice will be determined later) and apply Lemma \ref{l:clusters-2} to $P=\{p_1,\dots,p_N\}$ with this choice of $\delta$ and $\varepsilon = E^{1/2}$, yielding a subset $\tilde P \subset P$ obeying the conclusions of Lemma \ref{l:clusters-2}. Let $\tilde{\boldsymbol{\pi}}= \bigcup_{\tilde p \in \tilde P}(\tilde p + \pi_0) \subset \boldsymbol{\pi}$ be the corresponding union of parallel planes. By property (i) of Lemma \ref{l:clusters-2} we then have 
\begin{equation}\label{e:upper-bound-bar-1}
\tilde E := \int_{\mathbf{C}_2} \dist^2 (q, \tilde{\boldsymbol{\pi}})\, d\|T\| (q) \leq 
2E + C \max_i \dist(p_i, \tilde{P})^2
\end{equation}
and hence
\begin{equation}\label{e:upper-bound-bar}
\tilde E \leq C(1+\delta^{-2} (1+\delta^{-1})^{2N-4}) E\, .
\end{equation}
If $\tilde{P}$ is a singleton, then we can apply Proposition \ref{p:step1} to $T$ and $\bar{\boldsymbol{\pi}}$ to conclude \eqref{e:tilt-estimate} from \eqref{e:upper-bound-bar}, since we are then in the case $N=1$ of Theorem \ref{thm:main-estimate}.

We may thus henceforth assume that $\tilde P$ consists of at least two distinct points. 
In this case, property (ii) of Lemma \ref{l:clusters-2} gives that $E^{1/2}\leq \delta \tilde H$ and $\max_i \dist(p_i,\tilde P) \leq \delta \tilde H$, where $\tilde H := \min \{|\tilde p-\tilde q|: \tilde p, \tilde q \in \tilde P, \tilde p \neq \tilde q\}$, and thus combining this with \eqref{e:upper-bound-bar-1}, we get
%In this case, observe that \eqref{e:upper-bound-bar} and property (ii) of Lemma \ref{l:clusters-2} give
\begin{equation}\label{e:upper-bound-bar-2}
\tilde E \leq C \delta^2 \tilde H^2\, ,
\end{equation}
where %$\tilde H:= \min \{|\tilde p-\tilde q|: \tilde p, \tilde q \in \tilde P, \tilde p \neq \tilde q\}$ and the constant $C$ is independent of $\delta$.
the constant $C = C(Q,m,n,\bar{n})$ is independent of $\delta$.

Fix another constant $0<\bar\delta\leq 1/2$ (to be determined later). We now apply Lemma \ref{l:clusters-1} with this $\bar\delta$ to further refine $\tilde{\boldsymbol{\pi}}$, 
%and, for another fixed $\bar \delta>0$ (to be determined later), find 
finding a second subset $P_*\subset \tilde P$ with the properties listed in Lemma \ref{l:clusters-1}. By property (i) of Lemma \ref{l:clusters-1}, if we denote by $M_*:=\max \{|p-q|: p,q\in P'\}$ and $H_*:= \min \{|p'-q'|: p', q'\in P', \ p'\neq q'\}$, we achieve that
\begin{equation}\label{e:M<H}
M_* \leq \bar C(\bar \delta, N) H_*
\end{equation}
and, combining (ii) of Lemma \ref{l:clusters-1} with \eqref{e:upper-bound-bar-2}, we have
\begin{equation}\label{e:-upper-bound-prime}
E_*:=  \int_{\mathbf{C}_2} \dist^2 (q, \boldsymbol{\pi}_*)\, d\|T\| (q) \leq 2\tilde E + C\bar{\delta}^2 H_*^2
%2 (\tilde E + \bar \delta^2 H_*^2)
%\leq (\bar{C} \delta^2 + 2\bar\delta^2) H_*^2\, ,
\leq C(\delta^2+\bar{\delta}^2)H_*^2\, ,
\end{equation}
since $\tilde H\leq H_*$. Here, $C=C(Q,m,n,\bar{n})$ is a new constant which we stress is independent of $\bar\delta$, and $\boldsymbol{\pi}_*:= \bigcup_{p_*\in P_*} (p_*+\pi_0)$ is the union of parallel planes corresponding to $P_*$. Let $J:=\# (\tilde P \setminus P_*)$ and let $P_j\subset\tilde P$ be the collections of points given by property (iii) of Lemma \ref{l:clusters-1}; in particular, $P_0 = \tilde{P}$, $P_J = P_*$. Define radii $(r_j)_{j=0}^{J+2}$ by $r_{J+2} = 2$, $r_0 = r$, and $r_j - r_{j-1} = \frac{2-r}{J+2}$.
We will apply various estimates on the tilt-excess between the radii $r_j<r_{j-1}$; note that the parameter $\sigma_4$ in Proposition \ref{p:tilt-2} when applied at such scales obeys $\sigma_4 = \sigma_4\big(Q,m,n,\bar{n},N,(2-r)/(J+2)\big)$ and so is now fixed (independent of $j$). One should note however that the dependence of $\sigma_4$ in the radius variable, $(2-r)/(J+2)$, only actually depends on a lower bound on the radius, which here is $(2-r)/N$, meaning $\sigma_4 = \sigma_4(Q,m,n,\bar{n},N,2-r)$. Recalling \eqref{e:upper-bound-bar-2} and \eqref{e:-upper-bound-prime}, we now choose $\delta$ and $\bar{\delta}$, depending only on $Q,m,n,\bar{n}$, so that
\begin{equation}\label{e:condition-to-apply-tilt-estimate}
E_*\leq \sigma_4 H_*^2 \qquad \mbox{and}\qquad \tilde E \leq \sigma_4 \tilde H^2\, .
\end{equation}
We may further assume that $H_*, \tilde H \leq 1$, since otherwise we may apply Lemma \ref{l:simpler} to reach the desired conclusion. Having fixed all the parameters, we may henceforth treat all constants depending on them as just $\bar{C}=\bar{C}(Q,m,n,\bar n,N, 2-r)$.

Now, applying Proposition \ref{p:step1} (or, more precisely, the $N=1$ case of \eqref{e:tilt-estimate}) to $T$ with $r=r_{J+1}$ and $p+\pi_0$ in place of $\pi_0$ for any $p\in P_*$, we get
\[
\Ebf(T,\Cbf_{r_{J+1}}) \leq \bar {C}(E_* + M_*^2 + \Abf^2) \leq \tilde{C}(E_* + H_*^2 + \Abf^2)
\]
using \eqref{e:M<H} in the last inequality (we stress that this is the only time we need to apply Proposition \ref{p:step1}, namely to $P_* =P_J$ as it is the only time we have comparability between the maximum and minimum distances between the planes in $\boldsymbol{\pi}_j$, as defined below). If we now apply Proposition \ref{p:tilt-2} with $\boldsymbol{\pi}_*$, $r_J$, $r_{J+1}$ in place of $\boldsymbol{\pi}$, $r$, $r_0$ respectively, we get
\begin{align}
\Ebf(T,\Cbf_{r_J}) \leq \bar{C}(E_* + \Abf^2) + \bar{C}\left(\frac{E_*}{H^2_*}\right)\Ebf(T,\Cbf_{r_{J+1}}) & \leq \bar{C}(E_* + \Abf^2) + \bar{C}\left(\frac{E_*}{H_*^2}\right)(E_* + H_*^2 + \Abf^2)\nonumber\\
& \leq \bar{C}(E_* + \Abf^2) \label{e:first-tilt-bound}
\end{align}
where in the last inequality we have used the inequality $E_*\leq \sigma_4 H_*^2$ from \eqref{e:condition-to-apply-tilt-estimate}.

Next, for $j=0,1,\dotsc,J$, define the collections $\boldsymbol{\pi}_j := \bigcup_{p\in P_j}(p+\pi_0)$ of parallel planes associated to the sets $P_j$; we remark that $P_J = P_*$. For each such $J$ set
\[
E_j:= \int_{\Cbf_{2}}\dist^2 (q,\boldsymbol{\pi}_j)\ \|T\|(q)
\]
and
\[
H_j := \min\{|p-q|:p,q\in P_j,\, p\neq q\}\, .
\]
%, \qquad M_j:= \max\{|p-q|:p,q\in P_j\}$$
%however recall from the proof of Lemma \ref{l:clusters-1} that in fact $M_j$ is constant in $j$, and so in particular $M_j = M_J = M_*$ for all $j$. 
Now observe that, for each $j=0,1,\dotsc,J$, by property (iii) of Lemma \ref{l:clusters-1}, we have that
\begin{equation}\label{e:height-recurrence}
E_j \leq 2E_{j-1} + CH_{j-1}^2.
\end{equation}
Now combining \eqref{e:first-tilt-bound}, \eqref{e:height-recurrence} with $j=J$, and the fact that $E_* = E_J$, we have
\[
\Ebf(T,\Cbf_{r_J}) \leq \bar{C}(E_{J-1} + H_{J-1}^2 + \Abf^2).
\]
We now distinguish two possibilities. If $E_{J-1}\leq \sigma_4 H^2_{J-1}$, then we can apply Proposition \ref{p:tilt-2} with $\boldsymbol{\pi}_{J-1}$, $r_{J-1}$, $r_J$ in place of $\boldsymbol{\pi}$, $r$, $r_0$ to get
\[
\Ebf(T,\Cbf_{r_{J-1}}) \leq \bar{C}(E_{J-1}+\Abf^2) + \bar{C}\left(\frac{E_{J-1}}{H_{J-1}^2}\right)\Ebf(T,\Cbf_{r_J}) \leq \bar{C}(E_{J-1}+\Abf^2).
\]
On the other hand, if the opposite inequality holds, namely $H_{J-1}^2 \leq \sigma_4^{-1}E_{J-1}$, then we can estimate directly and see that
\[
\Ebf(T,\Cbf_{r_{J-1}}) \leq \bar{C}\Ebf(T,\Cbf_{r_J}) \leq \bar{C}(E_{J-1}+\Abf^2)
\]
where we remark that we have used in the first inequality that $r\geq 1$ here. So, we see that in either situation, this inequality holds.

Now iterate this argument, namely the one beginning after \eqref{e:height-recurrence}, we see that we get
\[
\Ebf(T,\Cbf_{r_j}) \leq \bar{C}(E_j + \Abf^2)
\]
for each $j=J,J-1,\dotsc,0$. In particular, taking $j=0$, we get
\[
\Ebf(T,\Cbf_r) \leq \bar{C}(E_0 + \Abf^2).
\]
However, since $E_0 = \tilde{E}$ and $\tilde{E}\leq CE$ from \eqref{e:upper-bound-bar}, we reach the desired conclusion.
\qed

\section{Proof of \texorpdfstring{$L^\infty$}{Linfty} height bound}
It remains to prove \eqref{e:Linfty-estimate} of Theorem \ref{thm:main-estimate}, which will be achieved by induction on $N$. As already observed, if $N=1$ we know that Theorem \ref{thm:main-estimate} holds (for all $Q$) by Proposition \ref{p:step1}. The core of the inductive argument is the following proposition.

\begin{proposition}\label{p:inductive}
Fix $N\geq 2$ and assume that, under the assumptions of Theorem \ref{thm:main-estimate}, \eqref{e:Linfty-estimate} holds for any $N'<N$ and any $Q'\leq Q$. Then it holds for $N$ and $Q$. 
\end{proposition}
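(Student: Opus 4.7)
The strategy is induction on $N$, leveraging the combinatorial Lemma \ref{l:clusters-2} to reduce configurations of $N$ planes to either the $N=1$ base case (Proposition \ref{p:step1}) or configurations with fewer planes (the inductive hypothesis). Since \eqref{e:Linfty-estimate} is trivial when $E^{1/2} + \Abf$ is bounded below by any fixed constant (upon choosing $\bar C$ large), we may assume $E + \Abf^2$ is smaller than a threshold $\varepsilon_1 = \varepsilon_1(Q,m,n,\bar n, N) > 0$. Fix large $\Lambda = \Lambda(Q,m,n,\bar n,N) > 1$ and small $\delta = \delta(Q,m,n,\bar n,N) \in (0,1/2)$ to be chosen later. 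Set $\varepsilon := \Lambda(E^{1/2} + \Abf)$ and apply Lemma \ref{l:clusters-2} to $P := \{p_1, \dots, p_N\}$ with parameters $\delta, \varepsilon$, obtaining a nonempty subset $\tilde P \subset P$ satisfying properties (i)--(ii) of that lemma.

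In the \emph{singleton case} $\tilde P = \{\tilde p\}$, every $p \in P$ lies within $C_N \varepsilon$ of $\tilde p$, where $C_N := \delta^{-1}(1+\delta^{-1})^{N-2}$. Consequently the $L^2$ excess of $T$ with respect to the single plane $\tilde p + \pi_0$ satisfies
\[
\bar E := \int_{\Cbf_2} \dist^2(q, \tilde p + \pi_0)\, d\|T\|(q) \leq 2E + C_N^2 \varepsilon^2\, \|T\|(\Cbf_2) \leq C(E + \Abf^2),
\]
and Proposition \ref{p:step1} applied with the plane $\tilde p + \pi_0$ yields $\dist(q, \tilde p + \pi_0) \leq \bar C(E^{1/2} + \Abf)$ for $q \in \spt T \cap \Cbf_r$; since $\tilde p + \pi_0 \subset \boldsymbol{\pi}$, this establishes \eqref{e:Linfty-estimate}. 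In the \emph{multi-cluster case} $K := |\tilde P| \geq 2$, Lemma \ref{l:clusters-2}(ii) gives $\tilde H := \min_{i\neq j}|\tilde p_i - \tilde p_j| \geq \varepsilon/\delta$ and $\dist(p, \tilde P) \leq \delta \tilde H$ for every $p \in P$. Partition $P = \bigsqcup_i P_i$ by nearest element of $\tilde P$, and set $\boldsymbol{\pi}_i := \bigcup_{p \in P_i}(p+\pi_0)$; since $K \geq 2$, $|P_i| < N$ for each $i$. With $\Lambda$ large and $\delta$ small, the hypotheses of Corollary \ref{c:splitting-0} applied to $T$ with the $K$ parallel planes $\tilde p_i + \pi_0$ (using $\tau = 0$, $\varkappa = \tilde H$) are verified, yielding a decomposition $T \res \Cbf_{r'} = \sum_{i=1}^K T_i$ for an appropriate $r' \in (r, 1)$, where each $T_i$ is integer multiplicity with $\partial T_i \res \Cbf_{r'} = 0$, $(\mathbf{p}_{\pi_0})_\sharp T_i = Q_i \llbracket B_{r'} \rrbracket$ for some $Q_i \leq Q$, and the support of $T_i$ is contained in a $\tilde H/4$-tubular neighborhood of $\tilde p_i + \pi_0$. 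Combined with the bound $|p - \tilde p_i| \leq \delta \tilde H$ for $p \in P_i$ and $|\tilde p_i - \tilde p_j| \geq \tilde H$ for $i\neq j$, this separation yields $\dist(q, \boldsymbol{\pi}) = \dist(q, \boldsymbol{\pi}_i)$ for $q \in \spt T_i$. The inductive hypothesis applied to $T_i$ with the $|P_i| < N$ planes $\boldsymbol{\pi}_i$ and multiplicity $Q_i \leq Q$ therefore gives $\dist(x, \boldsymbol{\pi}_i) \leq \bar C(E_i^{1/2} + \Abf) \leq \bar C(E^{1/2} + \Abf)$, where $E_i := \int \dist^2(q, \boldsymbol{\pi}_i)\, d\|T_i\| \leq E$.

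The main technical obstacle is ensuring non-circularity in the invocation of Corollary \ref{c:splitting-0}, whose proof uses Theorem \ref{thm:main-estimate} with the number $K$ of clusters, a fact given by the inductive hypothesis only when $K \leq N - 1$. In the edge case $K = N$ (i.e., $\tilde P = P$ with no reduction occurring), one has $H \geq \varepsilon/\delta = \Lambda(E^{1/2} + \Abf)/\delta$ on the minimal separation in $P$. For $\Lambda$ chosen sufficiently large relative to $\delta$ and $\sigma_1$, this separation is enough to apply Lemma \ref{l:simple} directly provided $E^{m/2} \geq \delta^{m+2}/(\sigma_1 \Lambda^{m+2})$; in the complementary regime where $E$ falls below this threshold, the desired bound is instead derived via the tilt-excess estimate \eqref{e:tilt-estimate} combined with Almgren's Lipschitz approximation and a direct analysis of the resulting $Q$-valued map, which at the scale $H \gg E^{1/2}$ separates naturally into $N$ independent pieces each governed by the $N=1$ result. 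The delicate bookkeeping of the constants $\Lambda$ and $\delta$ across Lemma \ref{l:clusters-2}, Corollary \ref{c:splitting-0}, and the separation lemmas, together with the verification of the hypothesis of Corollary \ref{c:splitting-0} under our smallness assumptions, is the most intricate part of the argument.
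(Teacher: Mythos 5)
Your approach diverges from the paper's in a significant way, and it contains a genuine gap that you yourself flag but do not close.

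You apply Lemma~\ref{l:clusters-2} to cluster the points and then invoke Corollary~\ref{c:splitting-0} with the $K$ cluster centers, followed by the inductive hypothesis on each piece. As you observe, Corollary~\ref{c:splitting-0} is proved by appealing to Theorem~\ref{thm:main-estimate}, in particular to \eqref{e:Linfty-estimate}, for the \emph{same} number $K$ of planes. When $K<N$ the inductive hypothesis covers this, but Lemma~\ref{l:clusters-2} permits $\tilde P=P$, i.e.\ $K=N$, whenever $\varepsilon\leq\delta\min_{i\neq j}|p_i-p_j|$. In that case the invocation is circular.

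Your fallback in the $K=N$ case does not work as stated. The clustering only guarantees $H\gtrsim E^{1/2}+\Abf$ (roughly $H\gtrsim E^{1/2}$), whereas Lemma~\ref{l:simple} requires $E\leq\sigma_1\min\{H,1\}^{m+2}$, i.e.\ $H\gtrsim E^{1/(m+2)}$. Since $E^{1/2}\ll E^{1/(m+2)}$ as $E\downarrow 0$, the hypothesis of Lemma~\ref{l:simple} can genuinely fail, which is precisely your sub-case ``$E$ below the threshold.'' The proposed resolution there --- tilt-excess estimate, Lipschitz approximation, and ``direct analysis of the resulting $Q$-valued map'' --- is left unspecified and is in fact the crux of the whole proposition. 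This is exactly where the paper inserts its key new ingredient: the decay Lemma~\ref{l:decay}, proved from the tilt-excess estimate alone (and therefore \emph{not} circular), which replaces the collection $\boldsymbol{\pi}$ by a possibly different collection $\boldsymbol{\pi}'$ of at most $Q$ parallel planes and produces a strict geometric decay $\int_{\Cbf_{2\rho}}\dist^2(\cdot,\boldsymbol{\pi}')\,d\|T\|\leq\rho^{m+2\beta_0}(E+\Abf^2)$. Iterating this decay across scales, the ratio $\bar{E}_k/M_k^2$ decreases geometrically (since $M_k$ scales like $\rho^{-k}$ while $\bar E_k$ gains $\rho^{2\beta_0 k}$), until the maximal separation exceeds $1$ in scaled units, at which point Lemma~\ref{l:simpler-2} (a direct monotonicity-formula argument, again non-circular) gives the desired containment $\spt(T)\cap\Cbf_{1+r/2}\subset\{x:\dist(x,\boldsymbol{\pi})\leq 2^{-N}M\}$. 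Note also that the paper uses Lemma~\ref{l:clusters-3}, not Lemma~\ref{l:clusters-2}, to produce a two-piece decomposition $P=P_1\cup P_2$ with separation $\geq M/2^{N-2}$; each piece is nonempty, so each has $<N$ planes, which makes the inductive reduction automatic and sidesteps the $K=N$ obstruction entirely. Without the iterative decay argument your proof does not close.
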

Clearly once we have shown this, Theorem \ref{thm:main-estimate} follows by induction.

\subsection{A decay lemma} The crucial ingredient for Proposition \ref{p:inductive} is the following $L^2$ height excess decay, which crucially relies on the tilt-excess estimate \eqref{e:tilt-estimate} that we have already established.

\begin{lemma}\label{l:decay}
There are constants $\rho_0 = \rho_0 (m,n,Q)>0$ and $C = C(Q,m,n,\bar{n})>0$ such that, for every fixed $0<\rho\leq \rho_0$, there are
$\sigma_5 = \sigma_5 (Q,m,n,\bar{n},N, \rho)>0$ and $0 < \beta_0 = \beta_0(Q,m,n,\bar{n}) < 1$ such that the following holds. 
Assume $T$, $E$, and $\boldsymbol{\pi}$ are as in Theorem \ref{thm:main-estimate} with $P= \{p_1, \ldots, p_N\}$ and that
\begin{equation}\label{e:very-small-again}
E + \mathbf{A}^2 \leq \sigma_5.
\end{equation} 
Then there is another set of points $P':= \{q_1, \ldots, q_{N'}\}$ with $N'\leq Q$ such that:
\begin{itemize}
\item[(A)] $\dist (q_i, P) \leq C (E+\mathbf{A}^2)^{1/2}$ for each $i$;
\item[(B)] If we set $\boldsymbol{\pi}':= \bigcup (q_i+\pi_0)$, then
\begin{equation}\label{e:decay-estimate}
\int_{\mathbf{C}_{2\rho}} \dist^2 (x, \boldsymbol{\pi}')\, d\|T\| (x) 
\leq \rho^{m+2\beta_0} (E+\mathbf{A}^2)\, .
\end{equation}
\end{itemize}
\end{lemma}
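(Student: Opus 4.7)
The strategy is contradiction combined with a blow-up to $Q$-valued Dirichlet minimizers, exploiting the tilt-excess bound \eqref{e:tilt-estimate} as the key input that makes Almgren's Lipschitz approximation available. Suppose the conclusion fails at some fixed $\rho \leq \rho_0$ (to be chosen below). Extract a sequence $T_k$, $\boldsymbol{\pi}_k = \bigcup_i(p_{i,k}+\pi_0)$, $\Abf_k$ satisfying the hypotheses with $\eps_k^2 := E_k + \Abf_k^2 \to 0$, for which no $P_k'$ with $|P_k'|\leq Q$ satisfies both (A) and (B). The first step is a cluster reduction: by iterating Lemma \ref{l:simple-4} on clusters of planes at mutual separation $\gg\eps_k$ and handling each resulting subcurrent independently (afterwards recombining the new centers, with $\sum_\alpha|P_k'^\alpha| \leq Q$ by mass counting), one may assume that the points $p_{i,k}$ form a single cluster, lying within $C\eps_k$ of a common reference point $p_k\in\pi_0^\perp$.

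With the single-cluster assumption, \eqref{e:tilt-estimate} yields $\Ebf(T_k,\Cbf_{3/2}) \leq C\eps_k^2$ and Almgren's strong Lipschitz approximation (see \cite{DLS14Lp}*{Theorem 1.4}) produces a $Q$-valued Lipschitz map $u_k:B_{5/4}(\pi_0)\to\Acal_Q(\pi_0^\perp)$ with $\Lip(u_k)\leq C\eps_k^{2\gamma}$ whose graph coincides with $T_k$ outside a bad set of mass $\leq C\eps_k^{2+2\gamma}$ and with $L^2$ discrepancy from $T_k$ of the same order. Normalize $\bar u_k := (u_k - Q\llbracket p_k\rrbracket)/\eps_k$; the cluster reduction ensures $\|\bar u_k\|_{L^2}\leq C$. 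Up to a subsequence, the standard convergence theory for area-minimizing blow-ups (see \cite{DLS14Lp}) gives $\bar u_k\to\bar u$ strongly in $L^2(B_r)$ for each $r<5/4$, with $\bar u$ a $Q$-valued Dirichlet minimizer, and the rescaled plane positions $(p_{i,k}-p_k)/\eps_k$ converge to bounded limits $\bar p_i\in\pi_0^\perp$.

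The conclusion is then extracted via the Hölder continuity of $Q$-valued Dirichlet minimizers (see \cite{DLS_MAMS}): there exists $\beta_0 = \beta_0(Q,m,n,\bar n) \in (0,1)$ such that, writing $\bar u(0) = \sum_{j=1}^{N'}Q'_j\llbracket\bar q_j\rrbracket$ with $\sum_j Q'_j = Q$ and distinct $\bar q_j\in\pi_0^\perp$ (so automatically $N'\leq Q$), one has
\[
\int_{B_{2\rho}(\pi_0)}\dist^2\bigl(\bar u(x),\,\bar u(0)\bigr)\,dx \leq C_1\rho^{m+2\beta_0}\qquad\text{for every }\rho\leq\rho_0,
\]
with $C_1,\rho_0$ depending only on $Q,m,n,\bar n$. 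Define $q_{j,k} := p_k + \eps_k \bar q_j$. Property (A) follows from the uniform bound $|\bar q_j|\leq C$, yielding $\dist(q_{j,k},P_k)\leq|q_{j,k}-p_k|\leq C\eps_k = C(E_k+\Abf_k^2)^{1/2}$. For (B), combining the graphical approximation with the $L^2$ convergence $\bar u_k\to\bar u$ and the decay above gives
\[
\int_{\Cbf_{2\rho}}\dist^2(x,\boldsymbol{\pi}'_k)\,d\|T_k\| \leq \eps_k^2\int_{B_{2\rho}(\pi_0)}\dist^2(\bar u_k,\bar u(0))\,dx + C\eps_k^{2+2\gamma} \leq \bigl(C_1\rho^{m+2\beta_0}+o_k(1)\bigr)\eps_k^2,
\]
which, on choosing $\rho_0$ small enough that $C_1\rho_0^{\beta_0}\leq 1/2$, is at most $\rho^{m+2\beta_0}\eps_k^2$ for all sufficiently large $k$. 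This contradicts the failure of (B) and completes the proof.

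The main obstacle is the cluster reduction in the first step: ensuring all planes lie within $O(\eps_k)$ of a common point so that the $\eps_k$-normalization produces a bounded-energy, nontrivial Dir-minimizer and so that (A) holds automatically from $|\bar q_j|\leq C$. The intermediate regime, where planes are separated at scales between $\eps_k$ and $1$, requires careful iteration of Lemma \ref{l:simple-3} and Lemma \ref{l:simple-4} together with the combinatorial Lemmas \ref{l:clusters-1}--\ref{l:clusters-3} to identify scales at which clean splitting into further subclusters is possible. A secondary subtlety is to verify that the blow-up limit $\bar u$ is genuinely a Dirichlet minimizer in $\Acal_Q(\pi_0^\perp)$: this uses that the normalized plane positions $\bar p_i$ remain bounded (from the cluster reduction) and that $\Abf_k\to 0$, so that after rescaling by $1/\eps_k$ the ambient manifold $\Sigma$ converges smoothly to a flat $(m+\bar n)$-plane and the curvature corrections to the limit energy vanish.
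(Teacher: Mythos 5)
Your proposal takes a genuinely different route from the paper: you argue by contradiction with a blow-up to a $Q$-valued Dir-minimizer, whereas the paper gives a direct proof. In the paper's argument (after reducing the tilt excess via \eqref{e:tilt-estimate}), one applies the harmonic approximation theorem of \cite{DLS14Lp}*{Theorem 2.6} to obtain a Dir-minimizer $g$ \emph{with small Dirichlet energy} $\int|Dg|^2 \lesssim E + \mathbf{A}^2$, directly close to the Lipschitz approximation. One then sets $q_i := g_i(0)$, gets (A) from a Chebyshev argument that locates points $x$ where both $f(x)$ and $g(x)$ have sheets near $P$, and gets (B) from the interior H\"older estimate for Dir-minimizers, which controls $\mathcal{G}(g(\cdot), g(0))$ on $B_{2\rho}$ in terms of the Dirichlet energy. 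No normalization by $\eps_k = (E_k+\Abf_k^2)^{1/2}$ and no blow-up limit is taken, so the original points $P$ never need to form a single cluster: the sheets $g_i(0)$ may well be spread across far-apart planes in $\boldsymbol{\pi}$, but each is individually close to $P$, which is all (A) requires.

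This makes your cluster-reduction step not just unresolved but unnecessary. Moreover, as stated it has a genuine gap: Lemma \ref{l:simple-4} only produces a splitting when $E \leq \tilde\sigma\min\{M,1\}^{m+2}$, i.e.\ when the cluster separation $M$ is at least of order $\eps_k^{2/(m+2)}$. For separations in the range $\eps_k \lesssim M \lesssim \eps_k^{2/(m+2)}$ (a non-vacuous intermediate regime), the splitting lemma does not apply, yet the normalized planes $(p_{i,k}-p_k)/\eps_k$ diverge, so the limit $\bar u$ would not have bounded $L^2$ norm and the blow-up would degenerate. Iterating Lemmas \ref{l:clusters-1}--\ref{l:clusters-3} and \ref{l:simple-3}--\ref{l:simple-4} cannot by itself close this window, since those tools are all calibrated to the threshold $\eps_k^{2/(m+2)}$, not $\eps_k$.

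There is a second, independent gap in the curvature-dominant regime. The harmonic approximation theorem \cite{DLS14Lp}*{Theorem 2.6} that supplies the Dir-minimizer requires $\mathbf{A} \leq \mathbf{E}^{1/4+\bar\delta}$, which fails when $\mathbf{A}^3 > \mathbf{E}$; the paper handles this via a separate argument (Case 2) using the $Q$-valued Poincar\'e inequality, producing a \emph{constant} competitor $Y\in\mathcal{A}_Q$ rather than a Dir-minimizer. Your remark that ``after rescaling by $1/\eps_k$ the ambient manifold $\Sigma$ converges smoothly to a flat plane'' is incorrect in this regime: from \eqref{e:estimates_psi} the graph parameterization $\Psi_k$ of $\Sigma_k$ satisfies $|D^2\Psi_k|\sim\mathbf{A}_k$, so the target rescaling yields $|D^2(\Psi_k/\eps_k)| \sim \mathbf{A}_k/\eps_k$, which is $O(1)$ rather than $o(1)$ when $\mathbf{A}_k$ is comparable to $\eps_k$. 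The rescaled manifold therefore retains bounded but nonvanishing second-order deviation from a plane, and the limit of $\bar u_k$ (insofar as it exists) would carry a nontrivial quadratic correction from $\Sigma$, breaking Dir-minimality. Your contradiction scheme has no analogue of the paper's Case 2 to absorb this.

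In summary: the strategy (contradiction, blow-up to a Dir-minimizer, H\"older decay) is in the right spirit, but the cluster-reduction you rely on is neither necessary (the paper's Chebyshev argument for (A) avoids it entirely) nor established in the intermediate separation regime, and the proposal has no valid treatment of the case $\mathbf{A}^3 > \mathbf{E}$.
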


\begin{remark}
    Note that we need not have $N'\leq N$ in the conclusion of Lemma \ref{l:decay}; the number of planes in the new collection $\boldsymbol{\pi}'$ may increase.
\end{remark}

\begin{proof} As usual constants denoted by $C$ will depend only upon $Q,m,n,\bar{n}$ (their dependence on $N$ can be reduced to a dependence on $Q$ given that $N\in \{1, \ldots, Q\}$). Recall that we have just shown the validity of the tilt-excess estimate \eqref{e:tilt-estimate}, and thus (taking $r=1$ in \eqref{e:tilt-estimate}), we have%, by Proposition \ref{p:tilt-2},
\begin{equation}\label{e:tilt-estimate-repeated}
\mathbf{E} := \mathbf{E} (T, \mathbf{C}_1) \leq C (E+ \mathbf{A}^2)\, .
\end{equation}
By choosing $\sigma_5 = \sigma_5(Q,m,n,\bar{n},N)>0$ sufficiently small, we can therefore ensure that both $\mathbf{E}$ and $\mathbf{A}$ are as small as we wish. We now subdivide into two cases, depending on the relative sizes of $\Ebf$ and $\Abf$.

\medskip

{\bf Case 1:} $\mathbf{A}^3 \leq \mathbf{E}$. Here we apply the strong Lipschitz approximation theorem \cite{DLS14Lp}*{Theorem 2.4}. By translating, we may without loss of generality assume that the origin belongs to ${\spt}\, (T)$ (and hence to the manifold $\Sigma$), that $\pi_0=\mathbb R^m\times \{0\}$ (by rotating) and that $\Psi \equiv \Psi_0: \mathbb{R}^{m+\bar n} \to \mathbb R^{n-\bar n}$ is the map parametrizing $\Sigma$ graphically over $\mathbb{R}^{m+\bar n}$ in $\Cbf_2$, as in Assumption \ref{a:main}. By \cite{DLS14Lp}*{Remark 2.5}, we have the estimates
\begin{equation}\label{e:estimates_psi}
\Psi (0)=0 \quad\mbox{and}\quad \|D\Psi\|_{C^2} \leq C (\mathbf{E}^{1/2}+\Abf)\, ;
\end{equation}
here, $C = C(m,n,\bar{n}).$ By \cite{DLS14Lp}*{Theorem 2.4} there exists constants $\gamma = \gamma(Q,m,n,\bar{n}) > 0$ and $\varepsilon = \varepsilon(Q,m,n,\bar{n})>0$ such that if $\Ebf<\varepsilon$ (which can be guaranteed provided $\sigma_5$ is sufficiently small), then there is a multi-valued map $u: B_{1/4} (0, \pi_0) \to \mathbb R^{\bar n}$ such that $f = (u, \Psi (x, u))$ (using the notation of \cite{DLS14Lp}) is a good approximation of $T\res \mathbf{C}_{1/4}$, in the following sense:
\begin{itemize}
\item[(i)] ${\rm Lip}\, (f) \leq C (\mathbf{E}+\mathbf{A}^2)^\gamma \leq C(E+\Abf^2)^\gamma$;
\item[(ii)] There is a closed set $K\subset B_{1/4}$ of measure at least $\frac{1}{2}|B_{1/4}|$ such that $\mathbf{G}_f \res (K\times \mathbb R^n)=T\res (K\times \mathbb R^n)$ and 
\[
\|T\| ((B_{1/4}\setminus K)\times \mathbb R^n) \leq C (\mathbf{E}+\mathbf{A}^2)^{1+\gamma} \leq C(E+\mathbf{A}^2)^{1+\gamma} \, ;
\]
\end{itemize}
where $C = C(Q,m,n,\bar{n})$. Notice that in the above estimates, we have used \eqref{e:tilt-estimate-repeated} to get the improved control in terms of the $L^2$ height excess. From the above properties, the estimate \cite{DLS14Lp}*{Theorem 2.4(2.6)} and \eqref{e:estimates_psi} we also see that 
\begin{equation}\label{e:Dirichlet}
\int_{B_{1/4}} |Df|^2 \leq C\int_K |Df|^2 + C (\mathbf{E}+\mathbf{A}^2)^{1+\gamma} \leq C \Ebf + C(\mathbf{E}+\mathbf{A}^2)^{1+\gamma} \overset{\eqref{e:very-small-again}}{\leq} C (\mathbf{E} + \mathbf{A}^2)\, .
\end{equation}
Moreover, for every fixed $\eta$, if $\mathbf{E}$ is sufficiently small (depending on $\eta$), by \cite{DLS14Lp}*{Theorem 2.6} there is a Dir-minimizing function $v:B_{1/4}\to \mathcal{A}_Q (\mathbb R^{\bar n})$ such that, if we set $g = (v, \Psi (x,v))$, then
\begin{align}
&\int_{B_{1/4}} \mathcal{G} (f,g)^2 \leq \eta \mathbf{E} \label{e:f,g-bound-1}\\
&\int_{B_{1/4}} |Dg|^2 \leq \int_{B_{1/4}} |Df|^2 + \eta \mathbf{E}\overset{\eqref{e:Dirichlet}}{\leq} C (\mathbf{E} + \mathbf{A}^2) \overset{\eqref{e:tilt-estimate-repeated}}{\leq} C(E+\Abf^2)\, . \label{e:f,g-bound-2}
\end{align}
Note that the condition $\Abf\leq \Ebf^{\frac{1}{4}+\bar\delta}$ in \cite{DLS14Lp}*{Theorem 2.6} is satisfied, with $\bar\delta = 1/12 $ in this case, since we are assuming $\Abf^3\leq \Ebf$.

Observe now that, by a simple Chebyshev argument, for at least half of the points $x\in K\cap B_{1/8}$ we have the following property:
\begin{itemize} 
\item[(a)] if $f(x)= \sum_i Q_i \llbracket f_i (x)\rrbracket$ (with $f_i(x)$ distinct), for each $i$ there is a $j(i)$ such that $|f_i(x) - p_{j(i)}|\leq C E^{1/2}$\, .
\end{itemize}
Hence, by another Chebyshev argument, combined with \eqref{e:f,g-bound-1}, we may find at least one such point $x\in K\cap B_{1/8}$ for which we have the corresponding property for $g(x)$ (in fact, we can find a set of positive measure on which this holds):
\begin{itemize}
    \item[(b)]  if $g(x)= \sum_i \tilde{Q}_i \llbracket g_i (x)\rrbracket$ (with $g_i(x)$ distinct), for every $i$ there is a $j(i)$ such that $|g_i (x)-p_{j(i)}|\leq C E^{1/2}$.
\end{itemize}
Now, combining \eqref{e:f,g-bound-2} with the H\"older estimate \cite{DLS_MAMS}*{Theorem 3.9} for Dir-minimizing functions, we also conclude an analogous estimate at $0$:
\begin{itemize}
    \item[(c)] if $g_i (0) = \sum_i Q^*_i \llbracket g_i (0)\rrbracket$ (with $g_i(0)$ distinct), for every $i$ there is a $j(i)$ such that $|g_i (0)-p_{j(i)}|\leq C (E^{1/2} + \mathbf{A})$
\end{itemize}
Notice that since $x\in B_{1/8}$, $\dist(x,\partial B_{1/4})\geq C^{-1}$ and so the choice of $\delta$ in \cite{DLS_MAMS}*{Theorem 3.9} only depends on $n,m,Q$). We now set $q_i := g_i (0)$; obviously the number of distinct points $q_i$ is at most $Q$. Clearly, by (c) above, this choice of $q_i$ obeys conclusion (A) of the lemma.

Observe that, by all the estimates carried over so far and by Lemma \ref{l:simple-2}, for each $\rho\leq \frac{1}{8}$ and some $\alpha = \alpha(m,Q)\in \ ]0,1[$ we have
\begin{align*}
\int_{\mathbf{C}_{2\rho}} \dist^2 (x, \boldsymbol{\pi}')\, d\|T\| (x) &
\overset{\text{(ii)}}{\leq} C (E + \mathbf{A}^2)^{1+\gamma} + C \int_{B_{2\rho}} \mathcal{G} (f(x), g (0))^2\, dx\\
&\leq C \sigma_5^{\gamma} (E+\mathbf{A}^2) + C \eta (E+\mathbf{A}^2) + C \rho^{m+2\alpha} (E+\mathbf{A}^2)\, ;
\end{align*}
where Lemma \ref{l:simple-2} was used in the first inequality, while in the second inequality we use \eqref{e:very-small-again}, \eqref{e:f,g-bound-1}, the $\alpha$-H\"older continuity of $g$ from \cite{DLS_MAMS}*{Theorem 3.9}, and \eqref{e:f,g-bound-2}.

Now choose $\beta_0\in \ ]0, \alpha/2]$, and choose $\rho_0>0$ with $C\rho_0^\alpha\leq\frac{1}{3}$. Then choose $\eta=\eta(\rho)$ such that $C\eta\leq\frac{1}{3}\rho^{m+2\beta_0}$. Then, given this choice of $\eta$, provided that we choose $\sigma_5$ sufficiently small so that our application of \cite{DLS14Lp}*{Theorem 2.6} was valid with this $\eta$, and so that $C\sigma_5^\gamma \leq \frac{1}{3}\rho^{m+2\beta_0}$, the above estimate clearly then gives (B) of the lemma. This completes the proof of Case 1. 

\medskip

{\bf Case 2:} In this case we have $\mathbf{E} \leq \mathbf{A}^3$. As in the previous case, using the same notation, we introduce the Lipschitz approximation $f = (u, \Psi (x,u))$ on $B_{1/4}(0,\pi_0)$. Observe that, since the graph of $f$ coincides with the current $T$ on $K\times \mathbb R^n\subset B_{1/4}\times\R^n$, we again have the first two inequalities of \eqref{e:Dirichlet}, only now we further estimate this as follows:
\[
\int_{B_{1/4}} |Df|^2 \leq C \Ebf + C (\mathbf{E} + \mathbf{A}^2)^{1+\gamma}
\leq C \mathbf{A}^{2+2\gamma} \leq C \sigma_5^{\gamma} \mathbf{A}^2\, .
\]
We then use the Poincar\'e inequality for $Q$-valued functions (see e.g. \cite{DLS_MAMS}*{Proposition 2.12 \& Proposition 4.9}) and H\"older's inequality to find a point $Y\in \mathcal{A}_Q$ such that
\[
\int_{B_{1/4}} \mathcal{G} (f,Y)^2 \leq C \left(\int_{B_{1/4}} \mathcal{G} (f, Y)^{2^*}\right)^{2/2^*}
\leq C \int_{B_{1/4}} |Df|^2\, .
\]
In particular we reach
\begin{equation}
\int_{B_{1/4}} \mathcal{G} (f,Y)^2\leq C \sigma_5^{\gamma} \mathbf{A}^2\, .\label{e:latter-inequality}
\end{equation}
Write $Y = \sum_i Q_i \llbracket q_i \rrbracket$, where the $q_i$ are distinct, and set $\boldsymbol{\pi}':= \bigcup_i (q_i+\pi_0$). We then find that,
for every $0<\rho \leq \frac{1}{8}$, by Lemma \ref{l:simple-2}, \eqref{e:latter-inequality}, and (ii), we have
\[
\int_{\mathbf{C}_{2\rho}} \dist^2 (x, \boldsymbol{\pi}')\, d\|T\| (x) 
\leq C \sigma_5^{\gamma} \mathbf{A}^2 + \|T\| ((B_{1/4}\setminus K)\times \mathbb R^n)
\leq C \sigma_5^\gamma \mathbf{A}^2\, .
\]
In particular, for every fixed $\rho$, we may fix $\beta_0$ to be an arbitrary positive dimensional constant and choose $\sigma_5$ small enough (dependent on $\beta_0$) to guarantee that $C\sigma_5^\gamma \leq \rho^{m+2\beta_0}$
\[
\int_{\mathbf{C}_{2\rho}} \dist^2 (x, \boldsymbol{\pi}')\, d\|T\| (x) \leq \rho^{m+2\beta_0} \mathbf{A}^2\, .
\]
This proves conclusion (B) of the lemma, in this case.

We are left with proving the estimate (A). Recall that, in light of (ii), we have $|B_{1/4}\setminus K|\leq \frac{1}{2} |B_{1/4}|$, and hence, via a similar Chebyshev argument as in Case 1, now using \eqref{e:latter-inequality}, we may estimate
\[
\dist^2 (q_i, {\rm spt}\, f (x))\leq C \mathbf{A}^2\, 
\qquad \forall i \, ,
\]
for all $x$ in a subset $K'\subset K$ whose measure is at least $\frac{1}{4} |B_{1/4}|$. On the other hand, on at least half of the set $K'$ we also have by another Chebyshev argument
\[
\max_{p\in {\rm spt}\, f (x)} \dist^2 (p, \boldsymbol{\pi})\leq C E\, . 
\]
Therefore, choosing a point $p\in\spt f(x)$ for $x$ in this latter set and using the triangle inequality we get
\[
\dist^2 (q_i, \boldsymbol{\pi}) \leq C (E+\mathbf{A}^2)\,  \qquad \forall i\, ,
\] 
which proves (A) and thus completes Case 2, which also completes the proof.
\end{proof}

\subsection{Proof of Proposition \ref{p:inductive}} 
Having proved the $L^2$ height excess decay lemma, we are now in a position to prove the inductive step given in Proposition \ref{p:inductive}. Fix $1\leq r<2$ as in the statement of Theorem \ref{thm:main-estimate} and set
\[
M := \max_{i,j} \{|p_i-p_j|\}\, .
\]
We will show the existence of a constant $\sigma_6 = \sigma_6(Q,m,n,\bar{n},N,r)>0$ such that, if 
\begin{equation}\label{e:small-again}
E+\mathbf{A}^2 \leq \sigma_6 M^2\, ,
\end{equation} 
then there is decomposition $\boldsymbol{\pi}=\boldsymbol{\pi}_1\cup \boldsymbol{\pi}_2$ and $T\res \Cbf_{1+r/2} = T_1 + T_2$ such that 
\begin{itemize}
\item[(i)] The sets $\boldsymbol{\pi}_1,\boldsymbol{\pi}_2$ are disjoint and nonempty;
\item[(ii)] $\partial T_i \res \Cbf_{1+r/2} = 0$;
\item[(iii)] ${\rm spt}\, (T_1) \cap {\rm spt}\, (T_2) = \emptyset$;
\item[(iv)] $\dist (q, \boldsymbol{\pi}) = \dist (q, \boldsymbol{\pi}_i)$ for every $q\in {\rm spt}\, (T_i)$, for each $i=1,2$.
\end{itemize}
In particular, assuming such a decomposition, after rescaling $1+r/2$ to scale $2$ we can apply the inductive assumption to $T_1$ and $T_2$ (note that whilst we could have, e.g. $T_2 = 0$, i.e. $Q$ remains fixed, however by (i) we then know that $N$ strictly decreases) to conclude
\[
{\rm spt} (T_i)\cap \mathbf{C}_r \subset \{\dist (\cdot, \boldsymbol{\pi}_i)\leq \bar{C} (E^{(i)}+\mathbf{A}^2)^{1/2}\}
\]
where $\bar{C} = \bar{C}(Q,m,n,\bar{n},N,r)$ and 
\[
E^{(i)} := \int_{\mathbf{C}_{1+r/2}} \dist^2 (x, \boldsymbol{\pi}_i)\, d\|T_i\| (x)\, .
\]
Given that obviously $E^{(1)},E^{(2)}\leq CE$, we would have reached the desired estimate \eqref{e:Linfty-estimate}, provided that \eqref{e:small-again} holds. On the other hand, if the estimate \eqref{e:small-again} does not hold, i.e. $M^2 \leq \sigma_6^{-1} (E+\mathbf{A}^2)$, we can take an arbitrary plane $p_i + \pi_0$ from $\boldsymbol{\pi}$ and conclude that
\[
\int_{\mathbf{C}_2} \dist^2 (x, p_i+\pi_0)\, d\|T\| (x) \leq \bar{C} (E+\mathbf{A}^2)\, .
\]
It would then follow from the case $N=1$ of Theorem \ref{thm:main-estimate}, namely Proposition \ref{p:step1}, that
\[
{\rm spt}\, (T) \cap \mathbf{C}_r \subset \{\dist (\cdot, p_i+\pi_0) \leq \bar{C} (E+\mathbf{A}^2)^{1/2}\}
\subset \{\dist (\cdot, \boldsymbol{\pi})\leq \bar{C} (E+\mathbf{A}^2)^{1/2}\}\, ,
\]
for any fixed plane $p_i+\pi_0\in \boldsymbol{\pi}$, proving \eqref{e:Linfty-estimate}.

We are thus left to prove the existence of the decomposition satisfying (i)-(iv) under the assumption that \eqref{e:small-again} holds for a sufficiently small $\sigma_6$. We first use the combinatorial Lemma \ref{l:clusters-3} to decompose $P \equiv \{p_1,\dotsc,p_N\}= P_1\cup P_2$, and consequently $\boldsymbol{\pi} = \boldsymbol{\pi}^*_1\cup \boldsymbol{\pi}^*_2$ for $\boldsymbol{\pi}_i^* =\bigcup_{p\in P_i}(p+\pi_0)$, with the property that $P_1$ and $P_2$ are disjoint non-empty sets satisfying 
\begin{equation}\label{e:plane-separation}
{\rm sep}\, (P_1,P_2) \equiv \min \{|p_1-p_2|: p_1\in P_1,\, p_2\in P_2\} \geq \frac{M}{2^{N-2}}\, .
\end{equation}
Our claim is that the desired decomposition will be achieved with the latter choice. In light of \eqref{e:plane-separation}, this will follow immediately if we have that 
\begin{equation}\label{e:close-enough}
{\rm spt}\, (T) \cap \mathbf{C}_{1+r/2} \subset \{x:\dist (x, \boldsymbol{\pi}) \leq 2^{-N} M\}\, .
\end{equation}
Let us thus show that \eqref{e:close-enough} indeed holds. Fix a point $x\in B_{1+r/2} (0, \pi_0)$ and the scaling factor $1-r/2$. Let $T^0$ be the rescaled current $(\iota_{x,1-r/2})_\sharp T$. Likewise, introduce the sets $\boldsymbol{\pi}_0 := \iota_{x,1-r/2} (\boldsymbol{\pi})$, $\Sigma^0 = \iota_{x, 1-r/2} (\Sigma)$, the shorthand notation $\mathbf{A}_0$ for the supremum norm of the second fundamental form of $\Sigma^0$, and 
\[
E_0 := \int_{\mathbf{C}_2} \dist^2 (y, \boldsymbol{\pi}_0)\, d\|T^0\| (y)\, .
\]
Observe that $E_0 \leq (1-r/2)^{-m-2} E$, $\mathbf{A}_0^2 = (1-r/2)^2 \mathbf{A}^2$. Setting $M_0:= (1-r/2)^{-1} M$, since $1-\frac{r}{2} \leq 1$ we see immediately that 
\[
E_0 + \mathbf{A}_0^2 \leq (1-r/2)^{-m} \sigma_6 M_0^2\, .
\]
Let now $\bar \sigma$ be the threshold of Lemma \ref{l:simpler-2}. We start by choosing $\sigma_6$ small enough so that 
\begin{equation}\label{e:one-condition-for-sigma}
\sigma_7 := (1-r/2)^{-m} \sigma_6 \leq \bar \sigma\, .
\end{equation}
In particular, if $M_0\geq 1$ we could apply Lemma \ref{l:simpler-2} and achieve the estimate
\[
{\rm spt}\, (T^0) \cap \mathbf{C}_1 \subset  \left\{\dist (\cdot , \boldsymbol{\pi}_0) \leq 2^{-N}M_0\right\}\, .
\]
Given that this holds for every $x\in B_{1+r/2} (0, \pi_0)$, it would then imply \eqref{e:close-enough} and conclude the proof. 

Otherwise, if $M_0 <1$, we may fix $\rho = \rho_0<1$, with $\rho_0$ as in Lemma \ref{l:decay}, and let $\sigma_5$ be the corresponding threshold from Lemma \ref{l:decay}. We may further choose $\sigma_7$ (as defined in \eqref{e:one-condition-for-sigma}) small enough such that $\sigma_7 \leq \sigma_5$. In particular, since $M_0 < 1$, we can apply Lemma \ref{l:decay} to $T^0$ and $\boldsymbol{\pi}_0$. Denote by $\boldsymbol{\pi}'_0$ the corresponding new group of planes and let 
\begin{itemize}
\item $T^1 = (\iota_{0, \rho})_\sharp T^0$;
\item $\Sigma^1 = \iota_{0, \rho} (\Sigma^0)$, with $\Abf_1$ the supremum norm of the second fundamental form of $\Sigma^1$;
\item $\bar{\boldsymbol{\pi}}_1 = \iota_{0, \rho} (\boldsymbol{\pi}'_0)$;
\item $M_1 = \rho^{-1} M_0$;
\item $\bar E_1 = \int_{\mathbf{C}_{2}} \dist^2 (x, \bar{\boldsymbol{\pi}}_1)\, d\|T^1\| (x)$.
\end{itemize}
Observe that from \eqref{e:decay-estimate} we have the inequality (with $\beta_0<1$ as in Lemma \ref{l:decay})
\begin{equation}\label{e:scaled-decay}
\bar E_1 + \mathbf{A}_1^2 \leq \rho^{2\beta_0-2}(E_0+\Abf_0^2) \leq \rho^{2\beta_0-2}\sigma_7 M_0^2 =  \rho^{2\beta_0}\sigma_7 M_1^2\, .
\end{equation}
As $\rho<1$, we can thus keep iterating this procedure with the same $\rho$, and along the iteration each $\bar{\boldsymbol{\pi}}_k$ consists of some number of planes, which could be larger than the initial number $N$, but it never exceeds $Q$. This in particular means that the parameter $\sigma_7$ remains fixed, dependent on only $\rho$, $Q$, $N$ and dimensional constants.  generating currents $T^j$, ambient manifolds $\Sigma^j$, families of parallel planes $\boldsymbol{\pi}'_j$ and $\bar{\boldsymbol{\pi}}_j = \iota_{0, \rho} (\boldsymbol{\pi}'_{j})$ until we reach the first index $k$ such that 
\[
    M_k = \rho^{-k} M_0 \geq 1.
\]

Let us now see how this iteration affects our original current $T$. Setting
\[
\bar E_j := \int_{\mathbf{C}_{2}} \dist^2 (x, \bar{\boldsymbol{\pi}}_j)\, d\|T^j\| (x)\, 
\] 
we may conclude, analogously to \eqref{e:scaled-decay}, that 
\[
\bar E_j + \mathbf{A}_j^2 \leq \rho^{2\beta_0j} \sigma_7 M_j^2\, .
\]
Now define $\boldsymbol{\pi}_j := \iota_{0, \rho^j} (\boldsymbol{\pi}_0) =
\iota_{0, \rho^j} (\iota_{x,(1-r/2)} (\boldsymbol{\pi}))$ and set
\[
E_j := \int_{\mathbf{C}_2} \dist^2 (x, \boldsymbol{\pi}_j)\, d\|T^j\| (x)\, .
\]
We wish to estimate $E_j$ and the distance of the new family of planes $\boldsymbol{\pi}'_j$ from the one $\boldsymbol{\pi}_j$ obtained by rescaling $\boldsymbol{\pi}_0$. We therefore introduce
 \[
d (j):=  \max \{\dist (\pi, \boldsymbol{\pi}_j): \pi \in \boldsymbol{\pi}'_j\}
 \]
 Note that from (A) from Lemma \ref{l:decay} we have
 \[
 d (0) \leq C (E_0 + \mathbf{A}_0^2)^{1/2} \leq C \sigma_7^{1/2} M_0\, .
 \]
 Subsequently, using the triangle inequality (to estimate the distance from $\boldsymbol{\pi}_1$ to $\bar{\boldsymbol{\pi}}_1$ and then from $\bar{\boldsymbol{\pi}}_1$ to $\boldsymbol{\pi}^\prime_1$)
 \[
 d(1) \leq \left(C \sigma_7^{1/2} M_0\right) \rho^{-1} + C (E_1+\mathbf{A}_1^2)^{1/2} 
 \leq C \sigma_7^{1/2} M_1 (1+\rho^\beta)\, 
 \]
 where we have used our previous bounds and the fact $M_1 = \rho^{-1}M_0$. Inductively, we achieve 
 \[
 d (j) \leq C \sigma_7^{1/2} M_j \sum_{i=0}^j \rho^{j\beta}\, ;
 \]
 we stress that $C$ is the same each time we apply the lemma and so it is constant in $j$. In particular, 
 \[
 d (k) \leq C \sigma_7^{1/2} M_k\, .
 \]
 Since
 \[
 E_k \leq C d(k)^2 + C \bar E_k\, ,
 \]
 we conclude that 
 \[
 E_k \leq C \sigma_7 M_k^2\, ,
 \]
 where the constant $C$ depends only on $m,n,\bar{n}$, and $Q$. In particular, due to the fact that $M_k\geq 1$ by construction (and $M_k$ is the maximal distance between the planes in $\boldsymbol{\pi}_k$), we may choose $\sigma_7$ smaller than a suitable geometric constant in order to apply Lemma \ref{l:simpler-2}, concluding that
 \[
 {\rm spt}\, (T^k) \cap \mathbf{C}_1 \subset \{\dist (\cdot, \boldsymbol{\pi}_k) \leq 2^{-N} M_k\}\, .
 \]
 Rescaling this information in the original system of coordinates gives 
 \[
 {\rm spt}\, (T) \cap \mathbf{C}_{\rho^k (1-r/2)} (x) \subset \{\dist (\cdot, \boldsymbol{\pi})\leq 2^{-N} M\}\, .
 \]
 The arbitrariness of $x\in B_{1+r/2} (0, \pi_0)$ gives then \eqref{e:close-enough} and hence concludes the proof.
 \qed

\medskip

 The proof of Theorem \ref{thm:main-estimate} is therefore complete.

% \part{Relative position of pairs of planes}

% \input{planes}

\part{Proof of the Main Excess Decay Theorem}\label{p:decay-proof}

In this part, we prove Theorem \ref{c:decay}. The overall structure is as follows. In Section \ref{p:planes}, we begin with a few technical preliminaries regarding the relative positioning of planes, and some combinatorial preliminaries which will be needed for the graphical parameterizations in the following section. In Section \ref{p:approx}, we set up efficient graphical parameterizations for $T$ over \emph{balanced} cones in $\Cscr(Q)$ (as defined in the preceding section), away from the spines of the cones and when the two-sided excess of $T$ relative to $\Sbf$ is much smaller than the \emph{minimal angle} of the cone (such an assumption being used in this context is equivalent to that used by Wickramasekera \cite{W14_annals}*{Section 10, Hypothesis $(\star\star)$}). This construction heavily relies on the height bound from Part \ref{p:heightbd}, and given this our construction is a suitable adaptation of those seen in the work of Simon (\cite{Simon_cylindrical}) and Wickramasekera (\cite{W14_annals}), with added technical complications from working in arbitrary codimension and with area-minimizing currents. Section \ref{s:balancing} is then dedicated to showing that we may assume that the cone $\Sbf$ in Theorem \ref{c:decay} is balanced, allowing us to approximate $T$ with a multi-valued graph over this cone away from its spine, as demonstrated in the previous section. In Section \ref{s:reduction} we reduce the proof of Theorem \ref{c:decay} to an a priori much weaker weaker decay theorem which has a much stronger assumption on the size of the $L^2$ height excess of $T$ relative to $\Sbf$; the idea is to use an excess decay statement with multiple scales, analogous to that first seen in \cite{W14_annals}*{Section 13}. In Section \ref{p:estimates}, we then show the non-concentration of excess and the Simon estimates at the spine of the cone $\Sbf$; these are the key estimates first used in Simon's work (\cite{Simon_cylindrical}) and then adapted to a degenerate setting, as we similar face here, by Wickramasekera (\cite{W14_annals}*{Section 10}). We then display the analogue of Theorem \ref{c:decay} at the linearized level (for multi-valued Dir-minimizers) in Section \ref{p:linear}. Finally, in Section \ref{p:blowup}, we conclude with a blow-up procedure, collecting all the previous arguments to yield a limiting Dir-minimizer that contradicts the results in Section \ref{p:linear} if we assume the failure of Theorem \ref{c:decay}.

\section{Relative position of pairs of planes}\label{p:planes}
We begin with some elementary results on the relative positions of pairs of planes. These will be useful for results appearing later in this section regarding comparability of angle parameters for cones in $\Cscr(Q)$ and multi-valued Dir-minimizers formed from superpositions of linear maps.

We start with a simple lemma in linear algebra.

\begin{lemma}\label{l:symmetry}
Consider two $m$-dimensional linear subspaces $\alpha$, $\beta$ of $\mathbb R^N$ and the two quadratic forms $Q_1:\alpha\to \R$, $Q_2:\beta\to \R$ given by $Q_1(v) := \dist^2(v,\beta)$ and $Q_2(w):= \dist^2(w,\alpha)$. Then:
\begin{itemize}
\item[(a)] $Q_1$ and $Q_2$ have the same eigenvalues with the same multiplicities.
\item[(b)] For every orthonormal base $v_1, \ldots , v_m$ of eigenvectors of $Q_1$ there is a corresponding orthonormal base $w_1, \ldots , w_m$ of eigenvectors of $Q_2$ with the property that
\[
Q_1 (v_i) = Q_2 (w_i) = 1 -  (w_i\cdot v_i)^2\, ,
\]
and $v_i \cdot w_j =0$ for $i\neq j$.
\item[(c)] If $\alpha \cap \beta^\perp = \{0\}$ a choice of $w_1, \ldots , w_m$ is given by $w_i = \frac{\mathbf{p}_\beta (v_i)}{|\mathbf{p}_\beta (v_i)|}$.
\end{itemize}
\end{lemma}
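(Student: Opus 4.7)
The plan is to recast both $Q_1$ and $Q_2$ as quadratic forms associated with restricted orthogonal projections, and then invoke the standard spectral symmetry between $T^*T$ and $TT^*$. First I will define $T := \mathbf{p}_\beta \circ \iota_\alpha \colon \alpha \to \beta$ and $S := \mathbf{p}_\alpha \circ \iota_\beta \colon \beta \to \alpha$, where $\iota_\alpha,\iota_\beta$ denote the inclusions into $\mathbb{R}^N$. Self-adjointness of the projections immediately gives $S = T^*$, and a short application of Pythagoras yields
\[
Q_1(v) = |v|^2 - |Tv|^2 = \langle (I_\alpha - T^*T)v, v\rangle, \qquad Q_2(w) = \langle (I_\beta - TT^*)w, w\rangle,
\]
so the eigenvalues of $Q_1$ (resp.\ $Q_2$) are precisely $1 - \lambda$ as $\lambda$ ranges over the eigenvalues of $T^*T$ (resp.\ $TT^*$).

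Part (a) is then immediate from the standard singular-value-decomposition fact that $T^*T$ and $TT^*$ share the same nonzero eigenvalues with the same multiplicities, combined with the rank-nullity identity $\dim \ker T = m - \rank T = m - \rank T^* = \dim \ker T^*$, which reconciles the multiplicity of the zero eigenvalue. For (b), given an orthonormal eigenbasis $v_1, \ldots, v_m$ of $T^*T$ with $T^*T v_i = \lambda_i v_i$, I will set $w_i := \lambda_i^{-1/2} T v_i = |\mathbf{p}_\beta v_i|^{-1}\mathbf{p}_\beta v_i$ for each index with $\lambda_i > 0$. Direct computation then shows $TT^* w_i = \lambda_i w_i$, $|w_i| = 1$, and
\[
w_i \cdot v_i = \lambda_i^{-1/2} \langle \mathbf{p}_\beta v_i, v_i\rangle = \lambda_i^{-1/2} |\mathbf{p}_\beta v_i|^2 = \lambda_i^{1/2},
\]
giving $1 - (w_i \cdot v_i)^2 = 1 - \lambda_i = Q_1(v_i) = Q_2(w_i)$ as required; the orthogonality relations $v_i \cdot w_j = \lambda_j^{-1/2}\langle T^*T v_i, v_j\rangle = \lambda_i \lambda_j^{-1/2}\delta_{ij}$ and $w_i \cdot w_j = \delta_{ij}$ will follow from the same manipulation.

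For the remaining eigenvectors, $\lambda_i = 0$ forces $v_i \in \ker T = \alpha \cap \beta^\perp$; I will complete $\{w_i\}$ by choosing any orthonormal basis of $\ker T^* \subset \beta \cap \alpha^\perp$, and these vectors are automatically perpendicular to every $v_j \in \alpha$, so the claims $w_i \cdot v_j = 0$ for $i \neq j$ and $1 - (w_i \cdot v_i)^2 = 1 = Q_2(w_i)$ are trivially satisfied. Finally, (c) is immediate because $\alpha \cap \beta^\perp = \{0\}$ is equivalent to $\ker T = \{0\}$, which forces every $\lambda_i > 0$ and makes the formula $w_i = |\mathbf{p}_\beta v_i|^{-1}\mathbf{p}_\beta v_i$ from (b) apply throughout. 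The argument is really just the SVD of $T$ combined with Pythagoras, and I expect no substantive obstacle; the only piece of bookkeeping is reconciling $\ker T$ with $\ker T^*$ when $\alpha$ and $\beta$ are not in generic position, and this is handled cleanly by rank-nullity.
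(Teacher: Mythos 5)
Your proof is correct. The core linear-algebraic object is the same as in the paper's argument: the eigenvalue equation $\mathbf{p}_\alpha(\mathbf{p}_\beta^\perp(v_i)) = \lambda_i v_i$ the paper works with is exactly the condition $T^*T v_i = (1-\lambda_i)v_i$ in your notation, and both proofs end up with the same construction $w_i = \mathbf{p}_\beta(v_i)/|\mathbf{p}_\beta(v_i)|$. Where you diverge is in how the remaining facts are derived. You recast everything through the operator $T = \mathbf{p}_\beta\circ\iota_\alpha$ and its adjoint, so (a) follows in one line from the standard SVD identity that $T^*T$ and $TT^*$ share nonzero spectra (with rank--nullity supplying the zero eigenvalue), and the orthogonality relations $v_i\cdot w_j = 0$ and $w_i\cdot w_j = 0$ reduce to the direct computations $\langle v_i, Tv_j\rangle = \langle T^*T v_j, v_i\rangle = \lambda_j\delta_{ij}$ and $\langle Tv_i, Tv_j\rangle = \lambda_i\delta_{ij}$. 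The paper instead proves the orthogonality of the $w_i$'s indirectly, by expanding the quadratic form $Q_1(\sum a_i v_i)$ in two ways and comparing cross-terms, and derives (a) only at the end once (b) is fully established. Your approach buys economy and transparency at the cost of leaning on an external fact about singular values; the paper's is longer but entirely self-contained. Both handle the degenerate case $\ker T \ne \{0\}$ in the same way, by completing with an orthonormal basis of $\beta\cap\alpha^\perp$. No gaps.
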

Before proving this, let us note that this lemma motivates the following definition:

\begin{definition}\label{d:frank}
Given two $m$-dimensional linear subspaces $\alpha, \beta$ of $\mathbb R^N$ whose intersection has dimension $m-k$, we order the $k$ positive eigenvalues $\lambda_1 \leq \lambda_2 \leq \cdots \leq \lambda_k$ of $Q_1$ as in Lemma \ref{l:symmetry}, with the convention that the number of occurrences of the same real $\lambda$ in the list equals its multiplicity as eigenvalue of $Q_1$. The \textit{Morgan angles} of the pair $\alpha$ and $\beta$ are the numbers $\theta_i (\alpha, \beta) := \arcsin \sqrt{\lambda_i}$ for $i=1,\dots,k$.
\end{definition}

Of course we can define the Morgan angles between two intersecting affine planes of the same dimension by simply translating an intersection point to the origin.

The following is then an immediate corollary of Lemma \ref{l:symmetry} and basic linear algebra:

\begin{corollary}\label{c:growth}
Suppose that $\alpha, \beta$ are two $m$-dimensional linear subspaces of $\R^N$. Denote by $V$ their $(m-k)$-dimensional intersection and let $\theta_1 \leq \cdots \leq \theta_k$ be their Morgan angles. Then 
\begin{equation}\label{e:Morgan-max-min}
|v| \sin \theta_1 \leq \dist (v, \beta) \leq |v| \sin \theta_k
\qquad \forall v\in V^\perp\cap \alpha\, ,
\end{equation}
and
\begin{equation}\label{e:Haus=max_eigen}
\dist (\alpha \cap \Bbf_1, \beta \cap \Bbf_1) = \sup \{ \dist (v, \beta): v\in \alpha\cap \Bbf_1\} = \sin \theta_k\, .
\end{equation}
\end{corollary}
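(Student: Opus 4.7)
The plan is to read both assertions directly off a spectral decomposition of the quadratic form $Q_1(v):=\dist^2(v,\beta)$ on $\alpha$ provided by Lemma \ref{l:symmetry}. The key initial observation is that the kernel of $Q_1$ is exactly $V=\alpha\cap\beta$: $Q_1(v)=0$ if and only if $v\in\beta$, equivalently $v\in V$. Hence, choosing an orthonormal basis $v_1,\ldots,v_m$ of $\alpha$ that diagonalizes $Q_1$ as in Lemma \ref{l:symmetry}, we may arrange that the last $m-k$ eigenvectors span $V$ (eigenvalue $0$), while $v_1,\ldots,v_k$ span $V^\perp\cap\alpha$ and correspond to the positive eigenvalues $\sin^2\theta_1\leq\cdots\leq\sin^2\theta_k$ in our ordering.

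For \eqref{e:Morgan-max-min} I would simply expand any $v\in V^\perp\cap\alpha$ as $v=\sum_{i=1}^k c_i v_i$, observe that
\[
\dist^2(v,\beta)=Q_1(v)=\sum_{i=1}^{k} c_i^2\sin^2\theta_i,\qquad |v|^2=\sum_{i=1}^{k} c_i^2,
\]
and pinch the first sum between $\sin^2\theta_1|v|^2$ and $\sin^2\theta_k|v|^2$.

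For \eqref{e:Haus=max_eigen} the plan is first to establish the one-sided identity $\sup\{\dist(v,\beta):v\in\alpha\cap\Bbf_1\}=\sin\theta_k$. Given $v\in\alpha\cap\Bbf_1$, decompose $v=v_V+v_\perp$ with $v_V\in V$ and $v_\perp\in V^\perp\cap\alpha$; since $v_V\in V\subset\beta$, we have $\dist(v,\beta)=\dist(v_\perp,\beta)\leq |v_\perp|\sin\theta_k\leq\sin\theta_k$ by the previous step, and equality is attained at $v=v_k$. Then, to upgrade this to the Hausdorff distance, I would invoke Lemma \ref{l:symmetry}(a): the form $Q_2$ on $\beta$ has the same eigenvalues with the same multiplicities as $Q_1$, so the symmetric one-sided supremum $\sup\{\dist(w,\alpha):w\in\beta\cap\Bbf_1\}$ also equals $\sin\theta_k$, and the Hausdorff distance, being the maximum of the two, again equals $\sin\theta_k$.

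I do not anticipate any real obstacle: the entire content of the corollary is already encoded in Lemma \ref{l:symmetry}, and the only small care needed is in identifying $V=\alpha\cap\beta$ with $\ker Q_1$ and in using the spectral symmetry between $Q_1$ and $Q_2$ to handle the second leg of the Hausdorff distance.
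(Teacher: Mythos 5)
Your proposal is correct and follows essentially the same route as the paper's terse justification: identify $V^\perp\cap\alpha$ with the positive-eigenvalue eigenspace of $Q_1$, read off the extremal eigenvalues on the unit sphere for \eqref{e:Morgan-max-min} and the first part of \eqref{e:Haus=max_eigen}, and invoke Lemma~\ref{l:symmetry}(a) for the symmetric leg of the Hausdorff distance. You just spell out the details (orthonormal diagonalizing basis, decomposition $v=v_V+v_\perp$) that the paper leaves implicit.
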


Indeed, this simply follows because $V^\perp\cap \alpha$ is the eigenspace of the quadratic form $Q_1 = \dist^2(\cdot,\beta)$ spanned by those eigenvectors which have positive eigenvalues, combined with the fact that the extremal eigenvalues of a quadratic form are realized through constrained optimization over the unit sphere. Note that the first equality in \eqref{e:Haus=max_eigen} is simply due to the fact that $\alpha$ and $\beta$ are $m$-dimensional subspaces.

\begin{proof}[Proof of Lemma \ref{l:symmetry}]
We prove (b) and (a) at the same time. Moreover, since we give an explicit construction for the vectors $w_i$, the proof will also show (c). We start by observing that 
\begin{equation}\label{e:clarifies_q_form_1}
Q_1 (v) = |\mathbf{p}_\beta^\perp (v)|^2 = v \cdot (\mathbf{p}_\beta^\perp (v))\, .
\end{equation}
Now pick an orthonormal basis $\{v_1,\dotsc,v_m\}$ of $\alpha$ which diagonalizes $Q_1$. Moreover, let us remark that we can extend $Q_1$ to a quadratic form $\bar{Q}_1$ on $\mathbb R^N$ by setting it identically $0$ on $\alpha^\perp$. $\bar{Q}_1$ can be computed explicitly as 
\begin{equation}\label{e:clarifies_q_form_2}
\bar{Q}_1 (z) = z \cdot (\mathbf{p}_\alpha \circ \mathbf{p}_\beta^\perp  \circ \mathbf{p}_\alpha (z)) 
\end{equation}
for a general $z\in \mathbb R^N$. In particular, each $v_i$ must be an eigenvector of the symmetric matrix $\mathbf{p}_\alpha \circ \mathbf{p}_\beta^\perp  \circ \mathbf{p}_\alpha$, which in turn implies that 
\begin{equation}\label{e:eigenvalue_eq}
\mathbf{p}_\alpha (\mathbf{p}^\perp_\beta(v_i)) = \lambda_i v_i
\end{equation}
for some constant $0\leq \lambda_i \leq 1$. Observe, moreover, that in case $\lambda_i = 1$, then necessarily
\[
\mathbf{p}^\perp_\beta (v_i) = v_i\, .
\]
Without loss of generality let us order the $v_i$ to assume that $0\leq Q_1(v_i) \leq Q_1(v_j)\leq 1$ for all $i\leq j$. Let us write $k$ for the largest element of $\{1,\dotsc,m\}$ for which $Q_1(v_k)<1$. Observe that $\mathbf{p}_\beta(v_j) = 0$ for all $j>k$. For $i\leq k$, define $w_i:= \frac{\mathbf{p}_\beta(v_i)}{|\mathbf{p}_\beta(v_i)|}$, and write $W = \mathrm{span}\{w_1,\dotsc,w_k\}$. Select any orthonormal basis $\{w_{k+1},\dotsc,w_m\}$ of $W^\perp\cap \beta$. Since $\mathbf{p}_\beta(v_j) = 0$ for $j>k$, it is obvious that $W = \mathbf{p}_\beta(\alpha)$. In particular, it follows that $w_j\perp \alpha$ for every $j>k$, and so for $j>k$ we have $1 = Q_1(v_j) = Q_2(w_j)$ and $ w_j\cdot v_i = 0$ for all $i$. 

To handle the case $j\leq k$, note that, if $i\neq j$, then
\begin{align*}
v_i\cdot w_j &= |\mathbf{p}_\beta (v_j)|^{-1} (v_i \cdot \mathbf{p}_\beta (v_j)) = - |\mathbf{p}_\beta (v_j)|^{-1} (v_i \cdot \mathbf{p}_\beta^\perp (v_j))\\
&= - |\mathbf{p}_\beta (v_j)|^{-1} (v_i \cdot \mathbf{p}_\alpha (\mathbf{p}_\beta^\perp (v_j))) \stackrel{\eqref{e:eigenvalue_eq}}{=}
- \lambda_j |\mathbf{p}_\beta (v_j)|^{-1} v_i\cdot v_j = 0\, .
\end{align*}
Next consider $v= \sum_{i=1}^k a_i v_i$ and compute
\begin{align*}
    Q_1(v) &= |\mathbf{p}^\perp_\beta(v)|^2 = |v|^2 - |\mathbf{p}_\beta(v)|^2\\
    & = \sum_i a_i^2 - \left|\sum_{i=1}^k (w_i\cdot v) w_i \right|^2
 = \sum_i a_i^2 - \left|\sum^k_{i=1}a_i (w_i\cdot v_i) w_i\right|^2\\
    & = \sum_i a_i^2\left(1-(w_i\cdot v_i)^2\right) - 2\sum_{i<j\leq k} a_ia_j ( w_i \cdot w_j) (w_i\cdot v_i) (w_j\cdot v_j)\, .
\end{align*}
Given that $Q_1(v) = \sum_i a_i^2 Q_1(v_i) = \sum_i a_i^2(1-(v_i\cdot w_i)^2)$ and the coefficients $a_i$ can be chosen arbitrarily, we must then have
\[
(w_i\cdot w_j) ( w_i\cdot v_i) (w_j \cdot v_j) = 0
\]
for every $1\leq i<j\leq k$. In particular, since $(w_i\cdot v_i) (w_j \cdot v_j) \neq 0$ when $i<j\leq k$ we immediately infer that $w_i\cdot w_j =0$, hence concluding that $w_1,\dotsc,w_m$ is an orthonormal basis of $\beta$. Moreover, we have already seen that $Q_1(v_i,v_i) = 1- (v_i\cdot w_i)^2$.

Next, fix $j\leq k$ and observe that, by construction, $w_j$ is orthogonal to $v_\ell$ when $\ell>k$. We also already know that for $i,j\leq k$, $i\neq j$, that $w_j$ is orthogonal to $v_i$. In particular, this gives that $\mathbf{p}_\alpha(w_j) =  (v_j\cdot w_j) v_j$ for every $j$. Hence, given any $w = \sum_i b_i w_i\in \beta$, we have
$$
Q_2(w) = |w|^2 - |\mathbf{p}_\alpha(w)|^2 = \sum_i \mu_i^2(1-(w_i\cdot v_i)^2).
$$
This proves that $Q_1(v_i) = Q_2(w_i)$ and that $w_i$ is an orthonormal basis of eigenvectors of $Q_2$, which completes the proof of (b) and also implies at the same time the statement (a).
\end{proof}

{\subsection{Rotating planes}\label{s:rotations}

In this section we use the material of the previous one to define ``canonical
rotations'' which map equidimensional linear subspaces onto each other. These objects are not strictly necessary for our considerations, but they help streamlining some arguments.

Consider two linear subspaces $\alpha$ and $\beta$ of $\mathbb R^{m+n}$ of the same dimension $k$ and assume that
\begin{equation}\label{e:not-orthogonal}
\alpha \cap \beta^\perp = \{0\}\, .
\end{equation}
We then define a canonical element $R (\alpha, \beta)\in O (\mathbb R^{m+n})$ mapping $\alpha$ onto $\beta$ in the following fashion.  First of all consider the quadratic form $Q (v) = \dist^2 (v, \beta)$ on $\alpha$ introduced in the previous section and let 
$v_1, \ldots , v_k$ be an orthonormal base which diagonalizes $Q$. Likewise consider the quadratic form $Q^\perp$ on $\alpha^\perp$ defined by $\dist^2 (v, \beta^\perp)$ and let $v_{k+1}, \ldots , v_{m+n}$ be an orthonormal base which diagonalizes $Q^\perp$. We then define $w_i := \frac{\mathbf{p}_\beta (v_i)}{|\mathbf{p}_\beta (v_i)|}$ for $1\leq i \leq k$ and $w_i :=  \frac{\mathbf{p}_\beta^\perp (v_i)}{|\mathbf{p}_\beta^\perp (v_i)|}$ for $k+1\leq i \leq m+n$. By Lemma \ref{l:symmetry}, $w_1, \ldots , w_{m+n}$ is an orthonormal base of $\mathbb R^{m+n}$ and if we define $R (v_i)=w_i$ and extend $R$ by linearity we clearly have an element of $O (\mathbb R^{m+n})$ with the property that $O (\alpha)= \beta$.

The properties of this map is then given in the following

\begin{lemma}\label{l:rotations}
Let $\alpha$ and $\beta$ be two equidimensional linear subspaces such that $\alpha \cap \beta^\perp = \{0\}$. Then
\begin{itemize}
    \item[(a)] $R = R (\alpha, \beta)$ is well-defined, i.e. its definition does not depend on the choice of the diagonalizing orthonormal bases of $Q$ and $Q^\perp$;
    \item[(b)] $R$ is an element of $SO (m+n)$;
    \item[(c)] $R (\alpha, \beta) = (R (\beta, \alpha))^{-1}$ and $R$ is the identity on $\alpha \cap \beta$ and $\alpha^\perp \cap \beta^\perp$; in particular $R (\alpha, \alpha)$ is the identity;
    \item[(d)] $R$ depends continuously on $\alpha$ and $\beta$.
\end{itemize}
\end{lemma}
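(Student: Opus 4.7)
The plan is to derive a coordinate-free formula for $R|_\alpha$ (and symmetrically for $R|_{\alpha^\perp}$); statement (a) is then immediate, (c) and (d) follow quickly from the formula, and (b) follows from (d) via a connectedness argument.

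For (a), introduce the self-adjoint map $B:\alpha\to\alpha$ defined by $B := \mathbf{p}_\alpha \circ \mathbf{p}_\beta|_\alpha$. Since $\mathbf{p}_\beta$ is a self-adjoint idempotent, for every $v\in\alpha$ we have $\langle Bv,v\rangle = \langle\mathbf{p}_\beta v,v\rangle = |\mathbf{p}_\beta v|^2 = |v|^2 - Q_1(v)$, where $Q_1(v) = \dist^2(v,\beta)$. Thus the self-adjoint operator associated to the quadratic form $Q_1$ of Lemma \ref{l:symmetry} is $\id_\alpha - B$, so $B$ has the same eigenspaces as $Q_1$ with eigenvalues $1-\lambda_i$; the hypothesis $\alpha\cap\beta^\perp=\{0\}$ rules out $\lambda_i=1$, so these eigenvalues lie in $(0,1]$ and $B$ is invertible. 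By Lemma \ref{l:symmetry}(c), the image $w_i = \mathbf{p}_\beta(v_i)/|\mathbf{p}_\beta(v_i)| = (1-\lambda_i)^{-1/2}\mathbf{p}_\beta(v_i)$, so on each eigenspace of $Q_1$ the map $R$ reduces to the basis-independent prescription $(1-\lambda_i)^{-1/2}\mathbf{p}_\beta$. Combining these pieces yields
\[
R|_\alpha \;=\; \mathbf{p}_\beta\circ B^{-1/2},
\]
which depends only on $\alpha$ and $\beta$. A symmetric argument with $B^\perp := \mathbf{p}_{\alpha^\perp}\circ\mathbf{p}_{\beta^\perp}|_{\alpha^\perp}$ gives $R|_{\alpha^\perp} = \mathbf{p}_{\beta^\perp}\circ(B^\perp)^{-1/2}$, where the invertibility of $B^\perp$ uses that for equidimensional subspaces the hypotheses $\alpha\cap\beta^\perp=\{0\}$ and $\alpha^\perp\cap\beta=\{0\}$ are equivalent (take orthogonal complements).

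The continuity assertion (d) is read off immediately: the projections $\mathbf{p}_\alpha, \mathbf{p}_\beta, \mathbf{p}_{\alpha^\perp}, \mathbf{p}_{\beta^\perp}$ depend continuously on $\alpha,\beta$ in the Grassmannian topology, so do $B$ and $B^\perp$, and taking the inverse square root of a positive self-adjoint operator is continuous. For (b), $R$ is orthogonal by construction. To show $\det R = +1$, parametrize the subspaces $\beta$ transverse to $\alpha^\perp$ as graphs $\beta_L = \{v+Lv : v\in\alpha\}$ of linear maps $L:\alpha\to\alpha^\perp$; this identifies the admissible set with a vector space, hence is path-connected. By (d), $L\mapsto R(\alpha,\beta_L)$ is a continuous map into $O(m+n)$, and at $L=0$ one has $\beta_L = \alpha$, $B=\id_\alpha$, $B^\perp=\id_{\alpha^\perp}$, and hence $R = \mathbf{p}_\alpha + \mathbf{p}_{\alpha^\perp} = \id$, which has determinant $+1$. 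Since $\det$ takes discrete values on $O(m+n)$, it is constant along this path, giving $\det R(\alpha,\beta_L)\equiv 1$.

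Statement (c) also follows from the formula. On $\alpha\cap\beta$ the form $Q_1$ vanishes, so $B$ acts as the identity on $\alpha\cap\beta$ and $R|_{\alpha\cap\beta} = \mathbf{p}_\beta|_{\alpha\cap\beta} = \id$; similarly $R$ is the identity on $\alpha^\perp\cap\beta^\perp$. For the inversion identity, $R(\beta,\alpha)|_\beta = \mathbf{p}_\alpha\circ(B^\ast)^{-1/2}$ with $B^\ast := \mathbf{p}_\beta\circ\mathbf{p}_\alpha|_\beta$. By Lemma \ref{l:symmetry}(b), the $w_i$ form an orthonormal basis of $\beta$ diagonalizing $Q_2 = \dist^2(\cdot,\alpha)$ with the same eigenvalues $\lambda_i$, and the orthogonality $v_j\cdot w_i = 0$ for $i\neq j$ within $\alpha$ gives $\mathbf{p}_\alpha(w_i) = (v_i\cdot w_i)v_i = \sqrt{1-\lambda_i}\,v_i$; hence $R(\beta,\alpha)(w_i) = (1-\lambda_i)^{-1/2}\mathbf{p}_\alpha(w_i) = v_i$, and symmetrically on $\beta^\perp$, so $R(\beta,\alpha) = R(\alpha,\beta)^{-1}$. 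The principal obstacle is the well-definedness in (a): since $Q_1$ may have repeated eigenvalues, the bases $\{v_i\}$ and $\{w_i\}$ are far from unique, and everything rests on recognizing that on each eigenspace of $Q_1$ the construction collapses to the basis-free expression $(1-\lambda)^{-1/2}\mathbf{p}_\beta$.
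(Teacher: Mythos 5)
Your proof is correct, and it takes a genuinely cleaner route than the paper for parts (a), (b), and (d). The paper's proof of (a) proceeds by casework on the eigenspaces of $Q$ (distinguishing simple from multiple eigenvalues and checking that the assignment $v'_k \mapsto w'_k$ respects linear combinations within each eigenspace), and proves (d) by a compactness/subsequence argument on diagonalizing bases. Your observation that $\langle Bv,v\rangle = |v|^2 - Q_1(v)$ with $B = \mathbf{p}_\alpha\circ\mathbf{p}_\beta|_\alpha$, and hence $R|_\alpha = \mathbf{p}_\beta\circ B^{-1/2}$, collapses both of these at once: (a) because the formula is manifestly basis-free, and (d) because positive-definite inverse square roots depend continuously on the operator. (To make the continuity in the Grassmannian topology fully routine one can extend $B$ to the globally defined positive operator $\mathbf{p}_\alpha\mathbf{p}_\beta\mathbf{p}_\alpha + \mathbf{p}_{\alpha^\perp}$ before taking the inverse square root, but this is cosmetic.) For (b), the paper asserts that $v_i\cdot w_i > 0$ for all $i$ forces the two orthonormal bases to have the same orientation, leaving the reader to fill in the argument; your route — using (d) together with the path-connectedness of the set of graphs $\beta_L = \{v+Lv : v\in\alpha\}$ and evaluating at $L=0$ — is both complete and more robust, and correctly relies on the observation that the transversality hypothesis is exactly what makes $\beta$ a graph over $\alpha$. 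For (c), both you and the paper invoke Lemma \ref{l:symmetry}(b) to identify $R(\beta,\alpha)(w_i)$ with $v_i$; your derivation via $\mathbf{p}_\alpha(w_i) = (v_i\cdot w_i)v_i = \sqrt{1-\lambda_i}\,v_i$ is the same computation. The one point you did not emphasize but is implicit and correct is that the argument for $\alpha^\perp\cap\beta = \{0\}$ from $\alpha\cap\beta^\perp = \{0\}$ uses equidimensionality together with the adjoint relation between $\mathbf{p}_\beta|_\alpha$ and $\mathbf{p}_\alpha|_\beta$. In summary: the statement and its proof are sound, and the coordinate-free formula you found is a worthwhile simplification that could replace the paper's casework.
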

\begin{proof} Concerning (a), the only ambiguity in the definition stems from the fact that the vectors $v_1, \ldots, v_{m+n}$ are not uniquely defined. Assume thus $v'_1, \ldots v'_m$ and $v'_{m+1}, \ldots , v'_{m+n}$ form two other orthonormal bases of $Q$ and $Q^\perp$. Define $w'_i$ accordingly. Moreover, without loss of generality, assume that the eigenvalues are the same for $v_i$ and $v'_i$. If the eigenvalues of $Q$ are all distinct and those of $Q^\perp$ are also all distinct, then $v_i = \pm v'_i$ and hence $w_i = \pm w'_i$ and we see immediately that the definition of $R$ does not depend on the choice of the bases. Assume otherwise that there are some eigenvalues with higher multiplicity. To fix ideas assume therefore that $v_j, \ldots, v_\ell$ form a base of a maximal eigenspace $W$ for a fixed eigenvalue $\lambda$ of $Q$. Then $v'_j, \ldots, v'_\ell$ is also a base of the same eigenspace. Note however that, by the very definition of $Q$, $|\mathbf{p}_\beta (v)| = \sqrt{1-\lambda} |v|$ for every $v\in W$. In particular we see immediately that, computing $v'_k$ as a linear combination $\sum_{j\leq i \leq \ell} a_{\ell i} v_i$, then $w'_k = \sum_{j\leq i\leq \ell} a_{\ell i} w_i$. Therefore $R$ is well defined in this case as well.

Concerning (b), observe that $v_i \cdot w_i > 0$ for all $i$, and thus $v_1, \ldots , v_{m+n}$ and $w_1, \ldots, w_{m+n}$ have the same orientation. Concerning (c) observe first that $\alpha \cap \beta$ is the maximal eigenspace of $Q$ for the eigenvalue $0$, while $\alpha^\perp \cap \beta^\perp$ is the maximal eigenspace of $Q$ for the eigenvalue $0$.  Thus $v_1, \ldots , v_{m+n}$ contains an orthonormal base of the former and an orthonormal base for the latter and $R$ is by definition the identity on these elements. Moreover, it turns out that, by Lemma \ref{l:symmetry}, when defining $R (\beta, \alpha)$, we can choose $w_1, \ldots w_k$ and $w_{k+1}, \ldots, w_{m+n}$ as orthonormal bases diagonalizing $\dist^2 (\cdot, \alpha)$ and $\dist^2 (\cdot, \alpha^\perp)$. But then Lemma \ref{l:symmetry}(b) implies that $v_i = \frac{\mathbf{p}_{\alpha} (w_i)}{|\mathbf{p}_{\alpha} (w_i)|}$ for $1\leq i \leq m+n$, from which it immediately follows that $R (\beta, \alpha)$ maps $w_i$ in $v_i$, and it is thus the inverse of $R (\alpha, \beta)$.

As for the continuous dependence, observe that if $\alpha_k \to \alpha$ and $\beta_k \to \beta$ and we fix $v_1^k, \ldots, v^k_{m+n}$ with $v^k_1, \ldots , v^k_m$ diagonalizing $\dist^2 (\cdot, \beta_k)$ on $\alpha_k$ and $v^k_{m+1}, \ldots , v^k_{m+n}$ diagonalizing $\dist^2 (\cdot, \beta_k^\perp)$ on $\alpha_k^\perp$, then we can extract a subsequence for which $v^k_i$ converge to $v_i$. It follows immediately that the orthonormal vectors $v_1, \ldots , v_m$ diagonalize $Q$ and $v_{m+1}, \ldots , v_{m+n}$ diagonalize $Q^\perp$. Hence the algorithm given to determine $R (\alpha_k, \beta_k)$ shows immediately that the corresponding subsequence must converge to $R (\alpha, \beta)$. This completes the proof.
\end{proof}
}

\subsection{Area-minimizers and Dir-minimizers}

Of particular interest for us is the following consequence of F.~Morgan's work \cite{Morgan}*{Theorem 2}.

\begin{lemma}\label{l:frank-distance}
Let $\mathbf{S} \subset \mathbb R^{m+n}$ be the union of $N$ distinct $m$-dimensional planes $\alpha_1, \ldots , \alpha_N$ with the property that, for every $i<j$, $\alpha_i\cap \alpha_j$ is the same $(m-2)$-dimensional plane $V$. If $T$ is an $m$-dimensional integral area-minimizing current such that $\spt (T)=\mathbf{S}$, then the (two) Morgan angles of any pair $\alpha_i, \alpha_j$, $i\neq j$, coincide. 
\end{lemma}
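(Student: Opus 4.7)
My plan is to reduce the statement to a two-dimensional classification and then invoke the cited result of F.~Morgan directly.

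The first step is to expose the structure of $T$ and slice off the common subspace $V$. Applying the constancy theorem on each connected open set $\alpha_i \setminus V$ (valid since $V$ has Hausdorff codimension $2$ in $\alpha_i$), one gets $T \res \alpha_i = Q_i \llbracket \alpha_i \rrbracket$ for nonzero integers $Q_i$ (nonzero because $\alpha_i \subset \spt(T)$), so $T = \sum_{i=1}^N Q_i \llbracket \alpha_i \rrbracket$. Since every $\alpha_i$ contains $V$, the current $T$ is invariant under translations along $V$, and the orthogonal splitting $\R^{m+n} = V \oplus V^\perp$ produces a product decomposition $T = \llbracket V \rrbracket \times T'$, where $T' = \sum_i Q_i \llbracket L_i \rrbracket$ is a two-dimensional integral current in $V^\perp \cong \R^{n+2}$ and the $L_i := V^\perp \cap \alpha_i$ are $N$ distinct $2$-planes through the origin which pairwise meet only at the origin.

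The second step is to transfer area-minimality to $T'$: any integral competitor for $T'$ in a bounded region $U \subset V^\perp$, when crossed with a large disk in $V$, produces a competitor for $T$, forcing $T'$ to be area-minimizing. At that point \cite{Morgan}*{Theorem 2} applies directly and yields that every pair $L_i, L_j$ of constituent $2$-planes of $T'$ is \emph{equiangular}, in the sense that the two principal angles between them coincide.

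To finish, I translate equiangularity back to the statement about $(\alpha_i, \alpha_j)$. A direct computation shows that the quadratic form $Q_1(v) = \dist^2(v, \alpha_j)$ of Lemma \ref{l:symmetry}, viewed on $\alpha_i$, vanishes on $V = \alpha_i \cap \alpha_j$ (producing the $m-2$ zero eigenvalues) and restricts on $V^\perp \cap \alpha_i = L_i$ to $v \mapsto \dist^2(v, L_j)$; consequently, the two positive Morgan angles of $(\alpha_i, \alpha_j)$ are exactly those of $(L_i, L_j)$. The core of the argument is entirely contained in Morgan's cited classification, so the only real obstacle is to carry out the two-dimensional reduction cleanly and to confirm that area-minimality descends from $T$ to $T'$; everything else is the linear algebra packaged in Lemma \ref{l:symmetry}.
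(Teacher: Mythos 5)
Correct, and essentially the same approach as the paper's one-line proof: both reduce the question to Morgan's classification of two-dimensional area-minimizing unions of planes and then match the paper's Morgan angles with Morgan's characterizing angles via Lemma~\ref{l:symmetry}/Corollary~\ref{c:growth}. You simply spell out the steps the paper leaves implicit — the constancy-theorem structure $T=\sum_i Q_i\llbracket\alpha_i\rrbracket$, the product decomposition $T=\llbracket V\rrbracket\times T'$ with $T'$ two-dimensional in $V^\perp$, the descent of area-minimality to $T'$, and the identification of the pairs of angles of $(\alpha_i,\alpha_j)$ with those of $(L_i,L_j)$.
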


This follows from \cite{Morgan}, since Corollary \ref{c:growth} gives the equivalence of $\theta_1$ and $\theta_2$ with the angles in \cite{Morgan} once the planes are oriented accordingly.

We complement this with the following counterpart for Dir-minimizers:

\begin{proposition}\label{p:two-sided-bound}
Let $m, n\geq 2$. 
There are positive absolute constants $c_1$ and $c_2$ depending on $n$ such that the following holds. Assume $u: \mathbb R^m \to \mathcal{A}_Q (\mathbb R^n)$ is a Dir-minimizing map such that $u(x) = 
\sum_i k_i \llbracket L_i (x)\rrbracket$ for distinct linear maps $L_i :\mathbb R^m \to \mathbb R^n$ and non-zero integers $k_i$ with the property that $\max_i |L_i| \leq c_1$ and that, for every $i< j$, the kernel of $L_i-L_j$ is the same $(m-2)$-dimensional subspace $V$ of $\mathbb R^m$ while all $L_i$ vanish on $V$. Let $\pi_i$ be the $m$-dimensional planes given by the graphs of the $L_i$'s. Then, for every pair $i<j$ the Morgan angles $\theta_k= \theta_k (\pi_i, \pi_j)$, $k=1,2$, satisfy the inequality $\theta_2 \leq c_2 \theta_1$.
\end{proposition}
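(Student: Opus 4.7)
The strategy is to show that the graph current
\[
T_u := \sum_i k_i\llbracket \pi_i\rrbracket
\]
is area-minimizing in $\Cbf_1$ once $c_1$ is chosen small enough (depending on $n$ and $Q$), and then to invoke F.~Morgan's theorem, Lemma \ref{l:frank-distance}. The latter will in fact deliver the stronger conclusion $\theta_1(\pi_i,\pi_j) = \theta_2(\pi_i,\pi_j)$ for every pair $i<j$, so that the proposition follows with $c_2=1$. Observe first that under our hypotheses every $\pi_i$ contains the $(m-2)$-plane $V\times\{0\}$ and that distinct $\pi_i,\pi_j$ intersect exactly along it, so $T_u$ is precisely of the form to which Lemma \ref{l:frank-distance} applies.

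The core task is therefore to upgrade Dir-minimality of $u$ to area-minimality of $T_u$ in $\Cbf_1$. I argue by contradiction: suppose there is an integral current $T'$ with $\partial(T'-T_u)\res\Cbf_1 = 0$ and $\mathbf M(T'\res\Cbf_1) < \mathbf M(T_u\res\Cbf_1)$. Since $\max_i|L_i|\leq c_1$, the current $T_u$ is uniformly close to the flat multiplicity-$Q$ plane $Q\llbracket B_1\rrbracket$ (with $Q := \sum_i k_i$), and so is any near-optimal $T'$. Applying Almgren's strong Lipschitz approximation (see e.g.\ \cite{DLS14Lp}*{Theorem 2.4}) to $T'$ on a slightly smaller cylinder produces a $Q$-valued map $u'$ with $\Gbf_{u'}$ coinciding with $T'$ outside a set of mass $O(c_1^{2+2\gamma})$ and with $\|Du'\|_\infty\leq Cc_1$. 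The quadratic area--Dirichlet expansion
\[
\mathbf M(\Gbf_v\res \Cbf_{3/4}) = Q\omega_m (3/4)^m + \tfrac{1}{2}\Dir(v,B_{3/4}) + O(c_1^4),
\]
applied to $v=u$ and $v=u'$, converts the mass gap between $T_u$ and $T'$ into a Dirichlet-energy gap, provided the former dominates the $O(c_1^4)$ error. A quantitative version of the calibration behind Lemma \ref{l:frank-distance} shows that a pair of planes with Morgan-angle ratio $\theta_2/\theta_1$ bounded below by a large constant $K$ forces a mass gap of order $(\theta_2-\theta_1)^2\sim c_1^2$, which for $c_1$ small enough dominates $O(c_1^4)$. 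This yields $\Dir(u',B_{3/4}) < \Dir(u,B_{3/4})$, contradicting Dir-minimality of $u$, so $T_u$ must be area-minimizing and the conclusion follows from Lemma \ref{l:frank-distance}.

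The main technical obstacle is the quantitative Morgan-type area reduction producing the $c_1^2$-scale mass gap for unbalanced cones — implemented either via an explicit area-decreasing surgery that rotates one plane toward the other to equalize angles, or via a quantitative calibration construction starting from the calibrating $m$-form used in Morgan's original proof and estimating its comass excess in terms of $(\theta_2-\theta_1)$. Both routes fit within the Almgren--De Lellis--Spadaro toolkit employed throughout \cites{DLSk1,DLSk2}, and together with the care needed in matching the boundary traces of $T'$ and $T_u$ in the Lipschitz approximation step constitute the bulk of the work.
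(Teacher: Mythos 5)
Your approach is a genuinely different route from the paper's, and it has a real gap. The paper does \emph{not} try to show that $T_u$ is area-minimizing; instead it reduces (after pairing off the $L_i$, subtracting off the average $A=\frac12(L_1+L_2)$, and quotienting by the common $(m-2)$-dimensional kernel) to a 2-dimensional, scale-invariant problem: the Dir-minimality of the 2-valued map $\llbracket Bx\rrbracket + \llbracket -Bx\rrbracket$ with $B=\frac12(L_1-L_2)$ forces $B$ to be quasiconformal (Lemma \ref{l:quasi-conformal}), and the singular values of $B$ are comparable to the Morgan angles (Lemma \ref{l:elementary-planes}). Lemma \ref{l:quasi-conformal} is a compactness argument leaning on Almgren's partial regularity for Dir-minimizers --- crucially, it is \emph{scale-invariant} because adding a linear single-valued function and rescaling by a scalar both preserve Dir-minimality, so it yields a geometric constant independent of $c_1$.

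The gap in your proposal is twofold. First, you claim to get the equality $\theta_1=\theta_2$ (so $c_2=1$), but this is strictly stronger than the proposition, and the paper explicitly remarks right after the statement that ``it is possible that this stronger result could hold for Dir-minimizing unions of planes also, however we will not need this here and so do not pursue this'' --- i.e.\ the authors do \emph{not} believe Dir-minimality upgrades for free to area-minimality of $T_u$. This is precisely the higher-order discrepancy that the center manifold machinery of \cites{DLS16centermfld,DLSk1,DLSk2} is built to control, and it does not vanish for small $c_1$. Second, your scaling $(\theta_2-\theta_1)^2\sim c_1^2$ is incorrect in the dangerous regime: the proposition must rule out $\theta_2/\theta_1$ being large even when $\theta_1,\theta_2$ are both much smaller than $c_1$ (nothing in the hypotheses prevents two of the $L_i$ from being very close). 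In that regime $(\theta_2-\theta_1)^2 \ll c_1^2$, and since the area--Dirichlet Taylor error at the full $\max_i|L_i|\le c_1$ scale is $O(c_1^4)$, your mass gap need not dominate the error and the contradiction with Dir-minimality evaporates. You can try to repair this by first subtracting the average $A$ so that the relevant $\sup$-norm becomes $|B|\sim\theta_2$ rather than $c_1$, but then the proposal reduces in spirit to the two lemmas the paper already proves --- and the ``quantitative Morgan'' estimate you flag as the ``main technical obstacle'' is exactly where the work lies; it is not a routine step, and the paper's compactness argument is specifically designed to avoid having to establish it.
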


We stress here that as the subspace $V$ in Proposition \ref{p:two-sided-bound} has dimension $m-2$, there are exactly two Morgan angles for each pair of planes $\pi_i,\pi_j$, $i\neq j$, which in the above are denoted $\theta_1(\pi_i,\pi_j)$, $\theta_2(\pi_i,\pi_j)$, which by definition obey $\theta_1(\pi_i,\pi_j)\leq \theta_2(\pi_i,\pi_j)$. In particular, Proposition \ref{p:two-sided-bound} tells us that they are comparable to each other, while Lemma \ref{l:frank-distance} tells us in the case the planes are area-minimizing, these two angles must in fact coincide. It is possible that this stronger result could hold for Dir-minimizing unions of planes also, however we will not need this here and so do not pursue this.

Proposition \ref{p:two-sided-bound} will be derived from the following special case and some elementary linear algebra.

\begin{lemma}\label{l:quasi-conformal}
There is a geometric constant $\mu \geq 1$ with the following property. Let $u: \mathbb R^2 \to \mathcal{A}_2 (\mathbb R^2)$ be a Dir-minimizing map such that $u(x) = \llbracket B (x)\rrbracket + \llbracket - B (x)\rrbracket$ for some linear $B:\mathbb R^2 \to \mathbb R^2$. Then $B$ is quasiconformal, in the sense that 
\begin{equation}\label{e:quasi-conformal}
\max \{|B (x)|: |x|=1\}\leq \mu \min \{|B(x)|: |x|=1\}\, .
\end{equation}
\end{lemma}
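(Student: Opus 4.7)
The plan is to argue by compactness and contradiction, reducing to a degenerate rank-one case in which an explicit lower-energy competitor can be exhibited.

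Suppose for contradiction that no such $\mu$ exists. Then there is a sequence of linear maps $B_k : \R^2 \to \R^2$ such that each $u_k := \llbracket B_k \rrbracket + \llbracket -B_k \rrbracket$ is Dir-minimizing, while the ratio $\max_{|x|=1}|B_k x|/\min_{|x|=1}|B_k x|$ diverges. Normalizing $|B_k|=1$ and extracting a subsequence, $B_k \to B_\infty$ in the space of linear maps; since the largest singular value of $B_k$ stays bounded below (by $1/\sqrt{2}$) while the smallest vanishes in the limit, $B_\infty$ has rank exactly one. The convergence $u_k \to u_\infty := \llbracket B_\infty\rrbracket + \llbracket -B_\infty\rrbracket$ is then strong in $W^{1,2}(B_1, \mathcal{A}_2(\R^2))$, and $u_\infty$ inherits Dir-minimality: given any competitor $w$ for $u_\infty$, one builds competitors $w_k$ for $u_k$ by keeping $w$ in the bulk of $B_1$ and interpolating between $w$ and $u_k$ in a shrinking annulus adjacent to $\partial B_1$ via the standard $\mathcal{A}_Q$-valued Luckhaus-type boundary interpolation from \cite{DLS_MAMS}, with additional cost controlled by $\|u_k - u_\infty\|_{W^{1/2,2}(\partial B_1)}\to 0$; passing to the limit in $\Dir(u_k)\leq \Dir(w_k)$ yields $\Dir(u_\infty) \leq \Dir(w)$.

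To contradict Dir-minimality of $u_\infty$, rotate coordinates so that $B_\infty(x_1,x_2) = (c x_1,0)$ for some $c > 0$, giving $\Dir(u_\infty; B_1) = 2 c^2 \pi$. As a competitor take
\[
v(x) := \llbracket (c H(x), 0)\rrbracket + \llbracket (-c H(x),0)\rrbracket,
\]
where $H : B_1 \to \R$ is the harmonic extension of the boundary function $|x_1||_{\partial B_1} = |\cos\theta|$. The crucial feature exploited here is that on $\partial B_1$ the two sheets of $u_{B_\infty}$ collide along $\{x_1=0\}$, which permits the unordered-pair trace to be split into the continuous single-valued branches $\pm c|\cos\theta|$ in place of $\pm c\cos\theta$; at each $\theta$ the pair $\{(\pm c|\cos\theta|,0)\}$ coincides with $\{(\pm c\cos\theta,0)\}$, so $v$ is admissible. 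Its energy is $\Dir(v;B_1) = 2 c^2 \int_{B_1} |\nabla H|^2$, and since $|x_1|$ is itself a $W^{1,2}$ extension of $|\cos\theta|$ that is not harmonic on $B_1$, the minimizing property of harmonic extensions yields the strict inequality $\int_{B_1}|\nabla H|^2 < \int_{B_1}|\nabla |x_1||^2 = \pi$, so that $\Dir(v) < 2 c^2 \pi = \Dir(u_\infty)$.

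I expect the main technical obstacle to be the closure of Dir-minimality under $B_k \to B_\infty$, which requires careful use of the $\mathcal{A}_Q$-valued boundary interpolation; the remaining steps—extracting the rank-one limit and the elementary comparison of harmonic extensions—are routine.
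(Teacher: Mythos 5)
Your proposal is correct, but it reaches the contradiction by a genuinely different route than the paper. The compactness step is the same in spirit (though the paper simply cites \cite{DLS_MAMS}*{Proposition~3.20} for closure of Dir-minimality under $W^{1,2}$-strong limits, which would spare you re-deriving a Luckhaus interpolation). Where you diverge is the endgame: having produced a Dir-minimizing limit $u_\infty=\llbracket B_\infty\rrbracket+\llbracket -B_\infty\rrbracket$ with $B_\infty$ of rank one, the paper observes that the kernel line $\{\lambda e:\lambda\in\R\}$ consists entirely of singular points of $u_\infty$ (the two sheets $\pm B_\infty$ cross there but do not separate into disjoint harmonic branches in any neighborhood), and this violates Almgren's partial regularity theorem \cite{DLS_MAMS}*{Theorem~0.11}, which forces $\dim_{\Hcal}\Sing(u_\infty)\le m-2=0$. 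You instead exhibit an explicit cheaper competitor: after normalizing $B_\infty(x_1,x_2)=(cx_1,0)$, the trace of $u_\infty$ on $\partial B_1$ is the unordered pair $\{(\pm c\cos\theta,0)\}$, which coincides with $\{(\pm c|\cos\theta|,0)\}$ and hence can be filled in with $\llbracket(cH,0)\rrbracket+\llbracket(-cH,0)\rrbracket$ for $H$ the harmonic extension of $|\cos\theta|$; since $|x_1|$ is a non-harmonic $W^{1,2}$ extension of the same boundary data, $\int_{B_1}|\nabla H|^2<\int_{B_1}|\nabla|x_1||^2=\pi$, giving $\Dir(v;B_1)<2\pi c^2=\Dir(u_\infty;B_1)$. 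Both arguments work. The paper's version is shorter and leans on the (deep) partial regularity theorem; yours is more elementary and makes the mechanism concrete — it shows exactly why a degenerate rank-one $B$ permits a branch-switching extension that an axis-aligned $B$ cannot rule out — at the cost of the extra competitor construction and the interpolation step in the compactness argument. Both are fine; if you want to streamline, drop the bespoke Luckhaus argument in favor of citing Proposition~3.20.
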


\begin{proof}
    Without loss of generality we can assume $|B|=1$, otherwise normalize $B$ to ensure this. If the lemma were false, then there would be a sequence of Dir-minimizing maps $u_k (x) = \llbracket B_k (x)\rrbracket + \llbracket - B_k (x) \rrbracket$, for some sequence $B_k:\mathbb R^2\to \mathbb R^2$ of linear maps with $|B_k|=1$, for which there is a sequence $e_k$ of unit vectors with $B (e_k)\to 0$. Extracting a subsequence, we can assume that $e_k\to e$ and that $B_k \to B$. $B$ is thus a linear map with $|B|=1$, and so in particular if we set $u (x) := \llbracket B (x)\rrbracket + \llbracket - B (x) \rrbracket$, then $u$ does not vanish identically. However, the line $\{\lambda e : \lambda \in \mathbb R\}$ would be a line of singularities, while from \cite{DLS_MAMS}*{Proposition~3.20} we know that $u$ is Dir-minimizing. In particular we would contradict Almgren's partial regularity theorem for Dir-minimizers, cf. \cite{DLS_MAMS}*{Theorem~0.11}. This contradiction proves the result.
\end{proof}

Given Lemma \ref{l:quasi-conformal}, the elementary linear algebra needed to prove Proposition \ref{p:two-sided-bound} is the following:

\begin{lemma}\label{l:elementary-planes}
There is a geometric constant $c = c(n)>0$ with the following property. Consider two linear maps $L_1, L_2: \mathbb R^2 \to \mathbb R^n$ with $|L_i|\leq c (n)$ and ${\rm rank}\, (L_1-L_2)=2$. If 
\[
    \sigma_1 = \min\{|B(e)|: |e|=1\} \leq \sigma_2 = \max \{|B (e)|:|e|=1\}
\]
denote the two singular values of $B=\frac{1}{2}(L_1-L_2)$, and $\pi_1$, $\pi_2$ are the two planes in $\mathbb R^{2+n}$ given by the graphs of $L_1$ and $L_2$ respectively, then 
\begin{align*}
&\frac{\sigma_2}{4} \leq  \theta_2 (\pi_1, \pi_2) \leq 4\sigma_2\\
&\frac{\sigma_1}{4} \leq \theta_1 (\pi_1, \pi_2) \leq 4 \sigma_1\, 
\end{align*}
where $\theta_1(\pi_1,\pi_2)\leq \theta_2(\pi_1,\pi_2)$ are the Morgan angles of the planes $\pi_1,\pi_2$.
\end{lemma}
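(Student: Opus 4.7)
The plan is to use the definition of Morgan angles directly, reducing everything to the explicit quadratic form $Q(v) := \dist^2(v, \pi_2)$ on $\pi_1$. Since $\mathrm{rank}(L_1-L_2)=2$, the kernel of $L_1-L_2$ is trivial, so $\pi_1 \cap \pi_2 = \{0\}$, and by Corollary \ref{c:growth} the two Morgan angles $0 < \theta_1 \leq \theta_2$ satisfy $\sin \theta_i = \sqrt{\lambda_i}$, where $\lambda_1 \leq \lambda_2$ are the eigenvalues of $Q$ on $\pi_1$. It thus suffices to show $\lambda_i = (4 + O(c(n)^2))\, \sigma_i^2$ for $i = 1, 2$.

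First I would parametrize $\pi_1$ by $x \mapsto (x, L_1(x))$ and points of $\pi_2$ by $y = x+z \mapsto (y, L_2(y))$. With $B = \tfrac{1}{2}(L_1-L_2)$, one has
\[
Q((x, L_1(x))) \;=\; \min_{z\in \R^2} \bigl[\,|z|^2 + |2B(x) - L_2(z)|^2\,\bigr].
\]
The first-order condition $(I + L_2^\top L_2)z = 2 L_2^\top B(x)$ gives $|z| \leq 3\,|L_2|\,|B(x)|$ provided $c(n)$ is small. Substituting the Euler equation back into the objective yields the simple expression $Q((x, L_1(x))) = 4|B(x)|^2 - 2\, B(x) \cdot L_2(z)$, whose second term is controlled by $6\,|L_2|^2\,|B(x)|^2 \leq 6\,c(n)^2\,|B(x)|^2$.

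Next I translate to the unit sphere of $\pi_1$: writing $x = te$ with $|e|=1$, the identity $|v|^2 = t^2(1 + |L_1(e)|^2)$ gives $|B(x)|^2 = (1 + O(c(n)^2))\,|B(e)|^2$ when $|v|=1$. Hence $Q(v) = (4 + O(c(n)^2))\,|B(e)|^2$ uniformly for unit vectors $v = (x, L_1(x)) \in \pi_1$ with $e := x/|x|$. Since the map $v \mapsto e$ is a diffeomorphism of the unit circles of $\pi_1$ and $\R^2$, the variational characterizations $\lambda_2 = \max_{|v|=1} Q(v)$ and $\lambda_1 = \min_{|v|=1} Q(v)$, combined with the uniform multiplicative error, immediately yield $\lambda_i = (4 + O(c(n)^2))\,\sigma_i^2$.

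Taking square roots and using the elementary inequalities $\sin t \leq t \leq (\pi/2)\sin t$ on $[0, \pi/2]$, the bounds $\sigma_i/4 \leq \theta_i \leq 4\sigma_i$ follow once $c(n)$ is fixed small enough that $\sqrt{4 + O(c(n)^2)} \in [3/2, 5/2]$ and the angles lie in $[0, \pi/4]$. I expect the only mild obstacle to be ensuring that the pointwise approximation $Q(v) \approx 4\,|B(e)|^2$ is uniform enough to simultaneously control \emph{both} the smallest and largest eigenvalues of $Q$, rather than only $\lambda_2$; this is handled automatically since the multiplicative error in the pointwise approximation depends only on the a priori bound $c(n)$ and is thus uniform over the whole unit sphere.
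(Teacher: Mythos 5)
Your proof is correct and takes a cleaner route than the paper's, so let me compare the two. Both solve the minimization defining $Q(v) = \dist^2(v,\pi_2)$ via an Euler equation, but they use the resulting formula differently. The paper derives from the explicit minimizer only the one-sided lower bound $\dist^2(p,\pi_2) \geq \tfrac12|B(\xi)|^2$, pairs it with the cheap upper bound $\dist(p,\pi_2) \leq 2|B(x)|$ coming from the test point $(x,L_2(x))$, and then establishes the two remaining inequalities $\sigma_2 \leq 2\max\dist$ and $\min\dist \leq 2\sigma_1$ by instantiating singular vectors $e$ of $B$ and tracking perturbations of $x$ versus $x/|x|$ separately. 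You instead substitute the Euler equation back into the objective fully, getting the closed-form $Q(p) = 4|B(x)|^2 - 2B(x)\cdot L_2(z)$ (equivalently $4\,B(x)^\top(I+L_2L_2^\top)^{-1}B(x)$), observe that the correction term is a uniform $O(c^2)$-multiplicative perturbation of $4|B(x)|^2$, rescale $x \mapsto e$, and then read off \emph{both} eigenvalue estimates $\lambda_i = (4+O(c^2))\sigma_i^2$ simultaneously from the variational characterization — no separate test-point constructions needed. This is more economical and actually gives the tighter asymptotic $\theta_i = (2+O(c^2))\sigma_i$, from which the stated $[\sigma_i/4,\,4\sigma_i]$ bounds follow with room to spare. (Incidentally, the paper's display for the minimizer has $L_1$ where $L_2$ should appear, as you can verify by redoing its Euler computation; your version is the correct one, though this does not affect the paper's downstream estimates since both are bounded by $c$.) One small point worth stating explicitly for completeness: the uniform two-sided estimate $f(v)\le Q(v)\le g(v)$ with $f,g$ proportional to $|B(e)|^2$ must be used at four places — an upper bound on $\max Q$, a lower bound on $\max Q$ evaluated at a maximizing $e$, and the analogous two for $\min Q$ — and this is exactly what the uniformity of your error term licenses, as you note in your final remark.
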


\begin{proof}
    Let $A:=\frac{1}{2}(L_1+L_2)$ and $B:=\frac{1}{2}(L_1-L_2)$. Fix $p\in \partial \mathbf{B}_1 \cap \pi_1$. Since in particular $p\in \pi_1$, we may find $x\in\R^2$ such that $p = (x, (A+B) (x))$. Note that $|x|\leq 1$ as $|p|=1$. Since $(x, (A-B) (x))\in \pi_2$ we clearly have 
\begin{equation}\label{e:from-above}
\dist (p, \pi_2) \leq |2B (x)|\leq 2 \max \{|B (e)|: |e|=1\}\, .
\end{equation}
Next, let $q\in \pi_2$ be the point of minimum distance in $\pi_2$ from $p$ and let $\xi\in \mathbb R^2$ be so that $q = (\xi,L_2(\xi)) = (\xi,(A-B)(\xi))$. Then,
\begin{equation}\label{e:brute-force}
\dist^2 (p, \pi_2) = |p-q|^2 = |x-\xi|^2 + |(A+B) (x)- (A-B) (\xi)|^2\, .
\end{equation}
Set $\eta:=x-\xi$ and rewrite the expression in \eqref{e:brute-force} as
\[
|\eta|^2+|(A+B)(\eta+\xi)-(A-B)(\xi)|^2 = |\eta|^2 + |2B(\xi) + (A+B)(\eta)|^2.
\]
Since $\eta$ is the minimum point of the latter quadratic expression, we can differentiate it to find that $\eta$ obeys
\[
2\eta + 4(A+B)^TB (\xi) + 2(A+B)^T(A+B) (\eta) = 0.
\]
In turn, we can solve this as
\[
\eta = - 2 ({\rm Id} + (A+B)^T (A+B))^{-1} (A+B)^T (B(\xi)) = -2 ({\rm Id} + L_1^TL_1)^{-1} L_1^T (B(\xi))\, .
\]
Insert the latter in \eqref{e:brute-force} to find
\begin{equation}
\dist^2 (p, \pi_2) = 4 |({\rm Id} + L_1^TL_1)^{-1} L_1^T (B(\xi))|^2 + 4 |\left({\rm Id} - L_1 ({\rm Id} + L_1^T L_1)^{-1} L_1^T\right) (B(\xi))|^2\, .
\end{equation}
Observe now that the operator norm of $({\rm Id}+L_1^TL_1)^{-1}$ is always at most $1$: indeed, since ${\rm Id} + L_1^T L_1$ is self-adjoint and its smallest eigenvalue is at least $1$, its inverse is self-adjoint and has positive eigenvalues no larger than $1$. We can therefore estimate
\[
\left|\left({\rm Id} - L_1 ({\rm Id} + L_1^T L_1)^{-1} L_1^T\right) (B(\xi))\right|\geq |B(\xi)| - |L_1|^2 |B(\xi)|\, .
\]
In particular, if we choose the constant $c$ sufficiently small, we conclude
\begin{equation}\label{e:compare-dist-B}
\dist^2(p, \pi_2) \geq \frac{1}{2} |B(\xi)|^2\, .
\end{equation}
But, again under the assumption that $c$ is sufficiently small, and since $|q|\leq 1$ (as $\dist(p,0) =1$ and $0\in \pi_2$), we have that $|\xi|\geq \frac{\sqrt{2}}{2}$. Hence we get
\begin{equation}\label{e:from-below}
\dist (p, \pi_2) \geq \frac{1}{2} \min \{|B (e)|: |e|=1\}\, .
\end{equation}
Combining \eqref{e:from-above} and \eqref{e:from-below} and recalling that $\sigma_2 = \max \{|B (e)|:|e|=1\}$ and $\sigma_1 = \min\{|B(e)|: |e|=1\}$ are the two singular values of $B$, we conclude that 
\begin{equation}\label{e:singular_values_vs_distances}
\frac{\sigma_1}{2} \leq \min \{\dist (p, \pi_2): p \in \partial\Bbf_1\cap \pi_1\}
\leq \max \{\dist (p, \pi_2): p \in \partial\Bbf_1\cap \pi_1\} \leq 2 {\sigma_2}\, .
\end{equation}

\medskip

Next let $e\in \R^2$ be a unit vector with $|B (e)|=\sigma_2$ and consider $v= (e, L_1 (e))$ and $v' := \frac{v}{|v|} \in \pi_1 \cap \partial \Bbf_1$. Decreasing $c$ further if necessary, we have
\begin{equation}\label{e:sing_value-10}
\dist (v, \pi_2) \leq \frac{3\sqrt{2}}{4} \dist (v', \pi_2)
\leq \frac{3\sqrt{2}}{4} \max \{\dist (p, \pi_2): p\in \partial\Bbf_1\cap \pi_1\}\, .
\end{equation}
Consider $q= \mathbf{p}_{\pi_2} (v)$ and let $\xi\in \mathbb R^2$ be such that $(\xi, L_2 (\xi))=q$. Note that 
\[
|\xi-e|\leq |v-q|\leq |\mathbf{p}_{\pi_2} - \mathbf{p}_{\pi_1}| |v|\, .
\]
In particular $|\xi-e|\leq C (|L_1|+|L_2|)\leq C c$. On the other hand, recall that we have the inequality \eqref{e:compare-dist-B} but for $v$ in place of $p$ (since we did not use the fact that $p\in\partial\Bbf_1$ to achieve this). In particular, provided $c$ is sufficiently small we can write 
\begin{align}
\dist (v, \pi_2) &\geq \frac{1}{\sqrt{2}} |B (e)| - \frac{1}{\sqrt{2}} |B (e-\xi)|
\geq \frac{{\sigma_2}}{\sqrt{2}} (1-|\xi-e|) \geq \frac{3{\sigma_2}}{4\sqrt{2}} \, .
\label{e:sing_value-11}
\end{align}
We can now combine \eqref{e:sing_value-10} and \eqref{e:sing_value-11}, yielding
\begin{equation}\label{e:sing_value-12}
\sigma_2 \leq 2 \max \{\dist (p, \pi_2): p\in \partial\Bbf_1\cap \pi_1\}\, .
\end{equation}
In a similar fashion, fix now $e$ such that $|B (e)| = \sigma_1$ and consider $v$ and $v'$ as above, for this choice of $e$. Arguing analogously to \eqref{e:from-above}, it follows that 
\begin{equation}\label{e:sing_value-13}
\min \{\dist (p, \pi_2): p\in \pi_1\cap \partial \Bbf_1\} 
\leq \dist (v', \pi_2) \leq \dist (v, \pi_2) \leq 2|B (e)|\, ,
\end{equation}
because $(e, L_2 (e))\in \pi_2$. In particular we deduce that
\begin{equation}\label{e:sing_value-14}
\min \{\dist (p, \pi_2): p\in \pi_1\cap \partial \Bbf_1\} \leq 2{\sigma_1}\, .
\end{equation}

\medskip

Summarizing, if we combine \eqref{e:singular_values_vs_distances},
\eqref{e:sing_value-12}, and \eqref{e:sing_value-14}, we arrive at 
\begin{align}
&\frac{\sigma_1}{2} \leq \min \{\dist (p, \pi_2): p\in \pi_1\cap \partial \Bbf_1\} \leq 2 \sigma_1\label{e:s-value-2-sided-1}\\
&\frac{\sigma_2}{2} \leq \max \{\dist (p, \pi_2): p\in \pi_1\cap \partial \Bbf_1\} \leq 2 \sigma_2\label{e:s-value-2-sided-2}\, .
\end{align}

\medskip

Recall now that, since we are dealing with two-dimensional planes, the variational definition of the eigenvalues of quadratic forms gives  
\begin{align*}
\theta_1 & = \arcsin ({\min} \{\dist (p, \pi_2): p \in \partial \Bbf_1\cap \pi_1\})\\
\theta_2 & = \arcsin ({\max} \{\dist (p, \pi_2): p \in \partial \Bbf_1\cap \pi_1\})\, .
\end{align*}
On the other hand $\max \{\dist (p, \pi_2): p \in \partial\Bbf_1\cap \pi_1\}$ is controlled by $|L_1|+|L_2|$, hence if we choose the latter sufficiently small we get 
\begin{align*}
& \frac{1}{2} \min \{\dist (p, \pi_2): p \in \partial \Bbf_1\cap \pi_1\} \leq \theta_1 
\leq 2 \min \{\dist (p, \pi_2): p \in \partial \Bbf_1\cap \pi_1\}\\
& \frac{1}{2} \max \{\dist (p, \pi_2): p \in \partial \Bbf_1\cap \pi_1\} \leq \theta_2 
\leq 2\max \{\dist (p, \pi_2): p \in \partial \Bbf_1\cap \pi_1\} 
\end{align*}
Combining the latter inequalities with \eqref{e:s-value-2-sided-1} and \eqref{e:s-value-2-sided-2} 
we conclude the proof.
\end{proof}

We record the following corollary; it is not needed for now, but will be convenient for us to use later on.

\begin{corollary}\label{c:elementary-planes}
There is a geometric constant $c = c(n)>0$ with the following property. 
Consider two linear maps $L_1, L_2: \mathbb R^m \to \mathbb R^n$ with the property that ${\rm ker}\, (L_1-L_2)$ is $(m-2)$-dimensional, let $\lambda >0$, and denote by $\alpha_i$ and $\beta_i$ the $m$-dimensional planes given by the graphs of $L_i$ and $\lambda L_i$ respectively, for $i=1,2$. If $\max \{\lambda,1\} |L_i|\leq c(n)$, then
\[
\frac{\theta_1 (\alpha_1, \alpha_2)}{\theta_2 (\alpha_1, \alpha_2)} \leq 16^2 \frac{\theta_1 (\beta_1, \beta_2)}{\theta_2 (\beta_1, \beta_2)}\, .
\]
\end{corollary}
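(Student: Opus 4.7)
The plan is to reduce to the singular-value characterization of the Morgan angles given by Lemma \ref{l:elementary-planes} and exploit the fact that the rescaling $L_i \mapsto \lambda L_i$ scales the singular values of $\tfrac{1}{2}(L_1 - L_2)$ uniformly by $\lambda$, thereby preserving their ratio up to a universal constant.

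The first step will be to extend Lemma \ref{l:elementary-planes} from $m=2$ to arbitrary $m$, still under the hypothesis that $\ker(L_1 - L_2)$ has codimension $2$. Setting $V := \ker(L_1 - L_2)$ and $\R^m = V \oplus V^\perp$, Lemma \ref{l:symmetry} and Corollary \ref{c:growth} identify the Morgan angles $\theta_1(\alpha_1, \alpha_2) \leq \theta_2(\alpha_1, \alpha_2)$ as the arcsines of the square roots of the two eigenvalues of the quadratic form $v \mapsto \dist^2(v, \alpha_2)$ on the $2$-dimensional subspace $W := (\alpha_1 \cap \alpha_2)^\perp \cap \alpha_1$. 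For $|L_i|$ sufficiently small, $W$ agrees with $\{(x, L_1(x)) : x \in V^\perp\}$ up to an error of order $|L_1|^2$, and the quadratic-optimization computation in the proof of Lemma \ref{l:elementary-planes} transcribes essentially verbatim in this setting --- one replaces the domain $\R^2$ there by $V^\perp$ and interprets $B := \tfrac{1}{2}(L_1 - L_2)$ as a rank-$2$ linear map $V^\perp \to \R^n$. This will yield
\[
\frac{\sigma_i}{4} \leq \theta_i(\alpha_1, \alpha_2) \leq 4 \sigma_i, \qquad i = 1, 2,
\]
where $\sigma_1 \leq \sigma_2$ denote the singular values of $B|_{V^\perp}$.

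Next, the hypothesis $\max\{\lambda, 1\}|L_i| \leq c(n)$ ensures that the smallness requirement of the extended lemma is met simultaneously for the pairs $(L_1, L_2)$ and $(\lambda L_1, \lambda L_2)$. Since $\tfrac{1}{2}(\lambda L_1 - \lambda L_2) = \lambda B$ has singular values $\lambda \sigma_1 \leq \lambda \sigma_2$, applying the extended lemma to $(\beta_1, \beta_2)$ gives
\[
\frac{\lambda \sigma_i}{4} \leq \theta_i(\beta_1, \beta_2) \leq 4 \lambda \sigma_i, \qquad i = 1, 2.
\]
Combining these two two-sided bounds,
\begin{align*}
\frac{\theta_1(\alpha_1, \alpha_2)}{\theta_2(\alpha_1, \alpha_2)} &\leq \frac{4\sigma_1}{\sigma_2/4} = 16\,\frac{\sigma_1}{\sigma_2} = 16\,\frac{\lambda \sigma_1}{\lambda \sigma_2}\\
&\leq 16 \cdot \frac{4\,\theta_1(\beta_1, \beta_2)}{\theta_2(\beta_1, \beta_2)/4} = 16^2\, \frac{\theta_1(\beta_1, \beta_2)}{\theta_2(\beta_1, \beta_2)},
\end{align*}
which is the claimed bound.

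The principal technical step will be the extension of Lemma \ref{l:elementary-planes} to general $m$. Although the algebra is essentially identical to the $m=2$ case, one must verify that the quadratic minimization producing $\dist^2((x, L_1(x)), \alpha_2) \approx 4|B(\xi)|^2$ in the two-dimensional argument is insensitive to the $V$-component of the optimizer $\xi \in \R^m$. Since that component is of order $|L|^2 |x|$, the resulting error terms can be absorbed into the constants $4$ and $\tfrac{1}{4}$ of the two-sided bounds, and no new ideas beyond those already present in the proof of Lemma \ref{l:elementary-planes} are required.
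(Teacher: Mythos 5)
Your proof is correct and follows the same route as the paper: reduce the Morgan angles to the singular values of $B = \tfrac{1}{2}(L_1-L_2)$ via the two-sided bounds of Lemma~\ref{l:elementary-planes}, observe that these singular values scale linearly under $L_i \mapsto \lambda L_i$, and combine the factors of $4$ to obtain $16^2$. The only difference is that you spell out the reduction of the general-$m$ case to the $2$-dimensional statement of Lemma~\ref{l:elementary-planes} (restricting the quadratic form to the $2$-plane $W = (\alpha_1\cap\alpha_2)^\perp\cap\alpha_1$ and controlling the $V$-component of the minimizer), whereas the paper treats this reduction as implicit and simply cites the lemma directly — your explicit account of that step is sound and, if anything, a welcome addition.
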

\begin{proof}
For $i=1,2$, letting $\sigma_i$ and $\sigma_i^{(\lambda)}$ denote the singular values of $\frac{1}{2}(L_1-L_2)$ and $\frac{1}{2}(\lambda L_1-\lambda L_2)$ respectively, we have $\sigma_i^{(\lambda)} = \lambda\sigma_i$, and so the inequality is an immediate corollary of Lemma \ref{l:elementary-planes}.
\end{proof}

We can now prove Proposition \ref{p:two-sided-bound}.

\begin{proof}[Proof of Proposition \ref{p:two-sided-bound}]
First of all observe that, for every choice of $i$ and $j$, the map $v (x) = \llbracket L_i (x)\rrbracket+\llbracket L_j (x)\rrbracket$ is Dir-minimizing. It therefore suffices to prove the proposition when $Q=2$ and we have two distinct linear maps $L_1$ and $L_2$. 

Next consider the $(m-2)$-dimensional subspace $W$ which is the kernel of $L_1-L_2$, and let
$A= \frac{1}{2}(L_1+L_2)$ and $B=\frac{1}{2}(L_1-L_2)$. Observe that since $A$ is linear, the function
\[
\tilde{v}(x) := v(x)\ominus A(x) \equiv \llbracket L_1(x)-A(x)\rrbracket + \llbracket L_2(x)-A(x)\rrbracket = \llbracket B(x)\rrbracket + \llbracket -B(x)\rrbracket
\]
is also Dir-minimizing (see \cite{DLS_MAMS}*{Lemma 3.23}). Now, for any point $z\in \R^m$, let us write $z = (x,y)\in W^\perp\times W$. Since the image of $B$ on $W^\perp$ lies in a $2$-dimensional subspace, and the kernel of $B$ is $(m-2)$-dimensional, we may quotient out the kernel of $B$ and consider it as a function $W^\perp\to \R^2$. As the domain of $B$ is then a 2-dimensional subspace, we can then apply Lemma \ref{l:quasi-conformal} to conclude that \eqref{e:quasi-conformal} holds for $B$, and thus we can control the ratio of the two singular values of $B$ by a geometric constant, $\mu$. But then observe that the two Morgan angles of the planes $\pi_1$ and $\pi_2$ coincide with the Morgan angles of the $2$-dimensional planes of $\R^4$ constructed above. We can therefore apply Lemma \ref{l:elementary-planes} to conclude the proof of Proposition \ref{p:two-sided-bound}.
\end{proof}

\subsection{Separated regions, Alignment, and Shifting}

Here we collect three lemmas of a different flavor, which all have to do with the geometry of a collection of planes which all intersect in a common $(m-2)$-dimensional subspace; they will be used later on at different stages, when proving Theorem \ref{c:decay}.

The following lemma shows that, even in the absence of a comparison estimate between the Morgan angles for a given finite collection of planes, it is possible to find a sizeable region of one of the planes $\alpha$ where the minimum distance of a point in that region to the other planes is comparable to the minimum of the Hausdorff distances of these planes to $\alpha$.

\begin{lemma}[Separated region]\label{l:sep_region}
There is a constant $0<c =c(m,N)< \frac{1}{2}$ with the following property. Suppose that $\alpha, \beta_1, \ldots, \beta_N$ are distinct $m$-dimensional subspaces of $\mathbb R^{m+n}$. Then there is a point $\xi \in \alpha\cap \partial \Bbf_{1/2}$ with the property that 
\begin{equation}\label{e:sep_region}
\min_i \inf \{\dist (\zeta, \beta_i) : \zeta\in \Bbf_{c} (\xi)\cap \alpha\}
\geq c \min_i \dist (\alpha\cap \Bbf_1, \beta_i \cap \Bbf_1)\, .
\end{equation}
\end{lemma}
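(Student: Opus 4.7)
The plan is a measure-theoretic pigeonhole argument on the sphere $\alpha\cap\partial\Bbf_{1/2}$. Set $d_i:=\dist(\alpha\cap\Bbf_1,\beta_i\cap\Bbf_1)$ and $d:=\min_i d_i$, which is strictly positive since $\alpha\neq\beta_i$ for every $i$. The positive semi-definite quadratic form $Q_i(\zeta):=\dist^2(\zeta,\beta_i)$ on $\alpha$ has, by Corollary \ref{c:growth}, largest eigenvalue equal to $d_i^2$; fix a unit eigenvector $v^{(i)}\in\alpha$ realizing this top eigenvalue. Expanding $\zeta$ in an orthonormal eigenbasis of $Q_i$ containing $v^{(i)}$ and discarding the non-negative contribution of the other eigendirections gives the pointwise lower bound
\[
\dist(\zeta,\beta_i)\ \geq\ d_i\,|\zeta\cdot v^{(i)}|\qquad\forall\,\zeta\in\alpha.
\]

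Next, for a parameter $c>0$ to be chosen, define the equatorial ``bad bands''
\[
B_i\ :=\ \{\zeta\in\alpha\cap\partial\Bbf_{1/2}:\ |\zeta\cdot v^{(i)}|\leq 2c\}.
\]
Each $B_i$ is the intersection of the $(m-1)$-sphere $\alpha\cap\partial\Bbf_{1/2}$ with a symmetric slab of width $4c$ through the origin, and a direct computation yields $\Hcal^{m-1}(B_i)\leq C_0(m)\,c$ for all $c\leq 1/4$. Choose $c=c(m,N)>0$ small enough that $N\cdot C_0(m)\, c$ is strictly less than $\Hcal^{m-1}(\alpha\cap\partial\Bbf_{1/2})$. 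Then $\bigcup_i B_i$ does not cover the whole sphere, and we may pick $\xi\in\alpha\cap\partial\Bbf_{1/2}$ satisfying $|\xi\cdot v^{(i)}|>2c$ for every $i$.

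To verify the conclusion on $\Bbf_c(\xi)\cap\alpha$, apply the triangle inequality: for any $\zeta$ in this ball,
\[
|\zeta\cdot v^{(i)}|\ \geq\ |\xi\cdot v^{(i)}|-|\zeta-\xi|\ >\ 2c-c\ =\ c,
\]
hence by the first step $\dist(\zeta,\beta_i)\geq d_i\,|\zeta\cdot v^{(i)}|\geq c\,d_i\geq c\,d$. Taking the infimum over $\zeta\in\Bbf_c(\xi)\cap\alpha$ and the minimum over $i$ yields \eqref{e:sep_region}.

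There is no real obstacle here: the argument is a clean pigeonhole on the sphere, and the key input is Corollary \ref{c:growth}, which identifies the Hausdorff distance between two equidimensional planes with the square-root of the top eigenvalue of the distance-squared quadratic form and thereby converts a geometric separation statement into a linear-algebraic one.
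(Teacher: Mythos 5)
Your proof is correct and follows essentially the same route as the paper's: you identify the top eigenvalue of $Q_i(\cdot)=\dist^2(\cdot,\beta_i)$ with $d_i^2$ via Corollary \ref{c:growth}, obtain the pointwise bound $\dist(\zeta,\beta_i)\geq d_i|\zeta\cdot v^{(i)}|$ by expanding in an eigenbasis, and then run a pigeonhole on the sphere $\alpha\cap\partial\Bbf_{1/2}$ to avoid the bad bands where $|\zeta\cdot v^{(i)}|$ is small. This matches the paper's argument step for step.
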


\begin{proof}
Following the notation of Lemma \ref{l:symmetry} and Definition \ref{d:frank}, let us denote by $Q_i :\alpha\to \R$ the quadratic forms $Q_i (v) := \dist^2 (v, \beta_i)$ and let $e_i$ be an eigenvector corresponding to the its maximal eigenvalue $\lambda^i$ and hence also to the largest Morgan angle $\theta^i$ of the pair $(\alpha, \beta_i)$. Complete $e_i$ to an orthonormal basis which diagonalizes $Q_i$. For any $\zeta\in \alpha$ we thus have 
\[
\dist^2 (\zeta, \beta_i) \geq (e_i \cdot \zeta)^2 \lambda^i 
= (e_i\cdot \zeta)^2 \sin^2 (\theta^i)
\]
Recalling that, by Corollary \ref{c:growth}, $\sin (\theta^i) = \dist (\alpha \cap \Bbf_1, \beta_i \cap \Bbf_1)$, we conclude that 
\[
\dist (\zeta, \beta_i) \geq |e_i \cdot \zeta| \dist (\alpha \cap \Bbf_1, \beta_i \cap \Bbf_1)\, .
\] 
Therefore, we just need to find a vector $\xi \in \partial \Bbf_{1/2} \cap \alpha$ with the property that 
\[
|\zeta\cdot e_i| \geq c \qquad \forall i, \forall \zeta \in \Bbf_c (\xi) \cap \alpha\, 
\]
for $c =c(m,N)$ to be determined. Given the elementary estimate
\[
|\zeta\cdot e_i|\geq |\xi\cdot e_i| - c\, ,
\]
for any such $\zeta$ and $\xi$, it therefore suffices to find $\xi\in \partial \Bbf_{1/2} \cap \alpha$ such that 
\[
|\xi \cdot e_i|\geq 2c \qquad \forall i\, .
\]
Now for each $i=1,\dots,N$, let $S_i:= \{v\in \partial \Bbf_{1/2} \cap \alpha : |v\cdot e_i|\leq 2c\}$ and observe that $\mathcal{H}^{m-1} (S_i) \leq C (m) c$ for some constant $C(m)$. In particular, 
$\mathcal{H}^m (\bigcup_i S_i) \leq N C (m) c$, and thus if $c = c(m,N)$ is chosen sufficiently small, $\partial \Bbf_{1/2} \setminus \bigcup_i S_i$ must have positive measure, and hence it contains at least one point $\xi$ which obeys the desired conditions.
\end{proof}

As an immediate corollary of Lemma \ref{l:sep_region} above we have the following.

\begin{corollary}\label{c:close-to-one-plane}
Let $\alpha, \beta_1, \ldots, \beta_N$ be a collection of $m$-dimensional distinct planes of $\mathbb R^{m+n}$ and set $\mathbf{S} = \bigcup_i \beta_i$. Then there is a constant $\bar C = \bar{C}(N,m)>0$ with
\[
\min_i \dist (\alpha\cap \Bbf_1, \beta_i \cap \Bbf_1) \leq \bar{C} \dist (\alpha\cap \Bbf_1, \mathbf{S} \cap \Bbf_1)\, .
\]
\end{corollary}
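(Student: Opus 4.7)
The plan is to read off Corollary \ref{c:close-to-one-plane} almost directly from Lemma \ref{l:sep_region}, after setting up the correct one-sided reading of the Hausdorff distance. The only subtlety is to make sure the point we pick in $\alpha$ is inside the ball where the Hausdorff distance is measured.

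First I would recall that for any two subsets $A,B \subset \mathbb R^{m+n}$ the Hausdorff distance satisfies
\[
\dist(A,B) \geq \sup_{x\in A} \dist(x,B)\, ,
\]
so in particular
\[
\dist (\alpha\cap \Bbf_1, \mathbf{S}\cap \Bbf_1) \geq \dist (\xi, \mathbf{S}\cap \Bbf_1) \qquad \text{for every } \xi \in \alpha\cap \Bbf_1\, .
\]
Since $\alpha$ is a linear subspace, $0\in\mathbf S$, and therefore whenever $\xi\in \alpha\cap \Bbf_{1/2}$ we have $\dist(\xi,\mathbf S) \leq |\xi|\leq 1/2$; hence the nearest point in $\mathbf{S}$ to $\xi$ lies in $\mathbf{S}\cap \Bbf_1$, and so $\dist (\xi, \mathbf{S}\cap \Bbf_1) = \dist (\xi, \mathbf{S})$.

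Next I would apply Lemma \ref{l:sep_region} to the collection $\alpha,\beta_1,\ldots,\beta_N$ to produce a dimensional constant $c=c(m,N)>0$ and a point $\xi \in \alpha\cap \partial \Bbf_{1/2}$ satisfying \eqref{e:sep_region}. Specializing \eqref{e:sep_region} by taking $\zeta=\xi$ gives
\[
\dist (\xi, \mathbf{S}) = \min_i \dist (\xi, \beta_i) \geq c \min_i \dist (\alpha\cap \Bbf_1, \beta_i\cap \Bbf_1)\, .
\]

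Combining the three displays above yields
\[
\dist (\alpha\cap \Bbf_1, \mathbf{S}\cap \Bbf_1) \geq c\, \min_i \dist (\alpha\cap \Bbf_1, \beta_i\cap \Bbf_1)\, ,
\]
which is the claim with $\bar{C} := c^{-1}$. There is no real obstacle here; the content is entirely in Lemma \ref{l:sep_region}, and the only thing to be careful about is the comment above that lets one replace $\dist(\xi,\mathbf{S}\cap \Bbf_1)$ by $\dist(\xi,\mathbf{S})$, which is why Lemma \ref{l:sep_region} is stated for $\xi \in \alpha\cap \partial\Bbf_{1/2}$ rather than $\alpha\cap \partial\Bbf_1$.
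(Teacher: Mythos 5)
Your proof is correct and follows exactly the route the paper intends: the paper presents Corollary \ref{c:close-to-one-plane} as an immediate consequence of Lemma \ref{l:sep_region} without further argument, and the three observations you make — the one-sided reading of the Hausdorff distance, the fact that the nearest point of $\mathbf S$ to a point $\xi$ with $|\xi|=\tfrac12$ lies in $\Bbf_1$, and the specialization of \eqref{e:sep_region} to $\zeta=\xi$ — are precisely the (unwritten) steps. The attention to why Lemma \ref{l:sep_region} places $\xi$ on $\partial\Bbf_{1/2}$ rather than $\partial\Bbf_1$ is exactly the right subtlety to flag.
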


The following lemma concerns itself with the alignment of spines of pairs of cones in $\mathscr{C}(Q)$, cf. Definition \ref{def:cones}, and will be of fundamental importance. 

\begin{lemma}\label{l:spine}
For every $M>0$ and natural numbers $m$, $n$, and $Q$, there is a constant $\bar{C} = \bar{C}(M,m,n,Q)>0$ with the following property. Assume that
\begin{itemize}
\item[(i)] $\mathbf{S}$ and $\mathbf{S}'$ consist of $2\leq N, N' \leq Q$ $m$-dimensional distinct planes $\alpha_1, \ldots, \alpha_N$ and $\beta_1, \ldots, \beta_{N'}$, respectively;
\item[(ii)] The intersection of every pair $\alpha_i\neq \alpha_j$ is a single $(m-2)$-dimensional subspace $V (\mathbf{S})$ and the intersection of every pair $\beta_i\neq \beta_j$ is a single $(m-2)$-dimensional subspace $V (\mathbf{S}')$;
\item[(iii)] For every pair $\alpha_i\neq \alpha_j$ the two Morgan angles $\theta_1(\alpha_i,\alpha_j) \leq \theta_2(\alpha_i,\alpha_j)$ satisfy $\theta_2 \leq M \theta_1$.
\end{itemize}
Then
\begin{equation}\label{e:alignment}
\dist (V (\mathbf{S}) \cap \Bbf_1, V (\mathbf{S}')\cap \Bbf_1) \leq \bar{C} 
\frac{\dist (\mathbf{S}\cap \Bbf_1, \mathbf{S}'\cap \Bbf_1)}{\min_i \dist (\mathbf{S} \cap \Bbf_1, \alpha_i \cap \Bbf_1)}\, .
\end{equation}
\end{lemma}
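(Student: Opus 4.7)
The plan is to fix an arbitrary point $v \in V(\Sbf')\cap\Bbf_1$, bound $\dist(v, V(\Sbf))$ by the right-hand side of \eqref{e:alignment}, and then conclude by noting that the Hausdorff distance between the two $(m-2)$-dimensional linear subspaces $V(\Sbf)$ and $V(\Sbf')$ is symmetric (by the $(m-2)$-dimensional analogue of \eqref{e:Haus=max_eigen}). Set $\eta := \dist(\Sbf\cap\Bbf_1, \Sbf'\cap \Bbf_1)$. The first step is to show that $\dist(v,\alpha_k)\leq \bar C\eta$ for \emph{every} $k=1,\dots,N$. Indeed, $v\in V(\Sbf')\subset \beta_j$ for every $j$, and since $\alpha_k\subset\Sbf$ each point of $\alpha_k\cap\Bbf_1$ is within $\eta$ of $\Sbf'$; applying Corollary \ref{c:close-to-one-plane} (whose proof relies only on this one-sided bound) with $\alpha_k$ in place of $\alpha$ and the planes of $\Sbf'$ in place of the $\beta_i$, we find some $j=j(k)$ with $\dist(\alpha_k\cap\Bbf_1, \beta_{j(k)}\cap\Bbf_1) \leq \bar C \eta$, and then $v\in\beta_{j(k)}\cap\Bbf_1$ yields $\dist(v,\alpha_k)\leq \bar C\eta$.

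Next, fix any index $i$ and set $v':=\mathbf{p}_{\alpha_i}(v)$, so that $|v-v'|=\dist(v,\alpha_i)\leq \bar C\eta$, and decompose $v' = v_V + v_\perp$ with $v_V := \mathbf{p}_{V(\Sbf)}(v')\in V(\Sbf)$ and $v_\perp := v'-v_V\in \alpha_i\cap V(\Sbf)^\perp$. For any $k\neq i$, since $V(\Sbf) = \alpha_i\cap\alpha_k\subset\alpha_k$ contains $v_V$, shifting by $v_V$ gives $\dist(v',\alpha_k) = \dist(v_\perp,\alpha_k)\geq |v_\perp|\sin\theta_1(\alpha_i,\alpha_k)$ by Corollary \ref{c:growth}; on the other hand, $\dist(v',\alpha_k)\leq |v-v'|+\dist(v,\alpha_k) \leq 2\bar C\eta$ by the triangle inequality and step one. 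Hypothesis (iii), together with \eqref{e:Haus=max_eigen}, promotes the Morgan-angle bound to $\sin\theta_1(\alpha_i,\alpha_k) \geq c M^{-1}\dist(\alpha_i\cap\Bbf_1, \alpha_k\cap\Bbf_1)$ for a universal constant $c$, and maximizing the denominator over $k\neq i$ yields
\[
|v_\perp|\leq \frac{\bar C M\eta}{\max_{k\neq i}\dist(\alpha_i\cap\Bbf_1, \alpha_k\cap\Bbf_1)} = \frac{\bar C M\eta}{\dist(\Sbf\cap\Bbf_1,\alpha_i\cap\Bbf_1)},
\]
using $\alpha_i\subset\Sbf$ in the last equality. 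Combining $\dist(v, V(\Sbf))\leq |v-v'|+|v_\perp|$ and choosing $i$ so as to minimize the resulting bound uniformly in $v$ produces \eqref{e:alignment}.

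The main subtlety, and the reason hypothesis (iii) is essential, is the transition from smallness of $\dist(v,\alpha_k)$ (which carries no built-in information about the spine $V(\Sbf)$) to smallness of the component $v_\perp$ orthogonal to $V(\Sbf)$: this transition is controlled by the \emph{minimum} Morgan angle $\theta_1(\alpha_i,\alpha_k)$, whereas the available separation data for the planes of $\Sbf$ is naturally encoded via \eqref{e:Haus=max_eigen} by the \emph{maximum} Morgan angle $\theta_2(\alpha_i,\alpha_k)$. Without (iii) the two could be arbitrarily disproportionate and the estimate would degenerate.
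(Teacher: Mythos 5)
Your proof is correct and follows essentially the same route as the paper's: both rest on Corollary \ref{c:close-to-one-plane} to place a fixed $v\in V(\Sbf')\cap\Bbf_1$ close to the planes of $\Sbf$, then use the Morgan-angle growth estimate from Corollary \ref{c:growth} together with hypothesis (iii) to convert closeness to the planes into closeness of the $V(\Sbf)^\perp$-component, and finish via the symmetry of Hausdorff distance for equidimensional subspaces. The only cosmetic difference is that the paper fixes the pair $(\alpha_1,\alpha_2)$ realizing the maximal pairwise distance and works with it throughout, whereas you keep the index $i$ free and maximize over $k\neq i$ at the end — same bound, same ideas.
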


\begin{proof}
    We may assume without loss of generality that the planes $\alpha_i$ are ordered such that 
    \[
        \dist (\alpha_1\cap \Bbf_1, \alpha_2 \cap \Bbf_1) = \max_{i\neq j}\dist (\alpha_i \cap \Bbf_1, \alpha_j\cap \Bbf_1)\, ,
    \]
    and observe that 
\begin{equation}\label{e:distant-2}
\min_i \dist (\mathbf{S} \cap \Bbf_1, \alpha_i \cap \Bbf_1) \leq \dist (\alpha_1\cap \Bbf_1, \alpha_2 \cap \Bbf_1)\, .
\end{equation}
On the other hand, because of Corollary \ref{c:close-to-one-plane} and since $\alpha_i\subset \Sbf$ for all $i$, we can select $\beta_i$ and $\beta_j$ such that 
\begin{align}
\dist (\alpha_1 \cap \Bbf_1, \beta_i \cap \Bbf_1) &\leq \bar{C}  \dist (\mathbf{S}\cap \Bbf_1, \mathbf{S}'\cap \Bbf_1)\label{e:friend-of-1}\\
\dist (\alpha_2 \cap \Bbf_1, \beta_j \cap \Bbf_1) &\leq \bar{C}  \dist (\mathbf{S}\cap \Bbf_1, \mathbf{S}'\cap \Bbf_1)\, .\label{e:friend-of-2}
\end{align}
To simplify our notation we use $V$ and $V'$ in place of $V (\mathbf{S})$ and $V (\mathbf{S}')$. 
Because of the condition (iii) on the Morgan angles $\theta_k (\alpha_1, \alpha_2)$, using \eqref{e:Morgan-max-min} and \eqref{e:Haus=max_eigen} we have
\begin{equation}\label{e:growth-1}
\dist (w, \alpha_2) \geq \bar{C}^{-1} \dist (\alpha_1\cap \Bbf_1, \alpha_2 \cap \Bbf_1) |w- \mathbf{p}_V (w)| \qquad \forall w\in \alpha_1\, .
\end{equation}
Fix now $v'\in V'\cap \Bbf_1$ and observe that, since $v'$ belongs to both $\beta_i$ and $\beta_j$, due to \eqref{e:friend-of-1} and \eqref{e:friend-of-2} we must have
\begin{align*}
|\mathbf{p}_{\alpha_1} (v')-v'| &= \dist (v', \alpha_1) \leq \bar C \dist (\mathbf{S}\cap \Bbf_1, \mathbf{S}'\cap \Bbf_1)\\
|\mathbf{p}_{\alpha_2} (v')-v'| &= \dist (v', \alpha_2) \leq \bar{C} \dist (\mathbf{S}\cap \Bbf_1, \mathbf{S}'\cap \Bbf_1)\, .
\end{align*}
In particular, from the triangle inequality and the fact $|\mathbf{p}_{\alpha_2} (v')|\leq 1$ one can then deduce
\begin{equation}\label{e:close-10}
\dist (\mathbf{p}_{\alpha_1} (v'), \alpha_2) \leq \bar{C} \dist (\mathbf{S}\cap \Bbf_1, \mathbf{S}'\cap \Bbf_1)\, .
\end{equation}
Now, let $w= \mathbf{p}_{\alpha_1} (v')$ and observe that
\begin{equation}\label{e:est-1}
\dist (v', V) \leq |v'-w| + |w- \mathbf{p}_V (w)|
\leq \bar C \dist (\mathbf{S}\cap \Bbf_1, \mathbf{S}'\cap \Bbf_1) + |w-\mathbf{p}_V (w)|\, ,
\end{equation}
while, by \eqref{e:growth-1} and \eqref{e:close-10},
\begin{equation}\label{e:est-2}
|w- \mathbf{p}_V (w)|\leq \bar{C} \frac{\dist (w, \alpha_2)}{\dist (\alpha_1\cap \Bbf_1, \alpha_2 \cap \Bbf_1)}\leq \bar{C} \frac{\dist (\mathbf{S}\cap \Bbf_1, \mathbf{S}'\cap \Bbf_1)}{\dist (\alpha_1\cap \Bbf_1, \alpha_2 \cap \Bbf_1)}\, .
\end{equation}
Thus, given \eqref{e:distant-2} and $\dist (\alpha_1\cap \Bbf_1, \alpha_2 \cap \Bbf_1)\leq 1$, \eqref{e:est-1} and \eqref{e:est-2} lead to 
\[
\dist (v', V) \leq \bar{C} \frac{\dist (\mathbf{S}\cap \Bbf_1, \mathbf{S}'\cap \Bbf_1)}{\min_i \dist (\mathbf{S} \cap \Bbf_1, \alpha_i \cap \Bbf_1)}\, .
\]
Observe however that $v'$ is an arbitrary element of $V'\cap \Bbf_1$, and we have thus reached 
\[
\sup \{\dist (v',V) : v'\in V'\cap \Bbf_1)\}
\leq \bar{C} \frac{\dist (\mathbf{S}\cap \Bbf_1, \mathbf{S}'\cap \Bbf_1)}{\min_i \dist (\mathbf{S} \cap \Bbf_1, \alpha_i \cap \Bbf_1)}\, .
\]
On the other hand, since $V$ and $V'$ have the same dimension, by Corollary \ref{c:growth} we infer that
\[
\sup \{\dist (v',V) : v'\in V'\cap \Bbf_1)\} = \dist (V\cap \Bbf_1, V'\cap \Bbf_1)\, ,
\]
hence concluding the desired statement.
\end{proof}

Finally, the following lemma gives a lower bound for the distance of $z$ to $q+\mathbf{S}$ for many points $z\in\mathbf{S}$. Before stating it we introduce some useful terminology.

\begin{definition}\label{def:rotationally_invariant}
A set $\Omega\subset \mathbb R^{m+n}$ is said to be \textit{invariant under rotation around} a linear subspace $V$ if $R (\Omega)=\Omega$ for any rotation $R$ of $\mathbb R^{m+n}$ which fixes $V$.
\end{definition}

\begin{lemma}[Shifting lemma]\label{l:shifting}
For every $M\geq 1$ and every open set $U\subset \Bbf_1$ that is invariant under rotations around $V$, there is a constant $\bar{C} = \bar{C}(M,m,n,N,U)>0$ with the following properties. Assume that $\mathbf{S}= \alpha_1\cup\cdots \cup \alpha_N$ satisfies (i), (ii), and (iii) in Lemma \ref{l:spine}, let $q\in \Bbf_{1/2}$, and let $\boldsymbol{\mu} (\mathbf{S}) = \max_{i<j} \dist (\alpha_i\cap \Bbf_1, \alpha_j\cap \Bbf_1)$. Then there is an index $j\in\{1,\dotsc,N\}$ and a subset $\Omega\subset \alpha_j\cap U$ such that 
$\mathcal{H}^m (\Omega) \geq \bar{C}^{-1}$ and 
\begin{equation}\label{e:shifting-LB}
|\mathbf{p}_{\alpha_1}^\perp (q)| + \boldsymbol{\mu} (\mathbf{S}) |\mathbf{p}_{V^\perp\cap \alpha_1} (q)| \leq \bar{C} \dist (z, q+\mathbf{S}) \qquad \forall z\in \Omega\, .
\end{equation}
\end{lemma}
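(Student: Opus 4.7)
The plan is to reduce, using that every $\alpha_i$ contains $V$ and that $U$ is rotation-invariant around $V$, to an angular analysis in a $2$-dimensional slice of a carefully chosen plane $\alpha_j$. Since $V\subset\alpha_i$ for every $i$, the set $q+\mathbf{S}$ depends only on $q$ modulo $V$, so I may assume $q = q_1+q_\perp\in V^\perp$ with $q_1 = \mathbf{p}_{V^\perp\cap\alpha_1}(q)$ and $q_\perp = \mathbf{p}_{\alpha_1^\perp}(q)$. The openness and rotational invariance of $U$ give the existence of a cylindrical set $W\times A\subset U$, where $W\subset V$ is a ball of diameter $\gtrsim 1$ and $A = \{w\in V^\perp: s_1\leq|w|\leq s_2\}$ is a spherical shell of positive width, both depending only on $U$; the set $\Omega$ will be sought inside $\alpha_j\cap(W\times A)$. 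By the triangle inequality on Hausdorff distances, I may choose $j$ with $\dist(\alpha_1\cap\Bbf_1,\alpha_j\cap\Bbf_1)\geq\boldsymbol{\mu}(\mathbf{S})/2$, and hypothesis (iii) of Lemma \ref{l:spine} then forces both Morgan angles of the pair $(\alpha_1,\alpha_j)$ to be comparable to $\boldsymbol{\mu}(\mathbf{S})$ up to constants depending on $M$.

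For $z\in\alpha_j$ written as $z = v + w$ with $v\in W$, $w\in V^\perp\cap\alpha_j$, the identity $\dist(z,q+\alpha_i) = |\mathbf{p}_{\alpha_i^\perp}(w)-\mathbf{p}_{\alpha_i^\perp}(q_1)-\mathbf{p}_{\alpha_i^\perp}(q_\perp)|$ reduces the computation to $V^\perp$. Corollary \ref{c:growth}, hypothesis (iii), and Lemma \ref{l:symmetry} then give three key bounds: (a) $|\mathbf{p}_{\alpha_i^\perp}(w)|\sim |w|\dist(\alpha_j\cap\Bbf_1,\alpha_i\cap\Bbf_1)$ with $M$-dependent constants; (b) $|\mathbf{p}_{\alpha_i^\perp}(q_1)|\leq\boldsymbol{\mu}(\mathbf{S})|q_1|$; and (c) $|\mathbf{p}_{\alpha_i^\perp}(q_\perp)|\geq |q_\perp|\sqrt{1-\boldsymbol{\mu}(\mathbf{S})^2}$, valid when $\boldsymbol{\mu}(\mathbf{S})\leq 1/\sqrt{2}$ (which may be assumed without loss of generality).

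As $w$ rotates on the circle $\partial\Bbf_r\cap V^\perp\cap\alpha_j$, the map $\theta\mapsto\mathbf{p}_{\alpha_i^\perp}(w(\theta))$ traces a closed ``ellipse'' of diameter $\sim r\dist(\alpha_j\cap\Bbf_1,\alpha_i\cap\Bbf_1)$ in $\alpha_i^\perp$. For the fixed target $\mathbf{p}_{\alpha_i^\perp}(q_1+q_\perp)$, the ``bad'' sub-arc on which this curve lies within distance $\eta\max\{r\dist(\alpha_j\cap\Bbf_1,\alpha_i\cap\Bbf_1),|\mathbf{p}_{\alpha_i^\perp}(q_1+q_\perp)|\}$ of the target has length $\leq C_M\eta$ by a standard transverse-intersection count. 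Choosing $\eta=\eta(M,N)$ small, the union of the bad arcs over $i=1,\ldots,N$ does not exhaust the circle; crossing the complement with $W$ and an interval of radii in $[s_1,s_2]$ gives $\Omega$ with $\mathcal{H}^m(\Omega)\gtrsim 1$ and $\dist(z,q+\alpha_i)\gtrsim\max\{|\mathbf{p}_{\alpha_i^\perp}(q_1+q_\perp)|,r\dist(\alpha_j\cap\Bbf_1,\alpha_i\cap\Bbf_1)\}$ for all $i$.

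The main obstacle is to deduce from these bounds the uniform lower estimate $|q_\perp|+\boldsymbol{\mu}(\mathbf{S})|q_1|\lesssim\dist(z,q+\mathbf{S})$, via a case analysis on whether $|q_\perp|$ or $\boldsymbol{\mu}(\mathbf{S})|q_1|$ dominates. In the ``perpendicular'' regime, (c) handles the $|q_\perp|$ contribution for every $i$. In the ``tangential'' one, the choice of $j$ forces $|\mathbf{p}_{\alpha_j^\perp}(q_1)|\gtrsim\boldsymbol{\mu}(\mathbf{S})|q_1|$ via Corollary \ref{c:growth} applied to $(\alpha_1,\alpha_j)$, while the possible cancellation between $\mathbf{p}_{\alpha_i^\perp}(q_1)$ and $\mathbf{p}_{\alpha_i^\perp}(q_\perp)$ for particular $i\neq j$ is compensated by the $r\dist(\alpha_j\cap\Bbf_1,\alpha_i\cap\Bbf_1)$ term. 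The delicate combinatorial bookkeeping this requires among the Hausdorff distances of the $\alpha_i$ is in the spirit of the proof of Lemma \ref{l:spine}.
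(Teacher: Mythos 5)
There is a genuine gap in the choice of the index $j$, which makes the argument fail. You select $j$ \emph{before ever looking at $q$}, picking any $j\neq 1$ with $\dist(\alpha_1\cap\Bbf_1,\alpha_j\cap\Bbf_1)\geq\boldsymbol{\mu}(\mathbf{S})/2$. But the conclusion must hold for \emph{every} $q\in\Bbf_{1/2}$, and if $q$ happens to lie on (or very near) the plane $\alpha_j$ you selected, then every $z\in\alpha_j$ satisfies $\dist(z,q+\alpha_j)=|\mathbf{p}_{\alpha_j}^\perp(q)|\approx 0$, so $\dist(z,q+\mathbf{S})\approx 0$ for \emph{all} $z\in\alpha_j\cap U$ while the left-hand side of \eqref{e:shifting-LB} is of order $\boldsymbol{\mu}(\mathbf{S})$. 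A concrete instance (with $N=2$): take $\alpha_1=\langle e_1,e_2,e_3\rangle$, $\alpha_2=\langle e_1,e_2+\mu e_4,e_3+\mu e_5\rangle$ in $\Rbb^5$, $V=\langle e_1\rangle$, and $q=\tfrac14(e_2+\mu e_4)/\sqrt{1+\mu^2}\in\alpha_2$. Then $|\mathbf{p}_{\alpha_1}^\perp(q)|+\boldsymbol{\mu}(\mathbf{S})|\mathbf{p}_{V^\perp\cap\alpha_1}(q)|\sim\mu$, yet your forced choice $j=2$ gives $\dist(z,q+\alpha_2)=0$ for all $z\in\alpha_2$, so \eqref{e:shifting-LB} is violated for any $\bar C$; the lemma \emph{is} true here, but only by choosing $\Omega\subset\alpha_1$. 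The index $j$ must be chosen depending on $q$ — essentially so that $|\mathbf{p}_{\alpha_j}^\perp(q)|$ is (close to) maximal among $i$. That is exactly what the paper does: Lemma~\ref{l:maximizer} first reduces the left side of \eqref{e:shifting-LB} to $C_0M\max_i|\mathbf{p}_{\alpha_i}^\perp(q)|$, and then $j$ is chosen to realize that maximum.

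Beyond this, the route is genuinely different from the paper's — you attempt a direct ``ellipse versus target point'' transversality count in the $V^\perp\cap\alpha_j$ slice, whereas the paper argues by contradiction with an explicit linear-algebra construction (the basis $e_1,e_2$ and the map $\Phi$) and never needs an angular sweep. Your route also has two secondary issues worth flagging even if $j$ were fixed correctly. First, the claimed bad-arc length $\leq C_M\eta$ is too strong: when the ellipse is tangent to the sphere $\partial\Bbf_{\eta\cdot(\cdot)}(T)$ the $\theta$-measure of the bad arc is of order $\sqrt{\eta}$, not $\eta$; this is harmless for the conclusion but the exponent must be adjusted. Second, the reduction ``WLOG $\boldsymbol{\mu}(\mathbf{S})\leq 1/\sqrt 2$'' is not free: the lemma must hold for arbitrary $M$-balanced cones including those with $\boldsymbol{\mu}(\mathbf{S})$ close to $1$, and for those your key estimate (c) ($|\mathbf{p}_{\alpha_i^\perp}(q_\perp)|\geq|q_\perp|\sqrt{1-\boldsymbol{\mu}^2}$) degenerates, so that regime needs a separate (though presumably easy) argument. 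Once $j$ is chosen correctly as a function of $q$, the transversality idea looks salvageable, but the case bookkeeping you defer — in particular the regime $|q_\perp|\sim\boldsymbol{\mu}(\mathbf{S})|q_1|$ and planes $\alpha_i$ close to $\alpha_1$ — would need to be carried out carefully, and at that point the paper's Lemma~\ref{l:maximizer}-plus-contradiction route is arguably shorter.
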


\begin{remark}\label{r:shifting-scaling-invariant}
Observe that Lemma \ref{l:shifting} can be scaled. Under the assumption that $\Bbf_r$ replaces $\Bbf_1$, $U_{0,r}\equiv \iota^{-1}_{0,r}(U)$ replaces $U$, and $q\in \Bbf_{r/2}$, we can conclude the existence of a subset $\Omega \subset U_{0,r}$ with measure larger than $\bar C^{-1} r^m$ with the property that \eqref{e:shifting-LB} holds. Under these assumptions the constant $\bar{C}$ can be taken to be the same as the one in Lemma \ref{l:shifting}.
\end{remark}

In order to prove Lemma \ref{l:shifting}, we will need the following elementary result.

\begin{lemma}\label{l:maximizer}
There is a dimensional constant $C_0 = C_0(m,n)>0$ with the following property.
Let $\mathbf{S}$, $M$, and $q$ be as in Lemma \ref{l:shifting}. Then
\begin{equation}\label{e:orthogonal-parallel}
|\mathbf{p}_{\alpha_1}^\perp (q)| + \boldsymbol{\mu} (\mathbf{S}) |\mathbf{p}_{V^\perp\cap \alpha_1} (q)| \leq C_0 M \max_i |\mathbf{p}_{\alpha_i}^\perp (q)|\, .
\end{equation}
\end{lemma}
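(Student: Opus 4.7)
\textbf{Proof plan for Lemma \ref{l:maximizer}.} The plan is to orthogonally decompose $q$ relative to $\alpha_1$, bounding each piece separately. Write
\[
q = q_V + q_W + q^\perp, \qquad q_V \in V, \quad q_W \in V^\perp \cap \alpha_1, \quad q^\perp \in \alpha_1^\perp.
\]
Then $|\mathbf{p}_{\alpha_1}^\perp(q)| = |q^\perp|$ and $|\mathbf{p}_{V^\perp \cap \alpha_1}(q)| = |q_W|$, so the left-hand side of \eqref{e:orthogonal-parallel} equals $|q^\perp| + \boldsymbol{\mu}(\mathbf{S})|q_W|$. The bound on $|q^\perp|$ is trivial, namely $|q^\perp| \leq \max_i |\mathbf{p}_{\alpha_i}^\perp(q)|$ by taking $i=1$, so the whole argument reduces to controlling $\boldsymbol{\mu}(\mathbf{S})|q_W|$.

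To bound $|q_W|$, I will exploit the fact that $V \subset \alpha_i$ for every $i$, hence $\mathbf{p}_{\alpha_i}^\perp(q) = \mathbf{p}_{\alpha_i}^\perp(q_W) + \mathbf{p}_{\alpha_i}^\perp(q^\perp)$, so by the triangle inequality
\[
|\mathbf{p}_{\alpha_i}^\perp(q_W)| \leq |\mathbf{p}_{\alpha_i}^\perp(q)| + |q^\perp| \leq 2\max_j |\mathbf{p}_{\alpha_j}^\perp(q)|.
\]
Since $\alpha_1 \cap \alpha_i = V$ and $q_W \in V^\perp \cap \alpha_1$, Corollary \ref{c:growth} applied to the pair $(\alpha_1, \alpha_i)$ gives $|\mathbf{p}_{\alpha_i}^\perp(q_W)| = \dist(q_W, \alpha_i) \geq |q_W|\sin\theta_1(\alpha_1, \alpha_i)$, so
\[
\sin\theta_1(\alpha_1, \alpha_i)\,|q_W| \leq 2\max_j |\mathbf{p}_{\alpha_j}^\perp(q)|
\]
for every $i \neq 1$.

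It remains to pick $i$ so as to make $\sin\theta_1(\alpha_1, \alpha_i)$ comparable to $\boldsymbol{\mu}(\mathbf{S})$. For this, observe first that for any pair $(\alpha_j, \alpha_k)$ the Hausdorff triangle inequality gives $\dist(\alpha_j \cap \Bbf_1, \alpha_k \cap \Bbf_1) \leq \dist(\alpha_1 \cap \Bbf_1, \alpha_j \cap \Bbf_1) + \dist(\alpha_1 \cap \Bbf_1, \alpha_k \cap \Bbf_1)$, and hence there exists some index $i^* \neq 1$ with $\dist(\alpha_1 \cap \Bbf_1, \alpha_{i^*}\cap \Bbf_1) \geq \boldsymbol{\mu}(\mathbf{S})/2$. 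By \eqref{e:Haus=max_eigen}, this means $\sin\theta_2(\alpha_1, \alpha_{i^*}) \geq \boldsymbol{\mu}(\mathbf{S})/2$. Using hypothesis (iii), $\theta_1(\alpha_1, \alpha_{i^*}) \geq \theta_2(\alpha_1, \alpha_{i^*})/M$, combined with the elementary concavity inequality $\sin(\theta/M) \geq \sin(\theta)/M$ valid for $\theta \in [0, \pi/2]$ and $M \geq 1$, yields $\sin\theta_1(\alpha_1, \alpha_{i^*}) \geq \boldsymbol{\mu}(\mathbf{S})/(2M)$.

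Plugging $i=i^*$ into the displayed bound gives $\boldsymbol{\mu}(\mathbf{S})|q_W| \leq 4M \max_j |\mathbf{p}_{\alpha_j}^\perp(q)|$, and adding this to the trivial bound for $|q^\perp|$ produces \eqref{e:orthogonal-parallel} with, say, $C_0 = 5$. There is no real obstacle here; the only subtlety worth flagging is the step that replaces $\boldsymbol{\mu}(\mathbf{S})$ (a max over \emph{all} pairs) by a quantity of the form $\dist(\alpha_1\cap \Bbf_1, \alpha_{i^*}\cap \Bbf_1)$, where the Hausdorff triangle inequality above costs a factor of $2$, and the passage from $\theta_2$ to $\theta_1$, which is where hypothesis (iii) enters and the factor $M$ in the conclusion originates.
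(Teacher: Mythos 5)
Your proof is correct. It takes a genuinely cleaner route than the paper's. The paper first fixes $k$ to be the maximizer of $|\mathbf{p}_{\alpha_k}^\perp(q)|$, then chooses an auxiliary plane $\alpha_j$ realizing a large Hausdorff distance to $\alpha_k$, estimates $\boldsymbol{\mu}(\mathbf{S})|\mathbf{p}_{V^\perp\cap\alpha_j}(q)|$ on $\alpha_j$, and finally has to transfer that estimate from $\alpha_j$ back to $\alpha_1$ via a further triangle-inequality step (the block beginning ``It remains to replace $\alpha_j$ with $\alpha_1$\ldots''). You avoid that transfer entirely: by observing that $|\mathbf{p}_{\alpha_{i^*}}^\perp(q_W)| \leq |\mathbf{p}_{\alpha_{i^*}}^\perp(q)| + |q^\perp| \leq 2\max_j|\mathbf{p}_{\alpha_j}^\perp(q)|$ holds for \emph{every} index $i^*$, you are free to work directly with $q_W = \mathbf{p}_{V^\perp\cap\alpha_1}(q)$ on $\alpha_1$ and to choose $i^*$ so that $\dist(\alpha_1\cap\Bbf_1,\alpha_{i^*}\cap\Bbf_1) \geq \boldsymbol{\mu}(\mathbf{S})/2$ — this $i^*$ need not maximize $|\mathbf{p}_{\alpha_i}^\perp(q)|$. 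Both arguments rest on the same ingredients (Corollary \ref{c:growth}, the Hausdorff triangle inequality, and the $M$-balancing from Lemma \ref{l:spine}(iii)), so the saving is purely in bookkeeping, but it is a real simplification and yields a better explicit constant. One small point worth keeping as written: the passage from $\theta_2$ to $\theta_1$ via $\sin(\theta/M)\geq\sin(\theta)/M$ (concavity of $\sin$ on $[0,\pi]$ together with $\sin 0 = 0$) is exactly the place where the factor $M$ in the conclusion enters, and flagging it makes clear that the lemma's constant $C_0$ is genuinely independent of $M$.
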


\begin{proof} 
Let $k$ be such that 
\[
|\mathbf{p}_{\alpha_k}^\perp (q)| = \max_i |\mathbf{p}_{\alpha_i}^\perp (q)|\, .
\]
Thus, in particular, $|\mathbf{p}^\perp_{\alpha_1} (q)|\leq |\mathbf{p}^\perp_{\alpha_k} (q)|$, and so it remains to show that
\begin{equation}\label{e:parallel}
\boldsymbol{\mu} (\mathbf{S}) |\mathbf{p}_{V^\perp\cap \alpha_1} (q)| \leq C_0 M \max_i |\mathbf{p}_{\alpha_i}^\perp (q)|\, .
\end{equation}
First of all pick $j$ which maximizes $\dist (\alpha_j \cap \Bbf_1, \alpha_k \cap \Bbf_1)$, so that in particular
\[
    \dist (\alpha_j \cap \Bbf_1, \alpha_l \cap \Bbf_1) \leq \dist (\alpha_j \cap \Bbf_1, \alpha_k \cap \Bbf_1) \qquad \forall l\, .
\]
Thus, the triangle inequality yields
\[
\dist (\alpha_j \cap \Bbf_1, \alpha_k \cap \Bbf_1)\geq \frac{1}{2} \boldsymbol{\mu} (\mathbf{S})\, .
\]
Next, recalling the definition of Morgan angles, observe that for every $w\in \alpha_j \cap V^\perp$ we have (using the above line, assumption (iii) in Lemma \ref{l:spine} and \eqref{e:Haus=max_eigen})
\[
\boldsymbol{\mu} (\mathbf{S}) |w| \leq C_0 M |\mathbf{p}_{\alpha_k}^\perp (w)|\, .
\]
Now choose $w= \mathbf{p}_{V^\perp\cap \alpha_j} (q) = \mathbf{p}_V^\perp (q) - \mathbf{p}_{\alpha_j}^\perp (q)$. Since $\mathbf{p}_{\alpha_k}^\perp \circ \mathbf{p}_V^\perp =\mathbf{p}_{\alpha_k}^\perp$ (because $V\subset\alpha_k$) we may estimate
\begin{align*}
\boldsymbol{\mu} (\mathbf{S}) |\mathbf{p}_{V^\perp\cap \alpha_j} (q)|
&\leq C_0 M|\mathbf{p}_{\alpha_k}^\perp (\mathbf{p}_V^\perp (q))| + C_0 M |\mathbf{p}_{\alpha_k}^\perp (\mathbf{p}_{\alpha_j}^\perp (q))|\\
&\leq C_0 M |\mathbf{p}_{\alpha_k}^\perp (q)| + C_0 M |\mathbf{p}_{\alpha_j}^\perp (q)|\, .
\end{align*}
Using the maximality of $k$, we then reach
\begin{equation}\label{e:parallel-2}
\boldsymbol{\mu} (\mathbf{S}) |\mathbf{p}_{V^\perp\cap \alpha_j} (q)|
\leq C_0 M |\mathbf{p}_{\alpha_k}^\perp (q)|\, .
\end{equation}
It remains to replace $\alpha_j$ with $\alpha_1$ in the projection on the left-hand side of the above inequality. Since $\mathbf{p}_{V^\perp\cap \alpha_1}\circ\mathbf{p}_{\alpha_j} = \mathbf{p}_{V^\perp\cap \alpha_1}\circ\mathbf{p}_{V^\perp\cap\alpha_j}$, we have
\begin{align*}
|\mathbf{p}_{V^\perp \cap \alpha_1} (q)| &\leq |\mathbf{p}_{V^\perp \cap \alpha_1} (\mathbf{p}_{V^\perp\cap \alpha_j} (q))| + |\mathbf{p}_{V^\perp\cap \alpha_1} (\mathbf{p}_{\alpha_j}^\perp (q))|\\
&\leq |\mathbf{p}_{V^\perp\cap \alpha_j} (q)| + |\mathbf{p}_{\alpha_j}^\perp (q)|\\
&\leq |\mathbf{p}_{V^\perp\cap \alpha_j} (q)| + |\mathbf{p}_{\alpha_k}^\perp (q)|\, .
\end{align*}
Combining the latter inequality with \eqref{e:parallel-2} and using that $\boldsymbol{\mu}(\Sbf)\leq 1$, we reach \eqref{e:parallel} and hence complete the proof of the lemma.
\end{proof}

We can now prove Lemma \ref{l:shifting}.

\begin{proof}[Proof of Lemma \ref{l:shifting}]
We choose $j$ such that $|\mathbf{p}_{\alpha_j}^\perp (q)|= \max_i |\mathbf{p}_{\alpha_i}^\perp (q)|$ 
and using Lemma \ref{l:maximizer} we aim at proving that
\begin{equation}\label{e:distance_LB-2}
 |\mathbf{p}_{\alpha_j}^\perp (q)|\leq \bar C_1 \dist (z, q+\mathbf{S}) 
 \qquad \forall z\in \Omega\, ,
\end{equation}
for some set $\Omega\subset U$ with $\mathcal{H}^m (\Omega) \geq \bar C_1^{-1}$, where the constant $\bar C_1$ is allowed to depend on $U$. In fact it turns out that the constant $\bar C_1$ depends on
\begin{equation}\label{e:explicit_dependence_on_U}
\gamma:= \inf \{\mathcal{H}^m (U\cap \alpha) : V\subset \alpha \; \text{and $\alpha$ is an $m$-dimensional subspace}\}\, .
\end{equation}
It is not entirely obvious that $\gamma$ is necessarily positive. Note, however, that if a point $p$ belongs to $U$ and another point $q$ satisfies $\mathbf{p}_V (q) = \mathbf{p}_V (p)$ and $\dist (q, V)= \dist (p, V)$, then necessarily $q\in U$, in light of the rotational invariance of $U$. In particular there is an open subset $U'\subset V\times \mathbb R^+$ such that $U= \{p: (\mathbf{p}_V (p), \dist (p, V))\in U'\}$. From this we conclude that, not only is $\gamma$ positive, but in fact that $\mathcal{H}^m (U\cap \alpha)$ is exactly the same number for every $m$-dimensional plane which contains $V$.

Observe first that \eqref{e:distance_LB-2} is equivalent to
\begin{equation}\label{e:distance_LB-3}
 \min_{i} |\mathbf{p}_{\alpha_i}^\perp (z-q)|\geq \bar C_1^{-1} |\mathbf{p}_{\alpha_j}^\perp (q)| \qquad \forall z\in\Omega\, .
\end{equation}
Now assume that the claim is false, namely that \eqref{e:distance_LB-3} fails for all $z\in U\cap \alpha_j$ with the exception of a set $E$ of Hausdorff measure smaller than $\bar C_1^{-1}$. In particular by choosing $\bar C_1$ large enough, we can ensure that, for some $i\in \{1, \ldots , N\}$ the set $F_i\subset \alpha_j\cap U$ where  
\begin{equation}\label{e:too-small-0}
|\mathbf{p}_{\alpha_i}^\perp (z-q)|  \ \leq \bar C_1^{-1} |\mathbf{p}_{\alpha_j}^\perp (q)| \qquad \forall z\in F_i
\end{equation}
has measure at least $\frac{\gamma}{N+1}$.
We then claim that
\begin{align}
|\mathbf{p}_{\alpha_i}^\perp (q)| & \leq C \gamma^{-4} \bar C_1^{-1}  |\mathbf{p}_{\alpha_j}^\perp (q)|\, ,\label{e:too-small-1}\\
|\mathbf{p}_{\alpha_i}^\perp (z)| &\leq C \gamma^{-4} \bar C_1^{-1}  |\mathbf{p}_{\alpha_j}^\perp (q)|\qquad \forall
z\in \Bbf_1\cap \alpha_j \, ,\label{e:too-small-2}
\end{align}
where the constant $C$ depends only on $m$ and $N$.
Note that $i$ is the index chosen such that $\mathcal{H}^m (F_i) \geq \frac{\gamma}{N+1}$, but the second estimate is claimed for every $z\in \Bbf_1\cap \alpha_j$, and the latter will in the end be used to get a contradiction.

{ In order to prove \eqref{e:too-small-1} and \eqref{e:too-small-2} consider first the set $F'_i := \mathbf{p}_{V^\perp} (F_i)$. The latter belongs to the $2$-dimensional subspace $V^\perp \cap \alpha_j$ and the coarea formula implies immediately that 
\begin{equation}\label{e:area-lower-bound}
 \mathcal{H}^2 (F'_i) \geq C^{-1} \mathcal{H}^m (F_i) \geq C^{-1} \gamma
\end{equation}
for a positive dimensional $C= C(m,N)$. Choose next a vector $e_1\in F'_i$ such that $|e_1|\geq \frac{1}{2} \sup \{|x|:x\in F'_i\}$ and observe that $|e_1|\geq C \sqrt{\gamma}$ because $4\pi |e_1|^2 \geq \mathcal{H}^2 (F'_i)$. Hence choose $e_2\in F'_i$ such that 
\[
|e_2 - |e_1|^{-2} (e_2\cdot e_1) e_1| \geq \frac{1}{2} \sup \{|x - |e_1|^{-2} (x\cdot e_1) e_1|: x\in F'_i\}
\]
and observe that $\mathcal{H}^2 (F'_i)\leq 4 |e_2 - |e_1|^{-2} (e_1\cdot e_2) e_1|$, so that 
\[
||e_1| e_2 - |e_1|^{-1} (e_1\cdot e_2) e_1| \geq C^{-1} \gamma^{3/2}\, .
\]
We next define the linear map $\Phi: \mathbb R^2 \to V^\perp\cap \alpha_j$ by $(\lambda_1, \lambda_2)\mapsto \lambda_1 e_1 + \lambda_2 e_2$ and observe, by elementary geometry that $|\det \Phi| = ||e_1| e_2 - |e_1|^{-1} (e_1\cdot e_2)e_1|$. In particular, $|\det\, \Phi^{-1}|\leq C\gamma^{-3/2}$ and, given that $|\Phi|\leq C$,  we also have $|\Phi^{-1}| \leq C \gamma^{-3/2}$. Therefore $F''_i := \Phi^{-1} (F'_i)$ is contained in a disk of radius at most $C \gamma^{-3/2}$. Consider now the number
\[
\mu:= \sup \{|\lambda_1+\lambda_2-1|: (\lambda_1, \lambda_2)\in F''_i\} 
\]
and notice that $\mathcal{H}^2 (F''_i) \leq C \gamma^{-3/2} \mu$. Since $|\det \Phi|\leq 1$ we get $\mathcal{H}^2 (F'_i) \leq \mathcal{H}^2 (F''_i)$, and thus, when combined with \eqref{e:area-lower-bound}, we infer $\mu\geq C^{-1}\gamma^{5/2}$, and thus the existence of $(\lambda_1, \lambda_2)\in F''_i$ such that 
\begin{align}
|\lambda_1+\lambda_2-1|\geq C^{-1} \gamma^{5/2}\, .
\end{align}
Observe now that, by the very definition of $F'_i$, there are $v_1, v_2, v_3\in V$ such that 
\begin{align*}
&e_1 + v_1 \in F_i\\   
&e_2 + v_2 \in F_i\\
&\lambda_1 e_1 + \lambda_2 e_2 + v_3 \in F_i\, .
\end{align*}
}
Since $V\subset \alpha_i$ we can write 
\begin{align*}
& (\lambda_1+\lambda_2-1) \mathbf{p}_{\alpha_i}^\perp (q)
= \lambda_1 \mathbf{p}_{\alpha_i}^\perp (q-e_1) + \lambda_2 \mathbf{p}_{\alpha_i}^\perp (q-e_2) -
\mathbf{p}_{\alpha_i}^\perp (q- (\lambda_1 e_1+\lambda_2 e_2))\\
= & \lambda_1 \mathbf{p}_{\alpha_i}^\perp (q-e_1-v_1) + \lambda_2 \mathbf{p}_{\alpha_i}^\perp (q-e_2-v_2) - \mathbf{p}_{\alpha_i}^\perp (q-(\lambda_1 e_1 +\lambda_2 e_2 + v_3))\, .
\end{align*}
{ In particular we conclude
\begin{equation}\label{e:too-small-3}
|\mathbf{p}_{\alpha_i}^\perp (q)| \leq C \bar{C}_1^{-1} \gamma^{-5/2} |\mathbf{p}_{\alpha_j}^\perp (q)|\, ,
\end{equation}}
which is in fact stronger than \eqref{e:too-small-1}.
On the other hand we can also write 
\begin{align*}
|\mathbf{p}_{\alpha_i}^\perp (e_k)|\leq
|\mathbf{p}_{\alpha_i}^\perp (e_k-q)| + |\mathbf{p}_{\alpha_i}^\perp (q)| \qquad k=1,2
\end{align*}
and hence we immediately conclude
\begin{equation}\label{e:too-small-4}
|\mathbf{p}_{\alpha_i}^\perp (e_k)|\leq C \bar{C}_1^{-1} \gamma^{-5/2} |\mathbf{p}_{\alpha_j}^\perp (q)|\, .
\end{equation}
{ In turn, using again the map $\Phi$ we can express any $z\in \Bbf_1\cap \alpha_j$ as $z= \lambda_1 e_1+\lambda_2 e_2 + v$ for some coefficients $|\lambda_i|\leq C \gamma^{-3/2}$ and some vector $v\in V$. In particular we achieve \eqref{e:too-small-2} from \eqref{e:too-small-4}.

Observe however that \eqref{e:too-small-2} can be equivalently written as 
\begin{equation}\label{e:too-small-5}
\dist (z, \alpha_i) \leq C \bar{C}_1^{-1} \gamma^{-4} |\mathbf{p}_{\alpha_j}^\perp (q)|\qquad \forall z\in \Bbf_1\cap \alpha_j\, ,
\end{equation}
and thus also as
\begin{equation}\label{e:too-small-6}
\dist (\alpha_i\cap \Bbf_1, \alpha_j\cap \Bbf_1) \leq C \bar{C}_1^{-1} \gamma^{-4} |\mathbf{p}_{\alpha_j}^\perp (q)|
\end{equation}}
Since $\gamma$ is fixed, $|q|\leq \frac{1}{2}$, and $\alpha_i$ and $\alpha_j$ have the same dimension, the latter estimate implies that, by choosing $\bar{C}_1$ appropriately large, we can assume that the linear subspaces $\alpha_i$ and $\alpha_j$ are sufficiently close. In particular, given that $|q|\leq \frac{1}{2}$, for an appropriate large choice of $\bar{C}_1$ the affine subspace $q+\alpha_i^\perp$ must intersect $\Bbf_1\cap \alpha_j$ at some point $z$. But then at that point $z$ we would have 
\begin{equation}\label{e:intersection}
\mathbf{p}_{\alpha_i}^\perp (q-z) = q-z\, .
\end{equation}
Since $z\in \alpha_j$ we must have $|q-z| \geq \dist (q, \alpha_j) = |\mathbf{p}_{\alpha_j}^\perp (q)|$. { Hence 
\[
|\mathbf{p}_{\alpha_i}^\perp (q-z)| \geq |\mathbf{p}_{\alpha_j}^\perp (q)|\, .
\]
On the other hand, combining \eqref{e:too-small-1} and \eqref{e:too-small-2} we get 
\[
|\mathbf{p}_{\alpha_i}^\perp (q-z)| \leq 2 C \bar{C}_1^{-1} \gamma^{-4} |\mathbf{p}_{\alpha_j}^\perp (q)|\, .
\]
Since $C$ is a constant which depends only on $m$ and $N$ we can now choose $\bar{C}_1$ large enough, depending only on $\gamma$, $m$, and $N$ so that $C \bar{C}_1^{-1} \gamma^{-4} < \frac{1}{2}$. But then the last two inequalities would be in contradiction, unless $\mathbf{p}_{\alpha_j}^\perp (q) = \mathbf{p}_{\alpha_i}^\perp (q-z) = 0$. In particular we conclude that $q$ belongs to $\alpha_j$. In this case, however, \eqref{e:distance_LB-2} holds trivially.}
\end{proof}

\section{Graphical approximations}\label{p:approx}

In this section we start facing two of the geometric issues that complicate the proof of Theorem \ref{c:decay}. In an ideal situation the cone $\mathbf{S}$ in the statement of the theorem consists of $N$ $m$-dimensional planes $\alpha_1, \ldots , \alpha_N$ with the additional properties that, for any pair $\alpha_i\neq \alpha_j$,
\begin{itemize}
    \item[(i)] The Hausdorff distance between $\alpha_i$ and $\alpha_j$ is relatively larger than $\mathbb{E} (T, \mathbf{S}, \Bbf_1)$;
    \item[(ii)] The two Morgan angles $\theta_1(\alpha_i,\alpha_j)$, $\theta_2(\alpha_i,\alpha_j)$ formed by them are comparable.
\end{itemize}
Under these two additional assumptions we can hope to use the bounds of Theorem \ref{thm:main-estimate} and Corollary \ref{c:splitting-0} to give a good approximation of $T$ by graphs over the collection of planes $\alpha_i$, after removing a small neighborhood of the spine $V(\Sbf)$.

Although there is no reason to assume (i) and (ii) for $\Sbf$ a priori, we can hope to achieve them for a different cone $\mathbf{S}'$ without increasing the excess $\Ebb(T,\Sbf',\Bbf_1)$ too much relative to the excess $\Ebb(T,\Sbf,\Bbf_1)$. In light of this we first specify an algorithm that allows us to gain control on how much the excess increases when we discard planes of $\mathbf{S}$ until we achieve (i). This algorithm is summarized in the Pruning Lemma \ref{l:pruning}. For later use we want to iteratively apply Lemma \ref{l:pruning} and keep track of the structure of the planes which have been discarded during this process; this is accomplished in Lemma \ref{l:algorithm}. As for (ii), we will work under the assumption that it holds for now. Later, in Section \ref{p:balancing}, we will demonstrate that the new cone $\Sbf'$ achieving (i) indeed additionally satisfies (ii). 

In the remainder of this Section, we show how to gain a graphical approximation under a suitable quantification of (i) and (ii). We begin with a ``crude approximation'' in Section \ref{s:crude}, followed by a more intricate ``refined approximation" in Section \ref{s:refined}; the latter will be needed later.

\subsection{Pruning Lemma and Layer Subdivision}

The main purpose of this section is to introduce two very useful elementary combinatorial lemmas with the aim discussed above.

We will always work under the following assumption. We will often use the terminology ``plane" when referring to a linear subspace of $\R^{m+n}$.

\begin{assumption}\label{a:cones}
$\mathbf{S}\subset \mathbb R^{m+n}$ is an $m$-dimensional cone such that
\begin{itemize}
\item[(i)] $\mathbf{S}$ is a union of $N$ distinct $m$-dimensional planes $\alpha_1, \ldots, \alpha_N$;
\item[(ii)] for each pair $i\neq j$ the intersection $\alpha_i \cap \alpha_j$ is the same $(m-2)$-dimensional plane $V$, which we refer to as the \emph{spine} of $\mathbf{S}$.
\end{itemize}
\end{assumption}

Note in particular that the cones in the class of $\mathscr{C} (p, Q)$ of Definition \ref{def:cones} fall under the latter assumption.

Our first technical lemma we call the \emph{Pruning Lemma}. It has two main uses. One is to prove the second technical lemma (Lemma \ref{l:algorithm}); we will explain the meaning behind that lemma when we get to it. The other use will be to "prune" a cone, throwing away some of its planes, and ultimately get that the excess relative to the pruned cone is sufficiently small relative to the minimal angle of the pruned cone, which is a crucical assumption for our graphical approximation results later.

\begin{lemma}[Pruning lemma]\label{l:pruning}
Let $N\geq 2$, $D>0$, and $0<\delta \leq 1$. Set $\Gamma := \delta^{2-N} (N-1)!$ and $\varepsilon:= (1+\Gamma )^{-1} \delta$. If
\begin{itemize}
\item[(i)] $\mathbf{S} = \alpha_1 \cup \cdots \cup \alpha_N$ is as in Assumption \ref{a:cones};
\item[(ii)] $D\leq \varepsilon \max_{i<j} \dist (\alpha_i\cap \Bbf_1, \alpha_j\cap \Bbf_1)$;
\end{itemize}
then there is a subcollection $I\subset \{1, \ldots, N\}$ consisting of at least $2$ elements and satisfying the following requirements: 
\begin{align}
\max_j \min_{i\in I} \dist (\alpha_i\cap \Bbf_1, \alpha_j\cap \Bbf_1) &\leq \Gamma  D\label{e:pruning-1}\\
D + \max_j \min_{i\in I} \dist (\alpha_i\cap \Bbf_1, \alpha_j\cap \Bbf_1) &\leq \delta \min_{j,\ell\in I:\, j<\ell}
\dist (\alpha_j \cap \Bbf_1, \alpha_\ell \cap \Bbf_1)\, .\label{e:pruning-2}\\
\max_{i,j\in I:\, i<j} \dist (\alpha_i \cap \Bbf_1, \alpha_j\cap \Bbf_1)
&= \max_{i<j} \dist (\alpha_i \cap \Bbf_1, \alpha_j \cap \Bbf_1)\, .\label{e:pruning-3}
\end{align}
\end{lemma}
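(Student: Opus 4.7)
The plan is to construct $I$ by a greedy pruning algorithm that iteratively discards ``too close'' planes from $\{1,\ldots,N\}$. Write $d_{ij} := \dist(\alpha_i\cap\Bbf_1, \alpha_j\cap\Bbf_1)$, let $M := \max_{i<j} d_{ij}$, and fix a pair $(i_0, j_0)$ attaining $M$. I will set $I^{(0)} := \{1,\ldots,N\}$ and, given $I^{(k)}$ with $|I^{(k)}|\geq 2$, define
\[
h_k := \min\{d_{ij}:\, i,j\in I^{(k)},\, i\neq j\}, \qquad H_k := \max_{j\notin I^{(k)}}\min_{i\in I^{(k)}} d_{ij}
\]
(with $H_0 := 0$). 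If $\delta h_k \geq D + H_k$ the algorithm halts; otherwise I pick any pair $(i_k, j_k)\in I^{(k)}\times I^{(k)}$ achieving $h_k$ and remove a single element $r_k\in\{i_k,j_k\}\setminus\{i_0,j_0\}$, yielding $I^{(k+1)} := I^{(k)}\setminus\{r_k\}$. Upon halting at step $K$, I will take $I := I^{(K)}$.

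The first task is to show, inductively in $k$, that $H_k\leq \Gamma D$ throughout the run. When $r_k$ is removed, a triangle inequality using the surviving partner of $r_k$ in $\{i_k, j_k\}$ yields $H_{k+1}\leq H_k + h_k$, which combined with the non-stopping inequality $h_k < \delta^{-1}(D + H_k)$ produces the linear recursion $H_{k+1} + D \leq (1+\delta^{-1})(H_k + D)$, hence $H_k\leq ((1+\delta^{-1})^k - 1)D$ at every reached step. Since at most $N-2$ planes can be removed, and since $\delta\leq 1$ gives $(1+\delta^{-1})^{N-2}\leq 2^{N-2}\delta^{-(N-2)}\leq (N-1)!\,\delta^{-(N-2)} = \Gamma$ (using the elementary inequality $(N-1)!\geq 2^{N-2}$ for $N\geq 3$), the claimed bound follows. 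Combining this with hypothesis (ii) via the sharp identity $\delta/\varepsilon = 1+\Gamma$ gives $\delta M \geq (1+\Gamma)D \geq D + H_k$, so whenever the algorithm continues we have $h_k < \delta^{-1}(D + H_k) \leq M$; consequently the close pair $(i_k, j_k)$ cannot coincide with $\{i_0, j_0\}$, which legitimizes the choice of $r_k$ and will ultimately secure conclusion \eqref{e:pruning-3}.

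The second task is to show termination with $|I|\geq 2$. In the worst case the process runs for $N-2$ steps and reaches $I^{(N-2)} = \{i_0, j_0\}$ (since every removed element avoided this pair), forcing $h_{N-2} = M$; the previous bound $H_{N-2}\leq \Gamma D$ combined with hypothesis (ii) then gives $\delta h_{N-2} = \delta M \geq (1+\Gamma)D \geq D + H_{N-2}$, so the stopping criterion is met at step $N-2$ at the latest. At the halt step $K\leq N-2$, conclusion \eqref{e:pruning-2} is exactly the stopping inequality, \eqref{e:pruning-1} is the just-established bound on $H_K$, and \eqref{e:pruning-3} holds since $\{i_0, j_0\}\subset I^{(K)}$. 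The base case $N=2$ is handled directly: take $I = \{1,2\}$, for which $H(I) = 0$ and $D\leq \varepsilon d_{12} = (\delta/2)\, d_{12} < \delta d_{12}$.

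The hard part will be the simultaneous bookkeeping: the choice $r_k\notin\{i_0,j_0\}$ must be available at every step (to preserve \eqref{e:pruning-3}), but this availability itself rests on the running bound $H_k\leq \Gamma D$, which in turn only holds because we never removed $\{i_0, j_0\}$. I plan to resolve this apparent circularity by a single induction on $k$ in which the three statements --- availability of $r_k$, the recursion $H_{k+1}+D\leq (1+\delta^{-1})(H_k+D)$, and $H_k\leq \Gamma D$ --- are proved simultaneously, leveraging the tight numerical identity $\delta/\varepsilon - 1 = \Gamma$ to close the loop.
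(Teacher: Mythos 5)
Your proof is correct and takes essentially the same greedy pruning route as the paper's, but with cleaner bookkeeping. Where the paper tracks the auxiliary quantity $d(s) = \min_{j\in I(s)}\dist(\alpha_{\ell(s-1)}\cap\Bbf_1,\alpha_j\cap\Bbf_1)$, establishes the recursion $d(s+1)\leq\delta^{-1}(D + s\,d(s))$, and then needs a separate inner induction to bound the max-min quantity by $s\,d(s)$, you run a direct linear recursion on $H_k$ itself: the triangle inequality gives $H_{k+1}\leq H_k + h_k$, which combined with the non-stopping inequality $h_k < \delta^{-1}(D+H_k)$ yields $H_{k+1}+D\leq(1+\delta^{-1})(H_k+D)$ and hence $H_k\leq\left((1+\delta^{-1})^k-1\right)D$; the elementary estimate $(1+\delta^{-1})^{N-2}\leq 2^{N-2}\delta^{-(N-2)}\leq(N-1)!\,\delta^{-(N-2)}=\Gamma$ then closes the argument. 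Your device of fixing a maximizing pair $\{i_0,j_0\}$ at the outset and deducing $h_k<M$ from the running bound $H_k\leq\Gamma D$ together with hypothesis (ii), so that the minimizing pair at each step cannot equal $\{i_0,j_0\}$, does the same job as the paper's observation that min- and max-attaining pairs can be chosen distinct when $N\geq 3$; your version is slightly more streamlined, avoids the inner induction, and makes the preservation of the maximum in \eqref{e:pruning-3} transparent at no cost to the constants.
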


\begin{proof}
Set $I (0) = \{1, \ldots, N\}$. If either (a) $N=2$ or (b) $N\geq 3$ and 
\begin{equation}\label{e:condition-on-E}
D \leq \delta \min_{i<j} \dist (\alpha_i\cap \Bbf_1, \alpha_j\cap\Bbf_1)\, ,
\end{equation}
then we select $I= I (0)$ and the proof is complete, since the left hand side of \eqref{e:pruning-1} is zero (and hence the left hand side of \eqref{e:pruning-2} equals $D$), while \eqref{e:pruning-3} is obvious. Observe also that, since $\varepsilon < \delta$, the condition \eqref{e:condition-on-E} is implied by (ii) when $N=2$.

Otherwise, we select indices $\ell_1$ and $\ell_2$ such that 
\[
\min_{i<j} \dist (\alpha_i\cap \Bbf_1, \alpha_j\cap\Bbf_1) = \dist (\alpha_{\ell_1}\cap \Bbf_1, \alpha_{\ell_2}\cap\Bbf_1)
\]
and indices $j_1$ and $j_2$ such that 
\[
\max_{i<j} \dist (\alpha_i\cap \Bbf_1, \alpha_j\cap\Bbf_1) = \dist (\alpha_{j_1}\cap \Bbf_1, \alpha_{j_2}\cap\Bbf_1)\, .
\]
Since $N\geq 3$ we can choose them so that $\{j_1, j_2\}
\neq \{\ell_1, \ell_2\}$. In particular, we can pick $\ell (0)\in \{\ell_1, \ell_2\}\setminus \{j_1,j_2\}$. We then set $I (1) := I (0) \setminus \{\ell (0)\}$. Notice that
\[
\max_{i<j\in I (1)}  \dist (\alpha_i\cap \Bbf_1, \alpha_j\cap\Bbf_1)
= \max_{i<j\in I (0)}  \dist (\alpha_i\cap \Bbf_1, \alpha_j\cap\Bbf_1)\, ,
\]
while, by the assumption that \eqref{e:condition-on-E} fails,
\[
\min_{j\in I (1)} \dist (\alpha_{\ell (0)}\cap \Bbf_1, \alpha_j\cap\Bbf_1)  < \delta^{-1}D\, .%> \delta E\, .
\]
Assuming that we have inductively selected sets $I (0), I (1), \ldots I (s)$, we use the same procedure above to select a new subset $I (s+1)\subset I (s)$ by removing one element $\ell (s)$, provided that the cardinality of $I (s)$ is at least $3$ and 
\begin{equation}\label{e:condition-to-fail}
D + \max_j \min_{i\in I (s)} \dist (\alpha_i\cap \Bbf_1, \alpha_j \cap \Bbf_1)
> \delta \min_{i<j\in I (s)} \dist (\alpha_i \cap \Bbf_1, \alpha_j \cap \Bbf_1)\, .
\end{equation}
Otherwise, we stop; clearly this process must terminate in finitely many steps. We denote by $\sigma$ the index of the stopping step; note that $\sigma \leq N-2$. We claim that $I= I (\sigma)$ satisfies the requirements of the lemma.

First of all we prove the inequality 
\begin{equation}\label{e:inductive}
\min_{j\in I (s)} \dist (\alpha_{\ell (s')}\cap \Bbf_1, \alpha_j\cap \Bbf_1)
\leq (s-s') \min_{j\in I (s)} \dist (\alpha_{\ell (s-1)}\cap \Bbf_1, \alpha_j\cap \Bbf_1) \quad \forall s'< s \leq \sigma\, . 
\end{equation}
Note that if $s=s'+1$ the inequality is in fact an obvious equality (just from how it is written). In particular, the claim holds when $s'= \sigma-1$. We now assume that the claim holds for all $s'>s_0$ and will proceed to show it when $s'=s_0$, by induction. Fix $s>s_0$ and let $j_*\in I (s_0+1)$ be such that 
\[
\min_{j\in I (s_0+1)} \dist (\alpha_{\ell (s_0)}\cap \Bbf_1, \alpha_j\cap \Bbf_1)
= \dist (\alpha_{\ell (s_0)}\cap \Bbf_1, \alpha_{j_*}\cap \Bbf_1)\, .
\]
We first observe that, by the very definition of $\ell (s_0)$ and $j_*$ we have
\begin{equation}\label{e:minimality}
\dist (\alpha_{\ell (s_0)}\cap \Bbf_1, \alpha_{j_*}\cap \Bbf_1) \leq
\min_{j\in I (s)} \dist (\alpha_{\ell (s-1)}\cap \Bbf_1, \alpha_j\cap \Bbf_1)\, .
\end{equation}
In particular if $j_* \in I (s)$, the inequality \eqref{e:inductive} is obvious.

Otherwise $j_*\not\in I(s)$ and so $j_* = \ell (s_*)$ for some $s_0 < s_*< s$. In this case, using \eqref{e:minimality} and the fact that, by the inductive assumption,\eqref{e:inductive} holds for $s=s_*$, we write
\begin{align*}
& \min_{j\in I (s)} \dist (\alpha_{\ell (s_0)}\cap \Bbf_1, \alpha_j\cap \Bbf_1)\\
\leq & \dist (\alpha_{\ell (s_0)}\cap \Bbf_1, \alpha_{j_*} \cap \Bbf_1)
+ \min_{j\in I (s)} \dist (\alpha_{\ell (s_*)}\cap \Bbf_1, \alpha_j\cap \Bbf_1)\\
\leq & \min_{j\in I (s)} \dist (\alpha_{\ell (s-1)}\cap \Bbf_1, \alpha_j\cap \Bbf_1)
+ (s-s_*) \min_{j\in I (s)} \dist (\alpha_{\ell (s-1)}\cap \Bbf_1, \alpha_j\cap \Bbf_1)\, .
\end{align*}
In particular,
since $s_*-s_0 \geq 1$, we have shown \eqref{e:inductive} for $s=s_0$. We thus conclude that \eqref{e:inductive} indeed holds by induction over $s$.

Next, note that \eqref{e:inductive} implies that 
\begin{equation}\label{e:max-min-s-bound}
\max_j \min_{i\in I (s)} \dist (\alpha_i\cap \Bbf_1, \alpha_j \cap \Bbf_1)
\leq s \min_{i\in I (s)} \dist (\alpha_{\ell (s-1)}\cap \Bbf_1, \alpha_i\cap \Bbf_1)\, 
\end{equation}
for all $s$, by simply maximizing over all $s'<s$ on the left-hand side of \eqref{e:inductive}, since $s-s'\leq s$. In particular, combined with \eqref{e:condition-to-fail}, we must have 
\[
D + t \min_{i\in I (t)} \dist (\alpha_{\ell (t-1)}\cap \Bbf_1, \alpha_i\cap \Bbf_1)
> \delta \min_{i\in I (t+1)} \dist (\alpha_{\ell (t)} \cap \Bbf_1, \alpha_i\cap \Bbf_1)\, 
\]
for $t =1, \ldots , s-1$, with $s\leq \sigma$ fixed arbitrarily (note that the right-hand side of this expression equals that of \eqref{e:inductive}, by definition of $\ell(t)$). 
Setting $d (t) := \min_{i\in I (t)} \dist (\alpha_{\ell (t-1)}\cap \Bbf_1, \alpha_i\cap \Bbf_1)$, we rewrite the above as the recursive inequality
\[
\delta^{-1} (D+ t d (t)) > d (t+1)\, ,
\]
which, setting the convention $d(0)=0$, can be assumed valid for $j=0$ as well. We can thus iterate this to get
\begin{align*}
d(s) & \leq \delta^{-1}D\left(1+\delta^{-1}(s-1) + \delta^{-2}(s-1)(s-2) + \cdots + \delta^{-(s-1)}\cdot(s-1)!\right)\\
&\leq \delta^{-1}D\left(s\cdot \delta^{-(s-1)}\cdot(s-1)!\right)\\
& = \delta^{-s}s! D\, .
\end{align*}
Since $s\leq N-2$, we get $d (s) \leq \delta^{2-N} (N-2)! D $, so combining with \eqref{e:max-min-s-bound}, for any $s\leq \sigma$ we have
\[
\max_j \min_{i\in I (s)} \dist (\alpha_i\cap \Bbf_1, \alpha_j \cap \Bbf_1)
\leq s d (s) \leq (N-2) d (s) \leq \delta^{2-N} (N-1)! D\, .
\]
It is therefore the case that \eqref{e:pruning-1} holds with $I = I(\sigma)$.

Next, \eqref{e:pruning-2} certainly holds with $I = I(\sigma)$ by construction if {$|I(\sigma)|\geq 3$}, since then the procedure stopped due to the fact that \eqref{e:condition-to-fail} fails. We thus have to show that \eqref{e:pruning-2} holds when {$|I(\sigma)|=2$}. In this case observe that our procedure guarantees that
\begin{equation}\label{e:maximum-constant}
\max_{i<j\in I(\sigma)} \dist (\alpha_i\cap \Bbf_1, \alpha_j \cap \Bbf_1)
= \max_{i<j} \dist (\alpha_i\cap \Bbf_1, \alpha_j \cap \Bbf_1)\, ,
\end{equation}
which shows that \eqref{e:pruning-3} holds in general. But then, as $|I(\sigma)|=2$ we certainly have
$$\min_{i<j\in I(\sigma)} \dist (\alpha_i\cap \Bbf_1, \alpha_j \cap \Bbf_1) = \max_{i<j\in I(\sigma)} \dist (\alpha_i\cap \Bbf_1, \alpha_j \cap \Bbf_1)$$
and so combining this with \eqref{e:maximum-constant}, using assumption (ii) and the fact that we have already proved \eqref{e:pruning-1}, we have
\begin{align*}
D + \max_j\min_{i\in I(\sigma)}\dist (\alpha_i\cap \Bbf_1,\alpha_j\cap \Bbf_1)\leq D + \Gamma D & = (1+\Gamma )D\\
&\leq (1+\Gamma )\eps\max_{i<j}\dist (\alpha_i\cap\Bbf_1,\alpha_j\cap\Bbf_1)\\
&= \delta\min_{i<j\in I(\sigma)}\dist (\alpha_i\cap\Bbf_1,\alpha_j\cap\Bbf_1)\, ,
\end{align*}
proving \eqref{e:pruning-2} in this case also (we have used that $\varepsilon = (1+\Gamma )^{-1} \delta $ here).
\end{proof}

{We now iteratively apply Lemma \ref{l:pruning} to a cone $S$ comprised of planes $\{\alpha_1, \ldots, \alpha_N\}$ as in Assumption \ref{a:cones}, getting a finite family of subcollections of the integers $\{1, \ldots , N\}$ leading to a family of simpler cones, in the following fashion. The collections of planes after each application of Lemma \ref{l:pruning} can be thought of as ``layers'', which are subcollections of a starting one.} Moreover, when we fix plane in a certain layer, the closest one among those of the previous layers is much closer than the minimum distance between any pair of planes belonging to these previous layers. {What we end up with is that each plane we throw away when moving from one layer to the next is a distance comparable to the minimal distance in the original layer, whilst for the last layer we have comparability between the maximum and minimum distances of the planes.}

\begin{lemma}[Layer subdivision]\label{l:algorithm}
For every integer $N \geq 2$ and every $0<\delta \leq 1$, there is $\eta = \eta (\delta, N)>0$ with the following properties. Let $\mathbf{S}$ and $\alpha_1, \ldots , \alpha_N$ be as in Assumption \ref{a:cones}. Then, there is $\kappa\in \N$ and subcollections 
$I (0) \supsetneq I (1) \supsetneq \cdots \supsetneq I (\kappa)$ of the integers $\{1, \ldots , N\}$, each of cardinality at least $2$ and with $I(0) = \{1,\dotsc,N\}$, so that the numbers
\begin{align}
m (s) &:= \min_{i<j \in I(s)} \dist (\alpha_i \cap \Bbf_1, \alpha_j\cap \Bbf_1)\\
d (s) &:= \max_{i \in I (0)} \min_{j\in I (s)} \dist (\alpha_i \cap \Bbf_1, \alpha_j\cap \Bbf_1)\\
M (s) &:= \max_{i<j \in I(s)} \dist (\alpha_i \cap \Bbf_1, \alpha_j\cap \Bbf_1)
\end{align}
satisfy the following requirements:
\begin{itemize}
\item[(i)] $M (\kappa)= M (0)$;
\item[(ii)]  $\eta M (\kappa) \leq m (\kappa)$;
\item[(iii)] $d (s) \leq \delta m (s)$ and $\eta d (s) \leq m (s-1)$ for every $1\leq s\leq \kappa$;
\item [(iv)] {$m(s-1)\leq \delta m(s)$ for every $1\leq s\leq \kappa$.}
\end{itemize}
\end{lemma}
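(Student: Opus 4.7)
The plan is to construct the chain $I(0) \supsetneq I(1) \supsetneq \cdots \supsetneq I(\kappa)$ by iteratively applying the Pruning Lemma~\ref{l:pruning}. I first fix $\delta_0 := \delta/(1+\delta)$, and then let $\Gamma := \delta_0^{2-N}(N-1)!$ and $\varepsilon := (1+\Gamma)^{-1}\delta_0$ be the corresponding constants produced by Lemma~\ref{l:pruning} with $\delta_0$ playing the role of its $\delta$. Finally define $\eta := \min\{\varepsilon,\, (1-\delta_0)/\Gamma\}$, which depends only on $\delta$ and $N$. Starting from $I(0) := \{1,\ldots,N\}$, at each step $s \geq 1$ I stop and set $\kappa := s-1$ if $m(s-1) \geq \eta M(0)$; otherwise I apply Lemma~\ref{l:pruning} to the subcone $\bigcup_{i \in I(s-1)}\alpha_i$ with parameter $\delta_0$ and $D := D_s := m(s-1) > 0$, producing $I(s) \subsetneq I(s-1)$ with $|I(s)| \geq 2$. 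The hypothesis $D_s \leq \varepsilon M(s-1)$ of the pruning lemma is ensured because $M(s-1) = M(0)$ (by iteration of Lemma~\ref{l:pruning}(iii)) and, the stopping condition having failed, $m(s-1) < \eta M(0) \leq \varepsilon M(0)$. The process must terminate in at most $N-2$ steps, since $|I(s)|$ strictly decreases and $|I(s)| = 2$ forces $m(s) = M(s) = M(0) \geq \eta M(0)$.

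Properties (i), (ii) and (iv) follow directly from the construction: (i) is iteration of Lemma~\ref{l:pruning}(iii); (ii) is the stopping criterion combined with (i); and (iv) follows from Lemma~\ref{l:pruning}(ii), which yields $\delta_0 m(s) \geq D_s = m(s-1)$, together with $\delta_0 \leq \delta$.

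The crux is Property (iii), which I verify by induction on $s$. Both halves rest on the triangle-inequality recursion
\[
d(s) \leq d(s-1) + \max_{j \in I(s-1)}\min_{i \in I(s)}\dist(\alpha_i \cap \Bbf_1, \alpha_j \cap \Bbf_1),
\]
feeding the error term with two distinct bounds from the pruning lemma. For the first inequality $d(s) \leq \delta m(s)$: estimating the error by $\delta_0 m(s)$ via Lemma~\ref{l:pruning}(ii) and invoking the inductive hypothesis $d(s-1) \leq \delta m(s-1) \leq \delta \delta_0 m(s)$ (using (iv)) gives $d(s) \leq \delta_0(1+\delta) m(s) = \delta m(s)$, where the last equality is the calibration $\delta_0 = \delta/(1+\delta)$. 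For the second inequality $\eta d(s) \leq m(s-1)$: estimating instead by $\Gamma D_s = \Gamma m(s-1)$ via Lemma~\ref{l:pruning}(i), the induction with hypothesis $d(s-1) \leq \Gamma m(s-2)/(1-\delta_0) \leq \Gamma \delta_0 m(s-1)/(1-\delta_0)$ sums a geometric series to give $d(s) \leq \Gamma m(s-1)/(1-\delta_0)$; the bound $\eta \leq (1-\delta_0)/\Gamma$ then finishes the argument.

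The main technical obstacle is the two-sided nature of Property (iii): the quantity $d(s)$ must be simultaneously controlled by the new minimum $\delta m(s)$ (via the $m(s)$-proportional control of Lemma~\ref{l:pruning}(ii)) and by the old minimum $m(s-1)/\eta$ (via the $D_s$-proportional control of Lemma~\ref{l:pruning}(i)). This dichotomy is precisely what Lemma~\ref{l:pruning} was designed to provide, and the calibrated choice $\delta_0 = \delta/(1+\delta)$ is what makes the first induction close sharply without wasting margin.
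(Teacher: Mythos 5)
Your proof is correct and follows the same basic strategy as the paper: iterate the Pruning Lemma with a scaled-down parameter, stopping once $m(s)$ is comparable to $M(0)$, and control $d(s)$ by a telescoping triangle-inequality argument.

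The one genuine difference is in the bookkeeping for the telescoping sum. The paper applies Lemma~\ref{l:pruning} with parameter $\delta/N$ and then bounds the accumulated error crudely via $\sum_{t\leq s} m(t) \leq s\, m(s) \leq N\, m(s)$ (and similarly with $m(s-1)$ in the second half of (iii)), absorbing a factor of $N$ that the $1/N$ in the pruning parameter cancels. You instead apply the Pruning Lemma with the calibrated parameter $\delta_0 = \delta/(1+\delta)$ and observe that, precisely because property (iv) makes $m(\cdot)$ geometrically decreasing, the telescoping sum is dominated by its last term up to the factor $1/(1-\delta_0)$. This is a cleaner, slightly sharper estimate that closes the induction ``tightly'': the equality $\delta_0(1+\delta) = \delta$ lands exactly on the target $d(s) \leq \delta m(s)$, while the paper overshoots a little. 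Both give the same lemma with different (but equally usable) constants $\eta(\delta,N)$; your geometric-series observation is a nice simplification of the counting argument, though it does not change the structure of the proof.
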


\begin{proof}
Let $0<\delta\leq 1$. We fix $\eta>0$ such that $\eta\leq\varepsilon$, where $\varepsilon$ is the constant in Lemma \ref{l:pruning} corresponding to $\delta/N$ in place of $\delta$. In particular, $N\eta\leq \Gamma^{-1}$, where $\Gamma$ is again as in Lemma \ref{l:pruning} with $\delta$ replaced by $\delta/N$.

If $\eta M(0) \leq m (0)$, we set $\kappa=0$ and obviously the Lemma holds. Otherwise, we inductively apply Lemma \ref{l:pruning} with $D= m (s-1)$ to produce $I (s)$ from $I(s-1)$, as long as $\eta M (s-1) > m (s-1)$. We wish to check that the conclusions of the lemma hold for this particular choice of subcollections. The fact that the sequence of sets is strictly decreasing and each set has cardinality at least two are obvious. Property (i) is immediate from \eqref{e:pruning-3} of Lemma \ref{l:pruning}, and (iv) is immediate from \eqref{e:pruning-2}, since we are taking $D= m (s-1)$. Property (ii) holds by construction, as the process must terminate in finite time; at worst, when $\kappa = N-2$ and $|I(\kappa)|=2$. Moreover, by \eqref{e:pruning-2} of Lemma \ref{l:pruning} we have the inequality
\[
\max_{i\in I (s-1)} \min_{j\in I (s)} \dist (\alpha_i \cap \Bbf_1, \alpha_j \cap \Bbf_1)
\leq \frac{\delta}{N} m (s)\, .
\]
for every $s=1,\dotsc,\kappa$ (recall that when generating $I(s)$, we are applying Lemma \ref{l:pruning} with $I(s-1)$ in place of $\{1,\dotsc,N\}$). But the triangle inequality then gives 
\[
d (s) \leq \frac{\delta}{N} (m(1) + \cdots + m (s))\, .
\]
Since $m (t) \leq m (s)$ for all $1\leq t<s$, and $\kappa \leq N-2$, we therefore must have $d(s) \leq \delta m(s)$ for every $s$, proving the first inequality in (iii). Finally, observe that by \eqref{e:pruning-1} of Lemma \ref{l:pruning}, we also have
\[
\max_{i\in I (s-1)} \min_{j\in I (s)} \dist (\alpha_i \cap \Bbf_1, \alpha_j \cap \Bbf_1)
\leq \Gamma m (s-1)
\]
for $s=1,\dotsc,\kappa$, and thus by the same triangle inequality argument as above, we achieve $d(s)\leq N\Gamma m(s-1)$ for every $s$. Hence, recalling that $N\eta\leq \Gamma^{-1}$, we achieve the second inequality of (iii).
\end{proof}

\subsection{Graphical parameterizations for $T$ over $\Sbf$}\label{s:graphical-approx}

The aim of this part is to efficiently parameterize area-minimizing currents over cones $\mathbf{S}$ as in Definition \ref{def:cones} satisfying the additional pairwise Morgan angle comparability condition (ii) outlined in the introduction of Section \ref{p:approx}. We recall that we are working under the Assumption \ref{a:main} throughout. Moreover, recall the sets $\mathscr{C} (Q)$ and $\mathscr{P}$ as in Definition \ref{def:cones} and the $L^2$ based excesses of Definition \ref{def:L2_height_excess}. 

The important concept of a ``balanced cone'' is given in the following definition.

\begin{definition}\label{d:balanced}
Let $\mathbf{S}\in \mathscr{C} (Q)$, $M\geq 1$, and let $\alpha_1, \ldots , \alpha_N$ be the $N$ distinct $m$-dimensional planes forming $\mathbf{S}$. We say that $\mathbf{S}$ is \emph{$M$-balanced} if for every $i\neq j$ the inequality
\begin{equation}\label{e:balanced}
\theta_2 (\alpha_i, \alpha_j) \leq M \theta_1 (\alpha_i, \alpha_j)
\end{equation}
holds for the two Morgan angles of the pair $\alpha_i, \alpha_j$.
\end{definition}

Observe that the condition is empty when $\mathbf{S}\in \mathscr{P}$. Moreover, if we say that a cone $\Sbf = \alpha_1\cup\cdots\cup \alpha_N\in \mathscr{C}(Q)$ is $M$-balanced for some $M>0$, we will implicitly be assuming that all the $\alpha_i$ are distinct. Loosely speaking, we are interested in balanced cones because two planes within a balanced cone can only degenerate to a single plane, and not to two planes meeting along an $(n-1)$-dimensional axis.

We will give two different graphical approximation results: a series of ``crude approximation'' results, followed by a series of ``refined approximation'' results.

\subsection{Crude approximation statements}\label{s:crude} Here we will give the statements of the crude approximation results; in the next section, we will prove all of them. The key starting point is the following splitting lemma. We recall the notation
\[
\boldsymbol{\sigma} (\Sbf) := \min_{i<j} \dist (\alpha_i \cap \Bbf_1, \alpha_j\cap \Bbf_1).
\]

\begin{lemma}[Crude splitting]\label{l:splitting-1}
For every $Q,m,n, \bar{n}\in \N$ and every $M>0$, $\rho,\eta>0$, there are constants $\delta = \delta (Q,m,n,\bar{n},M,\rho,\eta)>0$ and $\varrho = \varrho (Q,m,n,\bar{n},M,\rho,\eta)>0$ with the following property.
Let $T$ be as in Assumption \ref{a:main} with $\|T\| (\Bbf_4)\leq (Q+\frac{1}{2}) 4^m \omega_m$. Assume that $2\leq N \leq Q$, that $\mathbf{S}= \alpha_1 \cup \cdots\cup \alpha_N\in \mathscr{C}(Q)$ is $M$-balanced, and 
\begin{equation}\label{e:small-in-crude}
\int_{\Bbf_4\setminus B_{\rho} (V)} \dist^2 (p, \mathbf{S})\, d\|T\| (p) + \mathbf{A}^2 \leq \delta^2 {\boldsymbol{\sigma} (\Sbf)^2 =: \delta^2 \sigma^2}\, ,
\end{equation}
where $V = V(\Sbf)$ is the spine of $\Sbf$. Then the following properties hold:
\begin{itemize}
\item[(a)] The sets ${W_i}:= (\Bbf_4 \setminus \overline{B}_{\rho} (V))\cap \{\dist (\cdot, \alpha_i) < \varrho \sigma\}$ are pairwise disjoint;
\item[(b)] $\spt (T) \cap \Bbf_{4-\eta} \setminus \overline{B}_{\rho+\eta} (V) \subset \bigcup_i W_i$.
\end{itemize}
\end{lemma}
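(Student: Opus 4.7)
The proof splits naturally into verifying (a) and (b).

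For (a), the pairwise disjointness of the tubes is purely geometric and uses only the $M$-balanced condition. Suppose $p\in\Bbf_4\setminus \overline{B}_\rho(V)$ lies within distance $\varrho\sigma$ of both $\alpha_i$ and $\alpha_j$ with $i\neq j$. The projection $q:=\mathbf{p}_{\alpha_i}(p)\in\alpha_i$ satisfies $\dist(q,V)\geq \rho-\varrho\sigma\geq \rho/2$ (for $\varrho\leq \rho/2$). Applying Corollary \ref{c:growth} to the pair $(\alpha_i,\alpha_j)$ and invoking the $M$-balanced hypothesis, together with the lower bound $\sin\theta_2(\alpha_i,\alpha_j)=\dist(\alpha_i\cap\Bbf_1,\alpha_j\cap\Bbf_1)\geq\boldsymbol\sigma(\Sbf)$, gives
\[
    \dist(q,\alpha_j)\geq \sin\theta_1(\alpha_i,\alpha_j)\,\dist(q,V)\geq M^{-1}\sin\theta_2(\alpha_i,\alpha_j)\,\frac{\rho}{2}\geq \frac{\rho}{2M}\,\sigma.
\]
The triangle inequality then gives $\dist(p,\alpha_j)\geq (\rho/(2M)-\varrho)\sigma>\varrho\sigma$, contradicting $p\in W_j$, provided $\varrho\leq \rho/(4M)$.

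For (b), fix $q_0\in \spt(T)\cap(\Bbf_{4-\eta}\setminus\overline{B}_{\rho+\eta}(V))$; by part (a) it suffices to prove $\dist(q_0,\Sbf)<\varrho\sigma$. A first crude estimate via the monotonicity formula applied to $T$ and the global $L^2$ bound \eqref{e:small-in-crude} yields $\dist(q_0,\Sbf)\leq C(\delta\sigma)^{2/(m+2)}$, which by choice of $\delta$ small enough (depending on $\rho,\eta$) may be taken to be $\leq\min(\eta,\rho)/4$. Let $i$ realize $\dist(q_0,\alpha_i)=\dist(q_0,\Sbf)$ and set $q_0':=\mathbf{p}_{\alpha_i}(q_0)\in\alpha_i$; then $\dist(q_0',V)\geq\rho+3\eta/4$ and $|q_0'|\leq 4-\eta/2$. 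Choose $\pi_0=\alpha_i$ and let $r=\bar c\min(\rho,\eta)$ for a suitably small dimensional constant $\bar c$. For each $j$, define $p_j$ to be the intersection of $\alpha_j$ with the affine subspace $q_0'+\pi_0^\perp$ (well-defined since $V\subset\alpha_j$ is transverse to $\pi_0^\perp$), and set $\boldsymbol\pi:=\bigcup_j(p_j+\pi_0)$. Inside the cylinder $\Cbf_{4r}(q_0',\pi_0)$, the families $\boldsymbol\pi$ and $\Sbf$ differ pointwise by at most $Cr\,\boldsymbol\mu(\Sbf)$, so
\[
    \int_{\Cbf_{2r}(q_0',\pi_0)}\dist^2(p,\boldsymbol\pi)\,d\|T\|(p)\;\leq\; 2\delta^2\sigma^2+Cr^{m+2}\boldsymbol\mu(\Sbf)^2.
\]
The mass bound $\|T\|(\Cbf_{2r})\leq (Q+\tfrac12)\omega_m(2r)^m$ and the projection condition $(\mathbf{p}_{\pi_0})_\sharp T\res\Cbf_{4r}=Q'\llbracket B_{4r}(q_0',\pi_0)\rrbracket$ for some integer $Q'\leq Q$ follow from the constancy theorem combined with the crude $L^\infty$ estimate above. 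Applying Theorem \ref{thm:main-estimate} (the multi-plane height bound, rescaled from unit size to radius $r$) gives
\[
    \dist(q_0,\Sbf)\;\leq\;\dist(q_0,\boldsymbol\pi)+Cr\boldsymbol\mu(\Sbf)\;\leq\;\bar C\bigl(r^{-m/2}\delta\sigma+r\boldsymbol\mu(\Sbf)+r^2\mathbf{A}\bigr)+Cr\boldsymbol\mu(\Sbf).
\]
Choosing $r$ appropriately and then $\delta$ small enough (depending on $\rho,\eta,M,Q,\varrho$, and using $\boldsymbol\mu(\Sbf)\leq 1$), the right-hand side is controlled by $\varrho\sigma$, concluding (b).

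The principal technical obstacle is the interplay between $\boldsymbol\sigma(\Sbf)$ and $\boldsymbol\mu(\Sbf)$: the $M$-balanced assumption controls the two Morgan angles of each pair $(\alpha_i,\alpha_j)$ relative to each other, but places no constraint on the ratio $\boldsymbol\mu(\Sbf)/\boldsymbol\sigma(\Sbf)$. Consequently, the parallel-plane approximation error $r\boldsymbol\mu(\Sbf)$ and the rescaled $L^2$ height contribution $r^{-m/2}\delta\sigma$ must be optimally balanced through the choice of $r$; the crucial point is that one exploits the $\sqrt{E}$-type (rather than $E^{1/(m+2)}$-type) height control from Theorem \ref{thm:main-estimate}, whose applicability requires verifying $E\leq 1$ in the rescaled cylinder, which in turn forces the bound $r\lesssim \boldsymbol\sigma/\boldsymbol\mu$ and the corresponding smallness of $\delta$. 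With these choices the final $\delta,\varrho$ depend only on the stated parameters, uniformly in the particular cone $\Sbf$.
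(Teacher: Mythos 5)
Your argument for part (a) is essentially correct and is the implicit content behind the paper's one-line remark that (a) holds for $\varrho$ small: you make the Morgan-angle computation (Corollary~\ref{c:growth} plus $M$-balancedness) explicit, which is a fine addition.

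Your argument for part (b), however, takes a genuinely different route from the paper, and that route has a gap that cannot be closed in the direct, quantitative form you propose. You want to invoke Theorem~\ref{thm:main-estimate} in a small rescaled cylinder $\Cbf_{2r}(q_0',\pi_0)$. That theorem (through Assumption~\ref{a:height-main}) requires \emph{both} $(\mathbf{p}_{\pi_0})_\sharp T\res\Cbf_{2r}=Q'\llbracket B_{2r}\rrbracket$ for some \emph{positive} integer $Q'$ \emph{and} the mass bound $\|T\|(\Cbf_{2r})\leq (Q'+\tfrac12)\omega_m(2r)^m$ with the \emph{same} $Q'$. You assert these ``follow from the constancy theorem combined with the crude $L^\infty$ estimate,'' but neither is substantiated: (i) the constancy theorem produces an integer $Q'$ that a priori could vanish (or be negative under orientation cancellation), and the crude $L^\infty$ bound gives no handle on that integer; (ii) more seriously, once one writes $\|T\|(\Cbf_{2r}) = Q'\omega_m(2r)^m + \tfrac12\int_{\Cbf_{2r}}|\vec{T}-\vec\pi_0|^2\,d\|T\|$, the required mass bound is exactly an oriented tilt-excess bound — which is precisely the \emph{output} of Theorem~\ref{thm:main-estimate} (and of Proposition~\ref{p:o<no}, which itself is proved under Assumption~\ref{a:height-main}). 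Trying to verify the hypothesis directly therefore runs into a circularity. Nor is the claim $Q'\leq Q$ cheap: the global mass bound $\|T\|(\Bbf_4)\leq(Q+\tfrac12)4^m\omega_m$ does not propagate to a density bound at the scale $\eta$ that would control $Q'$.

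The paper avoids this by arguing by contradiction and compactness: one takes a sequence $T_k,\Sbf_k,\sigma_k$ with $\sigma_k^{-2}\int\dist^2(\cdot,\Sbf_k)\,d\|T_k\|\to 0$ and alleged bad points $p_k$, extracts a limit $T_\infty$, and uses the constancy theorem to identify $T_\infty\res(\Bbf_4\setminus\overline{B}_\rho(V)) = \sum_i \bar{Q}_i\llbracket\alpha_i\rrbracket$ with $\bar{Q}_i\geq 0$ and $\sum_i\bar{Q}_i\leq Q$. Only then does one pass to a small cylinder around the limit bad point $p\in\pi_0\subset\Sbf_\infty$, restrict to the part of $T_k$ near $\pi_0$, and invoke Corollary~\ref{c:splitting-0}: because $T_k'\to\bar{Q}_j\llbracket\pi_0\rrbracket$ locally in mass, both the projection condition (with $Q'=\bar{Q}_j\geq 1$) and the cylinder mass bound hold automatically for $k$ large, and the $L^\infty$ estimate of Theorem~\ref{thm:main-estimate} then contradicts $\dist(p_k,\Sbf_k)\geq\varrho\sigma_k$. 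This is the step your direct approach is missing: identifying the limiting multiplicity configuration before invoking the height bound. Without some form of compactness (or an a priori splitting result that does not itself rely on Assumption~\ref{a:height-main}), your verification of the theorem's hypotheses is circular.
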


From the above lemma, the tilt-excess bound from Theorem \ref{thm:main-estimate}, and Almgren's strong Lipschitz approximation over planes (\cite{DLS14Lp}*{Theorem 1.4}), we can then conclude the following, where we use heavily the notation of \cite{DLS14Lp}*{Theorem 1.4} (in particular, if $u$ is a Lipschitz multi-valued map, ${\rm gr}\, (u)$ will denote its set-theoretic graph and $\mathbf{G}_u$ the current naturally induced by it).

\begin{proposition}\label{p:Lipschitz-1}
Let $T$, $W_i$, and all associated notation be as in Lemma \ref{l:splitting-1}. Consider for each $i\in\{1,\dotsc,N\}$ the regions 
$\Omega_i := (\Bbf_{4-2\eta} \cap \alpha_i)\setminus \overline{B}_{\rho+\eta} (V)$ and 
$\boldsymbol{\Omega}_i := \Bbf_{4-\eta}\cap\mathbf{p}_{\alpha_i}^{-1} (\Omega_i)$. Set $T_i := T \res (W_i\cap \boldsymbol{\Omega}_i)$ and 
\[
E_i := \int_{\Bbf_{4}\setminus B_{\rho} (V)} \dist^2 (p, \alpha_i)\, d\|T_i\| (p)
\]
Then, there are non-negative integers $Q_1, \ldots , Q_N$ with $\sum_i Q_i \leq Q$ satisfying the following properties:
\begin{itemize}
\item[(a)] $\partial T_i \res \boldsymbol{\Omega}_i = 0$;
\item[(b)] $\alpha_i$ can be appropriately oriented so that $(\mathbf{p}_{\alpha_i})_\sharp T_i = Q_i \llbracket \Omega_i \rrbracket$;
\item[(c)] The following estimate holds
\begin{equation}\label{e:Linfty-crude}
\dist^2(q,\alpha_i) \equiv |\mathbf{p}_{\alpha_i}^\perp (q)|^2 \leq C E_i + C \mathbf{A}^2 \qquad \forall q\in \spt (T_i)\cap \boldsymbol{\Omega}_i\, ;
\end{equation}
\item[(d)] For all $i$ with $Q_i\geq 1$, there are Lipschitz multi-valued maps $u_i: \Omega_i \to \mathcal{A}_{Q_i} (\alpha_i^\perp)$ and closed sets $K_i \subset \Omega_i$ such that ${\rm gr} (u_i) \subset \Sigma$, $T_i \res \mathbf{p}_{\alpha_i}^{-1} (K_i) = \mathbf{G}_{u_i} \res \mathbf{p}_{\alpha_i}^{-1} (K_i)$, and the following estimates hold:
\begin{align}
\|u_i\|_\infty^2 + \|Du_i\|_{L^2}^2 & \leq C (E_i +\mathbf{A}^2) \label{e:graphicality-1} \\
{\rm Lip}\, (u_i) &\leq C (E_i +\mathbf{A}^2)^\gamma \label{e:graphicality-2} \\
|\Omega_i \setminus K_i| + \|T\| (\mathbf{\Omega}_i\setminus \mathbf{p}_{\alpha_i}^{-1} (K_i))&\leq C (E_i +\mathbf{A}^2)^{1+\gamma}\, ; \label{e:non-graphicality}
\end{align}
\item[(e)] $Q_i=0$ if and only if $T_i=0$;
\item[(f)] Finally, if in addition we have the ``reverse excess" estimate
\begin{equation}\label{e:other-side}
\int_{\mathbf{S} \cap \Bbf_{4-2\eta}\setminus B_{\rho+2\eta} (V)} \dist^2 (p, \spt (T))\, d\mathcal{H}^m (p)
\leq \delta^2 \sigma^2\, ,
\end{equation}
then $Q_i\geq 1$ for every $i$.
\end{itemize}
Here, $\gamma = \gamma(Q,m,n,\bar{n})>0$ and $C = C(Q,m,n,\bar{n})>0$;
\end{proposition}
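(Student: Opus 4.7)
The plan is to extract each of the six conclusions from the structural disjointness of Lemma \ref{l:splitting-1}, the $L^\infty$ and tilt-excess estimates of Theorem \ref{thm:main-estimate}, the constancy lemma (in the form of \cite{DLS16centermfld}*{Lemma 1.6}), and Almgren's strong Lipschitz approximation \cite{DLS14Lp}*{Theorem 1.4}.

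First, for (a) and (b), I would observe that since $V\subset\alpha_i$, any $q\in\boldsymbol{\Omega}_i$ has $\dist(q,V)\geq \dist(\mathbf{p}_{\alpha_i}(q),V)\geq \rho+\eta$, so $\boldsymbol{\Omega}_i\subset \Bbf_{4-\eta}\setminus \overline{B}_{\rho+\eta}(V)$. Conclusion (b) of Lemma \ref{l:splitting-1} therefore yields $\spt(T)\cap \boldsymbol{\Omega}_i\subset\bigcup_j W_j$, and by (a) of that lemma this is a disjoint union of open sets. In particular, there is an open neighborhood (inside $\boldsymbol{\Omega}_i$) of $\spt(T)\cap W_i\cap\boldsymbol{\Omega}_i$ on which $T$ and $T_i$ agree, so $\partial T\res\Bbf_{7\sqrt{m}}=0$ immediately gives $\partial T_i\res\boldsymbol{\Omega}_i=0$, proving (a). For (b), applying the constancy lemma to $(\mathbf{p}_{\alpha_i})_\sharp T_i$ on the connected open set $\Omega_i$ (a planar disk minus a tubular neighborhood of $V$) produces an integer $Q_i$ with $(\mathbf{p}_{\alpha_i})_\sharp T_i=Q_i\llbracket\Omega_i\rrbracket$; the sign of $Q_i$ is fixed to be nonnegative by orienting $\alpha_i$ appropriately. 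The bound $\sum_i Q_i\leq Q$ follows by slicing $T$ along generic orthogonal fibers in a plane close to one of the $\alpha_i$'s and using $\|T\|(\Bbf_4)\leq (Q+\tfrac12)4^m\omega_m$.

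Next, for (c) and (d), I would apply Theorem \ref{thm:main-estimate} with $N=1$ and $\pi_0=\alpha_i$ to the area-minimizer $T_i$ in the cylinder $\mathbf{p}_{\alpha_i}^{-1}(\Omega_i)\cap \Bbf_{4-\eta}$. This yields simultaneously the $L^\infty$ estimate \eqref{e:Linfty-crude} and a tilt-excess bound $\mathbf{E}(T_i,\cdot)\leq C(E_i+\mathbf{A}^2)$. Since \eqref{e:small-in-crude} guarantees $E_i+\mathbf{A}^2$ is as small as we want, Almgren's strong Lipschitz approximation produces the $Q_i$-valued map $u_i:\Omega_i\to\mathcal{A}_{Q_i}(\alpha_i^\perp)$ together with a closed exceptional set $\Omega_i\setminus K_i$ satisfying the quantitative estimates \eqref{e:graphicality-2}--\eqref{e:non-graphicality}; the graph is lifted to $\Sigma$ by composition with $\Psi$ as in \cite{DLS14Lp}. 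The estimate \eqref{e:graphicality-1} then follows from (c) for the $L^\infty$ part and from the tilt-excess bound for the Dirichlet part. The $\eta$-shrinking built into the definitions of $\Omega_i$ and $\boldsymbol{\Omega}_i$ handles the boundary-layer issue in Almgren's approximation and keeps the constants uniform.

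Finally, for (e), if $Q_i=0$ then the Lipschitz graph of (d) is trivial, so on $\mathbf{p}_{\alpha_i}^{-1}(K_i)$ one has $T_i=\mathbf{G}_0=0$, and \eqref{e:non-graphicality} gives $\mathbf{M}(T_i)\leq C(E_i+\mathbf{A}^2)^{1+\gamma}$; the monotonicity formula and the interior density lower bound for area-minimizing currents then force $T_i=0$ inside $\boldsymbol{\Omega}_i$ once $\delta$ is small. For (f), assume by contradiction that $Q_i=0$ for some $i$; then (e) yields $T_i=0$, so $\spt(T)\cap W_i\cap \boldsymbol{\Omega}_i=\emptyset$, and the disjointness of $W_i$ from the other $W_j$'s combined with the $L^\infty$ bound (c) applied to $T_j$ for $j\neq i$ forces $\dist(p,\spt(T))\gtrsim\varrho\sigma$ for all $p$ in a set of definite $\mathcal{H}^m$-measure inside $\alpha_i\cap\Bbf_{4-2\eta}\setminus \overline{B}_{\rho+2\eta}(V)$; integrating gives a lower bound on the reverse excess that contradicts \eqref{e:other-side} for $\delta$ small enough depending on $\varrho$. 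The main obstacle is the careful bookkeeping of the geometric parameters $\rho,\eta,\varrho,\sigma$ and of the three families of regions $W_i,\Omega_i,\boldsymbol{\Omega}_i$, ensuring that the decomposition $T\res(\cdot)=\sum_i T_i$ is genuinely disjoint and that each $T_i$ lives in a cylindrical region where Theorem \ref{thm:main-estimate} and Almgren's approximation apply with uniform constants independent of the specific pair of planes.
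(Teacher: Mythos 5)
Your proposal follows the same overall route as the paper: (a), (b), (c) are extracted from the structural output of Lemma \ref{l:splitting-1} (disjointness of the $W_i$, Corollary \ref{c:splitting-0}, and the constancy lemma via \cite{DLS16centermfld}*{Lemma~1.6}), and (d) comes from Theorem \ref{thm:main-estimate} with $N=1$ applied to $T_i$ followed by Almgren's strong Lipschitz approximation. The arguments for (f) also coincide: lower-bound the distance from $\alpha_i$ to $\spt(T)$ on a definite $\mathcal H^m$-measure set when $Q_i=0$, and integrate to contradict the reverse excess.

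There is, however, a small but genuine gap in your treatment of (e). You invoke (d) -- specifically the Lipschitz map $u_i$ and the estimate \eqref{e:non-graphicality} -- to conclude $\mathbf{M}(T_i)\lesssim (E_i+\mathbf{A}^2)^{1+\gamma}$ when $Q_i=0$. But (d) is stated only for $Q_i\geq 1$, so it cannot be used as the source of the mass bound in the $Q_i=0$ case; Almgren's approximation theorem \cite{DLS14Lp}*{Theorem 1.4} is likewise formulated for positive projection degree. The cleaner route, and the one the paper takes, is to bypass (d) entirely and use the exact mass--projection identity
\[
\|T_i\|(\boldsymbol{\Omega}_i) = Q_i\,|\Omega_i| + \frac{1}{2}\int_{\boldsymbol{\Omega}_i} |\vec{T}-\vec{\alpha}_i|^2\,d\|T_i\|\,,
\]
which holds for any integer $Q_i$ (including $0$). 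Combined with the tilt-excess bound from Theorem \ref{thm:main-estimate}, setting $Q_i=0$ forces $\|T_i\|(\boldsymbol{\Omega}_i)\leq C\delta^2\sigma^2$, and the density lower bound for area-minimizers then kills $T_i$. With this replacement your argument for (e), and hence (f), closes.

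The remaining points -- the $\eta$-shrinking for uniformity of constants, the choice of $\sum_iQ_i\leq Q$ from the global mass bound, and the application of the $N=1$ case of Theorem~\ref{thm:main-estimate} to each $T_i$ separately -- are consistent with the paper's proof.
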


In several situations we will need to ensure that $\sum_i Q_i =Q$ in Proposition \ref{p:Lipschitz-1}. This conclusion needs however an additional assumption, as well as smallness of the tubular neighborhood radius $\rho$ of the spine. Depending on the situation we can guarantee this either by using (1) the existence of a point of large density; or (2) the existence of a region with sufficiently large mass.

\begin{lemma}\label{l:matching-Q}
There exists $\rho_* = \rho_*(Q,m)\in (0,1)$ such that, if we assume that $T$ satisfies the hypotheses of Lemma \ref{l:splitting-1} with $\rho\leq \rho_*$ and either:
\begin{itemize}
    \item[(a)] $\{\Theta (T, \cdot) \geq Q\} \cap \Bbf_{\varepsilon} (0)\neq \emptyset$ for a sufficiently small $\varepsilon = \varepsilon (Q, m,n, \bar n)$; or
    \item[(b)] for some $C_*>0$, $\rho_*$ is sufficiently small also depending on $C_*$, and there is a closed set $\Omega\subset \Bbf_4$ with non-empty interior that is invariant under rotation around $V$ (c.f. Definition \ref{def:rotationally_invariant}) and for which $\|T\| (\Omega) \geq (Q-\frac{1}{2}) \mathcal{H}^m (\alpha_1 \cap \Omega)\geq C_*$.
\end{itemize}
Then, if $Q_i$ is as in Proposition \ref{p:Lipschitz-1} and $\delta$ is sufficiently small, we have $\sum_i Q_i =Q$.
\end{lemma}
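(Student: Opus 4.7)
The strategy is a mass comparison. On one hand, the graphical approximation from Proposition \ref{p:Lipschitz-1} will give, for a suitable region $\Omega^*\subset \Bbf_{4-\eta}\setminus B_{\rho+\eta}(V)$ that is cylindrical over each $\alpha_i$, the identity $\|T\|(\Omega^*)\approx \sum_i Q_i\,\mathcal{H}^m(\alpha_i\cap\Omega^*)$ with error $O(\delta^2)$. On the other hand, a complementary \emph{lower} bound on the mass of $T$ (furnished by the monotonicity formula in case (a), or by the direct hypothesis in case (b)) will force $\sum_i Q_i\geq Q-(\text{small})$. Since $\sum_i Q_i\leq Q$ by Proposition \ref{p:Lipschitz-1} and the sum is an integer, once the error is strictly below $1$ we conclude $\sum_i Q_i=Q$.

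The first ingredient I would record is the preliminary spine-mass bound $\|T\|(\Bbf_4\cap B_\rho(V))\leq C(Q,m)\rho^2$, obtained by covering the $(m-2)$-dimensional set $V\cap \Bbf_4$ by $\sim \rho^{-(m-2)}$ balls of radius $\rho$ and applying the monotonicity formula on each, using only the mass bound $\|T\|(\Bbf_4)\leq (Q+\tfrac12)\omega_m 4^m$ and the smallness of $\Abf$. The second ingredient is the volume comparison itself: using the disjointness/coverage of the $W_i$ in parts (a)--(b) of Lemma \ref{l:splitting-1}, the area formula for the graphs $\mathbf{G}_{u_i}$ in (d) of Proposition \ref{p:Lipschitz-1}, and the non-graphicality estimate \eqref{e:non-graphicality} together with the Dirichlet bound \eqref{e:graphicality-1} and \eqref{e:small-in-crude}, one gets
\[
\Big|\,\|T\|(\Omega^*) - \sum_i Q_i\,\mathcal{H}^m(\alpha_i\cap \Omega^*)\Big| \leq C(Q,m,n,\bar n)\delta^2.
\]

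For case (a), applying the monotonicity formula centered at a point $p_0$ with $\Theta(T,p_0)\geq Q$ and $|p_0|\leq \varepsilon$ yields $\|T\|(\Bbf_{4-\eta})\geq Q\omega_m(4-\eta-\varepsilon)^m(1-C\Abf)$. Taking $\Omega^*=\Bbf_{4-\eta}\setminus B_{\rho+\eta}(V)$, noting that $\mathcal{H}^m(\alpha_i\cap\Omega^*)=\omega_m(4-\eta)^m-O(\rho^2)$ (independent of $i$, since every $\alpha_i$ contains $V$), and combining with the two preliminary bounds, one reaches
\[
\Big(\sum_i Q_i\Big)\omega_m(4-\eta)^m \geq Q\omega_m(4-\eta-\varepsilon)^m(1-C\Abf) - C\rho^2 - C\delta^2.
\]
Fixing $\eta,\varepsilon$ small depending on $Q,m,n,\bar n$, then choosing $\rho_*=\rho_*(Q,m)$, and finally shrinking $\delta$, the right-hand side exceeds $(Q-1)\omega_m(4-\eta)^m$, forcing $\sum_i Q_i\geq Q$ by the integer argument.

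For case (b), the rotational invariance of $\Omega$ around $V$ together with $V\subset\alpha_i$ for every $i$ implies $\mathcal{H}^m(\alpha_i\cap\Omega)$ is independent of $i$; denote this common value by $A\geq C_*$. Applying the volume comparison to $\Omega^*:=\Omega\setminus B_{\rho+\eta}(V)$ (still rotationally invariant and therefore cylindrical over each $\alpha_i$), combining with the spine-mass bound and the hypothesis $\|T\|(\Omega)\geq(Q-\tfrac12)A$, one deduces $(\sum_i Q_i)A\geq (Q-\tfrac12)A - C\rho^2 - C\delta^2 - C\eta$. Dividing by $A\geq C_*$ and taking $\rho_*,\eta,\delta$ small in terms of $Q,m,C_*$ forces $\sum_i Q_i>Q-1$, hence $=Q$. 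The main technical obstacle is the bookkeeping of parameter dependencies — verifying in particular that the spine-mass constant $C(Q,m)$ is genuinely universal so that $\rho_*$ inherits precisely the stated dependencies, and that all remaining errors can be absorbed by free shrinking of $\delta$ after $\rho_*$ and the auxiliary parameters $\eta,\varepsilon$ have been fixed.
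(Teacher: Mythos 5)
Your proposal is correct and uses the same core ingredients as the paper (the spine-mass bound $\|T\|(\Bbf_4\cap B_\rho(V))\leq C(Q,m)\rho^2$ from monotonicity, the monotonicity formula centered at a density-$\geq Q$ point in case (a), the rotational invariance of $\Omega$ in case (b), and the integer pigeonhole), but the \emph{structure} of your argument is genuinely different. The paper argues by contradiction: following the compactness setup from the proof of Lemma \ref{l:splitting-1}, it passes to a limiting current $T_\infty$ whose restriction to $\Bbf_4\setminus\overline{B}_\rho(V)$ is exactly $\sum_i\bar Q_i\llbracket\alpha_i\rrbracket$, and then runs the mass comparison in the limit where the identity $\|T_\infty\|(\Omega^*)=\sum_i\bar Q_i\,\Hcal^m(\alpha_i\cap\Omega^*)$ holds with no error term. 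You instead perform the mass comparison directly on $T$ at finite $\delta$, using the Lipschitz approximation from Proposition \ref{p:Lipschitz-1} and the tilt-excess control from \eqref{e:small-in-crude} to quantify the error as $O(\delta^2)$. Your route is more effective (it yields explicit thresholds rather than a non-constructive smallness from contradiction), at the cost of more error bookkeeping; the paper's route trades that bookkeeping for the softness of a compactness argument.

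Two bookkeeping points you should track more explicitly if you want to make this airtight. First, the graphical approximation in Proposition \ref{p:Lipschitz-1} lives over the shrunk domains $\boldsymbol{\Omega}_i = \Bbf_{4-\eta}\cap\mathbf{p}_{\alpha_i}^{-1}\big((\Bbf_{4-2\eta}\cap\alpha_i)\setminus\overline{B}_{\rho+\eta}(V)\big)$, so the region $\Omega^*=\Bbf_{4-\eta}\setminus B_{\rho+\eta}(V)$ you use in case (a) is not entirely covered; the uncovered shell near $\partial\Bbf_{4-\eta}$ contributes an $O(\eta)$ error (bounded via the density bound and the $O(\eta)$ volume of the shell), which is fine because $\eta$ is fixed in terms of $Q,m,n,\bar n$ before $\rho_*,\delta$ are chosen, but it should appear in your displayed inequality. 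Second, $\Omega^*$ is rotationally invariant but not cylindrical over any $\alpha_i$, so the identity $\|T_i\|(\mathbf{p}_{\alpha_i}^{-1}(\Omega_i))=Q_i|\Omega_i|+\tfrac12\int|\vec T-\vec\alpha_i|^2\,d\|T_i\|$ must be supplemented by a ``curved-boundary'' correction of the type later quantified in Lemma \ref{l:curved}; the relevant thickness of the symmetric difference $\Omega^*\,\Delta\,(\text{cylinder over }\alpha_i\cap\Omega^*)$ is of order $(\text{height})^2/\dist(\cdot,V)=O(\delta^2)$ rather than $O(\delta)$, which is what makes your claimed $O(\delta^2)$ error correct rather than merely $O(\delta)$. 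Neither issue invalidates the proof — both errors are absorbed since you only need the total error to be strictly below $\tfrac12$ — but they are real terms that your summary glosses over.
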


We will only be applying Lemma \ref{l:matching-Q} to specific choices of $\Omega$, for which we may ensure that $\mathcal{H}^m(\alpha_1\cap \Omega)\geq C_*$ always holds for a specific choice of $C_* = C_*(Q,m)>0$. In particular a suitable choice of $\rho_* = \rho_*(Q,m)$ will work whenever we will apply Lemma \ref{l:matching-Q} in the rest of the paper, without any additional assumptions involving $C_*$ in alternative (b). We will therefore fix this choice of radius $\rho_* = \rho_*(Q,m)$ for the rest of the paper.

Finally, we remark that all the statements above can be suitably scaled and translated to analogous statements where the initial domain $\mathbf{B}_4$ in Lemma \ref{l:splitting-1} is replaced by an arbitrary ball $\mathbf{B}_{4r} (q)$. 

An approximation statement analogous to Proposition \ref{p:Lipschitz-1} also holds in the much simpler setting in which $\mathbf{S}$ consists of a single plane. This case is somewhat special because we cannot identify a unique spine $V$ and at the same time the number $\sigma$ in Lemma \ref{l:splitting-1} is $\infty$, hence the smallness condition \eqref{e:small-in-crude} would be empty. For this reason we state the proposition separately even though it could be embedded as a special case of Proposition \ref{p:Lipschitz-1}.

\begin{proposition}[Crude approximation on a single plane]\label{p:Lipschitz-2}
For every $Q,m,n,\bar{n}\in \N$ and $\rho,\eta>0$, there exist $\delta = \delta (Q,m,n,\bar{n},\rho,\eta)>0$ and $C = C(Q,m,n,\bar{n})$ with the following property. Let $T$ be as in Assumption \ref{a:main} with $\|T\| (\Bbf_4)\leq (Q+\frac{1}{2}) 4^m \omega_m$. Assume $\mathbf{S}= \alpha_1\in \mathscr{P}$, $V\subset \alpha_1$ is an $(m-2)$-dimensional subspace, and 
\begin{equation}\label{e:small-in-crude-2}
E_1+\mathbf{A}^2 := \int_{\Bbf_4\setminus B_{\rho} (V)} \dist (p, \mathbf{S})^2\, d\|T\| (p) + \mathbf{A}^2 \leq \delta^2 \, .
\end{equation}
Set $\Omega_1 := (\Bbf_{4-2\eta} \cap \alpha_1)\setminus \overline{B}_{\rho+\eta} (V)$ and 
$\boldsymbol{\Omega}_1 := \Bbf_{4-\eta} \cap \mathbf{p}_{\alpha_{1}}^{-1} (\Omega_1)$ and $T_1 := T \res \boldsymbol{\Omega}_1$
Then, there is non-negative integer $Q_1\leq Q$ such that the following holds: 
\begin{itemize}
\item[(a)] $\partial T_1 \res \mathbf{\Omega}_1 = 0$
\item[(b)] $\alpha_1$ can be appropriately oriented so that $(\mathbf{p}_{\alpha_1})_\sharp T_1 = Q_1 \llbracket \Omega_1 \rrbracket$;
\item[(c)] The following estimate holds
\begin{equation}\label{e:Linfty-crude-2}
\dist^2(q,\alpha_1)\equiv |q-\mathbf{p}_{\alpha_1} (q)|^2 \leq C E_1 + C \mathbf{A}^2 \qquad \forall q\in \spt (T_1)\cap \boldsymbol{\Omega}_1\, ;
\end{equation}
\item[(d)] There is a Lipschitz multi-valued map $u_1: \Omega_1 \to \mathcal{A}_{Q_1} (\alpha_1^\perp)$ and a closed set $K_1 \subset \Omega_1$ such that ${\rm gr} (u_1) \subset \Sigma$, $T_1 \res \mathbf{p}_{\alpha_1}^{-1} (K_1) = \mathbf{G}_u \res \mathbf{p}_{\alpha_1}^{-1} (K_1)$ and the following estimates hold:
\begin{align}
\|u_1\|_\infty^2 + \|Du_1\|_{L^2}^2 & \leq C (E_1 +\mathbf{A}^2)\\
{\rm Lip}\, (u_1) &\leq C (E_1 +\mathbf{A}^2)^\gamma\\
|\Omega_1 \setminus K_1| + \|T\| (\mathbf{\Omega}_1\setminus \mathbf{p}_{\alpha_1}^{-1} (K_1))&\leq C (E_1 +\mathbf{A}^2)^{1+\gamma}\, ;
\end{align}
\item[(e)] $Q_1=0$ if and only if $T_1=0$;
\item[(f)] If $\rho\leq\rho_*$ and one of conditions (a) and (b) in Lemma \ref{l:matching-Q} holds then $Q_1=Q$. 
\end{itemize}
Here, $\gamma = \gamma(Q,m,n,\bar{n})>0$ and $C = C(Q,m,n,\bar{n})>0$.
\end{proposition}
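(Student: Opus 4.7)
The plan is to follow the scheme of Proposition \ref{p:Lipschitz-1}, specialized to the (much simpler) situation $N=1$ in which no splitting of $T$ is needed. The first step is to establish the $L^\infty$ height bound (c) directly from Theorem \ref{thm:main-estimate} with $N=1$. For any $q\in\boldsymbol{\Omega}_1$, setting $\bar q:=\mathbf{p}_{\alpha_1}(q)$, I would choose a radius $s=s(\rho,\eta)>0$ small enough that $\Cbf_{2s}(\bar q,\alpha_1)\subset \Bbf_4\setminus B_\rho(V)$, rescale to standard size, and apply Theorem \ref{thm:main-estimate} with $N=1$ to $T\res\Cbf_{2s}(\bar q,\alpha_1)$ and the single plane $\alpha_1$. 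For $\delta$ small this yields both the pointwise bound \eqref{e:Linfty-crude-2} at $q$ and a local tilt-excess estimate $\Ebf(T,\Cbf_s(\bar q,\alpha_1),\alpha_1)\leq \bar C(E_1+\Abf^2)$, which will feed into the Lipschitz approximation below.

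Items (a), (b), and (e) would then follow formally from (c). Taking $\delta$ so small that $\spt(T_1)$ cannot reach the lateral boundary $\mathbf{p}_{\alpha_1}^{-1}(\partial\Omega_1)\cap\Bbf_{4-\eta}$, the condition $\partial T\res\Bbf_{7\sqrt m}=0$ forces $\partial T_1\res\boldsymbol{\Omega}_1=0$, which is (a). Then $(\mathbf{p}_{\alpha_1})_\sharp T_1$ is a compactly supported cycle in $\Omega_1$, hence by a constancy-type argument (as in \cite{DLS16centermfld}*{Lemma 1.6}) equals $Q_1\llbracket\Omega_1\rrbracket$ for some $Q_1\in\Z$, which is (b). The global mass bound $\|T\|(\Bbf_4)\leq (Q+\tfrac12)4^m\omega_m$ combined with the quasi-graphical structure over $\Omega_1$ forces $0\leq Q_1\leq Q$, and standard slicing gives $Q_1=0$ iff $T_1=0$, which is (e). For (d), I would apply Almgren's strong Lipschitz approximation theorem \cite{DLS14Lp}*{Theorem 1.4}, stitched via a finite covering of $\Omega_1$ by cylinders of controlled size staying a definite distance from $V$, with input excess and height smallness supplied by Step 1. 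This yields the closed set $K_1\subset\Omega_1$ and the Lipschitz $Q_1$-valued map $u_1:\Omega_1\to\Acal_{Q_1}(\alpha_1^\perp)$ with the asserted estimates, the graphicality inside $\Sigma$ being enforced exactly as in \cite{DLS14Lp}*{Theorem 2.4}.

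Finally, for (f), under alternative (a) of Lemma \ref{l:matching-Q} a point $p\in\Bbf_\eps(0)$ with $\Theta(T,p)\geq Q$ combined with the monotonicity formula (with curvature correction) gives $\|T\|(\Bbf_r(p))\geq (Q-o_\delta(1))\omega_m r^m$ at any fixed small $r$; the $L^\infty$ bound from Step 1 localizes this mass inside the thin tube $\mathbf{p}_{\alpha_1}^{-1}(\Omega_1)\cap\{\dist(\cdot,\alpha_1)\leq C(E_1+\Abf^2)^{1/2}\}$, forcing $Q_1\geq Q$ and hence $Q_1=Q$ in view of the previous paragraph. Alternative (b) is handled analogously by a direct mass comparison, using the rotational invariance of $\Omega$ around $V$ to guarantee $\Hcal^m(\alpha_1\cap\Omega)>0$. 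The principal obstacle will be verifying that the height smallness hypothesis \eqref{e:small-in-crude-2}, which is uniform only in the annular region $\Bbf_4\setminus B_\rho(V)$, is enough to pin down the integer multiplicity on all of $\Omega_1$; this is precisely why $\rho\leq\rho_*(Q,m)$ is required in (f), so that the tube $B_\rho(V)\cap\Bbf_2$ contributes negligibly to the total mass compared to $\omega_m$.
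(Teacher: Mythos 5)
Your proposal is correct and follows essentially the route the paper intends (the paper says only that Proposition \ref{p:Lipschitz-2} ``follows by the same arguments'' as Lemma \ref{l:splitting-1}, Proposition \ref{p:Lipschitz-1}, and Lemma \ref{l:matching-Q}, leaving the details to the reader). The one genuine difference is that for items (a)--(c) and (f) you replace the paper's compactness/contradiction arguments with direct ones — applying the $N=1$ case of Theorem \ref{thm:main-estimate} (i.e.\ Allard's height and tilt estimates) on small cylinders at a definite distance from $V$, and then using the almost-monotonicity of mass ratios to pin down $Q_1=Q$ in (f). This is cleaner in the single-plane setting, where no splitting of $T$ is needed and the minimal angle $\boldsymbol{\sigma}(\Sbf)$ plays no role; the only point requiring care, which you correctly flag, is that the mass near $V$ (inside $B_{\rho+\eta}(V)$) must be shown to be $O(\rho^2)$ via the usual covering and monotonicity argument, so that the mass ratio in $\Bbf_r(p)$ for $\Theta(T,p)\geq Q$ actually forces $Q_1\geq Q$ rather than $Q_1\geq Q-1$; this is precisely what the hypothesis $\rho\leq\rho_*(Q,m)$ buys.
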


We also have an analogous scaled versions of the above where we replace $\mathbf{B}_4(0)$ by an arbitrary ball $\mathbf{B}_{4r}(p)$ and all quantities are scaled accordingly.

We now prove all of the above results.

\subsection{Proofs of Crude Approximation Results}
We will prove Lemma \ref{l:splitting-1}, Proposition \ref{p:Lipschitz-1}, and Lemma \ref{l:matching-Q}. Proposition \ref{p:Lipschitz-2} then follows by the same arguments, and so we will leave the details to the reader.

\begin{proof}[Proof of Lemma \ref{l:splitting-1}] We remark that statement (a) of Lemma \ref{l:splitting-1} holds as soon as $\varrho$ is smaller than an suitable constant which depends only on $M,\rho,\eta$ (and the number of planes as well as the dimensions). We fix any such $\varrho$, and claim that as soon as $\delta$ is small enough (b) holds as well. 

Indeed, to this end we may argue by contradiction: consider a sequence $T_k$ of integral currents and a sequence $\Sigma_k$ of Riemannian submanifolds of $\mathbb{R}^{m+n}$ satisfying Assumption \ref{a:main}, together with cones $\mathbf{S}_k\in\Cscr(Q)$ such that
\begin{itemize}
    \item[(i)] $\|T_k\| (\Bbf_{{4}}) \leq (Q+\frac{1}{2}) \omega_m {4}^m$;
    \item[(ii)] $\mathbf{S}_k = \alpha^k_1 \cup \cdots \cup\alpha^k_{N (k)}\in \mathscr{C}(Q)$ is $M$-balanced, where $N (k) \leq Q$;
    \item[(iii)] If we write $\sigma_k := \boldsymbol{\sigma} (\Sbf_k)$ and
    \[
    \mathbf{E}_k := \int_{\Bbf_{{4}}\setminus B_{\rho} (V (\mathbf{S}_k))}
    \dist^2 (p, \mathbf{S}_k) d\|T_k\| (p)\, ,
    \]
    we have $\sigma_k^{-2} (E_k + \mathbf{A}_k^2)\to 0$, where $\Abf_k$ corresponds to $\Sigma_k$;
    \item[(iv)] There are points $p_k\in \spt (T_k)\cap \Bbf_{4-\eta}\setminus \overline{B}_{\rho+\eta} (V (\mathbf{S}_k))$ such that $\dist (p_k, \mathbf{S}_k) \geq \varrho \sigma_k$.
\end{itemize}
Observe that $\sigma_k$ is a bounded sequence since $0\leq\sigma_k\leq 1$. Hence,
up to the extraction of a subsequence and after applying suitable rotations, we can assume that:
\begin{itemize}
    \item[(v)] $V (\mathbf{S}_k)$ is a fixed $(m-2)$-dimensional plane $V$ and $N\equiv N(k)\leq Q$ is a fixed integer; 
    \item[(vi)] $\mathbf{S}_k$ converges, locally in Hausdorff distance, to $\mathbf{S}\in \mathscr{C}(Q)$ which is the union of $N'\leq N$ distinct planes $\alpha_i$ such that $\alpha_i \cap \alpha_j = V$ for all pairs $i<j$ (note that we could have $N^\prime = 1$, in which case the latter condition here is vacuous);
    \item[(vii)] $T_k$ converges to an integral current $T$ which is area-minimizing in {$\Bbf_4$ and obeys $\partial T = 0$};
    \item[(viii)] $\spt (T) \cap \Bbf_{{4}}\setminus B_{\rho} (V) \subset \mathbf{S}$.
\end{itemize}
By the constancy theorem and the fact that $\partial T = 0$, it then follows that $T\res \Bbf_4\setminus \overline{B}_\rho(V) = \sum_i \bar Q_i\llbracket \alpha_i\rrbracket \res (\Bbf_4\setminus \overline{B}_\rho(V))$ where the $\bar Q_i$ are integers. Orienting the $\alpha_i$ suitably, we can assume that $\bar Q_i\geq 0$.

We also know that $\spt(T_k)$ converges locally in Hausdorff distance to $\spt(T)$. In particular, passing to a subsequence, the points $p_k$ converge to some point $p$ which must lie in one of the planes $\alpha_j$ which form $\Sbf$. We denote by $\pi_0$ this latter plane and observe that clearly $|p|\leq {4-\eta}$ whilst $\dist(p,V)\geq {\rho+\eta}$. For each fixed $i$, the sequence of planes $\alpha_i^k$ must converge (locally in Hausdorff distance) to some plane of $\Sbf$ and, again upon extraction of a suitable subsequence and relabelling, we may assume that there is an $N_0\leq N$ with the property that $\alpha_i^k$ converges to $\pi_0$ when $i\leq N_0$, whilst it converges to some other plane of $\Sbf$ when $i>N_0$. {Clearly we know that $N_0\geq 1$ by construction.}

Now for a fixed parameter ${\bar{\eta}>0}$, consider the currents $T^\prime_k := T_k\res ((\Bbf_{4-\eta/2}\setminus \overline{B}_{\rho+\eta}(V))\cap \{\dist(\cdot,\pi_0)<{\bar{\eta}}\})$ and the cones $\Sbf^\prime_k:= \alpha^k_1\cup\cdots\cup \alpha_{N_0}^k$. Observe that, if we choose ${\bar{\eta}}$ sufficiently small, the convergence properties outlined above imply that:

\begin{itemize}
\item $\partial T'_k =0$ in $\Bbf_{4-\eta/2}\setminus \overline{B}_{\rho+\eta} (V)$ for all $k$ sufficiently large;
\item $\dist (q, \mathbf{S}_k)= \dist (q, \mathbf{S}'_k)$ for all $q\in \spt (T'_k)$ and all $k$ sufficiently large;
\item $T'_k$ converges to $\bar Q_j \llbracket \pi_0 \cap \Bbf_{4-\eta/2}\setminus \overline{B}_{\rho+\eta} (V)\rrbracket$, where $1\leq \bar Q_j\leq Q$ is as above.
\end{itemize}

Now consider the cylinder $\mathbf{C}_{4r} (p, \pi_0)${, where $p$ is the limit point as above and $r$ is a geometric constant.} Observe that, upon choosing $r$ suitably, in light of (iii), for all $k$ sufficiently large we can apply Corollary \ref{c:splitting-0} to $T_k^\prime$ in the cylinder $\Cbf_{4r}(p,\pi_0)$ {(which we may ensure is disjoint from $B_{\rho}(V)$ provided that we take $4r\leq {\eta}$)}. We can therefore decompose $T'_k \res \mathbf{C}_r (p, \pi_0)$ as $T'_{k,1}+\cdots+ T'_{k, N_0}$ with the property that 
\begin{equation}\label{e:close-to-alpha_i}
\dist (q, \alpha^k_i) = \dist (q, \mathbf{S}'_k) = \dist (q, \mathbf{S}_k) \qquad \forall q\in \spt (T'_{k,i})\, .
\end{equation}
Moreover, for each fixed $i$, the currents $T'_{k,i}$ converge to $Q_i \llbracket B_r (p, \pi_0)\rrbracket$, for some non-negative integers $Q_i$ which obey $\sum_i Q_i = \bar Q_j$. Consider now the points $p_{k,i} := \alpha^k_i \cap \mathbf{p}_{\pi_0}^{-1} (p)$ {(which exist for all $k$ sufficiently large)} and the cylinders $\mathbf{C}^{k,i} := \mathbf{C}_{r/2} (p_{k,i}, \alpha^k_i)$. The latter cylinder is converging to $\mathbf{C}_{r/2} (p, \pi_0)$ and hence in particular, for $k$ large enough, we know that $\Cbf^{k,i} \cap \Bbf_1\subset \Cbf_r(p,\pi_0)$. Hence, $\partial T'_{k,i} \res \mathbf{C}^{k,i} =0$  for $k$ large enough because $\partial T'_{k,i} \res \mathbf{C}_r (p, \pi_0) =0$ and $\spt(T'_{k,i})$ are converging locally in Hausdorff distance to $\bar B_r (p, \pi_0)$. In particular, it follows that $(\mathbf{p}_{\alpha^k_i})_\sharp T'_{k,i} = Q'_i \llbracket B_{r/2} (p_{k,i}, \alpha^k_i)\rrbracket$ for some integers $Q^\prime_i\geq 0$ and that $\|T'_{k,i}\| (\mathbf{C}^{k,i}) \to Q_i \omega_m 2^{-m} r^m$. 

However, because of \eqref{e:close-to-alpha_i} we also have 
\[
\int_{\mathbf{C}^i} \dist^2 (q, \alpha^k_i)\ d\|T'_{k,i}\| (q) \leq \mathbf{E}_k\, .
\]
We can now apply (a scaled version of) the $L^\infty$ estimate \eqref{e:Linfty-estimate} from Theorem \ref{thm:main-estimate} to conclude that 
\[
\dist^2 (q, \alpha^k_i) \leq C r^{-m-2} \mathbf{E}_k + C r^2 \mathbf{A}^2 
\leq C (r^{-m-2} + r^2) \delta^2 \sigma_k^2 %\qquad \forall q\in \spt (T'_{k,i})\cap \mathbf{C}_{r/4} (p_{k,i}, \alpha^k_i)\, .
\]
for all $q\in \spt (T'_{k,i})\cap \mathbf{C}_{r/4} (p_{k,i}, \alpha^k_i)$. But the contradiction point $p_k$ must be contained in one of the cylinders $\mathbf{C}_{r/4} (p_{k,i}, \alpha^k_i)$ for $k$ sufficiently large, and it is also contained in the support of $T'_k = \sum_i T^\prime_{k,i}${; we may pass to a subsequence to fix the $i$ for which this is true}. Hence, for $k$ large enough we must have the estimate
\[
\dist^2 (p_k, \alpha^k_i) \leq C (r^{-m-2} + r^2) \delta^2 \sigma_k^2\, 
\]
where $i$ is now fixed. But since $r$ is fixed, it suffices to choose $\delta$ small enough to ensure $C (r^{-m-2} + r^2)\delta^2 \leq \varrho^2/4$ and contradict (iv) above; notice that $\delta$ indeed has the correct dependencies.
\end{proof}

\begin{proof}[Proof of Proposition \ref{p:Lipschitz-1}]
First of all notice that the conclusions (a), (b), and the estimate (c) all follow from the arguments in the proof of Lemma \ref{l:splitting-1}. The estimates in point (d) of Proposition \ref{p:Lipschitz-1} then follow from the Lipschitz approximation of \cite{DLS14Lp} and \eqref{e:tilt-estimate} of Theorem \ref{thm:main-estimate}.

As for the conclusion (e), because for each $i$ one has the identity
\[
\|T_i\| (\mathbf{\Omega}_i) = Q_i |\Omega_i| + \frac{1}{2} \int_{\Omega_i} |\vec{T} - \vec{\alpha}_i|^2\ d\|T_i\| \, ,
\]
if we again apply the tilt-excess estimate \eqref{e:tilt-estimate} from Theorem \ref{thm:main-estimate}, when $Q_i=0$ we must have 
\[
\|T_i\| (\mathbf{\Omega}_i) \leq {C}\delta^2 \sigma^2\, .
\]
In particular, if $r$ is the radius of the cylinders considered in the proof of Lemma \ref{l:splitting-1}, once $\delta$ is sufficiently small, the monotonicity formula guarantees that $\|T_i\| (\mathbf{B}_r (q))=0$ for every ball $\mathbf{B}_r (q)$ which is contained in $\mathbf{\Omega}_i$ (indeed, as soon as the mass ratio of $T_i$ falls below $1$, we get a contradiction if $T_i$ is not zero as $\Theta(T_i,q) \geq 1$ at every point $q\in\spt(T_i)$). This then implies conclusion (e).

As for (f), we easily conclude from the argument above that, if $Q_i=0$, then the distance of any point $q\in\Omega_i$ to $\spt (T)$ must be at least $\min \{ \varrho \sigma, r\}$. We thus infer
\[
\int_{\mathbf{S}\cap \Bbf_{4}\setminus B_{\rho} (V)} {\dist^2(p,\spt(T))}\ d\mathcal{H}^m (p) \geq c \min \{\varrho^2 \sigma^2, r^2\}
\]
for some geometric constant $c$. This obviously contradicts \eqref{e:other-side} if $\delta$ is small enough. This completes the proof of Proposition \ref{p:Lipschitz-1}.
\end{proof}

\begin{proof}[Proof of Lemma \ref{l:matching-Q}]
We argue by contradiction, starting in an identical manner to that seen in the proof of Lemma \ref{l:splitting-1}. Indeed, following the notation used there, it suffices to show the limiting current $T$ which obeys
$$T\res \Bbf_4\setminus\overline{B}_\rho(V) = \sum_i \bar{Q}_i\llbracket \alpha_i\rrbracket\res (\Bbf_4\setminus\overline{B}_\rho(V))$$
satisfies $\sum_i \bar{Q}_i = Q$ for any $\rho \leq \rho_*(Q,m)$ sufficiently small, when we suppose that one of the hypotheses (a) or (b) holds along the contradiction sequence (for suitable $\eps = \eps_k\downarrow 0$ in the case of (a) and suitable sets $\Omega_k$ in the case of (b)).

First, note that from the monotonicity formula and convergence (namely that we have weak$-*$ convergence of the masses $\|T_k\|$ to $\|T\|$, where $T_k$ are the currents as in the proof of Lemma \ref{l:splitting-1}) we know $\|T\|(\Bbf_3) \leq (Q+\frac{1}{2})3^m\omega_m$, which for sufficiently small $\rho$ evidently implies that we must have $\sum_i\bar{Q}_i\leq Q$. Thus, we just need to show that $\sum_i \bar{Q}_i\geq Q$ when we additional suppose the hypotheses from (a) or (b).

Note next that we can cover $\Bbf_3\cap V$ by $C\rho^{-(m-2)}$ balls of radius $\rho$; if we double the radius of each ball, we may then without loss of generality assume that they cover $\Bbf_3\cap B_\rho(V)$ also. But then the monotonicity formula for the mass ratios of $T$ gives for any such ball $\Bbf_i$ in this cover,
\[
\|T\|(\Bbf_i) \leq C\rho^m\|T\|(\Bbf_{7/2}) \leq C(Q,m)\rho^m
\]
and so
\begin{equation}\label{e:spine-nhd}
    \|T\|(\Bbf_3\cap B_\rho(V)) \leq C\rho^m\cdot \rho^{-(m-2)} = C\rho^2.
\end{equation}
Let us first suppose that the alternative (a) holds. Taking a sequence $\eps_k\downarrow 0$, for the sequence of currents $T_k$ from the proof of Lemma \ref{l:splitting-1}, we have a sequence of points $p_k\in \Bbf_{\eps_k}$ with $\Theta(T_k,p_k)\geq Q$. Upper semi-continuity of the density guarantees that the limiting current satisfies $\Theta(T,0)\geq Q$, and so $\|T\|(\Bbf_3) \geq 3^m\cdot Q\omega_m$, and hence $\|T\|(\Bbf_3\setminus\overline{B}_\rho(V)) \geq 3^m\cdot Q\omega_m - C\rho^2$. But then in light of the structure of $T$, this directly implies that
$$\sum_i \bar{Q}_i(3^m\omega_m - 3^{m-2}\omega_{2}\rho^2) \geq 3^m\cdot Q\omega_m - C\rho^2$$
and so since the $\bar{Q}_i$ are non-negative integers if $\rho \leq \rho_* = \rho_*(Q,m)$ is sufficiently small this evidently implies that $\sum_i \bar{Q}_i\geq Q$ in this case.

If instead (b) holds, then we have $\|T_k\|(\Omega_k)\geq (Q-\frac{1}{2})\mathcal{H}^m(\alpha_1^k\cap\Omega_k)$ for each $k$ and subsets $\Omega_k$ satisfying the given assumptions in (b). Since $\mathcal{H}^m(\alpha^1_k\cap\Omega_k)\geq C_*$, we may pass to a subsequence to ensure that $\mathcal{H}^m(\alpha^1_k\cap\Omega_k)\to \tilde{C}_* \in [C_*,4^m\omega_m]$. Again from the weak$-*$ convergence of the masses $\|T_k\|$ to $\|T\|$, using the rotational invariance of $T$ around $V$ in the region $\Bbf_4\setminus B_\rho(V)$, we readily get
$$\left(Q-\tfrac{1}{2}\right)\tilde{C}_* \leq \lim_{k\to\infty}\|T_k\|(\Omega_k) \leq \lim_{k\to\infty}\|T_k\|(\Omega_k\setminus B_\rho(V)) + C\rho^2 \leq \sum_i \bar{Q}_i\tilde{C}_* + C\rho^2.$$
Note that in the second inequality we use a mass bound analogous to the one in \eqref{e:spine-nhd}, which one may observe still holds for $T_k$ in $\Omega_k\cap B_\rho (V)$ since $\Omega_k$ is closed and contained in $\Bbf_4$. Thus, provided $\rho \leq \rho_* = \rho_*(Q,m,C_*)$ is sufficiently small, as $\sum_i \bar{Q}_i$ is always an integer this gives that $\sum_i \bar{Q}_i \geq Q$, completing the proof.
\end{proof}

\subsection{Refined approximation}\label{s:refined} We now come to the second graphical approximation, which follows a much more refined procedure, using the layers introduced in Lemma \ref{l:algorithm}. For the reader acquainted with \cite{W14_annals}, this should be compared to \cite{W14_annals}*{Remark (3) in Section 8, and Section 10}.

\begin{assumption}[Assumptions for the refined approximation]\label{a:refined}
Suppose $T$ and $\Sigma$ are as in Assumption \ref{a:main} and $\|T\| (\Bbf_4) \leq 4^m (Q+\frac{1}{2}) \omega_m$. Suppose $\mathbf{S}=\alpha_1\cup \cdots \cup \alpha_N$ is a cone in $\mathscr{C} (Q)\setminus \mathscr{P}$ which is $M$-balanced, where $M>0$ is a given fixed constant, and $V$ is the spine of $\mathbf{S}$. For a sufficiently small constant $\eps=\eps(Q,m,n,\bar{n},M)$ whose choice will be fixed in Assumption \ref{a:parameters} below, suppose that $\{\Theta (T,\cdot) \geq Q\} \cap \Bbf_\varepsilon (0) \neq \emptyset$ and (cf. Proposition \ref{p:Lipschitz-1}) suppose that
\begin{equation}\label{e:smallness-alg}
\mathbb{E} (T, \mathbf{S}, \Bbf_4) + \mathbf{A}^2 \leq \varepsilon^2 \boldsymbol{\sigma} (\Sbf)^2\, .
\end{equation}
Recall that $\mathbb{E}(T,\Sbf,\Bbf_4)$ is a two-sided $L^2$ excess between $T$ and $\Sbf$.
\end{assumption}

\subsubsection{Whitney decomposition}\label{ss:whitney} Let $L_0$ be the closed cube in $V$ with side-length $\frac{2}{\sqrt{m-2}}$ centered at $0$ (if we make the identification $V= \mathbb R^{m-2}$, then we can write explicitly $L_0 =\Large[-\frac{1}{\sqrt{m-2}}, \frac{1}{\sqrt{m-2}}\Large]^{m-2}$) and consider the set 
\begin{equation}\label{e:region-R}
R :=\{p: \mathbf{p}_V (p) \in L_0\, \text{ and }\, 0<|\mathbf{p}_{V^\perp} (p)|\leq 1\}\, .
\end{equation}
We recall here that we are assuming $m\geq 3$ (cf. Assumption \ref{a:main}). 

Obviously $R$ is invariant under rotations around $V$. We next decompose $R$ into a countable family of closed sets which are also invariant under rotations around $V$. Firstly, for every $\ell\in \mathbb N$ denote by $\mathcal{G}_\ell$ the collection of $(m-2)$-dimensional cubes in the spine $V$ obtained by subdividing $L_0$ into $2^{\ell (m-2)}$ cubes of side-length $\frac{2^{1-\ell}}{\sqrt{m-2}}$, and we let $\mathcal{G} = \bigcup_\ell \mathcal{G}_\ell$. {Note that we can generate $\mathcal{G}_{\ell+1}$ from $\mathcal{G}_\ell$ by bisecting every face of every cube in $\mathcal{G}_\ell$. We write $L$ for a cube in $\mathcal{G}$, so $L\in \mathcal{G}_\ell$ for some $\ell\in \N$.} When we want to emphasize the dependence of the integer $\ell$ on $L$ we will write $\ell (L)$ and we will call it the \emph{generation of $L$}. If $L\subset L'$ and $\ell (L')= \ell (L)+1$, we then call $L'$ the \emph{parent} of $L$, and $L$ a \emph{child} of $L'$, while more generally, when $\ell (L') > \ell (L)$, we say that $L'$ is an \emph{ancestor} of $L$ and $L$ a \emph{descendant} of $L'$.

For every $L\in \mathcal{G}_\ell$ we let 
\[
R (L) := \{p: \mathbf{p}_V (p)\in L \quad \mbox{and} \quad 2^{-\ell-1} \leq |\mathbf{p}_{V^\perp} (p)|\leq 2^{-\ell}\}\, .
\]
Observe that $R (L_0)$ is not the region $R$ (as $L_0\in\mathcal{G}_0$ so $\ell(L_0)=0$), but rather $R=\bigcup_{L\in\Gcal} R(L)$.

For each $L\in \mathcal{G}_\ell$ we let $y_L\in V$ be its center and denote by $\Bbf (L)$ the ball $\mathbf{B}_{2^{2-\ell(L)}} (y_L)$ {(in $\R^{m+n}$)}
and by $\mathbf{B}^h (L)$ the set $\Bbf (L) \setminus B_{\rho_*2^{-\ell(L)}} (V)$, where $\rho_*$ is as in Lemma \ref{l:matching-Q}. 
It will be convenient to consider slight enlargements of the sets $R (L)$. More precisely, given a positive number $1\leq \lambda\leq \frac{3}{2}$ and $L\in \mathcal{G}_\ell$ we will denote by $\lambda L$ the cube concentric to $L$ {in $V$} with side-length $\frac{\lambda 2^{1-\ell}}{\sqrt{m-2}}$ and $\lambda R (L)$ the set 
\[
\lambda R (L) := \{p: \mathbf{p}_V (p)\in \lambda L \quad \mbox{and} \quad \lambda^{-1} 2^{-\ell-1} \leq |\mathbf{p}_{V^\perp} (p)|\leq \lambda 2^{-\ell}\}\, .
\]
In fact, in the rest of the paper we will use some fixed choices of $\lambda$ which depend only on the dimension and which are rather close to $1$. Finally, an important role will be played by the ``planar cross sections'' of the sets $R (L)$ and $\lambda R (L)$, namely the intersections of these sets with the planes $\alpha_i$ forming the cone $\mathbf{S}$; these intersections will be denoted by $L_i$ and $\lambda L_i$, respectively.

The following elementary lemma, whose proof is left to the reader, summarizes some important geometric properties of the sets just introduced, and verifies that $\{R(L):L\in\Gcal\}$ is indeed a Whitney decomposition towards $V$. With a slight abuse of terminology we will talk about the interior of $L$ and $L_i$ meaning their interiors in the relative topology of $V$ and $\alpha_i$, respectively. In order to help the reader visualize the content of Lemma \ref{l:whitney} we refer to Figure \ref{f:whitney-1} below. 

\begin{lemma}\label{l:whitney}
Consider the collection of cubes $\mathcal{G}$ introduced above and its elements $L$. Then the following properties hold:
\begin{itemize}
\item[(i)] Given any pair of distinct $L, L'\in \mathcal{G}$ the interiors of $R(L)$ and $R(L')$ are pairwise disjoint and $R(L)\cap R(L') \neq \emptyset$ if and only if $L\cap L' \neq \emptyset$ and $|\ell(L)-\ell(L')|\leq 1$, while the interiors of $L$ and $L'$ are disjoint if $\ell (L) \leq \ell (L')$ and $L'$ is not an ancestor of $L$.
\item[(ii)] The union of $R(L)$ ranging over all $L\in\mathcal{G}$ is the whole set $R$.
\item[(iii)] The diameters of the sets $L$, $R (L)$, $\lambda L$, $\lambda R (L)$, $L_i$, $\lambda L_i$, and $\mathbf{B}^h (L)$ are all comparable to $2^{-\ell (L)}$ and, with the exception of $L,\lambda L$, all comparable to the distance between an arbitrarily element within them and $V$; more precisely, any such diameter and distance is bounded above by $C 2^{-\ell (L)}$ and bounded below by $C^{-1} 2^{-\ell (L)}$ for some constant $C$ which depends only on $m$ and $n$.
\item[(iv)] There is a constant $C = C(m,n)$ such that, if {$\Bbf^h (L) \cap \Bbf^h (L') \neq \emptyset$, then $|{\ell (L)} - \ell (L')|\leq C$ and $\dist (L, L') \leq C 2^{-\ell (L)}$. In particular, for every $L\in \mathcal{G}$, the subset of $L'\in \mathcal{G}$ for which $\Bbf^h (L)$ and $\Bbf^h (L')$} have nonempty intersection is bounded by a constant. 
\item[(v)] $\sum_{L\in \mathcal{G}_\ell} \mathcal{H}^{m-2} (L) = C(m)$ for any $\ell$ and therefore, {for any $\kappa>0$},
\begin{equation}\label{e:geometric}
\sum_{L\in \mathcal{G}} 2^{-(m-2+\kappa) \ell (L)} \leq C (\kappa, m)\, .
\end{equation}
\end{itemize}
\end{lemma}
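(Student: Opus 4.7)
The plan is to verify each of (i)--(v) directly from the definitions, exploiting the standard structure of a dyadic Whitney decomposition adapted here to the product structure of $R$ as a ``radial shell'' over $L_0$. Throughout, I identify $V$ with $\mathbb{R}^{m-2}$ so that $L_0=[-(m-2)^{-1/2},(m-2)^{-1/2}]^{m-2}$ and each $L\in\mathcal{G}_\ell$ is a half-open dyadic cube of side $2^{1-\ell}/\sqrt{m-2}$.

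For (i), I would first note that (by construction of dyadic subdivisions) the family $\mathcal{G}_\ell$ is a partition of $L_0$ and two cubes $L,L'\in\mathcal{G}$ with $\ell(L)\leq \ell(L')$ have disjoint interiors unless $L$ is an ancestor of $L'$; this yields the last assertion of (i). For the statement on $R(L),R(L')$, I would decompose: the projection $\mathbf{p}_V$ of $R(L)$ is $L$ and the radial range $|\mathbf{p}_{V^\perp}(\cdot)|$ is $[2^{-\ell(L)-1},2^{-\ell(L)}]$. Two such shells can overlap only if their projections meet \emph{and} their radial intervals meet; the former forces $L\cap L'\neq\emptyset$, and the latter forces $|\ell(L)-\ell(L')|\leq 1$ since consecutive dyadic intervals $[2^{-\ell-1},2^{-\ell}]$ meet only at endpoints. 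Conversely, if those two conditions hold, $R(L)\cap R(L')$ contains a common boundary piece. Interior disjointness follows because the radial intervals share only endpoints and the projections share only boundary (when $L\neq L'$).

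For (ii), given $p\in R$, there is a unique integer $\ell\geq 0$ with $2^{-\ell-1}\leq |\mathbf{p}_{V^\perp}(p)|\leq 2^{-\ell}$ (the endpoint case is harmless by (i)), and a cube $L\in\mathcal{G}_\ell$ with $\mathbf{p}_V(p)\in L$; then $p\in R(L)$. For (iii), all comparability statements are direct: $\mathrm{diam}(L)= 2^{1-\ell(L)}$ by definition, $\mathbf{B}(L)$ has radius $2^{2-\ell(L)}$, $\mathbf{B}^h(L)$ differs from $\mathbf{B}(L)$ only by removing a tubular neighborhood of $V$ of radius $\rho_*2^{-\ell(L)}<2^{2-\ell(L)}$, and $R(L),\lambda R(L),L_i,\lambda L_i$ all have diameter comparable to $2^{-\ell(L)}$ because $\lambda\in[1,3/2]$ is bounded. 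For all sets except $L$ and $\lambda L$, every point has distance to $V$ at least $\lambda^{-1}2^{-\ell(L)-1}$ and at most $\lambda 2^{-\ell(L)}+\mathrm{diam}(L)\leq C2^{-\ell(L)}$, giving the distance comparison.

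For (iv), if $\mathbf{B}^h(L)\cap \mathbf{B}^h(L')\neq\emptyset$, then any common point $q$ satisfies $\mathrm{dist}(q,V)\geq \rho_* 2^{-\max\{\ell(L),\ell(L')\}}$ and $\mathrm{dist}(q,V)\leq 2^{2-\ell(L)}+|y_L|_{V^\perp}\leq C 2^{-\ell(L)}$ (and symmetrically for $L'$), which, after comparing the two, forces $|\ell(L)-\ell(L')|\leq C$. Then $|y_L-y_{L'}|\leq \mathrm{diam}(\mathbf{B}(L))+\mathrm{diam}(\mathbf{B}(L'))\leq C 2^{-\ell(L)}$, hence $\mathrm{dist}(L,L')\leq C 2^{-\ell(L)}$; since the $L'\in\mathcal{G}_{\ell(L')}$ with center within distance $C2^{-\ell(L)}$ of $y_L$ form a set of cardinality bounded in terms of $m$ and $C$, and $\ell(L')$ ranges in a bounded window, the total number of admissible $L'$ is bounded. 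For (v), $\sum_{L\in\mathcal{G}_\ell}\mathcal{H}^{m-2}(L)=\mathcal{H}^{m-2}(L_0)=:C(m)$ by partition, and $\mathcal{H}^{m-2}(L)=c(m)2^{-(m-2)\ell}$; hence $\#\mathcal{G}_\ell=C(m)2^{(m-2)\ell}$, yielding
\[
\sum_{L\in\mathcal{G}}2^{-(m-2+\kappa)\ell(L)}=\sum_{\ell\geq 0}C(m)2^{(m-2)\ell}2^{-(m-2+\kappa)\ell}=C(m)\sum_{\ell\geq 0}2^{-\kappa\ell}=C(m,\kappa),
\]
completing the verification. No step poses a genuine obstacle; the only mildly delicate point is being careful with boundary cases in (i), which is handled by working with the ``open shells'' interiorly and using relative topology for the cube boundaries as the lemma already indicates.
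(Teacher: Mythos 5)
Since the paper explicitly leaves the proof of this lemma to the reader, there is no authorial proof to compare against; your direct, elementary verification is exactly the sort of argument the authors intended. Each of (i)--(v) is correctly handled: the product structure of $R(L)$ as (dyadic cube in $V$) $\times$ (dyadic radial annulus in $V^\perp$) gives the disjointness and intersection criterion in (i), the covering in (ii), and the diameter/distance-to-$V$ comparisons in (iii); the two-sided comparison $\rho_* 2^{-\ell(L)}\leq\mathrm{dist}(q,V)\leq 2^{2-\ell(L')}$ forces the bounded generation gap in (iv); and (v) is the standard geometric series computation.

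Two small remarks, neither of which is a real gap. First, in (iv) the constant $C$ genuinely depends on $\rho_*$, which the paper fixes as $\rho_*(Q,m)$; the lemma statement's claim that $C=C(m,n)$ is thus a slight imprecision in the paper (since $Q$ is fixed throughout Part II this is harmless), and your proof correctly traces the dependence. Second, your phrasing ``$L$ is an ancestor of $L'$'' uses the standard convention (ancestor = larger containing cube), whereas the paper, in the paragraph just before Lemma \ref{l:whitney}, uses a convention that is either reversed or contains a typo; the content of your statement matches the content of the lemma's, so this is a purely notational mismatch and does not affect correctness.
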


\begin{figure}[h]
    \def\svgwidth{0.55\columnwidth}
    \rotatebox{-90}{
    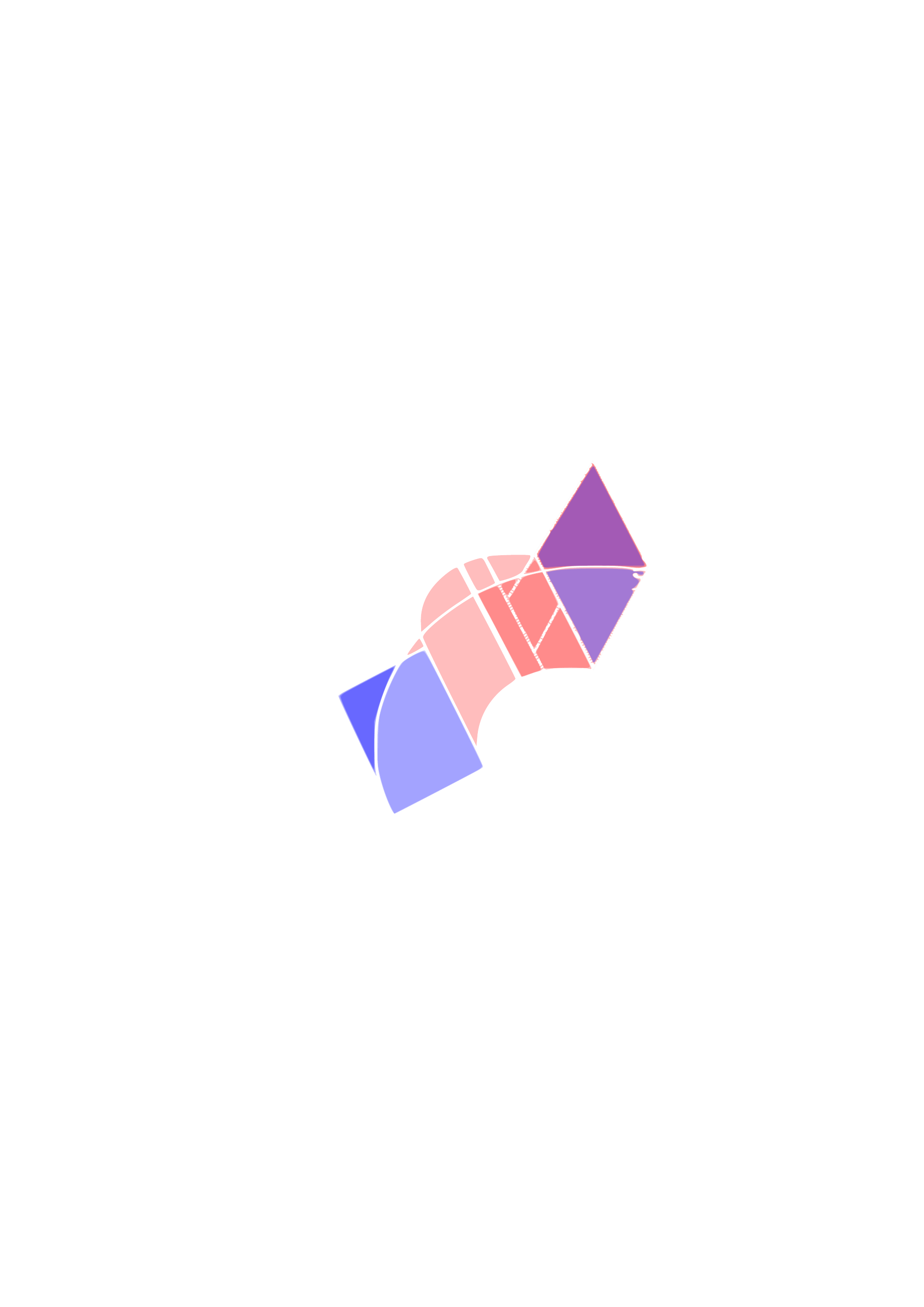}
    \vspace{-2em}
\caption{\Small An illustration of the Whitney decomposition of $R$ (illustrated on $\alpha_1$). The subspace $V$ is represented by the thick line joining the two planes, with a representation cube $L$ showing, with the corresponding sets $R(L)$ (in red) and $L_i$ (in blue). The set $R(L)$ is obtained by rotating the given cube $L_1$ around $V$ in the ambient $(m+n)$-dimensional space, a portion of which is shown.}
\label{f:whitney-1}
\end{figure}

\subsubsection{Layering and choice of the parameters}\label{s:layering-param}
We now fix a $\bar \delta>0$ (whose choice will specified below in Assumption \ref{a:parameters}) and apply the layering subdivision Lemma \ref{l:algorithm} with this $\bar{\delta}$ in place of $\delta$ therein, to identify a family of sub-cones $\mathbf{S} = \mathbf{S}_0 \supsetneq \mathbf{S}_1 \supsetneq \cdots \supsetneq \mathbf{S}_\kappa$ where $\mathbf{S}_k$ consists of the union of the planes $\alpha_i$ with $i\in I (k)$ for the set of indices $I(k)$ given by Lemma \ref{l:algorithm}. We then distinguish two cases:
\begin{itemize}
\item[(a)] if $\max_{i<j\in I (\kappa)} \dist (\alpha_i \cap \Bbf_1, \alpha_j \cap \Bbf_1) < \bar \delta$, we define an additional cone $\mathbf{S}_{\kappa+1}$ consisting of a single plane, given by the smallest index in $I (\kappa)$ and we set $\bar\kappa := \kappa+1$ and $I(\bar\kappa):= \{\min I(\kappa)\}$;
\item[(b)] otherwise, we select no smaller cone and set $\bar \kappa := \kappa$.
\end{itemize}
We next detail the choice of the various parameters involved in our discussion.
\begin{assumption}[Selection of the parameters]\label{a:parameters}
Firstly, we denote by $\delta^*$ the minimum of the parameters $\delta$ needed to ensure that Proposition \ref{p:Lipschitz-1}, Lemma \ref{l:matching-Q}, and Proposition \ref{p:Lipschitz-2} are applicable to all the cones $\mathbf{S}_k${, $k\in \{0,1,\dotsc,\bar{\kappa}\}$}: note that all the $\Sbf_k$ are $M$-balanced by construction and that therefore $\delta^* = \delta^*(m,n,\bar{n},Q,M)>0$, and in particular its choice does not depend on $\bar\delta$. Subsequently, we fix a parameter $\tau = \tau(m,n,\bar{n},Q,M)>0$ smaller than $\frac{\delta^*}{C}$ for some large constant $C = C(m,n,\bar{n},Q)>0$. The parameter $\bar\delta$ leading to the layering $\mathbf{S}_0 \supsetneq \mathbf{S}_1 \supsetneq \cdots \supsetneq \mathbf{S}_{\bar\kappa}$ is then chosen to be much smaller than $\tau$; so $\bar\delta = \bar\delta(m,n,\bar{n},Q,M)>0$. In particular, $\bar\delta \leq \delta^*$. Finally, $\varepsilon = \varepsilon(m,n,\bar{n},Q,\delta^*,\bar\delta,\tau)>0$ will be chosen even smaller than {$\bar\delta$}.
\end{assumption}

\subsubsection{Outer, central, and inner regions} We will next subdivide the cubes in $\mathcal{G}$ using the following criterion.  In order to simplify our notation we introduce the shorthand:
\[
\mathbf{E} (L,k):= 2^{(m+2) \ell (L)} \int_{\Bbf^h (L)} \dist^2(q, \mathbf{S}_k)\, d\|T\| (q)\, .
\]
For every $k\in \{0,1,\dotsc,\bar\kappa\}$, recall that $I (k)$ gives the subset of $\{1, \ldots , N\}$ such that $\mathbf{S}_k = \bigcup_{i\in I (k)} \alpha_i$. If $I (k)$ consists of more than one element, we set 
\begin{equation}\label{e:separation}
\mathbf{s} (k) := \min_{i<j \in I (k)} \dist (\alpha_i \cap \Bbf_1, \alpha_j \cap \Bbf_1)\, ,
\end{equation}
while we set $\mathbf{s} (k) := \bar\delta$ if $I (k)$ is a singleton. 
\begin{definition}\label{d:regions}
Let $L\in \mathcal{G}$. We say that:
\begin{itemize}
\item[(i)] $L$ is an \emph{outer cube} if $\mathbf{E} (L^\prime,0) \leq \tau^2 \mathbf{s} (0)^2$
for every ancestor $L^\prime$ of $L$ (including $L$).
\item[(ii)] $L$ is a \emph{central cube} if it is not an outer cube and if 
$\min_k \mathbf{E} (L^\prime, k)/{\mathbf{s}(k)^2} \leq \tau^2$
for every ancestor $L'$ of $L$ (including $L$).
\item[(iii)] $L$ is an \emph{inner cube} if it is neither an outer nor a central cube, but its parent is an outer or a central cube. 
\end{itemize}
The corresponding families of cubes will be denoted by $\mathcal{G}^o$, $\mathcal{G}^c$, and $\mathcal{G}^{in}$, respectively. Observe that any cube $L\in \mathcal{G}$ is either an outer cube, or a central cube, or an inner cube, or a descendant of inner cube.
\end{definition}
We correspondingly define three subregions of $R$:
\begin{itemize}
    \item The \emph{outer region}, denoted $R^o$, is the union of $R (L)$ for $L$ varying over elements of $\mathcal{G}^o$.
    \item The \emph{central region}, denoted $R^c$, is the union of $R (L)$ for $L$ varying over elements of $\mathcal{G}^c$.
    \item Finally, the \emph{inner region}, denoted $R^{in}$, is the union of $R(L)$ for $L$ ranging over the elements of $\mathcal{G}$ which are neither outer nor central cubes.
\end{itemize}
Alternatively, the inner region can be defined as the union of $R(L)$ for $L$ ranging over the inner cubes and their descendants. For a visual illustration of the subdivision and the corresponding cubes we refer to Figure \ref{f:whitney-2}.

\begin{figure}[htbp]
 \def\svgwidth{0.55\columnwidth}
    \rotatebox{-90}{
    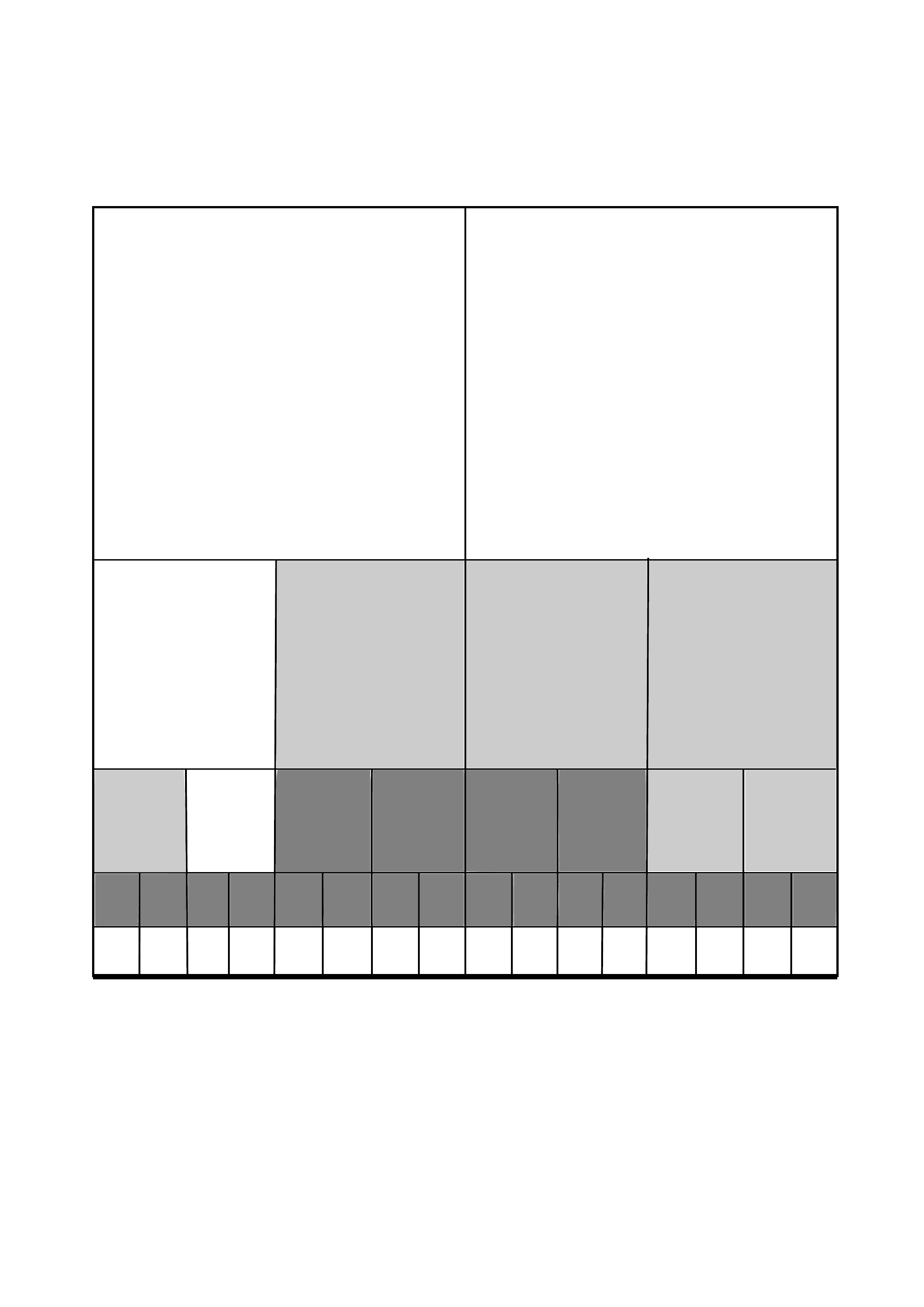}
    \vspace{-2em}
\caption{\Small An example of a possible labeling of the cubes and of a corresponding subdivision of $R$. The outer region is white, while the central region is lightly shadowed and the inner region is shadowed. The labels $o$, $c$, and $in$ identify rotationally invariant sets $R (L)$ corresponding to cubes $L$ which are, respectively, outer, central, and inner cubes. Note that descendants of inner cubes {\em are not} inner cubes, even though the corresponding rotationally invariant regions are included in the inner region.}\label{f:whitney-2}
\end{figure}

The following lemma will be pivotal to define our refined approximation.
\begin{lemma}\label{l:controls_Whitney}
Let $T$ and $\mathbf{S}$ be as in Assumption \ref{a:refined} and assume the parameters $\bar\delta$, $\delta^*$, $\tau$, and $\varepsilon$ satisfy Assumption \ref{a:parameters} and the ratios $\frac{\varepsilon}{\bar\delta},\frac{\bar\delta}{\tau}$, and $\frac{\tau}{\delta^*}$ are smaller than a constant $c =c (Q,m,n,\bar{n})>0$. Then
\begin{itemize}
    \item[(i)] ${L_0}\in \mathcal{G}^o$ and moreover, for every choice of $\tau$ and $\ell \in \mathbb N$ there is a constant $\bar c=\bar c(Q,m,n,\bar{n},\tau,\ell)>0$ such that, if $\varepsilon< \bar c$, then $\mathcal{G}_\ell \subset \mathcal{G}^o$.
    \item[(ii)] For every $L\in \mathcal{G}^c$ there is an index $k (L)\in \{0, \ldots , \bar\kappa\}$ such that 
    \begin{equation}\label{e:below-tau}
\mathbf{E} (L,k) \leq \tau^2 \mathbf{s} (k)^2 
\end{equation}
for all $k\geq k (L)$ while for all $k<k(L)$ we have 
\begin{equation}\label{e:above-tau}
\mathbf{E} (L,k) > \tau^2 \mathbf{s} (k)^2\, .
\end{equation} 
\item[(iii)] For every $L\in \mathcal{G}^o$ we have that \eqref{e:below-tau} holds for $k=0$ (and thus for every $k$), whilst for every $L\in \mathcal{G}^{in}$ we have that \eqref{e:above-tau} holds for every $k$.
\item[(iv)] There is a constant $\bar C = \bar C(Q,m,n,\bar{n},\bar\delta, \delta^*, \tau)>0$ such that
\begin{align}
\mathbf{E} (L, k(L)) &
\leq \bar C \mathbf{E} (L, 0) \hspace{3em}\forall L\in \mathcal{G}^c\label{e:0-control-central}\\
1 & \leq \bar C \mathbf{E} (L, 0) \hspace{3em}\forall L\in \mathcal{G}^{in}.\label{e:0-control-inner}
\end{align}
\item[(v)] For every $L\in \mathcal{G}^o$, Proposition \ref{p:Lipschitz-1} is applicable to the current $T_{y_L, 2^{-\ell{(L)}}}$ and the cone $\mathbf{S}_0$, whilst for every $L\in \mathcal{G}^c$ either Proposition \ref{p:Lipschitz-1} or Proposition \ref{p:Lipschitz-2} is applicable to the current $T_{y_L, 2^{-\ell{(L)}}}$ and the cone $\mathbf{S}_{k(L)}$ {(depending on whether $\Sbf_{k(L)}$ consists of at least two planes or is a single plane).}
\end{itemize}
\end{lemma}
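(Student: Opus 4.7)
The plan is as follows. Part (i) follows directly from the smallness hypothesis \eqref{e:smallness-alg}: for $L=L_0$ the ball $\Bbf(L_0)=\Bbf_4$, so $\Ebf(L_0,0)\leq\hat\Ebf(T,\Sbf,\Bbf_4)\leq \eps^2\mathbf{s}(0)^2\leq\tau^2\mathbf{s}(0)^2$ because $\eps\ll\tau$. For $L\in\Gcal_\ell$, the rescaling factor $2^{(m+2)\ell}$ in the definition of $\Ebf(L,0)$ is balanced against $\eps^2$: taking $\eps$ sufficiently small depending on $\ell$ ensures $\Ebf(L,0)\leq\tau^2\mathbf{s}(0)^2$ for every $L\in\Gcal_\ell$ and for all of its ancestors, giving $\Gcal_\ell\subset\Gcal^o$.

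The main quantitative tool for parts (ii)--(iv) will be the one-step stability estimate
\[
\Ebf(L,k)\leq 2\Ebf(L,k-1)+Cd(k)^2.
\]
I will derive this from the pointwise bound $|\dist(p,\Sbf_k)-\dist(p,\Sbf_{k-1})|\leq d(k)|\mathbf{p}_{V^\perp}(p)|$ (valid because every plane in $\Sbf_0$ admits a nearest neighbour in $\Sbf_k$ at unit-scale Hausdorff distance at most $d(k)$), the cylinder bound $|\mathbf{p}_{V^\perp}(p)|\leq 2^{2-\ell(L)}$ on $\Bbf^h(L)$, and the monotonicity-based mass bound $\|T\|(\Bbf(L))\leq C2^{-m\ell(L)}$. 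Combining with Lemma \ref{l:algorithm}(iii)--(iv), which give $d(k)\leq\bar\delta\mathbf{s}(k)$ and $\mathbf{s}(k-1)\leq\bar\delta\mathbf{s}(k)$, this yields $\Ebf(L,k)\leq 2\Ebf(L,k-1)+C\bar\delta^2\mathbf{s}(k)^2$, and hence the key \emph{propagation}: $\Ebf(L,k-1)\leq\tau^2\mathbf{s}(k-1)^2$ implies $\Ebf(L,k)\leq(2\tau^2+C)\bar\delta^2\mathbf{s}(k)^2\leq\tau^2\mathbf{s}(k)^2$, provided $\bar\delta$ is chosen small enough relative to $\tau$ (as guaranteed by Assumption \ref{a:parameters}). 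Defining $k(L)$ to be the smallest index for which \eqref{e:below-tau} holds then yields (ii). For (iii), outer cubes satisfy \eqref{e:below-tau} at $k=0$ by definition, hence for all $k$ by propagation; for inner $L$, the parent being outer or central means every ancestor of the parent already witnesses the central test (at some $k$), so the obstruction to $L$ being central must be $L$ itself, which gives \eqref{e:above-tau} at every $k$.

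For part (iv), in the central case with $k(L)\geq 1$, I will upgrade the stability estimate using the sharper $d(k)\leq\mathbf{s}(k-1)/\eta$ from Lemma \ref{l:algorithm}(iii), yielding $\Ebf(L,k)\leq 2\Ebf(L,k-1)+C\mathbf{s}(k-1)^2/\eta^2$. Since $k-1<k(L)$ forces $\mathbf{s}(k-1)^2<\Ebf(L,k-1)/\tau^2$, this collapses to $\Ebf(L,k)\leq C_1\Ebf(L,k-1)$ with $C_1=C_1(\tau,\eta)$. Iterating over the at most $Q$ steps produces \eqref{e:0-control-central}; the terminal step $k(L)=\bar\kappa$ in case (a) is handled analogously using the bound $d(\bar\kappa)\leq d(\kappa)+M(\kappa)\leq 2\mathbf{s}(\kappa)/\eta$. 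For inner $L$, I will run the iteration in reverse: $\Ebf(L,k)>\tau^2\mathbf{s}(k)^2$ combined with the stability estimate gives $\Ebf(L,k-1)\geq\tfrac12(\tau^2-C\bar\delta^2)\mathbf{s}(k)^2$, which cascades down from $k=\bar\kappa$ to $k=0$ to produce $\Ebf(L,0)\geq c\mathbf{s}(\bar\kappa)^2$ for some $c=c(\tau,\bar\delta,Q)>0$. Using $\mathbf{s}(\bar\kappa)\geq\eta\bar\delta$ (case (b)) or $\mathbf{s}(\bar\kappa)=\bar\delta$ (case (a)) then yields \eqref{e:0-control-inner}. I expect the main obstacle in the whole proof to be precisely the case (a) inner-cube estimate: here $d(\bar\kappa)$ is comparable to $\mathbf{s}(\bar\kappa)=\bar\delta$ rather than much smaller, so the terminal cascade loses a non-trivial multiplicative constant which I will need to absorb by instead starting the backward iteration from the penultimate inequality $\Ebf(L,\kappa)>\tau^2\mathbf{s}(\kappa)^2$ and carefully handling the single-plane transition.

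Part (v) is a routine rescaling check. Under $\iota_{y_L,2^{-\ell(L)}}$ the cone $\Sbf_k$ is invariant (being a cone through the subspace $V\ni y_L$), the rescaled mass satisfies $\|T_{y_L,2^{-\ell(L)}}\|(\Bbf_4)\leq(Q+\tfrac12)4^m\omega_m$ by monotonicity (using that $\Abf$ is small), and the rescaled conical excess relative to $\Sbf_0$ (respectively $\Sbf_{k(L)}$) in $\Bbf_4\setminus B_{\rho_*}(V)$ is exactly $\Ebf(L,0)\leq\tau^2\mathbf{s}(0)^2$ (respectively $\Ebf(L,k(L))\leq\tau^2\mathbf{s}(k(L))^2$). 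Since $\tau\ll\delta^*$ and the rescaled $\Abf$ is only improved by the factor $2^{-\ell(L)}\leq 1$, the smallness hypothesis of Lemma \ref{l:splitting-1}--and hence of Proposition \ref{p:Lipschitz-1} (or of Proposition \ref{p:Lipschitz-2} when $\Sbf_{k(L)}\in\Pscr$, which happens exactly when $k(L)=\bar\kappa$ in case (a))--is satisfied.
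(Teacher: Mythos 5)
Your proposal is correct and follows essentially the same route as the paper's proof: the one-step stability estimate $\mathbf{E}(L,k)\leq 2\mathbf{E}(L,k-1)+C\,d(k)^2$ (derived there via the Hausdorff distance between consecutive layer cones rather than your pointwise projection bound, but these are equivalent), the propagation property giving (ii)--(iii), the upgraded bound $d(k)\leq\mathbf{s}(k-1)/\eta$ from Lemma~\ref{l:algorithm}(iii) for (iv), and the rescaling check for (v). Your "reverse cascade" for the inner-cube estimate is just the paper's forward iteration read backwards.

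One remark on the obstacle you flag for case (a) inner cubes: there is none. You correctly identify $d(\bar\kappa)\leq d(\kappa)+M(\kappa)\leq C\,\mathbf{s}(\kappa)/\eta$ via Lemma~\ref{l:algorithm}(ii)--(iii), which is exactly the same per-step bound as for $k\leq\kappa$; the iteration constant $C(1+(\eta\tau)^{-2})$ is therefore uniform over all steps including the terminal one, and since $\bar\kappa\leq Q-1$ the cumulative factor $C(1+(\eta\tau)^{-2})^{\bar\kappa}$ is simply absorbed into $\bar C(Q,m,n,\bar n,\bar\delta,\delta^*,\tau)$. No restart from the penultimate step is needed; the single-plane transition requires no special handling beyond the distance bound you already have.
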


Given Lemma \ref{l:controls_Whitney} it is convenient to introduce the convention $k (L) =0$ if $L\in \mathcal{G}^o$ and to set $\mathbf{E} (L):=\mathbf{E} (L, k(L))$. Note however that $k(L)$ might be indeed equal to $0$ for several elements of $\mathcal{G}^c$ as well. 

\begin{proof}
    First of all, for any $L\in \mathcal{G}$ we have, since $\Bbf^h (L) \subset \Bbf_4$, that
\[
\mathbf{E} (L,0) \leq C 2^{(m+2)\ell (L)} \mathbb{E} (T, \mathbf{S}, \Bbf_4) \leq C 2^{(m+2)\ell (L)} \varepsilon^2 \mathbf{s} (0)^2\, ,
\]
where the second inequality comes from \eqref{e:smallness-alg}, and where $C$ depends only on $m$. In particular statement (i) in the Lemma follows immediately, as for any $L\in \mathcal{G}_{\ell}$ we have an upper bound on $\ell(L)$ and so choosing $C2^{(m+2)\ell}\varepsilon^2 <\tau^2$ we get the desired inequality.

Regarding statement (ii), notice that 
\begin{equation}\label{e:excess-cone-bound}
\mathbf{E} (L, k) \leq C \dist^2 (\mathbf{S}_{k-1}, \mathbf{S}_k) + C \mathbf{E} (L, k-1)
\end{equation}
for each $k=0,\dots,\bar\kappa$ and for some constant $C=C(m,n)$. Recalling that $\dist (\mathbf{S}_{k-1}, \mathbf{S}_k) \leq \bar \delta \mathbf{s} (k)$ and that $\mathbf{s} (k-1) \leq \bar \delta \mathbf{s} (k)$ (by Lemma \ref{l:algorithm} (iii) and (iv)), we immediately get
\[
\mathbf{E} (L, k) \leq C \left(1+\frac{\mathbf{E} (L, k-1)}{\mathbf{s} (k-1)^2}\right) \bar\delta^2 \mathbf{s} (k)^2\, .
\]
In particular, if $\frac{\bar\delta}{\tau}$ is smaller than an appropriate geometric constant, we infer that the inequality $\mathbf{E} (L, k-1) \leq \tau^2 \mathbf{s} (k-1)^2$ implies $\mathbf{E} (L, k) \leq \tau^2 \mathbf{s} (k)^2$. But then statement (ii) holds if we let $k(L)$ be the smallest integer $k$ for which $\mathbf{E} (L, k) \leq \tau^2 \mathbf{s} (k)^2$, whose existence is guaranteed by the assumption that $L\in \mathcal{G}^c$. The same argument implies immediately the statement (iii) (as if $L\in \mathcal{G}^o$ then we know the minimum integer $k(L)$ above is $k(L) = 0$, whilst if $L\in\mathcal{G}^{in}$ the minimum over all $k$ of the ratio $\mathbf{E} (L, k)/\mathbf{s} (k)^2$ is larger than $\tau^2$).

Now recalling the second half of Lemma \ref{l:algorithm}(iii), we have 
\[
\dist (\mathbf{S}_{k-1}, \mathbf{S}_k) \leq \eta^{-1} \mathbf{s} (k-1)\, ,
\]
where $\eta$ depends on $N$ and $\bar\delta$. In particular, recalling \eqref{e:excess-cone-bound}, if $\mathbf{E} (L, k-1) > \tau^2 \mathbf{s} (k-1)^2$ we conclude the inequality
\[
\mathbf{E} (L, k) \leq \frac{C}{\eta^2} \mathbf{s} (k-1)^2 + C \mathbf{E} (L, k-1)
\leq C(1+ (\eta\tau)^{-2}) \mathbf{E} (L, k-1)\, .
\]
If $L\in \mathcal{G}^c$ we can apply the latter for $k=1, \ldots , k (L)$ to conclude \eqref{e:0-control-central}. If $L\in \mathcal{G}^{in}$ we can apply it for every $k$, combined with \eqref{e:above-tau}, to conclude that
\[
\mathbf{s} (\bar \kappa)^2 \leq C \mathbf{E} (L, 0)\, .
\]
We now distinguish two cases:
\begin{itemize}
    \item $I (\bar\kappa)$ consists of a single element; in this case $\mathbf{s} (\bar\kappa) = \bar\delta$.
    \item $I (\bar\kappa)$ consists of more than one element; in this case (from Lemma \ref{l:algorithm}(ii) and the defining property (b) of $\bar\kappa$ in Section \ref{s:layering-param})
    \[
    \mathbf{s} (\bar \kappa) \geq \eta \max_{i<j\in I (\bar \kappa)} d (\alpha_i\cap \Bbf_1, \alpha_j \cap \Bbf_1) \geq \eta \bar\delta\, .
    \]
\end{itemize}
In both cases we conclude that $\mathbf{s} (\bar\kappa)$ is bounded away from $0$ and thus \eqref{e:0-control-inner} holds, proving (iv).

As for (v), observe that for $L\in \mathcal{G}^o\cup \mathcal{G}^c$,
\[
\int_{\mathbf{B}_4\setminus B_{\rho_*} (V)} \dist^2(q, \mathbf{S}_{k(L)})\, d\|T_{y_L, 2^{-\ell (L)}}\| (q)
= \mathbf{E} (L, k (L)) \leq \tau^2 \mathbf{s} (k (L))^2\, ,
\]
where we have used (iii). Moreover, $\mathbf{s} (0) \leq \mathbf{s} (k)$ for all $k$ and $\mathbf{A}^2 \leq \varepsilon^2 \mathbf{s} (0)^2$ by \eqref{e:smallness-alg}. Hence, if we denote by $\mathbf{A}_L$ the supremum norm of the second fundamental form of the rescaled manifold $\Sigma_{y_L, 2^{-\ell (L)}}$, we then have 
\[
\int_{\mathbf{B}_4\setminus B_{\rho_*} (V)} \dist^2(q, \mathbf{S}_{k(L)})\, d\|T_{y_L, 2^{-\ell (L)}}\| (q)
+ \mathbf{A}_L^2 \leq C (\tau^2 + 2^{-2\ell (L)} \varepsilon^2) \mathbf{s} (k)^2\, .
\]
Recalling the definition of $\mathbf{s} (k)$ it suffices to guarantee that $C (\tau^2 + \varepsilon^2) \leq (\delta^*)^2$ to guarantee that Proposition \ref{p:Lipschitz-1} is applicable in case $I (k)$ consists of more than one element. Otherwise we know that $\mathbf{s} (k) = \bar\delta$ and the same smallness condition guarantees the applicability of Proposition \ref{p:Lipschitz-2}. This completes the proof.
\end{proof}

\subsubsection{Local approximations} 
For each outer and central cube $L$, from Lemma \ref{l:controls_Whitney}(v) we apply Proposition \ref{p:Lipschitz-1} or Proposition \ref{p:Lipschitz-2} to the current $T_{y_L, 2^{-\ell (L)}}$ and the cone $\Sbf_{k(L)}$ (the choice of which proposition to apply depends only on whether $|I(k(L))|>1$ or not). We thus gain a corresponding Lipschitz approximation for $T_{y_L,2^{-\ell(L)}}$ on the planar domains $(\Bbf_2\setminus \overline{B}_{2\rho_*} (V))\cap \mathbf{S}_{k (L)}$. By translating and scaling back we gain corresponding Lipschitz approximations of the current $T$ defined over the open domains $\Omega (L) := (\Bbf_{2^{1-\ell (L)}} {(y_L)} \setminus \overline{B}_{\rho_*2^{1-\ell (L)}}(V))\cap \mathbf{S}_{k (L)}$ of $\mathbf{S}_{k(L)}$.

We set $\Omega_i (L) := \Omega (L) \cap \alpha_i$ and denote by $u_{L,i}$ the corresponding multi-valued function given above and by $Q_{L,i}$ the number of values $u_{L,i}$ takes. We moreover denote by $\mathbf{\Omega}_i (L)$ the sets $2^{-\ell (L)}\mathbf{\Omega}_i + y_L$ and by $K_i (L)$ the sets $2^{-\ell (L)} K_{L,i} + y_L$, with $\mathbf{\Omega}_i$ and $K_{L,i}$ given by Proposition \ref{p:Lipschitz-1} (or Proposition \ref{p:Lipschitz-2}) for $T_{y_L, 2^{-\ell (L)}}$. These objects are defined only for the indices $i$ belonging to the collection $I (k(L))$, but we may extend our notation to allow also the case $Q_{L,i} = 0$, in which the map $u_{L,i}$ does not exist (these will be the indices of planes not contained in $\Sbf_{k(L)}$);  this results in a slight abuse of our terminology and notation. The collection of corresponding multi-valued functions (ranging over all $i$) corresponding to a given $L$ will be denoted by $u_L$, and we will call them ``local approximations of $T$ related to $L$''. The following is the key proposition detailing the refined approximation.

\begin{proposition}[Refined approximation]\label{p:refined}
Let $T$, $\Sigma$ and $\Sbf$ be as in Assumption \ref{a:refined}, and suppose that the assumptions of Lemma \ref{l:controls_Whitney} hold. There exists $1< \lambda = \lambda(m)\leq \frac{3}{2}$ and $\bar C = \bar C(Q,m,n,\bar{n},\delta^*)>0$, not depending on $\bar\delta$, $\tau$, and $\varepsilon$, such that the local approximations $u_{L,i}$ satisfy the following properties.
\begin{itemize}
    \item[(i)] For every fixed $i\in \{1,\dotsc,N\}$, $Q_{L,i}$ is the same {\emph positive} integer for every $L\in \mathcal{G}^o$. 
    \item[(ii)] $\sum_i Q_{L,i} = Q$ for every $L\in \mathcal{G}^o\cup \mathcal{G}^c$.
    \item[(iii)] For every $L\in \mathcal{G}^o\cup\mathcal{G}^c$ we have $\spt (T)\cap \lambda R(L) \subset \bigcup_i \mathbf{\Omega}_i (L)$ and
    \begin{align}
        2^{2\ell (L)} |q- \mathbf{p}_{\alpha_i} (q)|^2 &\leq \bar C (\mathbf{E} (L) + 2^{-2\ell (L)} \mathbf{A}^2) \qquad \forall q \in \spt(T)\cap\mathbf{\Omega}_i (L)\, ;\label{e:thin-slab}
    \end{align}
    \item[(iv)] For $L\in \mathcal{G}^o\cup \mathcal{G}^c$, if we set 
    \[
    T_{L,i}:= T\res \mathbf{\Omega}_i (L)\cap \big\{\dist (\cdot, \alpha_i) < \bar C 2^{-\ell (L)} (\mathbf{E} (L) + 2^{-2\ell (L)} \mathbf{A}^2)^{1/2}\big\}\, 
    \]
    (with $\bar{C}$ larger than the constant in the estimate \eqref{e:thin-slab}), 
    then, {for $K_i(L)=2^{-\ell(L)}K_i+y_L\subset \Omega_i(L)$ as above},
    \[
    T_{L,i}\res \mathbf{p}_{\alpha_i}^{-1} (K_i (L)) = \mathbf{G}_{u_{L,i}} \res \mathbf{p}_{\alpha_i}^{-1} (K_i (L))\, ,
    \]
${\rm gr}\, (u_{L,i})\subset \Sigma$, and the following estimates hold:
\begin{align}
2^{2\ell (L)}\|u_{L,i}\|_\infty^2 + 2^{m\ell (L)}\|Du_{L,i}\|_{L^2}^2 & \leq \bar C (\mathbf{E} (L) + 2^{-2\ell (L)} \mathbf{A}^2)\label{e:error-refined-1}\\
{\rm Lip}\, (u_{L,i}) &\leq C (\mathbf{E} (L) + 2^{-2\ell (L)} \mathbf{A}^2)^\gamma\label{e:error-refined-2}\\
|\Omega_i (L) \setminus K_i (L)| + \|T_{L,i}\| (\mathbf{\Omega}_i (L)\setminus \mathbf{p}_{\alpha_i}^{-1} (K_i (L)))&\leq \bar C 2^{-m \ell (L)} (\mathbf{E} (L) +2^{-2\ell (L)} \mathbf{A}^2)^{1+\gamma}\, .\label{e:error-refined-3}
\end{align}
\item[(v)] For every $L\in \mathcal{G}^o\cup \mathcal{G}^c$, $\Theta (T, \cdot) \leq \max_i Q_{L,i} + \frac{1}{2}$ on $R (L)$. In particular, $\Theta (T, \cdot) \leq Q-\frac{1}{2}$ on $R (L)$ if $L\in \mathcal{G}^o$. 
\end{itemize}
\end{proposition}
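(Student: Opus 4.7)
The plan is to establish the items roughly in the order (iv), (iii), (ii), (i), (v). The first two items are essentially direct rescalings of Propositions \ref{p:Lipschitz-1}--\ref{p:Lipschitz-2}, and the remaining items are consequences of a propagation argument using the overlap structure of the Whitney cubes.

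\emph{Steps (iv) and (iii).} Fix $L \in \mathcal{G}^o\cup \mathcal{G}^c$. By Lemma \ref{l:controls_Whitney}(v) we may apply Proposition \ref{p:Lipschitz-1} (or Proposition \ref{p:Lipschitz-2} when $|I(k(L))|=1$) to the rescaled current $T_{y_L,2^{-\ell(L)}}$, the rescaled manifold $\Sigma_{y_L,2^{-\ell(L)}}$ and the cone $\Sbf_{k(L)}$, with $\rho = 2\rho_\ast$ and $\eta$ a suitable dimensional constant. Choose $\lambda = \lambda(m) \in (1,\tfrac{3}{2}]$ small enough that $\lambda R(L_0)\cap \Sbf_{k(L_0)}$ is compactly contained in $(\Bbf_{2}\setminus \overline{B}_{2\rho_\ast}(V))\cap \Sbf_{k(L_0)}$; then after translating and rescaling this same $\lambda$ works for every $L$. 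Scaling the estimates \eqref{e:graphicality-1}--\eqref{e:non-graphicality} back to the original scale and rewriting the height excess at scale $2^{-\ell(L)}$ as $\mathbf{E}(L,k(L)) = \mathbf{E}(L)$ yields \eqref{e:error-refined-1}--\eqref{e:error-refined-3}, while the $L^\infty$ bound \eqref{e:Linfty-crude} rescales to \eqref{e:thin-slab}. The containment $\spt(T)\cap \lambda R(L)\subset \bigcup_i \mathbf{\Omega}_i(L)$ is Lemma \ref{l:splitting-1}(b) applied to $T_{y_L,2^{-\ell(L)}}$. Because Proposition \ref{p:Lipschitz-1} and Proposition \ref{p:Lipschitz-2} only use $\rho \leq 2\rho_\ast$ and the dimensional $\eta$, the resulting $\bar C$ depends only on $Q,m,n,\bar n$ and the threshold $\delta^\ast$ (but not on $\bar\delta, \tau, \varepsilon$).

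\emph{Step (ii).} The root cube $L_0\in \mathcal{G}^o$ satisfies $\sum_i Q_{L_0,i} = Q$: this is Lemma \ref{l:matching-Q}(a), whose hypothesis is guaranteed by Assumption \ref{a:refined} (here we fix the constant $\varepsilon$ to be no larger than the threshold demanded there). To propagate to an arbitrary $L \in \mathcal{G}^o\cup \mathcal{G}^c$, we use the key structural fact that every ancestor of $L$ is again in $\mathcal{G}^o\cup\mathcal{G}^c$, directly from Definition \ref{d:regions}. Thus there is a chain $L_0\supsetneq L_1\supsetneq\cdots\supsetneq L_p = L$ entirely in $\mathcal{G}^o\cup \mathcal{G}^c$. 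By Lemma \ref{l:whitney}(iv) the sets $\Bbf^h(L_{j-1})$ and $\Bbf^h(L_j)$ have non-empty intersection; on this common region both local approximations represent the same current $T\res W$ for some open $W$, hence the total multiplicity $\sum_i Q_{L_{j-1},i}$ of the slice of $T$ along the $\alpha_i$-directions equals $\sum_i Q_{L_j,i}$. Induction along the chain gives (ii).

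\emph{Steps (i) and (v).} For (i), fix $i\in \{1,\dots,N\}$. Exactly the overlap argument above shows that on $\Bbf^h(L)\cap \Bbf^h(L')$ with $L,L'\in \mathcal{G}^o$, the slice multiplicities of $T$ close to $\alpha_i$ (controlled via \eqref{e:thin-slab}) coincide, so $Q_{L,i}=Q_{L',i}$. Any two outer cubes may be linked by a chain of outer cubes with overlapping $\Bbf^h$ passing through a common outer ancestor, so $Q_{L,i}$ is constant on $\mathcal{G}^o$; positivity follows from (f) of Proposition \ref{p:Lipschitz-1} applied at $L_0$, where the reverse excess condition \eqref{e:other-side} follows from \eqref{e:smallness-alg} once $\varepsilon$ is small. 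For (v), on the good set $\mathbf{p}_{\alpha_i}^{-1}(K_i(L))$ we have $T = \Gbf_{u_{L,i}}$, so $\Theta(T,\cdot) \leq Q_{L,i}$ there, and the bound $\Theta(T,q)\leq \max_i Q_{L,i}+\tfrac12$ on all of $R(L)$ follows from upper semi-continuity of density combined with the height bound \eqref{e:thin-slab} and the mass error \eqref{e:error-refined-3}. When $L\in \mathcal{G}^o$, (i) forces each $Q_{L,i}\geq 1$ with $N\geq 2$ summing to $Q$ by (ii), so $\max_i Q_{L,i}\leq Q-(N-1)\leq Q-1$ and we conclude $\Theta(T,\cdot)\leq Q-\tfrac12$.

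\emph{Main obstacle.} The most delicate step is the compatibility in (ii) between local approximations constructed relative to \emph{different} cones $\Sbf_{k(L_{j-1})}$ and $\Sbf_{k(L_j)}$ (which may involve different numbers of planes, and hence different sets of indices $I(k(L_j))$). One must exploit Lemma \ref{l:algorithm}(iii) to see that the discarded planes of $\Sbf_{k(L_{j-1})}$ merge, at the scale $2^{-\ell(L_j)}$, into planes of $\Sbf_{k(L_j)}$, and that the height bound \eqref{e:thin-slab} pins the sheets of $T$ in the overlap close enough to the coarser cone that the corresponding $Q_{L_j,i}$'s genuinely equal the sums of the finer $Q_{L_{j-1},i}$'s for the indices $i$ collapsing to the same plane of $\Sbf_{k(L_j)}$. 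This is where the quantitative comparability of excesses in Lemma \ref{l:controls_Whitney}(iv) and the smallness assumptions on the ratios $\varepsilon/\bar\delta$, $\bar\delta/\tau$, $\tau/\delta^\ast$ are used in an essential way.
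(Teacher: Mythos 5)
Your steps (iii) and (iv) match the paper's argument, and so does your treatment of (i), the $\Gcal^o$-part of (ii), and (v): the multiplicities agree across overlapping outer cubes because the current graphs coincide on the positive-measure set $K_i(L)\cap K_i(L')$ (via \eqref{e:error-refined-3}), positivity comes from Proposition \ref{p:Lipschitz-1}(f) at $L_0$ provided $\varepsilon$ is small, $\sum_i Q_{L_0,i}=Q$ is Lemma \ref{l:matching-Q}(a), and (v) follows from the monotonicity formula together with the mass-error bound \eqref{e:error-refined-3}.

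The gap is in the propagation of (ii) to central cubes, which you correctly flag as the main obstacle but do not resolve. When $L$ is a child of $L'\in\Gcal^o\cup\Gcal^c$ with $k(L)>k(L')$, the local approximations $u_{L,\cdot}$ and $u_{L',\cdot}$ are graphs over the planes of \emph{different} cones $\Sbf_{k(L)}\subsetneq\Sbf_{k(L')}$ with different index sets $I(k(L))\subsetneq I(k(L'))$. Your claim that ``the total multiplicity of the slice of $T$ along the $\alpha_i$-directions'' is preserved across the overlap is precisely what must be proved: the sheets near the discarded planes of $\Sbf_{k(L')}$ have merged into sheets near $\Sbf_{k(L)}$ at the finer scale, and the degree-theoretic bookkeeping is not automatic because the two decompositions $\{T_{L,i}\}$ and $\{T_{L',j}\}$ project to different collections of domains. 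Invoking Lemma \ref{l:algorithm}(iii) in the abstract does not close this. The paper avoids the sheet-matching question entirely via a mass-counting argument: it proves, in tandem with this proposition, the geometric estimate Lemma \ref{l:curved} bounding $\|T\|(U\Delta\tilde U)$ for the curved overlap $U=\lambda R(L)\cap\lambda R(L')$ and its cylindrical counterpart $\tilde U$. Combining $\sum_i Q_{L',i}=Q$ with Lemma \ref{l:curved} yields the single mass inequality $\|T\|(U)\geq(Q-\tfrac12)\Hcal^m(U\cap\alpha_1)$, which is agnostic to how sheets pair up; this triggers alternative (b) of Lemma \ref{l:matching-Q} for $T_{y_L,2^{-\ell(L)}}$ and delivers $\sum_i Q_{L,i}=Q$. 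Without Lemma \ref{l:curved} (or an equivalent mass-transport estimate across two annular regions with slightly mismatched cross-sections), your chain induction does not close, and you would also need to justify why the Lipschitz constants and the measures of the non-graphical sets are small enough that ``coinciding on a full-measure set'' makes sense across cones with different numbers of planes.
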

We will prove Proposition \ref{p:refined} in tandem with the next lemma. Consider a domain $U\subset \lambda R (L)$ which is invariant under rotations around $V$ and set $U_i := \alpha_i \cap U$. We wish to replace $T\res U$ with the union of the portions of the graphs of the functions $u_{L,i}$ lying over $U_i$. This will generate errors of two types. One type is due to the fact that $\spt (T)\cap U$ is not completely contained in the union of the graphs of the functions $u_{L,i}$: this will be taken care of by Proposition \ref{p:refined} above. A second type of error is due to the fact that, even though the estimate \eqref{e:thin-slab} holds, if we define  
\begin{equation}\label{e:slabs}
\tilde{U} := \bigcup_i \{q: \mathbf{p}_{\alpha_i} (q) \in U_i \quad \mbox{and} \quad |q-\mathbf{p}_{\alpha_i} (q)|
\leq \bar C 2^{-\ell (L)} (\mathbf{E} (L)+ 2^{-2\ell (L)}\mathbf{A}^2)^{1/2}\}\, 
\end{equation}
(where $\bar C$ is assumed to be the constant of estimate \eqref{e:thin-slab}), there still is a difference between $\spt (T)\cap \tilde{U}$ and $\spt (T) \cap U$. The purpose of the following lemma is to estimate errors of this second type:

\begin{lemma}\label{l:curved}
Under the assumptions of Lemma \ref{l:controls_Whitney}, consider $L\in \mathcal{G}^o \cup \mathcal{G}^c$, let $U\subset \lambda R(L)$ be a set invariant under rotations around $V$ whose cross-sections $U_i = U\cap \alpha_i$ are Lipschitz open sets or the closures of Lipschitz open sets, and define $\tilde{U}$ as in \eqref{e:slabs}. Then
\begin{align}
\|T\| (U\setminus \tilde{U}) + \|T\|(\tilde{U}\setminus U) &\leq \bar C \left(\mathbf{E} (L) + 2^{-2\ell(L)}\mathbf{A}^2\right) 2^{-\ell (L)} \mathcal{H}^{m-1} (\partial U_i)\, \nonumber\\
&\qquad \ \ \ \ + \bar C 2^{-m \ell (L)}\left(\mathbf{E} (L) + 2^{-2\ell(L)}\mathbf{A}^2\right)^{1+\gamma} \label{e:curved}
\end{align}
where the constant $\bar C$ depends on the parameters $m,n,\bar{n},Q, \bar\delta$, and the Lipschitz regularity of the boundary of the rescaled cross-section $2^{\ell (L)} U_i$. 
\end{lemma}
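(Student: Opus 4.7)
\emph{Plan of proof.} Set $\eta := \bar C 2^{-\ell(L)}(\mathbf E(L) + 2^{-2\ell(L)}\Abf^2)^{1/2}$, which is the slab thickness defining $\tilde U$. The strategy is to argue that every point of $\spt(T)\cap(U\triangle\tilde U)$ has to live in an $(\eta^2 2^{\ell(L)})$-neighborhood of the boundary of some $U_i$, so that on the graphical portion of $T$ the $\|T\|$-measure of $U\triangle\tilde U$ is controlled by the boundary layer of one of the $\partial U_i$. The key quadratic gain (width $\eta^2 2^{\ell(L)}$ rather than $\eta$) comes from the rotational invariance of $U$ around $V$ combined with the fact that $R(L)$ keeps us a definite distance $\gtrsim 2^{-\ell(L)}$ away from $V$.

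The first step is to exploit rotational invariance: since $U$ is invariant under rotations around $V$, there is a set $U' \subset V\times \R^+$ such that $U = \{q:(\mathbf{p}_V(q),|\mathbf{p}_{V^\perp}(q)|)\in U'\}$, and each $U_i = U\cap\alpha_i$ is described in $V\times W_i$ (with $W_i := \alpha_i\cap V^\perp$) by the same $U'$. The Lipschitz regularity of $\partial U_i$ in the rescaled coordinates $2^{\ell(L)}U_i$ translates into Lipschitz regularity of $\partial U'$ at unit scale. Now for $q\in\spt(T)\cap\lambda R(L)$ the $L^\infty$ bound \eqref{e:thin-slab} from Proposition \ref{p:refined}(iii) (and the disjointness of the slabs coming from Proposition \ref{p:Lipschitz-1}(a)) pins $q$ into a single slab around some plane $\alpha_j$, with $|q-\mathbf{p}_{\alpha_j}(q)|\le\eta$. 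Writing $q=\xi+z$ with $\xi=\mathbf{p}_V(q)$, $z\in V^\perp$, one checks $|z|^2 - |\mathbf{p}_{W_j}(z)|^2 = |q-\mathbf{p}_{\alpha_j}(q)|^2\le\eta^2$, while $|z|\geq c\, 2^{-\ell(L)}$ since $q\in\lambda R(L)$. Therefore
\[
|z|-|\mathbf{p}_{W_j}(z)| \le \frac{\eta^2}{|z|+|\mathbf{p}_{W_j}(z)|} \le C\eta^2 2^{\ell(L)}.
\]

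The second step is to convert this into a $\partial U_i$ inclusion. If $q\in U\setminus\tilde U$, then $(\xi,|z|)\in U'$ while $(\xi,|\mathbf{p}_{W_j}(z)|)\notin U'$; by the above, the two radial coordinates differ by at most $C\eta^2 2^{\ell(L)}$, so by the Lipschitz regularity of $\partial U'$ the projection $\mathbf{p}_{\alpha_j}(q)$ lies in the $(C\eta^2 2^{\ell(L)})$-tubular neighborhood of $\partial U_j$ inside $\alpha_j$. The opposite inclusion is established identically for $q\in\spt(T)\cap(\tilde U\setminus U)$. Thus in both cases $\mathbf{p}_{\alpha_j}(q)\in\Ncal_{C\eta^2 2^{\ell(L)}}(\partial U_j)$.

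The third step is a mass computation. Split $\spt(T)\cap(U\triangle\tilde U)$ according to the index $j$ and then split further into the graphical part $\mathbf{p}_{\alpha_j}^{-1}(K_j(L))$ and its complement. On the graphical part, $T$ coincides with $\mathbf G_{u_{L,j}}$, whose Lipschitz constant is small by \eqref{e:error-refined-2}, so $\|T\|$ is bounded by the area of the base, which in turn is at most $C\eta^2 2^{\ell(L)}\,\mathcal{H}^{m-1}(\partial U_j)$. Substituting $\eta^2 = \bar C^2 2^{-2\ell(L)}(\mathbf E(L)+2^{-2\ell(L)}\Abf^2)$ produces exactly the first term of \eqref{e:curved} (and by rotational symmetry $\mathcal{H}^{m-1}(\partial U_j)$ is independent of $j$). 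On the non-graphical part we use \eqref{e:error-refined-3} directly to obtain the second term. I expect the only mildly delicate step to be the transfer of the Lipschitz regularity of $\partial U_i$ from the planar cross-section to the ``cylindrical profile'' $U'$; this is purely geometric and scale-invariant once one has rescaled by $2^{\ell(L)}$, which is why the constant $\bar C$ depends only on the Lipschitz data of $2^{\ell(L)} U_i$ and on the allowed parameters.
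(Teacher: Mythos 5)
Your proof is correct and follows essentially the same strategy as the paper's: split $T$ over the cones, compare with the graphical pieces (using \eqref{e:error-refined-3} for the non-graphical error), and then exploit the rotational invariance of $U$ to show that the footprint of $\mathrm{gr}(u_{L,j})\cap (U\triangle\tilde U)$ is confined to a tubular neighborhood of $\partial U_j$ of width of order $2^{-\ell(L)}(\mathbf E(L)+2^{-2\ell(L)}\Abf^2)$. The paper obtains the quadratic gain from a small-angle/law-of-cosines argument ($|x-y|=|p-v|(1-\cos\theta)$ with $\sin\theta=|p-x|/|p-v|$), whereas you obtain the same quantity via Pythagoras in the profile coordinates, $|z|-|\mathbf p_{W_j}(z)|=(|z|^2-|\mathbf p_{W_j}(z)|^2)/(|z|+|\mathbf p_{W_j}(z)|)\le\eta^2/|z|$; these are two phrasings of the identical geometric fact, and both use rotational invariance to identify $U$ in terms of $(\mathbf p_V(q),\dist(q,V))$ exactly as the paper does.
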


\begin{remark}\label{r:regularity-of-cross-section}
We will in fact only apply this Lemma to sets $U$ whose cross-sections $U_i = U\cap \alpha_i$ have a limited number of shapes, up to the rescaling factor $2^{-\ell (L)}$. In particular, in these cases the corresponding constant $\bar C$ in the estimate depends only upon $m,n,\bar{n},Q,$ and $\bar\delta$.
\end{remark}

\begin{proof}[Proof of Proposition \texorpdfstring{\ref{p:refined}}{refined} and of Lemma \texorpdfstring{\ref{l:curved}}{curved}]
    We will prove the two statements at the same time. We start by showing that the estimates of the statements (iii) and (iv) of Proposition \ref{p:refined} hold. Fix $L\in \mathcal{G}^o\cup \mathcal{G}^c$, let $k= k(L)$, and $\ell = \ell (L)$. In order to simplify our notation we write $T':= T_{y_L, 2^{{-}\ell}}$ and $\Sigma':= \Sigma_{y_L, 2^{{-}\ell}}$. The supremum norm of the second fundamental form of $\Sigma'$ is clearly $2^{-\ell} \mathbf{A}$. On the other hand, by scaling invariance,
\begin{equation}\label{e:scaling-invariant}
\int_{\Bbf_4\setminus B_{\rho_*} (V)} \dist^2(q, \mathbf{S}_k)\, d\|T'\| (q) = 
\mathbf{E} (L)\, .
\end{equation}
Let us assume that $I(k)$ consists of more than one element; in particular, by Lemma \ref{l:controls_Whitney}(v), we can apply Proposition \ref{p:Lipschitz-1} to $T'$, $\Sigma'$ and $\mathbf{S}_k$. The argument is entirely analogous when $I(k)$ consists of one element except instead we apply Proposition \ref{p:Lipschitz-2}, and so we will just focus on the former case. We consider the domains $\mathbf{\Omega}_i$ given by Proposition \ref{p:Lipschitz-1} and recall that from Lemma \ref{l:splitting-1} it follows that
\[
\spt (T')\cap \mathbf{B}_{3}\setminus B_{2\rho_*} (V) \subset \bigcup_i \mathbf{\Omega}_i\, ,
\]
In turn this rescales to the statement
\[
\spt (T) \cap \mathbf{B}_{3\cdot 2^{-\ell}} (y_L)\setminus B_{\rho_*2^{1-\ell}} (V) \subset \bigcup_i \mathbf{\Omega}_i (L)\, .
\]
For an appropriate choice of the constant $\lambda$ (i.e. sufficiently close to $1$), 
\[
\lambda R(L) \subset \mathbf{B}_{3\cdot 2^{-\ell}} (y_L)\setminus B_{\rho_*2^{1-\ell}} (V)
\]
and so the first claim of Proposition \ref{p:refined}(iii) follows. As for the height estimate in (iii), Proposition \ref{p:Lipschitz-1}(c) and \eqref{e:scaling-invariant} gives 
\[
|q-\mathbf{p}_{\alpha_i} (q)|^2 \leq C (\mathbf{E} (L) + 2^{-2\ell} \mathbf{A}^2) \qquad \forall q\in \mathbf{\Omega}_i\cap \spt (T')
\]
which scales to 
\[
|q-\mathbf{p}_{\alpha_i} (q)|^2 \leq C 2^{-2\ell} (\mathbf{E} (L) + 2^{-2\ell} \mathbf{A}^2) \qquad \forall q\in \mathbf{\Omega}_i (L)\cap \spt (T)\, ,
\]
i.e. \eqref{e:thin-slab}. This proves (iii).

The first two claims of Proposition \ref{p:refined}(iv) follow directly from the corresponding claims in Proposition \ref{p:Lipschitz-1}. As for the estimates, if we denote by $u_i$ and $K_i$ the multi-valued functions approximating {$T'_i := T' \res \mathbf{\Omega}_i\cap W_i$} and the corresponding 
``coincidence sets'' given by Proposition \ref{p:Lipschitz-1}, then, taking into account \eqref{e:scaling-invariant}, we have the estimates
\begin{align*}
\|u_{i}\|_\infty^2 + \|Du_{i}\|_{L^2}^2 & \leq C (\mathbf{E} (L) + 2^{-2\ell} \mathbf{A}^2)\\
{\rm Lip}\, (u_{i}) &\leq C (\mathbf{E} (L) + 2^{-2\ell} \mathbf{A}^2)^\gamma\\
|\Omega_i \setminus K_i| + \|T'_i\| (\mathbf{\Omega}_i \setminus \mathbf{p}_{\alpha_i}^{-1} (K_i))&\leq C (\mathbf{E} (L) +2^{-2\ell} \mathbf{A}^2)^{1+\gamma}\, .
\end{align*}
The estimates in Proposition \ref{p:refined}(iv) therefore follow from the obvious scaling relations
\begin{align*}
\|u_i\|_{\infty} &= 2^{\ell}\|u_{L,i}\|_\infty\\
\|Du_i\|_{\infty} &= \|Du_{L,i}\|_\infty\\
\|Du_i\|_{L^2} &= 2^{m\ell/2}\|Du_{L,i}\|_{L^2}\\
|\Omega_i\setminus K_i| &= 2^{m\ell} |\Omega_i (L)\setminus K_i (L)|\\
\|T'_i\| (\mathbf{p}^{-1}_{\alpha_i} (\Omega_i\setminus K_i)) &=
2^{m\ell} \|T_{L,i}\| (\mathbf{p}^{-1}_{\alpha_i} (\Omega_i (L) \setminus K_i (L)))
\end{align*}
which completes the proof of (iv).

Next, we will prove Proposition \ref{p:refined}(i). We claim that $Q_{L,i} = Q_{L',i}$ if $L, L'\in \mathcal{G}^o$ and $L'$ is the parent of $L$; notice that since $L_0\in \mathcal{G}^o$ by Lemma \ref{l:controls_Whitney}(i), this will hold for any $i\in I(0)$, and thus for every $i\in \{1,\dotsc,N\}$. First, notice that because of \eqref{e:thin-slab}, it is easy to see that $T_{L,i}$ and $T_{L',i}$ coincide over $\mathbf{\Omega}_i (L)\cap \mathbf{\Omega}_i (L')$. In particular, the two currents $\mathbf{G}_{u_{L,i}}$ and $\mathbf{G}_{u_{L',i}}$ coincide over $\mathbf{p}_{\alpha_i}^{-1} (K_i (L)\cap K_i (L'))$. Clearly by (iv), namely \eqref{e:error-refined-3}, $K_i(L)\cap K_i(L')$ has positive measure provided $\mathbf{E} (L)$, $\mathbf{E} (L')$, and $\mathbf{A}$ are smaller than a geometric constant, and all these conditions can be ensured by choosing $\tau$ and $\varepsilon$ smaller than a geometric constant (here it is crucial that the constant $C$ in the estimates of statement (iv) does not depend on $\tau$ and $\varepsilon$). But once we know that $K_i(L)\cap K_i (L')$ has positive measure and the current graphs coincide over this set we clearly conclude that $Q_{L,i}=Q_{L',i}$. 

Given this, since every parent of an element in $\mathcal{G}^o$ belongs to $\mathcal{G}^o$ (by definition of $\mathcal{G}^o$) we conclude from the above that $Q_{L,i} = Q_{L_0,i}$, which obviously implies the first claim of Proposition \ref{p:refined}(i). The fact that all of them are positive follows from Proposition \ref{p:Lipschitz-1}(f) once we assume $\varepsilon$ is sufficiently small, because it will force $Q_{L_0,i}\geq 1$ for every $i$. Thus (i) is proved.

Next, because of (i), the conclusion of point (ii) holds for $L\in \mathcal{G}^o$ once we show $\sum_i Q_{L_0,i}= Q$. For the latter we can use Lemma \ref{l:matching-Q} with assumption (a), since $y_{L_0}=0$ and $\{\Theta (T,\cdot ) \geq Q\} \cap \Bbf_\varepsilon (0) \neq \emptyset$. 

In order to prove (ii) when $L\in \mathcal{G}^c$ we are again going to argue inductively, showing that:
\begin{itemize}
    \item[(A)] If $\sum_i Q_{L',i}=Q$ for $L'\in \mathcal{G}^c\cup \mathcal{G}^o$, then $\sum_i Q_{L,i} =Q$ for every child $L\in \mathcal{G}^c\cup \mathcal{G}^o$ of $L^\prime$.
\end{itemize}
In order to show (A) we will in fact use the result of Lemma \ref{l:curved}, which we shall prove now.

Fix $L\in \mathcal{G}^c\cup \mathcal{G}^o$ and a set $U\subset \lambda R(L)$ which is invariant under rotations around $V$. Firstly, from what has been proved so far of Proposition \ref{p:refined}, namely (iii) and (iv), we can conclude that $T\res (U\cup\tilde{U}) = \sum_i T_{L,i} \res (U\cup\tilde{U})$ and $\spt (T_{L,i})\cap \spt (T_{L,j})=\emptyset$ for all $i<j$ (which follows from (iv) provided $\tau$ and $\varepsilon$ are sufficiently small, as then the neighbourhoods of the $\alpha_i$ in (iv) are all disjoint, and from the choice of $\lambda$, which ensures that $U\cap \tilde{U}\cap \spt (T)$ is contained in the union of $\boldsymbol{\Omega}_i (L)$). In particular, we have
\[
\|T\| (U\Delta\tilde{U})
=\sum_i \|T_{L,i}\| (U\Delta\tilde{U})
\]
where $U\Delta\tilde{U}:= (U\setminus \tilde{U})\cup (\tilde{U}\setminus U)$ is the symmetric difference of $U$ and $\tilde{U}$. Thus we can again use the conclusion (iv) in Proposition \ref{p:refined} to estimate further
\begin{align*}
\|T\| ({U\Delta\tilde{U}})
&\leq \sum_i \|\mathbf{G}_{u_{L,i}}\| ({U\Delta\tilde{U}})
 + C 2^{-m\ell(L)} (\mathbf{E} (L) + 2^{{-}2 \ell(L)} \mathbf{A}^2)^{1+\gamma}\, .
\end{align*}
Consider now the set 
\[
\Delta_i := \mathbf{p}_{\alpha_i} ({\rm gr}\, (u_{L,i})\cap ({U\Delta\tilde{U}}))
\]
Since by choosing $\tau$ and $\varepsilon$ small enough we can assume $\Lip (u_{L,i})\leq 1$ (by {\eqref{e:error-refined-2}}), it follows immediately from this this Lipschitz regularity that
\begin{align*}
\|\mathbf{G}_{u_{L,i}}\|{(U\Delta\tilde{U})} \leq C Q_{L,i} |\Delta_i|
\end{align*}
for some geometric constant $C$. Since $\sum_i Q_{L,i} \leq Q$ (note in particular that we do not need the equality $\sum_i Q_{L,i} =Q$ in this argument), in order to reach the estimate of Lemma \ref{l:curved} it suffices to show that 
\begin{align}
|\Delta_i|&\leq C (\mathbf{E} (L) + 2^{-2 \ell (L)}\mathbf{A}^2) 2^{-\ell (L)} \mathcal{H}^{m-1} (\partial U_i)\, ,\label{e:curved-2}
\end{align}
where $C$ has the dependencies in the statement of the lemma (recall that $U_i = U\cap \alpha_i$). We will indeed show that
\[
\dist (q, \partial U_i) \leq C 2^{-\ell (L)} (\mathbf{E} (L) + {2^{-2\ell(L)}}\mathbf{A}^2) \qquad \forall q\in \Delta_i\, . 
\]
In particular, using the Lipschitz regularity of $\partial U_i$, \eqref{e:curved-2} follows immediately {(by taking a suitable cover of $\partial U_i$ and taking the tubular neighbourhood of each set in this cover with radius $2$ times the above distance bound to cover $\Delta_i$).}
In order to show \eqref{e:curved-2}, fix a point $x\in \Delta_i$ and observe that by definition there must be a point $p\in {\rm gr}\, (u_{L,i})\cap ({U\Delta\tilde{U}})$ such that $x= \mathbf{p}_{\alpha_i} (p)$. Let $v= \mathbf{p}_V (p) = \mathbf{p}_V (x)$ and denote by $\sigma$ the half-line in $\alpha_i$ which originates at $v$ and contains $x$. On this half-line we denote by $y$ the point such that 
\begin{align}\label{e:distance-y-1}
    \dist (y, V) = |y-v| = \dist (p, V)\, ,
\end{align}
{and moreover as all points on this half-line project to $v$, we have }
\begin{align}\label{e:distance-y-2}
\mathbf{p}_V(y) = \mathbf{p}_V(p)\, .
\end{align}
Observe that $p\in U$ if and only if $y\in U$ {(using that $U$ is invariant under rotations about $V$ and \eqref{e:distance-y-1}, \eqref{e:distance-y-2})}, which happens if and only if $y\in U_i$. On the other hand $p\in \tilde{U}$ if and only if $x=\mathbf{p}_{\alpha_i} (p)\in U_i$. Therefore one of the following two alternatives hold:
\begin{itemize}
    \item $p\in U\setminus \tilde{U}$, and hence $y\in U_i$ but $x\not \in U_i$
    \item $p \in \tilde{U}\setminus U$, and hence $x\in U_i$ but $y \not\in U_i$.
\end{itemize}
In both cases the segment $\sigma$ joining $x$ and $y$ must contain a point of $\partial U_i$ and thus $\dist (x, \partial U_i) \leq |x-y|$. We therefore wish to estimate the latter.

To that end consider the triangle with vertices $v, p$, and $x$ and let $\theta$ be the angle at $v$. We then have
\begin{align}
|x-y| &= |p-v| (1- \cos \theta) \leq C 2^{-\ell (L)} (1-\cos \theta)\label{e:|x-y|}\\
\sin \theta &= \frac{|p-x|}{|p-v|} \leq C (\mathbf{E} (L) + 2^{-2\ell (L)}\mathbf{A}^2 )^{1/2}\, .\label{e:sin-theta}
\end{align}
{Here, the upper bound on $|p-v|$ comes from \eqref{e:error-refined-1} and the upper bound on the distance of $U_i$ from $V$, while the lower bound on $|p-v|$ also comes from the lower bound on the distance of $U_i$ to $V$, and the upper bound on $|p-x|$ comes from \eqref{e:error-refined-1} (this is using that $p\in {\rm gr}(u_{L,i})$, but also holds more generally when $p\in \spt(T_{L,i})$). In particular as \eqref{e:sin-theta} tells us that $\theta$ is small (e.g. $\theta<\pi/3$ suffices), we get $\sin(\theta)\geq\theta/2$; as $1-\cos(\theta) \leq \theta^2/2$ is always true, combining these with \eqref{e:|x-y|} and \eqref{e:sin-theta} we establish the desired bound on $|x-y|$, hence completing the proof of \eqref{e:curved-2} and so also the proof of Lemma \ref{l:curved}.} 

\medskip

Having proved Lemma \ref{l:curved} we now come to the proof of (A). Consider $L, L'\in \mathcal{G}^c \cup \mathcal{G}^o$, with $L$ a child of $L'$. Consider the set $U:= \lambda R (L) \cap \lambda R (L')$; clearly $U$ is invariant under rotations around $V$. Under the assumption that $\sum_i Q_{L',i} = Q$, we can use Lemma \ref{l:curved} to prove 
\begin{equation}\label{e:enough-mass}
\|T\| (U) \geq (Q-{\textstyle{\frac{1}{2}}}) \mathcal{H}^m (U_1)\, .
\end{equation}
{for $U_1 := U\cap\alpha_1$ as before. } Indeed, introduce the set $\tilde{U}$ {for this choice of $U$} as in Lemma \ref{l:curved} and note that
\[
\|T\| (\tilde{U}) = \sum_i \|T_{L',i}\| ({\tilde{U}})\geq \sum_i Q_{L',i} \mathcal{H}^m (U_i) = Q \mathcal{H}^m (U_1)\, 
\]
(observe that the set $\tilde{U}$ is formed by cylindrical domains with cross-sections $U_i$, and in particular $(\mathbf{p}_{\alpha_i})_\sharp (T_{L',i}) \res \tilde{U}) = 
Q_{L',i} \llbracket U_i \rrbracket$).

We can now use Lemma \ref{l:curved} to estimate 
\[
\|T\| (U) \geq \|T\| (\tilde{U}) - \|T\| (\tilde{U}\setminus U) \geq Q \mathcal{H}^m (U_1) - C 2^{{-}\ell (L)} (\mathbf{E} (L) + 2^{-{2}\ell (L)} \mathbf{A}^2) \mathcal{H}^{m-1} (\partial U_1)\, .
\]
Note that in principle the constant $C$ appearing in the estimate depends on the region $U$ (which in turn depends on $L$ and $L'$), however the latter is determined by the cross-section which, after rescaling by $2^{\ell (L)}$ and translating, is the intersection of two shapes ranging in a finite number of possibilities (the number of which depends on $\lambda$). In particular, by Remark \ref{r:regularity-of-cross-section}, we can assume that the constant $C$ depends only on the constants $m$, $n$, $\bar n$, $Q$, $\bar \delta$ and $\lambda$ {(note however that we have already fixed $\lambda = \lambda(m)$)}. On the other hand we also get $2^{-\ell (L)} \mathcal{H}^{m-1} (\partial U_1) \leq C \mathcal{H}^m (U_1)$, with a constant $C$ depending on $m$ and $\lambda$. We thus conclude 
\[
\|T\| (U) \geq (Q- C (\mathbf{E} (L) + 2^{-2\ell (L)} \mathbf{A}^2)) \mathcal{H}^m (U_1) \geq (Q- C (\tau^2 + \varepsilon^2)) \mathcal{H}^m (U_1)\, .
\]
Since the constant $C$ is independent of both $\tau$ and $\varepsilon$, an appropriate smallness condition on these two parameters guarantees the validity of \eqref{e:enough-mass}.

If we now consider the current $T_{y_L, 2^{-\ell (L)}}$, the corresponding rescaled set $\Omega = 2^{\ell (L)} (U-y_L)$ satisfies requirement (b) of Lemma \ref{l:matching-Q}, which implies that $\sum_i Q_{L,i} = Q$, if $\tau$ is sufficiently small. While it is true that Lemma \ref{l:matching-Q} imposes a smallness condition on $\tau$ which depends on the set $\Omega$, it is easy to see that the latter varies among a fixed number of sets (since $L$ is a child of $L^\prime$), which depend on the relative position of $L$ compared to $L'$ because once $\lambda$ is fixed, $\lambda R (L) \cap \lambda R (L')$ overlap for a fixed number of cubes $L$, $L'$ always. In particular, there is a choice of smallness condition on $\tau$ which ensures the applicability of the lemma for every pair of cubes $L$ and $L'$ as in (A). Therefore to finish the proof, note that as (ii) holds for $\mathcal{G}^o$, it follows by induction from (A) that (ii) holds for every $L\in \mathcal{G}^c\cup\mathcal{G}^o$, as every cube $L$ in this set is a descendent of a cube in $\mathcal{G}^o$, while their ancestors are all in $\mathcal{G}^c\cup \mathcal{G}^o$. So we are done with this part.

We finally prove (v). Fix a point $p\in R(L)\cap \spt (T)$ and note that it must belong to $\spt (T_{L,i})$ for some $i$. For $\rho= (\lambda-1) \frac{2^{-\ell (L)-1}}{\sqrt{m-2}}$ we estimate
\begin{align*}
\|T_i\| (\Bbf_\rho (p)) & \leq \|T_i\| (\mathbf{C}_\rho (p, \pi_i))
\leq Q_{L,i} \rho^m + C (\mathbf{E} (L) + 2^{-2\ell (L)} \Abf^2)^{1+\gamma} 2^{-m \ell (L)}\\
& \leq (Q_{L,i} + C (\mathbf{E} (L) + 2^{-2\ell (L)} \Abf^2)^{1+\gamma}) \rho^m\, .
\end{align*}
Then if $\eps$ and $\tau$ are small enough, we conclude the claim from \eqref{e:below-tau} and the monotonicity formula.
\end{proof}

\subsection{Coherent approximation on the outer region and first blow-up} 

For technical reasons, it is useful to have a single multi-valued approximation defined over the union of all $L_i$ for $L$ varying among the elements of $\mathcal{G}^o$. Proposition \ref{p:coherent} below gives a precise statement of this, but we first introduce some notation.

\begin{definition}\label{d:neighbors}
Let $T$, $\Sigma$, and $\mathbf{S}$ be as in Proposition \ref{p:refined}. {Fix $L\in \mathcal{G}^o$.} We denote by $\mathscr{N} (L)$ the set of $L'\in \mathcal{G}^o$ such that $R(L)\cap R(L')\neq \emptyset$, and let 
\[
\bar{\mathbf{E}} (L) := \max \{\mathbf{E} (L'): L'\in \mathscr{N} (L)\}\, .
\]
\end{definition}

\begin{proposition}[Coherent outer approximation]\label{p:coherent}
Let $T$ and $\mathbf{S}$ be as in Proposition \ref{p:refined}. For every $i\in \{1, \ldots, N\}$ we define 
\[
R^o_i := \bigcup_{L\in \mathcal{G}^o} L_i \equiv \alpha_i\cap\bigcup_{L\in \mathcal{G}^o} R (L)\, 
\]
and let $Q_i := Q_{L_0, i}$. Then, there are Lipschitz multi-valued maps $u_i : R^o_i \to \mathcal{A}_{Q_i} (\alpha_i^\perp)$ and closed subsets $\bar{K}_i (L)\subset L_i$ satisfying the following properties.
\begin{itemize}
\item[(i)] ${\rm gr}\, (u_i)\subset \Sigma$ and $T_{L,i} \res \mathbf{p}_{\alpha_i}^{-1} (\bar{K}_i(L))= \mathbf{G}_{u_i}\res \mathbf{p}_{\alpha_i}^{-1} (\bar{K}_i (L))$ for every $L\in \mathcal{G}^o$, where $T_{L,i}$ are defined as in Proposition \ref{p:refined};
\item[(ii)] The following estimates hold
\begin{align}
2^{2\ell (L)}\|u_i\|_{L^\infty (L_i)}^2 + 2^{m\ell (L)}\|Du_i\|_{L^2 (L_i)}^2 & \leq C (\bar{\mathbf{E}} (L) + 2^{-2\ell (L)} \mathbf{A}^2)\label{e:coherent-1}\\
\|Du_i\|_{L^\infty (L_i)} &\leq C (\bar{\mathbf{E}} (L) + 2^{-2\ell (L)} \mathbf{A}^2)^\gamma\label{e:coherent-2}\\
|L_{i} \setminus \bar{K}_i (L)| + \|T_{L,i}\| (\mathbf{p}_{\alpha_i}^{-1} (L_{i}\setminus \bar{K}_i (L)))&\leq C 2^{-m \ell (L)} (\bar{\mathbf{E}} (L) +2^{-2\ell (L)} \mathbf{A}^2)^{1+\gamma}\, ,\label{e:coherent-3}
\end{align}
where the constant $C$ depends only upon $Q,m,n,\bar n$, and $\bar\delta$.
\end{itemize}
\end{proposition}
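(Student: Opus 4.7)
The strategy is to stitch together the local approximations $\{u_{L,i}\}_{L \in \mathcal{G}^o}$ given by Proposition \ref{p:refined}, using the fact that they all parameterize the same current $T$ on the coincidence sets $K_i(L)$, and then invoke the multi-valued Lipschitz extension theorem of Almgren to obtain a single map defined over all of $R^o_i$. The map $Q_i := Q_{L_0,i}$ is unambiguous thanks to Proposition \ref{p:refined}(i), which asserts $Q_{L,i} \equiv Q_{L_0,i}$ for every $L \in \mathcal{G}^o$.

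First I would establish local compatibility. For $L, L' \in \mathcal{G}^o$ with $L' \in \mathscr{N}(L)$, Lemma \ref{l:whitney}(iv) implies $|\ell(L) - \ell(L')| \leq C(m)$, so $2^{-\ell(L)}$ and $2^{-\ell(L')}$ are comparable. On $\mathbf{p}_{\alpha_i}^{-1}(K_i(L) \cap K_i(L'))$ we have, by definition, $\mathbf{G}_{u_{L,i}} = T_{L,i} = T_{L',i} = \mathbf{G}_{u_{L',i}}$, and hence $u_{L,i} = u_{L',i}$ on $K_i(L) \cap K_i(L')$. By \eqref{e:error-refined-3}, combined with Lemma \ref{l:whitney}(iii), the set $L_i \setminus K_i(L)$ has measure at most $C 2^{-m\ell(L)}(\tau^2 + \varepsilon^2)^\gamma$, so $K_i(L) \cap K_i(L')$ has non-trivial measure on any overlap provided $\tau, \varepsilon$ are small enough.

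Second I would define the \emph{master coincidence set}
\[
K^\star_i := \bigcup_{L \in \mathcal{G}^o} K_i(L) \subset R^o_i,
\]
and a well-defined map $v_i : K^\star_i \to \mathcal{A}_{Q_i}(\alpha_i^\perp)$ by $v_i := u_{L,i}$ on $K_i(L)$; pointwise $v_i(x)$ equals (the positive part of) the slice $\langle T, \mathbf{p}_{\alpha_i}, x\rangle$, and consistency across overlaps is immediate from Step 1. The Lipschitz constant of $v_i$ on $K^\star_i \cap L_i$ can be bounded using \eqref{e:error-refined-2} on each $L' \in \mathscr{N}(L)$ together with a chaining argument across neighbours, yielding an $L^\infty$ bound of order $C(\bar{\mathbf{E}}(L) + 2^{-2\ell(L)}\mathbf{A}^2)^\gamma$. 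I would then extend $v_i$ to all of $R^o_i$ via Almgren's multi-valued Lipschitz extension (see \cite{DLS_MAMS}*{Theorem 1.7 and Proposition 4.1}), applied locally on each Whitney region, to produce $u_i : R^o_i \to \mathcal{A}_{Q_i}(\alpha_i^\perp)$ whose Lipschitz constant on $L_i$ is at most a dimensional multiple of the Lipschitz constant of $v_i$ on $K^\star_i \cap L_i$. Setting $\bar K_i(L) := L_i \cap K_i(L)$ guarantees statement (i), because $u_i = v_i = u_{L,i}$ there. The graph inclusion ${\rm gr}(u_i) \subset \Sigma$ is inherited from each $u_{L,i}$ on the coincidence set and then preserved by choosing an extension that stays $\Sigma$-valued (this uses that $\Sigma$ is graphical over $\alpha_i^\perp$ at the relevant scale, so one can postcompose the Almgren extension with the nearest-point projection onto $\Sigma$, affecting Lipschitz constants only by a factor $1 + O(2^{-\ell(L)}\mathbf{A})$).

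Third I would verify \eqref{e:coherent-1}--\eqref{e:coherent-3}. The estimates \eqref{e:coherent-2} and the Lipschitz-constant-dependent part of \eqref{e:coherent-1} come directly from the chaining argument of Step 2, which reads the local bounds \eqref{e:error-refined-1}--\eqref{e:error-refined-2} only on $L' \in \mathscr{N}(L)$; this is precisely the source of the $\bar{\mathbf{E}}(L)$ (rather than $\mathbf{E}(L)$) on the right-hand side. The $L^\infty$ part of \eqref{e:coherent-1} follows because on the coincidence portion $v_i$ is pinned to $u_{L,i}$ (for which \eqref{e:error-refined-1} applies), and an extension cannot blow up the $L^\infty$ norm by more than the Lipschitz constant times the diameter $2^{-\ell(L)}$ of $L_i$. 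For \eqref{e:coherent-3}, the set $L_i \setminus \bar K_i(L)$ equals $L_i \setminus K_i(L)$, so the estimate is exactly \eqref{e:error-refined-3}, while the mass bound on $T_{L,i}$ over $\mathbf{p}_{\alpha_i}^{-1}(L_i \setminus \bar K_i(L))$ is inherited verbatim from Proposition \ref{p:refined}(iv).

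The main technical obstacle is the extension step for \emph{multi-valued} maps while keeping the Lipschitz constant scale-localized to the estimate $\bar{\mathbf{E}}(L)$ rather than a global supremum. Almgren's theorem is stated globally, so one needs either to apply it on each Whitney region separately (using that the local definitions are already consistent on overlaps via Step 1) and glue by retraction-to-$\mathcal{A}_{Q_i}$, or to re-prove a localized version using that $\mathcal{A}_{Q_i}(\mathbb{R}^n)$ embeds bi-Lipschitz into a Euclidean space; both options work and give the claimed bounds.
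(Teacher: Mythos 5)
Your overall strategy -- stitch the local maps $u_{L,i}$ together on the master coincidence set, then Lipschitz-extend -- matches the paper's, and your choice $\bar K_i(L) := L_i\cap K_i(L)$ is legitimate (the paper instead uses $\bigcap_{L'\in\mathscr{N}(L)}K_i(L')$, but both yield (i) and (iii) with the appropriate $\bar{\mathbf{E}}(L)$ on the right). However, there are two genuine gaps in the extension step.

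\textbf{Gap 1: gluing across Whitney region boundaries.} By Lemma \ref{l:whitney}(i) the interiors of the $L_i$'s are pairwise disjoint, so $R^o_i$ is a non-overlapping union. If you run Almgren's extension theorem on each $L_i$ separately, nothing forces the two extensions to agree on a shared face $\partial L_i\cap\partial L'_i$: the data you are extending ($v_i$ restricted to $K^\star_i\cap L_i$ and to $K^\star_i\cap L'_i$) only pins down the map on a \emph{subset} of that face, and the two extensions will in general disagree on the rest. Your parenthetical suggestion to ``glue by retraction-to-$\mathcal{A}_{Q_i}$'' would modify the map even on the coincidence set, breaking property (i). The paper resolves this by the cellular decomposition argument: one extends skeleton-by-skeleton (first the fattened $0$-cells $U^p$, then the $1$-cells $U^\sigma$, and so on), so that at each stage the boundary data for the next extension is already globally defined and consistent. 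This is not a cosmetic detail; the localized Lipschitz bound $\|Du_i\|_{L^\infty(L_i)}\leq C(\bar{\mathbf{E}}(L)+2^{-2\ell(L)}\mathbf{A}^2)^\gamma$ requires exactly this kind of hierarchical construction to keep the extension's oscillation on $L_i$ controlled by the data on $\mathscr{N}(L)$ alone.

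\textbf{Gap 2: the $\Sigma$-constraint.} Postcomposing an $\alpha_i^\perp$-valued extension with the nearest-point projection onto $\Sigma$ does not produce a graph over $\alpha_i$: the projection shifts the $\alpha_i$-component too, so $\mathrm{proj}_\Sigma(z,v(z))$ need not lie in the fiber $\{z\}\times\alpha_i^\perp$. The paper circumvents this by decomposing $\alpha_i^\perp=\pi\oplus T_0\Sigma^\perp$ with $\pi=\alpha_i^\perp\cap T_0\Sigma$, extending only the $\pi$-component $u_i^\pi$ freely, and then \emph{defining} the $T_0\Sigma^\perp$-component by $x\mapsto\Psi(x,u_i^\pi(x))$. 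This automatically keeps $\mathrm{gr}(u_i)\subset\Sigma$ while remaining a graph over $\alpha_i$.
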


\begin{proof}
The ideas of the proof are borrowed from \cite{DLS_MAMS}*{Section 1.2.2} and \cite{DLS16centermfld}*{Section 6.2}. On the one hand there is a slight complication compared to the arguments borrowed from \cite{DLS_MAMS}*{Section 1.2.2} due to the fact that the regions  $R (L)$ are not cubes; on other hand compared to the arguments borrowed from \cite{DLS16centermfld}*{Section 6.2} our situation is considerably simpler. 

First observe that, since $L_i = R(L)\cap \alpha_i$, the set $\mathscr{N} (L)$ is equivalently described as those cubes $L'\in \mathcal{G}^o$ such that $L_i\cap L'_i\neq \emptyset$ for some $i$. In fact, given the invariance of $R (L)$ under rotations around $V$, if this is true for some $i$ then it is true for all $i$, and so we can fix an arbitrary $i$. Notice that
\begin{itemize}
    \item[(a)] the cardinality of $\mathscr{N} (L)$ is bounded by a dimensional constant;
    \item[(b)] $|\ell (L')-\ell (L)|\leq 1$ for every $L'\in \mathscr{N} (L)$.
\end{itemize}
Indeed, (a) follows by the construction of the collection of cubes $\Gcal$ and their associated sets $R(L)$, while (b) is a consequence of Lemma \ref{l:whitney}(i). 
For each cube $L\in \Gcal^o$, consider the approximations $u_{L,i}$ and the coincidence sets $K_i (L)$ given by Proposition \ref{p:refined} and define 
\[
\bar{K}_i (L) := \bigcap_{L' \in \mathscr{N} (L)} K_i (L')\, .
\]
Observe that for every $L'\in \mathscr{N} (L)$ we have $u_{L,i}= u_{L',i}$ on $\bar{K}_i (L)$. Note that
\begin{align*}
    |L_i\setminus \bar{K}_i (L)|  & \leq \sum_{L^\prime\in \mathscr{N}(L)}|\Omega_i(L^\prime)\setminus K_i(L^\prime)|
\end{align*}
and
\begin{align*}
    \|T_{L,i}\|(\mathbf{p}_{\alpha_i}^{-1}(L_i\setminus &\bar{K}_i(L)))\\
    \leq & \|T_{L,i}\|(\mathbf{p}_{\alpha_i}^{-1}(L_i\setminus K_i(L))) + \sum_{L^\prime\in \mathscr{N}(L)\setminus\{L\}}\|T_{L,i}\|(\mathbf{p}_{\alpha_i}^{-1}({L_i}\setminus K_i(L^\prime)))\\
    \leq & \|T_{L,i}\|(\mathbf{p}_{\alpha_i}^{-1}(L_i\setminus K_i(L))) + \sum_{L^\prime\in \mathscr{N}(L)\setminus\{L\}}\| T_{L',i}\|(\mathbf{\Omega}_i(L^\prime)\setminus \mathbf{p}_{\alpha_i}^{-1}(K_i(L^\prime))) \, ,
\end{align*}
and thus \eqref{e:coherent-3} follows from Proposition \ref{p:refined}(iv) and (a) and (b) above. 

{Note that}{ we may define $u_i := u_{L,i}$ on $\bar{K}_i(L)$, {which is clearly} well-defined.} To complete the proof we need to extend $u_i$ {from $\bigcup_{L\in\mathcal{G}^o}\bar{K}_i(L)$} to a Lipschitz map on $R_i^o:= \bigcup_{L\in \mathcal{G}^o} L_i$ which satisfies ${\rm gr}\, (u_i)\subset \Sigma$ and the estimates \eqref{e:coherent-1} and \eqref{e:coherent-2}. Recall that we have the estimates
\begin{align}
\|u_i\|_{L^\infty ({\bar{K}_{i}(L)})} &\leq C 2^{-\ell (L)} (\mathbf{E} (L) + 2^{-2\ell (L)} \mathbf{A}^2)^{1/2}\label{e:extended-1}\\
\textrm{Lip} (u_i|_{\bar{K}_{i}(L)}) &\leq C  (\mathbf{E} (L) + 2^{-2\ell (L)} \mathbf{A}^2)^\gamma\, ,\label{e:extended-2}
\end{align}
{by Proposition \ref{p:refined}(iv),} just because on the set ${\bar{K}_{i}(L)}$ the map $u_i$ coincides with $u_{L,i}$. 

Observe moreover that, if we consider the larger domains $\tilde{K}_i (L) := \bigcup_{L'\in \mathscr{N} (L)} \bar{K}_i (L)$, we still have estimates analogous to \eqref{e:extended-1}, \eqref{e:extended-2}, with $\bar{\Ebf} (L)$ replacing $\Ebf (L)$, namely
\begin{align}
\|u_i\|_{L^\infty ({\tilde{K}_{i}(L)})} &\leq C 2^{-\ell (L)} (\bar{\mathbf{E}} (L) + 2^{-2\ell (L)} \mathbf{A}^2)^{1/2}\label{e:extended-11}\\
\textrm{Lip} (u_i|_{\tilde{K}_{i}(L)}) &\leq C  (\bar{\mathbf{E}} (L) + 2^{-2\ell (L)} \mathbf{A}^2)^\gamma\, ,\label{e:extended-22}
\end{align}
Our aim is to show that we can find an extension $u_i$ to $\bigcup_{L\in \mathcal{G}^o} L_i$ and use \eqref{e:extended-11}, \eqref{e:extended-22} to show that {for each $L\in\mathcal{G}^o$} this extension satisfies
\begin{align}
\|u_i\|_{L^\infty (L_i)} &\leq C 2^{-\ell (L)} (\bar{\mathbf{E}} (L) + 2^{-2\ell (L)} \mathbf{A}^2)^{1/2}\label{e:extended-111}\\
\textrm{Lip} (u_i|_{L_i}) &\leq C  (\bar{\mathbf{E}} (L) + 2^{-2\ell (L)} \mathbf{A}^2)^\gamma\, ,\label{e:extended-222}
\end{align}
The remaining claim, namely the $L^2$ bound on $Du_i$ over $L_i$ claimed in point (ii) of the proposition, is then an obvious consequence of \eqref{e:extended-2}, the bound on $|L_i\setminus {\bar{K}_{i}(L)}|$, and the fact that 
\[
\|Du_i\|_{L^2}^2 ({\bar{K}_{i}(L)}) \leq C 2^{-m \ell (L)} (\mathbf{E} (L) + 2^{-2\ell (L)} \mathbf{A}^2)\, ,
\]
the latter being again a consequence of $u_i|_{\bar{K}_i (L)} = u_{L,i}$.

In order to accomplish the latter task we first observe that we can ignore the requirement that ${\rm gr}\, (u_i)\subset \Sigma$. Indeed, fix the $\bar{n}$-dimensional subspace $\pi = \alpha_i^\perp \cap T_0 \Sigma$ which is the orthogonal complement of $\alpha_i$ in $T_0 \Sigma$ and let $\Psi: \Bbf_7 \cap T_0 \Sigma \to T_0 \Sigma^\perp$ be the map whose graph describes $\Sigma$. Recall that $\|D^2 \Psi\|_{C^0} \leq C \Abf$ and, since $D\Psi (0)=0$, we conclude $\|D\Psi\|_{C^0} \leq C \Abf$ as well. It thus suffices to find an extension of the $\pi$-component {$u_i^\pi$} of the map $u_i$ with the desired estimate and compose it with $\Psi$ {in the remaining components of $\alpha_i^\perp$} to find the {desired extension of $u_i$} {(the formula for the latter map would then be $x\mapsto (u_i^\pi(x),\Psi(x,u_i^\pi(x))) \in \pi \times T_0 \Sigma^\perp$).} 

Once we are allowed to ignore the {above} issue, we consider a cellular decomposition of the $L_i$'s into $0$-cells (the $0$-skeleton), $1$-cells attached to the $0$-cells (the $1$-skeleton), $2$-cells, and so on, with the final $m$-dimensional cells being the interiors of the $L_i$'s. This can be done canonically across all $L$, note indeed that each $L_i$ is the product, in $V\times (V^\perp \cap \alpha_i)$, of the cube $L\subset V$ with the annulus $\{y\in V^\perp\cap \alpha_i : 2^{-\ell (L)-1}\leq |y|\leq 2^{-\ell (L)}\}$.

We denote by $\mathcal{S}_i$ the collection of $i$-cells {in the above}.  We also slightly fatten each $0$-cell $p$ to open neighborhoods $U^p$ so that the separation between any $U^p$ and $U^q$ with $p,q\in L_i$ is at least $c 2^{-\ell (L)}$ {for some dimensional constant $c>0$}. {Now}, for every $1$-cell {$\sigma$} with endpoints $p,q\in \mathcal{S}_0$, we slightly fatten $\sigma \setminus (U^p\cup U^q)$ to an open set $U^\sigma$ and again we take care that the separation between two fattenings $U^\sigma$ and $U^\tau$ for distinct 1-cells $\sigma$ and $\tau$ contained in the same $L_i$ is at least $c 2^{-\ell (L)}$. We proceed in this way over all skeleta. Figure \ref{f:whitney-3} gives a graphical illustration of this on some specific $0$, $1$, and $2$-cells.

\begin{figure}[htbp]
\begin{center}
\input{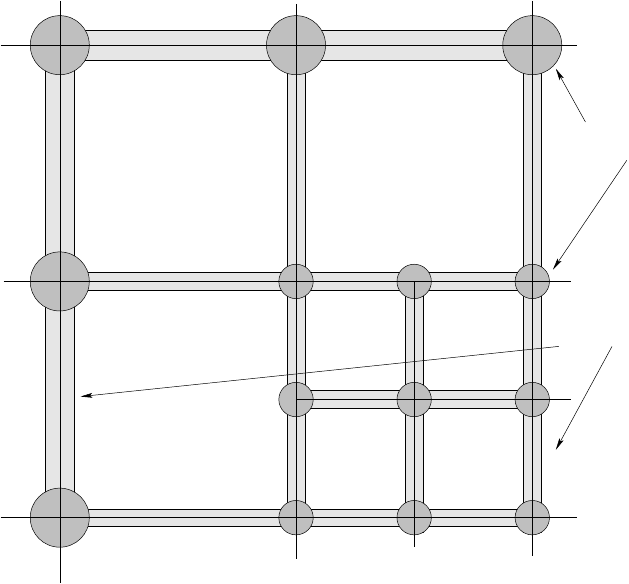_t}
\end{center}
\caption{\Small An illustration of the fattening of some $0$-cells, $1$-cells and $2$-cells, used in the extension algorithm to find a coherent approximation.}\label{f:whitney-3}
\end{figure}
We then find a Lipschitz extension of the maps $u_i$ to the points $0$-skeleton in the following fashion:

For each point $P$ in the $0$-skeleton, we let $N (P)$ be the union of $L'_i$ for those cubes $L'\in \mathcal{G}^o$ which contain $P$. {The extension is then done using \cite{DLS_MAMS}*{Theorem 1.7}, which ensures that}, after setting $K := \bigcup_{L\in \mathcal{G}^o} \bar{K}_i (L)$ 
\begin{align}
\textrm{Lip} (u_i|_{(K\cap N(P))\cup U^p}) &\leq C \textrm{Lip} (u_i|_{K\cap N(P)})\label{e:intermediate1}\\ 
\|u_i\|_{L^\infty ((K\cap N(P))\cup U^p)} &\leq C \|u_i\|_{L^\infty (K\cap N(P))}\, .\label{e:intermediate2}  
\end{align}

To extend to the $1$-cells fix any $\sigma \in \mathcal{S}_1$ contained in some $L_i$ with endpoints $p$ and $q$, and let $N (\sigma)$ be the union of all the $L'_i$ which intersect $\sigma$. Observe that, because $U^p$ and $U^q$ are far enough apart from one another, using \eqref{e:intermediate1} and \eqref{e:intermediate2} we get the estimates
\begin{align*}
\textrm{Lip} (u_i|_{(K \cap N (\sigma))\cup (U^p\cup U^q)}) & \leq C \textrm{Lip} (u_i|_{K\cap N (\sigma)}) + C 2^{-\ell (L)} \|u_i\|_{L^\infty (K\cap N(\sigma))}\\
\|u_i\|_{L^\infty ((K \cap N (\sigma)) \cup (U^p\cup U^q))} &\leq C \|u_i\|_{L^\infty(K\cap N (\sigma))}\, .
\end{align*} 
We then proceed and define $u_i$ on each $U^\sigma$ {\em separately} so that {(again, using \cite{DLS_MAMS}*{Theorem 1.7})}
\begin{align*}
\textrm{Lip} (u_i|_{(K \cap N (\sigma))\cup (U^p\cup U^q \cup U^\sigma)}) & \leq C \textrm{Lip} (u_i|_{(K\cap N (\sigma))\cup (U^p\cup U^q)})\\
\|u_i\|_{L^\infty ((K \cap N (\sigma)) \cup (U^p\cup U^q \cup U^\sigma))} &\leq C \|u_i\|_{L^\infty((K\cap N (\sigma))\cup (U^q\cup U^p))}\, .
\end{align*}
We proceed inductively in this fashion and after going over all the skeleta we finally arrive at an extension which, defining $N (L_i)$ as the union of the $L'_i$ over $L'\in \mathscr{N} (L)$, satisfies
\begin{align*}
\textrm{Lip} (u|_{(K\cap N (L_i)) \cup L_i)}) &\leq C \textrm{Lip} (u|_{K\cap N (L_i)}) + C 2^{-\ell (L)} \|u_i\|_{L^\infty (K\cap N (L_i))}\, .\\
\|u_i\|_{L^\infty ((K\cap N (L_i)) \cup L_i)} &\leq C \|u_i\|_{L^\infty (K\cap N (L_i))}\, .
\end{align*}
Given that $K\cap N (L_i) = \tilde{K}_i (L)$, we conclude the desired bounds from \eqref{e:extended-11} and \eqref{e:extended-22}.
\end{proof}

\subsection{Blow-up}
We have thus constructed a collection of $N$ multi-valued approximations on the outer region. From the estimates of the previous proposition and a suitable covering argument it is easy to see that, away from the spine $V$, their Dirichlet energies are controlled by the conical excess $\hat{\mathbf{E}} (T, \mathbf{S}, \Bbf_4)$. Our next main goal is to show that, after we normalize them by $(\hat{\mathbf{E}}(T,\Sbf,\Bbf_4))^{1/2}$, they are close to a Dir-minimizing function. The control of the Dirichlet energy is still too crude for our final purpose, as it degenerates when we get closer to the spine: in order to gain a uniform control outside the spine we will need {to use Simon's estimates}; Section \ref{p:estimates} will be devoted to this task.  However, this ``first blow-up'' will be useful for two reasons: it is sufficient for the purpose of Section \ref{p:balancing}, where we ``balance'' the best approximating cone {(recall the discussion at the beginning of Section \ref{p:approx})}, and it will ultimately be used in the final blow-up argument to prove Dir-minimality of the blow-up limit away from the spine. In the final blow-up argument, the better control on the Dirichlet energy will then be used to prove Dir-minimality across the spine, using the fact that {$(m-2)$-dimensional subspaces} have vanishing $W^{1,2}$-capacity.

\begin{proposition}[First blow-up\label{p:Dir-minimizing}]\label{p:first-blow-up}
Let $T$ and $\mathbf{S}$ be as in Proposition \ref{p:refined} and assume the parameters $\delta^*,\tau$, and $\bar\delta$ are fixed. Then, for every $\sigma, \varsigma>0$ there are constants $C = C(m,n,Q,\delta^*, \tau, \bar\delta)>0$ and $\varepsilon = \varepsilon(m,n,Q,\delta^*, \tau, \bar\delta, \sigma, \varsigma)>0$ such that the following holds:
\begin{itemize}
    \item[(i)] $R\setminus B_\sigma (V)$ is contained in the outer region $R^o$ {(recall $R$ is as in \eqref{e:region-R})};
    \item[(ii)] If $u_i$ are the maps of Proposition \ref{p:coherent} and $R_i := (R\setminus B_\sigma (V))\cap \alpha_i$ then
    \begin{equation}\label{e:first-Dir-control}
    \int_{R_i} |Du_i|^2 \leq C \sigma^{-2} \hat{\mathbf{E}} (T, \mathbf{S}, \Bbf_4) + C \mathbf{A}^2\, .
    \end{equation}
    \item[(iii)] If additionally $\mathbf{A}^2 \leq \varepsilon^2 \hat{\mathbf{E}}(T, \mathbf{S}, \Bbf_4)$ and we set $v_i := \hat{\mathbf{E}} (T, \mathbf{S}, \Bbf_4)^{-1/2} u_i$, then there is a map $w_i: R_i \to \mathcal{A}_{Q_i} (\alpha_i^\perp)$ which is Dir-minimizing and such that 
    \begin{equation}\label{e:first-blowup}
    d_{W^{1,2}} (v_i, w_i) \leq \varsigma\, ,
    \end{equation}
    where $d_{W^{1,2}}$ is the $W^{1,2}$ distance defined in \cite{DLS_MAMS}. 
\end{itemize}
\end{proposition}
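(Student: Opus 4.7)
\textbf{Proof proposal for Proposition \ref{p:first-blow-up}.}

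The plan is to establish (i)–(iii) in order, with (iii) being the main substantive claim, proved by a contradiction/compactness argument modeled on Almgren's blow-up procedure.

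For (i), I would use Lemma \ref{l:controls_Whitney}(i): there exists $\bar{c}(Q,m,n,\bar n,\tau,\ell)$ such that if $\eps\leq \bar{c}$, then $\mathcal{G}_\ell\subset \mathcal{G}^o$. By Lemma \ref{l:whitney}(iii), the sets $R(L)$ have diameter and distance to $V$ both comparable to $2^{-\ell(L)}$. Therefore any $L\in\mathcal{G}$ with $R(L)\cap (R\setminus B_\sigma(V))\neq\emptyset$ satisfies $2^{-\ell(L)}\geq c(m)\sigma$, i.e.\ $\ell(L)\leq \ell_\sigma$ for some explicit $\ell_\sigma = \ell_\sigma(m,\sigma)$. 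Applying Lemma \ref{l:controls_Whitney}(i) with $\ell=\ell_\sigma$ and choosing $\eps\leq \bar{c}(Q,m,n,\bar n,\tau,\ell_\sigma)$ yields (i).

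For (ii), once we know every $L$ meeting $R\setminus B_\sigma(V)$ lies in $\mathcal{G}^o$, hence $k(L)=0$ and $\mathbf{E}(L)=\mathbf{E}(L,0)$, I would sum the $L^2$ estimate \eqref{e:coherent-1} from Proposition \ref{p:coherent} over all such cubes. Using Definition \ref{d:neighbors}, the definition of $\mathbf{E}(L,0)$, the uniformly bounded overlap of the sets $\Bbf^h(L')$ for $L'\in\mathscr{N}(L)$ (Lemma \ref{l:whitney}(iv)), and the inequality $2^{2\ell(L)}\leq C\sigma^{-2}$ valid on the cubes meeting $R\setminus B_\sigma(V)$, one obtains
\begin{align*}
\int_{R_i}|Du_i|^2 &\leq C\sum_{L\cap R_i\neq\emptyset,\, L\subset \mathcal{G}^o}\left(2^{-m\ell(L)}\bar{\mathbf{E}}(L)+2^{-(m+2)\ell(L)}\mathbf{A}^2\right) \\
&\leq C\sigma^{-2}\sum_L \int_{\Bbf^h(L)}\dist^2(q,\mathbf{S})\,d\|T\|(q)+C\mathbf{A}^2 \\
&\leq C\sigma^{-2}\hat{\mathbf{E}}(T,\mathbf{S},\Bbf_4)+C\mathbf{A}^2,
\end{align*}
which is \eqref{e:first-Dir-control}.

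The heart of the argument is (iii), which I would prove by contradiction. Assume the proposition fails for some $\sigma,\varsigma>0$: there is a sequence of pairs $(T_k,\Sigma_k)$ as in Assumption \ref{a:refined}, with corresponding cones $\mathbf{S}_k = \alpha^k_1\cup\cdots\cup \alpha^k_N$, satisfying $\eps_k\downarrow 0$ in \eqref{e:smallness-alg} and $\mathbf{A}_k^2\leq \eps_k^2 \hat{\mathbf{E}}_k$, where $\hat{\mathbf{E}}_k:=\hat{\mathbf{E}}(T_k,\mathbf{S}_k,\Bbf_4)$, such that the normalized maps $v_i^k:=\hat{\mathbf{E}}_k^{-1/2}u_i^k$ have $W^{1,2}$-distance from any Dir-minimizer on $R_i$ exceeding $\varsigma$. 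After suitable rotations and passage to subsequences we may assume $V(\mathbf{S}_k)\to V$, each plane $\alpha_i^k\to\alpha_i$, and hence $\mathbf{S}_k\to\mathbf{S}\in\mathscr{C}(Q)$; moreover, since the $\Sbf_k$ are all $M$-balanced and $\boldsymbol\sigma(\Sbf_k)$ is bounded away from $0$ by the assumption $\mathbf{A}_k^2+\mathbb{E}_k\leq \eps_k^2\boldsymbol\sigma(\Sbf_k)^2$ (otherwise $\hat{\mathbf{E}}_k\to 0$ is incompatible with the non-closeness hypothesis), the planes remain uniformly transverse. Using (ii) and Rellich, $v_i^k\rightharpoonup w_i$ weakly in $W^{1,2}(R_i;\mathcal{A}_{Q_i})$ and strongly in $L^2$, to some limit $w_i:R_i\to \mathcal{A}_{Q_i}(\alpha_i^\perp)$.

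To reach the contradiction, I need the limits $w_i$ to be Dir-minimizing. This is the main obstacle, and I would follow the scheme of \cite{DLS14Lp}*{Theorem 2.6}: any strictly energy-decreasing competitor $\tilde w_i$ for $w_i$ on a compactly supported perturbation (in $R_i\setminus V$) can be turned, via Lipschitz truncation together with the graphical identification of Proposition \ref{p:coherent} and the $L^\infty$ bound \eqref{e:thin-slab}, into a comparison current for $T_k$ of strictly smaller mass once $k$ is large enough, contradicting area-minimality. The strong $W^{1,2}$ convergence then follows by a standard interpolation between the weak convergence and the convergence of energies (which can be proved using the Dir-minimality of the limit together with the fact that the competitor above can be taken to be $w_i$ itself, comparing $v_i^k$ with an interpolation between $v_i^k$ and $w_i$ on a thin annular region). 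The strong $W^{1,2}$ convergence $v_i^k\to w_i$ on $R_i$ then contradicts the assumption $d_{W^{1,2}}(v_i^k,w_i)>\varsigma$, completing the proof. The only technical care needed is to route the competitor construction through the finitely many cubes $L\in \mathcal{G}^o$ meeting $R\setminus B_\sigma(V)$, using Lemma \ref{l:curved} to estimate the mass discrepancy between the graph over $\mathbf{S}$ and the current on the boundary regions; the rotational invariance of the cross-sections built into our setup ensures these errors are of higher order than $\hat{\mathbf{E}}_k$.
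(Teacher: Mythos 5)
Your handling of (i) and (ii) mirrors the paper exactly (select $\ell$ with $2^{-\ell}\sim\sigma$, apply Lemma \ref{l:controls_Whitney}(i), sum \eqref{e:coherent-1} with the overlap and geometric-series bounds). For (iii) the overall route is also the paper's: reduce the claim to the Dir-minimality of an $L^2$/weak-$W^{1,2}$ limit of the normalized Lipschitz approximations $v_i^k$ on $R_i$, proved by a cut-and-paste/Lipschitz-truncation comparison. The one substantive difference is presentational: the paper verifies the $L^\infty$, Lipschitz and bad-set mass estimates on the (Lipschitz) domain $R_i$ and then invokes the arguments of \cite{DLS14Lp}*{Theorem 2.6} as a black box, whereas you re-derive the compactness/contradiction argument from scratch, including the energy-convergence interpolation step. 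Both routes work, and yours is correct modulo the vagueness of the thin-annulus interpolation, which is standard.

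One assertion in your argument is incorrect, though harmless. You claim that the hypothesis $\mathbf{A}_k^2 + \mathbb{E}_k \leq \eps_k^2\boldsymbol\sigma(\Sbf_k)^2$ forces $\boldsymbol\sigma(\Sbf_k)$ to stay bounded away from $0$ ``otherwise $\hat{\mathbf{E}}_k\to 0$ is incompatible with the non-closeness hypothesis.'' In fact $\hat{\mathbf{E}}_k\to 0$ always holds here (since $\boldsymbol\sigma(\Sbf_k)\leq 1$), and $\boldsymbol\sigma(\Sbf_k)\downarrow 0$ is entirely possible along the contradicting sequence — this is precisely the ``collapsed'' scenario that the later blow-up analysis is built to handle, and Remark \ref{r:blow-up} spells out how to deal with the drifting planes $\alpha_i^k$ via the canonical rotations of Lemma \ref{l:rotations}. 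Fortunately, uniform transversality of the planes is not needed: each $u_i^k$ is analyzed on its own plane $\alpha_i^k$, so the degeneration of $\Sbf_k$ does not obstruct the single-plane compactness argument. You should simply drop this claim.
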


Note that, crucially, while the parameter $\varepsilon$ must be chosen suitably small depending on $\varsigma$, the constant $C$ in \eqref{e:first-Dir-control} is instead independent of it. As already mentioned, the control \eqref{e:first-Dir-control} is indeed sub-optimal and we will later be able to remove the factor $\sigma^{-2}$. 

\begin{remark}\label{r:blow-up}
We wish to spent a few words on a procedure which will be used often in the rest of the paper. Assume we have a sequence of currents $T_k$, manifolds $\Sigma_k$, and cones $\mathbf{S}_k$ satisfying the assumptions of Proposition \ref{p:first-blow-up} for some fixed choice of the parameters $\delta^*$, $\tau$, and $\bar \delta$. We assume that the cones $\mathbf{S}_k$ are converging {locally} in the sense of Hausdorff {distance} to some limiting $\mathbf{S}_\infty$, and fix some sequence of $m$-dimensional planes $\alpha^k_i\subset \mathbf{S}_k$ converging to a plane $\alpha_i^\infty\subset \mathbf{S}_\infty$. Assume moreover that $\hat{\Ebf}_k = \hat{\Ebf} (T_k, \mathbf{S}_k, \Bbf_1)$ and $\hat{\Ebf} (T_k, \mathbf{S}_k, \Bbf_1)^{-1} \mathbf{A}_k^2$ are both converging to zero. Let $u^k_i$ be the coherent outer approximations over the domains $R^o_{i,k}$ which consist of the outer regions $R^o_k$ {for each $T_k$} intersected with the planes $\alpha^k_i$. We would like to use Proposition \ref{p:first-blow-up} to extract a Dir-minimizing limit of suitable normalizations of the maps $u^k_i$, namely $v^k_i := E_k^{-1/2} u^k_i$ for some choice of normalization constants $E_k$ satisfying $\hat{\Ebf}_k \leq E_k$. One technical point is that the maps $v^k_i$ are not defined on the same plane. In order to deal with issue apply a rotation and map $\alpha_i^k$ onto $\alpha_i^\infty$. In fact, even though this is not needed, it is convenient to choose the canonical rotations $R_k = R(\alpha^k_i, \alpha_i^\infty)$ of Lemma \ref{l:rotations}. We can then extract, up to extracting a subsequence, a limit $v_i$ of $v^k_i \circ R_k^{-1}$; note that the latter maps are defined over (subdomains of) the same plane. Observe that the rotations $R_k$ do depend on $i$ and thus we do not have a single canonical rotation which works for every $i$. However, we can also see from Lemma \ref{l:rotations} that, if we consider the graphs of the maps $v^k_i$ as subsets of $\mathbb R^{m+n}$, the latter are indeed converging to the graph of $v_i$. Our limiting object is, in that sense, canonical.

In the sequel, when we are referring to ``the blow-up $v_i$ of the maps $v^k_i$'' we will assume that we have followed the above algorithm.
\end{remark}

\begin{proof}
    First of all we let $\sigma$ be given and fixed. We then select $\ell$ so that $\frac{\sigma}{2} \leq 2^{-\ell}\leq \sigma$ and define 
\[
\mathcal{G}_{\leq \ell}:= \bigcup_{j=0}^\ell \mathcal{G}_j\, .
\] 
We can then appeal to Lemma \ref{l:controls_Whitney}(i) to get that, if $\varepsilon$ is small enough, then $\mathcal{G}_{\ell+1}\subset \mathcal{G}^o$. But by definition of outer cubes the latter implies in fact ${\mathcal{G}_{\leq \ell+1}}\subset \mathcal{G}^o$, which implies the first claim of the proposition.

{Let $L\in \mathcal{G}_{\leq \ell}$, and observe that (b) in the proof of Proposition \ref{p:coherent} tells us that for any $L'\in\Nscr(L)$, we have $L'\in \mathcal{G}_{\leq \ell+1}$.} Appealing to Proposition \ref{p:coherent}{(ii) (namely, \eqref{e:coherent-1})} we have
\begin{align*}
\int_{L_i} |Du_i|^2 &\leq C 2^{-m\ell (L)} (\bar{\mathbf{E}} (L) + 2^{-2\ell (L)} \mathbf{A}^2)\\
&\leq C 2^{-(m+2)\ell (L)}{\Abf^2} + C \sum_{L'\in \mathscr{N} (L)} 2^{-m\ell (L')} \mathbf{E} (L', 0)\, ,
\end{align*}
where $\mathscr{N} (L)$ and $\bar{\mathbf{E}} (L)$ are as in Definition \ref{d:neighbors}. 
Since the cardinality of $\mathscr{N} (L)$ is bounded by a geometric constant $C = C(m,n)$, we immediately conclude
\begin{align*}
\int_{R_i} |Du_i|^2 &\leq C \sum_{L\in \mathcal{G}_{\leq \ell+1}}
2^{-m\ell (L)} (\mathbf{E} (L,0) + 2^{-2\ell (L)} \mathbf{A}^2)\\
&= C \sum_{L\in \mathcal{G}_{\leq \ell+1}} \Big(2^{2\ell (L)} \int_{\mathbf{B}^h (L)} \dist^2 (q, \mathbf{S})\, d\|T\| (q) + 2^{-(m+2)\ell (L)} \mathbf{A}^2\Big)\\
&\leq C \sigma^{-2} \sum_{L\in \mathcal{G}^o} \int_{\mathbf{B}^h (L)} \dist^2 (q, \mathbf{S})\, d\|T\| (q) + {C \mathbf{A}^2} \, ,
\end{align*}
where we have used \eqref{e:geometric} {(with $\kappa =4$)}.
We next observe that $\mathbf{B}^h (L)\subset \Bbf_4$ for every $L\in \mathcal{G}$ and that, by Lemma \ref{l:whitney}(iv), for given any point $q\in \Bbf_4$ the cardinality of elements $L\in \mathcal{G}$ for which $q\in \mathbf{B}^h (L)$ is bounded by a constant $C = C(m,n)$. We thus conclude \eqref{e:first-Dir-control}. 

All that remains to be proven is (iii). By the definition of $T_{L,i}$ as the restriction of $T$ on a suitable open set, {for each $i$} there is an integral current $T_i$ with the following properties:
\begin{itemize}
    \item $\partial T_i =0$ on $\mathbf{p}_{\alpha_i}^{-1} \left(\bigcup_{L\in \mathcal{G}^o} L_i\right)$;
    \item $T_i$ is area-minimizing;
    \item $T_i \res \mathbf{p}_{\alpha_i}^{-1} (L_i) = T_{L, i} \res \mathbf{p}_{\alpha_i}^{-1} (L_i)$.
\end{itemize}
In particular, if we define $K_i := \bigcup_{L\in \mathcal{G}^o} \bar{K}_i (L)$, the argument leading to \eqref{e:first-Dir-control} and the estimates in Proposition \ref{p:coherent} lead to the following:
\begin{align}
T_i \res \mathbf{p}_{\alpha_i}^{-1} (K_i \cap R_i) &= \mathbf{G}_{u_i} \res \mathbf{p}_{\alpha_i}^{-1}{(K_i\cap R_i)}\\
\|u_i\|^2_{L^\infty (R_i)} &\leq C (\hat{\mathbf{E}} (T, \mathbf{S}, \Bbf_4) + \mathbf{A}^2)\\
\|Du_i\|_{L^\infty} &\leq C(\sigma^{-2} \hat{\mathbf{E}} (T, \mathbf{S}, \Bbf_4) + \mathbf{A}^2)^\gamma\\
|R_i\setminus K_i| +\|T_i\| (\mathbf{p}_{\alpha_i}^{-1} (R_i\setminus K_i)) &
\leq C (\sigma^{-2} \hat{\mathbf{E}} (T, \mathbf{S}, \Bbf_4) + \mathbf{A}^2)^{1+\gamma}\, .
\end{align}
We are thus in the same position to apply the arguments of \cite{DLS14Lp} leading to \cite{DLS14Lp}*{Theorem~2.6} in order to conclude point (iii) of Proposition \ref{p:Dir-minimizing}. The reader will notice that the only obstruction to applying \cite{DLS14Lp}*{Theorem~2.6} is that the domain of the map $u_i$ given above is not a ball. However, the arguments only use the regularity of the boundary of the domain, and since the boundary of $R_i$ is Lipschitz, those arguments apply here as well.
\end{proof}

\section{Cone balancing}\label{s:balancing}

In this section we introduce a suitable procedure which allows us to pass from a possibly unbalanced cone to a balanced cone whilst only changing the excess by a constant. This procedure is done under the assumption that the two-sided $L^2$ height excess of $T$ relative to a cone $\Sbf\in \mathscr{C}(Q)$ is significantly smaller than the planar $L^2$ height excess of $T$.
To make our exposition cleaner, we recall the notation
\begin{align*}
\boldsymbol{\sigma}(\Sbf) &:= \min_{i<j}\dist(\alpha_i\cap \Bbf_1,\alpha_j\cap\Bbf_1)\\
\boldsymbol{\mu}(\Sbf) &:= \max_{i<j}\dist(\alpha_i\cap \Bbf_1,\alpha_j\cap\Bbf_1)
\end{align*}
where $i,j$ in the minimum and maximum range over the indices of the planes in $\Sbf$.

\begin{proposition}[Cone balancing]\label{p:balancing}
Assume that $T$ and $\Sigma$ are as in Assumption \ref{a:main}, $\Bbf_1= \Bbf_1 (0) \subset \Omega$, $\mathbf{S}\in \mathscr{C} (Q)$, and $\alpha_1, \ldots, \alpha_N$ are as in Definition \ref{def:cones}. Then, there are constants $C = C(Q,m,n,\bar{n})>0$ and $\varepsilon_0 = \varepsilon_0(Q,m,n,\bar{n})>0$ with the following property. Assume that
\begin{equation}\label{e:smallexcess}
    \mathbf{A}^2 \leq \varepsilon_0^2 \mathbb{E} (T, \mathbf{S}, \Bbf_1) \leq \eps_0^4 \Ebf^p(T,\Bbf_1)\, .
\end{equation}
 Then there is a subset $\{i_1, \ldots , i_k\}\subset \{1, \ldots , N\}$ with $k\geq 2$ such that, upon setting $\mathbf{S}' = \alpha_{i_1} \cup \cdots \cup \alpha_{i_k}$, the following holds:
\begin{itemize}
\item[(a)] $\mathbf{S}'$ is $C$-balanced;
\item[(b)] $\mathbb{E} (T, \mathbf{S}', \Bbf_1) \leq C \mathbb{E} (T, \mathbf{S}, \Bbf_1)$;
\item[(c)] $\dist^2(\Sbf\cap \Bbf_1,\Sbf^\prime\cap\Bbf_1) \leq C\mathbb{E}(T,\Sbf,\Bbf_1)$;
\item[(d)] $C^{-1} \Ebf^p (T, \Bbf_1) \leq \boldsymbol{\mu} (\Sbf)^2 =
\boldsymbol{\mu} (\Sbf^\prime)^2\leq C \Ebf^p (T, \Bbf_1)$.
\end{itemize}
\end{proposition}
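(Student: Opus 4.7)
\emph{I first establish (d)}, namely $\boldsymbol{\mu}(\Sbf)^2 \sim \Ebf^p(T,\Bbf_1)$. Taking any $\alpha_i \in \Sbf$ as a competitor plane gives $\Ebf^p(T,\Bbf_1) \leq \hat{\Ebf}(T,\alpha_i,\Bbf_1) \leq C\hat{\Ebf}(T,\Sbf,\Bbf_1) + C\boldsymbol{\mu}(\Sbf)^2$, which combined with \eqref{e:smallexcess} absorbs into $\Ebf^p(T,\Bbf_1) \leq C\boldsymbol{\mu}(\Sbf)^2$. For the reverse, any plane $\pi$ has Hausdorff distance $\geq \boldsymbol{\mu}(\Sbf)/2$ to some $\alpha_j \in \Sbf$, and the reverse-excess term in $\mathbb{E}(T,\Sbf,\Bbf_1)$ keeps $\mathcal{H}^m$-a.e.\ of $\alpha_j \cap \Bbf_1$ near $\spt T$; hence $\hat{\Ebf}(T,\pi,\Bbf_1) \geq c\boldsymbol{\mu}(\Sbf)^2 - C\mathbb{E}$, and minimizing over $\pi$ and absorbing yields $\boldsymbol{\mu}(\Sbf)^2 \leq C\Ebf^p(T,\Bbf_1)$. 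Next I apply Lemma \ref{l:pruning} to $\Sbf$ with threshold $D := C_0\sqrt{\mathbb{E}(T,\Sbf,\Bbf_1)}$ for a large absolute constant $C_0$ and a fixed small absolute $\delta$; the hypothesis $D \leq \varepsilon\boldsymbol{\mu}(\Sbf)$ follows from (d) once $\eps_0$ is small. The resulting $\Sbf' = \bigcup_{i\in I}\alpha_i$ satisfies $\boldsymbol{\mu}(\Sbf')=\boldsymbol{\mu}(\Sbf)$, $\boldsymbol{\sigma}(\Sbf') \geq C^{-1}\sqrt{\mathbb{E}}$, and $\max_j\dist(\alpha_j\cap\Bbf_1,\Sbf'\cap\Bbf_1) \leq C\sqrt{\mathbb{E}}$, the last giving (c) directly. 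Combining this with the triangle inequality $\dist(p,\Sbf')\leq \dist(p,\Sbf) + C\sqrt{\mathbb{E}}$ for $p\in\Bbf_1$ yields $\hat{\Ebf}(T,\Sbf',\Bbf_1) \leq C\mathbb{E}$, while $\hat{\Ebf}(\Sbf',T,\Bbf_1) \leq \hat{\Ebf}(\Sbf,T,\Bbf_1)$ holds trivially as $\Sbf'\subset\Sbf$, proving (b).

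\emph{For the balancing (a), I argue by contradiction via a blow-up.} Suppose there are sequences $T_k,\Sigma_k,\Sbf_k$ satisfying the hypotheses with $\eps_0^{(k)}\to 0$ whose pruned cones $\Sbf'_k$ are not $k$-balanced: some pair $(\alpha^k_{i_k},\alpha^k_{j_k})\subset\Sbf'_k$ has Morgan angles $\theta_1^k\leq\theta_2^k$ with ratio $\theta_2^k/\theta_1^k\to\infty$. After extracting subsequences we may assume $i_k\equiv i$ and $j_k\equiv j$, the spines all coincide with a fixed $(m-2)$-plane $V$, the optimizing planes for $E_k:=\Ebf^p(T_k,\Bbf_1)$ equal a fixed $\pi_0\supset V$, the multiplicities $Q_i^k$ and $N_k$ stabilize, and (by (d)) $\sqrt{E_k}\sim\boldsymbol{\mu}(\Sbf_k)\to 0$. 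Each plane $\alpha_i^k$ is then the graph of a linear map $L_i^k:\pi_0\to\pi_0^\perp$ of norm $O(\sqrt{E_k})$ vanishing on $V$, and $\Sbf_k$ corresponds to $L^k:=\sum_i Q_i[L_i^k]$. Almgren's strong Lipschitz approximation of $T_k$ over $\pi_0$ (\cite{DLS14Lp}*{Theorem 1.4}) yields $u^k:B_1(\pi_0)\to\mathcal{A}_Q(\pi_0^\perp)$; the hypothesis $\mathbb{E}(T_k,\Sbf_k,\Bbf_1) = o(E_k)$ forces $u^k$ to be $L^2$-close to $L^k$ modulo an $o(\sqrt{E_k})$ error on the coincidence set. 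Normalizing $v^k:=(u^k-\boldsymbol{\eta}\circ u^k)/\sqrt{E_k}$ and invoking the blow-up machinery of \cite{DLS14Lp}*{Theorem 2.6}, a subsequence of $v^k$ converges in $W^{1,2}_{loc}$ to a Dir-minimizer $w$, which must coincide with the limit $L_\infty$ of $(L^k-\boldsymbol{\eta}\circ L^k)/\sqrt{E_k}$. In particular $w$ is a Dir-minimizing superposition of linear maps all vanishing on $V$, so Proposition \ref{p:two-sided-bound} applies.

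Let $\sigma_1^k\leq\sigma_2^k$ denote the singular values of $(L_i^k-L_j^k)/2$; by Lemma \ref{l:elementary-planes} they are comparable to $\theta_1^k,\theta_2^k$, so $\sigma_2^k/\sigma_1^k\to\infty$. \emph{Case A:} $\sigma_2^k/\sqrt{E_k}$ stays bounded below along a subsequence, so the limits $L_i^\infty,L_j^\infty$ are distinct in $w$. If $\sigma_1^\infty>0$ (kernel $(m-2)$-dimensional), Proposition \ref{p:two-sided-bound} forces $\sigma_2^\infty/\sigma_1^\infty \leq c_2(n)$, incompatible with $\sigma_2^k/\sigma_1^k\to\infty$; if $\sigma_1^\infty=0$, then $L_i^\infty-L_j^\infty$ has $(m-1)$-dimensional kernel, producing an $(m-1)$-dimensional singular set for $w$ in violation of Almgren's partial regularity theorem \cite{DLS_MAMS}*{Theorem 0.11}. \emph{Case B:} $\sigma_2^k/\sqrt{E_k}\to 0$, so the offending pair collapses at the primary scale. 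Here I perform a secondary blow-up at scale $\lambda_k:=\sigma_2^k$, using that the pruning gives $\lambda_k\gtrsim\sqrt{\mathbb{E}_k}$ while $\Abf_k^2\leq(\eps_0^{(k)})^2\mathbb{E}_k \ll \lambda_k^2$. On a fixed small ball around a point $y_0+r_0\nu$ with $y_0\in V$ and $\nu\in\pi_0\cap V^\perp$ (where the pair looks nearly parallel and well-separated from the other planes of $\Sbf_k$) a scaled application of Corollary \ref{c:splitting-0} isolates the portion of $T_k$ lying near the pair; blowing up at scale $\lambda_k$ produces a Dir-minimizing two-plane superposition whose limiting pair has Morgan angles $(0,1)$, whose $(m-1)$-dimensional kernel again contradicts Almgren's partial regularity.

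\emph{Main obstacle.} The delicate step is Case B: building a Dir-minimizing secondary limit without access to the $M$-balancedness one is trying to establish. The resolution hinges on the quantitative separation $\boldsymbol{\sigma}(\Sbf'_k)\gtrsim\sqrt{\mathbb{E}_k}$ provided by the pruning together with the sharp $L^2$--$L^\infty$ height bound of Theorem \ref{thm:main-estimate}, which jointly enable a direct use of Corollary \ref{c:splitting-0} at the secondary scale, thereby bypassing the balanced crude splitting Lemma \ref{l:splitting-1}.
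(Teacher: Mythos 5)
Your proof of the reverse inequality $\boldsymbol{\mu}(\Sbf)^2 \leq C\Ebf^p(T,\Bbf_1)$ in (d) does not go through as written. The inference from ``the reverse-excess term keeps $\mathcal{H}^m$-a.e.\ of $\alpha_j\cap\Bbf_1$ near $\spt T$'' to ``$\hat{\Ebf}(T,\pi,\Bbf_1)\geq c\boldsymbol{\mu}(\Sbf)^2 - C\mathbb{E}$'' needs a lower bound on the $\|T\|$-mass of a region of $\spt T$ that lies uniformly a distance $\gtrsim\boldsymbol{\mu}(\Sbf)$ from $\pi$, and at this stage (before balancedness and the Lipschitz approximation are available) the only tool is monotonicity, which forces the radius of such a region to be $\lesssim\boldsymbol{\mu}(\Sbf)$ and hence its $\|T\|$-mass to be $\lesssim\boldsymbol{\mu}(\Sbf)^m$. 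This gives $\hat{\Ebf}(T,\pi,\Bbf_1)\gtrsim\boldsymbol{\mu}(\Sbf)^{m+2}$, which is vacuous in the collapsing regime $\boldsymbol{\mu}(\Sbf)\to 0$. This is precisely why the paper proves (a) \emph{first} and only then the reverse of (d): it is Proposition \ref{p:Lipschitz-1} together with Lemma \ref{l:matching-Q}, both requiring $M$-balancedness, that supply a $\|T\|$-mass of order $1$ in a slab near $\alpha_1$ on which the distance to $\pi$ is $\gtrsim\boldsymbol{\mu}(\Sbf)$. (One could salvage the ordering by arguing in the opposite direction: Allard's $L^\infty$ bound applied to $\pi$ places $\spt T\cap\Bbf_{3/4}$ within $O(\sqrt{\Ebf^p(T,\Bbf_1)})$ of $\pi$, and the reverse excess then puts $m$ independent vectors of $\alpha_j\cap\Bbf_1$ within $O(\sqrt{\Ebf^p(T,\Bbf_1)})$ of $\pi$, whence $\dist(\alpha_j\cap\Bbf_1,\pi\cap\Bbf_1)\leq C\sqrt{\Ebf^p(T,\Bbf_1)}$ by linearity — but this is not what you wrote.)

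The more serious gap is in Case B of your balancing argument. After rescaling by $\lambda_k=\sigma_2^k$, the pair $(\alpha^k_i,\alpha^k_j)$ need not be isolated: the pruning only guarantees pairwise separations in $\Sbf'_k$ of order $\sqrt{\mathbb{E}_k}$, which may be much smaller than $\lambda_k$, so other planes of $\Sbf'_k$ can survive at scales $O(1)$ or smaller after the secondary rescaling. Worse, a pair within this surviving sub-cone may itself be collapsing and unbalanced, so the secondary picture can simply reproduce the original difficulty at a deeper scale. To apply Corollary \ref{c:splitting-0} at the secondary scale you need the excess there to be small compared to the \emph{minimal} separation of whatever planes survive, and you have no such bound without already knowing the surviving sub-cone is balanced — which is what you are trying to prove. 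The paper circumvents this with a genuine induction on $N$ (Proposition \ref{p:balancing-2}) driven by the layer subdivision (Lemma \ref{l:algorithm}): at each layer one either finds, via the inductive hypothesis, that the layer's sub-cone is balanced and can blow up over it (using Proposition \ref{p:remove-spine} to obtain Dir-minimality across $V$), or one descends to the next layer; a telescoping product of separation ratios then reconciles all scales. Your single secondary blow-up is missing this iterative structure, and as it stands cannot handle $N\geq 3$ with nested collapsing.
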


Using the Pruning Lemma (Lemma \ref{l:pruning}), the proof of Proposition \ref{p:balancing} can be reduced to showing the following proposition, which roughly says that if the two-sided $L^2$ height excess of $T$ relative to $\Sbf$ is significantly smaller than the minimal angle in the cone $\Sbf$, then in fact $\Sbf$ is already $C$-balanced for some constant $C$. Indeed, intuitively, since $T$ is area-minimizing, if it is very close to $\Sbf$ then we would expect the union of planes in $\Sbf$ to roughly behave like an area-minimizer, and from Morgan's result (Lemma \ref{l:frank-distance}) we would expect $\Sbf$ to be balanced.

\begin{proposition}\label{p:balancing-2}
Assume that 
$T$, $\Sigma$, and $\mathbf{S}$ are as in Proposition \ref{p:balancing}. Then there are constants $C = C(m,n,\bar{n},N)$ and $\varepsilon = \varepsilon (m,n,\bar{n},N)$ with the following property. If we additionally have that
\begin{equation}\label{e:evensmallerexcess}
    N\geq 2 \quad \text{and} \quad {\mathbf{A}^2 + \mathbb{E} (T, \mathbf{S}, \Bbf_1) \leq \varepsilon^2 \boldsymbol{\sigma}(\Sbf)^2\,} ,
\end{equation}
then $\mathbf{S}$ is $C$-balanced.
\end{proposition}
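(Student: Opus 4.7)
The plan is to argue by contradiction via a blow-up to a Dir-minimizing multi-valued function that is a superposition of linear maps, to which Proposition \ref{p:two-sided-bound} applies. Suppose the statement fails; then there exist sequences $(T_k, \Sigma_k, \Sbf_k)$ satisfying \eqref{e:evensmallerexcess} with $\eps_k \downarrow 0$, such that $\Sbf_k$ is not $k$-balanced. Setting $\sigma_k := \boldsymbol{\sigma}(\Sbf_k)$ and $\mu_k := \boldsymbol{\mu}(\Sbf_k)$, after rotations and passing to subsequences I fix the number of planes $N \leq Q$, the common spine $V$, and indices $i<j$ of an unbalanced pair satisfying $\theta_2(\alpha_i^k,\alpha_j^k) \geq k\,\theta_1(\alpha_i^k,\alpha_j^k)$.

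In the non-degenerate case $\sigma_k \geq c_0 > 0$, no planes collapse: a further subsequence yields $\Sbf_k \to \Sbf_\infty$ in Hausdorff distance, with $\Sbf_\infty$ consisting of $N$ distinct planes through $V$. Since $\mathbb{E}(T_k,\Sbf_k,\Bbf_1) \to 0$ and $\Abf_k \to 0$, standard compactness for area-minimizing currents produces a limit $T_\infty$, area-minimizing in $\R^{m+n}$, with support equal to $\Sbf_\infty \cap \Bbf_1$ and positive integer multiplicity on each plane (positivity coming from the reverse excess). Lemma \ref{l:frank-distance} then forces the two Morgan angles of each pair in $\Sbf_\infty$ to coincide, so $\Sbf_\infty$ is $1$-balanced; by continuity of Morgan angles on uniformly separated planes, $\Sbf_k$ is $2$-balanced for large $k$, a contradiction.

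The main case is $\sigma_k \to 0$. Pick $\pi_0^k \subset \Sbf_k$ (say $\pi_0^k = \alpha_1^k$) and write each $\alpha_i^k$ as the graph of a linear map $L_i^k: \pi_0^k \to (\pi_0^k)^\perp$ with $L_1^k = 0$ and $\ker(L_i^k - L_j^k) = V$. The tilt-excess and $L^\infty$ height bounds of Theorem \ref{thm:main-estimate} (applied with the collection of planes $\{\alpha_i^k\}$) give $\Ebf(T_k,\Cbf_{1/2},\pi_0^k) \lesssim \mu_k^2$ and $\dist(p, \Sbf_k) \lesssim \eps_k\sigma_k \ll \mu_k$ on $\spt(T_k) \cap \Cbf_{1/2}$. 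Almgren's strong Lipschitz approximation then produces $f_k: B_{1/2}(\pi_0^k) \to \Acal_Q((\pi_0^k)^\perp)$ modeling $T_k$ with $\|f_k\|_\infty + \|Df_k\|_{L^2} \lesssim \mu_k$. Normalizing $\tilde f_k := f_k/\mu_k$ and $\tilde L_i^k := L_i^k/\mu_k$, passing to a subsequence, and invoking the Dir-minimizing convergence of rescaled Almgren approximations (\cite{DLS14Lp}*{Theorem 2.6}), I obtain a Dir-minimizer $\tilde f$ on the limit disk; the $L^\infty$ proximity of $\spt(T_k)$ to $\Sbf_k$ combined with the reverse excess (giving $Q_i \geq 1$ sheets near each $\alpha_i^k$) identifies $\tilde f = \sum_{i=1}^N Q_i \llbracket \tilde L_i^\infty \rrbracket$ with $\sum_i Q_i = Q$ and $\tilde L_i^\infty := \lim \tilde L_i^k$ linear with $\ker \tilde L_i^\infty \supseteq V$.

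The argument closes by splitting into three sub-cases for the limit, the third being the main obstacle. If all $\tilde L_i^\infty$ are distinct and $\ker(\tilde L_i^\infty - \tilde L_j^\infty) = V$ for every $i\neq j$, then Proposition \ref{p:two-sided-bound} yields a universal constant $c_2$ with $\theta_2(\tilde L_i^\infty, \tilde L_j^\infty) \leq c_2\,\theta_1(\tilde L_i^\infty, \tilde L_j^\infty)$, and Corollary \ref{c:elementary-planes} then implies that $\Sbf_k$ is $2c_2$-balanced for large $k$, contradicting the choice of sequence. If some distinct $\tilde L_i^\infty \neq \tilde L_j^\infty$ have $\ker(\tilde L_i^\infty - \tilde L_j^\infty) \supsetneq V$, then the branches $\llbracket \tilde L_i^\infty\rrbracket$ and $\llbracket \tilde L_j^\infty\rrbracket$ of the Dir-minimizer $\tilde f$ coincide on a subspace of dimension $\geq m-1$, so the singular set of $\tilde f$ has Hausdorff dimension $\geq m-1$, contradicting Almgren's partial regularity for $Q$-valued Dir-minimizers (\cite{DLS_MAMS}*{Theorem 0.11}). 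The delicate third sub-case is when some $\tilde L_i^\infty = \tilde L_j^\infty$ coincide: the index set partitions into clusters that collapse in the blow-up, leaving a finer scale $\nu_k = |L_i^k - L_j^k| \ll \mu_k$ uncontrolled. I would resolve this by a secondary blow-up, restricting $T_k$ to a cylindrical region where only the planes of one merged cluster $C_\ell$ remain nearby (using the $L^\infty$ height bound to ensure well-separated tubular neighborhoods), verifying that the restricted current and the sub-cone $\Sbf_k^\ell = \bigcup_{i \in C_\ell}\alpha_i^k$ still obey \eqref{e:evensmallerexcess}, and iterating. Formally this becomes a strong induction on $N$: since each cluster has strictly fewer than $N$ planes, the inductive hypothesis forces $C$-balancing of the cluster sub-cones, which combined with the first two sub-cases yields the full $C$-balancing of $\Sbf_k$ with constant depending only on $m,n,\bar n,Q$.
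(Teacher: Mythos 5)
Your high-level plan (contradiction, blow-up, Morgan's theorem for the non-degenerate case, Proposition \ref{p:two-sided-bound} plus Corollary \ref{c:elementary-planes} after blow-up, and strong induction on $N$ to handle clusters) is in the same spirit as the paper's proof. However, there are two genuine gaps, and the second is the essential difficulty that the paper's layer subdivision is designed to overcome.

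First, your case split between $\sigma_k\geq c_0$ and $\sigma_k\to 0$ does not handle $\sigma_k\to 0$, $\liminf\boldsymbol{\mu}(\Sbf_k)>0$. In that regime you cannot invoke Almgren's strong Lipschitz approximation over $\alpha_1^k$: the tilt excess $\Ebf(T_k,\Cbf_{1/2},\alpha_1^k)$ is comparable to $\boldsymbol{\mu}(\Sbf_k)^2$, which is bounded away from zero, so the hypothesis of \cite{DLS14Lp}*{Theorem 2.4} fails and there is no $f_k$ to rescale. The paper's Proposition \ref{p:balancing-3} avoids this entirely because there $\eta\boldsymbol{\mu}\leq\boldsymbol{\sigma}$, so $\boldsymbol{\mu}_k$ is comparable to $\sigma_k\to 0$; in the general Proposition \ref{p:balancing-2} the blow-up is instead carried out via the refined graphical approximation (Propositions \ref{p:refined}--\ref{p:first-blow-up}) over the \emph{balanced sub-cone} $\Sbf_k^{(1)}$ rather than over a single plane, and normalized by the layer gap $\mathbf{d}_k^{(1)}$ rather than by $\boldsymbol{\mu}_k$. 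That machinery does not require the cone to collapse onto a plane.

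Second, your third sub-case (secondary blow-up on a collapsed cluster) is the crux and is not actually carried out. The difficulty is this: to apply the inductive hypothesis to the cluster sub-cone $\Sbf_k^\ell$, you must verify $\Abf_k^2+\Ebb(T_k',\Sbf_k^\ell,\Bbf)\leq\eps^2\boldsymbol{\sigma}(\Sbf_k^\ell)^2$ for some competitor current $T_k'$. Without restricting $T_k$, the one-sided excess $\hat\Ebf(T_k,\Sbf_k^\ell,\Bbf_1)$ picks up contributions of size $\sim\boldsymbol{\mu}_k^2$ from the sheets of $T_k$ near the planes \emph{outside} the cluster, which is enormous compared to $\boldsymbol{\sigma}(\Sbf_k^\ell)^2$ when the cluster is tight. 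Restricting $T_k$ to a tubular neighbourhood of $\Sbf_k^\ell$ does not fix this cleanly either: such a neighbourhood necessarily contains the spine $V$ (where all planes of $\Sbf_k$ coincide), so near $V$ the restriction still sees the other sheets; and if instead you restrict to a cylinder away from $V$, the sub-cone no longer passes through the centre and you are no longer in the setting of Proposition \ref{p:balancing-2}. The paper resolves this precisely via the Layer Subdivision Lemma (Lemma \ref{l:algorithm}): the discarded planes at each stage are guaranteed to lie at distance $\mathbf{d}_k^{(l)}\leq\delta\,\boldsymbol{\sigma}(\Sbf_k^{(l)})$ from the retained planes, so the \emph{unrestricted} excess of $T_k$ relative to $\Sbf_k^{(l)}$ stays below $\eps^2\boldsymbol{\sigma}(\Sbf_k^{(l)})^2$ whenever $\mathbf{d}_k^{(l)}/\boldsymbol{\sigma}(\Sbf_k^{(l)})\to 0$; and when that ratio does not vanish, the argument moves to the next layer and ultimately contradicts $\sigma_k/\boldsymbol{\mu}_k\to 0$. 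This dichotomy and the telescoping product estimate at the end of the paper's proof are the content that your sketch of "iterating" omits.

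Your sub-cases 1 and 2 in the degenerate regime are correct (sub-case 2 relies correctly on the fact that crossing sheets of a $Q$-valued Dir-minimizer form singular points, so an $(m-1)$-dimensional crossing set contradicts \cite{DLS_MAMS}*{Theorem 0.11}). The reduction to Proposition \ref{p:two-sided-bound} via Corollary \ref{c:elementary-planes} is also correct. But the two gaps above, especially the second, need a concrete mechanism such as the layer subdivision to be filled.
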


Let us first show that Proposition \ref{p:balancing-2} implies Proposition \ref{p:balancing}.

\begin{proof}[Proof of Proposition \ref{p:balancing}]
Suppose that Proposition \ref{p:balancing-2} holds, and let $\varepsilon_*$ be the minimum over $N\leq Q$ of all the constants $\varepsilon = \varepsilon(m,n,\bar{n},N)$ from Proposition \ref{p:balancing-2}. Then fix $\eps_0\leq \eps_*$ to be determined later.

The hypotheses of Proposition \ref{p:balancing} in particular give that $\mathbb{E}(T,\Sbf,\Bbf_1)\leq \eps_0^2 \Ebf^p(T,\Bbf_1)$. Next we estimate
\begin{align*}
    \Ebf^p(T,\Bbf_1) &\leq \int_{\Bbf_1}\dist^2(x,\alpha_1)\, d\|T\|(x) \leq C\hat{\Ebf}(T,\Sbf,\Bbf_1) + C\max_{i<j}\dist^2(\alpha_i\cap \Bbf_1,\alpha_j\cap \Bbf_1)\\
    &\leq C\eps_0^2 \Ebf^p (T, \Bbf_1) + C \boldsymbol{\mu} (\Sbf)^2\, ,
\end{align*}
where $C = C(m,n)$. In particular for $\eps_0$ smaller than a geometric constant we get
\begin{equation}\label{e:mu-control}
\Ebf^p (T,\Bbf_1)\leq C \boldsymbol{\mu} (\Sbf)^2\, ,
\end{equation}
where $C= C(m,n)$. Thus we also conclude
\[
\mathbb{E}(T,\Sbf,\Bbf_1) \leq \underbrace{C \eps_0^2}_{=:\eta}\boldsymbol{\mu}(\Sbf)^2\, .
\]
Let us now fix $\delta>0$ (which will be determined later), and let $\Gamma$ be as in Lemma \ref{l:pruning} for this choice of $\delta$ and $N$. If we take $\eta^{1/2} = (1+\Gamma)^{-1}\delta$ in the above, we see that if $\eps_0 = \eps_0(m,n,N,\delta)>0$ is sufficiently small, and $D := \mathbb{E}(T,\Sbf,\Bbf_1)^{1/2}$, then
$$D\leq (1+\Gamma)^{-1}\delta \boldsymbol{\mu} (\Sbf)\, .$$
Thus we are in the situation to apply Lemma \ref{l:pruning} with this choice of $D$. This yields a subset $I=\{i_1,\dots,i_k\}\subset \{1,\dots,N\}$, with $k\geq 2$, such that the corresponding planes $\{\alpha_{i_1},\dots,\alpha_{i_k}\}$ satisfy \eqref{e:pruning-1}, \eqref{e:pruning-2}, \eqref{e:pruning-3} of the Pruning Lemma. Now set $\Sbf^\prime:= \alpha_{i_1}\cup \cdots \cup \alpha_{i_k}$; we claim that Proposition \ref{p:balancing} holds with this $\Sbf^\prime$. We start by observing that, since $\boldsymbol{\mu} (\Sbf) = \boldsymbol{\mu} (\Sbf^\prime)$ (by \eqref{e:pruning-3}) and \eqref{e:mu-control} holds, we just need
\begin{equation}\label{e:other-mu-control}
\boldsymbol{\mu} (\Sbf^\prime)^2 \leq C \Ebf^p (T, \Bbf_1)
\end{equation}
to complete the proof of (d). However, let us first prove (a)--(c).

{Observe that conclusion \eqref{e:pruning-1} of the Pruning Lemma gives
$$\max_{j=1,\dotsc,N}\min_{i\in I}\dist^2(\alpha_i\cap\Bbf_1,\alpha_j\cap\Bbf_1) \leq \Gamma^2 \mathbb{E}(T,\Sbf,\Bbf_1).$$
In particular, this will give condition (c) once we have chosen $\delta$ appropriately. Furthermore, observe the following consequence of this: for $q\in \spt(T)\cap \Bbf_1$, suppose $\dist(q,\Sbf^\prime) = \dist(q,\alpha_{i_j})$ for some $i_j\in I$. If $\dist(q,\Sbf) = \dist(q,\alpha_{i_j})$, then we clearly have $\dist(q,\Sbf^\prime) = \dist(q,\Sbf)$. Otherwise, we must have $\dist(q,\Sbf) = \dist(q,\alpha_\ell)$ for some $\ell\not\in I$. By the above consequence of the Pruning Lemma however, there is some $i_{j_*}\in I$ for which
$$\dist^2(\alpha_\ell\cap \Bbf_1,\alpha_{i_{j_*}}\cap \Bbf_1) \leq \Gamma^2\mathbb{E}(T,\Sbf,\Bbf_1).$$
In particular, we must have 
\begin{align*}
\dist^2(q,\Sbf^\prime) \leq \dist^2(q,\alpha_{i_{j_*}}\cap \Bbf_1) & \leq 4\dist^2(q,\alpha_{\ell}) + 4\dist^2(\alpha_{\ell}\cap \Bbf_1, \alpha_{i_{j_*}}\cap\Bbf_1)\\
& \leq 4\dist^2(q,\Sbf) + 4\Gamma^2\mathbb{E}(T,\Sbf,\Bbf_1).
\end{align*}
In either case, we see that for any $q\in \spt(T)\cap \Bbf_1$,
$$\dist^2(q,\Sbf^\prime) \leq 4\dist^2(q,\Sbf) + 4\Gamma^2 \mathbb{E}(T,\Sbf,\Bbf_1)$$
which evidently gives
$$\hat{\mathbf{E}}(T,\Sbf^\prime,\Bbf_1) \leq 4(1+\Gamma^2 (Q+1)\omega_m)\mathbb{E}(T,\Sbf,\Bbf_1).$$
This deals with controlling one term of the two-sided height excess $\mathbb{E}(T,\Sbf^\prime,\Bbf_1)$. However, controlling the other term is simple as $\Sbf^\prime\subset \Sbf$, and so $\hat{\mathbf{E}}(\Sbf^\prime,T,\Bbf_1)\leq \hat{\mathbf{E}}(\Sbf,T,\Bbf_1)$. Combining we therefore get $\mathbb{E}(T,\Sbf^\prime,\Bbf_1)\leq C\mathbb{E}(T,\Sbf,\Bbf_1)$ where $C = C(Q,m,n,\delta)$; this proves conclusion (b) provided we choose $\delta = \delta(Q,m,n,\bar{n})>0$ in the end.}

To show that conclusion (a) holds we will apply Proposition \ref{p:balancing-2} to $\Sbf^\prime$. For this we must verify that the hypothesis \eqref{e:evensmallerexcess} holds in this situation. Note that from \eqref{e:pruning-2} of the Pruning Lemma we have
$$\mathbb{E}(T,\Sbf,\Bbf_1) + \max_{j=1,\dotsc,N}\min_{i\in I}\dist^2(\alpha_i\cap \Bbf_1,\alpha_j\cap \Bbf_1) \leq 2\delta^2 \min_{i<j\in I}\dist^2(\alpha_i\cap\Bbf_1,\alpha_j\cap\Bbf_1).$$

In particular, if we perform exactly the same bounds as above when we proved (b), except replacing the estimate from \eqref{e:pruning-1} by the above, we would end up with
\begin{align*}
\hat{\mathbf{E}}(T,\Sbf^\prime,\Bbf_1) & \leq 2\hat{\mathbf{E}}(T,\Sbf,\Bbf_1) + 8(Q+1)\delta^2\min_{i<j\in I}\dist^2(\alpha_i\cap \Bbf_1,\alpha_j\cap \Bbf_1)\\
& \leq (8Q+12)\delta^2\min_{i<j\in I}\dist^2(\alpha_i\cap \Bbf_1,\alpha_j\cap \Bbf_1)
\end{align*}
where in the second inequality we have used the fact $\mathbb{E}(T,\Sbf,\Bbf_1)\leq 2\delta^2\min_{i<j\in I}\dist^2(\alpha_i\cap\Bbf_1,\alpha_j\cap \Bbf_1)$ again from the statement of \eqref{e:pruning-2} above. But again, since for the other half of the excess we have
$$\hat{\mathbf{E}}(\Sbf^\prime,T,\Bbf_1) \leq \hat{\mathbf{E}}(\Sbf,T,\Bbf_1) \leq \mathbb{E}(T,\Sbf,\Bbf_1) \leq 2\delta^2\min_{i<j\in I}\dist^2(\alpha_i\cap \Bbf_1,\alpha_j\cap \Bbf_1)$$
we see that
$$\mathbb{E}(T,\Sbf^\prime,\Bbf_1) \leq (8Q+14)\delta^2\min_{i<j\in I}\dist^2(\alpha_i\cap\Bbf_1,\alpha_j\cap\Bbf_1).$$
Hence, if we choose $\delta = \delta(Q,m,n,\bar{n})>0$ obeying $(8Q+14)\delta^2 < \epsilon_*/2$, we get that one part of the inequality in \eqref{e:evensmallerexcess} holds for $T$ and $\Sbf^\prime$. However, the other part of the inequality in \eqref{e:evensmallerexcess} evidently follows, since by assumption we have
$$\Abf^2 \leq \eps_0^2\mathbb{E}(T,\Sbf,\Bbf_1) \leq 2\delta^2\eps_0^2\min_{i<j\in I}\dist^2(\alpha_i\cap \Bbf_1,\alpha_j\cap \Bbf_1)$$
and thus we have that \eqref{e:evensmallerexcess} holds for suitably chosen $\delta = \delta(Q,m,n,\bar{n})>0$. Hence, with this choice of $\delta$ we can apply Proposition \ref{p:balancing-2} to see that $\Sbf^\prime$ is $C$-balanced for some $C = C(Q,m,n,\bar{n})>0$, which completes the proof of (a).

It remains to prove (d), namely, as already observed, \eqref{e:other-mu-control}.
Fix any plane $\pi$ and observe that there is one plane $\alpha_{i_1}$ from $\Sbf'$ with $i_1\in I$ (which without loss of generality by relabeling we can assume to be $i_1=1$) such that
\[
\dist (\alpha_1\cap \Bbf_1, \pi\cap \Bbf_1) \geq \frac{1}{2} \boldsymbol{\mu} (\Sbf^\prime)\, .
\]
Indeed, if not we would get a contradiction to the definition of $\boldsymbol{\mu}(\Sbf^\prime)$ by the triangle inequality.

We next show that there is an element $p\in \alpha_1$, a radius $r (m,n)>0$, and constant $C(m,n)>0$ with the property that $\Bbf_r (p)\subset \Bbf_{3/4} \setminus B_{1/4}(V)$ and 
\begin{equation}\label{e:dist-q-lower-bound}
\dist (q, \pi) \geq C^{-1} \boldsymbol{\mu} (\Sbf^\prime) \qquad \forall q\in  B_r (p, \alpha_1)\, .
\end{equation}

Recall first that $\dist (p, \pi) = |\mathbf{p}_{\pi}^\perp (p)|$ and that, by linearity of the map $\mathbf{p}^\perp_\pi$ we have $|\mathbf{p}_\pi^\perp(p)| = |p|\cdot|\mathbf{p}_\pi^\perp(p/|p|)| = |p|\dist(p/|p|,\pi)$, and so  
\begin{equation}\label{e:simple-linearity}
|\mathbf{p}^\perp_\pi (p)|\leq \dist (\alpha_1 \cap \Bbf_1, \pi \cap \Bbf_1) |p| \qquad \forall p\in \alpha_1\, .
\end{equation}
Next choose any $v\in V^\perp\cap \alpha_1$ with $|v|= \frac{1}{2}$. Consider then the disk $B_{1/8} (v, \alpha_1)$ and inside this disk select a base $e_1, \ldots e_m$ of $\alpha_1$ with the property that any element $x\in \alpha_1\cap \overline{\Bbf}_1$ can be written as a linear combination $\sum_i \lambda_i e_i$ with $|\lambda_i|\leq C = C(m)$. It follows that, for some element $e_i$ we must necessarily have 
\[
|\mathbf{p}^\perp_\pi (e_i)|\geq \frac{1}{m C} \dist (\alpha_1\cap \Bbf_1, \pi\cap \Bbf_1)\, .
\]
Indeed by Corollary \ref{c:growth} there is a vector $e\in \overline{\Bbf}_1 \cap \alpha_1$ with $|\mathbf{p}_\pi^\perp (e)| = \dist (\alpha_1\cap \Bbf_1, \pi\cap \Bbf_1)$. We can thus use the property above to write $e= \sum_i \lambda_i e_i$ and estimate
\begin{align*}
\dist (\alpha\cap \Bbf_1, \pi\cap \Bbf_1) &= |\mathbf{p}_\pi^\perp (e)|
\leq \sum_i |\lambda_i| |\mathbf{p}_\pi^\perp (e_i)|
\leq C \sum_i |\mathbf{p}_\pi^\perp (e_i)|\, .
\end{align*}
Set then $p=e_i$ and choose the radius $r$ to equal $\min \{\frac{1}{2mC}, \frac{1}{8}\}$. We can then use the last inequality and \eqref{e:simple-linearity} to show that for all $q\in B_r(p,\alpha_1)$, 
\[
\dist (q, \pi) = |\mathbf{p}^\perp_\pi (q)|\geq |\mathbf{p}_\pi^\perp(p)| - |\mathbf{p}^\perp_\pi(q-p)| \geq \frac{1}{2mC} \dist (\alpha_1\cap \Bbf_1, \pi\cap \Bbf_1) \geq \frac{1}{4mC} \boldsymbol{\mu} (\Sbf)\, .
\]
This establishes \eqref{e:dist-q-lower-bound}.

In particular, if the parameter {$\eps_0$} is small enough, we can apply Lemma \ref{l:splitting-1}, Proposition \ref{p:Lipschitz-1}, and Lemma \ref{l:matching-Q} to $T_{0,1/4}$ and $\mathbf{S}'$ (note that we have already established that $\Sbf^\prime$ is $C$-balanced by the above and that the hypotheses of these results do hold here). In particular, let $\varrho$ be the parameter in Lemma \ref{l:splitting-1}. If we consider the current
\[
T' := T \res \Bbf_r (p) \cap \{\dist (\cdot , \alpha_1) \leq \varrho \boldsymbol{\sigma} (\Sbf^\prime)\}
\]
we will have that $\partial T' =0$ in $\Bbf_r (p)$ and that $\|T'\| (\Bbf_r (p))\geq C^{-1} r^m$ for some constant $C(m,n)$. {Moreover, from \eqref{e:Linfty-crude} in Proposition \ref{p:Lipschitz-1}}, we have 
\[
\dist (q, \alpha_1) \leq C \hat{\Ebf} (T, \Sbf^\prime, \Bbf_1)^{1/2} + C \mathbf{A}\, \qquad \forall q\in \Bbf_r(p)\cap \spt(T^\prime)
\]
for some constant $C = C(Q,m,n,\bar{n})$. {Since $\hat{\Ebf}^2(T,\Sbf^\prime,\Bbf_1) + \Abf^2 \leq C\eps_0^2\boldsymbol{\mu}(\Sbf^\prime)^2$, if $\eps_0$ is chosen sufficiently small we then easily conclude from the above and \eqref{e:dist-q-lower-bound} that}
\[
\dist (q, \pi) \geq C^{-1} \boldsymbol{\mu} (\mathbf{S}^\prime) \qquad \forall q\in \Bbf_r(p) \cap \spt (T')\, .
\]
Squaring and integrating the latter inequality with respect to {$d\|T^\prime\|$}, and using the lower bound $\|T'\| (\Bbf_r (p))\geq C^{-1} r^m$ we reach 
\[
\boldsymbol{\mu} (\Sbf^\prime)^2 \leq C \hat{\Ebf} (T, \pi, \Bbf_1)\, .
\]
However, since $\pi$ is an arbitrary $m$-dimensional plane, this completes the proof of \eqref{e:other-mu-control}.
\end{proof}

We now come to the proof of Proposition \ref{p:balancing-2}. We first prove a key special case of it when $\boldsymbol{\mu}(\Sbf)$ and $\boldsymbol{\sigma}(\Sbf)$ are comparable.

\begin{proposition}\label{p:balancing-3}
    Assume that $T,\Sigma$, and $\Sbf$ are as in Proposition \ref{p:balancing}. Fix $\eta>0$. Then, there exist constants $C = C(m,n,\bar{n},N,\eta)$ and $\eps = \eps(m,n,\bar{n},N,\eta)>0$ with the following property. If \eqref{e:evensmallerexcess} and the assumptions of Proposition \ref{p:balancing} hold with this choice of $\eps$, and moreover if
    \begin{equation}\label{e:sep-planes}
    \eta\boldsymbol{\mu}(\Sbf) \leq \boldsymbol{\sigma}(\Sbf)\, ,
    \end{equation}
    then $\Sbf$ is $C$-balanced.
\end{proposition}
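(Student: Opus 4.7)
The plan is to argue by contradiction via a compactness/blow-up procedure, splitting according to whether the cones in the contradicting sequence degenerate to a single plane. In both cases the argument concludes with a rigidity statement about the limiting object: Morgan's theorem (Lemma \ref{l:frank-distance}) in the nondegenerate case, and Proposition \ref{p:two-sided-bound} combined with Corollary \ref{c:elementary-planes} in the degenerate case. Concretely, if the proposition fails then for the fixed $\eta$ I would extract a sequence $(T_k,\Sigma_k,\Sbf_k)$ satisfying the hypotheses with $\varepsilon=\varepsilon_k\downarrow 0$ but such that some pair of planes $\alpha_1^k,\alpha_2^k\subset\Sbf_k$ obeys $\theta_2(\alpha_1^k,\alpha_2^k)\geq k\,\theta_1(\alpha_1^k,\alpha_2^k)$. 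Up to rotation and subsequence I may assume $V(\Sbf_k)$ is a fixed $(m-2)$-plane $V$ and $\mu_k:=\boldsymbol{\mu}(\Sbf_k)\to\mu_\infty\in[0,1]$.

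In the nondegenerate case $\mu_\infty>0$, each $\alpha_i^k$ converges to an $m$-plane $\alpha_i^\infty\supset V$ and $\mathbb{E}(T_k,\Sbf_k,\Bbf_1)+\mathbf{A}_k^2\to 0$. Standard compactness of area-minimizing currents then yields a limit $T_\infty$ in $\Bbf_1$, area-minimizing with support contained in $\Sbf_\infty:=\bigcup_i\alpha_i^\infty$. The vanishing of the reverse one-sided excess $\hat{\mathbf{E}}(\Sbf_k,T_k,\Bbf_1)$ ensures $\spt(T_\infty)$ also contains $\Sbf_\infty\cap\Bbf_1\setminus B_a(V)$, and the constancy theorem then gives $T_\infty=\sum_i Q_i\llbracket\alpha_i^\infty\rrbracket$ with every $Q_i\geq 1$. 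Lemma \ref{l:frank-distance} forces $\theta_1(\alpha_i^\infty,\alpha_j^\infty)=\theta_2(\alpha_i^\infty,\alpha_j^\infty)$ for every pair, which by continuity of the Morgan angles on planes sharing the common subspace $V$ contradicts the unbounded ratio along the sequence.

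In the degenerate case $\mu_k\to 0$, all planes collapse to a single $m$-plane $\pi_\infty\supset V$. I would write each $\alpha_i^k$ as the graph of a linear map $L_i^k:\pi_\infty\to\pi_\infty^\perp$ vanishing on $V$ and rescale $\widetilde L_i^k:=\mu_k^{-1}L_i^k$; these have uniformly bounded norm and, up to subsequence, converge to linear maps $\widetilde L_i^\infty$ whose graphs form a cone $\widetilde\Sbf_\infty$ with $\boldsymbol{\mu}(\widetilde\Sbf_\infty)\leq 1$ and $\boldsymbol{\sigma}(\widetilde\Sbf_\infty)\geq \eta$. The $L^\infty$ height bound \eqref{e:Linfty-estimate} of Theorem \ref{thm:main-estimate} places $\spt(T_k)$ inside an $o(\mu_k)$ neighborhood of $\Sbf_k$, so applying Almgren's strong Lipschitz approximation of $T_k$ over $\pi_\infty$ (on a slightly smaller ball) and rescaling by $\mu_k$ produces a $Q$-valued map $v_k$. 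The tilt-excess bound \eqref{e:tilt-estimate} combined with $\mu_k^{-2}\mathbf{A}_k^2\to 0$ yields uniform $W^{1,2}$ bounds for the $v_k$, and the standard coarse blow-up argument (cf. \cite{DLS14Lp}*{Theorem 2.6}) provides a subsequential strong $W^{1,2}$ limit $v_\infty$ which is $Q$-valued Dir-minimizing. The two-sided nature of $\mathbb{E}$ together with the $\eta$-separation forces $v_\infty=\sum_i Q_i\llbracket\widetilde L_i^\infty\rrbracket$ with each $Q_i\geq 1$ and $\sum_iQ_i=Q$. Proposition \ref{p:two-sided-bound} applied to $v_\infty$ bounds the Morgan-angle ratios of the $\widetilde L_i^\infty$, and Corollary \ref{c:elementary-planes} (with $\lambda=\mu_k$) transfers this bound back to the original planes $(\alpha_i^k,\alpha_j^k)$ for all sufficiently large $k$, contradicting the unbounded ratio.

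The principal difficulty I expect is the degenerate case, specifically identifying $v_\infty$ as exactly the superposition $\sum_i Q_i\llbracket\widetilde L_i^\infty\rrbracket$ with every $Q_i\geq 1$. This identification rests on combining the two-sided character of $\mathbb{E}$ (which keeps $\spt(T_k)$ close to each plane of $\Sbf_k$ and vice versa) with the uniform angular separation hypothesis \eqref{e:sep-planes}, which after rescaling by $\mu_k$ ensures the target planes of $v_\infty$ remain distinct and each carries positive multiplicity; the fact that the spine $V$ has vanishing $W^{1,2}$-capacity is what allows one to take limits across $V$ without loss. The remaining ingredients, namely the Dir-minimality of $v_\infty$ and the plane-comparison estimate of Corollary \ref{c:elementary-planes}, are essentially off-the-shelf given the height bound from Part \ref{p:heightbd}.
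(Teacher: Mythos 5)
Your proposal is correct and follows essentially the same route as the paper: a contradiction/compactness argument split according to whether the cones degenerate to a single plane, with Morgan's rigidity (Lemma \ref{l:frank-distance}) settling the non\nobreakdash-degenerate case and the Dir-minimizer two-sided angle bound (Proposition \ref{p:two-sided-bound}) plus the scaling-comparison of Corollary \ref{c:elementary-planes} settling the degenerate one. A few small remarks: the paper splits on $\limsup_k\boldsymbol{\sigma}(\Sbf_k)$ rather than $\boldsymbol{\mu}(\Sbf_k)$ and normalizes by $\boldsymbol{\sigma}(\Sbf_k)$ in the degenerate blow-up, but under \eqref{e:sep-planes} these are comparable, so your choice is equivalent. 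For the step you correctly flag as the main difficulty (showing every $Q_i\geq 1$ in the blow-up limit), the paper's mechanism is to use Lemma \ref{l:sep_region} to locate a disk in one of the planes where it is uniformly separated from all the others, and then apply Corollary \ref{c:splitting-0} in a small cylinder there to split $T_k$ into disjoint sheets with positive projected degrees; that is the concrete way to extract positive multiplicities from the two-sidedness of $\mathbb{E}$. Finally, the remark about vanishing $W^{1,2}$-capacity of $V$ is not actually needed for this proposition, since the Lipschitz approximation of \cite{DLS14Lp} is over a full disk and its limit is already Dir-minimizing there; the capacity argument (Proposition \ref{p:remove-spine}) only becomes necessary in the more general Proposition \ref{p:balancing-2}, where the graphical approximations are defined only away from the spine.
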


Notice in particular that this implies the $N=2$ case of Proposition \ref{p:balancing-2}, as in that situation we have $\boldsymbol{\mu}(\Sbf) = \boldsymbol{\sigma}(\Sbf)$.

\begin{proof}
    We argue by contradiction. If the proposition were false, then we could find sequences $T_k,\Sigma_k,\Abf_k$ and $\Sbf_k = \alpha_1^k\cup \dots\cup \alpha_N^k$ as in Proposition \ref{p:balancing-2} such that
$$\Abf_k^2 + \mathbb{E}(T_k,\Sbf_k,\Bbf_1) \leq \eps_k^2 \boldsymbol{\sigma}(\Sbf_k)^2$$
and for which \eqref{e:sep-planes} holds for each $k$, but (after relabelling the planes) the Morgan angles $\theta_1(\alpha_1^k,\alpha_2^k)$ and $\theta_2(\alpha^k_1,\alpha_2^k)$ obey
\begin{equation}\label{e:notbalanced1}
\frac{\theta_1(\alpha^k_1,\alpha^k_2)}{\theta_2(\alpha^k_1,\alpha^k_2)}\to 0.
\end{equation}
We have two possibilities: either (a) $\limsup_{k\to\infty}\boldsymbol{\sigma}(\Sbf_k) >0$ or (b) $\limsup_{k\to\infty}\boldsymbol{\sigma}(\Sbf_k) = 0$. In the case of (a) the situation is simple: we simply pass to a subsequence (which we do not relabel) for which $\lim_{k\to\infty} \boldsymbol{\sigma}(\Sbf_k)>0$, $T_k\to T_\infty$ as currents, where $T_\infty$ is some $m$-dimensional area-minimizing integral current in $\Bbf_1$, and $\Sbf_k\to \Sbf_\infty = \alpha_1^\infty\cup \cdots\cup \alpha_N^\infty$ for a collection of \emph{$N$ distinct} planes $\alpha_i^\infty$, locally in Hausdorff distance, {with $\alpha^k_i\to \alpha^\infty_i$ for each $i=1,\dotsc,N$}. By the assumption that $\mathbb{E}(T_k,\Sbf_k,\Bbf_1) \to 0$, we must have $\spt(T_\infty)\cap \Bbf_1 = \Sbf_\infty\cap \Bbf_1$. 
Indeed, {we first see that} $\spt(T_\infty)\cap\Bbf_1\subset \Sbf_\infty\cap\Bbf_1$ and then the constancy theorem implies $T_\infty \res \Bbf_1 = \sum_i k_i \llbracket \alpha^\infty_i\rrbracket$ for some $k_i \in \mathbb Z$. The only obstruction to the equality $\spt (T_\infty) \cap \Bbf_1 = \Sbf_\infty \cap \Bbf_1$ is then the vanishing of some of the coefficients $k_i$, which would come from orientation cancellation in the limit of the $T_k$; however, this would contradict the $T_k$ being area-minimizing. 

But then by Lemma \ref{l:frank-distance}, we would necessarily have $\theta_1(\alpha_i^\infty,\alpha_j^\infty) = \theta_2(\alpha_i^\infty,\alpha_j^\infty) > 0$ for each $i<j\in\{1,\dots,N\}$, where the fact that these angles are non-zero follows from the assumption that ${\liminf_{k}}\boldsymbol{\sigma}(\Sbf_k)>0$. But we clearly have $\theta_1(\alpha_1^k,\alpha_2^k)\to \theta_1(\alpha_1^\infty,\alpha_2^\infty)$ and $\theta_2(\alpha_1^k,\alpha_2^k)\to \theta_2(\alpha^\infty_1,\alpha^\infty_2)$ from the local Hausdorff distance convergence in $\Bbf_1$, and hence we would have $\theta_1(\alpha^k_1,\alpha^k_2)/\theta_2(\alpha^k_1,\alpha^k_2)\to 1$, contradicting \eqref{e:notbalanced1}. 

Now let us handle the case (b). Here, we can pass to a further subsequence to ensure that $\alpha_i^k\to \alpha_\infty$ for all $i=1,\dots, N$, locally in Hausdorff distance, for some $m$-dimensional plane $\alpha_\infty$. Since $\mathbb{E}(T_k,\Sbf_k,\Bbf_1)\to 0$ we therefore also have that $\hat{\mathbf{E}}(T_k,\alpha_\infty,\Bbf_1) \to 0$. Note that we may assume without loss of generality that $\alpha^k_1 = \alpha_\infty$ for all $k$: indeed, we can choose a sequence of rotations $q_k:\R^{m+n}\to \R^{m+n}$ with $q_k\to \text{id}_{\R^{m+n}}$ and obeying $q_k(\alpha_1^k) = \alpha_\infty$, and then consider $q_k(\Sbf_k)$ and $(q_k)_\sharp T_k$; note that $\boldsymbol{\sigma}(\Sbf_k) = \boldsymbol{\sigma}(q_k(\Sbf_k))$ {(see the discussion in Section \ref{s:rotations})}. Hence, for $k$ sufficiently large, we can consider the (strong) Lipschitz approximations $f_k: B_{3/4}(0,\alpha_\infty)\to \mathcal{A}_Q(\alpha_\infty^\perp)$ of Almgren (see \cite{DLS14Lp}*{Theorem 2.4}) for the $T_k$ relative to $\alpha_\infty$ (here, the assumption $\hat{\mathbf{E}}(T_k,\alpha_\infty,\Bbf_1)\to 0$ is sufficient in light of Allard's tilt-excess inequality, see for example \cite{Allard_72}*{Proposition 4.1}). Now set
\[
\bar{f}_k \coloneqq \frac{f_k}{\boldsymbol{\sigma}(\Sbf_k)}.
\]
Note that because $\alpha_1^k = \alpha_\infty$ for all $k$, we have
\[
\hat{\mathbf{E}}(T_k,\alpha_\infty,\Bbf_1) \leq 4\hat{\mathbf{E}}(T_k,\Sbf_k,\Bbf_1) + C{\boldsymbol{\mu}}(\Sbf_k)^2 \leq (4\eps_k^2+C{\eta^{-1}})\boldsymbol{\sigma}(\Sbf_k)^2
\]
and so, again using Allard's tilt-excess inequality, the estimates from Almgren's strong Lipschitz approximation (see again \cite{DLS14Lp}*{Theorem 2.4}) give that $\bar{f}_k\to \bar{f}_\infty$ strongly in $W^{1,2}_{\text{loc}}(\Bbf_{3/4})\cap L^2(\Bbf_{3/4})$ for some Dir-minimizer $\bar{f}_\infty$. Note also that, if for each $i$, we denote by $L_i^k$ the linear maps parameterizing the planes $\alpha_i^k$ over $\alpha_\infty$ (in particular, $L^k_1 = 0$ by construction, although this will not play any role in the proof), then we necessarily have that $|L_i^k| \leq {C\boldsymbol{\mu}(\Sbf_k) \leq C{\eta^{-1}}\boldsymbol{\sigma}(\Sbf_k)}$ for some dimensional constant $C$, and so if we write
$$\bar{L}_i^k := \frac{L_i^k}{\boldsymbol{\sigma}(\Sbf_k)}\, , \hspace{3em}$$
then we can pass to a further subsequence to ensure also that $\bar{L}_i^k \to \bar{L}_i$ for some linear map $\bar{L}_i$ over $\alpha_\infty$. Clearly, for each $i\neq j$ we have $|\bar{L}_i-\bar{L}_j|\geq c>0$ by definition of $\boldsymbol{\sigma}(\Sbf_k)$, and so $\bar{L}_i\neq \bar{L}_j$ whenever $i\neq j$. Noting that
$$
\int \dist^2(x,\Sbf_k)\, d\|\mathbf{G}_{f_k}\|(x) \leq C\hat{\mathbf{E}}(T_k,\Sbf_k,\Bbf_1) + C\left(\Abf_k^2 + \hat{\mathbf{E}}(T_k,\alpha_\infty,\Bbf_1)\right)^{1+\gamma} = o (\boldsymbol{\sigma}(\Sbf_k)^2)\, 
$$
{where $o(\boldsymbol{\sigma}(\Sbf_k)^2)$ denotes a term which converges to zero as $k\to\infty$ when divided by $\boldsymbol{\sigma}(\Sbf_k)^2$.} Dividing both sides of the above inequality by $\boldsymbol{\sigma}(\Sbf_k)^2$ and taking $k\to \infty$, we see that necessarily the graph of $\bar{f}_\infty$ is supported in the union of the graphs of $\bar{L}_i$, $i=1,\dots,N$, and so in particular we have
$$\bar{f}_\infty = \sum_i Q_i\llbracket \bar{L}_i\rrbracket$$
for some non-negative integers $Q_i$ such that $\sum_i Q_i = Q$. We claim that necessarily both $Q_1,Q_2$ {(and in fact all $Q_i$, but we won't need this to see the contradiction)} are strictly positive. Indeed, once we have this we can complete the proof of the claim in the following manner: for every constant $\lambda>0$ the map $\lambda \bar{f}_\infty$ is Dir-minimizing, and so if $Q_1,Q_2>0$ then, by Proposition \ref{p:two-sided-bound}, the planes $\bar{\alpha}_i$ that are the graphs of ${\lambda}\bar{L}_i$ are $c_2$-balanced for all $\lambda$ sufficiently small, where $c_2$ is an absolute constant (note that necessarily the intersection of the $\bar{\alpha}_i$ is a $(m-2)$-dimensional subspace, as the intersection of the $\alpha^k_i$ is an $(m-2)$-dimensional subspace for all $k$, and so up to passing to a subsequence this subspace will converge to an $(m-2)$-dimensional subspace which is necessarily the intersection of the planes $\bar{\alpha}_i$). In particular, we have
$$\frac{\theta_1(\bar{\alpha}_1,\bar{\alpha}_2)}{\theta_2(\bar{\alpha}_1,\bar{\alpha}_2)} \geq c_2 > 0.$$
Fix such a constant $\lambda$ for which the above is true. Now, let $\bar{\alpha}_i^k$ be the planes which are the graphs of $\lambda\bar{L}^k_i$, $i=1,2$. Shrinking $\lambda$ if necessary (which can be done independently of $k$, as $|\bar{L}^k_i|\leq C$ for all $k$ and some geometric constant $C$), we can infer from Corollary \ref{c:elementary-planes} that we have
$$\frac{\theta_1(\bar{\alpha}_1^k,\bar{\alpha}_2^k)}{\theta_2(\bar{\alpha}_1^k,\bar{\alpha}_2^k)} \leq 16^2\frac{\theta_1(\alpha_1^k,\alpha_2^k)}{\theta_2(\alpha^k_1,\alpha^k_2)}.$$
But then our assumption \eqref{e:notbalanced1} implies from this that
$$\frac{\theta_1(\bar{\alpha}_1^k,\bar{\alpha}_2^k)}{\theta_2(\bar{\alpha}_1^k,\bar{\alpha}_2^k)} \to 0\, .$$
Noting that
$$\frac{\theta_1(\bar{\alpha}_1^k,\bar{\alpha}_2^k)}{\theta_2(\bar{\alpha}_1^k,\bar{\alpha}_2^k)} \to \frac{\theta_1(\bar{\alpha}_1,\bar{\alpha}_2)}{\theta_2(\bar{\alpha}_1,\bar{\alpha}_2)} \geq c_2>0$$
this gives the desired contradiction.

Therefore, all that remains to show is that $Q_1,Q_2>0$. In order to prove this, we use
    Lemma \ref{l:sep_region} to find points $\xi_k \in {\alpha^k_1\cap \partial \Bbf_{1/2}}$ and a positive constant $c_0 = c_0(m,N)$ with the property that
    \begin{equation}\label{e:separated-disc}
        \min_{j>1}\inf \{\dist (\zeta, \alpha_j^k) : \zeta\in \Bbf_{2c_0} (\xi_k)\cap \alpha_1^k\} \geq 2c_0 {\min_{j>1}\dist} (\alpha_1^k\cap \Bbf_1, \alpha_j^k \cap \Bbf_1)\, .
    \end{equation}
    In particular, for $\delta = \delta(Q,m,n,\bar{n},N)$ as in the Splitting Corollary (Corollary \ref{c:splitting-0}) and for $r=r(\delta{,\eta})$ sufficiently small, the assumptions of Corollary \ref{c:splitting-0} are satisfied in $\Cbf_{{4}r c_0} (\xi_k)$.
    
    Indeed, conditions (i), (ii), (iii) are clear. (iv) follows from \eqref{e:separated-disc}, as it gives
    \[
    \min_{j>1} \inf\{\dist(\zeta,\alpha^k_j):\zeta\in \Bbf_{2c_0}(\xi_k)\cap \alpha^k_1\}\geq 2c_0\boldsymbol{\sigma}(\Sbf_k) \geq 2c_0\eta \boldsymbol{\mu}(\Sbf_k)
    \]
    which in turn gives (iv) if we chose $r = 2\delta c_0\eta$ with a possibly smaller constant $c_0$. Note indeed that in the left hand side of (iv) we are using the distance between oriented planes, however it is just a matter of choosing the right orientation for the planes to be able to conclude that left hand side of (iv) is bounded above by $C\boldsymbol{\mu} (\Sbf)$. Finally, condition (v) follows from the fact $\Abf^2_k + \mathbb{E}(T_k,\Sbf_k,\Bbf_1)\leq \eps^2_k\boldsymbol{\sigma}(\Sbf_k)^2$, combined with the bound $\boldsymbol{\mu}(\Sbf_k)\leq \delta\min\{1,r^{-1}\varkappa\}$ just established for the above choice of $r$. 
    
    From Corollary \ref{c:splitting-0} we may conclude that $T\res \mathbf{B}_{r c_0} (\xi_k)$ splits into {currents $T_i$, $i=1,\dotsc,N$, with the $T_i$ supported in small disjoint neighborhoods of $\alpha_i^k\cap \Cbf_{{r}c_0} (\xi_k)$;} in particular, $T_1$ and $T_2$ are supported in two small disjoint neighbourhoods of $\alpha_1^k \cap \Cbf_{{r}c_0} (\xi_k)$ and $\alpha_2^k \cap \Cbf_{{r}c_0} (\xi_k)$ respectively, with the property that $(\mathbf{p}_{\alpha_1^k})_\sharp T_i = Q_i (k) \llbracket B_{{r}c_0} (\xi_k)\rrbracket$ for \emph{positive} integers $Q_i (k){\geq 1}$. Upon extraction of a subsequence we can assume the $Q_i (k)$ equal positive integers $Q'_i$ independent of $k$ and it follows easily that $Q'_i=Q_i$. This concludes the proof.
\end{proof}

We now come to the proof of Proposition \ref{p:balancing-2}. At one point in the proof we will need the following proposition, which also plays a key role later on in our work. Its proof will be given in Section \ref{p:linear}, where we collect several important facts regarding Dir-minimizers.

\begin{proposition}\label{p:remove-spine}
Assume $\Omega\subset \mathbb R^m$ is a Lipschitz domain, $V\subset \mathbb R^m$ is an $(m-2)$-dimensional plane, and $v\in W^{1,2}(\Omega;\Acal_Q(\R^n))$ is a map with the property that the restriction of $v$ to $\Omega_\varepsilon := \Omega\setminus B_\varepsilon (V)$ is Dir-minimizing for every $\varepsilon > 0$. Then $v$ is Dir-minimizing in $\Omega$.
\end{proposition}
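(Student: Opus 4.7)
The plan is to prove this via a capacity argument, exploiting the vanishing $W^{1,2}$-capacity of the $(m-2)$-dimensional subspace $V$ in $\R^m$. Let $w \in W^{1,2}(\Omega; \Acal_Q(\R^n))$ be any competitor for $v$, meaning $w$ has the same trace as $v$ on $\partial\Omega$; the aim is to show $\Dir(v, \Omega) \leq \Dir(w, \Omega)$. As a preliminary reduction, I would assume $\Gcal(v, w) \in L^\infty(\Omega)$ with some bound $M$, using the standard Lipschitz truncation of $Q$-valued Sobolev maps (see \cite{DLS_MAMS}*{Proposition~4.4}) combined with the density of Lipschitz competitors.

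Set $r(x) \coloneqq \dist(x, V)$ and introduce the logarithmic cutoff $\phi_\eps(x) \coloneqq \min\bigl(1, \max\bigl(0, \log(r(x)/\eps)/|\log \sqrt\eps|\bigr)\bigr)$, which vanishes on $B_\eps(V)$, equals $1$ outside $B_{\sqrt\eps}(V)$, and satisfies $\int_\Omega |D\phi_\eps|^2 \leq C/|\log\eps| \to 0$. The last estimate follows from the standard coarea computation using $\Hcal^m(\{r \leq s\}\cap \Omega) \leq C s^2$, which holds since $V$ has codimension two in $\R^m$. Using the Almgren bi-Lipschitz embedding $\xi \colon \Acal_Q(\R^n) \to \R^N$ together with a Lipschitz retraction $\rho \colon \R^N \to \xi(\Acal_Q(\R^n))$ from \cite{DLS_MAMS}, I would then form the interpolated competitor
\[
w_\eps \coloneqq \xi^{-1} \circ \rho \bigl(\phi_\eps\, \xi(w) + (1-\phi_\eps)\,\xi(v)\bigr).
\]
By construction $w_\eps = v$ in $B_\eps(V) \cap \Omega$ and $w_\eps = w$ outside $B_{\sqrt\eps}(V) \cap \Omega$, so $w_\eps$ shares the trace of $v$ on $\partial(\Omega \setminus B_\eps(V))$.

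Applying the hypothesis that $v$ is Dir-minimizing on $\Omega \setminus B_\eps(V)$ with $w_\eps$ as a competitor, then adding $\Dir(v, B_\eps(V) \cap \Omega)$ to both sides, yields
\[
\Dir(v, \Omega) \leq \Dir(v, B_\eps(V)\cap \Omega) + \Dir(w_\eps, A_\eps) + \Dir(w, \Omega\setminus B_{\sqrt\eps}(V)),
\]
where $A_\eps \coloneqq (B_{\sqrt\eps}(V) \setminus B_\eps(V)) \cap \Omega$. As $\eps \to 0$, the first term vanishes by absolute continuity of the integral (since $\Hcal^m(B_\eps(V) \cap \Omega) \to 0$) and the third tends to $\Dir(w, \Omega)$ by monotone convergence. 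For the middle term, the Lipschitz regularity of $\xi, \xi^{-1}, \rho$ combined with the product rule gives
\[
|Dw_\eps|^2 \leq C\bigl(|D\phi_\eps|^2 \Gcal(v,w)^2 + |Dv|^2 + |Dw|^2\bigr) \leq C\bigl(M^2 |D\phi_\eps|^2 + |Dv|^2 + |Dw|^2\bigr),
\]
where the second inequality uses the $L^\infty$ reduction. Integrating over $A_\eps$ and combining the capacity estimate for $\phi_\eps$ with absolute continuity gives $\Dir(w_\eps, A_\eps) \to 0$, producing $\Dir(v, \Omega) \leq \Dir(w, \Omega)$.

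The main obstacle will be the preliminary reduction to bounded competitors: producing, for each competitor $w$, a sequence $w^{(k)}$ with $\Gcal(v, w^{(k)}) \in L^\infty$, matching trace with $v$ on $\partial \Omega$, and $\Dir(w^{(k)}) \to \Dir(w)$. I expect this to be achievable by truncating $w$ in the ``$\Gcal(v, \cdot) \leq M$ ball'' around $v$ via a Lipschitz projection on $\Acal_Q(\R^n)$ (using the metric-space nearest-point retraction, which does not increase the Dirichlet energy) and then letting $M \to \infty$; alternatively, this bounded-competitor assumption can be circumvented by invoking a weighted codimension-two Hardy inequality for $\Gcal(v, w)$ to directly control $\int_{A_\eps} |D\phi_\eps|^2 \Gcal(v,w)^2$, at the cost of a more delicate choice of the interpolating cutoff.
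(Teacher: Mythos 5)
Your interpolation construction is essentially the same as the paper's (both build the competitor via the Almgren/White bi-Lipschitz embedding $\xi$, a Lipschitz retraction $\rho$, and a cutoff that exploits the zero $2$-capacity of $V$), but the two executions differ in a way that exposes a gap in your version.

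The paper (via Lemma~\ref{l:Dir_min_cap_zero}) sets $z \coloneqq \xi(w) - \xi(v)$, truncates $z$ \emph{component-wise in $\R^N$} to $z_M$, multiplies by $1-\rho_\eps$ where $\rho_\eps$ is a capacity cut-off, and then shows that $\xi(v) + z_M(1-\rho_\eps)$ converges \emph{strongly in $W^{1,2}$} to $\xi(w)$ as $M\to\infty$, $\eps\to 0$. The competitor is recovered by $\xi^{-1}\circ\rho$, and the Dirichlet energy passes to the limit by this strong convergence. The whole point is that the truncation is carried out in the linear space $\R^N$, where it is a trivial scalar operation, and the argument is a convergence argument rather than a term-by-term energy estimate.

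You instead estimate $\Dir(w_\eps, A_\eps)$ directly via the product rule, which produces the cross term $\int_{A_\eps} |D\phi_\eps|^2\,\Gcal(v,w)^2$, forcing the preliminary reduction to $\Gcal(v,w)\in L^\infty$. That reduction is where the proof breaks: the ``metric-space nearest-point retraction onto the $\Gcal$-ball around $v(x)$'' is not an available tool. $\Acal_Q(\R^n)$ is not a CAT(0) space for $Q\geq 2$ and $n\geq 2$, so the nearest-point projection onto a metric ball is neither obviously single-valued nor $1$-Lipschitz, and the claim that it ``does not increase the Dirichlet energy'' is unjustified. (The citation to \cite{DLS_MAMS}*{Proposition~4.4} is also off the mark --- that result concerns Lipschitz truncation in the sense of bounding the Lipschitz constant of the map, not bounding $\Gcal(v,w)$.) Your alternative via a codimension-two Hardy inequality for $\Gcal(v,w)$ is a viable repair, but you do not carry it out. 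The cleanest fix, which is what the paper does, is to perform the truncation on $\xi(w)-\xi(v)$ in $\R^N$ rather than attempting a truncation intrinsic to $\Acal_Q$, and then let the retraction $\rho$ push the interpolant back onto $\xi(\Acal_Q)$.
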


\begin{proof}[Proof of Proposition \ref{p:balancing-2}]
    We argue by induction on $N$. The case $N=2$ is already established by Proposition \ref{p:balancing-3}, since in that situation we have $\boldsymbol{\mu}(\Sbf) = \boldsymbol{\sigma}(\Sbf)$.

    So fix $N\geq 3$. We may assume inductively the validity of Proposition \ref{p:balancing-2} for all $N^\prime< N$. If $C(m,n,\bar{n},N^\prime)$ and $\eps = \eps(m,n,\bar{n},N^\prime)$ denote the corresponding constants, set
    $$C_1:= \max_{N^\prime\leq N-1}C(m,n,\bar{n},N^\prime)\ \ \ \ \text{and}\ \ \ \ \eps_1^*:= \min_{N^\prime\leq N-1}\eps(m,n,\bar{n},N^\prime).$$
    Suppose however that the conclusion of the proposition fails for $N$. We may therefore find sequences $T_k$, $\Sigma_k$, $\Abf_k$, $\Sbf_k = \alpha^k_1\cup\cdots\cup \alpha^k_N$ and $\eps_k\downarrow 0$ with
    \begin{equation}\label{e:seq-small-excess}
        \mathbf{A}_k^2 \leq \varepsilon_k^2 \mathbb{E} (T_k, \mathbf{S}_k, \Bbf_1) \leq \varepsilon_k^4 \boldsymbol{\sigma}(\Sbf_k)^2,
    \end{equation}
    such that, up to relabelling the planes in $\Sbf_k$, we have
    \begin{equation}\label{e:notbalanced2}
\frac{\theta_1(\alpha^k_1,\alpha^k_2)}{\theta_2(\alpha^k_1,\alpha^k_2)}\to 0.
    \end{equation}
    Now, we may assume that 
    \begin{equation}\label{e:incomparable-max-min}
        \lim_{k\to\infty}\boldsymbol{\sigma}(\Sbf_k)/\boldsymbol{\mu}(\Sbf_k) = 0\, .
    \end{equation}
    Indeed, if this were not true, then up to passing to a subsequence we may assume that $\boldsymbol{\sigma}(\Sbf_k)\geq \eta^*\boldsymbol{\mu}(\Sbf_k)$ for all $k$ and some $\eta^*>0$, at which point we may apply Proposition \ref{p:balancing-3} to get that for all $k$ sufficiently large, the cones $\Sbf_k$ are $C$-balanced for some $C>0$ independent of $k$. But then we would have $\theta_1(\alpha^k_1,\alpha^k_2)/\theta_2(\alpha^k_1,\alpha^k_2)\geq C^{-1}>0$ for all $k$, in direct contradiction to \eqref{e:seq-small-excess}.

    Now apply the layer subdivision lemma (Lemma \ref{l:algorithm}) {with $\delta=1$} to each $\Sbf_k$ {(the exact choice of $\delta$ is unimportant)}. This provides subcollections $\{1,\dotsc,N\} = I_k(0)\supsetneq I_k(1)\supsetneq \cdots\supsetneq I_k(\kappa(k))$ satisfying properties (i)--(iv) of Lemma \ref{l:algorithm}. Up to extracting a further subsequence (which we again do not relabel) we can assume that $\kappa(k)$ is independent of $k$, and moreover that all the sets of indices $I_k(l)$, $\ell=0,1,\dotsc,\kappa$ are independent of $k$; we therefore label them $I(l)$. Since we are assuming now that $\boldsymbol{\sigma}(\Sbf_k)/\boldsymbol{\mu}(\Sbf_k)\to 0$, we necessarily have that $\kappa\geq 1$, as otherwise conclusion (ii) of Lemma \ref{l:algorithm} would give that $\eta\boldsymbol{\mu}(\Sbf_k) \leq \boldsymbol{\sigma}(\Sbf_k)$ for all $k$ and for some fixed $\eta = \eta(N)>0$ independent of $k$, giving a contradiction.

    For each $k$ and $l\in \{0,1,\dotsc,\kappa\}$, write $\Sbf_k^{(l)} := \bigcup_{i\in I(\ell)}\alpha^k_i$ for the cone at the $l^{\text{th}}$ layer of $\Sbf_k$. Also write
    $$\mathbf{d}_k^{(l)}:= \max_{i\in I(0)}\min_{j\in I(l)}\dist(\alpha_i^k\cap\Bbf_1,\alpha_j^k\cap\Bbf_1).$$
    From (iii) of Lemma \ref{l:algorithm}, we know that
    \begin{equation}\label{e:balancing-induction-1}
        \frac{\boldsymbol{\sigma}(\Sbf_k^{(\ell-1)})}{\mathbf{d}_k^{(l)}}\geq\eta>0
    \end{equation}
    for all $k$ and $\ell\in \{1,\dotsc,\kappa\}$, where again $\eta = \eta(N)>0$ is a fixed constant.

    Suppose first that we have
    \begin{equation}\label{e:balancing-induction-2}
        \lim_{k\to\infty}\frac{\mathbf{d}_k^{(1)}}{\boldsymbol{\sigma}(\Sbf_k^{(1)})} =0\, .
    \end{equation}
    Then we have $\mathbf{d}_k^{(1)}\leq \delta_k\boldsymbol{\sigma}(\Sbf_k^{(1)})$ for some non-negative sequence $\delta_k\to 0$. We can thus estimate
    \begin{equation}\label{e:balancing-induction-3}
    \begin{aligned}
    \hat{\Ebf}(T_k,\Sbf_k^{(1)},\Bbf_1) & \leq \hat{\mathbf{E}}(T_k,\Sbf_k,\Bbf_1) + C(\mathbf{d}_k^{(1)})^2\\
    &\leq \eps_k^2\boldsymbol{\sigma}(\Sbf_k)^2 + C\delta_k^2 \boldsymbol{\sigma}(\Sbf_k^{(1)})^2 = (\eps_k^2 + C\delta_k^2)\boldsymbol{\sigma}(\Sbf_k^{(1)})^2\, , 
    \end{aligned}
    \end{equation}
    where $C = C(m,n,Q)$ and we have used that $\boldsymbol{\sigma}(\Sbf_k)\leq \boldsymbol{\sigma}(\Sbf_k^{(1)})$, since $\Sbf_k^{(1)}\subset \Sbf_k$; this inclusion also gives 
    $$\hat{\Ebf}(\Sbf_k^{(1)},T_k,\Bbf_1)\leq \hat{\Ebf}(\Sbf_k,T_k,\Bbf_1) \leq \eps_k^2\boldsymbol{\sigma}(\Sbf_k)^2\leq \eps_k^2\boldsymbol{\sigma}(\Sbf_k^{(1)})^2\, .$$
    Thus, for all $k$ sufficiently large, we will have that hypothesis \eqref{e:evensmallerexcess} holds with the smallness threshold $\eps_1^*$. This allows us to conclude, by the induction hypothesis, that $\Sbf_k^{(1)}$ is $C$-balanced for some $C = C(m,n,\bar{n},N)$ (in fact, $C = C_1$). 

    Now, we know from \eqref{e:balancing-induction-2} that $\mathbf{d}_k^{(1)}\to 0$. Analogously to case (b) in the proof of Proposition \ref{p:balancing-3}, we perform a blow-up. This time, however, we will be normalizing by $\mathbf{d}_k$ and building graphical approximations over the (balanced) cones $\Sbf_k^{(1)}$. Indeed, \eqref{e:balancing-induction-3} tells us that the hypotheses of Proposition \ref{p:refined} (and therefore Propositions \ref{p:coherent} and \ref{p:first-blow-up}) hold for $T_k$ relative to the balanced cone $\Sbf_k^{(1)}$ for all $k$ sufficiently large. Thus, $T_k$ splits into a sum of disjoint currents, denoted by $T_k^j$, near each plane $\alpha^k_j\subset \Sbf_k^{(1)}$. 

    Now choose any sequences $\varsigma_k\downarrow 0$ and $\sigma_k\downarrow 0$. Passing to a subsequence, we may apply Proposition \ref{p:first-blow-up} with $\sigma_k,\varsigma_k$ in place of $\sigma$, $\varsigma$ to $T_k$ and $\Sbf_k^{(1)}$ to find multi-valued Lipschitz maps $u^j_k$ over the sets $R^k_i:= (R\setminus B_{\sigma_k}(V))\cap \alpha_i^k$ for $i\in I(1)$ with the property that, if $v^i_k := \hat{\Ebf}(T_k,\Sbf_k^{(1)},\Bbf_1)^{-1/2}u_k^i$, then
    \begin{equation}\label{e:balancing-induction-4}
    d_{W^{1,2}}(v^i_k,w^i_k)\leq\varsigma_k
    \end{equation}
    where $w^i_k$ are Dir-minimizing maps on $R^k_i$. So, if we set $\bar{u}^k_i:= (\mathbf{d}_k^{(1)})^{-1}u_i^k$, then estimating similarly to \eqref{e:balancing-induction-3}, since $\boldsymbol{\sigma}(\Sbf_k)\leq \mathbf{d}_k^{(1)}$, we have
    $$\hat{\Ebf}(T_k,\Sbf_k^{(1)},\Bbf_1)\leq (\eps_k^2+C)(\mathbf{d}_k^{(1)})^2$$
    meaning that we can blow-up via rescaling by $\mathbf{d}_k^{(1)}$ instead of $\hat{\Ebf}(T_k,\Sbf_k^{(1)},\Bbf_1)^{-1/2}$ (see the remark after Proposition \ref{p:first-blow-up}). Indeed, the estimates \eqref{e:graphicality-1}--\eqref{e:non-graphicality}, combined with the conclusion (ii) of Proposition \ref{p:first-blow-up}, \eqref{e:balancing-induction-4}, and a diagonal argument give that $\bar{u}^k_i\to \bar{u}^\infty_j$ locally strongly in $W^{1,2}(\Bbf_1(0)\setminus V)$ and locally uniformly in $\Bbf_1(0)\setminus V$, for some $\bar{u}^\infty_j$ which is Dir-minimizing in $\Bbf_{1-\eps}(0)\setminus B_\rho(V)$ for every $0<\eps,\rho<1/2$. Note also that, if for each $i$, we denote by $L_i^k$ the linear maps parameterizing the planes $\alpha^k_i\in \Sbf_k$ over the closest plane in $\Sbf_k^{(1)}$, then we necessarily have that $|L^k_i|\leq C\mathbf{d}_k^{(1)}$ for some dimensional constant $C$. So, if we write
    $$\bar{L}_i^k:= (\mathbf{d}_k^{(1)})^{-1}L_i^k$$
    then we can pass to a further subsequence to ensure also that $\bar{L}_i^k\to \bar{L}_i$ for some linear map $\bar{L}_i$ over some plane $\alpha_i^\infty$ in $\Sbf^{(1)}_\infty:= \lim_{k\to\infty}\Sbf^{(1)}_k$ (note that $\Sbf^{(1)}_\infty$ need not be a single plane). However, as $|L^k_i-L^k_j|\geq c\boldsymbol{\sigma}(\Sbf_k)$ for some dimensional constant $c$ (by definition of $\boldsymbol{\sigma}(\Sbf_k)$) whenever $L^k_i,L^k_j$ are defined over the same plane, from \eqref{e:balancing-induction-1} we know that $|\bar{L}_i-\bar{L}_j|\geq c\eta>0$, i.e. $\bar{L}_i\neq\bar{L}_j$ for $i\neq j\in I(0)$. Note now that
    $$\int\dist^2(x,\Sbf_k)\, d\|\mathbf{G}_{u^k_j}\|(x)\leq C\hat{\Ebf}(T_k,\Sbf_k,\Bbf_1) + C\left(\Abf_k^2+\hat{\Ebf}(T_k,\Sbf_k,\Bbf_1)\right)^{1+\gamma} = o\left((\mathbf{d}_k^{(1)})^2\right)$$
    where $\gamma = \gamma(Q,m,n,\bar{n})>0$ is as in Proposition \ref{p:Lipschitz-1}. Thus, dividing both sides by $(\mathbf{d}_k^{(1)})^2$ and taking $k\to\infty$, we see that necessarily the graph of $\bar{u}_j^\infty$ is supported on the union of all the graphs of the maps $\bar{L}_i$ associated to the plane $\alpha_j$. Letting $J_j$ denote this collection of indices $i$ for $j\in I(1)$, on $\Omega = (\Bbf_1\cap \alpha_j)\setminus V$ we have
    $$\bar{u}_j^\infty = \sum_{i\in J_j}Q^j_i\llbracket \bar{L}_i\rrbracket$$
    for some non-negative integers $Q^j_i$ such that $\sum_{i,j}Q^j_i = Q$. Following a similar argument to the proof in Proposition \ref{p:balancing-3}, we can in fact show that $Q_i^j\geq 1$ for each $i,j$. Now, since $\bar{u}^\infty_j$ is Dir-minimizing in $\Bbf_{1-\eps}\setminus B_\rho(V)$ for every $\eps,\rho>0$, we may now apply Proposition \ref{p:remove-spine} to conclude that $\bar{u}^\infty_j$ extends to a Dir-minimizer (which we still denote by $\bar{u}^\infty_j$) on $\Bbf_{1/2}\cap \alpha_j$. If the unbalancing assumption from \eqref{e:notbalanced2} has $1,2\in J_j$ for some $j$, then we can argue as in case (b) of Proposition \ref{p:balancing-3} to arrive at a contradiction. If we have $1\in J_j$ and $2\in J_{j^*}$ for some $j\neq j^*$, we can again argue in the same way, except now using the fact that $\Sbf^{(1)}_k$ is balanced, and so the planes $\alpha^k_{j}$, $\alpha^k_{j^*}$ are balanced. Thus, in this situation we arrive at a contradiction.
    
   To summarize, we have now shown that \eqref{e:balancing-induction-2} cannot hold. From Lemma \ref{l:algorithm} (as $\delta=1$) we know that $\mathbf{d}_k^{(1)}\leq \boldsymbol{\sigma}(\Sbf_k^{(1)})$, and so we may therefore pass to a subsequence to ensure that
   \begin{equation}\label{e:balancing-induction-5}
       \mathbf{d}_k^{(1)}\geq c_1\boldsymbol{\sigma}(\Sbf^{(1)}_k)
   \end{equation}
   for all $k$ and for some constant $c_1>0$.

   To progress, we now move to the next layer, namely the cones $\Sbf^{(2)}_k$. We know from \eqref{e:balancing-induction-1} that $\boldsymbol{\sigma}(\Sbf_k^{(1)})\geq\eta\mathbf{d}^{(2)}_k$. Now, if we have
   \begin{equation}\label{e:balancing-induction-6}
       \lim_{k\to\infty}\frac{\mathbf{d}^{(2)}_k}{\boldsymbol{\sigma}(\Sbf_k^{(2)})} = 0
   \end{equation}
   we argue that we can follow an analogous argument to that above when \eqref{e:balancing-induction-2} held, except now performing a blow-up of $T_k$ relative to $\Sbf^{(2)}_k$. Indeed, if \eqref{e:balancing-induction-6} holds, then we have $\mathbf{d}^{(2)}_k\leq \delta_k^\prime \boldsymbol{\sigma}(\Sbf_k^{(2)})$, for some non-negative sequence $\delta_k^\prime\to 0$. We can estimate as in \eqref{e:balancing-induction-3} to find
   \begin{align*}
    \hat{\Ebf}(T_k,\Sbf_k^{(2)},\Bbf_1) &\leq \hat{\Ebf}(T_k,\Sbf_k,\Bbf_1) + C(\mathbf{d}^{(2)}_k)^2 \leq \eps_k^2\boldsymbol{\sigma}(\Sbf_k)^2 + C(\delta_k^\prime)^2\boldsymbol{\sigma}(\Sbf_k^{(2)})^2\\
    &\leq (\eps_k^2+C(\delta_k^\prime)^2)\boldsymbol{\sigma}(\Sbf_k^{(2)})^2
    \end{align*}
   where again we have used $\boldsymbol{\sigma}(\Sbf_k)\leq \boldsymbol{\sigma}(\Sbf_k^{(2)})$ since $\Sbf_k^{(2)}\subset\Sbf_k$. Therefore, once again we get that the hypothesis \eqref{e:evensmallerexcess} holds with the smallness threshold $\eps_1^*$ for all $k$ sufficiently large, and so by the induction hypothesis we have that $\Sbf^{(2)}_k$ is $C$-balanced for some $C = C(m,n,\bar{n},N)$. The only aspect of the above blow-up procedure which we need to check in this case is the separation of all the rescaled linear functions of the planes in $\Sbf_k$ over $\Sbf_k^{(2)}$. As we are rescaling our approximations by $\mathbf{d}_k^{(2)}$, we therefore need to show that $\frac{\boldsymbol{\sigma}(\Sbf_k)}{\mathbf{d}_k^{(2)}}$ has a uniform lower bound. Indeed, combining \eqref{e:balancing-induction-1} and \eqref{e:balancing-induction-5}, we have
   \begin{equation}\label{e:balancing-induction-7}
   \frac{\boldsymbol{\sigma}(\Sbf_k)}{\mathbf{d}^{(2)}_k} = \frac{\boldsymbol{\sigma}(\Sbf_k)}{\mathbf{d}_k^{(1)}}\cdot \frac{\mathbf{d}_k^{(1)}}{\boldsymbol{\sigma}(\Sbf_k^{(1)})}\cdot\frac{\boldsymbol{\sigma}(\Sbf^{(1)}_k)}{\mathbf{d}_k^{(2)}} \geq \eta \cdot c_1\cdot \eta = \eta^2 c_1>0\, .
   \end{equation}
   This tells us that we may repeat the blow-up argument as above, with $\mathbf{d}_k^{(1)}$ replaced by $\mathbf{d}_k^{(2)}$ and $\Sbf^{(1)}_k$ replaced with $\Sbf^{(2)}_k$, to contradict the lack of balancing \eqref{e:notbalanced2} of $\Sbf_k$ when \eqref{e:balancing-induction-6} holds.

   Proceeding now inductively, we see that we must be able to find a subsequence with
   $$\mathbf{d}_k^{(l)}\geq c_l \boldsymbol{\sigma}(\Sbf_k^{(l)})$$
   for all $l=1,\dotsc,\kappa$ and some constants $c_l>0$. However, we can now show that this is in direct contradiction to the assumption \eqref{e:incomparable-max-min}. Indeed, an identical calculation from \eqref{e:balancing-induction-7} gives 
   $$\frac{\boldsymbol{\sigma}(\Sbf_k)}{\boldsymbol{\mu}(\Sbf_k)} = \frac{\boldsymbol{\sigma}(\Sbf^{(\kappa)}_k)}{\boldsymbol{\mu}(\Sbf_k)}\cdot\prod_{\ell=0}^{\kappa-1} \left(\frac{\boldsymbol{\sigma}(\Sbf_k^{(\ell)})}{\mathbf{d}_k^{(\ell+1)}}\cdot\frac{\mathbf{d}_k^{(\ell+1)}}{\boldsymbol{\sigma}(\Sbf_k^{(\ell+1)})}\right) \geq \frac{\boldsymbol{\sigma}(\Sbf^{(\kappa)}_k)}{\boldsymbol{\mu}(\Sbf_k)}\cdot \eta^{\kappa}\prod_{l=1}^\kappa c_l\, ,$$
   where we recall $\Sbf_k^{(0)}=\Sbf_k$. But we know from Lemma \ref{l:algorithm}(i) and (ii) that $\boldsymbol{\mu}(\Sbf_k) = \boldsymbol{\mu}(\Sbf_k^{(\kappa)})$ and $\eta\boldsymbol{\mu}(\Sbf_k^{(\kappa)})\leq \boldsymbol{\sigma}(\Sbf_k^{\kappa})$. Hence, the above gives
   $$\frac{\boldsymbol{\sigma}(\Sbf_k)}{\boldsymbol{\mu}(\Sbf_k)} \geq \eta^{\kappa+1}\prod^\kappa_{l=1}c_l > 0\, ,$$
   thus reaching the desired contradiction to \eqref{e:incomparable-max-min}, and completing the proof.
\end{proof} 

% \part{Inductive set-up for Conjecture \texorpdfstring{\ref{c:decay}}{decay}}

% \input{inductive}

\section{Reduction of the main decay theorem}\label{s:reduction}

In this section, we reduce the proof of Theorem \ref{c:decay} to an a priori much weaker decay statement; Theorem \ref{t:weak-decay} below. For this we will utilize the balancing result from Proposition \ref{p:balancing}. Recall that for a cone $\Sbf\in \mathscr{C}(Q)$, we write
$$\boldsymbol{\sigma}(\Sbf) := \min_{i<j}\dist(\alpha_i\cap\Bbf_1,\alpha_j\cap\Bbf_1),\ \ \ \ \boldsymbol{\mu}(\Sbf) := \max_{i<j}\dist(\alpha_i\cap\Bbf_1,\alpha_j\cap \Bbf_1).$$
From now on, we make the following assumption regarding the (balancing) constant $M$:

\begin{assumption}\label{a:choose-M}
The constant $M\geq 1$ is chosen so that the application Proposition \ref{p:balancing} yields an $M$-balanced cone. Thus, $M = M(Q,m,n,\bar{n})$.
\end{assumption}

\begin{theorem}[Weak Excess Decay Theorem]\label{t:weak-decay}
    Fix $Q,m,n,\bar{n}$ as before, and let $M\geq 1$ be as in Assumption \ref{a:choose-M}. Fix also $\varsigma_1>0$. Then, there are constants $\varepsilon_1 = \varepsilon_1(Q,m,n,\bar{n},\varsigma_1)\in (0,1/2]$, $r^1_1 = r^1_1(Q,m,n,\bar{n},\varsigma_1)\in (0,1/2]$ and $r^2_1 = r^2_1(Q,m,n,\bar{n},\varsigma_1)\in (0,1/2]$, such that the following holds. Suppose that
    \begin{enumerate}
        \item [\textnormal{(i)}] $T$ and $\Sigma$ are as in Assumption \ref{a:main};
        \item [\textnormal{(ii)}] $\|T\|(\Bbf_1)\leq (Q+\frac{1}{2})\omega_m$;
        \item [\textnormal{(iii)}] There is $\Sbf\in \mathscr{C}(Q,0)$ which is $M$-balanced, such that
        \begin{equation}\label{e:smallness-weak}
        \mathbb{E}(T,\Sbf,\Bbf_1)\leq \eps_1^2\boldsymbol{\sigma}(\Sbf)^2
        \end{equation}
        and
        \begin{equation}\label{e:no-gaps-weak}
            \Bbf_{\eps_1}(\xi)\cap \{p:\Theta(T,p){\geq\ } Q\}\neq\emptyset \qquad \forall\xi\in V(\Sbf)\cap \Bbf_{1/2}\, ;
        \end{equation}
        \item [\textnormal{(iv)}] $\mathbf{A}^2 \leq \eps_1^2 \mathbb{E} (T, \tilde{\Sbf}, \Bbf_1)$ for every $\tilde{\Sbf} \in \mathscr{C} (Q,0)$.
    \end{enumerate}
    Then, there is $\Sbf^\prime\in \mathscr{C}(Q,0)\setminus\mathscr{P}(0)$ such that for some $i\in\{1,2\}$ we have
    \begin{equation}\label{e:weak-decay}
    \mathbb{E}(T,\Sbf^\prime,\Bbf_{r_1^i}) \leq \varsigma_1\mathbb{E}(T,\Sbf,\Bbf_1)\, .
    \end{equation}
\end{theorem}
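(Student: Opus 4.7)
My plan is to prove Theorem \ref{t:weak-decay} by a standard blow-up by contradiction, leveraging the graphical approximation machinery of Section \ref{p:approx} (which applies precisely because the hypothesis \eqref{e:smallness-weak} is scaled relative to $\boldsymbol{\sigma}(\Sbf)^2$, not merely $\Ebf^p(T,\Bbf_1)$) together with the Simon-type non-concentration estimates of Section \ref{p:estimates} and the linearized decay theorem of Section \ref{p:linear}. Suppose the theorem fails; then for some $\varsigma_1>0$ there exist sequences $T_k, \Sigma_k, \Sbf_k = \alpha_1^k\cup\cdots\cup\alpha_{N_k}^k$, all satisfying (i)--(iv) with $\eps_k\downarrow 0$, such that for every $\Sbf'\in \mathscr{C}(Q,0)\setminus \Pscr(0)$ and every choice of $r_1^1,r_1^2\in(0,1/2]$,
\[
\mathbb{E}(T_k,\Sbf',\Bbf_{r_1^i}) > \varsigma_1 \mathbb{E}_k \qquad \text{for both } i=1,2,
\]
where $\mathbb{E}_k := \mathbb{E}(T_k,\Sbf_k,\Bbf_1)$. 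After passing to a subsequence we can assume $N_k\equiv N$, that the $\Sbf_k$ converge in Hausdorff distance to a cone $\Sbf_\infty\in\Cscr(Q,0)$ (not necessarily with $N$ distinct planes) with spine $V_\infty$, and that $\Abf_k^2/\mathbb{E}_k\to 0$ by hypothesis (iv). Rotating the ambient space by Lemma \ref{l:rotations} so that each $\alpha_i^k$ converges to a fixed limiting plane $\alpha_i^\infty\subset \Sbf_\infty$, we are in the setting of Remark \ref{r:blow-up}.

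The first and most substantial step is to extract a meaningful blow-up limit. Applying Proposition \ref{p:refined} and then Proposition \ref{p:coherent} to each $T_k$ over $\Sbf_k$ (which is $M$-balanced by assumption), we obtain, for each plane $\alpha_i^k$, a Lipschitz multi-valued coherent outer approximation $u_{i}^k: R_i^{o,k} \to \Acal_{Q_i}(\alpha_i^{k,\perp})$. Setting $v_i^k := \mathbb{E}_k^{-1/2}u_i^k$ and using Proposition \ref{p:first-blow-up}(iii), together with a diagonal argument in the tubular radius $\sigma\to 0$, we extract subsequential limits $w_i$ that are Dir-minimizing on $\alpha_i^\infty\cap(\Bbf_1\setminus B_\sigma(V_\infty))$ for every $\sigma>0$, then apply Proposition \ref{p:remove-spine} to obtain bona fide Dir-minimizers on $\alpha_i^\infty\cap \Bbf_1$. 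The density hypothesis \eqref{e:no-gaps-weak} ensures, by upper semi-continuity of density and the argument in Lemma \ref{l:matching-Q}, that the total multiplicity is preserved across the blow-up, and in particular that $\boldsymbol{\eta}\circ w_i \equiv 0$ on $V_\infty$. The crucial ingredient here — and the main technical obstacle — is that the approximation error in the refined approximation degenerates as one approaches the spine, so one needs the non-concentration of excess estimates from Section \ref{p:estimates} (the Simon estimates) to guarantee that the graphical parameterizations capture enough mass near the spine for the normalized sequence to converge strongly in $W^{1,2}$, rather than just weakly, and thereby for the normalized height excess to pass to the limit as the Dirichlet energy of the $w_i$.

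The second step applies the linearized decay theorem from Section \ref{p:linear} to the limiting Dir-minimizer $w = (w_1,\ldots,w_N)$. That theorem produces, for any prescribed decay factor $\varsigma_1'<\varsigma_1/C$ depending only on the allowed constants, a new cone $\Sbf'_\infty\in\Cscr(Q,0)\setminus\Pscr(0)$ (built from the average of $w$ on each sheet) and a scale $r^*$ at which the linearized two-sided excess of $w$ relative to $\Sbf'_\infty$ decays by factor $\varsigma_1'$. The flexibility of choosing one of two scales $r_1^1, r_1^2$ in the conclusion of Theorem \ref{t:weak-decay} is used here to absorb the fact that, in the nonlinear problem, the scale at which the linearized decay is realized can shift slightly: we take $r_1^1$ to be a fixed scale at which the linearized decay holds robustly, and $r_1^2$ to be a second scale handling the degenerate case where the sheets of $w$ are themselves homogeneous planes (in which case the relevant decay occurs at a scale fixed purely by the compactness of the set of balanced cones).

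The final step is to push this linear decay back to the currents $T_k$. Writing $\Sbf'_k\in \Cscr(Q,0)\setminus \Pscr(0)$ for the cone built from $\Sbf_k$ by tilting its planes by $\mathbb{E}_k^{1/2}$ times the average directions of $w$ (via the inverse of Lemma \ref{l:rotations}), the strong $W^{1,2}$ convergence of $v_i^k\to w_i$ away from the spine, combined with the Simon non-concentration estimates for the excess near the spine (which control precisely the contribution of the spine region to the conical excess $\mathbb{E}(T_k,\Sbf'_k,\Bbf_{r_1^i})$), yields
\[
\limsup_{k\to\infty}\frac{\mathbb{E}(T_k,\Sbf'_k,\Bbf_{r_1^i})}{\mathbb{E}_k} \leq C\varsigma_1' \leq \varsigma_1
\]
for $i=1$ in the generic case and $i=2$ in the degenerate case. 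This contradicts the failure of \eqref{e:weak-decay} and completes the proof. The hard part is step one: ensuring that the height excess of the normalized currents passes cleanly to the Dirichlet energy of the limit Dir-minimizer, despite the degeneration of all graphical approximations at the spine; this is exactly where the non-concentration estimates of Section \ref{p:estimates} (adapted from \cite{Simon_cylindrical} and \cite{W14_annals}) are indispensable, and where the balanced hypothesis on $\Sbf$ and \eqref{e:no-gaps-weak} are used in tandem.
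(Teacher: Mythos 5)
Your overall architecture (contradiction blow-up via the coherent approximations, Simon's non-concentration estimates to control the spine region, and the linearized decay theorem of Section \ref{p:linear} to produce the new cone) matches the paper's, but there is a genuine gap: you run a \emph{single} blow-up, namely the orthogonal coherent approximations $u_i^k$ over the planes $\alpha_i^k$ normalized by $\mathbb{E}_k^{1/2}$, and you attribute the two radii $r_1^1,r_1^2$ to a dichotomy in the limiting Dir-minimizer (``generic'' versus ``sheets are homogeneous planes''). That is not where the dichotomy lies, and your single procedure breaks down when $\boldsymbol{\mu}(\Sbf_k)\to 0$ along the contradiction sequence. Hypothesis \eqref{e:smallness-weak} only forces $\mathbb{E}_k\ll\boldsymbol{\sigma}(\Sbf_k)^2\leq\boldsymbol{\mu}(\Sbf_k)^2$, so the cones may collapse onto a single multiplicity-$Q$ plane. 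In that regime the paper must switch to the \emph{transversal} coherent approximation of Proposition \ref{p:transversal}: all sheets are reparametrized over the single plane $\alpha_1$, one tracks two separate normalizations $\bar w_i^k = w_i^k/\mathbb{E}_k^{1/2}$ and $\bar A_i^k = A_i^k/\boldsymbol{\mu}(\Sbf_k)$, the Dir-minimality of the limit requires a Taylor expansion of the area integrand around $A_i^k$ with cancellation of the linear term (Proposition \ref{p:blowup2}(e), explicitly more subtle than the non-collapsed case (a)), and the averages on the spine have the structure $\boldsymbol{\eta}\circ\bar w_i=\beta^\perp+\bar A_i(\beta_\parallel)$. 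None of this is accessible from your orthogonal blow-up: when all $\alpha_i^k$ converge to the same plane, the recovery of the spine displacement from the projections $\mathbf{p}_{\alpha_i}^\perp(\beta)$ uses a lower bound on the angle between $\alpha_1$ and $\alpha_N$ whose constants degenerate as $\boldsymbol{\mu}\to 0$. The two radii in the statement are precisely the radii of the collapsed and non-collapsed regimes (Propositions \ref{p:decay-collapsed} and \ref{p:decay-noncollapsed}); the dichotomy is decided by whether $\boldsymbol{\mu}(\Sbf)\leq\eps_c$ or $\boldsymbol{\mu}(\Sbf)\geq\eps_c^\star$ \emph{before} any blow-up, not by the shape of the limit.

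A secondary but consequential error: you assert that \eqref{e:no-gaps-weak} forces $\boldsymbol{\eta}\circ w_i\equiv 0$ on the spine. What one actually obtains (Proposition \ref{p:blowup2}(b)--(d)) is $w_i=Q_i\llbracket\boldsymbol{\eta}\circ w_i\rrbracket$ on $V$ with $\boldsymbol{\eta}\circ w_i|_V=\mathbf{p}_{\alpha_i}^\perp(\beta)$ for a generically nontrivial smooth $\beta$ recording the limiting normalized displacement of the density-$Q$ points from $V$; only at the origin is the value forced to vanish. Suppressing $\beta$ hides the need for the rotations $R_k$ and the linearizations $\gamma$, $\xi_i$ in the final step: the cone built from the homogeneous approximants $h_{i,\rho}$ alone has its spine misaligned with the curve of $Q$-points by an amount of order $\mathbb{E}_k^{1/2}|y|$, which is not negligible relative to $\varsigma_1\mathbb{E}_k$ at the fixed decay scale, so the claimed excess decay would fail without the recentering.
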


The above theorem would appear to be considerably weaker than Theorem \ref{c:decay}; not only are we assuming the cone $\Sbf$ is balanced, but there is also a significant difference between the smallness assumption \eqref{e:smallness} and \eqref{e:smallness-weak}. In fact, \eqref{e:smallness-weak} implies the second inequality in \eqref{e:smallness} for $\eps_1$ suitably small {(indeed, Proposition \ref{p:Lipschitz-1} then gives $\boldsymbol{\sigma}(\Sbf)^2\leq C\mathbf{E}^p(T,\Bbf_1)$)}, whilst {when the cone $\Sbf$ arises from Proposition \ref{p:balancing} (as it will)} the second inequality in \eqref{e:smallness} {is} equivalent, up to constants, to {(from Proposition \ref{p:balancing}(d))}
\begin{equation}\label{e:smallness-strong}
\mathbb{E} (T, \mathbf{S}, \Bbf_1) \leq \varepsilon_0^2 \boldsymbol{\mu} (\mathbf{S})^2\, .
\end{equation}

In order to show that in fact the above seemingly weaker statement implies Theorem \ref{c:decay}, the idea is to first show that Theorem \ref{t:weak-decay} implies a multiple radii decay version of Theorem \ref{c:decay}, by first removing the necessary planes in the cone $\Sbf$ to reach a balanced cone, and then inductively removing additional planes to obtain a cone (that is still balanced) for which assumption \eqref{e:smallness-weak} holds.
In doing so we reach an intermediate statement, namely an excess decay with finitely many scales, from which we then derive Theorem \ref{c:decay}. This is the following, the idea of which is similar to that seen in \cite{W14_annals}*{Section 13}:

\begin{proposition}[Multiple Radii Excess Decay]\label{p:decay-multiple}
    Fix $Q,m,n,\bar{n}$ as before, and let $M\geq 1$ be as in Assumption \ref{a:choose-M}. Let $\bar{N}:= Q(Q-1)$. Fix $\varsigma_2>0$. Then, there exist positive constants $\varepsilon_2,r_1,\dotsc,r_{\bar{N}}\leq \frac{1}{2}$, depending only on $Q,m,n,\bar{n}$, and $\varsigma_2$, such that the following holds. Suppose that $T,\Sigma,\Sbf$ are as in Theorem \ref{c:decay}, i.e.:
    \begin{enumerate}
        \item [\textnormal{(i)}] $T$ and $\Sigma$ are as in Assumption \ref{a:main} and $\Bbf_1\subset\Omega$;
        \item [\textnormal{(ii)}] $\|T\|(\Bbf_1)\leq (Q+\frac{1}{2})\omega_m$;
        \item [\textnormal{(iii)}] There is $\Sbf\in \mathscr{C}(Q,0)\setminus\mathscr{P}(0)$ such that
        $$\mathbb{E}(T,\Sbf,\Bbf_1) \leq \eps_2^2\mathbf{E}^p(T,\Bbf_1),$$
        and that
        $$\Bbf_{\eps_2}(\xi)\cap \{p:\Theta(T,p)\geq Q\}\neq\emptyset \qquad \forall \xi\in V(\Sbf)\cap \Bbf_{1/2}\, ;$$
        \item [\textnormal{(iv)}] $\mathbf{A}^2 \leq \eps_2^2 \mathbb{E} (T, \tilde{\Sbf}, \Bbf_1)$ for every $\tilde{\Sbf} \in \mathscr{C} (Q,0)$.
    \end{enumerate}
    Then, there is a $\Sbf^\prime\in \mathscr{C}(Q,0)\setminus\mathscr{P}(0)$ and $i\in \{1,\dotsc,\bar{N}\}$ such that:
    \begin{equation}\label{e:multi-decay}
    \mathbb{E}(T,\Sbf^\prime,\Bbf_{r_i}) \leq \varsigma_2\mathbb{E}(T,\Sbf,\Bbf_1)\, .
    \end{equation}
\end{proposition}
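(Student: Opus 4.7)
The plan is to iteratively reduce Proposition \ref{p:decay-multiple} to Theorem \ref{t:weak-decay}, using Proposition \ref{p:balancing} to replace $\Sbf$ by a balanced cone and the Pruning Lemma (Lemma \ref{l:pruning}) to remove planes that are too close together whenever the stronger smallness hypothesis \eqref{e:smallness-weak} fails. With $\eps_2$ sufficiently small, the hypotheses of Proposition \ref{p:balancing} are satisfied, so we extract an $M$-balanced subcone $\Sbf^{(0)}\subseteq\Sbf$ with $\mathbb{E}(T,\Sbf^{(0)},\Bbf_1)\leq C\mathbb{E}(T,\Sbf,\Bbf_1)$ and $\boldsymbol{\mu}(\Sbf^{(0)})^2\sim\Ebf^p(T,\Bbf_1)$. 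Since $V(\Sbf^{(0)})=V(\Sbf)$, hypothesis (iii) on the density of $Q$-points transfers to $\Sbf^{(0)}$ (after slightly shrinking $\eps_1$), and hypothesis (iv) is automatic as it quantifies over every $\tilde{\Sbf}\in\mathscr{C}(Q,0)$.

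We then iterate. Given a balanced cone $\Sbf_{\mathrm{curr}}$ whose excess is controlled by a multiple of $\mathbb{E}(T,\Sbf,\Bbf_1)$: if $\mathbb{E}(T,\Sbf_{\mathrm{curr}},\Bbf_1)\leq\eps_1^2\boldsymbol{\sigma}(\Sbf_{\mathrm{curr}})^2$, then Theorem \ref{t:weak-decay} applies with $\varsigma_1$ proportional to $\varsigma_2$, yielding decay at one of two radii $r_1^1,r_1^2$, and we stop. Otherwise $\boldsymbol{\sigma}(\Sbf_{\mathrm{curr}})<\eps_1^{-1}\mathbb{E}(T,\Sbf_{\mathrm{curr}},\Bbf_1)^{1/2}$, so some planes of $\Sbf_{\mathrm{curr}}$ are too close. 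Apply Lemma \ref{l:pruning} with $D=\eps_1^{-1}\mathbb{E}(T,\Sbf_{\mathrm{curr}},\Bbf_1)^{1/2}$ and a small constant $\delta=\delta(Q,m,n)$; the condition $D\leq\eps\boldsymbol{\mu}(\Sbf_{\mathrm{curr}})$ follows from the running excess bound combined with $\boldsymbol{\mu}(\Sbf_{\mathrm{curr}})^2\sim\Ebf^p(T,\Bbf_1)$. This yields a proper subcone $\Sbf_{\mathrm{curr}}^*\subsetneq\Sbf_{\mathrm{curr}}$ (with at least two planes and the same spine) satisfying $\boldsymbol{\sigma}(\Sbf_{\mathrm{curr}}^*)\geq\delta^{-1}D$ and $\mathbb{E}(T,\Sbf_{\mathrm{curr}}^*,\Bbf_1)\leq C\mathbb{E}(T,\Sbf_{\mathrm{curr}},\Bbf_1)$ (using the triangle inequality on the forward part and $\Sbf_{\mathrm{curr}}^*\subseteq\Sbf_{\mathrm{curr}}$ for the reverse part of $\mathbb{E}$). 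We re-apply Proposition \ref{p:balancing} to $\Sbf_{\mathrm{curr}}^*$ and repeat.

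Each iteration strictly decreases the number of planes, so the process terminates within at most $Q-1$ rounds. Each successful invocation of Theorem \ref{t:weak-decay} produces one of two candidate radii, with the pair depending on the iteration depth (since the effective $\varsigma_1$ must shrink with depth to absorb the compounded excess constant). Incorporating a safety margin for the scale choices across depths, the decay \eqref{e:multi-decay} holds for one of at most $\bar{N}=Q(Q-1)$ radii, each depending only on $Q,m,n,\bar{n},\varsigma_2$. The cone $\Sbf^\prime\in\mathscr{C}(Q,0)\setminus\mathscr{P}(0)$ arises from the final invocation of Theorem \ref{t:weak-decay}.

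The main obstacle lies in the bookkeeping of constants through the iteration: the excess bound $\mathbb{E}(T,\Sbf_{\mathrm{curr}},\Bbf_1)\leq C_k\mathbb{E}(T,\Sbf,\Bbf_1)$ and the weak-decay parameter $\varsigma_1\sim\varsigma_2/C_{Q-1}$ compound multiplicatively across up to $Q-1$ rounds, so $\eps_2$ must be chosen small enough that (a) the balancing hypothesis $\mathbb{E}\leq\eps_0^4\Ebf^p$ remains valid after every pruning; (b) the pruning condition $D\leq\eps\boldsymbol{\mu}$ remains valid; and (c) hypothesis (iv) holds uniformly for all cones arising in the loop. A delicate point preserved automatically is the density condition \eqref{e:no-gaps-weak} for intermediate cones, since neither pruning nor balancing alters the spine, which remains equal to $V(\Sbf)$ throughout the procedure.
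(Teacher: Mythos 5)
Your plan is essentially the paper's: iteratively prune $\Sbf$ until the stronger smallness hypothesis $\Ebb(T,\Sbf_{\mathrm{curr}},\Bbf_1)\leq\eps_1^2\boldsymbol{\sigma}(\Sbf_{\mathrm{curr}})^2$ holds, then invoke Theorem \ref{t:weak-decay}, and observe that at $N=2$ planes $\boldsymbol{\sigma}=\boldsymbol{\mu}$ so the smallness is forced. However, there are a few deviations worth flagging, two of which create real gaps if left unaddressed.

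First, the paper does not use Lemma \ref{l:pruning} in this proof. Instead it removes exactly one plane per round — namely the plane $\alpha_1$ realizing $\boldsymbol{\sigma}(\Sbf_{\mathrm{curr}})$ that is not part of the $\boldsymbol{\mu}$-realizing pair. This choice has two payoffs: (i) the Hausdorff distance $\dist(\Sbf_{\mathrm{curr}}\cap\Bbf_1,\Sbf_{\mathrm{curr}}^*\cap\Bbf_1)$ is bounded by $\boldsymbol{\sigma}(\Sbf_{\mathrm{curr}})$ alone, with no $\Gamma$ factor; if instead you invoke Lemma \ref{l:pruning} with your choice of $D$, the Hausdorff distance to the pruned cone is bounded only by $\Gamma D$ with $\Gamma=\delta^{2-N}(N-1)!$, so the excess of $\Sbf_{\mathrm{curr}}^*$ is inflated by a factor $C(1+\Gamma^2\eps_1^{-2})$, and this makes the claim $\Ebb(T,\Sbf_{\mathrm{curr}}^*,\Bbf_1)\leq C\Ebb(T,\Sbf_{\mathrm{curr}},\Bbf_1)$ with a controlled $C$ depend delicately on the choice of $\delta$ (the constant is $C(\delta)$ and you cannot send $\delta\to 0$ without blowing up $\Gamma$). (ii) The sequence of plane counts is deterministically $N, N-1, \ldots, 2$, which is what produces the count $\bar N = \sum_{N=2}^Q 2(N-1) = Q(Q-1)$. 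With Lemma \ref{l:pruning} the number of planes can drop by several at once, so the sequence of plane counts — which governs the compound constants and hence the decay radius pair at the final step — ranges over decreasing subsequences of $\{2,\dots,Q\}$, not over intervals. Your assertion that the resulting number of radii equals $Q(Q-1)$ is specific to the one-at-a-time removal and does not follow from your scheme as stated (some finite bound depending only on $Q$ still holds, but not this one).

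Second, re-applying Proposition \ref{p:balancing} after each pruning round is superfluous: $\Sbf_{\mathrm{curr}}^*\subset\Sbf_{\mathrm{curr}}$, and any pair of planes in a subcone of an $M$-balanced cone still has Morgan angle ratio at most $M$, so the subcone is automatically $M$-balanced. Inserting an extra balancing step is not wrong but compounds the constants unnecessarily and complicates the $\bar N$ count further (Proposition \ref{p:balancing} also removes planes).

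Third, the point you flag as the ``main obstacle'' is where the proof actually lives, and it requires more than naming the constraints. The threshold $\eps_1$ and the decay-radius pair for the weak decay at round $k$ are $\eps_{(N_k)}(\varsigma_2/C_k^*)$ and $\big(r^1_{(N_k)}(\varsigma_2/C_k^*),r^2_{(N_k)}(\varsigma_2/C_k^*)\big)$, where $N_k$ is the number of planes at round $k$. But $C_{k+1}^*$ depends on $\eps_{(N_k)}(\varsigma_2/C_k^*)^{-2}$ (through the Hausdorff-distance bound at that step). This is not merely ``multiplicative compounding'': it is a nested recursion of the form $C^*_{k}=C_0\big(C^*_{k-1}+\eps_{(N-(k-1))}(\varsigma_2/C^*_{k-1})^{-2}\big)$, terminating only because there are at most $N-2\leq Q-2$ rounds. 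One needs to set this up explicitly, define $\eps_2$ as the minimum of $\eps_{(N-k)}(\varsigma_2/C^*_{k})/\sqrt{C^*_k}$ over all $k$ and all $N\leq Q$, and verify — separately — that the $N=2$ endpoint is guaranteed to succeed for this choice, i.e. that the failure of the smallness test at $N=2$ would contradict $\Ebb(T,\Sbf,\Bbf_1)\leq\eps_2^2\Ebf^p(T,\Bbf_1)\lesssim\eps_2^2\boldsymbol{\mu}(\Sbf)^2=\eps_2^2\boldsymbol{\sigma}(\Sbf_2)^2$. Without this final contradiction at $N=2$, the loop might terminate without ever being able to apply Theorem \ref{t:weak-decay}.
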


We will then be able to show that Proposition \ref{p:decay-multiple} implies Theorem \ref{c:decay}, which therefore demonstrates that the weak excess decay from Theorem \ref{t:weak-decay} implies the stronger excess decay statement from Theorem \ref{c:decay}. Let us begin by showing that Theorem \ref{t:weak-decay} implies Proposition \ref{p:decay-multiple}.

\begin{proof}[Proof of Proposition \ref{p:decay-multiple} from Theorem \ref{t:weak-decay}]
Fix $T,\Sigma,\Sbf$, and $\varsigma_2$ as in the statement of Proposition \ref{p:decay-multiple}.

Firstly, because of Proposition \ref{p:balancing}, if we take $\eps_2 \leq \eps_0$ where $\eps_0 = \eps_0(Q,m,n,\bar{n})>0$ is as in Proposition \ref{p:balancing}, then we can find a cone $\tilde{\Sbf}$ which is $M$-balanced and obeys $\mathbb{E}(T,\tilde{\Sbf},\Bbf_1)\leq M\mathbb{E}(T,\Sbf,\Bbf_1)$, where $M = M(Q,m,n,\bar{n})$ is as in Assumption \ref{a:choose-M}. Thus, we have
$$\mathbb{E}(T,\tilde{\Sbf},\Bbf_1) \leq M\mathbb{E}(T,\Sbf,\Bbf_1) \leq M\eps_2^2\mathbf{E}^p(T,\Bbf_1)$$
In particular, this coupled with the other estimates in Proposition \ref{p:balancing} gives that if we can prove the result for $\tilde{\Sbf}$, then the result follows for $\Sbf$, up to changing $\varsigma_2$ by a factor of $M$. Thus, we may without loss of generality assume that $\Sbf$ is $M$-balanced {and $M^{-1}\Ebf^p(T,\Bbf_1)\leq\boldsymbol{\mu}(\Sbf)\leq M\Ebf^p(T,\Bbf_1)$}.

We would now like to apply Theorem \ref{t:weak-decay}. However, a priori, it may be the case that \eqref{e:smallness-weak} does not hold for $\Sbf$, as we are merely assuming that $\Ebb(T,\Sbf,\Bbf_1)\leq \eps_2^2\Ebf^p(T,\Bbf_1)$. The remaining part of the argument deals with this difficulty; the price to pay is that the decay might occur at {one of finitely many scales}, {but nonetheless the number of possible scales which are needed belong to a set of controlled cardinality.}

We start by introducing, for every integer $k\in \{2,\dotsc,Q\}$, functions $\eps_{(k)}(s)$, $r^1_{(k)}(s)$, and $r^2_{(k)}(s)$ of a parameter $s>0$ as follows:
\begin{itemize}
    \item If one takes $Q=k$ and $\varsigma_1 = s$ in Theorem \ref{t:weak-decay}, then $\eps_{{(k)}}(s):= \eps_1(k,m,n,\bar{n},s)$, where the $\eps_1$ is the constant from Theorem \ref{t:weak-decay} with this choice of $Q$, $\varsigma_1$, and $r_{(k)}^1(s):= r_1^1(k,m,n,\bar{n},s)$, $r_{(k)}^2(s):= r_1^2(k,m,n,\bar{n},s)$ are the corresponding radii, where $r^1_1$ and $r^2_1$ are as in Theorem \ref{t:weak-decay}.
\end{itemize}
In particular, $k$ here is the number of planes forming the cone $\Sbf$ to which Theorem \ref{t:weak-decay} is applied, whilst $s$ is the specified decay factor.

We now denote by $N$ the number of planes forming $\Sbf$. If $N=2$, then $\boldsymbol{\mu}(\Sbf) = \boldsymbol{\sigma}(\Sbf)$, and so from \eqref{e:smallness-strong} we get that the assumptions of Theorem \ref{t:weak-decay} hold (up to $\eps_2$ changing by a constant), and so if we take $\eps_2$ smaller than $\eps_{(2)}(\varsigma_2)$, we see that we can apply Theorem \ref{t:weak-decay}. The corresponding radii from this application of Theorem \ref{t:weak-decay} will be denoted by $r^1_{2,2}$ and $r^2_{2,2}$ {(the first subscript here denotes the number of planes, $N$, in the cone $\Sbf$, whilst the second subscript denotes the number of planes left at the point of application of Theorem \ref{t:weak-decay} -- see below)}, which are simply $r_{(2)}^1(\varsigma_2)$ and $r_{(2)}^2(\varsigma_2)$. In particular, the proof is complete in the case $Q=2$.

From now on we therefore assume $N\geq 3$. We may take $\eps_2\leq \eps_{(N)}(\varsigma_2)$. We then consider two cases. If we have
$$\mathbb{E}(T,\Sbf,\Bbf_1)\leq \eps_{(N)}(\varsigma_2)^2\boldsymbol{\sigma}(\Sbf)^2$$
then we can simply apply Theorem \ref{t:weak-decay} to get the desired decay at {one of the two radii $r_{N,N}^1:= r^1_{(N)}(\varsigma_2)$, $r_{N,N}^2:= r^2_{(N)}(\varsigma_2)$.} We can therefore assume that
\begin{equation}\label{e:second-pruning-1}
    \mathbb{E}(T,\Sbf,\Bbf_1)>\eps_{(N)}(\varsigma_2)^2\boldsymbol{\sigma}(\Sbf)^2.
\end{equation}
We now follow the same idea in the argument for the Pruning Lemma (Lemma \ref{l:pruning}). Let $\Sbf = \alpha_1\cup\cdots\cup\alpha_N$, where the $\alpha_i$ are distinct $m$-dimensional planes. We can assume, upon relabelling, that
$$\dist(\alpha_1\cap\Bbf_1,\alpha_2\cap\Bbf_1) = \boldsymbol{\sigma}(\Sbf)$$
and that there are two indices $i_*,j_*\neq 1$ with
$$\dist(\alpha_{i_*}\cap \Bbf_1,\alpha_{j_*}\cap \Bbf_1) = \boldsymbol{\mu}(\Sbf).$$
We now remove the plane $\alpha_1$ and consider $\Sbf_{N-1}:= \alpha_2\cup\cdots\cup \alpha_N$; {note that $\Sbf_{N-1}$ is still $M$-balanced, $V(\Sbf_{N-1}) = V(\Sbf)$, $\boldsymbol{\mu}(\Sbf) = \boldsymbol{\mu}(\Sbf_{N-1})$, and
$$\dist^2(\Sbf\cap\Bbf_1,\Sbf_{N-1}\cap\Bbf_1) \leq \boldsymbol{\sigma}(\Sbf)^2\leq \eps_{(N)}(\varsigma_2)^{-2}\mathbb{E}(T,\Sbf,\Bbf_1);$$
these conditions imply that if we can show the result for $\Sbf_{N-1}$, the corresponding statement for $\Sbf$ follows.}

Now observe that 
$$\mathbb{E}(T,\Sbf_{N-1},\Bbf_1) \leq C_0\mathbb{E}(T,\Sbf,\Bbf_1) + C_0\boldsymbol{\sigma}(\Sbf)^2 \leq C_0\left(1+ \eps_{(N)}(\varsigma_2)^{-2}\right)\mathbb{E}(T,\Sbf,\Bbf_1).$$
for some constant $C_0 = C_0(Q,m,n,\bar{n})\geq 1$. Let us write $C^*_1:= C_0 (1+\eps_{(N)}(\varsigma_2)^{-2})$, and so $C^*_1 = C^*_1(N,m,n,\bar{n},\varsigma_2)>1$, and so the above inequality is just
$$\mathbb{E}(T,\Sbf_{N-1},\Bbf_1) \leq C_1^*\mathbb{E}(T,\Sbf,\Bbf_1).$$
Therefore, if{
\begin{equation}\label{e:pruning-condition-2}
    \mathbb{E}(T,\Sbf,\Bbf_1) \leq \eps_{(N-1)}(\varsigma_2/C_1^*)^2\sigma(\Sbf_{N-1})^2
\end{equation}
and if we take $\eps_2 \leq \eps_{(N-1)}(\varsigma_2/C_1^*)/\sqrt{C_1^*}$ {(which of course we may)},} then the above gives that we can apply Theorem \ref{t:weak-decay} to the triple $T,\Sigma$, and $\mathbf{S}_{N-1}$ to conclude the desired decay statements in Proposition \ref{p:decay-multiple}, for one of the radii
$$r^i_{N,N-1} := r^i_{(N-1)}(\varsigma_2/C_1^*),\ \ \ \ i\in \{1,2\}.$$
If \eqref{e:pruning-condition-2} is not true, i.e. if we cannot apply Theorem \ref{t:weak-decay} as above, then we must have
\begin{equation}\label{e:second-pruning-2}
    \mathbb{E}(T,\Sbf,\Bbf_1) > \eps_{(N-1)}(\varsigma_2/C_1^*)^2\boldsymbol{\sigma}(\Sbf_{N-1})^2.
\end{equation}
We now wish to apply the above procedure inductively; indeed, suppose we have performed the above procedure $K$ times, and at each stage we have not been able to apply Theorem \ref{t:weak-decay}. Thus, upon relabelling the planes in $\Sbf$, for $k=0,1,\dotsc,K$ we have cones $\Sbf_{N-k} = \alpha_{k+1}\cup\cdots\cup \alpha_N$ which are all $M$-balanced and obey $V(\Sbf_{N-k}) = V(\Sbf)$, $\boldsymbol{\mu}(\Sbf_{N-k}) = \boldsymbol{\mu}(\Sbf)$. We also know that $\Sbf_{N-(k+1)}$ is formed from $\Sbf_{N-k}$ by removing a single plane, namely $\alpha_{k+1}$, which is a plane in $\Sbf_{N-k}$ achieving the minimal separation $\boldsymbol{\sigma}(\Sbf_{N-k})$ and such that there are two other planes in $\Sbf_{N-k}$ which achieve the maximal separation $\boldsymbol{\mu}(\Sbf_{N-k})$ (this can be done as long as $N-k\geq 3$, i.e. $0\leq k\leq N-3$). The criterion along this sequence is that
$$\mathbb{E}(T,\Sbf,\Bbf_1)>\eps_{(N-k)}(\varsigma_2/C_k^*)^2\boldsymbol{\sigma}(\Sbf_{N-k})^2$$
where $C^*_k = C_0(C^*_{k-1}+\eps_{(N-(k-1))}(\varsigma_2/C_{k-1}^*)^{-2})$ is defined inductively for $k\geq 1$, with $C^*_0 = 1$ and $C_0 = C_0(Q,m,n,\bar{n})$ is as above, and
\[
\mathbb{E}(T,\Sbf_{N-k},\Bbf_1)\leq C_k^*\mathbb{E}(T,\Sbf,\Bbf_1)\, ;
\]
for this procedure we need to assume $\eps_2\leq \eps_{(N-k)}(\varsigma_2/C^*_{k})/\sqrt{C^*_k}$ for each $k=0,1,\dotsc,K$ (which of course we may). We also therefore have
$$\dist^2(\Sbf_{N-(k+1)}\cap\Bbf_1,\Sbf_{N-k}\cap\Bbf_1)\leq \boldsymbol{\sigma}(\Sbf_{N-k})^2 \leq \eps_{(N-k)}(\varsigma_2/C_k^*)^{-2}\mathbb{E}(T,\Sbf,\Bbf_1)$$
and thus
\begin{align*}
\mathbb{E}(T,\Sbf_{N-(k+1)},\Bbf_1) & \leq C\mathbb{E}(T,\Sbf_{N-k},\Bbf_1) + C\boldsymbol{\sigma}(\Sbf_{N-k})^2\\
& \leq C_0\mathbb{E}(T,\Sbf_{N-k},\Bbf_1) + C_0\eps_{(N-k)}(\varsigma_2/C_k^*)^{-2}\mathbb{E}(T,\Sbf,\Bbf_1)
\end{align*}
which in particular gives the inductive definition of $C^*_k$. Note that this procedure can occur at most $N-2$ times, and thus all the constants and smallness assumptions here only depend on $N,m,n,\bar{n}$, and $\varsigma_2$, and so certainly we can choose $\eps_2 = \eps_2(N,m,n,\bar{n},\varsigma_2)$ small enough so that the above procedure is guaranteed whenever we cannot apply Theorem \ref{t:weak-decay}.

Hence, for this choice of $\eps_2$, if we are unable to apply Theorem \ref{t:weak-decay} at all steps to conclude, the above process must eventually terminate when the cone is formed of exactly two planes, i.e. at $\Sbf_2$. Since we are unable to apply Theorem \ref{t:weak-decay} to $\Sbf_2$, we must have
\begin{equation}\label{e:second-pruning-N-2}
    \mathbb{E}(T,\Sbf,\Bbf_1)>\eps_{(2)}(\varsigma_2/C_{N-2}^*)^2\boldsymbol{\sigma}(\Sbf_2)^2.
\end{equation}

However, as $\Sbf_2$ consists of two planes we know that $\boldsymbol{\sigma}(\Sbf_2) = \boldsymbol{\mu}(\Sbf_2)$, and moreover since by construction we know that $\boldsymbol{\mu}(\Sbf_2) = \boldsymbol{\mu}(\Sbf)$, we see that \eqref{e:second-pruning-N-2} is equivalent to
\begin{equation}\label{e:second-pruning-N-2-mu}
    \mathbb{E}(T,\Sbf,\Bbf_1)>\eps_{(2)}(\varsigma_2/C_{N-2}^*)^2\boldsymbol{\mu}(\Sbf)^2.
\end{equation}

However, we are assuming (see Proposition \ref{p:decay-multiple}(iii)) that $\mathbb{E}(T,\Sbf,\Bbf_1)\leq \eps_2^2\mathbf{E}^p(T,\Bbf_1)$, which from \eqref{e:smallness-strong} we know implies
$$\mathbb{E}(T,\Sbf,\Bbf_1)\leq C\eps_2^2\boldsymbol{\mu}(\Sbf)^2$$
for some $C = C(Q,m,n,\bar n)>0$. Thus, if we ensure that $C\eps_2^2 <\frac{1}{2}\eps_{(2)}(\varsigma_2/C_{N-2}^*)^2$ also, then this would give a contradiction to \eqref{e:second-pruning-N-2-mu}; hence, for such a choice of $\eps_2$ we see that once we reach $\Sbf_2$, we in fact \emph{must} be able to apply Theorem \ref{t:weak-decay} in the above procedure.

We hence conclude that, for any fixed $N\leq Q$, if the threshold $\eps_2$ satisfies all the (finitely many) smallness conditions above, then we have that Proposition \ref{p:decay-multiple} holds for one of the radii $r^1_{N,N}$, $r^2_{N,N}$, $r^1_{N,N-1},r^2_{N,N-1},\dotsc,r^1_{N,2},r^1_{N,2}$, of which there are $2(N-1)$ radii. In particular, if $\eps_2$ satisfies all of these smallness conditions as we range $N$ over $\{2,\dotsc,Q\}$, then the above argument is valid for any cone $\Sbf\in \mathscr{C}(Q,0)$ comprised of $N\in \{2,\dotsc,Q\}$ planes, i.e. for every $\Sbf\in \mathscr{C}(Q,0)\setminus\mathscr{P}$. Thus, Proposition \ref{p:decay-multiple} holds for every $\Sbf\in \mathcal{C}(Q,0)\setminus\mathscr{P}$ for one of the radii in the collection $\{r^i_{j,k}\}$, where $i\in \{1,2\}$ and $2\leq k\leq j\leq Q$. Thus, the number of possible decay scales is
$$\sum^Q_{N=2}2(N-1) = Q(Q-1) = \bar{N}$$
as claimed in the statement of Proposition \ref{p:decay-multiple}. This completes the proof.
\end{proof}

Now we will prove Theorem \ref{c:decay} from Proposition \ref{p:decay-multiple}, thus reducing the proof of Theorem \ref{c:decay} to proving the weaker version, Theorem \ref{t:weak-decay}. First, we observe a corollary of the Proposition \ref{p:balancing} and Proposition \ref{p:refined}, the purpose of which is to control the excess of rescalings; it will be a useful tool in the proof of the reduction.

\begin{corollary}\label{c:control-large-interval}
    Suppose $\tilde{T}$ and $\tilde{\Sigma}$ satisfy Assumption \ref{a:main}, let $\tilde{\Sbf}\in\mathscr{C}(Q,0)\setminus\mathscr{P}(0)$, and write $\tilde{\Abf}^2$ for the supremum norm of the second fundamental form of $\tilde{\Sigma}$. Fix also a radius $\bar{r}\in (0,1]$. Then, there are constants $\tilde{\eps} = \tilde{\eps}(Q,m,n,\bar{n},\bar{r})>0$ and $\tilde{C} = \tilde{C}(Q,m,n,\bar{n},\bar{r})>0$ such that the following holds. If we have
    \[
    \mathbb{E}(\tilde{T},\tilde{\Sbf},\Bbf_1) \leq \tilde{\eps}^2\mathbf{E}^p(\tilde{T},\Bbf_1)
    \]
    and 
\[
\tilde{\mathbf{A}}^2 \leq \tilde{\eps}^2 \mathbb{E} (\tilde{T}, \bar{\mathbf{S}}, \Bbf_1) \qquad \forall \bar{\Sbf}\in \mathscr{C} (Q, 0)\, ,
\]
    then there is $\Sbf^\prime\in\mathscr{C}(Q,0)\setminus\mathscr{P}(0)$ such that
    \begin{equation}\label{e:control-large-interval}
    \mathbb{E}(\tilde{T},\Sbf^\prime,\Bbf_r) \leq \tilde{C}\mathbb{E}(\tilde{T},\tilde{\Sbf},\Bbf_1)\ \ \ \ \forall r\in [\bar{r},1].
    \end{equation}
\end{corollary}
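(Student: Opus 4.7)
The approach is to apply Proposition \ref{p:balancing}, followed (if necessary) by an iterative application of the Pruning Lemma \ref{l:pruning}, to pass from $\tilde{\Sbf}$ to an $M$-balanced cone $\Sbf' \in \mathscr{C}(Q,0)\setminus\mathscr{P}(0)$ to which Proposition \ref{p:Lipschitz-1} is applicable; the bound for each $r \in [\bar r, 1]$ is then obtained by separating the forward and reverse one-sided excesses in $\mathbb{E}(\tilde T, \Sbf', \Bbf_r)$ and using the graphical approximation for the reverse piece closer to the spine. The main obstacle is the reverse excess on the ``collar'' $\Sbf' \cap \Bbf_1 \cap (\Bbf_a(V(\Sbf')) \setminus \Bbf_{a\bar r}(V(\Sbf')))$ near the spine, which sits outside the integration domain of $\hat{\mathbf{E}}(\Sbf', \tilde T, \Bbf_1)$ and therefore requires the graphical approximation to cover it; this in turn forces smallness of the excess relative to $\boldsymbol{\sigma}(\Sbf')$ rather than $\boldsymbol{\mu}(\Sbf')$ (the latter being all that Proposition \ref{p:balancing} directly provides), which is precisely what the pruning step addresses.

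First, provided $\tilde\eps \leq \eps_0(Q,m,n,\bar n)$ (the threshold of Proposition \ref{p:balancing}), that proposition yields an $M$-balanced cone $\Sbf^\dagger \in \mathscr{C}(Q,0)\setminus\mathscr{P}(0)$ with $\mathbb{E}(\tilde T, \Sbf^\dagger, \Bbf_1) \leq C\mathbb{E}(\tilde T, \tilde\Sbf, \Bbf_1)$ and $\boldsymbol{\mu}(\Sbf^\dagger)^2$ comparable to $\mathbf{E}^p(\tilde T, \Bbf_1)$, so in particular $\mathbb{E}(\tilde T, \Sbf^\dagger, \Bbf_1) \leq C\tilde\eps^2 \boldsymbol{\mu}(\Sbf^\dagger)^2$. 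If the analogous smallness with $\boldsymbol{\sigma}(\Sbf^\dagger)$ in place of $\boldsymbol{\mu}(\Sbf^\dagger)$ at the threshold required by Proposition \ref{p:Lipschitz-1} is not already satisfied, we iteratively apply Lemma \ref{l:pruning}, exactly as in the reduction of Proposition \ref{p:decay-multiple} to Theorem \ref{t:weak-decay}: each step produces a strictly smaller subcollection that remains $M$-balanced (any subcollection of an $M$-balanced cone is $M$-balanced), preserves $\boldsymbol{\mu}$, and inflates the excess by at most a $Q$-dependent constant factor. The process terminates after at most $Q-2$ steps, either when the smallness relative to $\boldsymbol{\sigma}$ is achieved or when exactly two planes remain (in which case $\boldsymbol{\sigma} = \boldsymbol{\mu}$ and the smallness is automatic once $\tilde\eps$ is sufficiently small). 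The resulting cone $\Sbf' \in \mathscr{C}(Q,0)\setminus\mathscr{P}(0)$ is then $M$-balanced, satisfies $\mathbb{E}(\tilde T, \Sbf', \Bbf_1) \leq C\mathbb{E}(\tilde T, \tilde\Sbf, \Bbf_1)$ together with the smallness required by Proposition \ref{p:Lipschitz-1}, and the hypothesis on $\tilde{\mathbf A}^2$ passes automatically to $\Sbf'$ since it is assumed for every $\bar\Sbf \in \mathscr{C}(Q,0)$.

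Fix $r \in [\bar r, 1]$. The forward one-sided excess is immediate from $\Bbf_r \subset \Bbf_1$:
\[
\hat{\mathbf{E}}(\tilde T, \Sbf', \Bbf_r) \leq \bar r^{-(m+2)}\hat{\mathbf{E}}(\tilde T, \Sbf', \Bbf_1) \leq C \bar r^{-(m+2)}\mathbb{E}(\tilde T, \tilde\Sbf, \Bbf_1).
\]
For the reverse excess, split the integration domain $\Sbf' \cap \Bbf_r \setminus \Bbf_{ar}(V(\Sbf'))$ into the ``far'' piece $\Sbf' \cap \Bbf_r \setminus \Bbf_a(V(\Sbf'))$, which is contained in the integration domain of $\hat{\mathbf{E}}(\Sbf', \tilde T, \Bbf_1)$ and is thus directly controlled by $\mathbb{E}(\tilde T, \Sbf', \Bbf_1)$, and the ``collar'' $\Sbf' \cap \Bbf_1 \cap (\Bbf_a(V(\Sbf')) \setminus \Bbf_{a\bar r}(V(\Sbf')))$. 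On the collar, apply Proposition \ref{p:Lipschitz-1} to $\tilde T$ and $\Sbf'$ (after rescaling $\tilde T \to \tilde T_{0,1/4}$ to reach the $\Bbf_4$ formulation) with parameters $\rho, \eta$ chosen depending on $\bar r$ so that the graphical cylinders $\boldsymbol{\Omega}_i$ cover the collar. The estimates \eqref{e:graphicality-1}--\eqref{e:non-graphicality} yield the pointwise control $\dist(x, \spt(\tilde T))^2 \leq C(\mathbb{E}(\tilde T, \Sbf', \Bbf_1) + \tilde{\mathbf A}^2)$ on the coincidence sets, while the non-coincidence sets have $\mathcal{H}^m$-measure $\leq C(\mathbb{E}(\tilde T, \Sbf', \Bbf_1) + \tilde{\mathbf A}^2)^{1+\gamma}$ on which one uses the trivial pointwise bound $\dist(\cdot, \spt(\tilde T)) \leq C$ in $\Bbf_1$. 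Integrating over the collar and combining with the forward and far-reverse contributions yields $\mathbb{E}(\tilde T, \Sbf', \Bbf_r) \leq \tilde C \mathbb{E}(\tilde T, \tilde\Sbf, \Bbf_1)$ with $\tilde C = \tilde C(Q, m, n, \bar n, \bar r)$, uniformly in $r \in [\bar r, 1]$, completing the proof.
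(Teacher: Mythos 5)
Your approach is correct in spirit but departs from the paper's in one notable way: the paper applies Proposition \ref{p:balancing}, then a \emph{single} application of the Pruning Lemma \ref{l:pruning} with $D=\mathbb{E}(\tilde T,\tilde\Sbf,\Bbf_1)^{1/2}$, and then invokes the \emph{refined} (Whitney-decomposed, multi-scale) approximation of Proposition \ref{p:refined}, using Lemma \ref{l:controls_Whitney}(i) to ensure $\Bbf_1\setminus B_{a\bar r/2}(V)\subset R^o$; the reverse-excess bound then follows by summing the outer-region estimates over the cubes meeting $\Bbf_r\setminus B_{ar}(V)$. You instead use the \emph{crude} single-scale approximation (Proposition \ref{p:Lipschitz-1}) with parameters $\rho,\eta$ fixed in terms of $\bar r$. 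For this particular corollary---where only a uniform (unweighted) bound on the reverse excess at scales in a fixed compact interval $[\bar r,1]$ is needed---the crude version is indeed sufficient and somewhat more direct; the refined version is stronger than necessary here (its weighted estimates are what is needed for the blow-up arguments elsewhere in the paper).

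There is, however, a genuine gap in your argument as written: you invoke the estimates \eqref{e:graphicality-1}--\eqref{e:non-graphicality} on each $\alpha_i$ and use the $L^\infty$ bound on $u_i$ on the coincidence sets, but those only control the reverse excess if $Q_i\geq 1$ for \emph{every} $i$ (if $Q_i=0$ the map $u_i$ does not exist and the distance from $\alpha_i$ to $\spt(\tilde T)$ is genuinely of order $\varrho\sigma$, which is not $O(\mathbb{E}^{1/2})$). The way to secure $Q_i\geq 1$ is Proposition \ref{p:Lipschitz-1}(f), whose hypothesis \eqref{e:other-side} is the smallness of the reverse excess over $\Sbf'\cap\Bbf_{4-2\eta}\setminus B_{\rho+2\eta}(V)$. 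With your choice $\rho+\eta\approx 4a\bar r$ this region reaches much closer to the spine than the domain $\Sbf'\cap\Bbf_4\setminus B_{4a}(V)$ where the reverse excess is controlled a priori by $\mathbb{E}(\tilde T,\Sbf',\Bbf_4)$---so \eqref{e:other-side} is exactly what you are trying to establish, not something you may assume. The fix is short but must be stated: first apply Proposition \ref{p:Lipschitz-1} at the dimensional radius $\rho+2\eta\gtrsim 4a$ (where \eqref{e:other-side} is directly verifiable from $\Sbf'\subset\tilde\Sbf$ and $\hat{\mathbf E}(\tilde\Sbf,\tilde T,\Bbf_1)\leq\mathbb{E}(\tilde T,\tilde\Sbf,\Bbf_1)$) to conclude $Q_i\geq 1$; then observe that the integers $Q_i$ produced by the crude approximation at the smaller $\rho\approx 4a\bar r$ agree with these on the overlap of the two approximating regions (cf.\ the constancy argument in the proof of Proposition \ref{p:refined}(i)), so they too are positive. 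Alternatively, one can note that the proof of Proposition \ref{p:Lipschitz-1}(f) in fact only uses the smallness of the reverse excess on a subregion of $\Omega_i$ of definite measure and bounded away from $V$, which your a priori control does cover. Either way, this step must appear; as written the proposal silently assumes the multiplicities are all positive.
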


\begin{proof}
    Observe that for any $r\in [\bar{r},1]$ we clearly have
    $$\hat{\mathbf{E}}(\tilde{T},\tilde{\Sbf},\Bbf_r) = r^{-m-2}\int_{\Bbf_r}\dist^2(x,\tilde{\Sbf})\, d\|\tilde{T}\|(x) \leq \bar{r}^{-m-2}\hat{\mathbf{E}}(\tilde{T},\tilde{\Sbf},\Bbf_1).$$
    On the other hand, there is no obvious reason to have a bound of the form
    $$\hat{\mathbf{E}}(\tilde{\Sbf},\tilde{T},\Bbf_r) \leq C(\bar{r})\mathbb{E}(\tilde{T},\tilde{\Sbf},\Bbf_1)$$
    because in the integral defining the reverse excess $\hat\Ebf(\tilde\Sbf, \tilde T,\Bbf_r)$ we omit a tubular neighbourhood of the spine $V(\tilde{\Sbf})$ of radius $ar$, and thus there is no inclusion property of one domain of integration into the other when we are comparing two different scales.

    To get around this problem, first note that, provided $\tilde{\eps}\leq \eps_0$ where $\eps_0 = \eps_0(Q,m,n,\bar{n})$ is as in Proposition \ref{p:balancing}, we can apply Proposition \ref{p:balancing} to assume without loss of generality that $\tilde{\Sbf}$ is $M$-balanced ($M$ as in Assumption \ref{a:choose-M}; all that changes is the two-sided excess increases by a factor of $M = M(Q,m,n,\bar{n})$).
    
    Fix $\delta>0$ {(to be determined later)}. Now, apply the Pruning Lemma (Lemma \ref{l:pruning}) {with this choice of $\delta$ and with $D=\Ebf(\tilde{T},{\tilde{\Sbf}},\Bbf_1)^{1/2}$} to the (balanced) cone $\tilde{\Sbf}$, in exactly the same way as was done in the proof of Proposition \ref{p:balancing}, to yield a new cone $\Sbf^\prime$. The latter is the union of some subset of the planes in $\tilde{\Sbf}$, in particular it is $M$-balanced and obeys $V(\Sbf^\prime) = V(\tilde{\Sbf})$. Moreover it satisfies
    \[
    \mathbb{E}(\tilde{T},\Sbf^\prime,\Bbf_1) \leq C_\delta\mathbb{E}(\tilde{T},\tilde{\Sbf},\Bbf_1)
    \]
    and
    \[
    \mathbb{E}(\tilde{T},\Sbf^\prime,\Bbf_1) \leq C\delta^2\boldsymbol{\sigma}(\Sbf^\prime)^2
    \]
    where here $C_\delta = C_\delta(Q,m,n,\bar{n},\delta)$ and $C = C(Q,m,n,\bar{n})$. In particular, we have
    \[
    \tilde{\Abf}^2 + \mathbb{E}(\tilde{T},\Sbf^\prime,\Bbf_1) \leq C(\tilde{\eps}^2 + \delta^2)\boldsymbol{\sigma}(\Sbf^\prime)^2.
    \]
    Thus, we are in a situation to apply the refined approximation (Proposition \ref{p:refined}) {to $\tilde{T}$, $\Sbf^\prime$,} provided we take $\tilde{\eps},\delta$ sufficiently small. The region we are interested in {building the refined approximation over} is $\Bbf_1\setminus B_{a\bar{r}/2}(V({\tilde{\Sbf}}))$, and so in particular the radius of this neighbourhood of the spine is fixed only depending on $\bar{r}$ and $a$. Thus, by Lemma \ref{l:controls_Whitney}, if we take $\tilde{\eps}$ and $\delta$ smaller than a constant depending only on $Q,m,n,\bar{n},\bar{r}$, we can ensure that $\Bbf_1\setminus B_{a\bar{r}/2}(V({\tilde{\Sbf}}))\subset R^o$, (recall that $R^o$ is the outer region of the refined approximation). In particular, one can apply Proposition \ref{p:refined}; the estimates therein {(summed over the cubes $L\in\Gcal^o\cup\Gcal^c$ that intersect $\Bbf_r\setminus B_{ar}(V({\tilde{\Sbf}})$)} give that
    $$\hat{\mathbf{E}}(\Sbf^\prime,\tilde{T},\Bbf_r) \leq \tilde{C}\mathbb{E}(\tilde{T},\Sbf^\prime,\Bbf_1)$$
    for any $r\in [\bar{r},1]$, where $\tilde{C} = \tilde{C}(Q,m,n,\bar{n},\bar{r})$. Hence by choosing such an appropriate $\delta = \delta(Q,m,n,\bar{n},\bar{r})$ and $\tilde{\eps} = \tilde{\eps}(Q,m,n,\bar{n},\bar{r})$, we see that the proof is complete.
\end{proof}

The next lemma gives the final ingredient for our proof {that Proposition \ref{p:decay-multiple} implies Theorem \ref{c:decay}{(a)}. The lemma provides a} condition for which the rescalings {of Corollary \ref{c:control-large-interval}} also satisfy the conditions of Theorem \ref{c:decay}.

\begin{lemma}\label{l:large_interval}
    Fix a radius $\bar{r}\in (0,\frac{1}{2}]$ and $\eps_2\in (0,1)$. Then, there exists a positive number $\eps_0 = \eps_0(Q,m,n,\bar{n},\bar{r},\eps_2)$ such that the following holds. Suppose that $T$, $\Sigma$, and $\Sbf$ satisfy the assumptions in Theorem \ref{c:decay} with this choice of $\eps_0$. Then, for every radius $r\in [\bar{r},\frac{1}{2}]$ we have the following. If there is a cone $\Sbf_r\in \mathscr{C}(Q,0)$ which obeys
    \begin{equation}\label{e:r-condition}
        \mathbb{E}(T,\Sbf_r,\Bbf_r) \leq \mathbb{E}(T,\Sbf,\Bbf_1),
    \end{equation}
    and moreover
    \begin{equation}\label{e:A-condition}
      \mathbf{A}^2 r^2 \leq \eps_0^2 \inf \{\mathbb{E} (T, \mathbf{S}^\prime, \Bbf_r) : \Sbf^\prime\in \mathscr{C} (Q, 0)\}\, ,
    \end{equation}
    then the rescalings $T_{0,r}$, $\Sigma_{0,r}$, and $\Sbf_r$ satisfy the assumptions (i)--(iii) of Theorem \ref{c:decay} (and so also Proposition \ref{p:decay-multiple} and Corollary \ref{c:control-large-interval}) with $\eps_2$ in place of $\eps_0$, i.e.
    $$\mathbb{E}(T_{0,r},\Sbf_r,\Bbf_1)\leq \eps_2^2\mathbf{E}^p(T_{0,r},\Bbf_1)$$
    and
    $$\Bbf_{\eps_2}(\xi)\cap\{p:\Theta(T_{0,r},p)\geq Q\}\neq\emptyset \qquad \forall\xi\in V(\Sbf_r)\cap \Bbf_{1/2}.$$
\end{lemma}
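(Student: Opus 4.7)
The plan is to verify each of conditions (i)--(iii) of Theorem \ref{c:decay} (with $\eps_2$ in place of $\eps_0$) for the rescaled triple $(T_{0,r},\Sigma_{0,r},\Sbf_r)$. Conditions (i) and (ii) are routine: (i) follows from the scale-invariance of Assumption \ref{a:main}, since the rescaled second fundamental form $r\mathbf{A}$ is small in view of \eqref{e:A-condition}; (ii), i.e. $\|T_{0,r}\|(\Bbf_1) = r^{-m}\|T\|(\Bbf_r)\leq(Q+\tfrac{1}{2})\omega_m$, follows from the monotonicity formula with negligible curvature correction. The analogue of hypothesis (iv) of Theorem \ref{c:decay} (needed to subsequently invoke Proposition \ref{p:decay-multiple} and Corollary \ref{c:control-large-interval}) is precisely \eqref{e:A-condition}, rescaled.

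The crucial point is condition (iii)(a): $\mathbb{E}(T,\Sbf_r,\Bbf_r)\leq\eps_2^2\mathbf{E}^p(T,\Bbf_r)$. By \eqref{e:r-condition} and the hypothesis $\mathbb{E}(T,\Sbf,\Bbf_1)\leq\eps_0^2\mathbf{E}^p(T,\Bbf_1)$, the task reduces to establishing the lower bound
\[
\mathbf{E}^p(T,\Bbf_r) \geq c\,\mathbf{E}^p(T,\Bbf_1)
\]
for some $c = c(Q,m,n,\bar n,\bar r)>0$. To obtain it, I would first apply Proposition \ref{p:balancing} to $(T,\Sbf)$ at scale one, producing a balanced cone $\tilde{\Sbf}$ satisfying $V(\tilde{\Sbf})=V(\Sbf)$, $\boldsymbol{\mu}(\tilde{\Sbf})^2\sim\mathbf{E}^p(T,\Bbf_1)$, and $\mathbb{E}(T,\tilde{\Sbf},\Bbf_1)\leq C\mathbb{E}(T,\Sbf,\Bbf_1)$. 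Next, mirroring the proof of Corollary \ref{c:control-large-interval}, I would prune $\tilde{\Sbf}$ via Lemma \ref{l:pruning} to $\Sbf''\subset\tilde{\Sbf}$, which remains $M$-balanced and, by the Pruning Lemma's property \eqref{e:pruning-3}, satisfies $\boldsymbol{\mu}(\Sbf'')=\boldsymbol{\mu}(\tilde{\Sbf})$, together with $\mathbb{E}(T,\Sbf'',\Bbf_r)\leq \tilde C\mathbb{E}(T,\tilde{\Sbf},\Bbf_1)$. Finally I would run the argument underlying Proposition \ref{p:balancing}(d) on $(T_{0,r},\Sbf'')$ in $\Bbf_1$: the crucial hypothesis $r^2\mathbf{A}^2\leq\eps_0^2\mathbb{E}(T,\Sbf'',\Bbf_r)$ follows from \eqref{e:A-condition}, while the existence of a density-$Q$ point of $T_{0,r}$ near the origin (needed for Lemma \ref{l:matching-Q}) comes from hypothesis (iii) of Theorem \ref{c:decay} applied at $0\in V(\Sbf)$, rescaled. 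The refined graphical approximation of Proposition \ref{p:refined} is then available over the balanced cone $\Sbf''$, and one concludes $\mathbf{E}^p(T_{0,r},\Bbf_1)\geq c\boldsymbol{\mu}(\Sbf'')^2 = c\boldsymbol{\mu}(\tilde{\Sbf})^2\geq c'\mathbf{E}^p(T,\Bbf_1)$. Taking $\eps_0$ so that $\eps_0^2/c'\leq\eps_2^2$ yields (iii)(a).

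For (iii)(b), if $\Sbf_r\in\mathscr{P}(0)$ the condition is vacuous; otherwise it suffices to show $d\coloneqq\dist(V(\Sbf)\cap\Bbf_1,V(\Sbf_r)\cap\Bbf_1)\leq C\eps_0/\bar r$. Using the $L^\infty$ height estimates underlying the refined approximation of Proposition \ref{p:refined} applied to both $\tilde{\Sbf}$ and $\Sbf_r$ relative to $\spt(T)\cap\Bbf_r$, together with the scale invariance of cones, one arrives at $\dist(\tilde{\Sbf}\cap\Bbf_1,\Sbf_r\cap\Bbf_1)\lesssim\eps_0\boldsymbol{\mu}(\tilde{\Sbf})/r$. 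Since $\tilde{\Sbf}$ is $M$-balanced, $\boldsymbol{\sigma}(\tilde{\Sbf})\sim\boldsymbol{\mu}(\tilde{\Sbf})$, so Lemma \ref{l:spine} (together with $V(\tilde{\Sbf})=V(\Sbf)$) delivers $d\leq C\eps_0/\bar r$. For any $\xi\in V(\Sbf_r)\cap\Bbf_{1/2}$ in the rescaled frame, its orthogonal projection onto $V(\Sbf)$ yields $\xi'$ with $|\xi-\xi'|\leq d/2$; in the original frame $r\xi'\in V(\Sbf)\cap\Bbf_{r/2}\subset V(\Sbf)\cap\Bbf_{1/2}$, and hypothesis (iii) on $T$ furnishes a density-$Q$ point $z$ within $\eps_0$ of $r\xi'$. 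Then $z/r$ is a density-$Q$ point of $T_{0,r}$ within $\eps_0/r+d/2\leq C\eps_0/\bar r$ of $\xi$, so choosing $\eps_0\leq\eps_2\bar r/C'$ secures (iii)(b).

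The principal obstacle is the lower bound $\mathbf{E}^p(T,\Bbf_r)\geq c\mathbf{E}^p(T,\Bbf_1)$, which has no a priori reason to hold; its proof relies critically on the fact that pruning via Lemma \ref{l:pruning} preserves $\boldsymbol{\mu}$ (property \eqref{e:pruning-3}), coupled with the equivalence of $\boldsymbol{\mu}^2$ and $\mathbf{E}^p$ for the appropriate balanced cone at each scale, which is in turn secured by the graphical machinery of Proposition \ref{p:refined}.
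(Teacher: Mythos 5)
Your proposal is correct in outline but takes a genuinely different route from the paper. The paper proves Lemma \ref{l:large_interval} by contradiction and compactness: one assumes the conclusion fails along a sequence $(T_k,\Sigma_k,\Sbf_k,\eps_0^k\downarrow 0)$, which forces either $\eqref{e:no-vanishing-of-excess}$ (the ratio $\Ebb(T_k,\Sbf_{r_k},\Bbf_{r_k})/\Ebf^p(T_k,\Bbf_{r_k})$ stays bounded away from $0$) or $\eqref{e:non-alignment-of-spines}$ (the spines stay apart). Each is contradicted by passing to a blow-up limit: in the non-flat regime both spines converge to the spine of the limit cone; in the flat regime one normalizes Almgren's Lipschitz approximation by $\Ebf^p(T_k,\Bbf_1)^{1/2}$, obtains a Dir-minimizer $f_\infty=\sum_i\llbracket L_i^\infty\rrbracket$ with at least two distinct linear sheets (courtesy of Proposition \ref{p:balancing}(d)), and reads off that $\Ebf^p(T_k,\Bbf_{r_k})/\Ebf^p(T_k,\Bbf_1)$ stays comparable while the spines align. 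You instead argue directly, deploying the balancing (Proposition \ref{p:balancing}), pruning (Lemma \ref{l:pruning}, exploiting \eqref{e:pruning-3}), and refined-approximation machinery to establish $\Ebf^p(T,\Bbf_r)\geq c\boldsymbol{\mu}(\Sbf'')^2 = c\boldsymbol{\mu}(\tilde{\Sbf})^2\sim\Ebf^p(T,\Bbf_1)$, then feed the output into Lemma \ref{l:spine} for spine alignment. Both strategies are sound; the compactness argument is shorter and sidesteps the extensive parameter bookkeeping (the thresholds for Lemma \ref{l:splitting-1}, Proposition \ref{p:refined}, Corollary \ref{c:control-large-interval}, etc.\ applied to the rescaled current at scale $r$) that a direct proof requires, whereas yours is more constructive and makes the geometry explicit.

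One slip worth flagging: you assert that $M$-balancedness of $\tilde{\Sbf}$ implies $\boldsymbol{\sigma}(\tilde{\Sbf})\sim\boldsymbol{\mu}(\tilde{\Sbf})$. This is false --- balancedness (Definition \ref{d:balanced}) controls only the ratio of the two Morgan angles \emph{within} each pair of planes, not the ratio of the minimal to maximal pairwise separations across different pairs (compare the purpose of the layer subdivision Lemma \ref{l:algorithm}, which is precisely to reach a sub-cone where $\boldsymbol{\sigma}\sim\boldsymbol{\mu}$ after possibly many prunings). Fortunately, this claim is not actually needed: the denominator in \eqref{e:alignment} is $\min_i\dist(\Sbf\cap\Bbf_1,\alpha_i\cap\Bbf_1) = \min_i\max_{j\neq i}\dist(\alpha_j\cap\Bbf_1,\alpha_i\cap\Bbf_1)\geq\boldsymbol{\mu}(\tilde{\Sbf})/2$ by the triangle inequality, independently of $\boldsymbol{\sigma}$. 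The other point to tighten is the claim $\dist(\tilde{\Sbf}\cap\Bbf_1,\Sbf_r\cap\Bbf_1)\lesssim\eps_0\boldsymbol{\mu}(\tilde{\Sbf})/r$: to apply the refined approximation over $\Sbf_r$ you first need to balance and prune $\Sbf_r$ (as in Step 1 of Lemma \ref{l:a->bcd}), since without a lower bound on $\boldsymbol{\sigma}(\Sbf_r)$ the hypothesis \eqref{e:smallness-alg} is not automatic. These gaps are fillable but not free.
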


\begin{proof}

We argue by contradiction. Fix $\eps_2$ and $\bar{r}$. If the conclusion of the lemma is false, then we can find a sequence of currents $T_k$, manifolds $\Sigma_k$, cones $\Sbf_k$, and $\eps_0^k\downarrow 0$ as in Theorem \ref{c:decay} with $\eps_0^k$ in place of $\eps_0$ for which the present lemma fails. Namely, there exist radii $r_k\in [\bar{r},\frac{1}{2}]$ and cones $\Sbf_{r_k} \equiv (\Sbf_k)_{r_k}$ which obey
$$(\eps_0^k)^{-2}r_k^2\Abf_k^2 \leq \mathbb{E}(T_k,\Sbf_{r_k},\Bbf_{r_k})\leq \mathbb{E}(T_k,\Sbf_{k},\Bbf_1)$$
yet either we have (as the inequality $\Abf_{0,r_k}^2\leq \eps_2^2\mathbb{E}((T_k)_{0,r_k},\Sbf_{r_k},\Bbf_1)$ holds for all $k$ sufficiently large by the above assumption)
$$\mathbb{E}(T_k,\Sbf_{r_k},\Bbf_{r_k})>\eps_2^2\mathbf{E}^p(T_k,\Bbf_{r_k})$$
or
$$\Bbf_{{r_k}\eps_2}(\xi)\cap \{p:\Theta(T_k,p)\geq Q\} = \emptyset\ \ \ \ \text{for some }\xi\in V(\Sbf_{r_k})\cap \Bbf_{r_k/2}.$$
In particular, since the assumptions of Theorem \ref{c:decay} hold for $T_k$ and $\Sbf_k$, points of density at least $Q$ in $T_k$ accumulate on the spines $V(\Sbf_k)$. Thus, if we pass to a subsequence for which $V(\Sbf_k)$ and $V(\Sbf_{r_k})$ converge, the second condition above in fact gives that we must have
\begin{equation}\label{e:non-alignment-of-spines}
\lim_{k\to \infty}\dist(V(\Sbf_{r_k})\cap \Bbf_1, V(\Sbf_k)\cap \Bbf_1) \geq {\bar{r}}\eps_2/2>0
\end{equation}
whilst the first condition above gives
\begin{equation}\label{e:no-vanishing-of-excess}
    \liminf_{k\to\infty}\frac{\mathbb{E}(T_k,\Sbf_{r_k},\Bbf_{r_k})}{\mathbf{E}^p(T_k,\Bbf_{r_k})} \geq \eps_2^2 >0.
\end{equation}

Without loss of generality, we can select planes $\pi_k$ realizing $\mathbf{E}^p(T_k,\Bbf_1)$, and, up to performing a rotation, we can assume they all coincide with the same fixed plane, $\pi_0$. We can also pass to a subsequence to ensure that the number $N_k$ of planes forming $\Sbf_k$ is a constant $N$ (obeying $N\leq Q$), and similarly the number of planes forming $\Sbf_{r_k}$ is a constant $\bar{N}$.

Let us first contradict \eqref{e:no-vanishing-of-excess}. Note that we have 
\[
\mathbb{E}(T_k,\Sbf_{r_k},\Bbf_{r_k}) \leq \mathbb{E}(T_k,\Sbf_k,\Bbf_1)\leq (\eps_0^k)^2\mathbf{E}^p(T_k,\Bbf_1)\to 0\, .
\]
and thus \eqref{e:no-vanishing-of-excess} tells us that we must have $\mathbf{E}^p(T_k,\Bbf_{r_k})\to 0$. In particular, up to subsequences, we know that the current $T_k$ is converging to a limiting current $T_\infty$ which coincides with an integer multiple of some plane $\pi_\infty$ in $\Bbf_{r_k}$. On the other hand, if we extract a converging subsequence, not relabelled, we have $\mathbf{S}_k \to \mathbf{S}_\infty$ in the local Hausdorff topology for some $\Sbf_\infty\in \mathscr{C}(Q,0)$. Hence we also conclude that $\spt (T) \cap \Bbf_1 \equiv \mathbf{S}_\infty\cap \Bbf_1$. This implies that the cone $\mathbf{S}_\infty\in \mathscr{C} (Q, 0)$ is in fact the plane $\pi_\infty$. In particular we conclude $\mathbf{E}^p (T_k, \Bbf_1) \to 0$.

We can therefore apply Almgren's (strong) Lipschitz approximation to the sequence $T_k$ and perform a blow-up procedure. We normalize the Lipschitz approximations by 
\[
E_k^{1/2} = \mathbf{E}^p(T_k,\Bbf_1)^{1/2}
\]
and, upon possibly applying suitable rotations (see Remark \ref{r:blow-up}), extract a Dir-minimizing function $f_\infty:B_{1/2}(\pi_\infty)\to\Acal_Q({\pi_\infty^\perp})$ in the blow-up limit. Moreover, since $\mathbb{E}(T_k,\Sbf_k,\Bbf_1)\leq (\eps_0^k)^2\mathbf{E}^p(T_k,\Bbf_1)$, we see that if we describe the cones $\Sbf_k$ as a union of graphs of linear maps $L_1^k,\dotsc,L_N^k:{\pi_\infty\to \pi_\infty^\perp}$ then, as we have previously seen, upon extraction of a subsequence we will have that $E_k^{-1/2}L_i^k$ converges to some linear function $L_i^\infty$ for each $i=1,\dotsc,N$. At least two of these linear functions are distinct, given that by Proposition \ref{p:balancing}(d) $\boldsymbol{\mu} (\Sbf)$ is comparable to $E_k^{1/2}$. But now, since $f_\infty=\sum_{i=1}^N \llbracket L_i^\infty \rrbracket$ and $r_k\leq \frac{1}{2}$, this would imply that $C^{-1}\leq\mathbf{E}^p(T_k,\Bbf_{r_k})/\mathbf{E}^p(T_k,\Bbf_1) \leq C$ for some geometric constant $C>0$, and hence we would have
\[
\frac{\mathbb{E}(T_k,\Sbf_{r_k},\Bbf_{r_k})}{\mathbf{E}^p(T_k,\Bbf_{r_k})} \leq C\frac{\mathbb{E}(T_k,\Sbf_{r_k},\Bbf_{r_k})}{\mathbf{E}^p(T_k,\Bbf_1)} \leq C\frac{\mathbb{E}(T_k,\Sbf_{k},\Bbf_1)}{\mathbf{E}^p(T_k,\Bbf_1)} \leq C(\eps_0^k)^2\to 0
\]
which is in direct contradiction to \eqref{e:no-vanishing-of-excess}.

Now we contradict \eqref{e:non-alignment-of-spines}. We distinguish two cases here, depending on whether $\mathbf{E}^p (T, \Bbf_1)$ converges to $0$ or not. In the latter case we can assume, after extracting a suitable subsequence, that $\mathbf{S}_k$ converges to a non-planar cone $\mathbf{S}_\infty\in \mathscr{C} (Q, 0)\setminus\mathscr{P}$ in the local Hausdorff topology which is the support of an area-minimizing current. But then it would follow that $\Sbf_{r_k}$ converges to the same cone. In particular $V (\Sbf_{r_k})\cap \Bbf_1$ and $V (\Sbf_k)\cap \Bbf_1$ both converge to $V (\Sbf_\infty)\cap \Bbf_1$, contracting \eqref{e:non-alignment-of-spines}.

In the other case note that, since $\mathbb{E}(T_k,\Sbf_{r_k},\Bbf_{r_k}) \leq (\eps_0^k)^2\mathbf{E}^p (T_k,\Bbf_1)$, if we assume that the cones $\Sbf_{r_k}$ are the union of $\bar{N}$ distinct linear maps $\bar{L}^k_i$, we see that $E_k^{-1/2}\bar{L}^k_i$ converge to a linear map $\bar{L}^\infty_i$ for each $i=1,\dotsc,\bar{N}$. Furthermore the union of the graphs of these limiting linear maps must coincide with the support of the map $f_\infty$ found previously (indeed, we get from the above inequality that they must coincide on $\Bbf_{\bar{r}}$, and hence on all of $\Bbf_1$ as they are linear). 
But this is only possible if
\[
\dist(V(\Sbf_{r_k})\cap\Bbf_1,V(\Sbf_k)\cap\Bbf_1)\to 0
\]
which contradicts \eqref{e:non-alignment-of-spines}. This completes the proof.
\end{proof}

We are now ready to complete the proof of Theorem \ref{c:decay} from Proposition \ref{p:decay-multiple}. We first address the most important conclusion, which is the decay estimate in (a). We will see after how (b), (c), and (d) can be derived from this.

\begin{proof}[Proof of Theorem \ref{c:decay}(a) from Proposition \ref{p:decay-multiple}]
We therefore fix $T$, $\Sigma$, $\Sbf$, and $\varsigma$ as in the statement of Theorem \ref{c:decay}, and fix parameters $\eps_0$ and $r_0<1/2$ to be specified later. 

Choose $\varsigma_2 = \frac{1}{2}$ in Proposition \ref{p:decay-multiple}, and denote by $\varepsilon_2 = \varepsilon_2(Q,m,n,\bar{n})$ the corresponding constant from Proposition \ref{p:decay-multiple} for this choice of $\varsigma_2$. Also denote by $r_1,\dotsc,r_{\bar{N}}\leq\frac{1}{2}$ the corresponding radii from Proposition \ref{p:decay-multiple} with this choice of $\varsigma_2$. Write $\bar{r}:= \min\{r_i: i=1,\dotsc,\bar{N}\}$, which depends only on $Q,m,n,\bar{n}$.

Define
$$\mathbb{E}(T,\Bbf_r) := \inf\{\mathbb{E}(T,\Sbf,\Bbf_r):\Sbf\in \mathscr{C}(Q,0)\setminus\mathscr{P}(0)\}$$
and
$$\mathcal{A}:= \{r\in [r_0,1/2]:\eps_0^{-2}\Abf^2r^2 \geq \mathbb{E}(T,\Bbf_r)\}.$$
We now distinguish three possibilities.

\medskip

\textbf{Case 1:} $\mathcal{A} = \emptyset$. We then have $\eps_0^{-2}r^2\Abf^2\leq \mathbb{E}(T,\Bbf_r)$ for all $r\in [r_0,1/2]$. If $\eps_0\leq \eps_2$, then at scale $1$ the assumptions of Theorem \ref{c:decay} give that we can apply Proposition \ref{p:decay-multiple} to $T,\Sigma,\Sbf$ to get that there exists $\rho_1\in [\bar{r},1/2]$ (indeed, $\rho_1 = r_i$ for some $i=1,\dotsc,\bar{N}$, but this is not relevant) and a cone $\Sbf_1\in \mathscr{C}(Q,0)\setminus \mathscr{P}(0)$ such that
$$\mathbb{E}(T,\Sbf_1,\Bbf_{\rho_1})\leq \frac{1}{2}\mathbb{E}(T,\Sbf,\Bbf_1).$$
Let us assume (as we may) that $r_0<\bar{r}$. Since $\mathcal{A} = \emptyset$, we therefore fall into the realm of Lemma \ref{l:large_interval}, taking $\eps_2$ and $\bar{r}$ to be the above constants. If $\eps_0$ is smaller than the corresponding constant from Lemma \ref{l:large_interval} with this choice of $\eps_2$ and $\bar{r}$ (and thus only depends on $Q,m,n,\bar{n})$, we see that Lemma \ref{l:large_interval} gives that $T_{0,\rho_1}$, $\Sigma_{0,\rho_1}$, and some cone $\Sbf^\prime_1$ (namely the cone achieving the infimum in $\mathbb{E}(T,\Bbf_{\rho_1})$) obey the necessary conditions to apply Proposition \ref{p:decay-multiple} again. Thus, we find a second radius $\rho_2$ obeying $\rho_2/\rho_1\in [\bar{r},1/2]$ and a cone $\Sbf_2\in \mathscr{C}(Q,0)\setminus\mathscr{P}(0)$ obeying
$$\mathbb{E}(T,\Sbf_2,\Bbf_{\rho_2})\leq \frac{1}{2}\mathbb{E}(T,\Sbf_1,\Bbf_{\rho_1})\leq \frac{1}{4}\mathbb{E}(T,\Sbf,\Bbf_1).$$

We now keep iterating this procedure, until we arrive at a radius $\rho_k$ so that the next radius $\rho_{k+1}$ is smaller than $r_0$ (in particular we no longer necessarily have that $\eps_0^{-2}\Abf^2 \rho_{k+1}^2 \leq \Ebb(T,\Bbf_{\rho_{k+1}})$); as $\rho_{k+1}/\rho_k\in [\bar{r},1/2]$, when this happens we have $\bar{r}\leq \rho_{k+1}/\rho_k < r_0/\rho_k$. Since we can apply Corollary \ref{c:control-large-interval} to $T_{0,\rho_k}$, $\Sigma_{0,\rho_k}$, and $\Sbf_k$ (again, using the fact that $\rho_k\not\in \mathcal{A} = \emptyset$ and Lemma \ref{l:large_interval}), provided $\eps_2$ is smaller than the corresponding constant from Corollary \ref{c:control-large-interval} with this choice of $\bar{r}$ (which can of course be arranged), we therefore get the existence of a cone $\Sbf^\prime\in \mathscr{C}(Q,0)\setminus\mathscr{P}(0)$ such that (choosing $r^* = r_0/\rho_k \in [\bar{r},1]$ in Corollary \ref{c:control-large-interval}, so that $r^*\rho_k = r_0$)
$$\mathbb{E}(T_{0,\rho_k},\Sbf^\prime,\Bbf_{r^*}) \leq C\mathbb{E}(T_{0,\rho_k},\Sbf_k,\Bbf_1)$$
where $C = C(Q,m,n,\bar{n})$ (indeed the constant $C$ depends on $\bar r$, but at this stage the latter has been fixed as depending only upon $Q,m,n,$ and $\bar n$). In particular, combined with the above iteration, the last inequality yields
$$\mathbb{E}(T,\Sbf^\prime,\Bbf_{r_0}) \leq C \mathbb{E}(T,\Sbf_k,\Bbf_{\rho_k}) \leq C\cdot 2^{-k}\mathbb{E}(T,\Sbf,\Bbf_1).$$
Observe however that as $\bar{r}<r_0/\rho_k < r_0/\bar{r}^k$, i.e. $r_0>\bar{r}^{k+1}$, we have $k+1\geq \log(r_0)/\log(\bar{r})$. Hence, choosing $r_0 = r_0(Q,m,n,\bar{n})$ sufficiently small we can ensure that $k\geq \frac{1}{2}\left\lfloor\frac{\log(r_0)}{\log(\bar{r})}\right\rfloor$, and hence
\begin{equation}\label{e:important-case}
    \mathbb{E}(T,\Sbf^\prime,\Bbf_{r_0}) \leq C2^{-\frac{1}{2}\left\lfloor\frac{\log(r_0)}{\log(\bar{r})}\right\rfloor}\mathbb{E}(T,\Sbf,\Bbf_1).
\end{equation}
Now we may further take $r_0$ small enough to make sure that $C2^{-\frac{1}{2}\left\lfloor\frac{\log(r_0)}{\log(\bar{r})}\right\rfloor} \leq \varsigma$ to deduce the conclusion (a) of Theorem \ref{c:decay}.
\medskip

\textbf{Case 2:} $\inf\mathcal{A} = r_0$. Note that $\mathcal{A}$ is certainly a closed set, and so in this case we have $r_0\in \mathcal{A}$. Hence, we have
$$\mathbb{E}(T,\Bbf_{r_0}) \leq \eps_0^{-2}\Abf^2 r_0^2 \leq r_0^2\mathbb{E}(T,\Sbf,\Bbf_1)$$
where the second inequality comes from our assumption in Theorem \ref{c:decay}. In particular, we can find $\Sbf^\prime\in\mathscr{C}(Q,0)\setminus \mathscr{P}(0)$ with
$$\mathbb{E}(T,\Sbf^\prime,\Bbf_{r_0}) \leq 2\mathbb{E}(T,\Bbf_{r_0}) \leq 2r_0^2\mathbb{E}(T,\Sbf,\Bbf_1)$$
and so we just need to take $2r_0^2 < \varsigma$ to deduce conclusion (a) of Theorem \ref{c:decay}. 

\medskip

\textbf{Case 3:} $\mathcal{A}\neq\emptyset$ yet $\inf\mathcal{A}>r_0$. In this case, let $r_I = \inf\mathcal{A}\in\mathcal{A}$. Let $\Sbf_I\in \mathscr{C}(Q,0)\setminus\scr{P}(0)$ be such that
\begin{equation}\label{e:A-dominates}
\mathbb{E}(T,\Sbf_I,\Bbf_{r_I})\leq 2\mathbb{E}(T,\Bbf_{r_I}) \leq 2\eps_0^{-2}\Abf^2 r_I^2.
\end{equation}
Now observe that
$$\lim_{r\uparrow r_I}\mathbb{E}(T,\Sbf_I,\Bbf_r) = \mathbb{E}(T,\Sbf_I,\Bbf_{r_I})$$
Moreover, as $[r_0,r_I)\neq\emptyset$ and $[r_0,r_I)\cap\mathcal{A} = \emptyset$, we see that for all $r\in [r_0,r_I)$,
$$\eps_0^{-2}r^2\Abf^2\leq \mathbb{E}(T,\Bbf_r) \leq \mathbb{E}(T,\Sbf_I,\Bbf_r).$$
Taking $r\uparrow r_I$ in this expression, we deduce
$$\eps_0^{-2}r^2_I \Abf^2\leq \mathbb{E}(T,\Sbf_I,\Bbf_{r_I}).$$
Moreover, {from \eqref{e:A-dominates} and our assumption \eqref{e:smallness}} we know
\begin{equation}\label{e:A-dominates-2}
\mathbb{E}(T,\Sbf_I,\Bbf_{r_I})\leq 2\eps_0^{-2}\Abf^2 r_I^2 \leq 2r_I^2\mathbb{E}(T,\Sbf,\Bbf_1).
\end{equation}
Hence, we have all the necessary assumptions to apply Lemma \ref{l:large_interval}, which guarantees that we can apply Proposition \ref{p:decay-multiple} to $T_{0,r_I}$, $\Sigma_{0,r_I}$, and $\Sbf_I$. We can now argue as in Case 1, but with $r_I$ replacing $1$ as a starting point of the iteration {and with $r_0/r_I$ as the lower endpoint}. We therefore get the existence of an $\Sbf^\prime\in \mathscr{C}(Q,0)\setminus\mathscr{P}(0)$ with the property that (see \eqref{e:important-case})
$$\mathbb{E}(T,\Sbf^\prime,\Bbf_{r_0}) \leq C2^{-\frac{1}{2}\frac{\log(r_0/r_I)}{\log(\bar{r})}}\mathbb{E}(T,\Sbf_I,\Bbf_{r_I}).$$
Combining this with \eqref{e:A-dominates-2} we arrive at
$$\mathbb{E}(T,\Sbf^\prime,\Bbf_{r_0}) \leq \left(2r_I^2C2^{\frac{1}{2}\frac{\log(r_I)}{\log(\bar{r})}}\right)2^{-\frac{1}{2}\frac{\log(r_0)}{\log(\bar{r})}}\mathbb{E}(T,\Sbf,\Bbf_{1}).$$
Noting that $2^{\frac{1}{2}\frac{\log(r_I)}{\log(\bar{r})}} = \left(\frac{1}{r_I}\right)^{\frac{\log(2)}{2\log(1/\bar{r})}} \leq r_I^{-\frac{1}{2}}$, the above becomes
$$\mathbb{E}(T,\Sbf^\prime,\Bbf_{r_0}) \leq \left(2Cr_I^{3/2}\right)2^{-\frac{1}{2}\frac{\log(r_0)}{\log(\bar{r})}}\mathbb{E}(T,\Sbf,\Bbf_{1}) \leq 2C\cdot 2^{-\frac{1}{2}\frac{\log(r_0)}{\log(\bar{r})}}\mathbb{E}(T,\Sbf,\Bbf_{1}).$$
Thus choosing $r_0$ small enough so that $2C\cdot2^{-\frac{1}{2}\frac{\log(r_0)}{\log(\bar{r})}}\leq \varsigma$, we deduce (a) from Theorem \ref{c:decay}. As the above three situations exhaust all possibilities, the proof is completed.
\end{proof}

To summarize: we have now reduced the proof of {point (a) in Theorem \ref{c:decay}} to proving the a priori much weaker result of Theorem \ref{t:weak-decay}. We will next address how points (b), (c), and (d) follow from (a). In fact, since it will prove to be useful also later when we get to the rectifiability statement in Theorem \ref{t:big-one}, we state here a more general lemma.

\begin{lemma}\label{l:a->bcd}
There is a constant $C = C(Q,m,n,\bar {n})>0$ with the following property. For every $r_0 >0$, there exists $\varepsilon = \varepsilon(Q,m,n, \bar n,r_0)>0$ such that if
\begin{itemize}
\item[(i)] $T$ and $\Sigma$ are as in Assumption \ref{a:main};
\item[(ii)] $\|T\| (\Bbf_1) \leq (Q+\frac{1}{2}) \omega_m$; and
\item[(iii)] there is $\mathbf{S}\in \mathscr{C} (Q, 0)\setminus\Pscr(0)$ such that 
\begin{equation}\label{e:smallness-abcd}
\mathbf{A}^2 + \mathbb{E} (T, \mathbf{S}, \Bbf_1) \leq \varepsilon^2 \mathbf{E}^p (T, \Bbf_1)\ ;
\end{equation}
\end{itemize}
then
\begin{equation}\label{e:no-change-E}
C^{-1} \mathbf{E}^p (T, \Bbf_1) \leq \Ebf^p (T, \Bbf_{r_0})
\leq C \Ebf^p (T, \Bbf_1)\, .
\end{equation}
Moreover, there is a constant $\bar C = \bar C (Q,m,n,\bar n, r_0)$ such that, provided $\varepsilon$ is chosen possibly smaller, if there is another cone $\Sbf'\in \mathscr{C} (Q, 0){\setminus \mathscr{P}(0)}$ that also obeys
\begin{align}\label{e:yet-another-smallness}
\mathbb{E} (T, \mathbf{S}', \Bbf_{r_0}) \leq \eps^2 \mathbf{E}^p (T, \Bbf_1)\, ,
\end{align}
then 
\begin{align}
\dist^2 (\mathbf{S}\cap \Bbf_1, \mathbf{S}^\prime \cap \Bbf_1) & \leq \bar C (\mathbf{A}^2 + \mathbb{E} (T, \Sbf, \Bbf_1) + \mathbb{E} (T, \Sbf^\prime, \Bbf_{r_0}))\label{e:near-10}\\
\dist^2 (V (\Sbf)\cap \Bbf_1, V (\Sbf')\cap \Bbf_1) 
&\leq \bar C \mathbf{E}^p (T, \Bbf_1)^{-1}
(\mathbf{A}^2 + \mathbb{E} (T, \Sbf, \Bbf_1) + \mathbb{E} (T, \Sbf^\prime, \Bbf_{r_0}))\, .\label{e:align-10}
\end{align}
\end{lemma}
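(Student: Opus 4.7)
The plan is to use Proposition \ref{p:balancing} to reduce to an $M$-balanced cone $\mathbf{S}_b\subseteq\mathbf{S}$ with $V(\mathbf{S}_b)=V(\mathbf{S})$ (the spine is determined by any two of the planes) and $\boldsymbol{\mu}(\mathbf{S}_b)^2\sim\mathbf{E}^p(T,\mathbf{B}_1)$ with absolute constants (Proposition \ref{p:balancing}(d)), together with $\mathbb{E}(T,\mathbf{S}_b,\mathbf{B}_1)\leq C\mathbb{E}(T,\mathbf{S},\mathbf{B}_1)$ and $\dist^2(\mathbf{S}\cap\mathbf{B}_1,\mathbf{S}_b\cap\mathbf{B}_1)\leq C\mathbb{E}(T,\mathbf{S},\mathbf{B}_1)$. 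The key to \eqref{e:no-change-E} will be to compare $\mathbf{E}^p(T,\mathbf{B}_{r_0})$ directly to $\boldsymbol{\mu}(\mathbf{S}_b)^2$ using the cone structure of $\mathbf{S}_b$ together with the graphical decomposition (Proposition \ref{p:Lipschitz-1}). The Hausdorff bounds \eqref{e:near-10} and \eqref{e:align-10} will then follow from a second application of balancing to $\mathbf{S}^\prime$ at scale $r_0$, combined with Proposition \ref{p:Lipschitz-1} to compare both cones to $T$, and the spine alignment Lemma \ref{l:spine}.

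For the upper bound in \eqref{e:no-change-E}, fix any plane $\alpha\subset\mathbf{S}_b$; since $\dist^2(p,\alpha)\leq 2\dist^2(p,\mathbf{S}_b)+C|p|^2\boldsymbol{\mu}(\mathbf{S}_b)^2$ and $\|T\|(\mathbf{B}_{r_0})\leq Cr_0^m$ by monotonicity, integration over $\mathbf{B}_{r_0}$ combined with $\hat{\mathbf{E}}(T,\mathbf{S}_b,\mathbf{B}_1)\leq C\varepsilon^2\boldsymbol{\mu}(\mathbf{S}_b)^2$ yields $\mathbf{E}^p(T,\mathbf{B}_{r_0})\leq (Cr_0^{-m-2}\varepsilon^2+C)\boldsymbol{\mu}(\mathbf{S}_b)^2\leq C\boldsymbol{\mu}(\mathbf{S}_b)^2$ for $\varepsilon\leq\varepsilon(r_0)$ small. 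For the lower bound, given any comparison plane $\pi$, the triangle inequality produces at least one $\alpha_{i_*}\subset\mathbf{S}_b$ with $\dist(\alpha_{i_*}\cap\mathbf{B}_1,\pi\cap\mathbf{B}_1)\geq\boldsymbol{\mu}(\mathbf{S}_b)/2$. Since $\dist^2(\cdot,\pi)|_{\alpha_{i_*}}$ is a PSD quadratic form whose largest eigenvalue is $\sin^2\theta_{\max}(\alpha_{i_*},\pi)\geq \boldsymbol{\mu}(\mathbf{S}_b)^2/4$ (Corollary \ref{c:growth}), the trace-integral bound gives
\[
\int_{\alpha_{i_*}\cap\mathbf{B}_{r_0}\setminus B_{\rho r_0}(V)}\dist^2(q,\pi)\,d\mathcal{H}^m(q)\geq cr_0^{m+2}\boldsymbol{\mu}(\mathbf{S}_b)^2.
\]
Applying Lemma \ref{l:splitting-1} and Proposition \ref{p:Lipschitz-1} to $T_{0,r_0}$ and $\mathbf{S}_b$ (valid for $\varepsilon$ small depending on $r_0$; the reverse excess condition \eqref{e:other-side} inherits smallness from scale $1$, so Proposition \ref{p:Lipschitz-1}(f) yields $Q_i\geq 1$ for each $i$), we decompose $T\res\mathbf{B}_{r_0}\setminus B_{\rho r_0}(V) = \sum_i T_i$ with $T_{i_*}$ a $Q_{i_*}\geq 1$-sheeted graph over $\alpha_{i_*}\cap\Omega_{i_*}$. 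The $L^\infty$ height bound \eqref{e:Linfty-crude} controls the graph displacement error by $O(\varepsilon^2 r_0^{m+2}\boldsymbol{\mu}(\mathbf{S}_b)^2)$, so for $\varepsilon$ small, $\mathbf{E}^p(T,\mathbf{B}_{r_0})\geq c\boldsymbol{\mu}(\mathbf{S}_b)^2\geq C^{-1}\mathbf{E}^p(T,\mathbf{B}_1)$.

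For \eqref{e:near-10} and \eqref{e:align-10}, we use \eqref{e:no-change-E} to rewrite \eqref{e:yet-another-smallness} as $\mathbf{A}^2+\mathbb{E}(T,\mathbf{S}^\prime,\mathbf{B}_{r_0})\leq C\varepsilon^2\mathbf{E}^p(T,\mathbf{B}_{r_0})$, and apply Proposition \ref{p:balancing} to $T_{0,r_0}$ and $\mathbf{S}^\prime$ to produce a balanced $\mathbf{S}^\prime_b\subseteq\mathbf{S}^\prime$ with $V(\mathbf{S}^\prime_b)=V(\mathbf{S}^\prime)$ (the regime $\mathbf{A}^2\gg\mathbb{E}(T,\mathbf{S}^\prime,\mathbf{B}_{r_0})$ is handled by absorbing $\mathbf{A}^2$ as an additional term in the effective excess throughout the balancing argument). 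Applying Proposition \ref{p:Lipschitz-1} to both $\mathbf{S}_b$ and $\mathbf{S}^\prime_b$ in $\mathbf{B}_{r_0}$ away from the respective spines gives $L^\infty$ height estimates bounding the Hausdorff distance from $\mathrm{spt}(T)$ to each cone by $Cr_0(\mathbf{A}^2+\mathbb{E})^{1/2}$, so the triangle inequality yields
\[
\dist^2(\mathbf{S}_b\cap\mathbf{B}_{r_0},\mathbf{S}^\prime_b\cap\mathbf{B}_{r_0})\leq Cr_0^2(\mathbf{A}^2+\mathbb{E}(T,\mathbf{S},\mathbf{B}_1)+\mathbb{E}(T,\mathbf{S}^\prime,\mathbf{B}_{r_0})).
\]
Rescaling by $r_0^{-1}$ (valid since both are cones through the origin) and combining with the balancing bounds on $\dist^2(\mathbf{S},\mathbf{S}_b)$ and $\dist^2(\mathbf{S}^\prime,\mathbf{S}^\prime_b)$ yields \eqref{e:near-10}. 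Finally, since $\mathbf{S}_b$ is $M$-balanced with $\min_i\max_j\dist(\alpha_j\cap\mathbf{B}_1,\alpha_i\cap\mathbf{B}_1)\geq\boldsymbol{\mu}(\mathbf{S}_b)/2$ by the triangle inequality, Lemma \ref{l:spine} applied to the pair $(\mathbf{S}_b,\mathbf{S}^\prime_b)$ yields $\dist^2(V(\mathbf{S}_b)\cap\mathbf{B}_1,V(\mathbf{S}^\prime_b)\cap\mathbf{B}_1)\leq \bar{C}\dist^2(\mathbf{S}_b\cap\mathbf{B}_1,\mathbf{S}^\prime_b\cap\mathbf{B}_1)/\boldsymbol{\mu}(\mathbf{S}_b)^2$; combining with $V(\mathbf{S})=V(\mathbf{S}_b)$, $V(\mathbf{S}^\prime)=V(\mathbf{S}^\prime_b)$, \eqref{e:near-10}, and $\boldsymbol{\mu}(\mathbf{S}_b)^2\geq C^{-1}\mathbf{E}^p(T,\mathbf{B}_1)$ produces \eqref{e:align-10}.

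The main obstacle will be the lower bound in \eqref{e:no-change-E}: this encodes the scale-invariance of the balanced-cone structure, asserting that any candidate plane $\pi$ at scale $r_0$ must remain a fixed angular distance from some $\alpha_{i_*}\subset\mathbf{S}_b$ independent of $r_0$, and transferring this geometric fact from $\mathbf{S}_b$ to the current $T$ requires the full sheet decomposition with multiplicity $Q_{i_*}\geq 1$ on $\alpha_{i_*}$, which in turn relies on the reverse-excess smallness ensuring Proposition \ref{p:Lipschitz-1}(f) applies. A secondary technical subtlety is the applicability of Proposition \ref{p:balancing} to $\mathbf{S}^\prime$ at scale $r_0$ when $\mathbf{A}^2$ dominates $\mathbb{E}(T,\mathbf{S}^\prime,\mathbf{B}_{r_0})$; this is resolved by carrying $\mathbf{A}^2$ as an additional term in an effective excess throughout, consistent with the $\mathbf{A}^2$ contribution appearing on the right-hand side of the conclusions.
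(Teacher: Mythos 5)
Your overall strategy — pass to a balanced sub‑cone, compare the cones to $T$ through the splitting lemma and Lipschitz approximation, conclude via Lemma~\ref{l:spine} — is in the spirit of the paper's approach for the Hausdorff bounds, but there are two genuine gaps.

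\textbf{The reverse excess at scale $r_0$.} Your lower bound in \eqref{e:no-change-E} hinges on applying Proposition~\ref{p:Lipschitz-1}(f) to $T_{0,r_0}$ and $\Sbf_b$, asserting that ``the reverse excess condition \eqref{e:other-side} inherits smallness from scale $1$.'' It does not: the scale-$r_0$ reverse excess integrates over $\Sbf\cap\Bbf_{\sim r_0}\setminus B_{\sim \rho_* r_0}(V)$, whereas the hypothesis \eqref{e:smallness-abcd} only controls $\hat{\Ebf}(\Sbf,T,\Bbf_1)$, which is an integral over $\Sbf\cap\Bbf_1\setminus B_a(V)$ for a \emph{fixed} $a=a(m)$. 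For $r_0$ small these domains are not nested — the smaller-scale one contains the annular region $B_a(V)\setminus B_{\sim\rho_* r_0}(V)$ on which you have no information. The paper explicitly flags this obstruction in the proof of Corollary~\ref{c:control-large-interval}, and avoids it in Lemma~\ref{l:a->bcd} by proving the lower bound in \eqref{e:no-change-E} with a \emph{compactness/blow-up argument}: assume the ratio $\Ebf^p(T_k,\Bbf_{r_0})/\Ebf^p(T_k,\Bbf_1)$ degenerates, extract a Dir-minimizing coarse blow-up $f_\infty=\sum Q_i\llbracket L_i^\infty\rrbracket$ with $Q_i\geq 1$ and at least two distinct $L_i^\infty$ at separation $\gtrsim 1$, and derive a contradiction. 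Your direct argument can be salvaged, but only after pruning to a cone $\Sbf_b'$ with $\Ebb(T,\Sbf_b',\Bbf_1)+\Abf^2\lesssim\delta^2\boldsymbol{\sigma}(\Sbf_b')^2$, then invoking the refined approximation (Proposition~\ref{p:refined}) or Corollary~\ref{c:control-large-interval} to propagate reverse-excess control through intermediate scales; as written, the claim is false.

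\textbf{The $\Abf^2$-dominated regime.} You invoke Proposition~\ref{p:balancing} directly, but its hypothesis \eqref{e:smallexcess} requires $\Abf^2\leq\eps_0^2\,\Ebb(T,\Sbf,\Bbf_1)$. The Lemma's assumption \eqref{e:smallness-abcd} gives $\Abf^2\leq\eps^2\Ebf^p(T,\Bbf_1)$ but says nothing about the ratio $\Abf^2/\Ebb(T,\Sbf,\Bbf_1)$, which can be arbitrarily large. Your parenthetical remark that the case $\Abf^2\gg\Ebb$ is ``handled by absorbing $\Abf^2$ as an additional term in the effective excess'' is not an argument — Proposition~\ref{p:balancing} simply does not apply in that regime. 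The paper handles this with a non-trivial \emph{enlargement} step: construct a continuous path of cones $\Sbf(t)$ from $\Sbf$ to a plane $\pi\supset V(\Sbf)$ inside $\mathscr{C}(Q,0)$ (using a topological argument that the relevant Grassmannian is path-connected with the bad stratum of high codimension), find $t_e$ with $\frac12\eps_0^{-2}\Abf^2<\Ebb(T,\Sbf(t_e),\Bbf_1)<\eps_0^{-2}\Abf^2$ by continuity, and only then apply balancing to $\Sbf_e:=\Sbf(t_e)$. It also requires a further argument (the proof of ``(iv)'' in Step~1) to show $\dist^2(\Sbf_1\cap\Bbf_1,\Sbf\cap\Bbf_1)\lesssim\Ebb(T,\Sbf,\Bbf_1)+\Abf^2$ when $\Sbf_e\neq\Sbf$, which is not a consequence of Proposition~\ref{p:balancing}(c) alone. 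Both steps are missing from your outline.

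A secondary remark: your triangle-inequality argument for \eqref{e:near-10} — match each plane of $\Sbf_b$ to one in $\Sbf_b'$ via the common approximation of $\spt T$ — is morally correct but needs the chaining argument of the paper's Step~2 to guarantee that one can use the \emph{same} plane $\beta_{j(1)}\subset\Sbf'$ to approximate an entire plane $\alpha_1\subset\Sbf$, since a priori different parts of $\alpha_1$ could be close to different planes of $\Sbf'$. The balancedness of $\Sbf'$ is used there to pass from pointwise closeness to Hausdorff closeness of the planes (via Corollary~\ref{c:growth}). This is fixable, but as written your sketch elides the real difficulty.
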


\begin{proof}
First of all we argue for \eqref{e:no-change-E}. We start by {demonstrating} that
\begin{equation}\label{e:obvious}
\boldsymbol{\mu} (\Sbf)^2 \leq C \mathbf{E}^p (T, \Bbf_1)\, .
\end{equation}
In fact, fix $\pi$ such that $\mathbf{E}^p (T, \Bbf_1) = \hat{\mathbf{E}} (T, \pi, \Bbf_1)$ and use Allard's $L^2$--$L^\infty$ height bound to infer that 
\begin{equation}\label{e:Allard-1}
\dist^2 (q, \pi)\leq C \mathbf{E}^p (T, \Bbf_1) + C \Abf^2\,  \qquad \forall q\in \spt (T)\cap \Bbf_{1/2}\, .
\end{equation}
Next, let $\alpha_i$ be one of the planes which form $\Sbf$ and recall that 
\[
\int_{\alpha_i \cap \Bbf_1 \setminus B_{a} (V)} \dist^2 (q, \spt (T))\, d\mathcal{H}^m (q) 
\leq \mathbb{E} (T, \Sbf, \Bbf_1)\, .
\]
Using Chebyshev's inequality (applied to the measure $\Hcal^m\res\alpha_i$), we have that the set $F_i\subset {\alpha_i\cap \Bbf_{1/4}\setminus \Bbf_a(V)}$ of points $q\in {\alpha_i\cap \Bbf_{1/4}\setminus B_a(V)}$ obeying
\begin{equation}\label{e:Chebyshev}
    \dist^2(q,\spt(T)) \leq \frac{C}{\Hcal^m({\alpha_i\cap \Bbf_{1/4}\setminus \Bbf_a(V)})}\mathbb{E} (T, \Sbf, \Bbf_1)
\end{equation}
satisfies $\Hcal^m(F_i)\geq \left(1-\frac{1}{C}\right)\Hcal^m({\alpha_i\cap \Bbf_{1/4}\setminus \Bbf_a(V)})$ and so for a choice of $C=C(m)>0$ sufficiently large, there exist $m$ linearly independent vectors $e_1,\dots,e_m\in F_i$ with the property that every $v \in \alpha_i \cap \Bbf_1$ can be written as $v = \sum_j \lambda_j e_j$ with $|\lambda_j|\leq C$.

Combining the above distance inequalities, for each $j\in \{1,\dots,m\}$ we have
\[
\dist^2 (e_j, \pi) \leq C (\mathbb{E} (T, \Sbf, \Bbf_1) + \Abf^2 + \mathbf{E}^p (T, \Bbf_1)) \leq 
C \mathbf{E}^p (T, \Bbf_1)\, .  
\]
{For each $v \in \alpha_i \cap \Bbf_1$, using the identity $\dist (v, \pi)= |\mathbf{p}_\pi^\perp (v)|$, this in turn implies}
\[
\dist^2 (v, \pi) \leq C \mathbf{E}^p (T, \Bbf_1)\, . 
\]
We thus can use \eqref{e:Haus=max_eigen} of Corollary \ref{c:growth} to infer that
\begin{equation}\label{e:obvious-2}
\dist^2 (\alpha_i\cap \Bbf_1, \pi\cap \Bbf_1) \leq C \mathbf{E}^p (T, \Bbf_1)\, ,
\end{equation}
which in turn implies \eqref{e:obvious}. 

Observe also that we have the inequality
\[
\mathbf{E}^p (T, \Bbf_1) \leq \hat{\mathbf{E}} (T, \alpha_1,\Bbf_1) 
\leq C \hat{\mathbf{E}} (T,\Sbf, \Bbf_1) + C \boldsymbol{\mu} (\Sbf)^2  
\leq C \varepsilon^2 \mathbf{E}^p (T, \Bbf_1) + C \boldsymbol{\mu} (\Sbf)^2\, ,
\]
and so in particular we conclude
\begin{equation}\label{e:E-mu-comparable}
C^{-1} \mathbf{E}^p (T, \Bbf_1) \leq \boldsymbol{\mu} (\Sbf)^2 \leq C \mathbf{E}^p (T, \Bbf_1)\, ,
\end{equation}
if $\varepsilon$ is sufficiently small.
Moreover, since we have not used any information other than the smallness of $\Ebb (T, \Sbf, \Bbf_1) + \Abf^2$ with respect to $\Ebf^p (T, \Bbf_1)$, the same argument applies to $\Sbf^\prime$ and $T_{0, r_0}$, which allows us to conclude
\begin{equation}\label{e:E-mu-comparable-2}
C^{-1} \mathbf{E}^p (T, \Bbf_{r_0}) \leq \boldsymbol{\mu} (\Sbf^\prime)^2 \leq C \mathbf{E}^p (T, \Bbf_{r_0})\, .
\end{equation}
We next observe that from \eqref{e:obvious} {(see also \eqref{e:obvious-2})} we immediately get the {inequality on the right-hand side of} \eqref{e:no-change-E}. Indeed, we can write 
\begin{align*}
\mathbf{E}^p (T, \Bbf_{r_0}) & \leq 
\hat{\mathbf{E}} (T, \pi, \Bbf_{r_0}) \leq C r_0^{-2} \max_i \dist^2 (\alpha_i \cap \Bbf_{r_0}, \pi \cap \Bbf_{r_0}) + 
C \hat{\mathbf{E}} (T, \Sbf, \Bbf_{r_0})\\
&\leq C \mathbf{E}^p (T, \Bbf_1) + C r_0^{-m-2} \hat{\mathbf{E}} (T, \Sbf, \Bbf_1)
\leq C (1+ \varepsilon r_0^{-m-2}) \mathbf{E}^p (T, \Bbf_1)
\end{align*}
and so to conclude the right-hand inequality in \eqref{e:no-change-E} it suffices to choose $\varepsilon$ small compared to $r_0$. 

\medskip

We next argue by contradiction for the left-hand inequality in \eqref{e:no-change-E}. 
If the conclusion is false, then we could find a sequence of {area-minimizing} currents $T_k$, ambient manifolds $\Sigma_k$, cones $\Sbf_k\in \Cscr(Q,0)\setminus\Pscr(0)$, and parameters $\eps^k\downarrow 0$ for which the left-hand inequality in \eqref{e:no-change-E} fails for some fixed $r_0$, namely 
\begin{equation}\label{e:planar-decay}
    \lim_{k\to \infty} \frac{\mathbf{E}^p (T_k, \Bbf_{r_0})}{\mathbf{E}^p (T_k, \Bbf_1)} <  C_*^{-1} \, ,
\end{equation}
for some constant $C_*$ which will be specified only later and which will turn out to be independent of $r_0$.
 Without loss of generality, we can select planes $\pi_k$ {with $\mathbf{E}^p(T_k,\Bbf_1) = \hat\Ebf(T_k,\pi_k,\Bbf_1)$}, and, up to performing a rotation, we can assume they all coincide with the same fixed plane, $\pi_\infty$. We can also pass to a subsequence to ensure that the number $N_k$ of planes forming $\Sbf_k$ is a constant $N$ (obeying $N\leq Q$).

Observe that we must necessarily have 
\[
\mathbf{E}^p(T_k,\Bbf_1)\to 0\, .
\]
Indeed, we can assume $\mathbf{S}_k \to \mathbf{S}_\infty {\in \Cscr(Q,0)\setminus \Pscr(0)}$ locally in the Hausdorff topology, while $T_k$ converges weakly to some area-minimizing current $T_\infty$ and it follows easily that $\spt (T_\infty)\cap \Bbf_1 = \mathbf{S}_\infty \cap \Bbf_1$. If $\mathbf{E}^p (T_k, \Bbf_1)$ does not converge to $0$, then $T_\infty$ is not supported in a plane, so it must be an area-minimizing integral cone which is not planar. But then we see immediately that 
\[
\lim_{k\to\infty} \frac{\mathbf{E}^p (T_k, \Bbf_1)}{\mathbf{E}^p (T_k, \Bbf_{r_0})} = 1\, .
\]
In particular this would be a contradiction if we impose $C_*\geq 1$ in \eqref{e:planar-decay}. 

We conclude then that $\mathbf{S}_k$ converges indeed to $\pi_\infty$. We can therefore apply Almgren's (strong) Lipschitz approximation \cite{DLS14Lp}*{Theorem 2.4} to the sequence $T_k$ {relative to the plane $\pi_\infty$} and perform a blow-up procedure. We normalize the Lipschitz approximations by 
\[
E_k^{1/2} = \mathbf{E}^p(T_k,\Bbf_1)^{1/2}
\]
and, upon possibly applying suitable rotations (see Remark \ref{r:blow-up}), extract a Dir-minimizing function {$f_\infty:B_1({\pi_\infty})\to\Acal_Q({\pi_\infty^\perp})$} in the blow-up limit {(the Dir-minimizing property follows from \cite{DLS14Lp}*{Theorem 2.6})}. Moreover, since $\mathbb{E}(T_k,\Sbf_k,\Bbf_1)\leq (\eps^k)^2\mathbf{E}^p(T_k,\Bbf_1)$, we see that if we describe the cones $\Sbf_k$ as a union of graphs of linear maps $L_1^k,\dotsc,L_N^k:{\pi_\infty\to \pi_\infty^\perp}$ then upon extraction of a subsequence we will have that $E_k^{-1/2}L_i^k$ converges to some linear function $L_i^\infty$ for each $i=1,\dotsc,N$, because of \eqref{e:obvious}.

Observe that:
\begin{itemize}
    \item $L_i^\infty$ and $L_j^\infty$ are not necessarily distinct for every $i\neq j$;
    \item However there is {\em at least one} pair {of indices $i\neq j$} for which they are indeed distinct. 
\end{itemize}
The second fact is a simple consequence of \eqref{e:E-mu-comparable}.

Up to reordering, we can assume that there are exactly $\bar N \geq 2$ distinct linear maps $L_1^\infty, \ldots , L_{\bar{N}}^\infty$ in the collection $\{L_i^\infty\}$. Consider then the cones $\Sbf'_k\subset \Sbf_k$ given by the graph of the linear maps $L_1^k, \ldots , L_{\bar{N}}^k$. It follows from the arguments outlined so far that 
\[
\lim_{k\to \infty} \frac{\hat{\mathbf{E}} (T_k, \Sbf^\prime_k, \Bbf_1)}{\mathbf{E}^p (T_k, \Bbf_1)} \to 0\, ,
\]
\[
\liminf_{k\to \infty} \frac{\boldsymbol{\sigma} (\Sbf^\prime_k)}{\boldsymbol{\mu} (\Sbf^\prime_k)} 
> 0\, ,
\]
and
\[
\lim_{k\to\infty} \frac{\boldsymbol{\mu} (\Sbf_k)}{\boldsymbol{\mu} (\Sbf^\prime_k)} =1\, .
\]
Together with the fact that $C^{-1} \leq \frac{\mathbf{E}^p ({T_k}, \Bbf_1)}{\boldsymbol{\mu} ({\Sbf_k})^2} \leq C$, we conclude that
\[
\max_{1\leq i < j \leq \bar N} \int_{B_1} |L^\infty_i - L^\infty_j|^2 \geq {2C_*^{-1}}
\]
for {a suitable choice of $C_*=C_*(Q,m,n,\bar n)>0$.} 

We can now argue as in the proof of Proposition \ref{p:balancing-3} to conclude that 
\[
f_\infty = \sum_{i=1}^{\bar N} Q_i \llbracket L^\infty_i \rrbracket
\]
for some {\em positive} integers $Q_i\geq 1$.

Consider now for each $k$ a plane $\pi^1_k$ such that $\hat{\Ebf} (T_k, \pi^1_k, \Bbf_{r_0}) = \Ebf^p (T_k, \Bbf_{r_0})$. By Allard's $L^\infty$-$L^2$ bound, for each $v\in \pi^1_k \cap \Bbf_{r_0/2}$ there is a point $q\in \spt (T_k)\cap \Bbf_{r_0}$ such that $\mathbf{p}_{\pi^1_k} (q) = v$ and
\[
|v-p|^2\leq C r_0^2 (\Ebf^p (T_k, \Bbf_{r_0}) + r_0^2 \Abf_k^2)
\leq C r_0^{-m} \Ebf^p (T_k, \Bbf_1)\, .
\]
However, by \eqref{e:Allard-1}, $\dist^2 (p, \pi_\infty)\leq C (\Ebf^p (T_k, \Bbf_1) + \Abf_k^2) \leq C \Ebf^p (T_k, \Bbf_1)$. It follows immediately that 
\[
\dist^2 (\pi^1_k\cap \Bbf_{r_0}, \pi_\infty\cap \Bbf_{r_0}) \leq C r_0^{-m} \Ebf^p (T_k, \Bbf_1)
\]
which in turn implies 
\[
\dist^2 (\pi^1_k\cap \Bbf_1, \pi_\infty\cap \Bbf_1) \leq C r_0^{-m-2}\Ebf^p (T_k, \Bbf_1)\, .
\]
Let $A_k : \pi_\infty \to \pi_\infty^\perp$ be the linear map whose graph gives $\pi^1_k$ and note that, from the above discussion, ${\|A_k\|_{L^\infty (B_1 (0, \pi_\infty))}}\leq C r_0^{-1-m/2} E_k^{1/2}$. In particular we can extract an {$L^2$} limit $A_\infty$ of the maps $E_k^{-1/2} A_k$, up to subsequences. 
Now the estimates in \cite{DLS14Lp}*{Theorem 2.4} imply
\[
\lim_{k\to \infty} \frac{\mathbf{E}^p (T_k, \Bbf_{r_0})}{\mathbf{E}^p ({T_k}, \Bbf_1)} 
= r_0^{-m-2} \int_{B_{r_0}} \mathcal{G} (f_\infty, Q \llbracket A_\infty \rrbracket)^2 
= \int_{B_1} \mathcal{G} (f_\infty, Q \llbracket A_\infty\rrbracket)^2 \, ,
\]
where in the last equality we have used the $1$-homogeneity of both $f_\infty$ and $A_\infty$. On the other hand by the triangle inequality,
\[
\int_{B_1} \mathcal{G} (f_\infty, Q\llbracket A_\infty \rrbracket)^2 \geq \frac{1}{2}
\max_{1\leq i < j \leq \bar N} \int_{B_1} |L^\infty_i - L^\infty_j|^2\, \geq {C_*^{-1}}\, .
\]
Thus, we are in contradiction with \eqref{e:planar-decay}, which concludes the proof of \eqref{e:no-change-E}.

\medskip

{\bf Step 1: {Reduction via }enlarging, balancing, and pruning.}
{We will first show that one can without loss of generality assume also the following on $\Sbf$ and $\Sbf^\prime$:}
\begin{itemize}
\item[(B)] $\mathbf{S}$ and $\mathbf{S}^\prime$ are both $M$-balanced (with $M$ the constant of Assumption \ref{a:choose-M});
\item[(P)] For a suitably small constant $\delta = \delta (Q,m,n, \bar{n}, r_0)>0$ we have 
\begin{align}
 \mathbf{A}^2 + \mathbb{E} (T, \mathbf{S}, \Bbf_1) & \leq \delta^2 \boldsymbol{\sigma} (\mathbf{S})^2\label{e:pruned-1}\\
 r_0^2 \mathbf{A}^2 + \mathbb{E} (T, \mathbf{S}^\prime, \Bbf_{r_0}) &\leq \delta^2 \boldsymbol{\sigma} (\Sbf^\prime)^2\, .\label{e:pruned-2}
\end{align}
\end{itemize}

For the present reduction step, we will show the existence of another cone $\Sbf_1\in \mathscr{C}(Q,0)$ such that:
\begin{itemize}
    \item[(i)] (B) and \eqref{e:pruned-1} above hold for $\Sbf_1$ in place of $\Sbf$;
    \item[(ii)] $V (\Sbf) = V (\Sbf_1)$;
    \item[(iii)] $\mathbb{E} (T, \Sbf_1, \Bbf_1) \leq \bar{C} (\mathbb{E} (T, \Sbf, \Bbf_1) + \Abf^2)$, for a constant $\bar C = \bar C (Q,m,n,\bar n, \delta)$;
    \item[(iv)] $\dist^2 (\Sbf_1\cap \Bbf_1, \Sbf\cap \Bbf_1)^2 \leq \bar{C} (\mathbb{E} (T, \Sbf, \Bbf_1) + \Abf^2);$
\end{itemize}
We can then use the same argument to find a cone $\Sbf^\prime_1$ satisfying (i)--(iv) for $T_{0,r_0}$ and $\Sbf^\prime$. Given this, it is easy to check that the assumptions of the lemma hold with $T,\Sbf_1,\Sbf_1^\prime$ in place of $T,\Sbf,\Sbf^\prime$ (up to controlled constant factors), and that if one can prove the result for $T,\Sbf_1,\Sbf^\prime_1$, then the result follows for $T,\Sbf,\Sbf^\prime$. Thus, after Step 1 we will have reduced the proof to the case where we can also assume the validity of (B) and (P).

\medskip

In order to accomplish the task of this step,  we observe that, by choosing $\eps$ in {\eqref{e:smallness-abcd}} small enough, $T$, $\Sigma$, and $\Sbf$ satisfy all the requirements of Proposition \ref{p:balancing}, except possibly $\Abf^2 \leq \eps_0^2 \mathbb{E} (T, \Sbf, \Bbf_1)$, for the parameter {$\eps_0 = \eps_0(Q,m,n,\bar{n})$ in Proposition \ref{p:balancing}}. Let us assume for the moment that this does not hold, i.e.
\begin{equation}\label{e:annoying}
\mathbb{E} (T, \mathbf{S}, \Bbf_1) < \eps_0^{-2}\Abf^2\, .
\end{equation}
In this case we wish to deform $\Sbf$ to another cone $\Sbf_e\in \mathscr{C} (Q, 0)$ such that $\mathbb{E} (T, \Sbf_e, \Bbf_1) < \eps_0^{-2}\Abf^2$ and $V (\Sbf_e) = V (\Sbf)=:V$. This can be accomplished as follows. We consider a plane $\pi \in \mathscr{P} (0)$ containing $V$ and observe that, because of \eqref{e:smallness-abcd}, provided that we take $\eps< {\sqrt{N}}\eps_0$ ,
we have
\begin{equation}\label{e:planar-larger-than-A}
{N}{\hat{\mathbf{E}}} (T, \pi, \Bbf_1) > \eps_0^{-2} \Abf^2\, .
\end{equation}
Next enumerate the (distinct) planes forming $\Sbf$ as $\alpha_1, \dots , \alpha_N$, and {for each $i=1,\dotsc,N$} fix a continuous path of planes $\alpha_i (t)$ in $T_0 \Sigma$ connecting $\alpha_i (0) = \alpha_i$ and $\alpha_i (1) =\pi$, with the property that $V \subset \alpha_i (t)$ for all $t\in [0,1]$. The existence of the path can be reduced to the existence of a continuous path for the cross sections $V^\perp \cap \alpha_i (t)\subset V^\perp \cap T_0 \Sigma$ and the existence of the latter is {a consequence of} the {path} connectedness of the Grassmannian of $2$-planes in $\mathbb{R}^{2+\bar n}$).

Next we want to avoid that along the path we have $\alpha_i (t) = \alpha_j (t)$ for some $i\neq j$ and some $0<t<1$. To that end, denote by $X$ the set of $m$-dimensional subspaces of $T_0 \Sigma$ which contain $V$, and consider its $N$-fold product $X\times \cdots \times X$. Let $Z\subset X\times \cdots\times X$ be the set of elements $(\beta_1, \beta_2, \ldots , \beta_N)$ such that $\beta_i = \beta_j$ for some $i\neq j$. We then need to show that $(X\times \cdots \times X \setminus Z) \cup \{(\pi, \ldots , \pi)\}$ is path connected. Note that $Z$ can be written as the union of $Z_{ij} := \{(\beta_1, \ldots , \beta_N) : \beta_i = \beta_j\}$. On the other hand the dimension of $Z_{ij}$ equals the dimension of $Z_{12}$, while the latter can be written as $\Delta \times X \times \cdots \times X$ where $\Delta \subset X\times X$ is the diagonal $\{(\beta, \beta): \beta \in X\}$. Hence the codimension of $Z$ in $X\times \cdots \times X$ equals the codimension of the diagonal $\Delta$ in $X\times X$. Since the codimension of $\Delta$ in $X\times X$ is the dimension of $X$, which is strictly larger than $1$, we conclude that the codimension of $Z$ in $X\times \cdots \times X$ is at least $2$. This in particular implies the path connectedness of the desired set{, meaning we can assume our path obeys $\alpha_i(t)\neq\alpha_j(t)$ for all $i\neq j$ and all $t\in (0,1)$.}

Now let $t\mapsto \Sbf(t)= \alpha_1 (t) \cup \cdots \cup \alpha_{N} (t)$, $t\in [0,1]$, denote the above continuous path from $\Sbf$ to $\pi$. Observe that by definition, 
\begin{align}
&t\mapsto \hat{\Ebf} (T, \Sbf (t), \Bbf_1) \quad \mbox{is continuous for $0\leq t \leq 1$,}\\
&t\mapsto \hat{\Ebf} (\Sbf (t), T, \Bbf_1) \quad \mbox{is continuous for $0 \leq t <1$,}
\end{align}
and
\[
\lim_{t\uparrow 1} \hat{\Ebf} (\Sbf (t), T, \Bbf_1) = N \hat{\Ebf} (\pi, T, \Bbf_1)\, .
\]
These properties, combined with \eqref{e:annoying} and \eqref{e:planar-larger-than-A}, are enough to claim the existence of a $t_e\in [0, 1)$ such that for $\Sbf_e := \Sbf (t_e)$ we have 
\[
\frac{1}{2} \eps_0^{-2} \Abf^2 < \mathbb{E} (T, \Sbf_e, \Bbf_1) < \eps_0^{-2} \Abf^2\, .
\]
However, it might be that $\mathbf{S}_e$ is not an element of $\mathscr{C} (Q,0)${; this can only happen if} the intersection of two of the planes forming $\Sbf_e$ is strictly bigger than $V$ (and so is $(m-1)$-dimensional as they do not coincide). Denote by $Z'\subset X \times \cdots \times X$ the set of elements $(\beta_1, \ldots , \beta_N)$ such that ${\rm dim}\, (\beta_i\cap \beta_j) \geq m-1$ for some $i\neq j$. $Z'$ is closed and has non-empty interior {in $X\times \cdots\times X$}. In particular, if we perturb the planes forming $\Sbf_e$ slightly (within the $N$-fold product space $X\times\dots\times X$), in light of the continuity of the map $\tilde\Sbf \mapsto \Ebf(T,\tilde\Sbf,\Bbf_1)$, we can ensure that the inequality above holds and that at the same time the new cone, which we will abuse of notation and still denote by $\Sbf_e$, is an element of $\mathscr{C} (Q, 0)$.

\medskip

We have now found a suitable cone $\Sbf_e$ when the unfavourable assumption \eqref{e:annoying} holds. {Notice that for such $\Sbf_e$ we also have
\[
\mathbb{E}(T,\Sbf_e,\Bbf_1) < \eps_0^{-2}\Abf^2 \leq \eps_0^{-2}\eps^2\Ebf^p(T,\Bbf_1)
\]
by \eqref{e:smallness-abcd}, and thus if $\eps^2<\eps_0^4$, we have that the assumptions of Proposition \eqref{p:balancing} hold for $T$ and $\Sbf_e$.} If we are in the favourable situation where \eqref{e:annoying} does not hold, then we have
\[
\Abf^2 \leq \varepsilon_0^2 \mathbb{E} (T, \Sbf, \Bbf_1)\, ,
\]
and so we can simply set $\Sbf_e:=\Sbf$, and again all the assumptions of Proposition \ref{p:balancing} hold for $T$ and $\Sbf_e$.

Thus, we are in a position to apply Proposition \ref{p:balancing} to $T$ and $\Sbf_e$ to find a cone $\Sbf_1$ which satisfies (ii) and is $M$-balanced. However, it must be noticed that indeed Proposition \ref{p:balancing} is proved by showing that $\Sbf_1$ satisfies the smallness assumption \eqref{e:evensmallerexcess} in Proposition \ref{p:balancing-2}, with the choice of $\eps=\eps(m,n,\bar{n},N)$ therein. In particular, since the argument of Proposition \ref{p:balancing} allows one to ensure that $\Sbf_1$ satisfies \eqref{e:evensmallerexcess} with $\eps$ replaced by any $\delta\leq\eps$, up to further decreasing $\eps_0$ (dependent on $\delta$), we get exactly \eqref{e:pruned-1}, and moreover we are still left with the freedom of choosing $\delta$. Thus, $\Sbf_1$ satisfies (i) and (ii).

By Proposition \ref{p:balancing} we also know that 
\[
\mathbb{E} (T, \Sbf_1, \Bbf_1) \leq \bar{C} \mathbb{E} (T, \Sbf_e, \Bbf_1)\, ,
\]
{where $\bar{C} = \bar{C}(Q,m,n,\bar{n},\delta)$.} On the other hand, by construction either $\Sbf_e= \Sbf$, or 
\[
\mathbb{E} (T, \Sbf_e, \Bbf_1) \leq \varepsilon_0^{-2} \Abf^2\, ,
\]
and in particular we infer (iii) (we stress that now $\eps_0 = \eps_0(Q,m,n,\bar{n},\delta)$).

So far we have proved that (i), (ii), and (iii) hold. Property (iv) would follow from Proposition \ref{p:balancing} in the case $\Sbf_e = \Sbf$, but unfortunately this may not be the case and so we will have to provide a more subtle argument. In fact, we will use the validity of (B) and (P) for $\Sbf_1$ to prove (iv) even when $\Sbf_e\neq \Sbf$ (for $\delta$ chosen sufficiently small).

We first assume that $\delta = \delta(Q,m,n,\bar{n})>0$ is sufficiently small so that Lemma \ref{l:splitting-1} and Proposition \ref{p:Lipschitz-1} apply to $T_{0, 1/4}$ and $\Sbf_1$. We will show that this implies
\begin{equation}\label{e:not-too-far}
\dist^2 (\Sbf_1\cap \Bbf_1, \Sbf\cap \Bbf_1)^2 \leq C (\mathbb{E} (T, \Sbf_1, \Bbf_1) +{\mathbb{E}(T,\Sbf,\Bbf_1) +} \Abf^2)\, .
\end{equation}
As observed, we would need to show this when $\Ebb(T,\Sbf,\Bbf_1)<\eps_0^{-2}\Abf^2$, but the argument is in fact more general and does not use the latter information. {Combined with (iii), \eqref{e:not-too-far} proves (iv).}

We enumerate the (distinct) planes $\beta_1, \ldots , \beta_{N'}$ forming $\mathbf{S}_1$, while we also recall the enumeration $\alpha_1,\dots,\alpha_N$ for $\Sbf$. Let $W_i{\subset\Bbf_4}$ be the disjoint neighbourhoods of the planes $\beta_i$ as in Lemma \ref{l:splitting-1}, and consider their homothetic rescalings $\tilde{W}_i\coloneqq (W_i)_{0,4}$. Observe that $T\res \Bbf_{3/4} \setminus B_{1/32} (V)$ is supported in the union of the $\tilde W_i$ and let $T_i := T \res \tilde{W}_i$. By Proposition \ref{p:Lipschitz-1}(c) we know that 
\begin{equation}\label{e:a-height-bound}
\dist (p, \beta_i) \leq C (\mathbb{E} (T, \Sbf_1, \Bbf_1) + \Abf^2)^{1/2} \qquad \text{for all }p\in \spt (T_i)\, .
\end{equation}
For each fixed $i=1,\dots,N'$, consider a unit vector $e_1\in V^\perp \cap \beta_i$, let $\xi_1:= \frac{e_1}{4}$, and define the disk $B_{i}:= B_{1/32} (\xi_1, \beta_i)$. By Proposition \ref{p:Lipschitz-1}(f) we know that $((\mathbf{p}_{\beta_i})_\sharp T_i) \res (B_{1/2} (0, \beta_i) \setminus B_{1/16} (V)) = Q_i \llbracket (B_{1/2} (0, \beta_i) \setminus B_{1/16} (V)\rrbracket$ for some integer $1\leq Q_i\leq Q$, and thus
\begin{equation}\label{e:Ti-mass}
\|T_i\| (\mathbf{p}_{\beta_i}^{-1} (B_{i})) \geq c_0>0
\end{equation}
for some geometric constant $c_0 (m,n, \bar n)$. Since 
\[
\int \dist^2 (p, \Sbf)\, d\|T_i\| (p) \leq \mathbb{E} (T, \Sbf, \Bbf_1)\, ,
\]
{for a fixed constant $\tilde{C} = \frac{C_*}{c_0}$, with $C_*> 0$ to be determined, by Chebyshev's inequality and \eqref{e:Ti-mass} we have
\begin{align*}
    \|T_i\|(\{p\in \mathbf{p}_{\beta_i}^{-1}(B_i):\dist^2(p,\Sbf) > \tilde{C}\mathbb{E} (T, \Sbf, \Bbf_1)\}) &\leq C_*^{-1} \|T_i\| (\mathbf{p}_{\beta_i}^{-1} (B_{i})) \, .
\end{align*}}
Thus, the set $E_i\subset\mathbf{p}_{\beta_i}^{-1}(B_i){\cap \spt\|T_i\|}$ of points $p\in \mathbf{p}_{\beta_i}^{-1}(B_i)$ which obey
\begin{equation}\label{e:L2-Linfty-S}
\dist^2 (p, \Sbf) \leq \tilde{C} \mathbb{E} (T, \Sbf, \Bbf_1)\, ,% \qquad \forall p\in E_{i}\, .
\end{equation}
has $\|T_i\|(E_i)\geq (1-1/C_*)\|T_i\|(\mathbf{p}_{\beta_i}^{-1}(B_i)).$

We then use Proposition \ref{p:Lipschitz-1}(iv) to estimate
\begin{align*}
Q_i\mathcal{H}^m(B_i\setminus\mathbf{p}_{\beta_i}(E_i)) & \leq C_0\|T_i\|(\mathbf{p}_{\beta_i}^{-1}(B_i\setminus \mathbf{p}_{\beta_i}(E_i))) + C_0(\Abf^2+\mathbb{E}(T,\Sbf_1,\Bbf_1))^{1+\gamma}\\
& \leq C_0\|T_i\|(\mathbf{p}_{\beta_i}^{-1}(B_i)\setminus E_i) + C_0(\Abf^2+\Ebb(T,\Sbf_1,\Bbf_1))^{1+\gamma}\\
& \leq \frac{C_0}{C_*} \|T_i\| (\mathbf{p}_{\beta_i}^{-1}(B_i)) + C_0 (\Abf^2+\Ebb(T,\Sbf_1,\Bbf_1))^{1+\gamma}\\
& \leq \frac{C_0}{C_*} Q_i \mathcal{H}^m (B_i) + C_0(\Abf^2+\Ebb(T,\Sbf_1,\Bbf_1))^{1+\gamma}
\end{align*}
where $C_0 = C_0(Q,m,n,\bar{n})$. Note however that $\Ebb (T, \Sbf_1, \Bbf_1) \leq C_1 (\Ebb (T, \Sbf, \Bbf_1) + \Abf^2)$ for another constant $C_1= C_1 (Q,m,n,\bar{n}, \delta)$. Hence, if $\Abf^2+\mathbb{E}(T,\Sbf,\Bbf_1)$ is sufficiently small and $C_*$ sufficiently large compared to $C_0$, we get
\[
\mathcal{H}^m (\mathbf{p}_{\beta_i} (E_i)) \geq \frac{1}{2} \mathcal{H}^m (B_i)\, .
\]

For each point $p\in E_i$ we let $j (p){\in \{1,\dotsc,N\}}$ be an index such that $\dist (p, \alpha_{j(p)}) = \dist (p, \Sbf)$. We then define $F_{i,j} := \mathbf{p}_{\beta_i} (\{p\in E_i : j(p)=j\})$. Clearly from the above lower bound on $\mathcal{H}^m(\mathbf{p}_{\beta_i}(E_i))$, there must be {an index $j_*\in \{1,\dots,N\}$} for which ${\Hcal^m(F_{i,j_*})\geq\frac{1}{2N}\Hcal^m(B_i)}$. {Fix now an arbitrary point $w_1$ in $F_{i,j*}$ and recall that, since it belongs to $B_i = B_{1/32} (\xi_1, \beta_i)$, $\frac{3}{8} \geq |w_1|\geq \frac{1}{8}$. Next,we let $r_*>0$ and consider the  open set
\[
\Lambda (r_*) := \{w\in \beta_i : |w\cdot w_1|^2 > {(1-r_*^2) |w_1|^2 |w|^2}\}\, .
\]
By the lower bound on the Hausdorff measure of $F_{i, j_*}$ the set $F_{i,j*}\setminus \Lambda (r_*)$ must contain at least one point $w_2$ if we choose $r_* = r_* (m,N)$ appropriately.

Clearly $w_2$ enjoys as well the bound $\frac{3}{8} \geq |w_2|\geq \frac{1}{8}$. Moreover, since $e_1$ is orthogonal to $V$, the angles formed by the $w_k$ and $V$ must both be larger than a geometric constant. Finally, the sine of the angle formed by $w_1$ and $w_2$ is at least $r_*$.  
In particular we can complete the pair $w_1, w_2$ to a basis of $\beta_i$, via an orthonormal basis $v_1, \ldots , v_{m-2}$ of $V$. {Thus,} any vector $v\in \Bbf_1 \cap \beta_i$ can be written as a linear combination
\[
v = \sum_i \lambda_i v_i + \lambda_{m-1} w_1 + \lambda_m w_2
\]
for a choice of $\lambda_i$ which satisfy $|\lambda_i|\leq C$ for some constant $C = C(m, N)$.}

For the points $w_1, w_2$, {if we let $p_1,p_2$ denote points in $E_i$ which obey $j(p)=j_*$ and $\mathbf{p}_{\beta_i}(p_k) =w_k$ for $k=1,2$, it follows from \eqref{e:L2-Linfty-S} and \eqref{e:a-height-bound} that, for $k=1,2$,}{
\begin{align*}
    |\mathbf{p}_{\alpha_{j_*}}^\perp (w_k)|^2 & = \dist^2 (w_k, \alpha_{j_*})
    \leq {2}\dist^2(w_k,p_k) + {2}\dist^2(p_k,\alpha_{j_*})\\
    & = {2}\dist^2(p_k,\beta_i) + {2}\dist^2(p_k,\Sbf) \leq C(\Ebb(T,\Sbf_1,\Bbf_1)+\Ebb(T,\Sbf,\Bbf_1) + \Abf^2)
\end{align*}
where $C = C(Q,m,n,\bar{n},\delta)$.}
  In particular, using the linearity of $\mathbf{p}_{\alpha_{j_*}}^\perp$ and the fact that $V\subset \alpha_{j_*}$, this shows that 
\[
\dist^2 (v, \alpha_{j_*}) \leq C (\mathbb{E} (T, \Sbf_1, \Bbf_1) + \mathbb{E} (T, \Sbf, \Bbf_1) + \Abf^2) \qquad \forall v\in \beta_i \cap \Bbf_1\, .
\]
Summarizing our argument so far, for every plane {$\beta_i$ in $\Sbf_1$ we can show the existence of a plane $\alpha_{j_*}$ in $\Sbf$} such that 
\begin{equation}\label{e:dist-planes}
\dist^2 (\beta_i\cap\Bbf_1, \alpha_{j_*}\cap\Bbf_1) \leq C (\mathbb{E} (T, \Sbf_1, \Bbf_1) + \mathbb{E} (T, \Sbf, \Bbf_1) + \Abf^2)\, .
\end{equation}
We now would like to prove the converse; namely, if we fix any plane $\alpha_j$ forming $\Sbf$, we look for a plane $\beta_{i_*}$ in $\Sbf_1$ satisfying \eqref{e:dist-planes} with $i_*,j$ in place of $i,j_*$. This time, we choose a unit vector $f_1\in V^\perp\cap\alpha_j$. We let $\zeta_1=f_1/4$ and set $B_j := B_{1/32} (\zeta_1, \alpha_j)$. We then know that 
\[
\int_{B_j} \dist^2 (q, \spt (T)) d\mathcal{H}^m (q) \leq \hat{\mathbf{E}} (\Sbf, T, \Bbf_1)\, .
\]
Once again applying Chebyshev's inequality, this time with $\Hcal^m$, we {get that if $F_j\subset B_j$ is the set of points $q\in B_j$ which obey} 
\begin{equation}\label{e:L2-Linfty-reverse}
\dist^2 (q, \spt (T)) \leq (2/\Hcal^m(B_j)) \mathbb{E} (T, \Sbf, \Bbf_1)\, , %\qquad {\forall q\in F_j}\, 
\end{equation}
{then we have $\Hcal^m(F_j)\geq \frac{1}{2}\Hcal^m(B_j)$.}
Now for each $q\in F_j$ let $p (q)\in\spt (T)$ be such that $\dist (q, \spt (T)) = |q-p(q)|$. {Thus, \eqref{e:L2-Linfty-reverse} tells us that if $\eps$ is chosen} small enough, we can guarantee that $|q-p(q)|{<1/16}$  and so in particular $p(q)\in \Bbf_{3/4}\setminus B_{1/32} (V)$. It thus follows that {$p(q)\in \tilde{W}_i$ for some $i\in\{1,\dots,N'\}$}. For each such $i$, let $F_{j,i}$ be the set of points $q\in F_j$ for which at least one such $p(q)$ belongs to $\tilde{W}_i$. Arguing as before, it follows that for some $i_*$ we have ${\Hcal^m(F_{j,i_*})\geq \frac{1}{2N'}\Hcal^m(B_j)}$. We can now argue as above that $F_{j,i_*}$ contains two appropriate points $w_1, w_2$, complete the pair with vectors in $V$ to form an appropriate base of $\alpha_j$ and use this base to prove
\[
\dist^2 (\alpha_j \cap \Bbf_1, \beta_{i_*}\cap \Bbf_1) \leq C (\mathbb{E} (T, \Sbf_1, \Bbf_1) + \mathbb{E} (T, \Sbf, \Bbf_1) + \Abf^2)\, .
\]
{Combined with \eqref{e:dist-planes},} this therefore completes the proof of {\eqref{e:not-too-far} and hence} the first step.

\medskip

{\bf Step 2. Concluding from the reduction.} Now that we have reduced to the setting where we may assume the validity of (B) and (P), we conclude the proof of the proposition by showing that the claimed conclusions hold under these additional assumptions, if we choose $\delta$ small enough. First of all we observe that, by (iv) in Step 1, \eqref{e:no-change-E}, \eqref{e:E-mu-comparable}, and \eqref{e:E-mu-comparable-2} we have 
\begin{align}\label{e:E-mu-comparable-3}
C^{-1} \mathbf{E}^p (T, \Bbf_1) \leq \min \{\boldsymbol{\mu} (\Sbf)^2, \boldsymbol{\mu} (\Sbf')^2\}
\leq \max \{\boldsymbol{\mu} (\Sbf)^2, \boldsymbol{\mu} (\Sbf')^2\} \leq C  \mathbf{E}^p (T, \Bbf_1)\, .
\end{align}
So we can appeal to Lemma \ref{l:spine} to conclude \eqref{e:align-10} from \eqref{e:near-10}. We will thus focus on proving \eqref{e:near-10}.

The latter task is similar to the one accomplished at the end of Step 1; while it is more complicated by the fact that $V (\mathbf{S})$ and $V (\mathbf{S}')$ might be different {(unlike $V(\Sbf)$ and $V(\Sbf_1)$ in Step 1)}, we can now take advantage of (B) and (P) for both {(whereas previously, we only knew that $\Sbf_1$ was balanced)}. Indeed, by choosing $\delta$ small enough we can ensure, using Lemma \ref{l:splitting-1} and Proposition \ref{p:Lipschitz-1} for $\Sbf', \ T_{0, r_0/4}$, and $\Sbf, \ T_{0,1/4}$, respectively, that the following situation holds:
\begin{itemize}
\item[(i)] If we enumerate the planes $\beta_1, \ldots , \beta_{N'}$ forming $\Sbf'$ and set $V' := V (\Sbf')$, then there are  pairwise disjoint neighborhoods $W'_j$ of $\beta_j \cap \Bbf_{r_0/2} \setminus B_{r_0/16} (V')$, a constant $C=C(Q,m,n,\bar n)>0$ and positive integers $Q'_j$ such that 
\begin{align}
& \spt (T) \cap \Bbf_{r_0/2} \setminus B_{r_0/16} (V') \subset \bigcup_j W'_j\, ;\label{e:i-crude-1}\\
&\dist^2 (p, \beta_j) \leq C (\mathbb{E} (T, \Sbf', \Bbf_{r_0}) + \Abf^2 r_0^2) r_0^2 \qquad \forall p\in \spt (T) \cap W'_j\, ;\label{e:i-crude-2}\\
&(\mathbf{p}_{\beta_j})_\sharp (T\res  W'_j \cap \mathbf{p}_{\beta_j}^{-1} (B_{r_0/2} (0, \beta_j) \setminus B_{r_0/16} (V')))\\
& \hspace{15em} = Q'_j \llbracket B_{r_0/2} (0, \beta_j) \setminus B_{r_0/16} (V')\rrbracket \, .\nonumber
\end{align}
\item[(ii)] If we enumerate the planes $\alpha_1, \ldots , \alpha_N$ forming $\Sbf$ and set $V = V (\Sbf)$, then there are pairwise disjoint neighborhoods $W_i$ of $\alpha_i \cap \Bbf_{1/2} \setminus B_{r_0/16} (V)$, constants $C=C(Q,m,n,\bar n)>0$, $\bar C=\bar C(Q,m,n,\bar n,r_0)>0$ and positive integers $Q_i$ such that
\begin{align}
&\spt (T) \cap \Bbf_{1/2} \setminus B_{r_0/16} (V) \subset \bigcup_i W_i\\
&\dist^2 (p, \alpha_i) \leq C (\mathbb{E} (T, \Sbf, \Bbf_1) + \Abf^2) \notag \\
&\qquad\qquad\hspace{1em}\leq \bar C (\mathbb{E} (T, \Sbf, \Bbf_1) + \Abf^2 r_0^2) r_0^2 \qquad \forall p\in \spt (T) \cap W_i \cap \Bbf_{r_0}\label{e:ii-crude-2}\\
&(\mathbf{p}_{\alpha_i})_\sharp (T\res W_i \cap \mathbf{p}_{\alpha_i}^{-1} (B_{r_0} (0, \alpha_i) \setminus B_{r_0/16} (V)))\label{e:ii-crude-3}\\
& \hspace{15em} = Q_i \llbracket B_{r_0} (0, \alpha_i) \setminus B_{r_0/16} (V)\rrbracket \, .\nonumber
\end{align} 
\end{itemize}
We will now proceed to show that all these estimates imply, for each plane $\alpha_i$ in $\Sbf$, the existence of a plane $\beta_j$ in $\Sbf'$ such that 
\begin{equation}\label{e:again-i-j}
\dist^2 (\alpha_i \cap \Bbf_1, \beta_j \cap \Bbf_1) \leq \bar C (\mathbb{E} (T, \Sbf, \Bbf_1) + \mathbb{E} (T, \Sbf', \Bbf_1) + \Abf^2)\, . 
\end{equation}
Since the argument can be symmetrized {to switch the roles of $\alpha_i$ and $\beta_j$}, this would conclude the proof of \eqref{e:near-10}.
Without loss of generality we assume that $i=1$ and we start by fixing a vector $e\in \alpha_1$ with $|e|=\frac{r_0}{4}$ such that $\Bbf_{2 c_0 r_0} (e) \subset \Bbf_{r_0/2}\setminus (B_{r_0/16} (V) \cup B_{r_0/16} (V'))$, where $c_0 = c_0 (m)$ is a positive dimensional constant. For each point $q\in B_{c_0 r_0} (e, \alpha_1)${, by \eqref{e:ii-crude-2} and \eqref{e:ii-crude-3} above} we {can find} a point $p=p(q)\in \spt (T)\cap W_{1}$ such that 
\[
{|p-q|^2} \leq \bar C (\mathbb{E} (T, \Sbf, \Bbf_1) + \Abf^2 r_0^2) r_0^2
\]  
and then {by \eqref{e:i-crude-1}} we find $W'_j$ (for some $j=j(q)\in \{1,\dots,N'\}$) such that $p\in W'_j$. In particular, setting $q_j := \mathbf{p}_{\beta_j} (p)$, we conclude {from \eqref{e:i-crude-2}} that 
\[
|p-q_j|^2 \leq C (\mathbb{E} (T, \Sbf, \Bbf_1) + \Abf^2 r_0^2 + \mathbb{E} (T, \Sbf^\prime, \Bbf_{r_0})) r_0^2\, .
\]
So, for each point $q\in B_{c_0 r_0} (e,\alpha_1)$ we conclude that there is {a plane $\beta_{j(q)}$ in $\Sbf'$} such that
\begin{equation}\label{e:on-the-whole-disk}
\dist (q, {\beta_{j(q)}})^2 \leq C (\mathbb{E} (T, \Sbf, \Bbf_1) + \Abf^2 r_0^2 + \mathbb{E} (T, \Sbf^\prime, \Bbf_{r_0})) r_0^2\, ,
\end{equation}
where $C=C(Q,m,n,\bar n, r_0)>0$. Select now $m$ linearly independent vectors $v_1, \ldots , v_m \in B_{c_0 r_0} (e,\alpha_1)$ with the property that, if we set $e_i:= \frac{v_i}{r_0}$, then every vector $v\in \Bbf_1\cap \alpha_1$ is a linear combination
\[
v=\sum_i \lambda_i e_i
\] 
with $|\lambda_i|\leq C$, for some constant $C$ which depends only on $c_0$ and therefore only on $m$.

{In light of the above argument, for each vector $v_i$ we may find a plane $\beta_{j(i)}$ in $\Sbf'$} such that 
\begin{equation}\label{e:another-base}
\dist^2 (v_i, \beta_{j(i)}) \leq C (\mathbb{E} (T, \Sbf, \Bbf_1) + \Abf^2 r_0^2 + \mathbb{E} (T, \Sbf^\prime, \Bbf_{r_0})) r_0^2\, .
\end{equation}
{Now, we would like to achieve \eqref{e:another-base} but with \emph{the same plane}, say $\beta_{j(1)}$, for each $v_i$, up to increasing the constant by a fixed amount.} Suppose that $j(i)\neq j(1)$ for some $i>1$. Consider the line segment $[v_1, v_i]$ and parameterize it with a constant speed curve $\gamma :[0,1]\to [v_1, v_i]$ {with $\gamma(0)=v_1$ and $\gamma(1)=v_i$}. 
By continuity {of $t\mapsto \dist (\gamma (t), \beta_{j(1)})$}, for some {$\sigma\in (0,1]$} we have 
\[
\dist^2(\gamma (t), \beta_{j(1)}) \leq 2C (\mathbb{E} (T, \Sbf, \Bbf_1) + \Abf^2 r_0^2 + \mathbb{E} (T, \Sbf^\prime, \Bbf_{r_0})) r_0^2\qquad \forall t\in [0, \sigma]\, .
\]
Now let $\tau$ be the maximal number in $[0,1]$ such that 
\begin{equation}\label{e:maximal}
\dist^2 (\gamma (\tau), \beta_{j(1)}) \leq 2C (\mathbb{E} (T, \Sbf, \Bbf_1) + \Abf^2 r_0^2 + \mathbb{E} (T, \Sbf^\prime, \Bbf_{r_0})) r_0^2\, .
\end{equation}
If $\tau =1$ then we {indeed arrive at \eqref{e:another-base} with $\beta_{j(i)}$ replaced by $\beta_{j(1)}$ and $C$ replaced by $2C$}. Otherwise, if $\tau<1$, then we must have equality in \eqref{e:maximal}. Since $\gamma (\tau) \in B_{c_0r_0} (e, \alpha_1)$, \eqref{e:on-the-whole-disk} implies that there must be another index $j'={j'(\tau)}\neq j(1)$ such that 
\begin{equation}\label{e:close-to-j'}
\dist^2 (\gamma (\tau), \beta_{j'}) \leq C (\mathbb{E} (T, \Sbf, \Bbf_1) + \Abf^2 r_0^2 + \mathbb{E} (T, \Sbf^\prime, \Bbf_{r_0})) r_0^2\, .
\end{equation}
If {$p_{j(1)} = \mathbf{p}_{\beta_{j(1)}}(\gamma(\tau))$} and {$p_{j'} = \mathbf{p}_{\beta_{j'}}(\gamma(\tau))$} are the respective nearest point projections of $\gamma (\tau)$, and if we ensure that {$\eps$ is taken to be small enough}, we can guarantee that both points belong to $\Bbf_{2c_0 r_0} (e)$. Recall that our choice of the vector $e$ guarantees that this ball does not intersect $B_{r_0/16} (V')$. In particular, { if we set $q_{j(1)} = p_{j(1)}/r_0$ and $q_{j^\prime} = p_{j^\prime}/r_0$} we find $q_{j(1)}\in \beta_{j(1)}\cap \Bbf_1 \setminus B_{1/16} ({V'})$ and $q_{j'}\in \beta_{j'}\cap \Bbf_1 \setminus B_{1/16} ({V'})$ are such that 
\[
|q_{j(1)} - q_{j'}|^2 \leq {6} C (\mathbb{E} (T, \Sbf, \Bbf_1) + \Abf^2 r_0^2 + \mathbb{E} (T, \Sbf^\prime, \Bbf_{r_0})) \, .
\]
Since the pair of planes have $V'$ as common spine and are $M$-balanced, we can {combine the above with} Corollary \ref{c:growth} {(namely, first use \eqref{e:Haus=max_eigen}, then $M$-balanced, then the first inequality in \eqref{e:Morgan-max-min}, then the above)} to conclude that
\begin{equation}\label{e:j'-close-to-1}
\dist^2 (\beta_{j(1)}\cap \Bbf_1, \beta_{j'}\cap \Bbf_1) \leq C' (\mathbb{E} (T, \Sbf, \Bbf_1) + \Abf^2 r_0^2 + \mathbb{E} (T, \Sbf^\prime, \Bbf_{r_0}))
\end{equation}
for some constant $C'$ which depends on $M$ and the previous constant $C$. 

We now look at the point $\tau'$ which is the maximum in the set of $t\in [0,1]$ for which
\begin{equation}\label{e:next-step}
\dist^2 (\gamma (t)), \beta_{j'})^2 \leq 2C (\mathbb{E} (T, \Sbf, \Bbf_1) + \Abf^2 r_0^2 + \mathbb{E} (T, \Sbf^\prime, \Bbf_{r_0})) r_0^2\, ,
\end{equation}
{where $C$ is again the constant in \eqref{e:on-the-whole-disk}.} Now $\tau'$ must be strictly larger than $\tau$, by \eqref{e:close-to-j'}. Moreover, if $\tau'<1$ then 
\begin{equation}\label{e:maximality-3}
\dist^2 (\gamma (\tau'), \beta_{j'}) = 2C (\mathbb{E} (T, \Sbf, \Bbf_1) + \Abf^2 r_0^2 + \mathbb{E} (T, \Sbf^\prime, \Bbf_{r_0})) r_0^2 \, .
\end{equation}
In this case we can look at the closest plane $\beta_{j''}$ to $\gamma ({\tau'})$, which satisfies
\begin{equation}\label{e:close-to-j''}
\dist^2 (\gamma (\tau'), \beta_{j''}) \leq C (\mathbb{E} (T, \Sbf, \Bbf_1) + \Abf^2 r_0^2 + \mathbb{E} (T, \Sbf^\prime, \Bbf_{r_0})) r_0^2\, ,
\end{equation}
where $C$ is the constant in \eqref{e:on-the-whole-disk}.
Obviously by construction $j''\not\in \{ j(1), j'\}$. But then we can repeat the argument above with $j'$ replacing $j(1)$ and $j''$ replacing $j'$ and estimate 
\begin{equation}\label{e:j''-close-to-j'}
\dist^2 (\beta_{j'}\cap \Bbf_1, \beta_{j''}\cap \Bbf_1) \leq C' (\mathbb{E} (T, \Sbf, \Bbf_1) + \Abf^2 r_0^2 + \mathbb{E} (T, \Sbf^\prime, \Bbf_{r_0}))
\end{equation}
with the same constant $C'$ of \eqref{e:close-to-j'}. We can iterate this procedure until we reach the right endpoint $t=1$. Any time that we do not stop, we find a {\em new} plane in the collection forming $\Sbf^\prime$. Since there is a finite number $N'$ of such planes, we conclude that the procedure stops after at most $N'-1$ steps. The last plane we find is a plane $\beta_{\kappa_i}$ which satisfies \eqref{e:another-base} with $2C$ in place of $C$ {and $\beta_{\kappa_i}$ in place of $\beta_{j(i)}$}. On the other hand we have a chain of at most $N'-1$ inequalities of the form \eqref{e:j''-close-to-j'} between each pair of consecutive planes found during the procedure. In particular { for every $i$ we have}
\[
\dist^2 (\beta_{j (1)}\cap \Bbf_1, \beta_{{\kappa_i}}\cap \Bbf_1) \leq (N'-1)^2 C' (\mathbb{E} (T, \Sbf, \Bbf_1) + \Abf^2 r_0^2 + \mathbb{E} (T, \Sbf^\prime, \Bbf_{r_0}))\, ,
\]
{which combined with \eqref{e:another-base} gives that \eqref{e:another-base} holds with $\beta_{j(1)}$ in place of $\beta_{j(i)}$ with a larger constant $C$, for every $i$, which is what we wanted.}
Summarizing the discussion above, we have found a larger constant $C$ such that, if we set $j = j(1)$, then 
\[
\dist^2 (e_i, \beta_j) = |\mathbf{p}_{\beta_j}^\perp (e_i)|^2 \leq C  (\mathbb{E} (T, \Sbf, \Bbf_1) + \Abf^2 r_0^2 + \mathbb{E} (T, \Sbf^\prime, \Bbf_{r_0}))\, 
\] 
for every $i$.
{Writing an arbitrary vector in $\Bbf_1\cap\alpha_1$ in terms of the basis $e_i$ as described above,} we then conclude that
\[
\dist^2 (\alpha_1\cap \Bbf_1, \beta_j\cap \Bbf_1)^2 \leq C  (\mathbb{E} (T, \Sbf, \Bbf_1) + \Abf^2 r_0^2 + \mathbb{E} (T, \Sbf^\prime, \Bbf_{r_0}))\, .
\]
This completes the proof of \eqref{e:again-i-j} and thus completes the proof of the lemma.
\end{proof}

Thus, to prove Theorem \ref{c:decay}, now we just need to prove Theorem \ref{t:weak-decay}.

\section{Estimates at the spine}\label{p:estimates}

In this section we address the pivotal estimates needed at the spine of the cone for the proof of Theorem \ref{c:decay}, which are a suitable adaptation of the groundbreaking work of Simon in \cite{Simon_cylindrical}. They will be crucial for demonstrating that in the end, after a blow-up procedure {under the assumption that $Q$-points accumulate across the spine}, the graphical approximations constructed in Proposition \ref{p:first-blow-up} will remain {controlled} as one approaches the spine, and will converge to a Dir-minimizer that has an $(m-2)$-dimensional subspace of $Q$-points. We start by detailing the assumptions which will be used through this section. Note that, in contrast to \cite{DLHMSS}, we denote the estimates in Theorem \ref{t:HS}, Corollary \ref{c:HS-patch} and Proposition \ref{p:HS-3} collectively as \emph{Simon's estimates}. Indeed, although the first occurrence of an estimate analogous to \eqref{e:HS} below first appeared in the work of Hardt and Simon in \cite{HS}, the framework therein is significantly simpler. The first appearance of the estimates \eqref{e:HS}, \eqref{e:HS-additional}, \eqref{e:HS-patch} and \eqref{e:HS-3}, albeit in a multiplicity one setting, is in \cite{Simon_cylindrical}; the corresponding estimates in a setting of higher multiplicity, as is the case in the present work, first appeared in \cite{W14_annals}. We note that \eqref{e:HS-4} is a more refined version of an estimate in \cite{Simon_cylindrical}, but in this form it appeared first in \cite{W14_annals}. One important difference between our work and \cites{Simon_cylindrical,W14_annals} is that, even when we are at a fixed distance from the spine of the cone $\Sbf$ and the excess of the current from $\Sbf$ is very small, the current is not necessarily a graph, nor a multigraph (it is merely approximated suitably by the latter), and therefore we need to deal with appropriate additional error estimates coming from regions of non-graphicality. 

Throughout this section, we will work under an analogous assumption to Assumption \ref{a:refined}, but with possibly smaller parameters:
\begin{assumption}[Assumptions for Simon's estimates]\label{a:HS}
Suppose $T$ and $\Sigma$ are as in Assumption \ref{a:main} and $\|T\| (\Bbf_4) \leq 4^m (Q+\frac{1}{2}) \omega_m$. Suppose $\mathbf{S}=\alpha_1\cup \cdots \cup \alpha_N$ is a cone in $\mathscr{C} (Q)\setminus \mathscr{P}$ which is $M$-balanced, where $M>0$ is a given fixed constant, and $V$ is the spine of $\mathbf{S}$. For a sufficiently small constant $\eps = \eps(Q,m,n,\bar{n},M)$ smaller than the {$\eps$-}threshold  in Assumption \ref{a:refined}, whose choice will be fixed by the statements of Theorem \ref{t:HS}, Corollary \ref{c:HS-patch}, and Proposition \ref{p:HS-3} below, suppose that
\begin{equation}\label{e:smallness-alg-2}
\mathbb{E} (T, \mathbf{S}, \Bbf_4) + \mathbf{A}^2 \leq \varepsilon^2 \boldsymbol{\sigma} (\Sbf)^2\, .
\end{equation}
\end{assumption}

We recall once again the notation
\begin{align*}
\boldsymbol{\sigma} (\mathbf{S}) := \min_{i<j} \dist (\alpha_i \cap \Bbf_1, \alpha_j \cap \Bbf_1)\, , \qquad \text{and} \qquad
\boldsymbol{\mu} (\mathbf{S}):= \max_{i<j} \dist (\alpha_i \cap \Bbf_1, \alpha_j\cap \Bbf_1)\, . 
\end{align*}
{Before stating the inequalities that we need, let us introduce the following short-hand notation for points $q\in {\spt}\, (T)$ at which the $m$-rectifiable set ${\spt}\, (T)$ has an approximate tangent {plane} $\pi (q)$ oriented by the simple $m$-vector $\vec{T} (q)$:}
\begin{itemize}
\item $\mathbf{p}_{\vec{T}}$ and $\mathbf{p}_{\vec{T}}^\perp$ will denote the orthogonal projections onto $\pi (q)$ and its orthogonal complement $(\pi (q))^\perp$ {respectively};
\item $q_\parallel$ and $q^\perp$ will denote the vectors $\mathbf{p}_{\vec{T}} (q)$ and $\mathbf{p}_{\vec{T}}^\perp (q)$ {respectively}.
\end{itemize}

We now split the key estimates into three separate statements and we will proceed with their proofs afterwards.

\begin{theorem}[Simon's error and gradient estimates]\label{t:HS}
Assume $T$, $\Sigma$, and $\mathbf{S}$ are as in Assumption \ref{a:HS}, suppose in addition that $\Theta (T, 0) \geq Q$ and set $r= \frac{1}{3\sqrt{m-2}}$. Then there is a constant $C=C(Q,m,n,\bar{n},M)>0$ and a choice of $\varepsilon = \varepsilon(Q,m,n,\bar{n},M)>0$ in Assumption \ref{a:HS} sufficiently small such that
\begin{align}
\int_{\Bbf_r} \frac{|q^\perp|^2}{|q|^{m+2}}\, d\|T\| (q)
&\leq C (\mathbf{A}^2 + {\hat{\Ebf} (T, \mathbf{S}, \Bbf_4))}\, \label{e:HS}\\
\int_{\Bbf_r} |\mathbf{p}_V\circ \mathbf{p}_{\vec{T}}^\perp|^2\, d\|T\| &\leq C (\mathbf{A}^2 + {\hat{\Ebf} (T, \mathbf{S}, \Bbf_4))}\, .\label{e:HS-additional}
\end{align}
\end{theorem}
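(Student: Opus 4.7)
The proof follows the strategy developed by Simon in \cite{Simon_cylindrical} and adapted to a higher-multiplicity setting by Wickramasekera in \cite{W14_annals}. The only essential new element is the systematic handling of the non-graphical portion of $T$, which is done via the refined approximation of Proposition \ref{p:refined}.

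For \eqref{e:HS}, the starting point is the standard monotonicity formula for area-minimizing currents in a $C^2$ Riemannian manifold. Applied at the origin and letting the inner radius tend to $0$, it yields
\[
\int_{\Bbf_r} \frac{|q^\perp|^2}{|q|^{m+2}}\, d\|T\|(q) \leq \frac{\|T\|(\Bbf_r)}{\omega_m r^m} - \Theta(T,0) + C\mathbf{A}^2 \leq \frac{\|T\|(\Bbf_r)}{\omega_m r^m} - Q + C\mathbf{A}^2,
\]
so it suffices to show $\|T\|(\Bbf_r) \leq Q\omega_m r^m + C(\hat{\Ebf}(T,\Sbf,\Bbf_4) + \mathbf{A}^2)$. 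I would derive this by splitting the mass into the part inside a small tubular neighbourhood of the spine $V$, controlled by $C\rho^2$ via monotonicity, and the complementary part, which by Proposition \ref{p:refined}(iv) is approximated by multi-valued Lipschitz graphs over the planes of $\Sbf$. Since by Proposition \ref{p:refined}(ii) the total multiplicity of these graphs equals $Q$ in each Whitney cube $L\in\Gcal^o\cup\Gcal^c$, the contribution of each graph to the mass is $Q_{L,i}|L_i|$ up to errors of size $\mathbf{E}(L)+2^{-2\ell(L)}\mathbf{A}^2$; summing over $L$ with the geometric bound \eqref{e:geometric} gives the desired inequality.

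For \eqref{e:HS-additional}, the crucial algebraic identity is
\[
|\mathbf{p}_V\circ \mathbf{p}_{\vec T}^\perp|^2 = \sum_{k=1}^{m-2} \bigl(1 - |\mathbf{p}_{\vec T}(e_k)|^2\bigr)
\]
for any orthonormal basis $\{e_k\}$ of $V$. Each summand vanishes identically on $\Sbf$ because $V\subset \alpha_i$ for every plane of $\Sbf$. I would test the first variation of $T$ against the vector field $X(q) = \chi^2(q) u_k(q)\, e_k$, with $u_k(q) := e_k\cdot q$ and $\chi$ a radial cutoff supported in $\Bbf_{2r}$ and equal to $1$ on $\Bbf_r$, obtaining
\[
\int \chi^2 |\mathbf{p}_{\vec T}(e_k)|^2\, d\|T\| = -\int \chi^2 u_k\, e_k\cdot \vec H_T\, d\|T\| - 2\int \chi u_k\, (e_k\cdot \nabla_T \chi)\, d\|T\|.
\]
Subtracting the corresponding identity written on $\Sbf$ with multiplicities (where $|\mathbf{p}_{\vec\Sbf}(e_k)|\equiv 1$ and $\vec H_\Sbf \equiv 0$), using $|\vec H_T|\leq C\mathbf{A}$ and Cauchy--Schwarz, and comparing the domains of integration via Lemma \ref{l:curved}, one arrives at a Caccioppoli-type inequality
\[
\int_{\Bbf_r} |\mathbf{p}_V\circ \mathbf{p}_{\vec T}^\perp|^2\, d\|T\| \leq C\int_{\Bbf_{2r}}\dist^2(q,\Sbf)\, d\|T\|(q) + C\mathbf{A}^2,
\]
whose right-hand side is bounded by $C(\hat{\Ebf}(T,\Sbf,\Bbf_4) + \mathbf{A}^2)$.

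The main obstacle is controlling the non-graphical portions of $T$ when passing between integrals over $T$ and integrals over $\Sbf$. Proposition \ref{p:refined}(iv) produces error terms of the form $(\mathbf{E}(L)+2^{-2\ell(L)}\mathbf{A}^2)^{1+\gamma}$, which a priori might accumulate near the spine; the point is that, once summed over the Whitney decomposition via \eqref{e:geometric} and combined with the smallness in Assumption \ref{a:HS}, these superlinear errors remain dominated by $\hat{\Ebf}(T,\Sbf,\Bbf_4)+\mathbf{A}^2$. The tilt-excess and $L^\infty$ bounds of Theorem \ref{thm:main-estimate} are then used to pass between tangent planes of $T$ and the planes $\alpha_i\subset \Sbf$ within each cube.
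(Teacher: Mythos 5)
Your approach to \eqref{e:HS-additional} is in the right spirit --- test the first variation of $T$ and of the cone (with multiplicities) against a vector field detecting the tilt from $V$, and exploit cancellation --- but the argument as written contains a genuine gap, and the gap is shared with your treatment of \eqref{e:HS}.

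For \eqref{e:HS}, you propose to bound $\|T\|(\Bbf_r)-Q\omega_m r^m$ directly from the refined Lipschitz approximation: each local graph contributes $Q_{L,i}|L_i|$ plus an error, and you claim that summing these errors over the Whitney cubes via \eqref{e:geometric} gives $C(\hat\Ebf+\Abf^2)$. This step does not work. The excess mass of the graph of $u_{L,i}$ over $L_i$ is, to leading order, $\tfrac12\int_{L_i}|Du_{L,i}|^2$, which by \eqref{e:error-refined-1} has size $\sim 2^{-m\ell(L)}(\mathbf{E}(L)+2^{-2\ell(L)}\Abf^2)$. But the sum
\[
\sum_{L}2^{-m\ell(L)}\mathbf{E}(L,0)\;=\;\sum_{L}2^{2\ell(L)}\int_{\Bbf^h(L)}\dist^2(q,\Sbf)\,d\|T\|(q)
\]
is not $O(\hat\Ebf(T,\Sbf,\Bbf_4))$: the factor $2^{2\ell(L)}$ destroys the bounded-overlap argument, and the sum scales like $\tau^2\boldsymbol{\sigma}(\Sbf)^2$, which under Assumption \ref{a:HS} is vastly larger than $\hat\Ebf$. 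In fact the weighted quantity on the right is precisely the non-concentration integral bounded by Corollary \ref{c:HS-patch}, which is a \emph{consequence} of Theorem \ref{t:HS}; bounding it directly would be circular. What actually controls the mass difference $\int\chi^2\,d\|T\|-\sum_iQ_i\int_{\alpha_i}\chi^2$ is the comparison of first variations of $T$ and of $S=\sum_iQ_i\llbracket\alpha_i\rrbracket$ against the same dilation field $X(q)=\chi(|q|)^2\mathbf{p}_V^\perp(q)$ (see \eqref{e:first-variation-5}--\eqref{e:first-variation-6}): the bulk divergence term $2\chi^2$ cancels in the difference, and the surviving cutoff terms carry the extra factor $|\mathbf{p}_V^\perp(q)|\sim 2^{-\ell(L)}$ that upgrades $2^{-m\ell(L)}$ to the summable weight $2^{-(m+2)\ell(L)}$ of \eqref{e:summing}.

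For \eqref{e:HS-additional}, your field $X(q)=\chi^2\mathbf{p}_V(q)$ (obtained by summing $\chi^2 u_ke_k$ over $k$) is the complementary one to the paper's. Its divergence produces $(m-2)-|\mathbf{p}_V\circ\mathbf{p}_{\vec T}^\perp|^2$, and after subtracting the cone identity one arrives at
\[
\int\chi^2|\mathbf{p}_V\circ\mathbf{p}_{\vec T}^\perp|^2\,d\|T\|\;=\;(m-2)\Bigl[\int\chi^2\,d\|T\|-\int\chi^2\,d\|S\|\Bigr]+\text{(cutoff terms)}+O(\Abf^2)\,,
\]
so the mass difference enters with a sign that requires an \emph{upper} bound. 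The monotonicity formula only supplies the lower bound $\geq -C\Abf^2$; the needed upper bound is essentially \eqref{e:HS}, which your argument has not established. The paper's choice $X=\chi^2\mathbf{p}_V^\perp(q)$ puts the mass difference on the \emph{same} side as $|\mathbf{p}_V\circ\mathbf{p}_{\vec T}^\perp|^2$, so the monotonicity lower bound suffices to drop it, and both \eqref{e:HS} and \eqref{e:HS-additional} drop out of the single identity \eqref{e:first-variation-6}--\eqref{e:first-variation-6-bis}.
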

We refer to the first estimate \eqref{e:HS} as ``Simon's error estimate", since it is estimating the error in the monotonicity formula. Meanwhile, we refer to the second estimate \eqref{e:HS-additional} as ``Simon's gradient estimate", since if $T$ were the graph of a function, this {would be an $L^2$ bound on the derivative of the function in the directions parallel to $V$.}

An important corollary which will not require much additional work is the following.

\begin{corollary}[Simon's non-concentration estimate]\label{c:HS-patch}
Assume $T$, $\Sigma$, $\mathbf{S}$ and $r$ are as in Theorem \ref{t:HS}. Then, there is a choice of $\varepsilon = \varepsilon(Q,m,n,\bar{n},M)$ in Assumption \ref{a:HS}, possibly smaller than that in Theorem \ref{t:HS}, such that for every $\kappa\in (0,m+2)$, 
\begin{equation}\label{e:HS-patch}
\int_{\Bbf_r} \frac{\dist^2 (q, \mathbf{S})}{|q|^{m+2-\kappa}}\, d\|T\| (q)
\leq C_\kappa (\mathbf{A}^2 + {\hat{\Ebf} (T, \mathbf{S}, \Bbf_4))}\, ,
\end{equation}
where here $C_\kappa = C_\kappa(Q,m,n,\bar{n},M,\kappa)$. 
\end{corollary}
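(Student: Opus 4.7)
The proof proceeds via dyadic decomposition. Set $A_k := \Bbf_{2^{-k}r}\setminus \Bbf_{2^{-k-1}r}$ for $k\geq 0$. Since $|q|^{-(m+2-\kappa)}\leq C(2^{-k}r)^{-(m+2-\kappa)}$ on $A_k$, one has
\[
\int_{\Bbf_r}\frac{\dist^2(q,\Sbf)}{|q|^{m+2-\kappa}}\,d\|T\|
\;\leq\; C\sum_{k\geq 0}(2^{-k}r)^{-(m+2-\kappa)}\int_{A_k}\dist^2(q,\Sbf)\,d\|T\|
\;\leq\; C r^\kappa \sum_{k\geq 0} 2^{-k\kappa}\,\hat{\mathbf{E}}(T,\Sbf,\Bbf_{2^{-k}r}).
\]
The geometric factor $\sum_k 2^{-k\kappa}$ converges for $\kappa>0$ to a constant depending only on $\kappa$, so the estimate \eqref{e:HS-patch} will follow once we establish a uniform-in-$s$ bound $\hat{\mathbf{E}}(T,\Sbf,\Bbf_s)\leq C\bigl(\Abf^2+\hat{\mathbf{E}}(T,\Sbf,\Bbf_4)\bigr)$ for $s\in(0,r]$.

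To obtain this uniform excess bound, the plan is to apply Theorem \ref{t:HS} scale-by-scale to the rescaled triples $T_{0,s}$, $\Sigma_{0,s}$, $\Sbf$. The hypothesis $\Theta(T,0)\geq Q$ is dilation-invariant, and since $\Sbf$ is a cone with spine $V$ the cone itself is unchanged under dilation at $0$. Assumption \ref{a:HS} is preserved under such rescaling provided $\eps$ is chosen small enough at the outset, because $\Abf_{0,s}=s\Abf$ while $\hat{\mathbf{E}}(T_{0,s},\Sbf,\Bbf_4)=\hat{\mathbf{E}}(T,\Sbf,\Bbf_{4s})$. An inductive bootstrap on the dyadic scale $s=2^{-k}r$, initiated by Theorem \ref{t:HS} itself at scale $1$, then propagates control of the excess to arbitrarily small scales.

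The heart of the argument is converting the two scale-invariant integrand quantities controlled by Theorem \ref{t:HS} into pointwise control of $\dist(q,\Sbf)$: $|q^\perp|$ measures the deviation of $T$ from being a cone at $0$, while $|\mathbf{p}_V\circ\mathbf{p}_{\vec T(q)}^\perp|$ measures its deviation from $V$-translation invariance. Since $\Sbf$ is \emph{both} a cone at $0$ and $V$-translation invariant, a current close to both these rigid structures and with $\Theta(T,0)\geq Q$ matching the density of $\Sbf$ must itself be close to $\Sbf$. Quantitatively, for $q\in\spt(T)\cap A_k$ lying in the outer or central region of the Whitney decomposition of Section \ref{ss:whitney}, we will establish a pointwise bound of the form
\[
\dist^2(q,\Sbf) \;\lesssim\; |q^\perp|^2 + |q|^2\bigl\|\mathbf{p}_V\circ\mathbf{p}_{\vec T(q)}^\perp\bigr\|^2 + \bigl(\text{refined-approximation error}\bigr),
\]
with the error absorbed via \eqref{e:thin-slab} of Proposition \ref{p:refined}. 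The contribution from the inner region is handled separately using the trivial bound $\dist(q,\Sbf)\leq|q|$ together with the excess control of Lemma \ref{l:controls_Whitney}(iv).

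The principal obstacle is the simultaneous bootstrap: the pointwise comparison above relies on the refined approximation hypothesis (Assumption \ref{a:refined}) at every dyadic scale, which in turn requires the uniform excess control that we are trying to prove. The plan is to decouple this by first verifying that, for $\eps$ chosen small enough in Assumption \ref{a:HS}, a single application of Theorem \ref{t:HS} combined with the refined approximation at scale $1$ yields an excess bound at scale $1/2$ that is strictly better than the geometric loss of a naive rescaling, and then iterating. Keeping the constants uniform across scales, rather than compounding them, is the crucial technical point that distinguishes \eqref{e:HS-patch} from what follows by direct rescaling.
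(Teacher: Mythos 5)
Your dyadic decomposition is a valid sufficient reduction: if $\hat{\mathbf{E}}(T,\Sbf,\Bbf_s)\leq C(\Abf^2+\hat{\mathbf{E}}(T,\Sbf,\Bbf_4))$ uniformly in $s\in(0,r]$, then \eqref{e:HS-patch} follows for every $\kappa>0$ since $\sum_k 2^{-k\kappa}<\infty$. However, the argument you sketch for the uniform excess bound has a gap that I do not see how to close without effectively redoing the paper's actual proof.

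The claimed pointwise inequality
\[
\dist^2(q,\Sbf)\;\lesssim\;|q^\perp|^2+|q|^2\,\bigl|\mathbf{p}_V\circ\mathbf{p}_{\vec{T}(q)}^\perp\bigr|^2+\bigl(\text{approximation error}\bigr)
\]
is false. Take $T$ equal to a cone $\Sbf'$ with spine exactly $V$ but $\Sbf'\neq\Sbf$, with $\mathbb{E}(\Sbf',\Sbf,\Bbf_4)$ small but nonzero. Then at every $q\in\spt(T)$ one has $q^\perp=0$ (a cone contains the rays through the origin, so $q$ lies in its own tangent plane) and $\mathbf{p}_V\circ\mathbf{p}_{\vec T(q)}^\perp=0$ (since $V\subset\pi(q)$ for a $V$-invariant cone), while $\dist(q,\Sbf)\sim|q|\cdot\dist(\Sbf\cap\Bbf_1,\Sbf'\cap\Bbf_1)>0$. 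The approximation error from \eqref{e:thin-slab} also vanishes here, since $T$ is already a graph over $\Sbf'$ with zero height. The heuristic that ``conical at $0$ plus $V$-translation-invariant forces closeness to $\Sbf$'' is only correct modulo the family of competing cones with spine $V$, and the distance to $\Sbf$ within that family is controlled by the starting excess $\hat{\mathbf{E}}(T,\Sbf,\Bbf_4)$ rather than by any local quantity; this term cannot be recovered from a pointwise comparison. Consequently the ``inductive bootstrap'' does not get off the ground even at the first dyadic step, and the circularity you acknowledge at the end is not merely a technical obstacle but reflects the absence of an estimate that would propagate the excess control inward without loss.

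The paper takes a fundamentally different route and avoids the uniform excess bound entirely. It tests the first variation formula with the vector field $X(q)=\dist^2(q,\Sbf)\,\bigl(\max\{r,|q|\}^{-m-2+\kappa}-1\bigr)_+\,q$, expands $\mathrm{div}_{\vec T}X$, and exploits the $2$-homogeneity identity $\nabla\dist^2(q,\Sbf)\cdot q=2\dist^2(q,\Sbf)$ to produce the weighted integral $\kappa\int\dist^2(q,\Sbf)/|q|^{m+2-\kappa}$ as the leading term; the remaining cross term $\int\nabla\dist^2(q,\Sbf)\cdot q^\perp f(q)$ is absorbed by Cauchy--Schwarz into the weighted integral itself plus $\kappa^{-1}\int|q^\perp|^2/|q|^{m+2}$, which Theorem \ref{t:HS} controls. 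This is a single clean integration-by-parts that delivers the weighted bound directly, with no Whitney decomposition, no iteration over scales, and no need to pin down the tangent cone. Note also that the factor $\kappa$ in front of the main term is what prevents taking $\kappa\to0$ to obtain a genuine uniform excess bound; the weighted estimate is exactly the right strength of statement and no more.
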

Finally, this part will be concluded by deriving the following consequence of Corollary \ref{c:HS-patch}, which will require a subtle geometric consideration.
\begin{proposition}[Simon's shift inequality]\label{p:HS-3}
Assume $T$, $\Sigma$, and $\mathbf{S}$ are as in Assumption \ref{a:HS} and in addition $\{\Theta (T, \cdot)\geq Q\}\cap \Bbf_\varepsilon (0) \neq \emptyset$. Then there is a radius $r=r(Q,m,n,\bar{n})$ and a choice of $\varepsilon = \varepsilon(Q,m,n,\bar{n},M)$ in Assumption \ref{a:HS}, possibly smaller than those in Theorem \ref{t:HS} and Corollary \ref{c:HS-patch} such that for each $\kappa\in (0,m+2)$, there are constants $\bar{C}_\kappa=\bar{C}_\kappa(Q,m,n,\bar{n},M,\kappa)>0$ and $C=C(Q,m,n,\bar{n},M)$ such that the following holds. If $q_0\in \Bbf_r (0)$ and $\Theta (T, q_0)\geq Q$, then
\begin{align}
\int_{\Bbf_{4r} (q_0)} \frac{\dist^2 (q, q_0 + \mathbf{S})}{|q-q_0|^{m+2-\kappa}}\, d\|T\| (q) &\leq
\bar{C}_\kappa (\mathbf{A}^2 + {\hat{\Ebf} (T, \mathbf{S}, \Bbf_4))}\, .\label{e:HS-3} \\
|\mathbf{p}_{\alpha_1}^\perp (q_0)|^2 + \boldsymbol{\mu} (\mathbf{S})^2 |\mathbf{p}_{V^\perp\cap \alpha_1} (q_0)|^2 &\leq C (\mathbf{A}^2 + {\hat{\Ebf} (T, \mathbf{S}, \Bbf_4))}\label{e:HS-4}\, .
\end{align}
\end{proposition}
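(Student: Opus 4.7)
I would prove the two inequalities in the order \eqref{e:HS-4} first, then \eqref{e:HS-3}. Once \eqref{e:HS-4} is in hand, \eqref{e:HS-3} reduces to Corollary \ref{c:HS-patch} applied to the translated current at $q_0$: setting $T' := (\iota_{q_0,1})_\sharp T$ with $\iota_{q_0,1}(x) = x - q_0$, and translating $\Sigma$ accordingly, the identity $\dist(q, q_0 + \mathbf{S}) = \dist(q - q_0, \mathbf{S})$ turns the integral on the left-hand side of \eqref{e:HS-3} into
\[
\int_{\Bbf_{4r}} \frac{\dist^2(q', \mathbf{S})}{|q'|^{m+2-\kappa}} d\|T'\|(q'),
\]
and $\Theta(T, q_0) \geq Q$ becomes $\Theta(T',0) \geq Q$. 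The mass bound on $T'$ in $\Bbf_4$ follows from monotonicity, while the required smallness $\mathbb{E}(T', \mathbf{S}, \Bbf_4) + \mathbf{A}^2 \leq \varepsilon_*^2 \boldsymbol{\sigma}(\mathbf{S})^2$ (with $\varepsilon_*$ the threshold of Corollary \ref{c:HS-patch}) is verified via the triangle-inequality bound $\mathbb{E}(T', \mathbf{S}, \Bbf_4) \leq C(\mathbb{E}(T, \mathbf{S}, \Bbf_4) + |\tilde{q}_0|^2)$ with $\tilde{q}_0 := \mathbf{p}_V^\perp(q_0)$. The $M$-balanced assumption together with \eqref{e:HS-4} yields $|\tilde{q}_0|^2 \leq CM^2(\mathbf{A}^2 + \hat{\Ebf})/\boldsymbol{\sigma}(\mathbf{S})^2$, so shrinking the $\varepsilon$ of Assumption \ref{a:HS} in a manner depending on $\varepsilon_*$ and $M$ closes the argument.

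For \eqref{e:HS-4}, the starting point is the Shifting Lemma (Lemma \ref{l:shifting}) applied at $q_0$, on a rotationally invariant region $U$ contained in the outer Whitney region of Section \ref{s:graphical-approx} (so bounded away from the spine $V$). This produces an index $j$ and a subset $\Omega \subset \alpha_j \cap U$ of Hausdorff measure bounded below on which
\[
|\mathbf{p}_{\alpha_1}^\perp(q_0)|^2 + \boldsymbol{\mu}(\mathbf{S})^2 |\mathbf{p}_{V^\perp \cap \alpha_1}(q_0)|^2 \leq \bar{C}^2 \dist^2(z, q_0 + \mathbf{S})
\]
holds pointwise in $z$. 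My plan is to integrate this over $\Omega$ and transfer the right-hand side to an $L^2$ integral over $\spt(T)$ via the coherent graphical approximation of Proposition \ref{p:coherent}: each $z \in \Omega$ in the coincidence set admits a corresponding graph point $p(z) = (z, u_j(z)) \in \spt(T)$, the quantity $|p(z) - z|^2$ averages to $O(\hat{\Ebf} + \mathbf{A}^2)$ by \eqref{e:coherent-1}, and a bi-Lipschitz change of variables $z \mapsto p(z)$ converts the integral against $\Hcal^m \res \Omega$ into one against $\|T\|$. The shift identity $q_0 + \mathbf{S} = \tilde{q}_0 + \mathbf{S}$ (from the $V$-invariance of $\mathbf{S}$) and sharp triangle-inequality bounds (exploiting that $p(z)$ lies close to $\alpha_j$) then relate the resulting integral to the Simon non-concentration estimate \eqref{e:HS-patch}, up to an error proportional to $|\tilde{q}_0|^2$.

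The main obstacle is the absorption step in the proof of \eqref{e:HS-4}. Because the shifted cone $q_0 + \mathbf{S}$ differs from $\mathbf{S}$ precisely by the translation $\tilde{q}_0$, any application of the triangle inequality introduces a $|\tilde{q}_0|^2$ error term of the same nature as the quantity being estimated, creating a potential circular dependence. The resolution must exploit the precise quadratic form on the left-hand side of \eqref{e:HS-4}: the $\boldsymbol{\mu}(\mathbf{S})^2$ weight on $|\mathbf{p}_{V^\perp \cap \alpha_1}(q_0)|^2$ is calibrated to match the scaling produced by the Shifting Lemma, and combined with the $M$-balancedness $\boldsymbol{\mu}(\mathbf{S}) \leq M \boldsymbol{\sigma}(\mathbf{S})$ it delivers the fixed-ratio absorption needed to close the inequality. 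A secondary technical point is that the reverse part of $\mathbb{E}(T, q_0 + \mathbf{S}, \Bbf_4(q_0))$ excises a tubular neighborhood of $q_0 + V$ rather than of $V$; this is addressed by slightly enlarging the excised radius and invoking the a posteriori bound on $|\tilde{q}_0|$ provided by \eqref{e:HS-4}.
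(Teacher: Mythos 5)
Your overall architecture is reasonable and close in spirit to the paper's — the Shifting Lemma applied at $q_0$, transfer to an $L^2$ integral over $\spt(T)$ via the graphical approximation, and a translation to reduce \eqref{e:HS-3} to Corollary \ref{c:HS-patch}. But both halves of your plan have genuine gaps, and the missing ingredient in both cases is the same: an intermediate \emph{weighted} integral estimate, namely
\begin{equation*}
\int_{\Bbf_{\rho} (q_0)} \frac{\dist^2 (q, q_0+\mathbf{S})}{|q-q_0|^{m+2-\kappa}}\, d\|T\| (q)
\leq C^\star_\kappa \bigl(\mathbf{A}^2 + \hat{\mathbf{E}} (T, \mathbf{S}, \Bbf_4) +  |\mathbf{p}_{\alpha_1}^\perp (q_0)|^2 +  \boldsymbol{\mu} (\mathbf{S})^2 |\mathbf{p}_{V^\perp\cap\alpha_1} (q_0)|^2\bigr),
\end{equation*}
which the paper proves first (Lemma \ref{l:HS-4}) and uses to close \emph{both} parts of the proposition. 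Your plan tries to dispense with this step, and that is where it breaks.

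In the proof of \eqref{e:HS-4}, you correctly identify the circular dependence, but the proposed resolution does not work. You claim the ``fixed-ratio absorption'' is closed by the calibrated quadratic form together with ``$M$-balancedness $\boldsymbol{\mu}(\mathbf{S}) \leq M\boldsymbol{\sigma}(\mathbf{S})$.'' The latter inequality is simply false: $M$-balancedness (Definition \ref{d:balanced}) controls the ratio of the two Morgan angles \emph{within a single pair} of planes, not the ratio $\boldsymbol{\mu}/\boldsymbol{\sigma}$ across pairs — indeed, the entire Layer Subdivision machinery (Lemma \ref{l:algorithm}) exists precisely because $\boldsymbol{\mu}/\boldsymbol{\sigma}$ can be arbitrarily large for a balanced cone. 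Moreover, even with the sharper shift bound $\dist(q,q_0+\mathbf{S}) \leq |\mathbf{p}_{\alpha_1}^\perp(q_0)| + C\boldsymbol{\mu}(\mathbf{S})|\mathbf{p}_{V^\perp\cap\alpha_1}(q_0)| + \dist(q,\mathbf{S})$ (which you gesture at but do not state), a crude $L^2$ estimate produces an error term that is the same quadratic form as the left-hand side with a constant $>1$ in front, so there is nothing to absorb it into. The absorption \emph{must} be done at the level of a weighted integral, because that is what produces the small prefactor $\bar r^{7/4}$ when you pass from $\int \dist^2\,d\|T\|$ over $\Bbf_{2\bar r}(q_0)$ to $\bar r^{7/4}\int \dist^2/|q-q_0|^{m+7/4}\,d\|T\|$. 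Bounding that weighted integral by the displayed estimate above (with $\kappa=1/4$) and taking $\bar r$ small is how one absorbs the self-referential term; this is exactly Lemma \ref{l:HS-4}, which you have skipped.

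The derivation of \eqref{e:HS-3} from \eqref{e:HS-4} via translation also fails as written. You need $\mathbb{E}(T',\mathbf{S},\Bbf_4)+\mathbf{A}^2 \leq \varepsilon_*^2\boldsymbol{\sigma}(\mathbf{S})^2$ with $\varepsilon_*$ the threshold of Corollary \ref{c:HS-patch}. But from \eqref{e:HS-4} you only get $|\tilde q_0|^2 \leq C(\mathbf{A}^2+\hat{\mathbf{E}})/\boldsymbol{\mu}(\mathbf{S})^2 \leq C\varepsilon^2$ (using $\boldsymbol{\mu}\geq\boldsymbol{\sigma}$ and Assumption \ref{a:HS}), which is \emph{not} $\leq C\varepsilon^2\boldsymbol{\sigma}(\mathbf{S})^2$: there is no factor of $\boldsymbol{\sigma}^2$, and $\boldsymbol{\sigma}(\mathbf{S})$ can be arbitrarily small. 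So for a fixed choice of $\varepsilon$ in Assumption \ref{a:HS}, the hypothesis of Corollary \ref{c:HS-patch} for the shifted current relative to the \emph{full} cone $\mathbf{S}$ cannot be verified. The paper's fix is to apply the Pruning Lemma \ref{l:pruning} with $D^2$ equal to the calibrated error, which discards some planes of $\mathbf{S}$ and produces a subcone $\mathbf{S}'\subset\mathbf{S}$ whose $\boldsymbol{\sigma}(\mathbf{S}')$ is large enough for Corollary \ref{c:HS-patch} to apply; the pointwise inequality $\dist(q,q_0+\mathbf{S})\leq\dist(q,q_0+\mathbf{S}')$ then transfers the conclusion back to $\mathbf{S}$. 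Without this pruning step, your reduction to Corollary \ref{c:HS-patch} does not close.
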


\begin{remark}\label{r:displacement-of-Q-points}
Observe that $\mathbf{p}_{V}^\perp = \mathbf{p}_{\alpha_1}^\perp + \mathbf{p}_{V^\perp \cap \alpha_1}$. In particular, when we know that $\mathbf{S}$ is at a fixed positive distance from any plane {(namely, $\boldsymbol{\mu}(\Sbf)$ is bounded from below away from $0$)}, then \eqref{e:HS-4} gives a control on how far $q_0$ can be from the spine $V$. This particular case corresponds to what Simon proves in \cite{Simon_cylindrical}, while, as already mentioned, \eqref{e:HS-4} is a refinement which appears first in the work \cite{W14_annals} of Wickramasekera.
\end{remark}

\subsection{Proof of the Simon's error and gradient estimates (Theorem \ref{t:HS})}

\subsubsection{Monotonicity formula}\label{ss:monotonicity}
We begin by recalling the monotonicity formula for mass ratios, along with some consequences. For $T$ as in Assumption \ref{a:main} and $\rho \in (0,4]$, the monotonicity formula reads
\[
\int_{\Bbf_\rho} \frac{|q^\perp|^2}{|q|^{m+2}}\, d\|T\| (q) =
\frac{\|T\| (\Bbf_\rho)}{\rho^m} - \omega_m \Theta (T,0) 
-\frac{1}{m} \int_{\Bbf_\rho} (q^\perp\cdot \vec{H}_T (q)) (|q|^{-m} - \rho^{-m})\, d\|T\| (q)\, .
\]
The {mean curvature} vector $\vec{H}_T (q)$ is defined as 
\[
\vec{H}_T (q) \coloneqq \sum_{i=1}^m A_\Sigma (e_i, e_i)\, ,
\]
where $A_\Sigma$ is the second fundamental form of $\Sigma$ and $e_1, \ldots, e_m$ is an orthonormal base of the approximate tangent $\pi (q)$ to $\spt (T)$ at $q$. 

We next assume that $\Theta (T,0)\geq Q$, fix the multiplicities $Q_1, \ldots , Q_N$ given by Proposition \ref{p:coherent} and observe that {for $\rho$ as above, we have}
\begin{equation}\label{e:split-in-planes}
\sum_i Q_i \mathcal{H}^m (\alpha_i \cap \Bbf_\rho) = Q \omega_m \rho^m \leq \omega_m \Theta (T,0) \rho^m\, .
\end{equation}
Fix $r = \frac{1}{3\sqrt{m-2}}$ as in the statement and note that any constants depending on $r$ become in turn dimensional constants. We moreover fix a smooth, monotone non-increasing function $\chi: [0, \infty)\to \mathbb R$ with $\chi \equiv 1$ on $[0,r]$ and $\chi \equiv 0$ on $[{2r}, \infty)$ and we introduce the function
\[
\Gamma (t) := -\int_t^\infty \frac{d}{ds} (\chi (s)^2) s^m\, ds\, .
\]
Recall the elementary equalities
\begin{align}
\int_{\Bbf_R} \chi (|q|)^2\, d\mu (q) &= \int_0^R \chi (t)^2 \frac{d}{dt} (\mu (\Bbf_t))\, dt\label{e:Fubini-1}\\
\int_{\Bbf_R} \Gamma (|q|)\, d\mu (q) &= \int_0^R \chi (t)^2 \frac{d}{dt} (t^m\mu (\Bbf_t))\, dt\, ,\label{e:Fubini-2}
\end{align}
which are valid for any Radon measure $\mu$ with $\mu (\{0\}) =0$ via Fubini's Theorem and the definition of the distributional derivative of the BV function $t\mapsto \mu(\Bbf_t)$. We then multiply both sides of the monotonicity formula by $\rho^m$, differentiate the resulting identity in $\rho$, multiply by $\chi (\rho)^2$, integrate {over $\rho \in[0,2r]$} and use \eqref{e:Fubini-1}, \eqref{e:Fubini-2}, and \eqref{e:split-in-planes} {(treating, for example, $\frac{|q^\perp|^2}{|q|^{m+2}} d\|T\|(q)$ as a Radon measure $d\mu$ as above)}. Since $\Gamma \geq C^{-1} \mathbf{1}_{[0, r]}$ for some constant $C(m,{r})>0$, we get
\begin{align}
\int_{\Bbf_r} \frac{|q^\perp|^2}{|q|^{m+2}}\, d\|T\| (q)
&\leq C\left[\int \chi^2 (|q|)\, d\|T\| (q) - \sum_i Q_i \int_{\alpha_i} \chi^2 (|q|)\, d\mathcal{H}^m (q)\right]\nonumber\\
&\qquad + C \underbrace{\int \frac{|\Gamma (|q|)q^\perp\cdot \vec{H}_T (q)|}{|q|^m}\,  d\|T\| (q)}_{=:\,\text{(A)}}\, .\label{e:monot-10}
\end{align}
We first demonstrate that the error term (A) coming from the mean curvature of $T$ may be controlled by $\Abf^2$ and to that end we observe that 
\begin{align}
|\vec{H}_T (q)| &\leq m \Abf\label{e:A-1}\, ,\\
|T_q \Sigma - T_0 \Sigma| & \leq C \Abf |q|\label{e:A-3}\\
|\mathbf{p}_{T_q \Sigma}^\perp (q)| &\leq C \Abf |q| \qquad \forall q\in \Sigma \label{e:A-2}\, .
\end{align}
Indeed the first inequality is obvious by the very definition of $\vec{H}_T$, while the second and the third are a simple exercise in differential geometry and we leave them to the reader.

Observe that since $T$ is area-minimizing in $\Sigma$, $\vec{H}_T (q)$ is orthogonal to $T_q \Sigma$, while $\pi (q)\subset T_q \Sigma$, so
\begin{equation}\label{e:perp}
q^\perp\cdot \vec{H}_T (q) = q \cdot \vec{H}_T (q) = \mathbf{p}_{T_q \Sigma}^\perp (q) \cdot \vec{H}_T (q)\, .
\end{equation}
%Given that $q\in \spt (T)\subset \Sigma$, we have $|\mathbf{p}_{T_q \Sigma}^\perp (q)|\leq C \mathbf{A} |q|$. On the other hand $|\vec{H}_T (q)|\leq m \mathbf{A}$. 
In particular, since $\spt (T) \subset \Sigma$, \eqref{e:A-1} and \eqref{e:A-2} imply
\[
\text{(A)}\leq C \mathbf{A}^2 \int_{\Bbf_r} |q|^{1-m}\, d\|T\| (q) \leq C \mathbf{A}^2\, 
\]
where again $C=C(m,r)>0$. We can therefore write
\begin{align}
\int_{\Bbf_r} \frac{|q^\perp|^2}{|q|^{m+2}}\, d\|T\| (q)
&\leq C\left[\int \chi^2 (|q|)\, d\|T\| (q) - \sum_i Q_i \int_{\alpha_i} \chi^2 (|q|)\, d\mathcal{H}^m (q)\right] + C \mathbf{A}^2 .\label{e:monot-11}
\end{align}
{We now wish to estimate the first term on the right hand side of \eqref{e:monot-11} in terms of the excess $\Ebb(T,\Sbf,\Bbf_4)$; to achieve this, we will test the first variation formula for both $T$ and the current whose support is $\Sbf$ with the vector field}
\[
X (q) = \chi (|q|)^2 \mathbf{p}_{V}^\perp (q)\, .
\]
The first variation formula for $T$ reads
\begin{equation}\label{e:first-variation}
\int {\rm div}_{\vec{T}}\, X (q)\, d\|T\| (q) = - \underbrace{\int X^\perp (q)\cdot \vec{H}_T (q)\, d\|T\| (q)}_{=:\, \text{(B)}}\, , 
\end{equation}
where we use the shorthand notation $X^\perp (q) = \mathbf{p}_{\vec{T}}^\perp (X (q))$ and
\begin{equation}
{\rm div}_{\vec{T}}\, X (q) = \sum_i \partial_{e_i} X (q) \cdot e_i
\end{equation}
for any orthonormal base $e_1, \ldots, e_m$ of the approximate tangent $\pi (q)$ of $\spt (T)$.

Writing $x=\mathbf{p}_{V}^\perp (q)$, we can now use \eqref{e:A-1} to write
\[
|X^\perp (q)\cdot \vec{H}_T (q)| = \chi (|q|)^2 |\mathbf{p}_{T_q \Sigma}^\perp (x) \cdot \vec{H}_T (q)| \leq m \mathbf{A} |\mathbf{p}_{T_q \Sigma}^\perp (x)|\, .
\]
Observe next that $x = \mathbf{p}_V^\perp (q) = q - \mathbf{p}_V (q)$ and $V\subset T_0 \Sigma$. Therefore for $q\in \spt (T)$ we can use \eqref{e:A-3} and \eqref{e:A-2} to estimate
\begin{align*}
|\mathbf{p}_{T_q \Sigma}^\perp (x)| &\leq |\mathbf{p}_{T_q \Sigma}^\perp (q)| + |\mathbf{p}_{T_q\Sigma}^\perp (\mathbf{p}_V (q))|
\leq C \Abf |q| + |\mathbf{p}_{T_q \Sigma}^\perp - \mathbf{p}_{T_0 \Sigma}^\perp| |\mathbf{p}_V (q)|
\leq C \Abf |q|\, .
\end{align*}
In particular 
\[
|\text{(B)}| \leq C \mathbf{A}^2 \int \chi (|q|)^2 |q|\, d\|T\| (q) \leq C \mathbf{A}^2\, ,
\]
which in turn leads to 
\begin{equation}\label{e:first-variation-2}
\int {\rm div}_{\vec{T}} X (q)\, d\|T\| (q) = O (\mathbf{A}^2)\, .
\end{equation}
For $x=\mathbf{p}^\perp_V(q)$, $q\in \spt(T)$, let $\mathbf{p}_{\vec{T}}(x) \equiv\mathbf{p}_{\pi(q)}(x)$ denote the orthogonal projection of $x$ onto the approximate tangent plane $\pi(q)$ at $q$. We next compute $\diverg_{\vec{T}} X$:
\[
{\rm div}_{{\vec{T}}}\, X (q) = 2 \chi (|q|)\, \mathbf{p}_{\vec{T}} (x) \cdot \nabla \chi (|q|) + \chi (|q)^2 {\rm div}_{\vec{T}} x\, .
\]
Let us now compute ${\rm div}_{\vec{T}} x$. Complete $e_1, \ldots , e_m$ to an orthonormal basis $e_1, \ldots , e_m, \nu_1\, \ldots , \nu_n$ of $\mathbb R^{m+n}$, and compute
\begin{align*}
{\rm div}_{\vec{T}} x &= \sum_i e_i \cdot \partial_{e_i} x = \sum_i e_i \cdot \mathbf{p}_{V}^\perp (e_i)\\
&= m - \sum_i e_i \cdot \mathbf{p}_V (e_i) = m - {\rm tr}\, (\mathbf{p}_{V}) + \sum_j \nu_j \cdot \mathbf{p}_{V} (\nu_j)
= 2 + \sum_j \nu_j \cdot \mathbf{p}_{V} (\nu_j)\, ,
\end{align*}
{where the last equality follows from the fact that $V$ is $(m-2)$-dimensional. Let us now rewrite the sum on the right-hand side in a more convenient form.} Observe that 
\[
{\rm tr}\, (\mathbf{p}_V \circ \mathbf{p}_{\vec{T}}^\perp) = {\rm tr}\, (\mathbf{p}_{\vec{T}}^\perp \circ \mathbf{p}_V \circ \mathbf{p}_{\vec{T}}^\perp)\, ,
\]
and also, since $\mathbf{p}_V^T = \mathbf{p}_V$, $\mathbf{p}_{\vec{T}}^T= \mathbf{p}_{\vec{T}}$, and $\mathbf{p}_V\circ \mathbf{p}_V=\mathbf{p}_V$, we have
\begin{align*}
|\mathbf{p}_V \circ \mathbf{p}_{\vec{T}}^\perp|^2 &= {\rm tr}\, ((\mathbf{p}_V \circ \mathbf{p}_{\vec{T}}^\perp)^T \circ
(\mathbf{p}_V \circ \mathbf{p}_{\vec{T}}^\perp)) = {\rm tr}\, ((\mathbf{p}_{\vec{T}}^\perp)^T \circ \mathbf{p}_V^T \circ
\mathbf{p}_V \circ \mathbf{p}_{\vec{T}}^\perp)\\
&= {\rm tr}\, (\mathbf{p}_{\vec{T}}^\perp \circ \mathbf{p}_V \circ
\mathbf{p}_V \circ \mathbf{p}_{\vec{T}}^\perp) = {\rm tr}\, (\mathbf{p}_{\vec{T}}^\perp \circ \mathbf{p}_V \circ \mathbf{p}_{\vec{T}}^\perp)\, ,
\end{align*}
Thus, we deduce that
\begin{align*}
\sum_j \nu_j \cdot \mathbf{p}_{V} (\nu_j) &= {\rm tr}\, (\mathbf{p}_V \circ \mathbf{p}_{\vec{T}}^\perp) =
|\mathbf{p}_V \circ \mathbf{p}_{\vec{T}}^\perp|^2\, ,
\end{align*}
{so in summary,
\begin{equation}\label{e:test-vf}
    {\rm div}_{\vec{T}}\, X (q) = 2 \chi (|q|)\, \mathbf{p}_{\vec{T}} (x) \cdot \nabla \chi (|q|) + \chi (|q|)^2 (2 + |\mathbf{p}_V\circ\mathbf{p}_{\vec{T}}^\perp|^2)\, .
\end{equation}}
Plugging this into \eqref{e:first-variation-2} we then conclude that
\begin{equation}\label{e:first-variation-3}
\int \chi{(|q|)}^2 (2 + |\mathbf{p}_V \circ \mathbf{p}_{\vec{T}}^\perp|^2)\, d\|T\|(q)
\leq C \mathbf{A}^2 - \int 2 \chi (|q|)\, \mathbf{p}_{\vec{T}} (x) \cdot \nabla \chi (|q|)\, d\|T\| (q)\, .
\end{equation}
{Since $\id = \mathbf{p}_V + \mathbf{p}_V^\perp = \mathbf{p}_{\vec{T}} + \mathbf{p}_{\vec{T}}^\perp$ and $x=\mathbf{p}_V^\perp(q)$, we have
\begin{align}
\mathbf{p}_{\vec{T}} (x) \cdot \nabla \chi (|q|) &=
\mathbf{p}_{\vec{T}} (x) \cdot \mathbf{p}_V (\nabla \chi (|q|))
+ \mathbf{p}_{\vec{T}} (x) \cdot \mathbf{p}_{V^\perp} (\nabla \chi (|q|)) \notag \\
&= - \mathbf{p}_{\vec{T}}^\perp (x) \cdot \mathbf{p}_V (\nabla \chi (|q|))
+ \mathbf{p}_{\vec{T}} (x) \cdot \mathbf{p}_{V^\perp} (\nabla \chi (|q|))\, \label{e:first-var}
\end{align}}
On the other hand, once again using that $\mathbf{p}_V^T = \mathbf{p}_V$, we have
\begin{align*}
|\mathbf{p}_{\vec{T}}^\perp (x) \cdot \mathbf{p}_{V} (\nabla \chi (|q|))|
&{=} |(\mathbf{p}_V\circ \mathbf{p}_{\vec{T}}^\perp) (\mathbf{p}_{\vec{T}}^\perp (x)) \cdot \nabla \chi (|q|)|
\leq C |\mathbf{p}_V\circ \mathbf{p}_{\vec{T}}^\perp||\mathbf{p}_{\vec{T}}^\perp (x)|\, .
\end{align*}
Introducing the short-hand notation $x^\perp$ for $\mathbf{p}_{\vec{T}}^\perp (x)$, as well as $\nabla_{V} \chi (|q|)$ and $\nabla_{V^\perp} \chi (|q|)$ for $\mathbf{p}_{V} (\nabla \chi (|q|))$ and $\mathbf{p}_{V^\perp} (\nabla \chi (|q|))$ respectively, we arrive at
\begin{align}
- 2 \chi (|q|)\, \mathbf{p}_{\vec{T}} (x) \cdot \nabla \chi (|q|)
&\leq - 2 \chi (|q|)\, \mathbf{p}_{\vec{T}} (x)\cdot \nabla_{V^\perp} \chi (|q|)
+ C|\chi (|q|)||\mathbf{p}_V\circ \mathbf{p}_{\vec{T}}^\perp||x^\perp|\nonumber\\
&\leq - 2 \chi (|q|)\, \mathbf{p}_{\vec{T}} (x)\cdot \nabla_{V^\perp} \chi (|q|) + \frac{1}{2} \chi (|q|)^2 |\mathbf{p}_V\circ \mathbf{p}_{\vec{T}}^\perp|^2 + C |x^\perp|^2{\mathbf{1}_{\Bbf_{2r}}}\, .
\end{align}
Inserting the latter inequality in \eqref{e:first-variation-3} we arrive at 
\begin{align}
\int \chi^2(|q|) \left({1+{\textstyle{\frac{1}{4}}}}|\mathbf{p}_V\circ \mathbf{p}_{\vec{T}}^\perp|^2\right)\, d\|T\|(q) &\leq C \mathbf{A}^2 + C \int_{\Bbf_{2r}} |x^\perp|^2 d\|T\| \notag
\\
&\qquad- \int \chi (|q|)\, \mathbf{p}_{\vec{T}} (x)\cdot \nabla_{V^\perp} \chi (|q|)\, d\|T\| (q)\, . \label{e:first-variation-4}
\end{align}
Consider next the current $S := \sum_i Q_i \llbracket \alpha_i \rrbracket$, where we recall that the $Q_i$ are the multiplicities on the outer region. Observe that, since $X (q)=\chi (|q|)^2 x$, if $\{\Phi_t\}_t$ denotes the one-parameter family of diffeomorphisms generated by $X$, then $(\Phi_t)_\sharp S =S$ for each $t$, since $\Sbf$ is invariant under rescalings in any direction in $V^\perp$. In particular, the first variation formula tells us that we must have
\[
\int {\rm div}_{\vec{S}} X\, d\|S\| = 0\, .
\]
Now we may repeat the computation above leading to \eqref{e:test-vf}, but with $\vec{S}$ in place of $\vec{T}$; notice however that \eqref{e:test-vf} is in fact simpler because $\mathbf{p}_V\circ \mathbf{p}^\perp_{\vec{S}} = 0$. Also notice that \eqref{e:first-var} is simpler, because for $q\in \Sbf = \spt(S)$ we have that $x$ is tangent to $S$ and therefore $\mathbf{p}^\perp_{
\vec{S}} = 0$ whilst $\mathbf{p}_{\vec{S}}(x) = x$, meaning the first term on the right hand side of \eqref{e:first-var} vanishes and the second is simply $x\cdot\mathbf{p}_{V^\perp}(\nabla \chi(|q|))$. Thus, in place of \eqref{e:first-variation-4} we get
\[
\int \chi^2(|q|)\, d\|S\|(q) = -  \int \chi (|q|) x \cdot \nabla_{V^\perp} \chi (|q|)\, d\|S\| (q)\, .
\]
Subtracting this from \eqref{e:first-variation-4} and rearranging, we arrive at
\begin{align}
\int \chi^2(|q|)\, d\|T\|(q) &- \int \chi^2(|q|)\, d\|S\|(q) \nonumber \\
&\leq\int \chi^2(|q|)\, d\|T\|(q) - \int \chi^2(|q|)\, d\|S\|(q) + \frac{1}{4}\int \chi^2(|q|)|\mathbf{p}_V\circ\mathbf{p}_{\vec{T}}^\perp|^2\, d\|T\|(q)\nonumber \\
&\leq C \mathbf{A}^2 + C \int_{\Bbf_{2r}} |x^\perp|^2 d\|T\| + \int \chi (|q|) x \cdot \nabla_{V^\perp} \chi (|q|)\, d\|S\|(q) \notag \\
&\qquad - \int \chi (|q|)\, \mathbf{p}_{\vec{T}} (x)\cdot \nabla_{V^\perp} \chi (|q|)\, d\|T\| (q)
\, .\label{e:first-variation-5}
\end{align}
On the other hand, observe that for any function $f$ on $\R^{m+n}$,
\begin{align*}
\int f(q)\, d\|S\|(q) &= \sum_i Q_i \int_{\alpha_i} f(q)\, d\mathcal{H}^m(q)\, .
\end{align*}
Therefore, combining \eqref{e:first-variation-5} with our monotonicity formula estimate \eqref{e:monot-11}, we arrive at 
\begin{align}
\int_{\Bbf_r} \frac{|q^\perp|^2}{|q|^{m+2}}\, d\|T\| (q) &\leq C \mathbf{A}^2 + C \int_{\Bbf_{2r}} |x^\perp|^2 d\|T\| + \sum_i Q_i \int_{\alpha_i} \chi(|q|)\, x \cdot \nabla_{V^\perp} \chi(|q|)\, d\mathcal{H}^m(q) \nonumber \\
&\qquad- \int \chi(|q|)\, \mathbf{p}_{\vec{T}} (x)\cdot \nabla_{V^\perp} \chi(|q|)\, d\|T\|(q)\, .\label{e:first-variation-6}
\end{align}
Observe however that \eqref{e:monot-11} gives 
\[
\int \chi^2(|q|) d\|T\|(q) - \int \chi^2(|q|) d\|S\|(q) \geq - C \mathbf{A}^2\, .
\]
Thus, from \eqref{e:first-variation-5} we may further infer that
\begin{align}
\int_{\Bbf_r} |\mathbf{p}_V\circ \mathbf{p}_{\vec{T}}^\perp|^2\, d\|T\|&\leq C \mathbf{A}^2 + C \int_{\Bbf_{2r}} |x^\perp|^2 d\|T\| + \sum_i Q_i \int_{\alpha_i} \chi(|q|)\, x \cdot \nabla_{V^\perp} \chi(|q|)\, d\mathcal{H}^m(q) \nonumber \\
&\qquad- \int \chi(|q|)\, \mathbf{p}_{\vec{T}} (x)\cdot \nabla_{V^\perp} \chi(|q|)\, d\|T\|(q)\, .\label{e:first-variation-6-bis}
\end{align}

\subsubsection{Key estimates}
The rest of the proof is dedicated to estimating the terms in the right hand side of \eqref{e:first-variation-6} and, equivalently, \eqref{e:first-variation-6-bis}. To that end we will use the refined graphical approximation of Proposition \ref{p:refined} and observe that $r$ has been chosen so that $\mathbf{B}_{2r}\subset R\cup V$. Observe moreover that $V$ is a negligible set in all the integrals appearing in the right and side of \eqref{e:first-variation-6} and we will therefore ignore it. Recalling the inner, central, and outer regions (cf. Definition \ref{d:regions}), we will split our task into four estimates {, depending on whether we are integrating over the inner region (near the spine), or central or outer regions (the latter two will be coupled together)}:
\begin{align}
&\hspace{11.5em}\underbrace{\int_{R^{in}} |x^\perp|^2 d\|T\|}_{=:\,\text{(C)}} \leq C (\mathbf{A}^2 + {\hat{\mathbf{E}} (T, \mathbf{S}, \Bbf_4)})\label{e:inner-1}\\
&\underbrace{\sum_j Q_j \int_{\alpha_j\cap R^{in}} \chi(|q|)\, |x \cdot \nabla_{V^\perp} \chi(|q|)|\, d\mathcal{H}^m(q) + \int_{R^{in}} \chi(|q|)\, |\mathbf{p}_{\vec{T}} (x)\cdot \nabla_{V^\perp} \chi(|q|)|\, d\|T\|(q)}_{=:\,\text{(D)}} \nonumber \\
&\hspace{24em}\leq C (\mathbf{A}^2 + {\hat{\mathbf{E}} (T, \mathbf{S}, \Bbf_4)})\label{e:inner-2}\\
&\hspace{10.5em}\underbrace{\int_{R^o\cup R^c} |x^\perp|^2 d\|T\|}_{=:\,\text{(E)}} \leq C (\mathbf{A}^2 + {\hat{\mathbf{E}} (T, \mathbf{S}, \Bbf_4)})\label{e:central-outer-1}\\
&\underbrace{\left|\sum_j Q_j \int_{\alpha_j\cap (R^o\cup R^c)} \chi(|q|)\, x \cdot \nabla_{V^\perp} \chi(|q|)\, d\mathcal{H}^m(q) - \int_{R^o\cup R^c} \chi(|q|)\, \mathbf{p}_{\vec{T}} (x)\cdot \nabla_{V^\perp} \chi(|q|)\, d\|T\|(q)\right|}_{=:\, \text{(F)}} \nonumber \\
&\hspace{24em}\leq C (\mathbf{A}^2 + {\hat{\mathbf{E}} (T, \mathbf{S}, \Bbf_4)})\, .\label{e:central-outer-2}
\end{align}
Once we have established these four estimates, the result follows from \eqref{e:first-variation-6} and \eqref{e:first-variation-6-bis}. We stress that, whilst in \eqref{e:inner-2} we do not care about subtracting the two terms as in \eqref{e:first-variation-6}, \eqref{e:first-variation-6-bis} (indeed, our estimates on the inner region will suffice there), it is important in the outer and central regions that we are subtracting the two terms, as in \eqref{e:central-outer-2}; the argument will exploit in a crucial way a cancellation effect due to the fact that this is a difference between two nearly equal quantities (so, we do not wish to crudely estimate (F) by the sum of the two terms therein at any point, unlike in (D)).

\subsubsection{Estimates in the inner region} 
This section is dedicated to prove the first two inequalities, \eqref{e:inner-1} and \eqref{e:inner-2}. First of all observe that 
\begin{align}
|x^\perp|^2 &\leq |x|^2 = \dist (q, V)^2\label{e:from-spine-1}
\end{align}
Note that,
\begin{equation}\label{e:Simon-error-integrand}
\chi (|q|) |x\cdot \nabla_{V^\perp} \chi (|q|)| = \chi (|q|) |x|^2 \frac{|\chi' (|q|)|}{|q|}\, .
\end{equation}
On the other hand $\chi' (|q|)=0$ if $|q|\leq r$ {and $|\chi'(|q|)| \leq \frac{C}{r}$ otherwise; in particular we conclude that for $C=C(m)>0$ we have}
\begin{equation}\label{e:from-spine-2}
\chi (|q|) |x\cdot \nabla_{V^\perp} \chi (|q|)|\leq C \dist (q,V)^2\, .
\end{equation}
On the other hand 
\[
\chi(|q|)|\mathbf{p}_{\vec{T}}(x)\cdot\nabla_{V^\perp}\chi(|q|)|\leq \chi(|q|)|\mathbf{p}_{\vec{T}}(x)\cdot x|\frac{|\chi^\prime(|q|)|}{|q|} = \chi(|q|)|\mathbf{p}_{\vec{T}}(x)|^2\frac{|\chi^\prime(|q|)|}{|q|}
\]
and since $|\mathbf{p}_{\vec{T}}(x)|\leq |x|$, we can combine it with the second inequality of \eqref{e:from-spine-1} to estimate
\begin{equation}\label{e:from-spine-3}
\chi (|q|) |\mathbf{p}_{\vec{T}} (x)\cdot \nabla_{V^\perp} \chi (|q|)|\leq C \dist (q,V)^2\, .
\end{equation}
Using the monotonicity formula {for $y\in \Large[-\frac{1}{\sqrt{m-2}}, \frac{1}{\sqrt{m-2}}\Large]^{m-2} \subset V$ and $\rho \leq 2$ (which in particular tells us that $\|T\|(\Bbf_\rho(y)) \leq C\rho^m$),} we easily conclude that
\begin{align}
\int_{\Bbf_\rho(y)} (|x^\perp|^2 + \chi (|q|) |\mathbf{p}_{\vec{T}} (x)\cdot \nabla_{V^\perp} \chi(|q|)|)\, d\|T\|(q)
& \leq C \rho^{m+2}\, ,\label{e:Vitali-1}\\
\sum_j Q_j \int_{\alpha_j\cap \Bbf_\rho(y)} \chi (|q|) |x\cdot \nabla_{V^\perp} \chi(|q|)|\, d\mathcal{H}^m(q) &\leq C \rho^{m+2}\, .\label{e:Vitali-2}
\end{align}
Consider now the cubes $L\in \mathcal{G}^{in}$ as in Definition \ref{d:regions} and enumerate them as $\{L_k\}_k$. Let $y_k:= y_{L_k}{\in V}$ be the corresponding centers and let $\rho_k = 2^{2-\ell (L_k)}$ be the radii of the balls $\Bbf (L_k)$ as defined in Section \ref{ss:whitney}. Clearly $\{\Bbf_{\rho_k} (y_k)\}$ is a covering of $R^{in}$. 
We now appeal to \eqref{e:0-control-inner} in Lemma \ref{l:controls_Whitney} and use \eqref{e:Vitali-1}, \eqref{e:Vitali-2} with $y=y_k$ and $\rho = \rho_k$ to estimate 
\begin{align}
 \text{(C)}+\text{(D)} &\leq C \sum_k \rho_k^{m+2} \mathbf{E} (L_k, 0) = C \sum_k \int_{\Bbf^h (L_k)} \dist^2 (q, \mathbf{S})\, d\|T\| (q)\, .
\end{align}
Recall that, by Lemma \ref{l:whitney}(iv), the collection of sets $\Bbf^h (L_k)$ has a control on their overlaps (each point in $R$ belongs to at most $C(m)$ such sets). Therefore
\[
 \text{(C)}+\text{(D)} \leq C \int_{\Bbf_4} \dist^2 (q, \Sbf)\, d\|T\| (q)\, , 
\]
which gives 
\eqref{e:inner-1} and \eqref{e:inner-2}.

\subsubsection{Estimates in the central and outer regions} In this section we prove \eqref{e:central-outer-1} and \eqref{e:central-outer-2} and hence conclude the proof of Theorem \ref{t:HS}. We first observe that, because of Lemma \ref{l:controls_Whitney}(iii), (iv) and Lemma \ref{l:whitney}(iv), (v) we have
\begin{equation}\label{e:summing}
\sum_{L\in \mathcal{G}^c\cup \mathcal{G}^o} 2^{-(m+2) \ell (L)} (\mathbf{E} (L)+2^{-2\ell(L)}\mathbf{A}^2) 
\leq C ({\hat{\mathbf{E}}} (T, \mathbf{S}, \Bbf_4) + \mathbf{A}^2)\, .
\end{equation}
For each $L\in \mathcal{G}^c\cup \mathcal{G}^o$ we consider the approximations $u_L$ given by Proposition \ref{p:refined}. These approximations consist of $Q_{L,i}$-valued functions defined over the regions $\lambda L_i$ for some subcollection of the planes $\{\alpha_1, \ldots , \alpha_N\}$ for each $L$. In order to simplify our notation, we do not keep track of these collections for different $L$. 

The estimate \eqref{e:central-outer-1} is less laborious than \eqref{e:central-outer-2}, thanks to the fact that we do not need to exploit any cancellation effect. We can in particular write 
\begin{align}
\text{(E)} \leq \sum_{L\in \mathcal{G}^c\cup\mathcal{G}^o} \int_{R(L)} |x^\perp|^2\, d\|T\| \label{e:central-outer-10}\, .
\end{align}
Now, for each region $R(L)$ with $L
\in \Gcal^c\cup\Gcal^o$, we can use that $|x^\perp|^2 \leq C 2^{-2\ell (L)}$ on $\spt (T)\cap R(L)$, together with Proposition \ref{p:refined}(iii) and \eqref{e:error-refined-3}, to estimate
\[
\int_{R(L)} |x^\perp|^2 d\|T\| \leq \sum_i \int_{\Omega(L)} |x^\perp|^2\, d\|\mathbf{G}_{u_{L,i}}\|
+ C 2^{-(m+2)\ell (L)} (\mathbf{E} (L) + 2^{-2\ell (L)} \mathbf{A}^2)\, .
\]
Using \eqref{e:summing} we then conclude that
\begin{equation}\label{e:central-outer-11}
\text{(E)} \leq \sum_{L\in \mathcal{G}^c\cup\mathcal{G}^o} \sum_i \int_{\Omega(L)} |x^\perp|^2\, d\|\mathbf{G}_{u_{L,i}}\|
+ C (\hat{\mathbf{E}} (T, \mathbf{S}, \Bbf_4)+\Abf^2)\, .
\end{equation}
Consider now coordinates $z$ on $\alpha_i$ and recall that, for a point $q= (z, (u_{L,i})_j (z))$, for some $j=1,\dots,Q_{L,i}$, in the graph of the $Q_{L,i}$-valued function $u_{L,i} = \sum_j \llbracket (u_{L,i})_j\rrbracket$, the vector $x^\perp$ is the projection of $x = \mathbf{p}_{V}^\perp(q)$ onto the orthogonal complement of the tangent plane $\vec{\mathbf{G}}_{u_{L,i}}$ to the graph of $(u_{L,i})_j$ at $q$. In particular, since at such a point $q$ we have $\big|\mathbf{p}^\perp_{{\vec{\mathbf{G}}_{u_{L,i}}}} - \mathbf{p}^\perp_{\alpha_i}\big| \leq C|\nabla(u_{L,i})_j(z)|$, and moreover because $\mathbf{p}^\perp_{\alpha_i}\circ\mathbf{p}^\perp_V = \mathbf{p}^\perp_{\alpha_i}$, this yields (noting that $|x| \leq |q| \leq C2^{-\ell(L)}$)
\begin{align}
    |x^\perp| = \big|\mathbf{p}_{{\vec{\mathbf{G}}_{u_{L,i}}}}^\perp(x)\big| & \leq \big|\mathbf{p}_{{\vec{\mathbf{G}}_{u_{L,i}}}}^\perp-\mathbf{p}_{\alpha_i}^\perp\big|\cdot|x| + |\mathbf{p}_{\alpha_i}^\perp(\mathbf{p}_{V}^\perp(q))| \nonumber \\
    &\leq C|\nabla(u_{L,i})_j(z)|\cdot 2^{-\ell(L)} + |(u_{L,i})_j(z)| \label{e:orth-proj}
\end{align}
Now if we square this expression and integrate it, we get from the two bounds in \eqref{e:error-refined-1} (to control the integrand) as well as the Lipschitz bound \eqref{e:error-refined-2} (to control the Jacobian) that
\begin{align*}
\sum_i\int_{\Omega(L)}|x^\perp|^2\, d\|\mathbf{G}_{u_{L,i}}\| & \leq C\sum_{i,j}\int_{\Omega_i(L)}|\nabla(u_{L,i})_j(z)|^2 2^{-2\ell(L)} + |(u_{L,i})_j(z)|^2 d\|\mathbf{G}_{u_{L,i}}\| \\
& \leq C2^{-(m+2)\ell(L)}(\Ebf(L)+2^{-2\ell(L)}\Abf^2).
\end{align*}
In particular, plugging the latter estimate in \eqref{e:central-outer-11} and using again \eqref{e:summing} we reach \eqref{e:central-outer-1}.

We now come to the proof of \eqref{e:central-outer-2}, which is more laborious. First of all, because $\|\mathbf{G}_{u_{L,i}}\| (\partial R (L)) =0$ for each $L \in \Gcal^c\cup\Gcal^o$, we can use Proposition \ref{p:refined}(iii), the estimate \eqref{e:error-refined-3} and the estimate \eqref{e:from-spine-3} to deduce that
\[
\left|\int_{\partial R (L)} \chi(|q|)\, \mathbf{p}_{\vec{T}} (x)\cdot \nabla_{V^\perp} \chi(|q|)\, d\|T\|(q)\right| \leq C 2^{-(m+2)\ell (L)} (\mathbf{E} (L) + 2^{-2\ell (L)} \mathbf{A}^2)\, .
\]
Thus, letting $(R (L))^\circ$ denote the interior of $R(L)$, we can once again use \eqref{e:summing} to estimate
\begin{align}
\text{(F)} &\leq \sum_{L\in \mathcal{G}^c\cup \mathcal{G}^o} \left| \sum_j Q_j \int_{\alpha_j\cap (R(L))^\circ} \chi(|q|)\, x \cdot \nabla_{V^\perp} \chi(|q|)\, d\mathcal{H}^m\right.\nonumber\\
&\hspace{7em}\left.- \int_{(R (L))^\circ} \chi(|q|)\, \mathbf{p}_{\vec{T}} (x)\cdot \nabla_{V^\perp} \chi(|q|)\, d\|T\|\right| + C ({\hat{\mathbf{E}}(T,\Sbf,\Bbf_4)} + \mathbf{A}^2)\, .\label{e:central-outer-20}
\end{align}
We remove the boundary as above to ensure that our regions are disjoint. Now we want to straighten out the curved regions (using Lemma \ref{l:curved}) to cylinders over disjoint cubes in our planes where we can pass to graphs. So, next, again taking into account \eqref{e:from-spine-3}, and then (recalling the notation $L_j = \alpha_j \cap R (L)$) using Lemma \ref{l:curved} with $U=(R(L))^\circ$ and $\tilde{U}=\bigcup_j\mathbf{p}_{\alpha_j}^{-1} (L_j)$, since $\Hcal^{m-1}(\partial L_j) \leq C2^{-(m-1)\ell(L)}$, we can further estimate 
\begin{align*}
 &\left|\int_{(R (L))^\circ} \chi(|q|)\, \mathbf{p}_{\vec{T}} (x)\cdot \nabla_{V^\perp} \chi(|q|)\, d\|T\|
- \sum_j \int_{\mathbf{p}_{\alpha_j}^{-1} ({L_j})} \chi(|q|)\, \mathbf{p}_{\vec{T}} (x)\cdot \nabla_{V^\perp} \chi(|q|)\, d\|T_{L,{j}}\| \right|\\
&\qquad\leq C 2^{-(m+2)\ell (L)} (\mathbf{E} (L) + 2^{-2\ell (L)} \mathbf{A}^2)\, .
\end{align*}
We can now yet again use \eqref{e:from-spine-3}, as well as the estimate, \eqref{e:error-refined-3} of Proposition \ref{p:refined} to further estimate
\begin{align*}
& \left|\sum_j\int_{{\mathbf{p}_{\alpha_j}^{-1} (L_j)}} \chi(|q|)\, \mathbf{p}_{\vec{T}} (x)\cdot \nabla_{V^\perp} \chi(|q|)\, d\|T_{L,j}\|\right.\\
& \hspace{5em} \left. - \sum_j \int_{{\mathbf{p}_{\alpha_j}^{-1} (L_j)}} \chi(|q|)\, \mathbf{p}_{\vec{\mathbf{G}}_{u_{L,j}}} (x)\cdot \nabla_{V^\perp} \chi(|q|)\, d\|\mathbf{G}_{u_{L,j}}\| \right|\\
&\hspace{20em} \leq C 2^{-(m+2)\ell (L)} (\mathbf{E} (L) + 2^{-2\ell (L)} \mathbf{A}^2)
\end{align*}
Combining the above estimates with \eqref{e:central-outer-20} and again making use of \eqref{e:summing}, we arrive at
\begin{align}
\text{(F)} & \leq \sum_{L\in \mathcal{G}^c\cup \mathcal{G}^o} \left| \sum_j \left[Q_j \int_{\alpha_j\cap (R(L))^\circ} \chi(|q|)\, x \cdot \nabla_{V^\perp} \chi(|q|)\, d\mathcal{H}^m\right.\right.\nonumber\\
& \hspace{10em}\left.\left.
- \int_{{\mathbf{p}_{\alpha_j}^{-1} (L_j)}} \chi(|q|)\, \mathbf{p}_{\vec{\mathbf{G}}_{u_{L,j}}} (x)\cdot \nabla_{V^\perp} \chi(|q|)\, d\|\mathbf{G}_{u_{L,j}}\|\right]\right|\nonumber\\
&\hspace{24em} + C ({\hat{\mathbf{E}}} (T, \mathbf{S}, \Bbf_4) + \mathbf{A}^2)\label{e:central-outer-21}\, .
\end{align}
Now note that the multiplicities $Q_j$ are the ones from the outer region, and so they do not necessarily match the multiplicities $Q_{L,j}$ of the multi-valued functions $u_{L,j}$ when $L\in\Gcal^c$. However recall the computation (see \eqref{e:Simon-error-integrand}),
\begin{equation}\label{e:cutoff-gradient}
\chi(|q|) x\cdot \nabla_{V^\perp} \chi (|q|) = \frac{\chi (|q|) \chi' (|q|)}{|q|} |x|^2\, ,
\end{equation}
which since $x\in V^\perp$ shows that the integrand {$\chi(|q|) x\cdot \nabla_{V^\perp} \chi (|q|)$} is invariant under rotations which keep the spine $V$ fixed. Since for every $j,k \in \{1,\dots,N\}$ there is a rotation which maps $\alpha_j$ onto $\alpha_k$ and fixes $V$, the integral
\[
\int_{\alpha_j\cap (R(L))^\circ} \chi(|q|)\, x \cdot \nabla_{V^\perp} \chi(|q|)\, d\mathcal{H}^m
\]
is independent of the plane $\alpha_j$. In particular, given that $\sum_j Q_{L,j} = \sum_j Q_j = Q$, we can in fact write {from \eqref{e:central-outer-21}} (using $L_j = \alpha_j \cap R (L)$
\begin{align}
\text{(F)} & \leq \sum_{L\in \mathcal{G}^c\cup \mathcal{G}^o} \sum_j \left| Q_{L,j} \int_{L_j} \chi(|q|)\, x \cdot \nabla_{V^\perp} \chi(|q|)\, d\mathcal{H}^m\right.\nonumber\\
& \hspace{11em}\left. - \int_{{\mathbf{p}_{\alpha_j}^{-1} (L_j)}} \chi(|q|)\, \mathbf{p}_{\vec{\mathbf{G}}_{u_{L,j}}} (x)\cdot \nabla_{V^\perp} \chi(|q|)\, d\|\mathbf{G}_{u_{L,j}}\|\right|\nonumber\\
&\hspace{24em} + C ({\hat{\mathbf{E}}} (T, \mathbf{S}, \Bbf_4) + \mathbf{A}^2)\label{e:central-outer-22}
\end{align}
Next, following the same computation as for \eqref{e:from-spine-3} we get 
\[
|\chi(|q|)\, \mathbf{p}_{\vec{\mathbf{G}}_{u_{L,j}}} (x)\cdot \nabla_{V^\perp} \chi(|q|)|
\leq C \dist (q, V)^2 \leq C\cdot 2^{-2\ell(L)}\, ,
\]
and hence, through the usual Taylor expansion of the area functional for a multi-valued graph, for $k=1,\dots, Q_{L,j}$ letting $q_k\coloneqq (z,(u_{L,j})_k(z))\equiv z + (u_{L,j})_{k}(z) \in \alpha_j \times \alpha_j^\perp$, we get
\begin{align*}
&\left|\int_{{\mathbf{p}_{\alpha_j}^{-1} (L_j)}} \chi(|q|)\, \mathbf{p}_{\vec{\mathbf{G}}_{u_{L,j}}} (x)\cdot \nabla_{V^\perp} \chi(|q|)\, d\|\mathbf{G}_{u_{L,j}}\|\right.\\
& \left.\hspace{10em}- \int_{L_j} {\sum_k\chi(|q_k|)\, \mathbf{p}_{\vec{\mathbf{G}}_{{(u_{L,j})_k}}} (\mathbf{p}_V^\perp(q_k))\cdot \nabla_{V^\perp} \chi(|q_k|)}\, d\mathcal{H}^m(z)\right| \\
&\hspace{20em}\leq C
2^{-2\ell (L)} \int_{L_j} |Du_{L,j}|^2\, d\mathcal{H}^m\, .
\end{align*}
We can in particular use \eqref{e:central-outer-22}, \eqref{e:error-refined-1}, and \eqref{e:summing} to obtain
\begin{align}
\text{(F)} & \leq \sum_{L\in \mathcal{G}^c\cup \mathcal{G}^o} \sum_j \left| \int_{L_j} \left.[Q_{L,j} \chi(|z|)\, \mathbf{p}_V^\perp(z) \cdot \nabla_{V^\perp} \chi(|z|)\right.\right.\nonumber\\
& \hspace{10em} \left. - {\sum_k\chi(|q_k|)\, \mathbf{p}_{\vec{\mathbf{G}}_{{(u_{L,j})_k}}} (\mathbf{p}_V^\perp(q_k))\cdot \nabla_{V^\perp} \chi(|q_k|)}]\, d\mathcal{H}^m(z)\right|\nonumber\\
&\hspace{20em} + C ({\hat{\mathbf{E}}}(T, \mathbf{S}, \Bbf_4) + \mathbf{A}^2)\label{e:central-outer-23}
\end{align}
Now use the coordinates $z=(\zeta,\xi)\in V\times V^\perp$ for points $z\in \alpha_j$. Recalling \eqref{e:cutoff-gradient}, the first integrand above is then given by
\[
h (z) \coloneqq Q_{L,j}\frac{\chi (|z|) \chi'(|z|)}{|z|} |\xi|^2\, .
\]
Meanwhile, we write the second integrand as
\[
g (z) := \sum_{k=1}^{Q_{L,j}} \frac{\chi (|q_k|)\chi'(|q_k|)}{|q_k|} \mathbf{p}_{\vec{\mathbf{G}}_{(u_{L,j}})_k} (\mathbf{p}_{V}^\perp (q_k)) \cdot \mathbf{p}_{V}^\perp (q_k)\, .
\]
Considering that $\chi' (|q|)=0$ when $|q|\leq r$, for each $k=1,\dots,Q_{L,j}$ we can estimate (by Taylor expansion)
\[
\left|\frac{\chi (|q_k|)\chi'(|q_k|)}{|q_k|} - \frac{\chi (|z|) \chi'(|z|)}{|z|}\right| \leq C 2^{2\ell (L)} |(u_{L,j})_k (z)|^2\, .
\]
On the other hand we have
\[
\left|\mathbf{p}_{\vec{\mathbf{G}}_{(u_{L,j}})_k} (\mathbf{p}_{V}^\perp (q_k)) \cdot \mathbf{p}_{V}^\perp (q_k)\right|
\leq C 2^{-2\ell (L)}\, .
\]
In particular, if we define
\[
\bar{g} (z) := \sum_{k=1}^{Q_{L,j}} \frac{\chi (|z|) \chi'(|z|)}{|z|} \mathbf{p}_{\vec{\mathbf{G}}_{(u_{L,j})_k}} (\mathbf{p}_{V}^\perp (q_k)) \cdot \mathbf{p}_{V}^\perp (q_k)\, ,
\]
we then get
\begin{equation}\label{e:g-gbar}
|g (z) - \bar{g} (z)|\leq C |u_{L,j} (z)|^2\, .
\end{equation}
Next, recalling the definition of $q_k$, notice that 
\[
\mathbf{p}_{V}^\perp (q_k) = \xi + (u_{L,j})_k (z)\, ,
\]
On the other hand, 
\begin{align*}
|\mathbf{p}_{\vec{\mathbf{G}}_{(u_{L,j})_k}} &  (\mathbf{p}_{V}^\perp (q_k)) \cdot \mathbf{p}_{V}^\perp (q_k) - |\xi|^2|\\
& = \left||\mathbf{p}_{\vec{\mathbf{G}}_{(u_{L,j})_k}}(\xi+(u_{L,j})_k)|^2 - |\xi|^2\right|\\
& = \left||\mathbf{p}_{\vec{\mathbf{G}}_{(u_{L,j})_k}}(\xi)|^2 - |\xi|^2\right| + |\mathbf{p}_{\vec{\mathbf{G}}_{(u_{L,j})_k}}((u_{L,j})_k)|^2\\
&\qquad\qquad + 2\left|\mathbf{p}_{\vec{\mathbf{G}}_{(u_{L,j})_k}}(\xi)\cdot\mathbf{p}_{\vec{\mathbf{G}}_{(u_{L,j})_k}}((u_{L,j})_k)\right|\\
& \leq |\mathbf{p}^\perp_{\vec{\mathbf{G}}_{(u_{L,j})_k}}(\xi)|^2 + |(u_{L,j})_k|^2 + 2|\mathbf{p}_{\vec{\mathbf{G}}_{(u_{L,j})_k}}(\xi)|\left|(\mathbf{p}_{\vec{\mathbf{G}}_{(u_{L,j})_k}}-\mathbf{p}_{\alpha_j})((u_{L,j})_k)\right|\\
& \leq |\mathbf{p}^\perp_{\vec{\mathbf{G}}_{(u_{L,j})_k}}-\mathbf{p}^\perp_{\alpha_j}|^2|\xi|^2 + |(u_{L,j})_k|^2 + 2|\xi|\cdot|D(u_{L,j})_k||(u_{L,j})_k|\\
& \leq |D(u_{L,j})_k|^22^{-2\ell(L)} + |(u_{L,j})_k|^2 + 2^{1-\ell(L)}|D(u_{L,j})_k||(u_{L,j})_k|\\
& \leq C2^{-2\ell(L)}|D(u_{L,j})_k|^2 + C|(u_{L,j})_k|^2
\end{align*}
where we have used that $\mathbf{p}_{\alpha_j}((u_{L,j})_k) = 0$ and $\mathbf{p}^\perp_{\alpha_j}(\xi) = 0$. In particular, we arrive at
\[
|\bar{g}(z)-h(z)| \leq C|u_{L,j}(z)|^2 + 2^{-2\ell(L)}|Du_{L,j}(z)|^2\, .
\]
Combining this last inequality and \eqref{e:g-gbar} with the bound \eqref{e:error-refined-1}, for each $L\in\Gcal^c\cup\Gcal^o$ and each index $j$ enumerating the planes for the corresponding cone associated to $L$, we thus achieve
\begin{align*}
& \left| \int_{L_j} Q_{L,j} \chi(|z|)\, \mathbf{p}_{V^\perp}(z) \cdot \nabla_{V^\perp} \chi(|z|)\, d\mathcal{H}^m(z)
\right.\\
& \hspace{10em}\left.- \sum_k \int_{L_j} \chi(|q_k|)\, \mathbf{p}_{\vec{\mathbf{G}}_{{(u_{L,j})_k}}} (\mathbf{p}_{V}^\perp(q_k))\cdot \nabla_{V^\perp} \chi(|q_k|)\, d\mathcal{H}^m(z)\right| \\
& \hspace{20em} \leq C 2^{-(m+2) \ell (L)} (\mathbf{E} (L)+ 2^{-2\ell (L)} \mathbf{A}^2)\,
\end{align*}
Together with \eqref{e:central-outer-23} and \eqref{e:summing}, this completes the proof of \eqref{e:central-outer-2} and thus the proof of the theorem.
\qed

\subsection{Proof of Simon's non-concentration estimate (Corollary \ref{c:HS-patch})}

We observe that Corollary \ref{c:HS-patch} is a direct consequence of Theorem \ref{t:HS} and of the following lemma (after an appropriate rescaling to adjust the radius).

\begin{lemma}\label{l:variation-test-2}
Let $T$, $\Sigma$ and $\Sbf$ be as in Assumption \ref{a:HS} with $\mathbf{B}_1 \subset \Omega$ and $\Theta (T, 0)\geq Q$. Then we may choose $\eps$ sufficiently small in Assumption \ref{a:HS} such that for each $\kappa > 0$ we have
\begin{equation}\label{e:HS-20}
\int_{\Bbf_1} \frac{\dist ^2(q, \mathbf{S})}{|q|^{m+2-\kappa}}\, d\|T\| (q)
\leq C_\kappa \int_{\Bbf_1} \frac{|q^\perp|^2}{|q|^{m+2}}\, d\|T\| (q)+ C_\kappa (\hat{\mathbf{E}} (T, \mathbf{S}, \Bbf_4) + \mathbf{A}^2)\, ,
\end{equation}
where $C_\kappa=C_\kappa(Q,m,n,\bar n,M,\kappa)>0$.
\end{lemma}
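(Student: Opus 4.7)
\emph{Plan.} Following the spirit of the proof of Theorem \ref{t:HS}, I would attack Lemma \ref{l:variation-test-2} by a first-variation/variation-test argument, this time using a weighted radial test vector field. Specifically, I would test with
$$X(q) = \chi^2(|q|)\, f(|q|)\, \mathbf{p}_V^\perp(q)\,,$$
where $\chi\in C^\infty_c([0,\infty))$ equals $1$ on $[0,1]$ and is supported in $[0,1+\eta]$, and $f(t) \approx t^\gamma$ for a suitable exponent $\gamma = \gamma(\kappa,m)$ (chosen by dimensional analysis so that the resulting integrand reproduces $\dist^2(q,\Sbf)/|q|^{m+2-\kappa}$); the singularity at the origin is dealt with by the truncation $f_\delta(t) := f(\max(t,\delta))$ and a passage to the limit $\delta \to 0^+$. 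The analogous computation is simultaneously performed on the stationary cone current $S := \sum_i Q_i \llbracket \alpha_i \rrbracket$ (with $Q_i$ the outer multiplicities furnished by Proposition \ref{p:refined}), and the two identities are subtracted.

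Following the computation in \eqref{e:test-vf} but accounting for the radial weight $f$, one obtains
$$\diverg_{\vec{T}} X = \chi^2 f\bigl(2 + |\mathbf{p}_V\circ \mathbf{p}_{\vec{T}}^\perp|^2\bigr) + 2\chi f\,\mathbf{p}_{\vec{T}}(x)\cdot\nabla\chi + \chi^2 f'(|q|)\,\frac{\mathbf{p}_{\vec{T}}(q)\cdot x}{|q|}\,,$$
with $x := \mathbf{p}_V^\perp(q)$, while on $S$ (using that each $\alpha_i\supset V$ forces $\mathbf{p}_V\circ\mathbf{p}_{\vec{S}}^\perp\equiv 0$ and $\mathbf{p}_{\vec{S}}(q)=q$) one gets the simpler $\diverg_{\vec{S}} X = 2\chi^2 f + 2\chi f\, x\cdot\nabla\chi + \chi^2 f'(|q|)|x|^2/|q|$. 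The first variation for $T$ produces an error $O(\Abf^2)$ (weighted), while $S$ is stationary. Subtracting and using $\mathbf{p}_{\vec{T}}(q)\cdot x - |x|^2 = -q^\perp\cdot x - \mathbf{p}_V(q)\cdot \mathbf{p}_{\vec{T}}(x)$, the new weighted contribution coming from $f'$ can be controlled, via Cauchy--Schwarz, by the scale-invariant quantity $\int|q^\perp|^2/|q|^{m+2}d\|T\|$ appearing on the right-hand side of the lemma. The bulk mass-difference term $\int\chi^2 f\,d\|T\|-\int\chi^2 f\,d\|S\|$ is then analyzed as in the proof of Theorem \ref{t:HS}: using Propositions \ref{p:refined} and \ref{p:coherent}, one replaces $T$ in the outer and central regions by the multi-valued graphs $u_{L,i}$ and, region by region, recognizes this difference as (a multiple of) $\int\chi^2 f\,\dist^2(q,\Sbf)\,d\|T\|$ up to errors which sum, via \eqref{e:geometric}, to a controlled multiple of $\Abf^2+\hat{\mathbf{E}}(T,\Sbf,\Bbf_4)$. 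This produces the weighted height integral on the left-hand side of the claimed inequality.

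\emph{Main obstacle.} The principal difficulties are threefold. First, justifying the singular test vector field requires truncating by $f_\delta$ and showing that the boundary contribution at $|q|=\delta$ vanishes as $\delta\to 0^+$; this is where the hypothesis $\Theta(T,0)\geq Q$ enters, via the monotonicity formula, to secure $\|T\|(\Bbf_\delta)\leq C\delta^m$ and thereby kill the boundary term regardless of the sign of $\gamma$. Second, the weighted $|\mathbf{p}_V\circ\mathbf{p}_{\vec{T}}^\perp|^2$ integral produced naturally by the above computation must be converted into (or absorbed by) the $|q^\perp|^2/|q|^{m+2}$ integral on the right-hand side of the lemma; this conversion must be carried out without a circular appeal to Theorem \ref{t:HS} (since that result is used independently in Corollary \ref{c:HS-patch}), and is where the $M$-balanced/conical structure of $\Sbf$ together with the refined approximation will be exploited. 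Third, the delicate bookkeeping of errors on each Whitney cube of the refined decomposition, already intricate in the unweighted argument \eqref{e:inner-1}--\eqref{e:central-outer-2}, must be revisited with the extra weight $f$ in place, ensuring geometric summability: this is the only step which could produce a constant depending on $\kappa$, and indeed it is only there that the strict positivity $\kappa>0$ is used, to guarantee that the weighted sum over scales converges.
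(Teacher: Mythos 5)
Your proposal takes a genuinely different and considerably more laborious route than the paper, and it hinges on a step that does not hold. The paper's proof is a short self-contained first-variation argument: it tests with the vector field $X(q)=\dist^2(q,\Sbf)\,f(q)\,q$, where $f(q)=(\max\{r,|q|\}^{-m-2+\kappa}-1)_+$, so that $\dist^2(q,\Sbf)$ is built into the test field. The $2$-homogeneity of $q\mapsto\dist^2(q,\Sbf)$ converts $\nabla\dist^2(q,\Sbf)\cdot q_\parallel$ into $2\dist^2(q,\Sbf)-\nabla\dist^2(q,\Sbf)\cdot q^\perp$, which makes the target integral $\int\dist^2(q,\Sbf)/|q|^{m+2-\kappa}$ drop out of $\diverg_{\vec{T}}X$ directly; the cross term $\nabla\dist^2(q,\Sbf)\cdot q^\perp f$ is then disposed of by the $1$-Lipschitz property of $\dist(\cdot,\Sbf)$ and Cauchy--Schwarz, producing exactly the $\int|q^\perp|^2/|q|^{m+2}$ term on the right. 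No comparison current $S$, no Whitney decomposition, and no graphical approximation is used at any point — the assumption $\Theta(T,0)\geq Q$ enters only to give $\|T\|(\Bbf_r)\leq Cr^m$ for the limit $r\downarrow 0$.

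The gap in your plan is the claim that the weighted mass-difference $\int\chi^2 f\,d\|T\|-\int\chi^2 f\,d\|S\|$ can be ``recognized as a multiple of $\int\chi^2 f\,\dist^2(q,\Sbf)\,d\|T\|$.'' That identification is false. Over a region where $T$ is described by a multi-valued graph $u$ over a plane $\alpha_i\subset\Sbf$, a Taylor expansion of the area element and of the weight $\chi^2(|q|)f(|q|)$ at the base point shows that the weighted mass excess decomposes into two pieces of different nature: a weighted Dirichlet energy $\sim\tfrac12\int\chi^2 f\,|Du|^2$ coming from the Jacobian, and a weight-shift correction $\sim\tfrac12\int(\chi^2 f)'(|z|)\,|u|^2/|z|$ coming from the difference $|q|-|\mathbf{p}_{\alpha_i}(q)|\approx|u|^2/(2|z|)$. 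The first piece is a gradient quantity, not an $L^2$ height, and the second carries a different power of $|z|$ than the target (and, for $f(t)=t^\gamma$ with $\gamma<0$, the wrong sign). Neither term is, even up to constants, the quantity $\int\chi^2 f\,\dist^2(q,\Sbf)$ that your argument needs to produce, and there is no obvious cancellation against $|\mathbf{p}_V\circ\mathbf{p}_{\vec{T}}^\perp|^2$ (which corresponds to $|\nabla_V u|^2$, only part of $|Du|^2$). So the plan would not close as written, and in any case the Whitney/cone-comparison machinery it mobilizes is unnecessary: building $\dist^2(q,\Sbf)$ directly into the test field makes the entire mechanism redundant.
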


\begin{proof}[Proof of Lemma \ref{l:variation-test-2}]
Fix $\kappa \in (0,m+2)$. We test the first variation identity \eqref{e:first-variation} for $T$ with the vector field
\[
X (q) := \dist^2 (q,\mathbf{S}) \underbrace{(\max \{r, |q|\}^{-m-2+\kappa} - 1)_+}_{=:\, f(q)} q\, ,
\]
where $r> 0$ and for a function $g$ we define $g_+(q) \coloneqq \max\{g(q),0\}$. Note that $f$ is supported in $\Bbf_1\setminus\Bbf_r$. 
In order to estimate the integrand in (B) of \eqref{e:first-variation}, we first observe that 
\[
f(q)\dist^2 (q, \mathbf{S}) \leq |q|^{{\kappa}-m}\, .
\]
Recalling \eqref{e:perp}, \eqref{e:A-1}, and \eqref{e:A-2}, we have
\[
|\vec{H}_T \cdot X^\perp (q)| \leq C \mathbf{A}^2 |q|^{1+\kappa-m}\, .
\]
Hence we can estimate
\[
|\text{(B)}| \leq C \mathbf{A}^2 \int_{\Bbf_1} |q|^{1+\kappa-m}\, d\|T\| (q) \leq C \mathbf{A}^2 \, .
\]
In particular we conclude that
\begin{equation}\label{e:first-variation-21}
\int {\rm div}_{\vec{T}} X\, \, d\|T\| = O (\mathbf{A}^2)\, .
\end{equation}
We next compute
\begin{equation}\label{e:compute-divergence-1}
{\rm div}_{\vec{T}} X (q) = m \dist^2 (q, \mathbf{S}) f (q) + f (q) \nabla \dist^2 (q, \mathbf{S}) \cdot q_\parallel + \dist^2 (q, \mathbf{S}) \nabla f (q) \cdot q_\parallel\, \, .
\end{equation}
Moreover, we can explicitly compute
\begin{align}
\nabla f (q) \cdot q_\parallel & = -(m+2-\kappa) |q|^{-m-4+\kappa} |q_\parallel|^2\mathbf{1}_{\Bbf_1\setminus \Bbf_r} (q)\nonumber\\
&= -(m+2-\kappa) |q|^{-m-2+\kappa} \mathbf{1}_{\Bbf_1\setminus \Bbf_r} (q) + (m +2-\kappa) |q|^{-m -4+\kappa} |q^\perp|^2 \mathbf{1}_{\Bbf_1\setminus \Bbf_r} (q)\, .\label{e:compute-divergence-2}
\end{align}
On the other hand, using the $2$-homogeneity of $\dist^2 (q, \mathbf{S})$ we can likewise compute
\begin{align*}
\nabla \dist^2 (q,\mathbf{S}) \cdot q_\parallel &= \nabla \dist^2 (q,\Sbf) \cdot q - \nabla \dist^2 (q, \mathbf{S}) \cdot q^\perp\\
&= 2 \dist^2 (q, \Sbf) - \nabla \dist^2 (q, \mathbf{S}) \cdot q^\perp\, .
\end{align*}
Inserting this and \eqref{e:compute-divergence-2} into \eqref{e:compute-divergence-1} we reach
\begin{align}
{\rm div}_{\vec{T}} X (q) &= \kappa \dist (q,\mathbf{S})^2 |q|^{-m-2+\kappa} \mathbf{1}_{\Bbf_1\setminus \Bbf_r} (q)\nonumber\\
&\quad + (m+2) r^{-m-2+\kappa} \dist^2 (q,\mathbf{S}) \mathbf{1}_{\Bbf_r} (q) - (m+2) \dist^2 (q,\mathbf{S}) \mathbf{1}_{\Bbf_1} (q)\nonumber\\
&\quad + (m+2-\kappa) \dist^2 (q,\mathbf{S}) |q^\perp|^2 |q|^{-m-4+\kappa} \mathbf{1}_{\Bbf_1\setminus \Bbf_r} (q) - \nabla \dist^2 (q,\mathbf{S}) \cdot q^\perp f(q)\, .
\end{align}
In turn, inserting this in \eqref{e:first-variation-21} we achieve 
\begin{align}
\kappa\int_{\Bbf_1\setminus \Bbf_r} \frac{\dist^2 (q,\mathbf{S})}{|q|^{m+2-\kappa}}\, d\|T\| (q) &\leq C \mathbf{A}^2 + (m+2) \hat{\mathbf{E}} (T, \mathbf{S}, \Bbf_1)\nonumber\\
&\quad + \underbrace{\int \nabla \dist (\mathbf{S},q)^2 \cdot q^\perp f(q)\, d\|T\| (q)}_{=:\, \text{(C)}}\label{e:first-variation-22}\, .
\end{align}
On the other hand, using that $\dist (q, \mathbf{S})$ is $1$-Lipschitz, we can estimate 
\begin{align}
|\text{(C)}| &\leq 2 \int \dist (q,\mathbf{S}) |q^\perp| f (q)\, d\|T\| (q)\nonumber\\
&\leq \kappa \int \dist^2 (q,\mathbf{S}) f(q)\, d\|T\| (q) + \frac{1}{\kappa} \int |q^\perp|^2 f(q)\, d\|T\| (q)\nonumber\\
&\leq \kappa \int_{\Bbf_1\setminus \Bbf_r} \frac{\dist^2 (q, \mathbf{S})}{|q|^{m+2-\kappa}}\, d\|T\| (q)\nonumber\\
&\qquad + \kappa r^{-m-2+\kappa} \int_{\Bbf_r} \dist^2 (q,\mathbf{S})\, d\|T\| (q)
 + \frac{1}{\kappa} \int_{\Bbf_1} \frac{|q^\perp|^2}{|q|^{m+2}}\, d\|T\| (q)\, .\label{e:first-variation-23}
\end{align}
Inserting \eqref{e:first-variation-23} into \eqref{e:first-variation-22} we then reach 
\begin{align}
\int_{\Bbf_1\setminus \Bbf_r} \frac{\dist^2 (q,\mathbf{S})}{|q|^{m+2-\kappa}}\, d\|T\| (q) &\leq C_\kappa \int_{\Bbf_1} \frac{|q^\perp|^2}{|q|^{m+2}}\, d\|T\| (q) + C_\kappa (\mathbf{A}^2 + \hat{\mathbf{E}} (T, \mathbf{S}, \Bbf_1)) + C\frac{\|T\| (\Bbf_r)}{r^{m-\kappa}}\, .
\end{align}
Letting $r\downarrow 0$ and using $\|T\| (\Bbf_r)\leq C r^m$ we reach \eqref{e:HS-20}. 
\end{proof}

\subsection{Proof of Simon's shift inequality (Proposition \ref{p:HS-3})}

%We now would like to claim that we can apply Corollary \ref{c:HS-patch} with $\kappa=\frac{1}{4}$ to $T_{q_0,3/4}$ to infer the inequality
%\begin{equation}\label{e:shifted-HS-2}
%\int_{\Bbf_{\rho} (q_0)} \frac{\dist (q, q_0+\mathbf{S})^2}{|q-q_0|^{m+7/4}}\, d\|T\| (q)
%\leq C (\mathbf{A}^2 + C \hat{\mathbf{E}} (T, \mathbf{S}, \Bbf_3))\, .
%\end{equation}
In Lemma \ref{l:HS-4} below we will show the following inequality for each $\kappa\in (0,m+2)$, under the assumption that $\varepsilon$ is chosen sufficiently small and that $\rho=\rho(m)$ is a dimensional constant:
\begin{equation}\label{e:shifted-HS-3}
\int_{\Bbf_{\rho} (q_0)} \frac{\dist (q, q_0+\mathbf{S})^2}{|q-q_0|^{m+2-\kappa}}\, d\|T\| (q)
\leq C^\star_\kappa (\mathbf{A}^2 + \hat{\mathbf{E}} (T, \mathbf{S}, \Bbf_4) +  |\mathbf{p}_{\alpha_1}^\perp (q_0)|^2 +  \boldsymbol{\mu} (\mathbf{S})^2 |\mathbf{p}_{V^\perp\cap\alpha_1} (q_0)|^2)\, ,
\end{equation}
for $C^\star_\kappa=C^\star_\kappa(Q,m,n,\bar{n})$. Assuming the validity of this, our aim is therefore to show that we also have 
\begin{equation}\label{e:control-on-Q-point}
|\mathbf{p}_{\alpha_1}^\perp (q_0)|^2 + \boldsymbol{\mu} (\mathbf{S})^2 |\mathbf{p}_{V^\perp\cap\alpha_1} (q_0)|^2 \leq C (\mathbf{A}^2 + \hat{\mathbf{E}} (T, \mathbf{S}, \Bbf_4))\, ,
\end{equation}
for $C=C(Q,m,n,\bar{n})$. Observe that \eqref{e:shifted-HS-3} and \eqref{e:control-on-Q-point} together yield \eqref{e:HS-3}.

Fix now a scale $\bar r\leq \frac{\rho}{2}$ (for $\rho$ fixed as in \eqref{e:shifted-HS-3}), whose choice will be specified later. Observe that, by assuming $\varepsilon$ is small enough depending on $\bar r$, Proposition \ref{p:refined}(v) guarantees that $\dist (q_0, V) \leq \frac{\bar r}{2}$. We can now apply a scaled version of Lemma \ref{l:shifting} to find an index $j$ and a subset $\Omega \subset \alpha_j \cap \Bbf_{\bar r} (\mathbf{p}_{V} (q_0))\setminus B_{\bar r/2} (V)$ with the property that 
\begin{equation}\label{e:shifting-LB-10}
|\mathbf{p}_{\alpha_1}^\perp (q_0)|^2 + \boldsymbol{\mu} (\mathbf{S})^2 |\mathbf{p}_{V^\perp\cap\alpha_1} (q_0)|^2 \leq \bar{C} \dist (z, q_0+\mathbf{S})^2 \qquad \forall z\in \Omega
\end{equation}
and $\mathcal{H}^m (\Omega) \geq \bar{C}^{-1} \bar r^m$ for some geometric constant $\bar{C}>0$ (from the proof of Lemma \ref{l:shifting}, 
we know $\bar{C} = \bar{C}(Q,m,n,\bar{n})$ due to the specific choice of $U=\Bbf_{\bar r}\setminus B_{\bar r/2} (V)$ here in Lemma \ref{l:shifting}; 
see also Remark \ref{r:shifting-scaling-invariant}). If we choose $\varepsilon$ sufficiently small, depending on $\bar r$, by Proposition \ref{p:refined} we can find a further subset $\Omega'\subset\Omega$ with measure larger than $(2\bar C)^{-1} \bar r^m$ and such that over each $z\in \Omega'$ we can find a point $p$ in the outer graphical approximation lying in $z+ \alpha_j^\perp$ and in the support of the current $T$ with the property that 
\[
 \dist (z, q_0+\mathbf{S}) \leq \dist (p, q_0+\mathbf{S}) + C \bar r^{-m/2} (\hat{\mathbf{E}} (T, \mathbf{S}, \Bbf_4) + \bar r^2 \mathbf{A}^2)^{1/2}\, . 
\]
In particular we achieve 
\begin{equation}\label{e:inequality-Omega-second}
|\mathbf{p}_{\alpha_1}^\perp (q_0)|^2 + \boldsymbol{\mu} (\mathbf{S})^2 |\mathbf{p}_{V^\perp\cap\alpha_1} (q_0)|^2 \leq \bar C \dist^2 (p, q_0+\mathbf{S}) + C \bar r^{-m} (\hat{\mathbf{E}} (T, \mathbf{S}, \Bbf_4) + \bar r^2\mathbf{A}^2)
\end{equation}
for all $p$ in the subset $\Omega''$ of points $p\in \spt (T)$ which coincide with the outer graphical approximation of Proposition \ref{p:refined} restricted to the subset $\Omega'$. For this set we clearly have $\|T\| (\Omega'') \geq \mathcal{H}^m (\Omega') \geq (2\bar{C})^{-1} \bar r^m$. Observe moreover that, since $\Omega' \subset \Bbf_{\bar r} (\mathbf{p}_V (q_0))\subset \Bbf_{3\bar r/2} (q_0)$, if $\varepsilon$ is sufficiently small (depending on $\bar r$), then $\Omega'' \subset \Bbf_{2\bar r} (q_0)$. We now integrate \eqref{e:inequality-Omega-second} over $\Omega''$ with respect to $d\|T\|$ to find 
\begin{align*}
|\mathbf{p}_{\alpha_1}^\perp (q_0)|^2 + \boldsymbol{\mu} (\mathbf{S})^2 |\mathbf{p}_{V^\perp\cap\alpha_1} (q_0)|^2 & \leq C \bar r^{-m} \int_{\Bbf_{2 \bar r} (q_0)} \dist^2 (p, q_0+\mathbf{S})\, d\|T\| (p)\\
&\qquad \qquad + C \bar r^{-m} (\hat{\mathbf{E}} (T, \mathbf{S}, \Bbf_4) + \bar r^2 \mathbf{A}^2)\, .
\end{align*}
We can in particular write
\begin{align}
|\mathbf{p}_{\alpha_1}^\perp (q_0)|^2 + \boldsymbol{\mu} (\mathbf{S})^2 |\mathbf{p}_{V^\perp\cap\alpha_1} (q_0)|^2 &\leq C \bar r^{\frac{7}{4}} \int_{\Bbf_{2 \bar r} (q_0)} \frac{\dist^2 (q, q_0+\mathbf{S})}{|q-q_0|^{m+7/4}}\, d\|T\|(q) \nonumber\\
&\qquad \qquad + C \bar r^{-m} (\hat{\mathbf{E}} (T, \mathbf{S}, \Bbf_4) + \bar r^2 \mathbf{A}^2)\, .\label{e:will-be-absorbed}
\end{align}
Given that the constant $C$ is independent of the radius $\bar r$, for any fixed $\delta>0$ if we choose $\bar r=\bar r(Q,m,n,\bar{n},\delta)$ sufficiently small we achieve 
\begin{align*}
|\mathbf{p}_{\alpha_1}^\perp (q_0)|^2 + \boldsymbol{\mu} (\mathbf{S})^2 |\mathbf{p}_{V^\perp\cap\alpha_1} (q_0)|^2 &\leq \delta \int_{\Bbf_{2 \bar r} (q_0)} \frac{\dist (q, q_0+\mathbf{S})^2}{|q-q_0|^{m+7/4}}\, d\|T\|(q) \\
&\qquad+ C \bar r^{-m} (\hat{\mathbf{E}} (T, \mathbf{S}, \Bbf_4) + \bar r^2 \mathbf{A}^2)\, .
\end{align*}
We can now insert the latter in \eqref{e:shifted-HS-3} (with $\kappa=\frac{1}{4}$) and, upon fixing $\delta =\delta(Q,m,n,\bar{n})$ small enough (recalling that $2 \bar r\leq\rho$), conclude that
\begin{align}
\int_{\Bbf_{2 \bar r} (q_0)} \frac{\dist (q, q_0+\mathbf{S})^2}{|q-q_0|^{m+7/4}}\, d\|T\| (q)
&\leq\int_{\Bbf_{\rho} (q_0)} \frac{\dist (q, q_0+\mathbf{S})^2}{|q-q_0|^{m+7/4}}\, d\|T\| (q)\nonumber\\
&\leq C \bar r^{-m} (\bar r^2 \mathbf{A}^2 + C \hat{\mathbf{E}} (T, \mathbf{S}, \Bbf_4))\nonumber\\
&\hspace{6em} + \frac{1}{2}
\int_{\Bbf_{2 \bar r} (q_0)} \frac{\dist (q, q_0+\mathbf{S})^2}{|q-q_0|^{m+7/4}}\, d\|T\| (q)\, .
\label{e:shifted-HS-5}
\end{align}
In particular we conclude
\[
\int_{\Bbf_{2 \bar r} (q_0)} \frac{\dist (q, q_0+\mathbf{S})^2}{|q-q_0|^{m+7/4}}\, d\|T\| (q)
\leq  C \bar r^{-m} (\bar r^2 \mathbf{A}^2 + \hat{\mathbf{E}} (T, \mathbf{S}, \Bbf_4)) \, ,
\]
for this fixed choice of $\bar r$ and inserting the latter into \eqref{e:will-be-absorbed} we achieve \eqref{e:control-on-Q-point}.

We are left with the task of showing that \eqref{e:shifted-HS-3} holds. This is accomplished in the following: 

\begin{lemma}\label{l:HS-4}
If $T$, $\Sigma$, and $\mathbf{S}$ are as in Assumption \ref{a:HS} and $\rho= \frac{1}{12 (m-2)}$, then \eqref{e:shifted-HS-3} holds for every point $q_0\in \Bbf_\rho$ with the property that $\Theta (T, q_0) \geq Q$.
\end{lemma}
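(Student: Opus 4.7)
The plan is to reduce \eqref{e:shifted-HS-3} to Corollary \ref{c:HS-patch} by translating $T$ to put the density-$Q$ point $q_0$ at the origin. Define $T' \coloneqq \phi_\sharp T$, where $\phi(p) \coloneqq p - q_0$; then $T'$ is area-minimizing in $\Sigma - q_0$ with the same bound $\Abf$ on the second fundamental form, and crucially $\Theta(T', 0) = \Theta(T, q_0) \geq Q$. Under the substitution $q = q' + q_0$ the identity $\dist(q, q_0 + \Sbf) = \dist(q', \Sbf)$ turns the left-hand side of \eqref{e:shifted-HS-3} into
\[
\int_{\Bbf_\rho} \frac{\dist^2(q', \Sbf)}{|q'|^{m+2-\kappa}}\, d\|T'\|(q'),
\]
and since $\rho \leq r = (3\sqrt{m-2})^{-1}$ for $m \geq 3$, Corollary \ref{c:HS-patch} applied to $T'$ relative to $\Sbf$ produces precisely the desired bound, modulo relating $\hat{\mathbf{E}}(T', \Sbf, \Bbf_4)$ back to $\hat{\mathbf{E}}(T, \Sbf, \Bbf_4)$.

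The key calculation is the excess comparison. For $q \in \spt(T) \cap \Bbf_4(q_0)$ and $\alpha_i \subset \Sbf$ realizing $\dist(q, \Sbf)$,
\[
\dist(q - q_0, \Sbf) \leq |\mathbf{p}_{\alpha_i}^\perp(q - q_0)| \leq \dist(q, \Sbf) + |\mathbf{p}_{\alpha_i}^\perp(q_0)|.
\]
Since $V \subset \alpha_1 \cap \alpha_i$, we have $\mathbf{p}_{\alpha_i}^\perp = \mathbf{p}_{\alpha_i}^\perp \circ \mathbf{p}_V^\perp$; combining the orthogonal decomposition $\mathbf{p}_V^\perp(q_0) = \mathbf{p}_{\alpha_1}^\perp(q_0) + \mathbf{p}_{V^\perp \cap \alpha_1}(q_0)$ with the angle bound $|\mathbf{p}_{\alpha_i}^\perp(w)| \leq \boldsymbol{\mu}(\Sbf)|w|$ for $w \in V^\perp \cap \alpha_1$, which follows from \eqref{e:Haus=max_eigen} of Corollary \ref{c:growth}, yields $\max_i |\mathbf{p}_{\alpha_i}^\perp(q_0)|^2 \leq 2\Delta$, where
\[
\Delta \coloneqq |\mathbf{p}_{\alpha_1}^\perp(q_0)|^2 + \boldsymbol{\mu}(\Sbf)^2 |\mathbf{p}_{V^\perp \cap \alpha_1}(q_0)|^2.
\]
Squaring and integrating against $\|T\|$, and performing a symmetric argument for the reverse excess, yields $\mathbb{E}(T', \Sbf, \Bbf_4) \leq C(\mathbb{E}(T, \Sbf, \Bbf_4) + \Delta)$.

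The main obstacle is verifying the smallness hypothesis $\mathbb{E}(T', \Sbf, \Bbf_4) + \Abf^2 \leq \eps^2 \boldsymbol{\sigma}(\Sbf)^2$ of Assumption \ref{a:HS} for $T'$; by the excess comparison above, this reduces to $\Delta \leq C\eps^2 \boldsymbol{\sigma}(\Sbf)^2$, which is not implied by $|q_0| \leq \rho$ alone since $\rho$ is a purely dimensional constant independent of $\boldsymbol{\sigma}(\Sbf)$. We close the gap by exploiting that $\Theta(T, q_0) \geq Q$ severely constrains the position of $q_0$. Lemma \ref{l:splitting-1} applied with a suitably small tubular-neighbourhood parameter shows that outside a small neighborhood of $V$ the current $T$ splits into disjoint pieces near the individual planes $\alpha_i$, each carrying multiplicity strictly less than $Q$, so $q_0$ must lie close to $V$; quantitatively, the $L^\infty$ height estimate of Proposition \ref{p:Lipschitz-1}(c) applied at the appropriate scale forces both $|\mathbf{p}_{\alpha_1}^\perp(q_0)|$ and $\boldsymbol{\mu}(\Sbf)|\mathbf{p}_{V^\perp\cap\alpha_1}(q_0)|$ to be bounded by $C\eps\boldsymbol{\sigma}(\Sbf)$. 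Taking $\eps$ in Assumption \ref{a:HS} sufficiently small then ensures $\Delta \leq \eps^2 \boldsymbol{\sigma}(\Sbf)^2$; the remaining mass bound $\|T'\|(\Bbf_4) \leq (Q+\tfrac{1}{2})4^m\omega_m$ follows from monotonicity and the smallness of $\rho$. Corollary \ref{c:HS-patch} applied to $T'$ then yields
\[
\int_{\Bbf_\rho} \frac{\dist^2(q', \Sbf)}{|q'|^{m+2-\kappa}}\, d\|T'\|(q') \leq C_\kappa \big(\Abf^2 + \hat{\mathbf{E}}(T, \Sbf, \Bbf_4) + \Delta\big),
\]
which is exactly \eqref{e:shifted-HS-3} after undoing the translation.
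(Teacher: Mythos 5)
Your overall strategy — shift $q_0$ to the origin, compare the excess of the shifted current relative to $\mathbf{S}$ with the original one, and then invoke Corollary \ref{c:HS-patch} — is the right starting point and coincides with the first half of the paper's argument (the elementary shift inequality \eqref{e:elementary-shift} and the quantity $D^2$ in \eqref{e:D^2} play the role of your $\Delta$). However, the step that makes the whole thing work is where your proof has a genuine gap.

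You claim that $\Delta := |\mathbf{p}_{\alpha_1}^\perp(q_0)|^2 + \boldsymbol{\mu}(\Sbf)^2|\mathbf{p}_{V^\perp\cap\alpha_1}(q_0)|^2 \le C\eps^2\boldsymbol{\sigma}(\Sbf)^2$, justifying it by the splitting lemma forcing $q_0$ close to $V$ plus the $L^\infty$ height estimate of Proposition \ref{p:Lipschitz-1}(c). This is too strong and cannot hold in general. The splitting/refined approximation (Proposition \ref{p:refined}(v)) does force $\dist(q_0,V)\le\lambda$, but $\lambda$ depends only on $\eps$ (and fixed parameters), not on $\boldsymbol{\sigma}(\Sbf)$; so $\boldsymbol{\mu}(\Sbf)^2|\mathbf{p}_{V^\perp\cap\alpha_1}(q_0)|^2\le\boldsymbol{\mu}(\Sbf)^2\lambda^2$ is bounded by $\lambda^2\boldsymbol{\mu}(\Sbf)^2$, not by $\eps^2\boldsymbol{\sigma}(\Sbf)^2$. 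Since $\boldsymbol{\sigma}(\Sbf)/\boldsymbol{\mu}(\Sbf)$ can be arbitrarily small (nearly-collapsed pairs of planes inside $\Sbf$), no choice of $\eps$ or $\lambda$ rescues the claim. Similarly, the $L^\infty$ height bound in Proposition \ref{p:Lipschitz-1}(c) applies to points in the outer region, which $q_0$ is \emph{not} (density-$Q$ points are excluded from the outer region by Proposition \ref{p:refined}(v)); the limiting version controls $\dist(q_0,\alpha_i)$ for a \emph{nearby} $\alpha_i$ rather than $\alpha_1$, picking up a tilting error of order $\boldsymbol{\mu}(\Sbf)\lambda$ — again scaling with $\boldsymbol{\mu}$, not $\boldsymbol{\sigma}$.

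The missing idea is the Pruning Lemma (Lemma \ref{l:pruning}). The achievable bound is only $D^2\lesssim\eps_*^2\boldsymbol{\mu}(\Sbf)^2$ for the fixed pruning threshold $\eps_*$; this is precisely the hypothesis needed to apply Lemma \ref{l:pruning} with $D$ and a suitable $\delta$, producing a sub-cone $\Sbf'\subset\Sbf$ for which $D^2 + (\text{discarded distances})^2 \le \delta^2\boldsymbol{\sigma}(\Sbf')^2$. One then applies Corollary \ref{c:HS-patch} to $T_{q_0,\bar\rho}$ with $\Sbf'$ in place of $\Sbf$, and passes back via the inclusion $\Sbf'\subset\Sbf$, which gives $\dist(q,q_0+\Sbf)\le\dist(q,q_0+\Sbf')$. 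Without this pruning step, the smallness hypothesis of Assumption \ref{a:HS} for the translated current cannot be verified, and your application of Corollary \ref{c:HS-patch} is unjustified.
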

\begin{proof}
First of all, observe the following elementary fact for all $q,q_0,\Sbf$: 
\begin{equation}\label{e:elementary-shift}
\dist (q, q_0+\mathbf{S})\leq |\mathbf{p}_{\alpha_1}^\perp (q_0)| + C \boldsymbol{\mu} (\mathbf{S}) |\mathbf{p}_{V^\perp\cap\alpha_1} (q_0)| + \dist (q, \mathbf{S})\, .
\end{equation}
Fix $\bar{\rho} = \frac{1}{4\sqrt{m-2}}$.
Next note that, assuming $\varepsilon$ is sufficiently small, we can assume that $\Bbf_{4\bar \rho}\setminus B_{a\bar\rho/8} (V)$ is in the outer region $R^o$ and moreover that $\dist(q_0,V)<a\bar\rho/8$, by Proposition \ref{p:refined}(v). Thus, using \eqref{e:elementary-shift} with $q_0$ as in the statement of the lemma, we gain the inequality
\begin{equation}\label{e:D^2}
\Ebb (T_{q_0,\bar\rho}, \mathbf{S}, \Bbf_4) + \mathbf{A}^2 \leq 
\underbrace{C_0 (\hat{\Ebf} (T, \mathbf{S}, \Bbf_4) + \mathbf{A}^2 + |\mathbf{p}_{\alpha_1}^\perp (q_0)|^2 + \boldsymbol{\mu} (\mathbf{S})^2 |\mathbf{p}_{V^\perp\cap\alpha_1} (q_0)|^2)}_{=:\, D^2}\, ,
\end{equation}
for some constant $C_0$ which is now independent of $\varepsilon$.

Note that, if $D^2\leq \bar{\varepsilon} \boldsymbol{\sigma} (\mathbf{S})^2$ for $\bar\varepsilon = \bar\varepsilon(Q,m,n,\bar{n},M)$ which is the threshold needed to apply Corollary \ref{c:HS-patch}, then we could apply Corollary \ref{c:HS-patch} with $T_{q_0,\bar{\rho}}$ in place of $T$ and the desired inequality \eqref{e:shifted-HS-3} would then follow. 

To handle the general case we fix a suitable $\delta>0$, which will be chosen depending on $\bar\varepsilon$. We wish to apply the Pruning Lemma \ref{l:pruning} with this choice of $\delta$ and $D$. Let $\eps_*= \eps_* (\delta,N)$ be the threshold needed for the applicability of Lemma \ref{l:pruning} and observe that we need to prove that
\begin{equation}\label{e:needed-for-pruning}
D^2 \leq \eps_*^2 \boldsymbol{\mu} (\Sbf)^2\, .
\end{equation}
Note that we can assume
\begin{equation}\label{e:piece-1}
C_0 (\hat{\Ebf} (T, \mathbf{S}, \Bbf_4) + \mathbf{A}^2) \leq \frac{\eps_*^2}{2} \boldsymbol{\mu} (\Sbf)^2
\end{equation}
by choosing $\eps$ sufficiently small, depending on $\eps_*$ in addition to existing dependencies.

Next observe that for every fixed $\lambda>0$, by choosing $\varepsilon$ sufficiently small depending on $\lambda$, the set $\Bbf_\rho \setminus B_\lambda (V)$ will be contained in the outer region $R^o$. In particular, applying Proposition \ref{p:refined}(v) as before, it follows that $\dist (q_0, V) \leq \lambda$
and thus for $\lambda\leq \eps_*/(2\sqrt{C_0})$ we gain
\begin{equation}\label{e:piece-2}
C_0\boldsymbol{\mu} (\mathbf{S})^2 |\mathbf{p}_{V^\perp\cap\alpha_1} (q_0)|^2 \leq \frac{\varepsilon_*^2}{4}
\boldsymbol{\mu} (\Sbf)^2\, .
\end{equation}
Consider moreover the ball $\Bbf := \Bbf_{4\lambda} (\mathbf{p}_V (q_0))$ and estimate
\[
\hat{\Ebf} (T, \alpha_1, \Bbf) \leq C_1 \boldsymbol{\mu} (\Sbf)^2 + C_1 \hat \Ebf (T, \Sbf, \Bbf)
\leq C_1 \boldsymbol{\mu} (\Sbf)^2 + C_1 \lambda^{-m-2} \hat{\Ebf} (T, \Sbf, \Bbf_4)\, ,
\]
where $C_1=C_1(m,n,\bar{n},Q)$.
In particular applying Allard's $L^2$--$L^\infty$ bound for $\alpha_1$ we conclude that
\[
|\mathbf{p}_{\alpha_1}^\perp (q_0)|^2 \leq C_1 \lambda^2 \boldsymbol{\mu} (\Sbf)^2
+ C_1 \lambda^{-m} \hat{\Ebf} (T, \Sbf, \Bbf_4) + C_1 \lambda^2 \Abf^2\, .
\]
We can now select $\lambda$ so that $C_0C_1 \lambda^2 \leq \frac{\varepsilon_*^2}{16}$, yielding
\[
    C_0|\mathbf{p}_{\alpha_1}^\perp (q_0)|^2 \leq \frac{\varepsilon_*^2}{16} \boldsymbol{\mu}(\Sbf)^2 + C\varepsilon_*^{-m-2}\hat{\Ebf} (T, \Sbf, \Bbf_4) + C\varepsilon_*^2 \Abf^2\, .
\]
We are now in the position to choose $\varepsilon$ sufficiently small (depending on $\eps_*$) so that
\begin{equation}\label{e:piece-3}
C_0|\mathbf{p}_{\alpha_1}^\perp (q_0)|^2\leq \frac{\varepsilon_*^2}{8}  \boldsymbol{\mu} (\Sbf)^2\, .
\end{equation}
Summing \eqref{e:piece-1}, \eqref{e:piece-2}, and \eqref{e:piece-3} we obtain \eqref{e:needed-for-pruning}. 

So we can now indeed apply Lemma \ref{l:pruning} to find a cone $\mathbf{S}'\subset \mathbf{S}$, indexed as $\mathbf{S}' = \bigcup_{i\in I} \alpha_i$, with the properties that 
\[
D^2 + \max_{j} \min_{i\in I} \dist^2 (\alpha_i\cap \Bbf_1, \alpha_j\cap \Bbf_1) \leq 
\delta^2 \boldsymbol{\sigma} (\mathbf{S}')^2\, ,
\]
and 
\begin{equation}\label{e:pruning-again}
\max_{j} \min_{i\in I} \dist^2 (\alpha_i\cap \Bbf_1, \alpha_j\cap \Bbf_1) \leq \Gamma^2D^2\, 
\end{equation}
for $\Gamma = \Gamma(\delta)$ given by Lemma \ref{l:pruning}.
Now, since by \eqref{e:D^2} and the triangle inequality
\begin{equation}\label{e:excess-S'}
\hat{\mathbf{E}} (T_{q_0,\bar{\rho}}, \mathbf{S}', \Bbf_4) \leq C D^2 + C \max_{j} \min_{i\in I} \dist^2 (\alpha_i\cap \Bbf_1, \alpha_j\cap \Bbf_1) \leq 
C \delta^2 \boldsymbol{\sigma} (\mathbf{S}')^2\, ,
\end{equation}
where $C$ is a constant independent of $\delta$, and since $\Sbf^\prime\subset\Sbf$ we have by \eqref{e:D^2} again that $\hat{\Ebf}(\Sbf^\prime,T_{q_0,\bar\rho},\Bbf_4)+\Abf^2\leq C_0 D^2 \leq C\delta^2\boldsymbol{\sigma}(\Sbf^\prime)^2$, we can choose $\delta = \delta(Q,m,n,\bar{n},M)$ small to achieve the applicability of Corollary \ref{c:HS-patch} with $\mathbf{S}'$ replacing $\mathbf{S}$ and $T_{q_0, \bar{\rho}}$ replacing $T$. Given that $\rho = \frac{\bar{\rho}}{3 \sqrt{m-2}}$ the estimate \eqref{e:HS-patch}, together with \eqref{e:excess-S'} and \eqref{e:pruning-again}, give 
\begin{equation}\label{e:shifted-HS-10}
\int_{\Bbf_{\rho} (q_0)} \frac{\dist^2 (q, q_0+\mathbf{S'})}{|q-q_0|^{m+2-\kappa}}\, d\|T\| (q)
\leq C (\mathbf{A}^2 + \Gamma^2 D^2) \, .
\end{equation}
Since however we have chosen $\delta$ and thus we have $\Gamma$ fixed, we can treat the latter as a constant depending only on $Q$, $m$, $n$, $\bar{n}$, and $M$.
Since $\mathbf{S}\supset \mathbf{S}'$, we trivially have $\dist (q, q_0+\mathbf{S})
\leq \dist (q, q_0+\mathbf{S'})$, and hence, given our definition of $D^2$, \eqref{e:shifted-HS-10} implies \eqref{e:shifted-HS-3}.
\end{proof}

\section{Linearization}\label{p:linear}

The aim of this section is to collect some results on Dir-minimizing functions which will be pivotal to close the proof of Theorem \ref{c:decay}.  We follow the notation of \cite{DLS_MAMS}. 

\begin{definition}\label{d:1-homogeneous}
Let $Q, m,$ and $n$ be positive integers. We denote by:
\begin{itemize}
    \item $\mathscr{H}_1$ the space of $1$-homogeneous locally Dir-minimizing functions $u: \mathbb R^m \to \mathcal{A}_Q (\mathbb R^n)$;
    \item $\mathscr{L}_1$ the subspace of $u\in \mathscr{H}_1$ which are invariant by translations along at least $(m-2)$-independent directions;
    \item $\mathscr{H}_1^0$ and $\mathscr{L}_1^0$ the subsets of $\mathscr{H}_1$ and $\mathscr{L}_1$ consisting of maps $u$ such that $\boldsymbol{\eta} \circ u \equiv 0$.
\end{itemize}
\end{definition}

Observe that all these spaces are locally compact in the $L^2_{\text{loc}}$ topology. In particular, all the minima appearing in the inequalities below (e.g. see the right hand side of \eqref{e:decay-1}) are attained. We begin with a suitable decay lemma.

\begin{theorem}\label{t:decay}
For every $Q, m, n$, and $\varepsilon >0$, there is $\rho = \rho(Q,m,n,\eps)\in (0, \frac{1}{2})$ with the following property. Assume that $u:\mathbb R^m \supset B_1 \to \mathcal{A}_Q (\mathbb R^n)$ is Dir-minimizing, that $u (0) = Q \llbracket 0 \rrbracket$, and $I_{0,u} (0) \coloneqq \lim_{r\to 0} I_{0,u}(r) \geq 1$. Then
\begin{equation}\label{e:decay-1}
\min_{v\in \mathscr{H}_1} \int_{B_r} \mathcal{G} (u,v)^2 \leq \varepsilon r^{m+2} \int_{B_1} |u|^2\qquad \forall r\leq \rho\, .
\end{equation}
If additionally for some constant $\vartheta>0$ and for every $r\leq \frac{1}{2}$ there are $m-2$ points $z_1, \ldots, z_{m-2}\in B_r$ such that $u (z_i) = Q \llbracket 0 \rrbracket$, $I_{z_i, u} (0)\geq 1$ and $\det ( (z_i\cdot z_j)_{i,j}) \geq \vartheta r^{m-2}$, then
\begin{equation}\label{e:decay-2}
\min_{v\in \mathscr{L}_1} \int_{B_r} \mathcal{G} (u,v)^2 \leq \varepsilon r^{m+2}\int_{B_1}|u|^2 \qquad \forall r\leq \rho\, 
\end{equation}
(where $\rho$ will depend also on $\vartheta$). 
Finally, when $\boldsymbol{\eta} \circ u \equiv 0$ we have the equalities
\begin{align}
&\min_{v\in \mathscr{H}_1} \int_{B_r} \mathcal{G} (u,v)^2 = \min_{v\in \mathscr{H}^0_1} \int_{B_r} \mathcal{G} (u,v)^2\label{e:average-free-1}\, ,\\
&\min_{v\in \mathscr{L}_1} \int_{B_r} \mathcal{G} (u,v)^2 = \min_{v\in \mathscr{L}^0_1} \int_{B_r} \mathcal{G} (u,v)^2\label{e:average-free-2}\, .
\end{align}
\end{theorem}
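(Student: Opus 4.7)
The plan is to prove the decay estimates \eqref{e:decay-1}--\eqref{e:decay-2} by a blow-up/contradiction argument, leveraging compactness of Dir-minimizers and the rigidity of Almgren's frequency, while the average-free identities \eqref{e:average-free-1}--\eqref{e:average-free-2} will follow from a direct algebraic computation exploiting the structure of the metric $\Gcal$.

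For \eqref{e:decay-1}, suppose the decay fails for some fixed $\eps>0$; then I may extract a sequence of Dir-minimizers $u_k\colon B_1\to\Acal_Q(\R^n)$ with $u_k(0)=Q\llbracket 0\rrbracket$, $I_{0,u_k}(0)\geq 1$, $\int_{B_1}|u_k|^2=1$, and radii $r_k\downarrow 0$ for which
\[
\min_{v\in\Hscr_1}\int_{B_{r_k}}\Gcal(u_k,v)^2 > \eps r_k^{m+2}\, .
\]
The frequency bound $I_{0,u_k}(r)\geq 1$ for $r\in (0,1)$ combined with Almgren's monotonicity yields that $r\mapsto r^{-(m+2)}\int_{B_r}|u_k|^2$ is non-decreasing, so the rescalings $\tilde u_k(x):=r_k^{-1}u_k(r_k x)$ are Dir-minimizers on $B_{1/r_k}$ with $\int_{B_1}|\tilde u_k|^2\leq 1$, $\tilde u_k(0)=Q\llbracket 0\rrbracket$, and $I_{0,\tilde u_k}(s)=I_{0,u_k}(sr_k)\geq 1$; using the 1-homogeneity of any $v\in\Hscr_1$ the failure becomes $\min_{v\in\Hscr_1}\int_{B_1}\Gcal(\tilde u_k,v)^2>\eps$. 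Set $d_k:=I_{0,u_k}(0)\geq 1$ and recall that near $0$ the map $u_k$ is close to its $d_k$-homogeneous tangent map $T_k$, so heuristically $\tilde u_k(y)\sim r_k^{d_k-1}T_k(y)$. If $\limsup_k d_k>1$ then $r_k^{d_k-1}\to 0$ along a subsequence, $\tilde u_k\to 0$ in $L^2_\loc$, and $v=Q\llbracket 0\rrbracket\in\Hscr_1$ gives the contradiction; otherwise $d_k\to 1$, and up to a subsequence either the same trivial contradiction applies or $\tilde u_k\to u_\infty\not\equiv 0$ in $L^2_\loc$ with $u_\infty$ locally Dir-minimizing. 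Passing to the limit in $I_{0,\tilde u_k}(s)=I_{0,u_k}(sr_k)$ using monotonicity of $I$ and $d_k\to 1$ yields $I_{0,u_\infty}(s)\equiv 1$, whence by the rigidity case of Almgren's monotonicity $u_\infty$ is 1-homogeneous, so $u_\infty\in\Hscr_1$, and testing with $v=u_\infty$ contradicts the strict lower bound. For \eqref{e:decay-2}, the rescaled $Q$-points $\tilde z_i^{(k)}:=z_i^{(k)}/r_k\in B_1$ inherit a uniform Gram determinant lower bound, so up to subsequence they converge to linearly independent $\tilde z_i$ spanning an $(m-2)$-dimensional subspace $V$, and $u_\infty(\tilde z_i)=Q\llbracket 0\rrbracket$ with $I_{\tilde z_i,u_\infty}(0)\geq 1$. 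Combined with the 1-homogeneity of $u_\infty$, the standard spine argument for 1-homogeneous Dir-minimizers---the set of density-$Q$ points of frequency at least $1$ forms a linear subspace along which $u_\infty$ is translation invariant---forces $u_\infty\in\Lscr_1$, and the same contradiction concludes.

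For \eqref{e:average-free-1}--\eqref{e:average-free-2}, the inequalities $\min_{\Hscr_1^0}\geq\min_{\Hscr_1}$ and $\min_{\Lscr_1^0}\geq\min_{\Lscr_1}$ are trivial by inclusion. For the reverse, given any $v\in\Hscr_1$ (resp.\ $\Lscr_1$), the average $L:=\boldsymbol{\eta}\circ v$ is harmonic (as the average of a Dir-minimizer) and 1-homogeneous, hence a linear map $\R^m\to\R^n$; when $v\in\Lscr_1$, translation invariance of $v$ along its spine $V$ forces $L|_V=0$. Thus $v':=v\ominus L$ still lies in $\Hscr_1^0$ (resp.\ $\Lscr_1^0$) by \cite{DLS_MAMS}*{Lemma 3.23}, and a pointwise sheet-by-sheet expansion using the optimal matching between the sheets of $u$ and $v$ gives
\[
\Gcal(u,v\ominus L)^2 \leq \Gcal(u,v)^2 + 2QL\cdot(\boldsymbol{\eta}\circ u-\boldsymbol{\eta}\circ v) + Q|L|^2 = \Gcal(u,v)^2 - Q|L|^2\, ,
\]
using $\boldsymbol{\eta}\circ u\equiv 0$ and $\boldsymbol{\eta}\circ v=L$. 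Integrating yields $\int_{B_r}\Gcal(u,v')^2\leq\int_{B_r}\Gcal(u,v)^2$, establishing \eqref{e:average-free-1} and \eqref{e:average-free-2}. The main technical obstacle will be the rigidity step in the blow-up: when $d_k\to 1$ and $\tilde u_k\to u_\infty\not\equiv 0$, passing $I_{0,\tilde u_k}(s)=I_{0,u_k}(sr_k)$ to the limit to conclude $I_{0,u_\infty}(s)\equiv 1$ requires careful handling of the double limit (in $k$ and in the rescaled scale $sr_k$), a delicate but standard aspect of the theory of $Q$-valued Dir-minimizers.
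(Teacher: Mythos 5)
Your overall blow-up-at-scale-$r_k$ strategy is a legitimate alternative to the paper's route (which factors through a quantitative compactness lemma, Lemma~\ref{l:compactness}, for maps whose frequency is pinched in $[1,1+\eta]$, and then splits scales at the critical radius $\varrho := \inf\{r:I_{0,u}(r)\geq 1+\eta\}$); your treatment of the average-free identities \eqref{e:average-free-1}--\eqref{e:average-free-2} is also correct and essentially equivalent to the paper's, which uses the slightly cleaner exact identity $\mathcal{G}(T,S)^2 = Q|\boldsymbol{\eta}(T)-\boldsymbol{\eta}(S)|^2 + \mathcal{G}(T',S')^2$ rather than your one-sided estimate.

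However, there is a genuine gap in the step ``passing to the limit in $I_{0,\tilde u_k}(s)=I_{0,u_k}(sr_k)$ using monotonicity of $I$ and $d_k\to 1$ yields $I_{0,u_\infty}(s)\equiv 1$.'' Monotonicity only gives $I_{0,u_k}(sr_k)\geq d_k$, a \emph{lower} bound; knowing $d_k = I_{0,u_k}(0)\to 1$ does not prevent $I_{0,u_k}(sr_k)$ from converging to some $\lambda>1$ for a fixed $s$, since the frequency can rise steeply between scale $0$ and scale $sr_k$. In that case $u_\infty$ is a nontrivial Dir-minimizer with $I_{0,u_\infty}(s)>1$ on $B_1$, hence not $1$-homogeneous, and no contradiction follows. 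What rescues the argument --- and is exactly where the paper's scale-separation logic hides --- is that $u_\infty\not\equiv 0$ \emph{forces} $I_{0,u_k}(r_k)\to 1$: if instead $I_{0,u_k}(r_k)\geq 1+\eta$ along a subsequence, the Almgren height decay \eqref{e:height-decay} (i.e.\ \cite{DLS_MAMS}*{Corollary~3.18}) gives $\int_{B_{r_k}}|u_k|^2 \leq C r_k^{m+2+2\eta}\int_{B_1}|u_k|^2$, hence $\int_{B_1}|\tilde u_k|^2\leq Cr_k^{2\eta}\to 0$, contradicting the nontriviality of $u_\infty$. Once you know $I_{0,u_k}(r_k)\to 1$, the squeeze $1\leq I_{0,u_k}(sr_k)\leq I_{0,u_k}(r_k)$ for $s\leq 1$ gives what you need. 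The same quantitative height decay is also what makes your heuristic ``$\tilde u_k\sim r_k^{d_k-1}T_k$'' rigorous in the case $\limsup_k d_k>1$. You should make this dichotomy on $I_{0,u_k}(r_k)$ explicit; after that your proof is complete, and in retrospect is simply a compactified repackaging of the paper's two-regime argument.
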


Let us first remark that the identities \eqref{e:average-free-1} and \eqref{e:average-free-2} are simple consequences of the following elementary algebraic fact. Consider any pair $T= \sum_i \llbracket T_i\rrbracket$, $S= \sum_i \llbracket S_i \rrbracket\in \mathcal{A}_Q (\mathbb R^n)$ and define
\begin{align*}
    T' &= \sum_i \llbracket T_i - \boldsymbol{\eta} (T) \rrbracket\, ,\qquad \text{and}\qquad S' = \sum_i \llbracket S_i - \boldsymbol{\eta} (S) \rrbracket\, ,
\end{align*}
then
\[
\mathcal{G} (T,S)^2 = Q |\boldsymbol{\eta} (T)-\boldsymbol{\eta} (S)|^2 + \mathcal{G} (T',S')^2\, .
\]
Given this then, for example, in \eqref{e:average-free-1}, we would have for $v\in \mathscr{H}_1$,
\[
\mathcal{G}(u,v)^2 = Q|\boldsymbol{\eta}\circ v|^2 + \mathcal{G}(u,v^\prime)^2
\]
where $v^\prime = v-\boldsymbol{\eta}\circ v$, meaning $\min_{v\in\mathscr{H}_1}\int_{B_r}\mathcal{G}(u,v)^2\geq\min_{v\in\mathscr{H}^0_1}\int_{B_r}\mathcal{G}(u,v)^2$; the other inequality is of course trivial as we are minimizing over a larger set. Thus, to prove Theorem \ref{t:decay} we just need to prove \eqref{e:decay-1} and \eqref{e:decay-2}. We will first show, using a compactness argument, that if the frequency of a Dir-minimizer is pinched between $1$ and $1+\eta$ then the function is very close to be $1$-homogeneous. We begin with the following intermediate lemma.

\begin{lemma}\label{l:compactness}
For every $Q,m,n,\bar \varepsilon >0$, there is a constant $\eta = \eta(Q,m,n,\bar{\eps})>0$ with the following property. Fix $r>0$. Then if $u:B_r\to \mathcal{A}_Q (\mathbb R^n)$ is a Dir-minimizing function such that $u (0) = Q\llbracket 0\rrbracket$, $I_{0,u} (0) \geq 1$ and $I_{0,u} (r) \leq 1+\eta$, then
\begin{equation}\label{e:compactness-estimate}
\min_{v\in \mathscr{H}_1} \int_{B_r} \mathcal{G} (u,v)^2 \leq \bar \varepsilon r \int_{\partial B_r} |u|^2\, .
\end{equation}
If additionally for some constant $\vartheta>0$ there are $m-2$ points $z_1, \ldots, z_{m-2}\in B_{r/2}$ such that $u (z_i) = Q \llbracket 0 \rrbracket$, $I_{z_i, u} (0)\geq 1$ and $\det ((z_i\cdot z_j)_{i,j}) \geq \vartheta r^{m-2}$, then, under the assumption that the parameter $\eta$ is small enough, depending also on the value of $\vartheta$, we have
\begin{equation}\label{e:compactness-estimate-2}
\min_{v\in \mathscr{L}_1} \int_{B_r} \mathcal{G} (u,v)^2 \leq \bar \varepsilon r \int_{\partial B_r} |u|^2\, .
\end{equation}
\end{lemma}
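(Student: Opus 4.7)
The strategy is a standard contradiction and compactness argument, following the philosophy of Almgren's/De~Lellis--Spadaro's frequency pinching reasoning. First I would reduce to the case $r=1$: a direct scaling $u \mapsto u(r \cdot)$ preserves Dir-minimality, the value at $0$, and the frequency, and both sides of the desired inequality scale in the same way (both as $r^m \|u\|_\infty^2$), so WLOG we may take $r=1$. Negating \eqref{e:compactness-estimate}, suppose that for some $\bar\eps>0$ there exist $\eta_k\downarrow 0$ and Dir-minimizing $u_k:B_1\to\Acal_Q(\R^n)$ with $u_k(0)=Q\llbracket 0\rrbracket$, $I_{0,u_k}(0)\geq 1$, $I_{0,u_k}(1)\leq 1+\eta_k$, yet $\min_{v\in\Hscr_1}\int_{B_1}\Gcal(u_k,v)^2 > \bar\eps\int_{\partial B_1}|u_k|^2$. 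Normalizing by $\alpha_k:=\bigl(\int_{\partial B_1}|u_k|^2\bigr)^{1/2}$ (non-zero, since $I_{0,u_k}(0)\geq 1>0$) set $\bar u_k:=u_k/\alpha_k$; then $\int_{\partial B_1}|\bar u_k|^2=1$ and $\int_{B_1}|D\bar u_k|^2=I_{0,u_k}(1)\leq 1+\eta_k$, so $\{\bar u_k\}$ is uniformly bounded in $W^{1,2}(B_1)$. Since $\Hscr_1$ is scale-invariant, the contradiction assumption becomes $\min_{v\in\Hscr_1}\int_{B_1}\Gcal(\bar u_k,v)^2 > \bar\eps$.

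Next, by the compactness theorem for multi-valued Dir-minimizers (see \cite{DLS_MAMS}*{Proposition~3.20}), up to a subsequence $\bar u_k$ converges to some $\bar u_\infty:B_1\to\Acal_Q(\R^n)$ strongly in $W^{1,2}_\loc(B_1)$ and locally uniformly, and $\bar u_\infty$ is itself Dir-minimizing. Uniform convergence gives $\bar u_\infty(0)=Q\llbracket 0\rrbracket$; strong boundary convergence (via the trace) gives $\int_{\partial B_1}|\bar u_\infty|^2=1$, so $\bar u_\infty\not\equiv 0$. The hypothesis $I_{0,u_k}(0)\geq 1$ together with monotonicity of the frequency gives $I_{0,\bar u_k}(r)\geq 1$ for all $r\in(0,1]$; strong $W^{1,2}_\loc$ convergence yields $I_{0,\bar u_k}(r)\to I_{0,\bar u_\infty}(r)$ for a.e.\ $r\in (0,1)$ (those $r$ with $\int_{\partial B_r}|\bar u_\infty|^2>0$), hence $I_{0,\bar u_\infty}(r)\geq 1$ on a dense set and, by monotonicity, $I_{0,\bar u_\infty}(0)\geq 1$. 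On the other hand $I_{0,\bar u_\infty}(1)\leq \liminf_k I_{0,\bar u_k}(1)\leq 1$, so monotonicity forces $I_{0,\bar u_\infty}\equiv 1$ on $(0,1]$. By Almgren's characterization of constant frequency, $\bar u_\infty$ is $1$-homogeneous on $B_1$ and hence extends by homogeneity to an element of $\Hscr_1$. Now strong $L^2$ convergence on $B_1$ gives $\int_{B_1}\Gcal(\bar u_k,\bar u_\infty)^2\to 0$, contradicting $\min_{v\in\Hscr_1}\int_{B_1}\Gcal(\bar u_k,v)^2 > \bar\eps$. This proves \eqref{e:compactness-estimate}.

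For \eqref{e:compactness-estimate-2} we repeat the same scheme, carrying along the extra points $z_1^k,\dots,z_{m-2}^k\in B_{1/2}$. By the determinant hypothesis, up to a subsequence $z_i^k\to z_i^\infty$ with $\det\bigl((z_i^\infty\cdot z_j^\infty)_{ij}\bigr)\geq\vartheta$, so the $z_i^\infty$ are linearly independent. Local uniform convergence gives $\bar u_\infty(z_i^\infty)=Q\llbracket 0\rrbracket$, and the same monotonicity/upper semi-continuity argument as above (now centered at $z_i^k\to z_i^\infty$) yields $I_{z_i^\infty,\bar u_\infty}(0)\geq 1$. The key step is then the ``spine'' property: since $\bar u_\infty$ is $1$-homogeneous about $0$, one has $I_{0,\bar u_\infty}(\rho)=1$ for every $\rho$, and elementary comparison at large scales (translating the center by a bounded amount is negligible as $\rho\to\infty$ when the function is exactly $1$-homogeneous) gives $\lim_{\rho\to\infty}I_{z_i^\infty,\bar u_\infty}(\rho)=1$; combined with monotonicity and $I_{z_i^\infty,\bar u_\infty}(0)\geq 1$ this forces $I_{z_i^\infty,\bar u_\infty}\equiv 1$, so $\bar u_\infty$ is $1$-homogeneous about $z_i^\infty$ as well. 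Being simultaneously $1$-homogeneous about $0$ and about $z_i^\infty$ is equivalent to translation invariance along the direction $z_i^\infty$, and the set of such directions is linear. Thus $\bar u_\infty$ is translation-invariant along an $(m-2)$-dimensional subspace, i.e.\ $\bar u_\infty\in\Lscr_1$, contradicting the negated hypothesis as before.

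The main obstacle, and the point where the dependence of $\eta$ on $\vartheta$ enters in \eqref{e:compactness-estimate-2}, is the spine argument in the last paragraph: one must carefully justify that a $1$-homogeneous Dir-minimizer with $Q$-density $Q$ and frequency $\geq 1$ at $z_0\neq 0$ is translation invariant along $z_0$, and that this structural property passes cleanly to the limit $\bar u_\infty$ despite the points $z_i^k$ being only approximately linearly independent along the sequence (this is where the quantitative determinant lower bound $\vartheta$ is used to prevent collapse). The rest of the argument is a routine contradiction/compactness scheme relying on the $W^{1,2}_\loc$ compactness and the monotonicity of Almgren's frequency.
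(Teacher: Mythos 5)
Your proposal is correct and takes essentially the same route as the paper: a rescaled contradiction sequence, $W^{1,2}$ compactness, semicontinuity and monotonicity of the frequency to pin the limit to $\mathscr H_1$ (resp. $\mathscr L_1$), contradicting the negated estimate via strong $L^2$ convergence. The only place you differ from the paper's argument is the step establishing $I_{z_i^\infty,\bar u_\infty}\equiv 1$ in the second case: the paper uses the $1$-homogeneity of $\bar u_\infty$ at the origin to note $I_{\sigma z_i,\bar u_\infty}(0)=I_{z_i,\bar u_\infty}(0)$ for all $\sigma\in(0,1]$ and then sends $\sigma\to 0$, invoking upper semicontinuity of the frequency in the center to obtain $I_{z_i,\bar u_\infty}(0)\leq 1$; you instead extend $\bar u_\infty$ homogeneously to $\R^m$, use the scaling identity $I_{z_0,\bar u_\infty}(\rho)=I_{z_0/\rho,\bar u_\infty}(1)$ to get $\lim_{\rho\to\infty}I_{z_i^\infty,\bar u_\infty}(\rho)=1$, and squeeze against the lower bound from monotonicity. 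Both derivations are valid — the paper's stays inside the unit ball, while yours quietly uses that a $1$-homogeneous Dir-minimizer on $B_1$ extends to a locally Dir-minimizing function on $\R^m$ (true, and built into the definition of $\mathscr H_1$). The passage from $I_{z_i^\infty,\bar u_\infty}\equiv 1$ to translation invariance along $z_i^\infty$ (your ``simultaneous $1$-homogeneity at two points'' formulation) is the argument the paper references in \cite{DLS_MAMS}*{Proof of Lemma 3.4}. Finally, the role of $\vartheta$ is exactly as you identify: it is a uniform nondegeneracy bound guaranteeing that the limit points $z_i^\infty$ remain linearly independent, so that the translation-invariant directions span an $(m-2)$-dimensional subspace.
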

\begin{proof}
By scaling the domain and $u$, we can assume without loss of generality that $r=1$ and $H_{u} (1) \coloneqq \int_{\partial B_1} |u|^2 =1$. We then argue by contradiction and assume the statements to be false for some fixed $\bar \varepsilon$ (and $\vartheta$) no matter how small $\eta$ is. In particular we can set $\eta = \frac{1}{k}$ and select corresponding maps $u_k$ satisfying $u_k (0) = Q \llbracket 0 \rrbracket$, $1\leq I_{0,u_k} (0) \leq I_{0,u_k} (1) \leq 1 + \frac{1}{k}$, $H_{u_k} (1) =1$ and 
\begin{equation}\label{e:compactness-contradiction}
\min_{v\in \mathscr{H}_1} \int_{B_1} \mathcal{G} (u_k,v)^2 \geq \bar\varepsilon \, .
\end{equation}
As for the second statement of the lemma, we know additionally
\begin{itemize}
    \item[$(\star)$] the existence of points $z^k_1, \ldots, z^k_{m-2}\in B_{1/2}$ such that $u_k (z^k_i) = Q \llbracket 0 \rrbracket$, $I_{z^k_i, u_k} (0)\geq 1$ and 
$\det ((z^k_i\cdot z^k_j)_{i,j}) \geq \vartheta$,
\end{itemize}
while 
\begin{equation}\label{e:compactness-contradiction-2}
\min_{v\in \mathscr{L}_1} \int_{B_1} \mathcal{G} (u_k,v)^2 \geq \bar\varepsilon \, .
\end{equation}
As $\int_{B_1} |Du_k|^2 = I_{0,u_k} (1) \cdot H_{u_k} (1) \leq 1 +\frac{1}{k}$, we can appeal to the compact embedding of the $W^{1,2} (B_1;\Acal_Q(\R^n))$ in $L^2 (B_1;\Acal_Q(\R^n))$ (cf. \cite{DLS_MAMS}*{Proposition~2.11}) to extract (in both cases) a subsequence, not relabeled, which converges strongly in $L^2$ to some $u$. In fact (up to extraction of another subsequence), we can also assume that $u_k|_{\partial B_1} \to u |_{\partial B_1}$ by the trace property of $W^{1,2}$ (cf. \cite{DLS_MAMS}*{Proposition~2.10}), while $u$ is Dir-minimizing (cf. \cite{DLS_MAMS}*{Proposition~3.20}) and $u_k \to u$ strongly in $W^{1,2}_{\text{loc}}(B_1)$. It also follows, from H\"older estimates (cf. \cite{DLS_MAMS}*{Theorem~3.9} that $u (0) = \llbracket 0 \rrbracket$. Also, from upper semi-continuity of the frequency, $I_{0,u} (0) \geq \limsup_{k\to \infty} I_{0,u_k} (0) \geq 1$, while $I_{0,u} (1) \leq \liminf_{k\to \infty} I_{0,u_k} (1) =1$. Hence, by the monotonicity of the frequency function (cf. \cite{DLS_MAMS}*{Theorem 3.15}) $I_{0,u} (r) \equiv 1$, which in turn implies that $u$ is $1$-homogeneous (cf. \cite{DLS_MAMS}*{Corollary 3.16}). In particular $u\in \mathscr{H}_1$ and so the $L^2$ convergence of $u_k$ to $u$ contradicts \eqref{e:compactness-contradiction}.

Under the additional assumption ($\star$) we can assume, up to extraction of a further subsequence, that $z^k_i \to z_i \in B_{1/2}$ and obviously $\det ((z_i\cdot z_j)_{i,j}) \rangle \geq \vartheta$. In particular the vectors $v_i$ span an $(m-2)$-dimensional subspace $V$. Again by the H\"older continuity of $u_k$ and upper semi-continuity of the frequency function we conclude $u (z_i) = Q\llbracket 0 \rrbracket$ and $I_{z_i,u} (0)\geq 1$. But because of the $1$-homogeneity of $u$ we have the same properties for $u (\sigma z_i)$ for every $\sigma\in (0,1]$, in particular, again by upper semi-continuity of the frequency, $I_{z_i,u} (0)\leq 1$. Arguing as in \cite{DLS_MAMS}*{Proof of Lemma 3.4}, it follows that $u (x+\lambda z_i) = u (x)$ for every $x$, every $i$, and every $\lambda \in \mathbb R$. In particular $u\in \mathscr{L}_1$. By the strong $L^2$-convergence of $u_k$ to $u$ this contradicts \eqref{e:compactness-contradiction-2}.
\end{proof}

\begin{proof}[Proof of Theorem \ref{t:decay}]
We fix $\varepsilon$ and $\vartheta$ as in the statement of the Theorem. Fix $\bar\eps>0$ (to be determined, possibly smaller than $\eps$), and let $\eta = \eta(Q,m,n,\bar\eps)$ be the parameter given by Lemma \ref{l:compactness} with this choice of $\bar\eps$. We then define
\[
\varrho:= \inf \{0\leq r\leq {\textstyle{\frac{1}{2}}}: I_{0,u} (r) \geq 1+\eta\}\, .
\]
In particular, we can apply Lemma \ref{l:compactness} to infer, respectively in each case (\eqref{e:decay-1} or \eqref{e:decay-2}) that
\begin{align}
&\min_{v\in \mathscr{H}_1} \int_{B_r} \mathcal{G} (u,v)^2 \leq \bar \varepsilon r \int_{\partial B_r} |u|^2\, \qquad \forall r\leq\varrho \label{e:small-radii-1}\\
&\min_{v\in \mathscr{L}_1} \int_{B_r} \mathcal{G} (u,v)^2 \leq \bar \varepsilon r \int_{\partial B_r} |u|^2\, \qquad \forall r\leq \varrho \label{e:small-radii-2}\, .
\end{align}
 We know by the decay of the $L^2$ height in terms of the frequency (cf. \cite{DLS_MAMS}*{Corollary 3.18}) that
\begin{align}
\int_{\partial B_r} |u|^2 \leq \left(\frac{r}{s}\right)^{m-1+2 I_{0,u}(r)}\int_{\partial B_s}|u|^2 \qquad \forall r\leq s\leq 1\, .\label{e:height-decay}
\end{align}
Moreover, we have
\begin{equation}\label{e:min-sphere-int}
\min_{t\in [1/2,1]} \int_{\partial B_t} |u|^2 \leq C \int_{B_1} |u|^2\, ,
\end{equation}
for some constant $C = C(Q,m,n)$.  Thus, we infer from \eqref{e:small-radii-1} (respectively \eqref{e:small-radii-2}), combined with \eqref{e:height-decay} with $s$ chosen to be $t\in [1/2,1]$ realizing the minimum and the fact that $I_{0,u}(r) \geq 1$ for all $r> 0$, that
\begin{align}
&\min_{v\in \mathscr{H}_1} \int_{B_r} \mathcal{G} (u,v)^2 \leq C \bar \varepsilon r^{m+2} \int_{B_1} |u|^2\, \qquad \forall r\leq \varrho \label{e:small-radii-3}\\
&\min_{v\in \mathscr{L}_1} \int_{B_r} \mathcal{G} (u,v)^2 \leq C \bar \varepsilon r^{m+2} \int_{B_1} |u|^2\, \qquad \forall r\leq \varrho \label{e:small-radii-4}\, .
\end{align}
This proves the result for $r\leq\varrho$; however, $\varrho$ is not a geometric constant (it depends on $u$), so we are not done. Now, by monotonicity of the frequency and the definition of $\varrho$ we know that $I_{0,u} (r) \geq 1+\eta$ for every $r> \varrho$. So, again by \eqref{e:height-decay} and \eqref{e:min-sphere-int} (choosing appropriate $t\in [1/2,1]$ again), for every $r \in (\varrho, 1]$ we have 
\[
\int_{\partial B_s} |u|^2 \leq \left(\frac{s}{r}\right)^{m+1}\int_{\partial B_r}|u|^2 \leq s^{m+1}\left(\frac{r}{t}\right)^{2\eta}\int_{\partial B_t}|u|^2 \leq C s^{m+1} r^{2\eta} \int_{B_1} |u|^2 \qquad \forall s\leq r\, .
\]
Integrating the latter over $s\in [0,r]$ we get 
\[
\int_{B_r} |u|^2 \leq C r^{m+2+2\eta} \int_{B_1} |u|^2 \qquad \forall \varrho < r \leq 1\, .
\]
Since, however, the function $v\equiv Q\llbracket 0\rrbracket$ belongs to $\mathscr{L}_1\subset \mathscr{H}_1$, we can combine the latter inequality with \eqref{e:small-radii-3} (resp. \eqref{e:small-radii-4}) to get 
\begin{equation}
\min_{v\in \mathscr{H}_1} \int_{B_r} \mathcal{G} (u,v)^2
\leq C \max \{\bar \varepsilon, r^{2\eta}\} r^{m+2} \int_{B_1} |u|^2 \qquad \forall r\leq \frac{1}{2}\, ,
\end{equation}
and respectively
\begin{equation}
\min_{v\in \mathscr{L}_1} \int_{B_r} \mathcal{G} (u,v)^2
\leq C \max \{\bar \varepsilon, r^{2\eta}\} r^{m+2} \int_{B_1} |u|^2\qquad \forall r \leq \frac{1}{2}, .
\end{equation}
We now choose first $\bar\varepsilon$ so that $C \bar \varepsilon \leq \varepsilon$, which in turn fixes the value of $\eta>0$ determined by Lemma \ref{l:compactness}. Hence we can choose $\rho = \rho(Q,m,n,\eps)>0$ such that $C \rho^{2\eta} \leq \varepsilon$. Then the desired estimates \eqref{e:decay-1} and \eqref{e:decay-2} for $r\leq\rho$ follow.
\end{proof}

The second result we will need is a suitable ``removability result'' which we stated previously in Proposition \ref{p:remove-spine}. We recall the statement for the convenience of the reader.
\begin{proposition}\label{p:remove-spine-2}
Assume $\Omega\subset \mathbb R^m$ is a Lipschitz domain, $V\subset \mathbb R^m$ is an $(m-2)$-dimensional plane, and $v\in W^{1,2}(\Omega;\Acal_Q(\R^n))$ is a map with the property that the restriction of $v$ to $\Omega_\varepsilon := \Omega\setminus B_\varepsilon (V)$ is Dir-minimizing for every $\varepsilon > 0$. Then, $v$ is Dir-minimizing in $\Omega$.
\end{proposition}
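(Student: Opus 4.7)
Fix any competitor $w\in W^{1,2}(\Omega;\Acal_Q(\R^n))$ with $w|_{\partial\Omega}=v|_{\partial\Omega}$. The goal is to show $\int_\Omega|Dv|^2\leq\int_\Omega|Dw|^2$. The overall strategy will be, for each $\eps>0$, to build a modification $\tilde w_\eps$ of $w$ that agrees with $v$ on $\partial B_\eps(V)\cap\Omega$ (and on $\partial\Omega$), so that it is a legitimate competitor for $v$ restricted to $\Omega_\eps:=\Omega\setminus\overline{B_\eps(V)}$. The Dir-minimizing assumption on every $\Omega_\eps$ will then give $\int_{\Omega_\eps}|Dv|^2\leq\int_{\Omega_\eps}|D\tilde w_\eps|^2$, and I will let $\eps\downarrow 0$. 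A standard density reduction (replacing $w$ by its Lipschitz $Q$-valued approximations, which converge in $W^{1,2}$ and whose traces are controlled) will allow me to assume $w$ is Lipschitz and bounded; this is convenient below, though not strictly necessary.

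The key technical tool is that the $(m-2)$-dimensional plane $V$ has $W^{1,2}$-capacity zero in $\R^m$: there exists a family of Lipschitz cutoffs $\chi_\eps\colon\Omega\to[0,1]$ with $\chi_\eps\equiv 0$ on $B_\eps(V)$, $\chi_\eps\equiv 1$ outside $B_{\sqrt{\eps}}(V)$, and $\int_\Omega|D\chi_\eps|^2\to 0$ as $\eps\downarrow 0$. The standard choice
\[
\chi_\eps(x)=\max\Bigl\{0,\min\Bigl\{1,\tfrac{\log(d(x,V)/\eps)}{\tfrac{1}{2}|\log\eps|}\Bigr\}\Bigr\}
\]
works because the cross-sectional computation in $V^\perp\cong\R^2$ gives $\int|D\chi_\eps|^2\lesssim 1/|\log\eps|$. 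To define $\tilde w_\eps$, I will use Almgren's bi-Lipschitz embedding $\xi\colon\Acal_Q(\R^n)\hookrightarrow\R^N$ together with a Lipschitz nearest-point retraction $\boldsymbol{\rho}\colon\R^N\to\xi(\Acal_Q(\R^n))$, setting
\[
\tilde w_\eps:=\xi^{-1}\circ\boldsymbol{\rho}\bigl(\chi_\eps\,\xi(w)+(1-\chi_\eps)\,\xi(v)\bigr).
\]
Since $\boldsymbol{\rho}$ fixes $\xi(\Acal_Q(\R^n))$ pointwise, $\tilde w_\eps=v$ on $B_\eps(V)$ and $\tilde w_\eps=w$ outside $B_{\sqrt\eps}(V)$, giving the required matching of traces.

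On $\Omega\setminus B_{\sqrt\eps}(V)$ the identity $\tilde w_\eps=w$ contributes $\int_{\Omega\setminus B_{\sqrt\eps}(V)}|Dw|^2$, which tends to $\int_\Omega|Dw|^2$ by absolute continuity of $|Dw|^2$. Likewise $\int_{B_\eps(V)}|Dv|^2\to 0$. On the transition annulus $A_\eps:=B_{\sqrt\eps}(V)\setminus B_\eps(V)$, the Lipschitz constant of $\boldsymbol{\rho}$ and the product rule yield
\[
\int_{A_\eps}|D\tilde w_\eps|^2\leq C\int_{A_\eps}\bigl(|Dv|^2+|Dw|^2\bigr)+C\int_{A_\eps}|D\chi_\eps|^2\,\mathcal{G}(v,w)^2.
\]
The first term vanishes by absolute continuity; the second is the main obstacle. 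Under the reduction that $w$ is bounded, $\mathcal{G}(v,w)$ is bounded on compact subsets of $\Omega$ (note that $v$ is Dir-minimizing, hence locally H\"older continuous, on every $\Omega_\eps$, so $v$ is locally bounded on $\Omega\setminus V$; the values of $v$ on $B_\eps(V)$ are never seen by this cross term since $\chi_\eps\equiv 0$ there). Hence $\int_{A_\eps}|D\chi_\eps|^2\mathcal{G}(v,w)^2\leq\|\mathcal{G}(v,w)\|_{L^\infty(A_\eps)}^2\int_\Omega|D\chi_\eps|^2\to 0$.

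Combining these estimates, letting $\eps\downarrow 0$ in the Dir-minimizing inequality
\[
\int_{\Omega_\eps}|Dv|^2\leq\int_{\Omega\setminus B_{\sqrt\eps}(V)}|Dw|^2+\int_{A_\eps}|D\tilde w_\eps|^2,
\]
and adding $\int_{B_\eps(V)}|Dv|^2$ to both sides, I obtain $\int_\Omega|Dv|^2\leq\int_\Omega|Dw|^2$, which is the required minimality. The hardest part is controlling the annular cross term without losing constants: the embedding-retraction construction produces a nontrivial Lipschitz constant $C$, but because the cross term is multiplied by $\int|D\chi_\eps|^2\to 0$ and the pure $|Dv|^2+|Dw|^2$ contributions over $A_\eps$ vanish separately, this factor $C$ is harmless in the limit. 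The subtle point to check carefully is the boundedness of $\mathcal{G}(v,w)$ on $A_\eps$ (uniformly in $\eps$), which relies on the local $L^\infty$ bound for $v$ on $\Omega\setminus V$ coming from its Dir-minimality on each $\Omega_\eps$; if this fails in some exceptional configuration one must instead invoke a sharper Luckhaus-type radial interpolation for $\Acal_Q$-valued maps on annular neighbourhoods of $V$, which bypasses the embedding-retraction cost.
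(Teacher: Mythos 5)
Your overall strategy — embed via $\boldsymbol{\xi}$, interpolate with a cutoff $\chi_\eps$ vanishing near $V$, retract with $\boldsymbol{\rho}$, and pass to the limit — is the same as the paper's (which uses the identity $\tilde w_\eps = \xi^{-1}\circ\boldsymbol{\rho}(\tilde v + \chi_\eps\,(\tilde w - \tilde v))$, i.e., your construction with $z := \xi(w)-\xi(v)$). The issue is the cross term $\int_{A_\eps}|D\chi_\eps|^2\,\mathcal{G}(v,w)^2$, and here there is a genuine gap.

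Reducing to $w$ bounded (Lipschitz) does not control $\mathcal{G}(v,w)$: you still need $|v|$ bounded on $A_\eps = B_{\sqrt\eps}(V)\setminus B_\eps(V)$ \emph{uniformly in} $\eps$. Local H\"older continuity of $v$ on each fixed $\Omega_{\eps'}$ gives only local boundedness on $\Omega\setminus V$; as $\eps\downarrow 0$ the annuli $A_\eps$ accumulate onto $V$, and membership in $W^{1,2}(\Omega)$ does not give an $L^\infty$ bound near a codimension-$2$ set. A bound near $V$ would essentially require knowing $v$ is Dir-minimizing \emph{across} $V$, which is precisely the conclusion you are trying to prove. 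A Sobolev/H\"older interpolation does not rescue the term either: with $v\in L^{2m/(m-2)}$ one would need $\|D\chi_\eps\|_{L^m}\to 0$, and the logarithmic cutoff gives $\int_{A_\eps}|D\chi_\eps|^m\sim\eps^{2-m}|\log\eps|^{-m}$, which blows up for $m\geq 3$. Your closing remark that ``one must instead invoke a sharper Luckhaus-type radial interpolation'' acknowledges the problem but does not resolve it.

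The fix the paper uses is lighter than a Luckhaus interpolation and slots directly into your construction: truncate the difference $z=\xi(w)-\xi(v)$ componentwise at level $M$ to obtain a bounded $z_M$ with $z_M\to z$ in $W^{1,2}$ as $M\to\infty$ (and $z_M$ still vanishes on $\partial\Omega$). Then for \emph{fixed} $M$ one has $\|z_M\,D\chi_\eps\|_{L^2}\leq C\,M\,\|D\chi_\eps\|_{L^2}\to 0$, and a diagonal choice of $M=M(\eps)$ gives $z_M(1-\rho_\eps)\to z$ in $W^{1,2}$. Inserting this into $\boldsymbol{\rho}(\tilde v + \cdot)$ yields competitors whose Dirichlet energies converge to that of $w$, with no boundedness hypothesis on $v$ at all. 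Without this truncation step your argument does not close.
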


We remark an important subtlety: the set which is removed from $\Omega$ to get to $\Omega_\varepsilon$ is {\em not} compactly contained in $\Omega$ when $m\geq 3$. 

We will in fact prove the following lemma, where we use the notion of $p$-capacity, ${\text{Cap}}^m_{p}(A)$, of a set $A\subset \R^m$. We refer the reader to e.g. \cite{EvansGariepy} for the definition and preliminaries.

\begin{lemma}\label{l:Dir_min_cap_zero}
Let $\Omega \subset \R^m$ be any bounded Lipschitz domain and $A\subset \mathbb R^m$ any set with $\textnormal{Cap}^m_2 (A) =0$. Then, for any two maps $v, u\in W^{1,2} (\Omega, \mathcal{A}_Q (\mathbb R^n))$ such that $v=u$ on $\partial \Omega$ and for any $\delta>0$, we can find a third map $w\in W^{1,2}(\Omega,\mathcal{A}_Q(\R^n))$ which coincides with $v$ on $\partial \Omega \cup (\Omega\cap U)$ for some neighborhood $U$ of $A$ and such that 
\[
\int_\Omega |Dw|^2 \leq \int_\Omega |Du|^2 + \delta\, .
\]
\end{lemma}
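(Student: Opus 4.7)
The plan is to lift the problem through Almgren's bi-Lipschitz embedding and reduce it to a classical Sobolev removability result for scalar functions. Let $\xi \colon \Acal_Q(\R^n) \to \R^N$ denote Almgren's bi-Lipschitz embedding of \cite{DLS_MAMS}*{Theorem~2.1} and $\rho \colon \R^N \to \xi(\Acal_Q(\R^n))$ its associated Lipschitz retraction. Set $\mathbf{V} := \xi\circ v$ and $\mathbf{U} := \xi \circ u$, both in $W^{1,2}(\Omega,\R^N)$; the boundary condition $v = u$ on $\partial\Omega$ translates to $\mathbf{V} - \mathbf{U} \in W^{1,2}_0(\Omega,\R^N)$.

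The central ingredient I would invoke is the classical removability theorem for scalar Sobolev functions: since $\textnormal{Cap}^m_2(A) = 0$, for any $f \in W^{1,2}_0(\Omega)$ there exists a sequence $f_k \in W^{1,2}_0(\Omega)$ vanishing on open neighborhoods of $A$ and converging strongly to $f$ in $W^{1,2}$. The standard proof is a two-step diagonal argument: one first approximates $f$ by smooth compactly supported $\tilde f_j$, each of which is bounded, and then multiplies $\tilde f_j$ by $(1-\chi)$ for a capacitary cutoff $\chi$ with $\chi \equiv 1$ on a neighborhood of $A$, $\text{supp}(\chi)$ of arbitrarily small Lebesgue measure, and $\int|D\chi|^2$ arbitrarily small; boundedness of $\tilde f_j$ makes the otherwise problematic term $\int |D\chi|^2 \tilde f_j^2$ small. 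Applying this result componentwise to $\mathbf{V} - \mathbf{U}$ produces $\Phi_k \in W^{1,2}_0(\Omega,\R^N)$ with $\Phi_k \equiv 0$ on open neighborhoods $U_k \supset A$ in $\Omega$ and $\Phi_k \to \mathbf{V} - \mathbf{U}$ strongly in $W^{1,2}$.

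I would then define $\mathbf{W}_k := \mathbf{V} - \Phi_k$ and set $w_k := \xi^{-1}\circ\rho\circ\mathbf{W}_k$. On $U_k$ one has $\mathbf{W}_k = \mathbf{V} \in \xi(\Acal_Q)$, so $\rho(\mathbf{W}_k) = \mathbf{V}$ and $w_k = v$; analogously, the trace of $\Phi_k$ vanishes, so $w_k = v = u$ on $\partial\Omega$. Thus $w_k$ coincides with $v$ on $\partial\Omega \cup (\Omega \cap U_k)$, as required. For the energy bound, strong $W^{1,2}$-convergence $\mathbf{W}_k \to \mathbf{U}$, combined with the fact that $\mathbf{U}$ takes values entirely in $\xi(\Acal_Q)$ where $\rho$ is the identity, yields strong $W^{1,2}$-convergence $\rho\circ\mathbf{W}_k \to \mathbf{U}$. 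I would verify this by passing to a subsequence along which $\mathbf{W}_k \to \mathbf{U}$ and $D\mathbf{W}_k \to D\mathbf{U}$ a.e., applying the Sobolev chain rule to obtain a.e.\ convergence of $D(\rho\circ\mathbf{W}_k)$ to $D\mathbf{U}$, and then invoking Vitali's theorem with the uniformly integrable dominant $\textnormal{Lip}(\rho)^2|D\mathbf{W}_k|^2$. Using the identification of $\int|Dw_k|^2$ with $\int|D(\xi\circ w_k)|^2 = \int|D(\rho\circ\mathbf{W}_k)|^2$ up to a fixed constant, one concludes $\int_\Omega |Dw_k|^2 \to \int_\Omega|Du|^2$, so $w := w_k$ and $U := U_k$ for $k$ sufficiently large will satisfy the claim.

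The principal obstacle is the energy-convergence step: since $\textnormal{Lip}(\rho) > 1$ in general, the crude bound $\int|Dw_k|^2 \leq \textnormal{Lip}(\rho)^2 \int|D\mathbf{W}_k|^2$ only yields $\int|Dw_k|^2 \leq \textnormal{Lip}(\rho)^2 \int|Du|^2$ in the limit, which overshoots the target bound $\int|Du|^2 + \delta$. The key observation rescuing the argument is that the limit $\mathbf{U}$ lies entirely in $\xi(\Acal_Q)$, where $\rho$ is the identity, so in the strong $W^{1,2}$-limit no inflation occurs --- this must be justified carefully via a.e.\ convergence of the chain-rule derivative combined with Vitali's theorem. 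The removability step itself is also delicate, since the naive bound $\int|D\chi|^2 f^2 \leq \|f\|_\infty^2 \int|D\chi|^2$ fails for unbounded $f \in W^{1,2}_0$ and a Hölder--Sobolev approach degrades in high dimension, requiring the classical smoothing-then-cutoff reduction to the bounded case.
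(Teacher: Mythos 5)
Your argument follows essentially the same route as the paper: lift through a bi-Lipschitz embedding of $\Acal_Q(\R^n)$ into $\R^N$, approximate the lifted "difference" between $\tilde u$ and $\tilde v$ by $W^{1,2}_0$ functions vanishing on a neighborhood of $A$ via a capacitary cutoff, retract back with the Lipschitz retraction $\rho$, and use that $\rho$ restricts to the identity on the image of the embedding. Your implementation of the removability step (smooth approximation first, then cutoff) differs cosmetically from the paper's (truncation first, then cutoff); both correctly isolate the problematic $\int |D\chi|^2 f^2$ term and kill it via boundedness. Your Vitali-theorem sketch of the final strong $W^{1,2}$ convergence is more explicit than the paper's one-line assertion, although both rely on the same delicate point: since $\boldsymbol{\xi}(\Acal_Q)$ is Lebesgue-null in $\R^N$, Rademacher gives no information about differentiability of $\rho$ there, so the a.e.\ chain-rule convergence you invoke is not automatic and requires additional structure of $\rho$ (or a more careful argument); you are on the same footing as the paper in leaving this implicit.

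There is, however, one genuine gap. You work with ``Almgren's bi-Lipschitz embedding of \cite{DLS_MAMS}*{Theorem~2.1}'' and then pass from $\int |Dw_k|^2$ to $\int |D(\rho\circ\mathbf{W}_k)|^2$ ``up to a fixed constant.'' That constant cannot be afforded here: a merely bi-Lipschitz $\xi$ gives only $c_1 \int |Dg|^2 \le \int |D(\xi\circ g)|^2 \le c_2 \int |Dg|^2$ with $c_1 < c_2$, so the chain of estimates
\[
\int |Dw_k|^2 \le c_1^{-1} \int |D(\rho\circ\mathbf{W}_k)|^2 \longrightarrow c_1^{-1}\int|D\mathbf{U}|^2 \le c_1^{-1}c_2 \int |Du|^2
\]
only yields a bound with a multiplicative constant $c_1^{-1}c_2 > 1$, which overshoots $\int|Du|^2 + \delta$ whenever $\int|Du|^2$ is not small. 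The conclusion $\int|Dw_k|^2 \to \int|Du|^2$ does \emph{not} follow from an identification ``up to a fixed constant.'' The paper avoids this by using White's modification $\boldsymbol{\xi}_{BW}$ of the embedding (\cite{DLS_MAMS}*{Corollary~2.2}), which is an infinitesimal isometry in the sense that $\lim_{S\to T}|\boldsymbol{\xi}_{BW}(S)-\boldsymbol{\xi}_{BW}(T)|/\mathcal{G}(S,T)=1$ for every $T$; this guarantees the exact identity $\int|Dg|^2 = \int|D(\boldsymbol{\xi}_{BW}\circ g)|^2$ for $W^{1,2}$ multi-valued maps $g$, so both ``$?$''-steps in the chain become equalities. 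Replacing $\xi$ by $\boldsymbol{\xi}_{BW}$ repairs your proof; as written, the energy bound does not close.
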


Let us first prove Proposition \ref{p:remove-spine-2} from Lemma \ref{l:Dir_min_cap_zero}.

\begin{proof}[Proof of Proposition \ref{p:remove-spine-2}]
    First note that since $V$ is an $(m-2)$-dimensional plane, it obeys $\text{Cap}^m_2(V) = 0$ (note here $m>2$, and so this follows because sets with finite $\mathcal{H}^{m-2}$-measure have vanishing 2-capacity, see \cite{EvansGariepy}; the result still holds for $m=2$ by using the classical logarithmic cut-off trick).
    
    Suppose for contradiction that $v$ is not Dir-minimizing in $\Omega$. Then we can find a competitor $u\in W^{1,2}(\Omega;\Acal_Q(\R^n))$ with $v=u$ on $\partial \Omega$ and
    $$\int_{\Omega}|Du|^2 < \int_{\Omega}|Dv|^2 - \delta$$
    for some $\delta>0$. Now apply Lemma \ref{l:Dir_min_cap_zero} with this choice of $v,u$ and $\delta$ to find a map $w\in W^{1,2}(\Omega;\Acal_Q(\R^n))$ which coincides with $v$ on $\partial\Omega\cup (\Omega\cap B_{\eps_*}(V))$ for some $\eps_*>0$. In particular, we have
    $$\int_{\Omega}|Dw|^2 \leq \int_{\Omega}|Du|^2 + \delta <\int_{\Omega}|Dv|^2.$$
    However, this contradicts $v$ being Dir-minimizing on $\Omega_{\eps_*}$ (since $v=w$ on $B_{\eps_*}(V)$). 
\end{proof}

All that remains is to prove Lemma \ref{l:Dir_min_cap_zero}.

\begin{proof}[Proof of Lemma \ref{l:Dir_min_cap_zero}]
We fix $\Omega$ a Lipschitz domain and we let $v, u:\Omega \to \Acal_Q(\R^n)$ be any pair of $W^{1,2}(\Omega;\Acal_Q(\R^n))$ functions with $v|_{\partial\Omega}= u|_{\partial\Omega}$, where the boundary data is defined in the Sobolev trace sense (see \cite{DLS_MAMS}*{Section 2.2.2}). Consider now $\tilde{u} \coloneqq \boldsymbol{\xi}_{BW}\circ u$ and $\tilde{v} \coloneqq \boldsymbol{\xi}_{BW}\circ v$, where $\boldsymbol{\xi}_{BW}: \Acal_Q(\R^n)\to \R^N$ is the special bi-Lipschitz embedding of White, which is constructed via a modification of the original embedding of Almgren (see \cite{SXChang} or \cite{DLS_MAMS}*{Corollary 2.2}). 

Let $z:= \tilde{u}-\tilde{v}$ and extend it to be identically equal to $0$ on $\mathbb R^m\setminus \Omega$. We claim that there is a sequence $\{\zeta_k\} \subset W^{1,2} (\mathbb R^m, \R^N)$ with the following properties:
\begin{itemize}
\item $\zeta_k$ vanishes identically on $(\mathbb R^m \setminus \Omega) \cup U_k (A)$, where $U_k (A)$ is some open neighborhood of $A$ (which depends on $k$);
\item $\|\zeta_k-z\|_{W^{1,2}}\to 0$ as $k\uparrow \infty$.
\end{itemize}
Assuming the existence of such a sequence $\{\zeta_k\}$ for now, we then let $w_k := \boldsymbol{\xi}_{BW}^{-1} \circ (\boldsymbol{\rho} \circ (\tilde{v} + \zeta_k))$, where $\boldsymbol{\rho}$ is Almgren's Lipschitz retraction of $\R^N$ onto $\boldsymbol{\xi}_{BW} (\mathcal{A}_Q (\R^N))$ (cf. \cite{DLS_MAMS}*{Chapter~2}). Observe that $\tilde{v}+\zeta_k =\tilde{v}$ on $\partial \Omega \cup (\Omega\cap U_k (A))$ and so $w_k = v$ on $\partial \Omega \cup (\Omega\cap U_k (A))$. Moreover 
\[
\int_\Omega |Dw_k|^2 = \int_\Omega |D (\boldsymbol{\rho} \circ (\tilde{v}+\zeta_k))|^2
\]
and 
\[
\int_\Omega |Du|^2 = \int_\Omega |D \tilde{u}|^2\, .
\]
On the other hand $\tilde{v}+\zeta_k$ converges to $\tilde{v} + z = \tilde{v}+(\tilde{u}-\tilde{v}) = \tilde{u}$ strongly in $W^{1,2} (\R^m;\R^N)$ and, 
because $\boldsymbol{\rho}$ is Lipschitz and equal to the identity on 
$\boldsymbol{\xi}_{BW} (\mathcal{A}_Q (\R^n))$, $\boldsymbol{\rho} \circ (\tilde{v} + \zeta_k)$ converges strongly in $W^{1,2}$ to $\boldsymbol{\rho} \circ \tilde{u} = \tilde{u}$. In particular we conclude from this and the above two identities that the Dirichlet energy of $w_k$ converges to the Dirichlet energy of $u$, and thus to conclude the result of the lemma we just need to take $w=w_k$ for $k$ sufficiently large (depending on $\delta$).

So now we just need to show the existence of $\zeta_k$ satisfying the desired properties. 
By the definition of ${\textrm{Cap}}^m_2$, given $\eps> 0$, we may choose $\rho_\eps \in C_c^\infty(B_\eps(A))$ with $\rho_\eps \geq \mathbf{1}_{B_{\eps/2}(A)}$ and such that 
\[
\int |D\rho_\eps|^2 \leq {\textrm{Cap}}^m_2(A) + \eps = \eps.
\]
In particular we can assume, by truncating $\rho_\eps$, that it takes values between $0$ and $1$; clearly it also follows that $\rho_\eps \to 0$ pointwise almost everywhere as $\eps \downarrow 0$. 

Next we choose $M$ sufficiently large and consider the truncation function $z_M = (z_M^1,\dots,z_M^N)$ given by
\[
    z_M^i\coloneqq \min\{z^i,M\}\mathbf{1}_{\{z^i > 0\}} + \max\{z^i,-M\}\mathbf{1}_{\{z^i \leq 0\}}\, ,
\]
where we have written $z = (z^1,\dotsc,z^N)$. It is simple to see that $z_M \to z$ in $W^{1,2}$ as $M\to\infty$, and so for each $k\in \{1,2,\dotsc\}$ we can select $M=M(k)$ so that $\|z_M-z\|_{W^{1,2}} \leq \frac{1}{2k}$. Observe also that $z_M$ always vanishes outside of $\Omega$. Next we take $\zeta_k := z_M (1-\rho_{\eps_k})$ for a sufficiently small $\eps_k$ to be chosen. Note indeed that for fixed $M$, $\|z_M\rho_\varepsilon\|_{L^2} \to 0$ as $\varepsilon\downarrow 0$ by dominated convergence. On the other hand
\[
D (z_M (1-\rho_\varepsilon)) -Dz_M
= -\rho_\eps Dz_M - z_M D\rho_\varepsilon
\]
and $\|Dz_M\rho_\varepsilon\|_{L^2} \to 0$ as $\eps\downarrow 0$, again by dominated convergence, while we have $\|z_M D \rho_\varepsilon\|_{L^2}\leq \sqrt{N} M \|D\rho_\varepsilon\|_{L^2}$ converges to $0$ as well as long as we keep $M$ fixed while $\varepsilon \downarrow 0$. Thus, $z_M(1-\rho_\eps)\to z_M$ in $W^{1,2}$, and so we can choose $\eps = \eps_k$ sufficiently small so that $\|z_M(1-\rho_{\eps_k})-z_M\|_{W^{1,2}}\leq \frac{1}{2k}$. This is the choice of $\eps_k$ used to define $\zeta_k$. Note then that $\|\zeta_k-z\|_{W^{1,2}} \leq 1/k\to 0$, and also that $\zeta_k$ vanishes identically on $(\R^m\setminus\Omega)\cup B_{\eps_k/2}(A)$; this completes the proof.
\end{proof}

\section{Final blow-up argument}\label{p:blowup}

In this part we complete the proof of Theorem \ref{t:weak-decay}, which we recall in turn implies the validity of Theorem \ref{c:decay}, as demonstrated in Section \ref{s:reduction}.

\subsection{Two regimes}

The proof of Theorem \ref{t:weak-decay} will be split into two cases, which will both be proved via a blow-up argument. We start by giving the detailed statements.

\begin{proposition}[Collapsed decay]\label{p:decay-collapsed}
For every $Q, m, n, \bar n$, and $\varsigma_1>0$ there are positive constants $\varepsilon_c = \eps_c(Q,m,n,\bar{n},\varsigma_1)\leq 1/2$ and $r_c = r_c(Q,m,n,\bar{n},\varsigma_1) \leq 1/2$ with the following property. Assume that 
\begin{itemize}
\item[(i)] $T$ and $\Sigma$ are as in Assumption \ref{a:main}, and $\|T\| (\Bbf_1) \leq \omega_m (Q+\frac{1}{2})$;
\item[(ii)] There is a cone $\mathbf{S}\in \mathscr{C} (Q, 0)$ which is $M$-balanced (with $M$ as in Assumption \ref{a:choose-M}), such that \eqref{e:smallness-weak} and \eqref{e:no-gaps-weak} hold with $\varepsilon_c$ in place of $\varepsilon_1$, and in addition
$\boldsymbol{\mu} (\mathbf{S}) \leq \varepsilon_c$;
\item [(iii)] $\Abf^2\leq \eps_c^2\mathbb{E}(T,\tilde{\Sbf},\Bbf_1)$ for every $\tilde{\Sbf}\in \mathscr{C}(Q,0)$.
\end{itemize}
Then, there is another cone $\mathbf{S}'\in \mathscr{C} (Q,0) \setminus \mathscr{P} (0)$ such that 
\begin{equation}\label{e:decay-collapsed}
\mathbb{E} (T, \mathbf{S}', \Bbf_{r_c})
\leq \varsigma_1 \mathbb{E} (T, \mathbf{S}, \Bbf_1)\, .
\end{equation}
\end{proposition}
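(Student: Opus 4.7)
The plan is to argue by contradiction via a blow-up at the natural ``error'' scale $H_k:=\mathbb{E}(T_k,\mathbf{S}_k,\mathbf{B}_1)^{1/2}$, reducing the decay to the linearized statement Theorem \ref{t:decay} for $Q$-valued Dir-minimizers. I assume the conclusion fails for some $\varsigma_1>0$ and extract sequences $T_k,\Sigma_k,\mathbf{S}_k$ obeying (i)--(iii) with $\varepsilon_c=\varepsilon_k\to 0$ but no $\mathbf{S}'\in\mathscr{C}(Q,0)\setminus\mathscr{P}(0)$ achieving $\mathbb{E}(T_k,\mathbf{S}',\mathbf{B}_{r_c})\leq\varsigma_1\mathbb{E}(T_k,\mathbf{S}_k,\mathbf{B}_1)$, for a radius $r_c\leq 1/2$ to be chosen. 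Since $\boldsymbol{\mu}(\mathbf{S}_k)\leq\varepsilon_k\to 0$ and $\mathbf{S}_k$ is $M$-balanced, up to a rotation the planes $\alpha_i^k\subset\mathbf{S}_k$ converge to a common $m$-plane $\pi_0$ and $V(\mathbf{S}_k)\to V\subset\pi_0$, an $(m-2)$-plane through $0$. Hypotheses (ii) and (iii) yield $H_k/\boldsymbol{\sigma}(\mathbf{S}_k)\to 0$ and $\mathbf{A}_k/H_k\to 0$.

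For every fixed $\sigma>0$, the smallness $H_k^2\ll\boldsymbol{\sigma}(\mathbf{S}_k)^2$ allows Corollary \ref{c:splitting-0} to split $T_k$ on $\mathbf{B}_{1/2}\setminus B_\sigma(V)$ into pieces $T_k^{(i)}$ near each $\alpha_i^k$, each approximated via Proposition \ref{p:Lipschitz-1} by a $Q_i$-valued Lipschitz map $u_i^k$ over $\alpha_i^k$ with $\|u_i^k\|_{W^{1,2}}\leq CH_k$ and $\sum_i Q_i=Q$. Normalizing $\tilde u_i^k:=H_k^{-1} u_i^k$ and (after a canonical rotation bringing $\alpha_i^k$ to a fixed $\alpha_i^\infty$) passing to a subsequential limit, one obtains $u_i^\infty$, Dir-minimizing on $\alpha_i^\infty\cap\mathbf{B}_{1/2}\setminus B_\sigma(V)$ for every $\sigma>0$ (cf. Proposition \ref{p:first-blow-up}). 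To extend $u_i^\infty$ across $V$, I would invoke Simon's non-concentration estimate (Corollary \ref{c:HS-patch}) with some $\kappa\in(0,1)$: after dividing by $H_k^2$ it yields the weighted bound $\int|u_i^\infty|^2/|x|^{m+2-\kappa}\,dx<\infty$ on $\alpha_i^\infty\cap\mathbf{B}_{\rho_0}$ for a dimensional $\rho_0>0$, which combined with Proposition \ref{p:remove-spine-2} (since $V$ has vanishing $2$-capacity) extends $u_i^\infty$ to a Dir-minimizer on $\alpha_i^\infty\cap\mathbf{B}_{\rho_0}$.

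Next, hypothesis \eqref{e:no-gaps-weak} produces, for each $\xi\in V\cap\mathbf{B}_{1/2}$, a sequence $q_0^k\to\xi$ with $\Theta(T_k,q_0^k)\geq Q$. Since $\Theta(T_k^{(i)},q_0^k)\leq Q_i$ and $\sum_iQ_i=Q$, each piece has density exactly $Q_i$ at $q_0^k$; in the blow-up this translates to $u_i^\infty(\xi)=Q_i\llbracket\boldsymbol{\eta}\circ u_i^\infty(\xi)\rrbracket$, and by H\"older continuity and density of such $\xi$ the identity holds on all of $V\cap\mathbf{B}_{1/2}$. Setting $w_i^\infty:=u_i^\infty\ominus\boldsymbol{\eta}\circ u_i^\infty$ (still Dir-minimizing by \cite{DLS_MAMS}*{Lemma 3.23}), this map vanishes as $Q_i\llbracket 0\rrbracket$ on $V\cap\mathbf{B}_{1/2}$, with $I_{\xi,w_i^\infty}(0)\geq 1$ at every such $\xi$. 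Choosing $(m-2)$ linearly independent $z_1,\ldots,z_{m-2}\in V\cap\mathbf{B}_{1/4}$ with $\det(z_i\cdot z_j)\geq\vartheta$ and applying Theorem \ref{t:decay}\eqref{e:decay-2} together with \eqref{e:average-free-2}, for any $\varepsilon_*>0$ I obtain $\rho_*=\rho_*(\varepsilon_*)$ and $v_i=\sum_j\llbracket B_i^j\rrbracket\in\mathscr{L}_1^0$ (with $B_i^j$ linear, $B_i^j|_V=0$, $\sum_j B_i^j=0$) satisfying $\int_{B_r}\mathcal{G}(w_i^\infty,v_i)^2\leq\varepsilon_*r^{m+2}\int_{B_1}|w_i^\infty|^2$ for all $r\leq\rho_*$. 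I then construct $\mathbf{S}'_k\in\mathscr{C}(Q,0)\setminus\mathscr{P}(0)$ whose planes are obtained from the $\alpha_i^k$ by adding the perturbations $H_k B_i^j$, after a translation of order $H_k$---controlled via Simon's shift inequality \eqref{e:HS-4} at $Q$-density points accumulating near $0$---so that the resulting spine passes through $0$. A piece-by-piece excess computation with $r_c:=\rho_*$ then gives $\mathbb{E}(T_k,\mathbf{S}'_k,\mathbf{B}_{r_c})\leq(C\varepsilon_*+o(1))\mathbb{E}(T_k,\mathbf{S}_k,\mathbf{B}_1)$; choosing $\varepsilon_*<\varsigma_1/(2C)$ and $k$ large contradicts the non-decay assumption.

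The crucial technical step will be the Dir-minimizing extension of $u_i^\infty$ across the spine $V$. Corollary \ref{c:HS-patch} supplies exactly the weighted $L^2$ control needed, but one must carefully reconcile the $H_k$-normalization (which captures deviations from $\mathbf{S}_k$, not from any single plane $\pi_0$) with the piecewise splitting from Corollary \ref{c:splitting-0} (whose validity at scales bounded away from $V$ is guaranteed precisely by $H_k\ll\boldsymbol{\sigma}(\mathbf{S}_k)$). A second subtlety is ensuring that $\mathbf{S}'_k$ truly lies in $\mathscr{C}(Q,0)$: the shifts $\boldsymbol{\eta}\circ u_i^\infty$ need not vanish on $V$ a priori, so \eqref{e:HS-4} must be invoked to show that they are consistent across the pieces (since $Q$-density points of $T_k$ are common to all of them) and can be absorbed into an $O(H_k)$ translation sending the spine through the origin.
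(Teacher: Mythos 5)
Your overall architecture (contradiction sequence, graphical splitting, Dir-minimality of the blow-up via Corollary \ref{c:HS-patch} and Proposition \ref{p:remove-spine-2}, invocation of Theorem \ref{t:decay}, re-assembly of a new cone) follows the right blueprint, and you correctly flag the reassembly as the danger point. But your handling of that step contains a genuine error: the claim that the traces $\boldsymbol{\eta}\circ u_i^\infty$ on $V$ ``can be absorbed into an $O(H_k)$ translation sending the spine through the origin'' is wrong in the collapsed regime, and fixing it requires structure that your blow-up does not produce.

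The trace of the normalized $i$-th sheet at a $Q$-density point $q_0^k$ is $H_k^{-1}\mathbf{p}_{\alpha_i^k}^\perp(q_0^k)$. Decomposing $\mathbf{p}_{\alpha_i^k}^\perp = \mathbf{p}_{\alpha_1}^\perp + (\mathbf{p}_{\alpha_i^k}^\perp - \mathbf{p}_{\alpha_1}^\perp)$ and using \eqref{e:HS-4}, the first piece converges to a common $\beta^\perp(\xi)$; but the second, since $\mathbf{p}_{\alpha_i^k}^\perp - \mathbf{p}_{\alpha_1}^\perp$ is comparable to the linear map $A_i^k$ of size $\boldsymbol{\mu}(\mathbf{S}_k)$ and $|\mathbf{p}_{V^\perp\cap\alpha_1}(q_0^k)|$ is of size $H_k/\boldsymbol{\mu}(\mathbf{S}_k)$ (not $H_k$), survives the normalization and converges to an $i$-dependent limit $\bar A_i(\beta_\parallel(\xi))$, where $\bar A_i := A_i^k/\boldsymbol{\mu}(\mathbf{S}_k) \to \bar A_i$. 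Your approach never extracts the normalized slopes $\bar A_i$ nor the tangential displacement $\beta_\parallel$, and so cannot see that the traces of the different sheets on $V$ differ by amounts of size $O(1)$ after normalization and are not equal. A translation of $\mathbf{S}_k'$ — in any direction, of any size — cannot remove an $i$-dependent, $\xi$-dependent discrepancy. What is actually needed is a \emph{rotation} of the ambient coordinates (Simon's cone-tilt, formalized here via the skew-symmetric map built from the linearization $\gamma_\parallel$ of $\beta_\parallel$ and the rotation $R_k = R[\mathbb{E}(T_k,\mathbf{S}_k,\mathbf{B}_1)^{1/2}/\boldsymbol{\mu}(\mathbf{S}_k)]$), applied at the scale $H_k/\boldsymbol{\mu}(\mathbf{S}_k)$, together with a linear correction $\xi_i$ to each plane. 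This is precisely why the paper switches in the collapsed case to the \emph{transversal} coherent approximation $w_i^k = v_i^k \ominus A_i^k$ over the single plane $\alpha_1$: the subtraction of $A_i^k$ exposes the pair $(\bar A_i, \beta_\parallel)$ cleanly (Proposition \ref{p:blowup2}(h)), and without it the re-assembled cone has excess of the same order $H_k^2$ you started with, defeating the decay.

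A secondary, related concern: by passing each $u_i^k$ through a canonical rotation $O_{k,i}$ taking $\alpha_i^k$ to $\alpha_i^\infty = \pi_0$ before taking limits, you transport the \emph{current}, not merely the parametrization, and the $N$ limit maps $u_1^\infty,\dots,u_N^\infty$ lose the mutual compatibility they need for the re-assembled cone $\mathbf{S}_k'$ to actually lie in $\mathscr{C}(Q,0)$, i.e.\ to have all its planes meeting in a single $(m-2)$-plane. The transversal construction bypasses this by keeping a single fixed domain $\alpha_1$ throughout, so the compatibility (shared spine, shared $\beta^\perp,\beta_\parallel$) is manifest rather than something to be recovered after the fact.
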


\begin{proposition}[Non-collapsed decay]\label{p:decay-noncollapsed}
For every $Q, m, n, \bar n$, $\varepsilon^\star_c >0$, and $\varsigma_1>0$, there are positive constants $\varepsilon_{nc} = \eps_{nc}(Q,m,n,\bar{n},\varepsilon^\star_c,\varsigma_1)\leq 1/2$ and $r_{nc} = r_{nc}(Q,m,n,\bar{n},\varepsilon^\star_c,\varsigma_1) \leq \frac{1}{2}$ with the following property. Assume that 
\begin{itemize}
\item[(i)] $T$ and $\Sigma$ are as in Assumption \ref{a:main} and $\|T\| (\Bbf_1) \leq \omega_m (Q+\frac{1}{2})$;
\item[(ii)] There is $\mathbf{S}\in \mathscr{C} (Q, 0)$ which is $M$-balanced (with $M$ as in Assumption \ref{a:choose-M}), such that 
\eqref{e:smallness-weak} and \eqref{e:no-gaps-weak} hold with $\varepsilon_{nc}$ in place of $\varepsilon_1$ and in addition
$\boldsymbol{\mu} (\mathbf{S}) \geq \varepsilon^\star_c$;
\item [(iii)] $\Abf^2\leq \eps^2_{nc}\mathbb{E}(T,\tilde{\Sbf},\Bbf_1)$ for every $\tilde{\Sbf}\in \mathscr{C}(Q,0)$.
\end{itemize}
Then, there is another cone $\mathbf{S}'\in \mathscr{C} (Q,0) \setminus \mathscr{P} (0)$ such that 
\begin{equation}\label{e:decay-noncollapsed}
\mathbb{E} (T, \mathbf{S}', \Bbf_{r_{nc}})
\leq \varsigma_1 \mathbb{E} (T, \mathbf{S}, \Bbf_1)\, .
\end{equation}
\end{proposition}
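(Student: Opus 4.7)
\textbf{Proof proposal for Proposition \ref{p:decay-noncollapsed}.} The plan is a compactness and blow-up argument of the kind used already in the proof of Proposition \ref{p:balancing-2}, but now exploiting in a decisive way Simon's estimates from Section \ref{p:estimates} and the linear decay theorem (Theorem \ref{t:decay}). Assume by contradiction that the conclusion fails, i.e. that there exist $\varsigma_1>0$ and $\eps_c^\star>0$ for which no pair $(\eps_{nc},r_{nc})$ works. Then one produces sequences $T_k,\Sigma_k$ satisfying Assumption \ref{a:main}, parameters $\eps_k\downarrow 0$, and cones $\mathbf{S}_k\in \mathscr{C}(Q,0)$ that are $M$-balanced, with $\boldsymbol{\mu}(\mathbf{S}_k)\geq \eps_c^\star$, such that \eqref{e:smallness-weak}--\eqref{e:no-gaps-weak} hold with $\eps_k$ in place of $\eps_1$, $\mathbf{A}_k^2\leq \eps_k^2\,\mathbb{E}(T_k,\tilde{\Sbf},\Bbf_1)$ for every $\tilde{\mathbf{S}}\in\mathscr{C}(Q,0)$, and yet for no $\mathbf{S}'$ and no $r\in(0,1/2]$ (in particular for some sequence $r_k\downarrow 0$ we wish to fix later) does the decay \eqref{e:decay-noncollapsed} hold. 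Because the $\mathbf{S}_k$ are $M$-balanced and $\boldsymbol{\mu}(\mathbf{S}_k)$ is bounded below, $\boldsymbol{\sigma}(\mathbf{S}_k)$ is comparably bounded below. Passing to a subsequence one can then assume $\mathbf{S}_k\to\mathbf{S}_\infty\in\mathscr{C}(Q,0)\setminus\mathscr{P}(0)$ locally in Hausdorff distance, and that the number of planes and the assignment of indices is stable in the limit.

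Write $E_k:=\mathbb{E}(T_k,\mathbf{S}_k,\Bbf_1)$, which tends to $0$ by the smallness hypothesis, and note that the assumptions of Proposition \ref{p:refined} hold for $T_k$ and $\mathbf{S}_k$, with the additional bonus that the spine is quantitatively non-degenerate. I would apply Proposition \ref{p:coherent} to produce the coherent outer approximations $u_i^k$ on $R_i^k$ and normalize $v_i^k:=E_k^{-1/2}u_i^k$; by Proposition \ref{p:first-blow-up} and the argument sketched in Remark \ref{r:blow-up}, up to extracting a subsequence and applying canonical rotations, each $v_i^k$ converges locally strongly in $W^{1,2}$ and locally uniformly on $\alpha_i^\infty\cap\Bbf_1\setminus V(\mathbf{S}_\infty)$ to a map $v_i^\infty$ which is Dir-minimizing on that punctured domain. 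The key improvement over Proposition \ref{p:first-blow-up}, needed to extend $v_i^\infty$ past $V(\mathbf{S}_\infty)$, comes from Simon's non-concentration estimate (Corollary \ref{c:HS-patch}) applied to each $T_k$: combined with the graphical estimates of Proposition \ref{p:refined}, this provides a uniform bound
\[
\int_{\alpha_i^\infty\cap\Bbf_{1/2}}\frac{|v_i^\infty|^2}{|\mathbf{p}^\perp_{V(\mathbf{S}_\infty)}|^{m+2-\kappa}}\leq C_\kappa,
\]
so that $v_i^\infty\in W^{1,2}$ in a full neighbourhood of the spine. Then Proposition \ref{p:remove-spine} upgrades each $v_i^\infty$ to a genuine Dir-minimizer on $\alpha_i^\infty\cap\Bbf_{1/2}$.

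Now one harvests the $Q$-points. Hypothesis \eqref{e:no-gaps-weak} and upper semicontinuity of density provide, for each $\xi\in V(\mathbf{S}_\infty)\cap\Bbf_{1/2}$ and each $k$, a point $q_k(\xi)\in\spt(T_k)$ with $\Theta(T_k,q_k(\xi))\geq Q$ and $q_k(\xi)\to\xi$. Applying Simon's shift inequality (Proposition \ref{p:HS-3}) at each such $q_k(\xi)$ and taking $k\to\infty$ after normalization by $E_k^{1/2}$, one deduces that $v_i^\infty(\xi)=Q\llbracket 0\rrbracket$ for every $\xi\in V(\mathbf{S}_\infty)\cap\Bbf_{1/2}$, and moreover that $I_{\xi,v_i^\infty}(0)\geq 1$ (the latter by the standard computation that the frequency of a blow-up cannot drop). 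In particular each $v_i^\infty$ has $Q$-points along an entire $(m-2)$-dimensional affine subspace, which provides the $m-2$ points required in the hypothesis of the second part of Theorem \ref{t:decay} (with a uniform non-degeneracy constant $\vartheta$ because $V(\mathbf{S}_\infty)$ is a linear subspace of dimension $m-2$).

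Finally I would apply \eqref{e:decay-2} of Theorem \ref{t:decay} with $\eps$ chosen much smaller than $\varsigma_1$: there exists $\rho_0$ and, for each $i$, a function $w_i^\infty\in\mathscr{L}_1$ with
\[
\int_{B_r(0,\alpha_i^\infty)}\mathcal{G}(v_i^\infty,w_i^\infty)^2 \leq \eps\, r^{m+2}\int_{B_1}|v_i^\infty|^2 \qquad \forall r\leq \rho_0.
\]
Since each $w_i^\infty$ is $1$-homogeneous and invariant under $m-2$ independent translations, its graph is (after the appropriate identification across indices $i$) a union of $m$-planes forming a cone $\mathbf{S}''\in\mathscr{C}(Q,0)\setminus\mathscr{P}(0)$ with spine containing $V(\mathbf{S}_\infty)$; rescaling back by $E_k^{1/2}$ produces, for each large $k$, a cone $\mathbf{S}_k'\in\mathscr{C}(Q,0)\setminus\mathscr{P}(0)$ close to $\mathbf{S}_k$, for which the strong $W^{1,2}\cap L^2$ convergence of the $v_i^k$ and the graphical estimates of Proposition \ref{p:coherent} translate the above inequality into
\[
\mathbb{E}(T_k,\mathbf{S}_k',\Bbf_{r}) \leq 2\eps\, \mathbb{E}(T_k,\mathbf{S}_k,\Bbf_1) \qquad \forall r\leq \rho_0,
\]
once $k$ is large. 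Choosing $\eps<\varsigma_1/2$ and $r_{nc}=\rho_0$ contradicts the failure of the decay, completing the argument. The main obstacle in executing this plan is the passage from the smooth graphical approximation on the outer region to a Dir-minimizing limit across the spine: both the $W^{1,2}$ extension (which requires the non-concentration estimate \eqref{e:HS-patch} with exponent $\kappa>0$) and the detection of $Q$-points on $V(\mathbf{S}_\infty)$ (which relies on the shift inequality \eqref{e:HS-4} and an interplay with the non-degeneracy $\boldsymbol{\mu}(\mathbf{S}_k)\geq\eps_c^\star$) hinge on Simon's estimates in an essential way, and must be combined with a careful bookkeeping of the error terms arising from the non-graphical parts of $T_k$ and from the curvature $\mathbf{A}_k$, itself controlled by assumption (iii).
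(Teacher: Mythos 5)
Your overall blueprint is sound — a contradiction sequence, passage to normalized coherent outer approximations, use of Simon's estimates to push the blow-up limit across the spine, detection of $Q$-points along $V$, and finally an appeal to Theorem \ref{t:decay} to manufacture a better cone — and this is indeed the architecture of the paper's proof. However there is a genuine gap at the step where you ``harvest the $Q$-points.'' You claim that, by applying the shift inequality \eqref{e:HS-4} at the nails $q_k(\xi)$ and normalizing by $E_k^{1/2}$, one deduces $v_i^\infty(\xi) = Q\llbracket 0\rrbracket$ for every $\xi\in V(\mathbf{S}_\infty)\cap\Bbf_{1/2}$. This is false. Inequality \eqref{e:HS-4} bounds the displacement of the nail from the spine by $C\, E_k^{1/2}$, so after dividing by $E_k^{1/2}$ the normalized displacements are $O(1)$ and need not vanish in the limit. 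What Proposition \ref{p:blowup2}(b)--(d) actually gives is $\bar{u}_i(y)=Q_i\llbracket \boldsymbol{\eta}\circ\bar{u}_i(y)\rrbracket$ for $y\in V$, with $\boldsymbol{\eta}\circ\bar{u}_i|_V = \mathbf{p}_{\alpha_i}^\perp(\beta)$ for a smooth (nonlinear) function $\beta:V\to V^\perp$ that records the limiting normalized nail drift; the barycenter vanishes only at the origin (where $\Theta(T_k,\cdot)\geq Q$).

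This matters because the hypothesis of \eqref{e:decay-2} needs $u(z_i)=Q\llbracket 0\rrbracket$, which holds for the average-free part $\bar{u}_i\ominus\boldsymbol{\eta}\circ\bar{u}_i$ but not for $\bar{u}_i$ itself. Applying Theorem \ref{t:decay} to the average-free part produces a homogeneous $h_{i,\rho}\in\mathscr{L}_1$, but the barycenter $\boldsymbol{\eta}\circ\bar{u}_i$ is merely harmonic and its graph is not a cone; pasting $h_{i,\rho}$ onto the planes of $\mathbf{S}_k$ without accounting for the barycenter would not yield the quoted decay. The paper's remedy, which your outline omits, is: (i) split $\bar{u}_i$ into the multi-valued average-free part and the single-valued harmonic part; (ii) linearize $\beta$ at $0$ to a linear map $\gamma$, build a skew-symmetric map and the rotation $R_k := R[E_k^{1/2}]$ that tilts the spine of $\mathbf{S}_k$ to absorb $\gamma$; (iii) linearize $\boldsymbol{\eta}\circ\bar{u}_i - \mathbf{p}_{\alpha_i}^\perp(\gamma)$ at $0$ to linear maps $\xi_i$ vanishing on $V$; (iv) form the new planes from the sums $h_{i,\rho}\oplus\xi_i$, then transport them by $R_k$ and the auxiliary rotations $O_{k,i}$. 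The $C^2$ bound of \eqref{e:est-u} controls the quadratic error from these linearizations, producing the $C\delta + Cr_{nc}^2$ bound, and only then can one fix $\delta$ and $r_{nc}$ to beat $\varsigma_1$. Without this ``split--linearize--rotate'' step your estimate would pick up an $O(1)$ error from the nonlinear drift $\beta$ and the decay would not close.
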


\begin{remark}\label{r:A-excess}
    In fact, for the proof of Proposition \ref{p:decay-collapsed} and Proposition \ref{p:decay-noncollapsed}, instead of condition (iii) all we will need to assume is that $\Abf^2\leq \eps_c^2\mathbb{E}(T,\Sbf,\Bbf_1)$ or $\Abf^2\leq\eps^2_{nc}\mathbb{E}(T,\Sbf,\Bbf_1)$, respectively, for the cone $\Sbf$ as in (ii) in each proposition, respectively.
\end{remark}

Theorem \ref{t:weak-decay} obviously follows from the two propositions above. In fact, being given $Q,m,n,\bar n$, and $\varsigma_1$ as in Theorem \ref{t:weak-decay}, we first apply Proposition \ref{p:decay-collapsed} and get $\varepsilon_c$ and $r_c$. We then apply Proposition \ref{p:decay-noncollapsed} with the same choice of $Q,m,n,\bar n$, and $\varsigma_1$, and with $\varepsilon_c^\star = \varepsilon_c$, to get $\varepsilon_{nc}$ and $r_{nc}$. It is then clear that Theorem \ref{t:weak-decay} holds if we set $\varepsilon_1 := \min \{\varepsilon_c, \varepsilon_{nc}\}$, $r^1_2 := r_c$ and $r^2_2 := r_{nc}$.

Both propositions will be proved by a blow-up procedure, which will assume, by seeking a contradiction, that the statements fail. In the proof of Proposition \ref{p:decay-noncollapsed} this means that $\varepsilon^\star_c$ is fixed while $\varepsilon_{nc}$ will be taken to be arbitrarily small and so in particular $\boldsymbol{\mu} (\mathbf{S})$ will stay away from $0$ while the ratio
\begin{equation}\label{e:key-ratio}
\frac{\mathbb{E} (T, \mathbf{S}, \Bbf_1)}{\boldsymbol{\sigma} (\mathbf{S})^2}
\end{equation}
is arbitrarily small.
We will call the latter a {\em Simon blow-up} (since in the work \cite{Simon_cylindrical} the cones are uniformly non-collapsed, analogously to this case), and in this case we will take limits of suitable rescalings of the coherent outer approximations of Proposition \ref{p:coherent}. 

In the proof of Proposition \ref{p:decay-collapsed} we will instead assume that $\varepsilon_c$ is arbitrarily small. This will mean that {\em both} the ratio in \eqref{e:key-ratio} and $\boldsymbol{\mu} (\mathbf{S})$ are arbitrarily small. In this case we will approximate the current in the outer region by reparameterizing the graphs of the coherent outer approximations over a single plane, which we can fix to be one of the planes forming $\mathbf{S}$, say $\alpha_1$. We will call these the {\em transversal coherent approximations}, as opposed to the ones of the Simon blow-up, which will be called the {\em orthogonal coherent approximations}. By construction the transversal approximations are superpositions of multi-valued maps, each of them close to the linear map describing $\alpha_j$ as a graph over $\alpha_1$. We will then subtract the latter linear map from the sheet of the corresponding multi-valued approximation and study the limits of appropriate rescalings. We will call this procedure a {\em Wickramasekera blow-up} (since the work \cite{W14_annals} is the first appearance of such a blow-up where the cones are collapsing to a high-multiplicity flat plane).

\subsection{Transversal coherent approximation} 
In this section we work under the assumption of Proposition \ref{p:decay-collapsed} and recast the coherent outer approximation of Proposition \ref{p:coherent} as an approximation through a Lipschitz multi-valued function over a single plane (the latter function representing the cone $\Sbf$). The choice of the plane is not really important as long as its Hausdorff distance from $\mathbf{S}$ in $\Bbf_1$ is comparable to $\boldsymbol{\mu} (\mathbf{S})$; we may without loss of generality take it to be the first indexed plane $\alpha_1$ forming $\mathbf{S}$. Given a multi-valued function $g = \sum_i \llbracket g_i \rrbracket$ and a single-valued function $f$ with the same domain, we also use the notational shorthand $g\ominus f$ for 
\[
g\ominus f (x) \coloneqq \sum_i \llbracket g_i (x) - f (x) \rrbracket\, ,
\]
and similarly we define $g\oplus f$.
We moreover recall the set $\mathscr{N} (L)$ of ``neighbouring cubes'' to $L$ and the quantity $\bar{\Ebf} (L)$ introduced in Definition \ref{d:neighbors} and introduce 
\[
\widetilde{\Ebf} (L) := \max \{\bar{\Ebf} (L') : L'\in \mathscr{N} (L)\}
= \max \{ \Ebf (L'') : L''\in \mathscr{N} (L'), \, L'\in \mathscr{N} (L)\}
\]
i.e. we maximize the excess of neighbours of neighbours.

\begin{proposition}[Transversal coherent approximation]\label{p:transversal}
Let $T$, $\Sigma$, and $\mathbf{S}$ be as in Proposition \ref{p:refined}, let $\mathcal{\ell}\in \N$ be the maximal number such that $\mathcal{G}_{\ell+2} \subset \mathcal{G}^o$, and consider the regions $\widetilde{R}^o_i\coloneqq \alpha_i \cap \bigcup_{L\in \mathcal{G}_{\leq\ell}} R (L)$. Let $u_i:R_i^o\to \Acal_{Q_i}(\alpha_i^\perp)$ be the $Q_i$-valued maps in Proposition \ref{p:coherent}. Under the additional assumption that $\boldsymbol{\mu} (\mathbf{S})\leq c_0$ for a geometric constant $c_0 = c_0(m,n,\bar{n})>0$, the following holds.
\begin{itemize}
    \item[(a)] Each plane $\alpha_i$ is the graph over $\alpha_1$ of a linear map $A_i : \alpha_1\to \alpha_1^\perp$ with $|A_i|\leq C \boldsymbol{\mu} (\mathbf{S})$ and $\ker (A_i) = V$.
    \item[(b)] For each $i$ there is a map $v_i : \widetilde{R}^o_1 \to \mathcal{A}_{Q_i} (\alpha_1^\perp)$ with the property that 
    \[
    \mathbf{G}_{v_i} = \mathbf{G}_{u_i} \res \mathbf{p}_{\alpha_1}^{-1} (\widetilde{R}_1^o)\, .
    \]
    \item[(c)] If we let $v:\widetilde{R}^o_1\to \mathcal{A}_Q(\alpha_1^\perp)$ be the $Q$-valued function $v:= \sum_i \llbracket v_i \rrbracket$ (note that $Q=\sum_{i=1}^N Q_i$), then 
    \begin{align}
    \|v\|_{L^\infty} + \|Dv\|_{L^2} &\leq C \boldsymbol{\mu} (\Sbf)
    \end{align}
    \item[(d)] If we let 
    \[
    K := \mathbf{p}_{\alpha_1} ((\spt (T)\cap \Bbf_{1/2}\cap \mathbf{p}_{\alpha_1}^{-1} (\widetilde{R}_1^o))\setminus {\rm gr}\, (v))
    \]
    then, for all $L\in \mathcal{G}_{\leq \ell}$,
    \begin{equation}\label{e:mass-error-transersal}
    |L_1\setminus K| + \|T\| (\mathbf{p}_{\alpha_1}^{-1} (L_1\setminus K)) \leq C 2^{-m\ell (L)} (\widetilde{\mathbf{E}} (L)+2^{-2\ell (L)} \mathbf{A}^2)^{1+\gamma}\, ,
    \end{equation}
    where $L_1$ is as in Proposition \ref{p:coherent}.
    \item[(e)] If we let $w_i := v_i \ominus A_i: \tilde{R}_1^o \to \Acal_{Q_i}(\alpha_1^\perp)$, then
    \begin{align}
    2^{2\ell (L)} \|w_i\|_{L^\infty (L_1)}^2 + 2^{m\ell (L)} \|Dw_i\|_{L^2 (L_1)}^2 &\leq C (\widetilde{\mathbf{E}} (L) + 2^{-2\ell(L)}\mathbf{A}^2)\label{e:local-transverse-1}\\
    \|Dw_i\|_{L^\infty(L_1)} & \leq C (\widetilde{\mathbf{E}} (L) + 2^{-2\ell(L)}\mathbf{A}^2)^\gamma  \label{e:Lipschitz-transverse}
    \end{align}
\end{itemize}
Here, $C = C(Q,m,n,\bar{n})$.
\end{proposition}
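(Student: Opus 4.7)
The strategy is to reparameterize the multi-valued graphs from Proposition \ref{p:coherent} over a common plane, extract the linear piece, and inherit the refined estimates from the outer approximation.

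\emph{Step 1: The linear maps $A_i$.} Each plane $\alpha_i$ lies in the $(m+\bar{n})$-dimensional subspace $T_0\Sigma$ together with $\alpha_1$, and contains the common spine $V$. Since $\boldsymbol{\mu}(\mathbf{S}) \leq c_0$ for a small geometric constant, the orthogonal projection $\mathbf{p}_{\alpha_1}|_{\alpha_i} : \alpha_i \to \alpha_1$ is a linear isomorphism (its norm distortion is controlled by $\boldsymbol{\mu}(\mathbf{S})$), and hence $\alpha_i$ is the graph of a unique linear map $A_i : \alpha_1 \to \alpha_1^\perp$. Both $\alpha_i$ and $\alpha_1$ contain $V$, so $A_i|_V \equiv 0$; using $M$-balancedness together with Corollary \ref{c:growth} (specifically \eqref{e:Haus=max_eigen} applied to the pair $\alpha_1, \alpha_i$) one gets $|A_i| \leq C\boldsymbol{\mu}(\mathbf{S})$ with $\ker A_i = V$.

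\emph{Step 2: Reparameterization over $\alpha_1$.} For $L \in \mathcal{G}_{\leq \ell}$, every cube in $\mathscr{N}(\mathscr{N}(L))$ belongs to $\mathcal{G}^o$ (this is the reason for requiring $\mathcal{G}_{\ell+2} \subset \mathcal{G}^o$). Hence on a neighborhood in $\alpha_i$ of $R^o_i \cap \mathbf{p}_{\alpha_i}^{-1}(\widetilde{R}^o_1)$ we have the Lipschitz coherent approximation $u_i$ from Proposition \ref{p:coherent}, and its Lipschitz constant on any such $L'_i$ is bounded by $C(\widetilde{\mathbf{E}}(L)+2^{-2\ell(L)}\mathbf{A}^2)^\gamma$, which is arbitrarily small once $\varepsilon$ in Assumption \ref{a:refined} is small. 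Combined with $|A_i| \leq C\boldsymbol{\mu}(\mathbf{S}) \leq Cc_0$, this makes the map
\[
\Phi_i : \mathrm{gr}(u_i) \to \alpha_1, \qquad (x, u_{i,k}(x)) \longmapsto \mathbf{p}_{\alpha_1}(x, u_{i,k}(x))
\]
a bi-Lipschitz homeomorphism onto its image covering $\widetilde{R}^o_1$. Inverting $\Phi_i$ sheet by sheet defines the $Q_i$-valued map $v_i:\widetilde{R}^o_1 \to \mathcal{A}_{Q_i}(\alpha_1^\perp)$ satisfying $\mathbf{G}_{v_i} = \mathbf{G}_{u_i}\res \mathbf{p}_{\alpha_1}^{-1}(\widetilde{R}^o_1)$, proving (b).

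\emph{Step 3: Local estimates and (e).} Writing a sheet of $v_i$ as $v_{i,k} = A_i \circ (\mathbf{p}_{\alpha_1}|_{\alpha_i})^{-1} + (\text{correction coming from } u_{i,k})$, one sees that $w_{i,k}:= v_{i,k}- A_i$ essentially equals $u_{i,k}$ composed with a small bi-Lipschitz perturbation of the identity, plus an error driven by $|Du_i| \cdot |A_i|$. Differentiating the defining relation $v_{i,k}(\mathbf{p}_{\alpha_1}(x, u_{i,k}(x))) = \mathbf{p}_{\alpha_1^\perp}(x, u_{i,k}(x))$ and using the bound $|A_i| \leq C\boldsymbol{\mu}(\mathbf{S})$ together with \eqref{e:coherent-1}--\eqref{e:coherent-2} yields
\[
|w_{i,k}(z)| \leq C|u_{i,k}(\mathbf{p}_{\alpha_i}^{-1}(z))|, \qquad |Dw_{i,k}(z)| \leq C|Du_{i,k}(\mathbf{p}_{\alpha_i}^{-1}(z))|,
\]
on each $L_1$ with $L \in \mathcal{G}_{\leq \ell}$, provided $c_0$ and $\varepsilon$ are small enough. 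Integrating and appealing to Proposition \ref{p:coherent}(ii) on each $L'_i$, $L'\in \mathscr{N}(L)$, delivers \eqref{e:local-transverse-1} and \eqref{e:Lipschitz-transverse}. Summing \eqref{e:local-transverse-1} over $L \in \mathcal{G}_{\leq \ell}$ using the overlap bound in Lemma \ref{l:whitney}(iv) and adding the planar contribution $\sum_i Q_i|A_i|^2 \leq C\boldsymbol{\mu}(\mathbf{S})^2$ produces the global $L^\infty$ and Dirichlet bound in (c).

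\emph{Step 4: Coincidence set and (d).} Define $K$ as in the statement. By the coherent coincidence sets $\bar K_i(L)$ of Proposition \ref{p:coherent}(i), we have $\mathbf{p}_{\alpha_1}(\mathrm{gr}(u_i)\cap \mathbf{p}_{\alpha_i}^{-1}(\bar K_i(L))) \subset K$ for each $i$. The estimate \eqref{e:coherent-3}, after pushing through the bi-Lipschitz change of variables $\mathbf{p}_{\alpha_1}|_{\alpha_i}$ (whose Jacobian distortion is controlled by $|A_i|^2$), gives \eqref{e:mass-error-transersal} on each $L_1$, with $\widetilde{\mathbf{E}}(L)$ (rather than $\bar{\mathbf{E}}(L)$) accounting for the fact that the projection of $\mathrm{gr}(u_i)\cap \mathbf{p}_{\alpha_1}^{-1}(L_1)$ may draw from $u_i$ over cubes in $\mathscr{N}(L)$.

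\emph{Main obstacle.} The only delicate point is Step 2: ensuring that the planar projection onto $\alpha_1$ produces a \emph{single} multi-valued graph with the correct number of sheets $Q_i$ over each $L_1$, without sheets from different planes $\alpha_i,\alpha_j$ colliding. This uses that $M$-balancedness together with $|A_i-A_j|$ being comparable to $\dist(\alpha_i\cap \Bbf_1, \alpha_j\cap \Bbf_1)$ (Proposition \ref{p:two-sided-bound} and Lemma \ref{l:elementary-planes}) forces the sheets to remain separated at scale $\boldsymbol{\sigma}(\Sbf)$, which dominates the graph oscillations by the smallness assumption \eqref{e:smallness-alg-2}; the careful bookkeeping of the height and Lipschitz bounds on the neighbors of neighbors of each $L$, and of the constants involved, is the main technical work.
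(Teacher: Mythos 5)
Your proposal is correct and follows essentially the same strategy as the paper: define the linear maps $A_i$, reparameterize the multi-valued graphs $\mathbf{G}_{u_i}$ over $\alpha_1$ to get $v_i$, then subtract $A_i$ and transfer the estimates from Proposition \ref{p:coherent}. The difference is one of presentation rather than substance. Where you reconstruct the reparameterization map $\Phi_i$ and its inverse by hand (Step 2), the paper simply cites \cite{DLS_multiple_valued}*{Proposition 5.2}; and where you differentiate the defining relation $v_{i,k}(\mathbf{p}_{\alpha_1}(x,u_{i,k}(x))) = \mathbf{p}_{\alpha_1^\perp}(x,u_{i,k}(x))$ and track the error terms, the paper appeals to a cleaner coordinate-free observation: for a.e.\ $x$, both $|Dw_i|(x)$ and $|Du_i|(x)$ are comparable (up to geometric constants) to $\sup\{|\vec{\mathbf{G}}_{v_i}(x,y) - \vec{\alpha}_i| : (x,y)\in\mathrm{gr}(v_i)\}$, and this quantity is manifestly invariant under the coordinate change since $\mathrm{gr}(v_i) = \mathrm{gr}(u_i)$. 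This observation gives the Lipschitz bound \eqref{e:Lipschitz-transverse} and the $L^2$-gradient bound in \eqref{e:local-transverse-1} without any explicit change-of-variables computation. Your ``main obstacle'' paragraph is a non-issue for this proposition: the $v_i$ for different $i$ are simply superposed to form $v := \sum_i \llbracket v_i \rrbracket$ in part (c) and need not have disjoint graphs over $\alpha_1$ (disjointness of their graphs in $\Rbb^{m+n}$ is already guaranteed by Lemma \ref{l:splitting-1}(a), but that is not even required by the statement).
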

\begin{proof}
Claim (a) is obvious. For the rest, all we are essentially doing is changing the coordinates of $u_i$, to give a parameterization over $\alpha_1$ rather than $\alpha_i$. Claims (b) and and the $L^\infty$-estimate in (c) follow immediately from \cite{DLS_multiple_valued}*{Proposition 5.2}. The $L^2$-gradient estimate in (c) follows immediately from the $L^2$ bound in \eqref{e:local-transverse-1} and (a). Conclusion (d) follows immediately from the corresponding estimate in Proposition \ref{p:coherent}. As for \eqref{e:Lipschitz-transverse}, observe that, for a.e. $x\in \tilde{R}_1^o$, $|Dw_i| (x)$ equals, up to a geometric constant, the following quantity:
\[
\sup \{|\vec{\mathbf{G}}_{v_i} (x,y) - \vec{\alpha}_i| : (x,y)\in {\rm gr}\, (v_i)\}
\]
while an analogous formula holds for $|Du_i| (x)$, with $x\in R^o_i$: since the graph of $v_i$ equals that of $u_i$ in a different coordinate system, the estimate \eqref{e:Lipschitz-transverse} follows from the analogous one for $u_i$ in Proposition \ref{p:coherent}. Likewise, $\|Dw_i\|_{L^2 (L_1)}^2$ is equivalent, up to constants, to
\[
\int_{\mathbf{p}_{\alpha_1}^{-1} (L_1)} |\vec{\mathbf{G}}_{v_i} (p) - \vec{\alpha}_i|^2 \, d\|\mathbf{G}_{v_i}\| (p)\, .
\]
Thus from the corresponding inequality for $u_i$, we obtain the inequality 
\[
\|Dw_i\|_{L^2 (L_1)}^2 \leq C 2^{-m\ell(L)}(\widetilde{\mathbf{E}} (L) + 2^{-2\ell(L)}\mathbf{A}^2)\, .
\]
As for $\|w_i\|_{L^\infty (L_1)}$ it is easy to see that it is controlled by $\|u_i\|_{L^\infty (\lambda L_i)}$ for a choice of $\lambda$ slightly larger than $1$ (cf. the arguments in \cite{DLS_multiple_valued}*{Section 5}). This establishes \eqref{e:local-transverse-1} and so completes the proof.
\end{proof}

\subsection{Non-concentration estimates}

In this section we draw some important conclusions from Simon's estimates in Section \ref{p:estimates}. We will work under the following assumptions throughout this section.

\begin{assumption}\label{a:small-excess-and-gaps}
$T$ and $\Sigma$ are as in Assumption \ref{a:main}. $\mathbf{S}\in \mathscr{C} (Q, 0)\setminus \Pscr(0)$ is such that 
\begin{equation}\label{e:again-small-excess}
\mathbf{A}^2 + \mathbb{E} (T, \mathbf{S}, \Bbf_1) \leq \varepsilon^2 \boldsymbol{\sigma} (\mathbf{S})^2
\end{equation}
and
\begin{equation}\label{e:again-no-gaps}
\Bbf_\varepsilon (y)\cap \{\Theta (T, \cdot) \geq Q\}\neq \emptyset \quad \mbox{for all\;} y\in V (\mathbf{S})\cap \Bbf_1\, ,
\end{equation}
for some $\eps>0$, to be determined.
\end{assumption}

Next, consider recall the family of cubes $\mathcal{G}$, where $L\in\mathcal{G}$ has center point $y_L\in V$, as defined in Section \ref{ss:whitney}.

\begin{definition}\label{d:nails}
Let $T$, $\Sigma$, and $\mathbf{S}$ be as in Assumption \ref{a:small-excess-and-gaps}. For every $L\in \mathcal{G}$ we let $\beta (L)$ be a point $p$ in $\{\Theta (T, \cdot)\geq Q\}$ which minimizes the distance to $y_L$. This point will be called the {\em nail} of $L$.
\end{definition}

Of course, there might be more than one candidate for the nail of $L$, but under Assumption \ref{a:small-excess-and-gaps}, one nail always exists as the set $\{\Theta(T,\cdot)\geq Q\}$ is closed by upper semi-continuity of density. In the rest of the paper we will just assume that some arbitrary choice for each given nail has been made. The main conclusion of this section is then the following.

\begin{proposition}\label{p:no-concentration}
There is a constant $C= C(m,n,\bar n,Q)$ such that, for every $\varrho>0$ there exists $\varepsilon = \varepsilon (Q,m,n,\bar{n},\varrho)>0$ with the following property. Assume $T$, $\Sigma$, and $\mathbf{S}$ are as in Assumption \ref{a:small-excess-and-gaps} with this choice of $\eps$, let $r_*$ denote the radius of Proposition \ref{p:HS-3} and let $r=\frac{r_*}{4}$.  Then
\begin{equation}\label{e:no-concentration}
\int_{\Bbf_r} \frac{\dist^2 (q, \mathbf{S})}{\max \{\varrho, \dist (q, V)\}^{3/2}}\, d\|T\| (q) \leq C (\hat{\mathbf{E}} (T, \mathbf{S}, \Bbf_1) +\mathbf{A}^2)\, .
\end{equation}
 Moreover, if $u_1, \ldots, u_N$ are the coherent approximations of Proposition \ref{p:coherent}, upon suitably modifying them (without relabelling) we can assume that they satisfy the following stronger estimates for each $j\in\{1,\dots,N\}$:
\begin{align}
\int_{(\Bbf_r \cap \alpha_j)\setminus B_\varrho (V)} \frac{|Du_j (z)|^2}{\dist (z, V)^{3/2}}\, dz &\leq
C (\hat{\mathbf{E}} (T, \mathbf{S}, \Bbf_1)+\Abf^2) \label{e:no-concentration-2}\\
\sum_{i:\, 2^{-i-1} \geq \varrho} \sum_{L\in \mathcal{G}_i} \int_{L_j} \frac{|u_j (z)\ominus (\mathbf{p}_{\alpha_j}^\perp (\beta (L)))|^2}{\dist (z, V)^{7/2}}\, dz &\leq C (\hat{\mathbf{E}} (T, \mathbf{S}, \Bbf_1)+\Abf^2)\, .\label{e:no-concentration-3}
\end{align}
Finally, under the additional assumption that $\boldsymbol{\mu} (\mathbf{S})\leq c_0$ for the constant $c_0 = c_0(m,n,\bar{n})$ of Proposition \ref{p:transversal}, the maps $w_1, \ldots, w_N$ and $A_1, \ldots, A_N$ defined therein satisfy the corresponding estimates
\begin{align}
\int_{(\Bbf_r \cap \alpha_1)\setminus B_\varrho (V)} \frac{|Dw_j (z)|^2}{\dist (z, V)^{3/2}}\, dz &\leq
C (\mathbf{A}^2 + \hat{\mathbf{E}} (T, \mathbf{S}, \Bbf_1)) \label{e:no-concentration-4}
\end{align}
\begin{align}
\sum_{2^{-i-1} \geq \varrho} \sum_{L\in \mathcal{G}_i} \int_{L_1} \frac{|w_j (z)\ominus (\mathbf{p}_{\alpha_1}^\perp (\beta (L)) + A_j (\mathbf{p}_{V^\perp\cap \alpha_1} (\beta (L))))|^2}{\dist (z, V)^{7/2}}\, dz &\leq C (\mathbf{A}^2 + \hat{\mathbf{E}} (T, \mathbf{S}, \Bbf_1))\, .\label{e:no-concentration-5}
\end{align}
\end{proposition}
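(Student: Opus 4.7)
The plan is to combine Simon's shift inequality (Proposition \ref{p:HS-3}), applied at suitably chosen nails near each Whitney cube, with the graphical parameterizations of Propositions \ref{p:refined}, \ref{p:coherent}, and \ref{p:transversal}. First, to establish \eqref{e:no-concentration}, I would fix $\varepsilon(\varrho) \ll \varrho$ and, for each $L \in \mathcal{G}_i$ with $2^{-i} \geq \varrho/4$, pick the nail $\beta(L) \in \{\Theta(T,\cdot)\geq Q\}\cap \Bbf_\varepsilon(y_L)$ furnished by the gap condition \eqref{e:again-no-gaps}. Applying \eqref{e:HS-3} with $\kappa=1/2$ at $q_0=\beta(L)$ gives
\[
\int_{\Bbf_{4r_*}(\beta(L))}\frac{\dist^2(q,\beta(L)+\Sbf)}{|q-\beta(L)|^{m+3/2}}\,d\|T\|(q) \leq C(\Abf^2 + \hat\Ebf(T,\Sbf,\Bbf_4)).
\]
Since $\mathbf{p}_V(\beta(L))+\Sbf = \Sbf$, the translate $\beta(L)+\Sbf$ differs from $\Sbf$ pointwise by at most $\dist(\beta(L),V)$. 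For $q \in R(L)$ one has $|q-\beta(L)| \sim \dist(q,V) \sim 2^{-\ell(L)}$, so multiplying through by $\dist(q,V)^{-3/2}$, integrating over $R(L)$, and summing the $\lesssim 2^{i(m-2)}$ cubes in $\mathcal{G}_i$ produces a main term $\lesssim 2^{-2i}(\Abf^2+\hat\Ebf)$, which telescopes in $i$, plus a shift error involving $\dist^2(\beta(L),V)$. The contribution from $\Bbf_r \cap B_\varrho(V)$ is handled directly using $\dist(q,\Sbf) \leq \dist(q,V) \leq \varrho$ together with the tubular mass bound $\|T\|(\Bbf_r \cap B_\varrho(V)) \leq C\varrho^2$.

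For the gradient bound \eqref{e:no-concentration-2} I would pass through the coherent outer approximation: the $L^2$ estimate \eqref{e:coherent-1} gives
\[
\int_{L_j} |Du_j|^2\,dz \leq C 2^{-m\ell(L)}(\bar\Ebf(L) + 2^{-2\ell(L)}\Abf^2),
\]
so that multiplying by $\dist(z,V)^{-3/2} \sim 2^{3\ell(L)/2}$ and summing with the bounded-overlap property of $\{\mathbf{B}^h(L)\}$ (Lemma \ref{l:whitney}(iv)) reduces the problem to a weighted integral of $\dist^2(q,\Sbf)$ whose exponent in the denominator is slightly stronger than $3/2$. This is controlled by the same machinery as in paragraph 1, with $\kappa$ chosen accordingly in \eqref{e:HS-3}. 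The modification of the Lipschitz extensions on the bad sets $R_j^o \setminus \bar K_j(L)$ (whose mass is controlled by \eqref{e:coherent-3}) is carried out in the same spirit as in the proof of Proposition \ref{p:coherent}, yielding \eqref{e:no-concentration-2} without exceptional sets. For the shifted bound \eqref{e:no-concentration-3}, Proposition \ref{p:refined}(iii)--(iv) tells us that on the coincidence sets $\bar K_j(L)$, $u_j(z)$ agrees with the $\alpha_j^\perp$-component of a point $p \in \spt(T)\cap \mathbf{\Omega}_j(L)$; comparing $p$ to the nail $\beta(L)$ through \eqref{e:HS-3} and exploiting that $|q-\beta(L)|^{m+2-\kappa} \sim 2^{-(m+2-\kappa)\ell(L)}$ pairs with $\dist(z,V)^{-7/2} \sim 2^{7\ell(L)/2}$ gives the stronger weight.

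The transversal estimates \eqref{e:no-concentration-4}--\eqref{e:no-concentration-5} follow from their orthogonal analogues through the change of coordinates in Proposition \ref{p:transversal}. Since $|A_j| \leq C\boldsymbol{\mu}(\Sbf) \leq Cc_0$, the $L^2$ and $L^\infty$ bounds \eqref{e:local-transverse-1} for $w_j$ are comparable to those for $u_j$, so \eqref{e:no-concentration-4} follows directly from \eqref{e:no-concentration-2}. For the shift vector in \eqref{e:no-concentration-5}, the geometric fact is that the graph of $A_j$ over $\alpha_1$ coincides with $\alpha_j$, so the $\alpha_1^\perp$-component of the point of $\alpha_j$ above $\mathbf{p}_{\alpha_1}(\beta(L)) \in \alpha_1$ is exactly $A_j(\mathbf{p}_{V^\perp\cap\alpha_1}(\beta(L)))$, and adding $\mathbf{p}_{\alpha_1}^\perp(\beta(L))$ compensates for $\beta(L)$ not lying on $\alpha_j$; thus this vector is the $\alpha_1^\perp$-representative of $\mathbf{p}_{\alpha_j}^\perp(\beta(L))$, and the shifted estimate transfers directly.

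The main technical obstacle is the control of the shift error $\dist^2(\beta(L),V)$ produced by replacing $\Sbf$ with the translate $\beta(L)+\Sbf$. The gap condition only yields the crude bound $\dist(\beta(L),V)\leq\varepsilon$, while the shift inequality \eqref{e:HS-4} provides $|\mathbf{p}_{\alpha_1}^\perp(\beta(L))|^2 \leq C(\Abf^2+\hat\Ebf)$ and $\boldsymbol{\mu}(\Sbf)^2|\mathbf{p}_{V^\perp\cap\alpha_1}(\beta(L))|^2 \leq C(\Abf^2+\hat\Ebf)$, the latter degenerating in the collapsed regime. The resolution is that, after multiplication by the weight and summation over cubes, the $V^\perp$-scaling produces a geometric series $\sum_i 2^{-i/2}$, so the total shift contribution is comparable to $\dist^2(\beta(L),V)_{\max}$; in the non-collapsed regime \eqref{e:HS-4} directly bounds this by $C(\Abf^2+\hat\Ebf)$, while in the collapsed regime one exploits the interpolation $|\mathbf{p}_{V^\perp\cap\alpha_1}(\beta(L))|^2 \leq \boldsymbol{\mu}^{-1}\cdot\boldsymbol{\mu}|\mathbf{p}_{V^\perp\cap\alpha_1}(\beta(L))|^2$ together with $\boldsymbol{\mu}|\mathbf{p}_{V^\perp\cap\alpha_1}(\beta(L))|\leq C(\Abf^2+\hat\Ebf)^{1/2}$ and the choice of $\varepsilon$ sufficiently small in $\varrho$ so that the residual error is absorbed by the right-hand side.
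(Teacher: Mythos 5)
Your proposal captures the right ingredients --- Simon's shift inequality centered at nails, the Whitney decomposition, and transfer to transversal coordinates --- but there are three concrete gaps where the argument does not close as written.

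\textbf{Near-spine contribution to \eqref{e:no-concentration}.} You propose to bound $\int_{\Bbf_r\cap B_\varrho(V)}\dist^2(q,\Sbf)\,\varrho^{-3/2}\,d\|T\|$ by combining $\dist(q,\Sbf)\leq\dist(q,V)\leq\varrho$ with the tubular mass bound $\|T\|(\Bbf_r\cap B_\varrho(V))\leq C\varrho^2$. This gives $C\varrho^{5/2}$, a quantity depending only on $\varrho$ --- it cannot be majorized by $C(\hat\Ebf(T,\Sbf,\Bbf_1)+\Abf^2)$, which may be arbitrarily small relative to $\varrho$. The correct route (which the paper takes) is to cover $V\cap\Bbf_r$ by $C\varrho^{-(m-2)}$ balls of radius $\varrho$, use the gap condition to locate a density-$Q$ point $p_i$ near each center, apply \eqref{e:HS-3} (with $\kappa=1/4$, say) centered at $p_i$, and control the shift error through \eqref{e:HS-4}; the resulting terms carry \emph{positive} powers of $\varrho$ times $(\hat\Ebf+\Abf^2)$, which are then benign.

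\textbf{The shift error is not $\dist^2(\beta(L),V)$.} You attempt to absorb the error from replacing $\Sbf$ by $\beta(L)+\Sbf$ via a bound on $\dist^2(\beta(L),V)$, and you then need the ``interpolation'' in the collapsed regime. But this does not close: decomposing $\mathbf{p}_V^\perp = \mathbf{p}_{\alpha_1}^\perp + \mathbf{p}_{V^\perp\cap\alpha_1}$, the bound $\boldsymbol{\mu}(\Sbf)^2|\mathbf{p}_{V^\perp\cap\alpha_1}(\beta(L))|^2\leq C(\hat\Ebf+\Abf^2)$ from \eqref{e:HS-4} only gives $|\mathbf{p}_{V^\perp\cap\alpha_1}(\beta(L))|^2\leq C\boldsymbol{\mu}(\Sbf)^{-2}(\hat\Ebf+\Abf^2)$, which degenerates when the cone collapses. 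The ``interpolation'' you write is tautological and the absorption of the residual into the right-hand side by taking $\varepsilon$ small does not work, since $\boldsymbol{\mu}(\Sbf)$ is not controlled by $\varrho$. The resolution is that the true shift error in the distance estimate is the finer quantity $|\mathbf{p}_{\alpha_1}^\perp(\beta(L))| + \boldsymbol{\mu}(\Sbf)|\mathbf{p}_{V^\perp\cap\alpha_1}(\beta(L))|$ (this is the content of \eqref{e:elementary-shift}), precisely the quantity bounded directly by \eqref{e:HS-4} --- no division by $\boldsymbol{\mu}(\Sbf)$ ever occurs.

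\textbf{The $7/2$ weight in \eqref{e:no-concentration-2}--\eqref{e:no-concentration-3}.} Reweighting the unmodified coherent-approximation bound $\int_{L_j}|Du_j|^2\leq C\,2^{-m\ell(L)}(\bar\Ebf(L)+2^{-2\ell(L)}\Abf^2)$ by $\dist(z,V)^{-3/2}\sim 2^{3\ell(L)/2}$ converts $\bar\Ebf(L)\sim 2^{(m+2)\ell(L)}\int\dist^2(q,\Sbf)\,d\|T\|$ into a weight of order $2^{7\ell(L)/2}\sim\dist(q,V)^{-7/2}$ on $\dist^2(q,\Sbf)$. Trying to control this by the same machinery as for \eqref{e:no-concentration} fails because the shift-error contribution at scale $i$ then carries the factor $2^{(m-2)i}\cdot 2^{7i/2-mi} = 2^{3i/2}$, which diverges in the sum over $i$. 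The fix is to re-build the local Lipschitz approximations over the \emph{shifted} cones $\Sbf+\beta(L)$: the inequality \eqref{e:HS-3} then gives the improved estimate
\begin{equation*}
\hat\Ebf(T,\Sbf+\beta(L),\Bbf_{C2^{-\ell(L)}}(\beta(L)))\leq C\,2^{\ell(L)/4}\bigl(\hat\Ebf(T,\Sbf,\Bbf_1)+\Abf^2\bigr)\, ,
\end{equation*}
so the new local $L^2$-gradient bound acquires a gain factor $2^{\ell(L)/4}$ that makes the weighted sum converge (the resulting geometric series is $\sum_i 2^{-i/4}$). This is what the phrase ``upon suitably modifying them'' in the statement is pointing at; the paper feeds the modified $\tilde u_{L,i}$ into the coherent-extension algorithm of Proposition \ref{p:coherent} exactly as in the original construction, and the estimates carry the gain through.

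The transversal estimates \eqref{e:no-concentration-4}--\eqref{e:no-concentration-5} then do follow by the change of coordinates as you describe, once the orthogonal estimates are in place with the shifted-cone modification.
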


\begin{proof}
    We begin with the first estimate \eqref{e:no-concentration}; we may assume that $\varrho<1$ is smaller than half the radius in Proposition \ref{p:HS-3}, as if not the inequality follows trivially. We begin by estimating the part of the integral on the left-hand side that is over $\Bbf_r\cap B_\varrho(V)$. Cover $V\cap \Bbf_r$ with $C \varrho^{-(m-2)}$ balls $\Bbf_\varrho (y_i)$ of radius $\varrho$ (of course, we can assume $\Bbf_{\varrho}(y_i)\cap V\neq\emptyset$ for each $i$) and observe that $\{\Bbf_{2\varrho} (y_i)\}_i$ covers $B_\varrho (V) \cap \Bbf_r$. If \eqref{e:again-no-gaps} holds with $\eps<\varrho$, then for each $i$ we can find a point $p_i\in \Bbf_{2\varrho}(y_i)$ such that $\Theta (T, p_i) \geq Q$. In particular we can estimate, using Proposition \ref{p:HS-3} with $\kappa = 1/4$, centered at each $p_i$, 
\begin{align}
\varrho^{-3/2} & \int_{\Bbf_r\cap B_\varrho (V)} \dist^2 (q, \mathbf{S})\, d\|T\| (q) \leq \varrho^{-3/2} \sum_i 
\int_{\Bbf_r\cap B_{2\varrho} (y_i)} \dist^2 (q, \mathbf{S})\, d\|T\| (q)\nonumber\\
&\leq C \varrho^{-3/2} \sum_i \int_{\Bbf_r\cap \Bbf_{2\varrho} (y_i)} \dist^2 (q, p_i + \mathbf{S})\, d\|T\| (q)\nonumber\\
& \hspace{10em} + C  \varrho^{-3/2} \sum_i \varrho^m \big(|\mathbf{p}_{\alpha_1}^\perp (p_i)|^2 + \boldsymbol{\mu} (\mathbf{S})^2 |\mathbf{p}_{V^\perp \cap \alpha_1} (p_i)|^2\big)\nonumber\\
&\leq C \varrho^{m+7/4-3/2} \sum_i \int_{\Bbf_r\cap \Bbf_{4\varrho} (p_i)} \frac{\dist^2 (q, p_i+\mathbf{S})}{|q-p_i|^{m+7/4}}\, d\|T\| (q)\nonumber\\
&\hspace{18em} + C \varrho^{-(m-2)} \varrho^{m-3/2} (\hat{\mathbf{E}} (T, \mathbf{S}, \Bbf_1) + \mathbf{A}^2)\nonumber\\
&\leq C \varrho^{-(m-2)} \varrho^{m+1/4} (\hat{\mathbf{E}} (T, \mathbf{S}, \Bbf_1) + \mathbf{A}^2) + C \varrho^{1/2} (\hat{\mathbf{E}} (T, \mathbf{S}, \Bbf_1) + \mathbf{A}^2)\nonumber\\
&\leq C (\hat{\mathbf{E}} (T, \mathbf{S}, \Bbf_1) + \mathbf{A}^2)\, .\label{e:no-concentration-close-to-V}
\end{align}
We next consider the region $\Bbf_r \setminus B_\varrho (V)$ and note that we can we cover it with the regions $R(L)$ for $L\in \mathcal{G}$ such that $2^{-\ell (L) -1} \geq \varrho$. However, we only include in the cover the cubes $L$ for which $R(L)$ have a nonempty intersection with $\Bbf_r$: denote the latter family of cubes by $\mathscr{F}$. For each such cube we denote by $\beta (L)$ its nail as in Definition \ref{d:nails}. Note in particular that, if $\varepsilon < \frac{\varrho}{4}$ in \eqref{e:again-no-gaps}, then
\[
C^{-2} 2^{-\ell (L)} \leq C^{-1} |q-\beta(L)| \leq \dist (q, V) \leq C |q-\beta(L)| \leq C^2 2^{-\ell (L)} \qquad \forall q\in R(L)\, ,
\]
where $C$ is a geometric constant. Again combining with Proposition \ref{p:HS-3} with $\kappa=\frac{1}{4}$, this allows us to estimate 
\begin{align}
\int_{\Bbf_r\setminus B_\varrho (V)} & \frac{\dist^2 (q, \mathbf{S})}{\dist (q, V)^{3/2}}\, d\|T\| (q)\nonumber\\
&\leq  C \sum_{i\leq -1-\log_2(\varrho)} \sum_{L\in \mathcal{G}_i\cap \mathscr{F}} 2^{3i/2} \int_{R(L)} 
\dist^2 (q, \mathbf{S})\, d\|T\| (q)\nonumber\\
&\leq C \sum_{i\leq -1-\log_2(\varrho)} \sum_{L\in \mathcal{G}_i\cap \mathscr{F}} 2^{3i/2} \int_{R(L)} \dist^2 (q, \beta (L) + \mathbf{S})\, d\|T\| (q)\nonumber\\
&\quad\quad + C \sum_{i\leq -1-\log_2(\varrho)} \sum_{L\in \mathcal{G}_i\cap \mathscr{F}} 2^{3i/2} 2^{-m i} \big(|\mathbf{p}_{\alpha_1}^\perp (\beta (L))|^2 + \boldsymbol{\mu} (\mathbf{S})^2 |\mathbf{p}_{V^\perp\cap \alpha_1} (\beta (L))|^2\big)\nonumber\\
&\leq C \sum_{i\leq -1-\log_2(\varrho)} \sum_{L\in \mathcal{G}_i\cap \mathscr{F}} 2^{3i/2-mi-7i/4} \int_{R(L)} \frac{\dist^2 (q, \beta (L) + \mathbf{S})}{|q-\beta (L)|^{m+7/4}}\, d\|T\| (q)\nonumber\\
&\quad\quad + C (\hat{\mathbf{E}} (T, \mathbf{S}, \Bbf_1) + \mathbf{A}^2) \sum_{i\leq -1-\log_2(\varrho)} 2^{3i/2-mi} \# \mathcal{G}_i\nonumber\\
&\leq C  (\hat{\mathbf{E}} (T, \mathbf{S}, \Bbf_1) + \mathbf{A}^2) \sum_{i\leq -1-\log_2(\varrho)}
( 2^{-i/4-mi}+2^{3i/2-m i}) \#\mathcal{G}_i\, .\nonumber
\end{align}
Since the cardinality $\# \mathcal{G}_i$ of $\mathcal{G}_i$ is bounded by $C 2^{(m-2) i}$, it follows that 
\begin{equation} \label{e:no-concentration-far-from-V}
\int_{\Bbf_r\setminus B_\varrho (V)} \frac{\dist^2 (q, \mathbf{S})}{\dist (q, V)^{3/2}}\, d\|T\| (q)
\leq C (\hat{\mathbf{E}} (T, \mathbf{S}, \Bbf_1) + \mathbf{A}^2)\, . 
\end{equation}
Clearly \eqref{e:no-concentration-close-to-V} and \eqref{e:no-concentration-far-from-V} imply \eqref{e:no-concentration}.

\medskip

We next use the same family $\mathscr{F}$ of cubes to prove the remaining estimates. Fix any cube $L\in \Fscr$ and recall Proposition \ref{p:refined}, but rather than approximating over $\mathbf{S}$, approximate the current over the shifted cone $\mathbf{S}+ \beta (L)$. We can indeed do this by Proposition \ref{p:HS-3}, which moreover yields
\begin{equation}\label{e:shifted-excess}
\hat{\mathbf{E}} (T, \mathbf{S}+\beta (L), \Bbf_{C 2^{-\ell (L)}} (\beta (L)))\leq C 2^{\ell (L)/4} (\hat{\mathbf{E}} (T, \mathbf{S}, \Bbf_1) + \mathbf{A}^2)\, .
\end{equation}
Note that we can apply \eqref{e:HS-3} provided $C2^{-\ell(L)} \leq r$, where $r=r(Q,m,n,\bar{n})$ is the radius from Proposition \ref{p:HS-3}, i.e. provided $\ell(L)$ is larger than a constant depending on $Q,m,n,\bar{n}$. For the cubes $L$ with $\ell(L)$ smaller than this constant, the inequality above follows more easily as we have an upper bound on $\ell(L)$).

We then gain an approximation $\tilde{u}_{L,i} : \mathbf{p}_{V^\perp\cap \alpha_i} (\beta (L))+ \lambda L_i \to \mathcal{A}_{Q_i} (\alpha_i^\perp)$ which satisfies the estimate 
\begin{align*}
\|D\tilde{u}_{L,i}\|^2_{L^2} &\leq C 2^{-m \ell (L)} (\hat{\mathbf{E}} (T, \mathbf{S}+\beta (L), \Bbf_{C 2^{-\ell (L)}} (\beta (L)))+2^{-2\ell (L)}\mathbf{A}^2) \\
&\leq C 2^{-m \ell (L) +\ell (L)/4} (\hat{\mathbf{E}} (T, \mathbf{S}, \Bbf_1) + \mathbf{A}^2)\, .
\end{align*}

% In particular we have 
% \[
% \|D\tilde{u}_{L,i}\|^2_{L^2} \leq C 2^{-m \ell (L) +\ell (L)/4} (\hat{\mathbf{E}} (T, \mathbf{S}, \Bbf_1) + \mathbf{A}^2)\, .
% \]
Similarly, Proposition \ref{p:refined}, \eqref{e:HS-4} and \eqref{e:shifted-excess} also give the estimate
\[
\|\tilde{u}_{L,i} \ominus (\mathbf{p}_{\alpha_i}^\perp (\beta (L)))\|_{L^\infty} \leq C 2^{- 7\ell (L)/4} (\hat{\mathbf{E}} (T, \mathbf{S}, \Bbf_1) + \mathbf{A}^2)
\]
We can now consider the map $\hat{u}_{L,i}\coloneqq \tilde{u}_{L,i} \ominus (\mathbf{p}_{\alpha_i}^\perp (\beta (L)))$ over the domain $\mathbf{p}_{V^\perp\cap \alpha_i} (\beta (L)) + \lambda L_i$, which we can assume contains $L_i$, provided that $\eps$ is sufficiently small. 
Observe that we can use this new map in the algorithm within Proposition \ref{p:coherent} in place of $u_{L,i}$ to construct a coherent approximation from $\tilde{u}_{L,i}$, since this new map satisfies the same estimates as above. We can in turn use this new coherent approximation to construct a coherent transversal approximation as in  Proposition \ref{p:transversal}.  In particular, if we keep denoting these improved approximations by $u_i$ and $w_i$ without relabeling, they will satisfy the estimates
\begin{align}
\int_{L_i} \frac{|Du_i|^2}{\dist (z,V)^{3/2}}\, dz &\leq C 2^{-m \ell (L) + 7\ell (L)/4} (\hat{\mathbf{E}} (T, \mathbf{S}, \Bbf_1) + \mathbf{A}^2)\\
\int_{L_1} \frac{|Dw_i|^2}{\dist (z,V)^{3/2}}\, dz &\leq C 2^{-m \ell (L) + 7\ell (L)/4} (\hat{\mathbf{E}} (T, \mathbf{S}, \Bbf_1) + \mathbf{A}^2)\\
\int_{L_i} \frac{|u_i\ominus (\mathbf{p}_{\alpha_i}^\perp (\beta (L)))|^2}{\dist (z,V)^{7/2}}\, dz &\leq C 2^{-m \ell (L) + 7\ell (L)/4} (\hat{\mathbf{E}} (T, \mathbf{S}, \Bbf_1) + \mathbf{A}^2)
\end{align}
\begin{align}
\int_{L_1} \frac{|w_i\ominus (\mathbf{p}_{\alpha_1}^\perp (\beta (L)) + A_i (\mathbf{p}_{V^\perp\cap \alpha_1} (\beta (L))))|^2}{\dist (z,V)^{7/2}}\, dz &\leq C 2^{-m \ell (L) + 7\ell (L)/4} (\hat{\mathbf{E}} (T, \mathbf{S}, \Bbf_1) + \mathbf{A}^2)\, .
\end{align}
We now sum the latter estimates over $L\in \mathscr{F}\cap \mathcal{G}_i$ for $i\leq -\log_2(\varrho)-1$. Considering that the cardinality of $\mathcal{G}_i$ is $C2^{(m-2)i}$ we get 
\eqref{e:no-concentration-2}--\eqref{e:no-concentration-5}.
\end{proof}

\subsection{Blow-ups}
We are now ready to define blow-up sequences for the contradiction argument in the two cases of Proposition \ref{p:decay-collapsed} and Proposition \ref{p:decay-noncollapsed}. In both cases we have a sequence of currents $T_k$, cones $\mathbf{S}_k$, and manifolds $\Sigma_k$ for which the assumption of their respective proposition fails, with a sequence of thresholds $\eps_c = 1/k$ in the case of Proposition \ref{p:decay-collapsed} and with $\eps_{nc} = 1/k$ in the case of Proposition \ref{p:decay-noncollapsed} (we stress that $\eps^\star_c$ is a fixed number in the case of Proposition \ref{p:decay-noncollapsed} and currently unrelated to $\eps_c$!). In particular, we know that (cf. Remark \ref{r:A-excess})
\begin{equation}\label{e:blow-up-1}
\lim_{k\uparrow \infty} \Big(\frac{\mathbf{A}_k^2}{\mathbb{E} (T_k, \mathbf{S}_k, \Bbf_1)} + \frac{\mathbb{E} (T_k, \mathbf{S}_k, \Bbf_1)}{\boldsymbol{\sigma} (\mathbf{S}_k)^2}\Big)=0\, 
\end{equation}
while, denoting by $\beta^k (L)$ the nails of Definition \ref{d:nails} for $T_k$, we have 
\begin{equation}\label{e:nailing-it}
\lim_{k\uparrow\infty} |\beta^k (L) - y_L| = 0 \qquad \forall L \in \mathcal{G}\, .
\end{equation}
Moreover, upon extracting a subsequence, we can assume that $\mathbf{S}_k$ consists of a fixed number $N(k)=N \in \{2, \ldots, Q\}$ of distinct planes, which we denote by $\alpha^k_1, \ldots, \alpha^k_N$. Applying suitable rotations, we can assume that $T_0 \Sigma_k$, $V (\mathbf{S}_k)$, and $\alpha^k_1$ are independent of $k$. In particular, we will often use $V$ in place of $V (\mathbf{S}_k)$ and $\alpha_1$ in place of $\alpha^k_1$. 

The distinction between the case of Proposition \ref{p:decay-collapsed} and Proposition \ref{p:decay-noncollapsed} is that
\begin{itemize}
\item[(C)] In the collapsed case (Proposition \ref{p:decay-collapsed}) we have $\boldsymbol{\mu} (\mathbf{S}_k)\to 0$;
\item[(NC)] In the non-collapsed case (Proposition \ref{p:decay-noncollapsed}) we have $\liminf_k \boldsymbol{\mu} (\mathbf{S}_k)\geq \varepsilon^\star_{c}>0$, for some $\eps^\star_c > 0$.
\end{itemize}
In particular, in the non-collapsed case we can assume in addition that: 
\begin{itemize}
\item[(NC1)] Up to relabelling, $\dist (\alpha^k_1 \cap \Bbf_1, \alpha^k_N\cap \Bbf_1) = \boldsymbol{\mu} (\mathbf{S}_k)$;
\item[(NC2)] Each sequence $\alpha^k_i$ converges (in Hausdorff distance) to a plane $\alpha_i$, and $\alpha_N$ is necessarily distinct from $\alpha_1$ (but some of the other planes $\alpha_i$ could coincide).
\end{itemize}
In the collapsed case we consider the transversal coherent approximations $w^k_i$, while in the non-collapsed case we consider the orthogonal coherent approximations $u^k_i$. In both cases we assume that their domains of definition ${\rm Dom}\, (w^k_i)$ and ${\rm Dom}\, (u^k_i)$ contain, respectively, $(\Bbf_r \cap \alpha_1)\setminus B_{1/k} (V)$ and $(\Bbf_r \cap \alpha^k_i) \setminus B_{1/k} (V)$. We further consider the rescaled functions
\begin{align}
\bar{u}^k_i &:= \frac{u^k_i}{\sqrt{\mathbb{E} (T_k, \mathbf{S}_k, \Bbf_1)}}\\
\bar{w}^k_i &:= \frac{w^k_i}{\sqrt{\mathbb{E} (T_k, \mathbf{S}_k, \Bbf_1)}}\, .
\end{align}
Observe that we have uniform estimates for the $W^{1,2}$ norms of $\bar{u}_i^k$ and $\bar{w}^i_k$ (from Proposition \ref{p:no-concentration}, which are even stronger as they have a weight). The maps $w^k_i$ are also defined over the same plane $\alpha_1$. For the maps $u^k_i$ there is the annoying technicality that they are not. In the latter case we refer to Remark \ref{r:blow-up} and assume they are, up to composition with a small rotation, defined on the same planes $\alpha_i$.

In particular, up to extraction of a subsequence we can assume that the $\bar{u}^k_i$ and $\bar{w}^k_i$ converge strongly in $W^{1,2}$ locally away from $V$ and strongly in $L^2$ on the entirety of $B_r$ (due to the non-concentration estimates from Proposition \ref{p:no-concentration}) to $W^{1,2}$ maps $\bar{u}_i$ and $\bar{w}_i$ defined on $(\Bbf_r \cap \alpha_i)\setminus V$ and $(\Bbf_r\cap \alpha_1)\setminus V$ respectively (and taking values in $\Acal_Q(\alpha_i^\perp)$ and $\Acal_Q(\alpha_1^\perp)$ respectively). However, since $V$ is an $(m-2)$-dimensional subspace, which has zero $2$-capacity, the maps $\bar{u}_i$ and $\bar{w}_i$ belong indeed to $W^{1,2} (\alpha_i\cap \Bbf_r)$ and $W^{1,2} (\alpha_1\cap \Bbf_r)$ respectively. In the collapsed case we also introduce the linear maps $A^k_i:\alpha_1\to \alpha_1^\perp$ whose graphs describe the planes $\alpha^k_i$, and can consider their rescalings
\begin{equation}
\bar{A}^k_i := \frac{A^k_i}{\boldsymbol{\mu} (\mathbf{S}_k)}\, .    
\end{equation}
Moreover, up to extraction of a further subsequence, we assume that $\bar{A}^k_i$ converges to some linear map $\bar{A}_i$. 

We are now ready to state our main proposition concerning the properties of the limiting maps $\bar{u}_i$ and $\bar{w}_i$.

\begin{proposition}\label{p:blowup2}
Let $\bar{u}_i^k$, $\bar{w}_i^k$ and their respective limits $\bar{u}_i$, $\bar{w}_i$ be as described above. The following holds in the non-collapsed case:
\begin{itemize}
    \item[(a)] Each $\bar{u}_i$ is Dir-minimizing on $\Bbf_r\cap \alpha_i$, for $r$ as in Proposition \ref{p:no-concentration}.
    \item[(b)] $\bar{u}_i = Q_i \llbracket \boldsymbol{\eta}\circ \bar{u}_i\rrbracket$ on $V$.
    \item[(c)]$\bar{u}_i(0) = Q_i\llbracket\boldsymbol{\eta}\circ \bar{u}_i(0)\rrbracket =Q_i\llbracket 0 \rrbracket$ and the frequency obeys $I_{0,\bar{u}_i} (0)\geq 1$. Moreover, we have 
    $I_{y,\bar{u}_i \ominus \boldsymbol{\eta}\circ \bar{u}_i} (0)\geq 1$ for all $y\in V\cap \Bbf_r$.
    \item[(d)] There is a smooth function $\beta: V \to V^\perp$ such that $\boldsymbol{\eta}\circ \bar{u}_i = \mathbf{p}_{\alpha_i}^\perp (\beta)$ on $V$ for every $i$. (Note that with a slight abuse of notation we are using the same letter identifying the nails of Definition \ref{d:nails}. In fact this function $\beta$ is the trace of the limit of suitable normalizations of the nails.)
\end{itemize}
The following holds in the collapsed case:
\begin{itemize}
    \item[(e)] Each $\bar{w}_i$ is Dir-minimizing on $\Bbf_r\cap \alpha_1$.
    \item[(f)] $\bar{w}_i = Q_i \llbracket \boldsymbol{\eta}\circ \bar{w}_i \rrbracket$ on $V$.
    \item[(g)] $\bar{w}_i(0)= Q_i\llbracket\boldsymbol{\eta}\circ \bar{u}_i(0)\rrbracket =Q_i\llbracket 0 \rrbracket$ and the frequency  $I_{0,\bar{w}_i} (0)\geq 1$.
    Moreover, we have $I_{y,\bar{w}_i \ominus \boldsymbol{\eta}\circ \bar{w}_i} (0) \geq 1$ for all $y\in V\cap \Bbf_r$;
    \item[(h)] There are smooth function $\beta^\perp: V \to \alpha_1^\perp$ and $\beta_\parallel: V \to V^\perp\cap \alpha_1$ such that $\boldsymbol{\eta}\circ \bar{w}_i = \beta^\perp + \bar{A}_i (\beta_\parallel)$ on $V$ for all $i$. 
\end{itemize}
The functions $\bar{u}_i$ and $\beta$ also enjoy the estimates \begin{equation}\label{e:est-u}
\|\bar{u}_i\|_{W^{1,2}}+\|\beta\|_{C^2} \leq C
\end{equation}
for some constant $C$ which depends only on $Q, m,n, \bar n$ and $\varepsilon^\star_{c}$. The functions $\bar{w}_i$, $\beta^\perp$, and $\beta_\parallel$ enjoy the estimates \begin{equation}\label{e:est-w}
\|\bar{w}_i\|_{W^{1,2}}+\|\beta^\perp\|_{C^2} + \|\beta_\parallel\|_{C^2} \leq C
\end{equation}
for some constant $C$ which depends only on $Q,m,n,\bar n$.
\end{proposition}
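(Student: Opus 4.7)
The plan is to handle the non-collapsed and collapsed cases in parallel since the arguments differ only in bookkeeping. The starting point is the strong convergence of the normalized approximations: the weighted non-concentration estimates \eqref{e:no-concentration-2}--\eqref{e:no-concentration-5} of Proposition~\ref{p:no-concentration}, together with the $L^2$-excess normalization, give uniform $W^{1,2}$ bounds on $\mathbf{B}_r$ and uniform $W^{1,2}_{\text{loc}}$ bounds away from $V$. Extracting a subsequence, $\bar{u}_i^k \to \bar{u}_i$ and $\bar{w}_i^k \to \bar{w}_i$ strongly in $L^2(\mathbf{B}_r)$ and strongly in $W^{1,2}$ on compact subsets away from $V$; the limits lie in $W^{1,2}(\mathbf{B}_r)$ because $V$ has vanishing $2$-capacity. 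The estimates \eqref{e:est-u} and \eqref{e:est-w} are then immediate from the weighted bounds and the $W^{1,2}$ lower semicontinuity.

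For conclusions (a) and (e), I would apply the Dir-minimality part of the strong-convergence theorem \cite{DLS14Lp}*{Theorem~2.6} to the parts of $T_k$ parametrized by $u_i^k$ (respectively $w_i^k$) on compact sets $K \Subset (\mathbf{B}_r\cap \alpha_i) \setminus V$ (resp.\ $(\mathbf{B}_r \cap \alpha_1)\setminus V$): the hypothesis $\mathbf{A}_k^2/\mathbb{E}(T_k, \mathbf{S}_k, \mathbf{B}_1) \to 0$ makes the ambient curvature negligible in the rescaling, so the limit is Dir-minimizing on each such $K$. Dir-minimality across $V$ then follows at once from the removability statement Proposition~\ref{p:remove-spine-2}, since $V$ is $(m-2)$-dimensional.

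Conclusions (b), (f), and the first part of (c), (g) follow from the no-gaps hypothesis together with the shift inequality \eqref{e:HS-4}. Fix $y \in V\cap \mathbf{B}_r$: by \eqref{e:nailing-it} there is a sequence of cubes $L_k$ with $y_{L_k} \to y$, and the nails $\beta^k(L_k)$ satisfy $\Theta(T_k, \beta^k(L_k)) \geq Q$ and $\beta^k(L_k) \to y$. Applying \eqref{e:HS-4} at each nail and dividing by $\mathbb{E}(T_k, \mathbf{S}_k, \mathbf{B}_1)^{1/2}$ shows that the rescaled projections $\mathbb{E}(T_k, \mathbf{S}_k, \mathbf{B}_1)^{-1/2}\mathbf{p}_{\alpha_1}^\perp(\beta^k(L_k))$ and $(\boldsymbol{\mu}(\mathbf{S}_k)\mathbb{E}(T_k, \mathbf{S}_k, \mathbf{B}_1)^{1/2})^{-1} \mathbf{p}_{V^\perp\cap\alpha_1}(\beta^k(L_k))$ remain uniformly bounded. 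Combined with the fact that at a density-$Q$ point the local $Q_i$-valued approximation must degenerate to a single $Q_i$-fold value, passing to the limit we obtain $\bar{u}_i(y) = Q_i \llbracket \boldsymbol{\eta}\circ \bar{u}_i(y)\rrbracket$ (and similarly $\bar{w}_i(y) = Q_i\llbracket \boldsymbol{\eta}\circ \bar{w}_i(y)\rrbracket$). Applying this at $y=0$, together with the hypothesis $\Theta(T_k, 0)\geq Q$ (which forces the normalized nails to vanish in the limit at $0$), yields $\bar{u}_i(0) = Q_i\llbracket 0\rrbracket$ and the analogous identity for $\bar{w}_i$.

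The frequency lower bound in (c) and (g) is the main obstacle and requires passing to the limit the almost-monotonicity of Almgren's frequency function relative to the center manifolds developed in \cite{DLSk1,DLSk2}, which at the density-$Q$ nails $\beta^k(L_k)$ provides the universal lower bound of $1$; the strong $L^2$ convergence above, combined with the weighted estimate \eqref{e:no-concentration-3}--\eqref{e:no-concentration-5} (which prevents $W^{1,2}$ concentration near $V$), transfers this lower bound to $I_{y, \bar{u}_i \ominus \boldsymbol{\eta}\circ\bar{u}_i}(0) \geq 1$ (resp.\ $I_{y, \bar{w}_i \ominus \boldsymbol{\eta}\circ\bar{w}_i}(0) \geq 1$). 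Finally, conclusions (d) and (h) are obtained by defining $\beta$ (resp.\ $\beta^\perp, \beta_\parallel$) on $V$ as the $k\to\infty$ limit of the normalized nail projections indexed by the cubes $L$ with $y_L \to y$: the shift inequality \eqref{e:HS-4} applied at two nearby nails, together with a Morrey-type argument, gives uniform Lipschitz (indeed $C^{1,\alpha}$) bounds on these limits; the identification of these limits with the trace of the averages $\boldsymbol{\eta}\circ\bar{u}_i$ (resp.\ $\boldsymbol{\eta}\circ\bar{w}_i$) on $V$ follows from (b) and (f), and the $C^2$-regularity is a consequence of the fact that the $\boldsymbol{\eta}$-average of a Dir-minimizer is harmonic, hence smooth. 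The principal delicacy throughout is the transfer of the frequency lower bound from the Almgren frequency at the nails of $T_k$ to the frequency of the limiting multi-valued Dir-minimizer, which requires a careful use of the center manifold error estimates from \cite{DLSk1}.
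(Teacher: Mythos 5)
Your overall skeleton — extract a limit via the weighted non-concentration estimates, use removability of $V$ for Dir-minimality, use the shift inequality and density-$Q$ nails for the multiplicity and frequency claims, and harmonicity of the $\boldsymbol{\eta}$-average for the $C^2$-regularity — is consistent with the paper. However there are two genuine gaps.

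First, the frequency lower bound in (c)/(g). You propose to transfer the lower bound from the almost-monotonicity of the Almgren frequency function relative to center manifolds (from the companion papers). This is not what the paper does, and it is not clear it can be made to work: the frequency of the Dir-minimizing blow-up $\bar{u}_i$ is an object living downstream of the normalization $\mathbb{E}(T_k,\mathbf{S}_k,\mathbf{B}_1)^{-1/2}$, whereas the center-manifold frequency is a fully nonlinear quantity built out of the current itself, and there is no ready-made comparison between the two, especially since the center-manifold normalization is tailored to the planar excess rather than the conical one. The paper's actual route is entirely quantitative and self-contained within this blow-up: from Proposition~\ref{p:HS-3} (applied at nails and passed to the limit) one obtains, for every $\kappa>0$,
\begin{equation*}
\int_{\Bbf_r(y)\cap\alpha_i} \frac{|\bar u_i(z)\ominus\boldsymbol{\eta}\circ\bar u_i(z)|^2}{|z-y|^{m+2-\kappa}}\,dz \leq C(\kappa)\,,
\end{equation*}
which, combined with the $L^2$ height growth bound $\int_{\partial B_\sigma(y)}|\bar u_i\ominus\boldsymbol{\eta}\circ\bar u_i|^2\geq C\sigma^{m-1+2\gamma'}$ from \cite{DLS_MAMS}*{Corollary 3.18} for $\gamma'>\gamma:=I_{y,\bar u_i\ominus\boldsymbol{\eta}\circ\bar u_i}(0)$, forces $2\gamma'+\kappa>2$; letting $\kappa\downarrow 0$ and $\gamma'\downarrow\gamma$ gives $\gamma\geq 1$. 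If you wanted to avoid this you could instead invoke the Hardt--Simon inequality (which the paper mentions as an alternative), but the center-manifold route is a detour that does not plug in here.

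Second, the Dir-minimality claim (e) in the collapsed case is subtler than you acknowledge. The naive application of \cite{DLS14Lp}*{Theorem~2.6} fails because $w_i^k$ is a re-parameterization of the graph over the tilted plane: the approximating graph over $\alpha_1$ is $\mathbf{G}_{w_i^k\oplus A_i^k}$, not $\mathbf{G}_{w_i^k}$, and the Dirichlet energy of $w_i^k$ is not the linearization of the area of this graph at $0$ but at $A_i^k$. The paper shows that if $\bar w_i$ were not Dir-minimizing, a competitor would produce a gain $-\vartheta E_k$ in Dirichlet energy; but the almost-minimality of $T_k$ only gives control of the \emph{mass}, so one must perform a Taylor expansion of the area integrand $\mathcal A(A_i^k+B)$ around $A_i^k$, check that the linear term in $B$ integrates to zero on the region of modification (because it is a divergence and the perturbation vanishes on the boundary), and that the quadratic term is $\frac{1}{2}|B|^2$ up to errors $O((\boldsymbol{\mu}(\mathbf{S}_k)+E_k^\gamma)E_k)$. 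Only then do the two notions of minimality line up in the limit. Without this step your argument for (e) would not close.

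Finally, a minor point on (b)/(f): your passage "at a density-$Q$ point the local $Q_i$-valued approximation must degenerate to a single $Q_i$-fold value, passing to the limit we obtain\ldots" is not quite precise, since the conclusion concerns the limit $\bar u_i$ along all of $V$, not the pre-limit maps at isolated nails. The clean deduction is: the weighted estimate \eqref{e:no-concentration-3} passed to the limit gives $\int_{\Bbf_r\cap\alpha_i}\dist(z,V)^{-7/2}|\bar u_i\ominus\boldsymbol{\eta}\circ\bar u_i|^2\,dz<\infty$, and since the integrand is continuous and $\dist(\cdot,V)^{-7/2}$ is non-integrable near a codimension-$2$ set, the numerator must vanish identically on $V$.
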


\begin{proof}
We start with (a). Here we first appeal to Proposition \ref{p:first-blow-up} to prove the Dir-minimality of $\bar{u}_i$ away from the spine $V$. Observe that, if, along the sequence $\bar{u}_i^k$, the double-sided excess $\mathbb{E}(T_k,\Sbf_k,\Bbf_1)$ stays comparable to $\hat{\Ebf}(T_k,\Sbf_k,\Bbf_1)$, this claim follows from Proposition \ref{p:first-blow-up}(iii) because $\mathbf{A}^2$ is infinitesimal compared to $\mathbb{E}(T_k,\Sbf_k,\Bbf_1)$. The alternative left is that (up to subsequence) $\hat{\Ebf}(T_k,\Sbf_k,\Bbf_1)/\mathbb{E}(T_k,\Sbf_k,\Bbf_1)$ is infinitesimal. In this case, by Proposition \ref{p:first-blow-up}(ii), $D \bar{u}_i$ vanishes. 
Having established the Dir-minimality away from the spine, Proposition \ref{p:remove-spine-2} allows us to conclude that it is Dir-minimizing on any $\Omega\subset\subset \Bbf_r$; this proves (a). 

Now let us show (b). We next observe that, by \eqref{e:HS-4} (in which one can clearly replace $\alpha_1$ by any other plane of $\Sbf$), $|\mathbf{p}_{\alpha^k_i}^\perp (\beta^k (L))| \leq C \mathbb{E} (T_k, \mathbf{S}_k, \Bbf_1)^{1/2}$ and, because of the lower bound on $\mathbf{\mu} (\mathbf{S}_k)$,
$|\mathbf{p}_{V^\perp \cap \alpha^k_i} (\beta^k (L))| \leq C \mathbb{E} (T_k, \mathbf{S}_k, \Bbf_1)^{1/2}$, for each $i=1,\dotsc,N$. In particular we easily conclude that 
\[
|\mathbf{p}_{V}^\perp (\beta^k (L))|\leq C \mathbb{E} (T_k, \mathbf{S}_k, \Bbf_1)^{1/2}
\]
for a constant $C= C(Q,m,n,\bar{n},\varepsilon^\star_{c})$. Thus we can assume that for every $L$, we can find $\bar{\beta} (L)$ such that 
\[
\lim_{k\to \infty} \frac{\mathbf{p}_{V}^\perp (\beta^k (L))}{\mathbb{E} (T_k, \mathbf{S}_k, \Bbf_1)^{1/2}} = \bar\beta (L)\, .
\]
It is notationally convenient to think of $\bar\beta$ as a piecewise constant function which is defined over $\Bbf_r$ as being identically equal to $\bar\beta (L)$ on the set $R(L)$ (note that this is well-defined away from $V$ and the overlaps of the regions $R(L)$, but this set has measure 0). In that way we can divide by $\mathbb{E}(T_k,\Sbf_k,\Bbf_1)$ in \eqref{e:no-concentration-3} and pass to the limit to conclude that
\begin{equation}\label{e:beta-bar}
\int_{\Bbf_r\cap \alpha_i} \frac{|\bar{u}_i (z) \ominus \mathbf{p}_{\alpha_i}^\perp (\bar\beta) (z)|^2}{\dist (z, V)^{7/2}}\, dz < \infty\, .
\end{equation}
But in particular, since $|\bar{u}_i (z) \ominus (\boldsymbol{\eta}\circ \bar{u}_i) (z)|\leq |\bar{u}_i (z) \ominus \mathbf{p}_{\alpha_i}^\perp (\bar\beta) (z)|$ (this is just the trivial fact that the barycenter minimizes this quantity), we get 
\begin{equation}\label{e:average-u}
\int_{\Bbf_r\cap \alpha_i} \frac{|\bar{u}_i (z) \ominus \boldsymbol{\eta} \circ \bar{u}_i (z)|^2}{\dist (z, V)^{7/2}}\, dz<\infty\, .
\end{equation}
Given that the numerator is continuous, it must be identically $0$ for $z\in V\cap \Bbf_r$, otherwise the integral would diverge (recall that $V$ has codimension $2$ and thus the integral of $\dist (z,V)^{-s}$ diverges for every $s\geq 2$). In particular we have shown (b). 

As for (c), recall first that we have the estimate of Corollary \ref{c:HS-patch}. Once again choosing $\kappa = \frac{1}{4}$, this is easily seen to imply the estimate
\[
\int_{\Bbf_r\cap \alpha_i} \frac{|\bar u_i (z)|^2}{|z|^{m+7/4}}\, dz < \infty\, .
\]
Arguing as above, this clearly implies that $|\bar u_i (0)|=0$ and hence the first claim of point (c). We next argue for the second part of the claim; here we resort to the full version of Corollary \ref{c:HS-patch} for any $\kappa\in (0,m+2)$. Observe that it suffices to prove the latter claim $I_{y,\bar{u}_i \ominus \boldsymbol{\eta}\circ \bar{u}_i} (0)\geq 1$ for all $y\in V\cap \Bbf_r$. Indeed, the fact that $I_{0,\bar{u}_i}(0)\geq 1$ follows immediately from this at $y=0$, combined with the property $\boldsymbol{\eta}\circ \bar{u}_i(0) = 0$ that we have just proved. Fix a point $y\in V \cap \Bbf_r$ and for each $k\in \N$, consider a point $q_k\in \Bbf_{\eps_k} (y)$ with $\Theta (T_k, q_k)\geq Q$ and fix any $\rho \in (0,r)$. It follows from Proposition \ref{p:HS-3} that, if $p\in {\rm gr}\, (u_i^k)\cap \spt (T_k)$ and $|\mathbf{p}_{\alpha_i} (p)| \geq \rho$, then $\dist (p, q_k)+\Sbf_k)=\dist (p,q_k+\alpha^k_i)$ for all $k$ sufficiently large (depending on $\rho$ and $\kappa$) and in particular, if $K_{k,i}\subset\alpha_i^k$ denotes the set over which the graph of $u_i^k$ coincides with the current $T$, then Proposition \ref{p:HS-3} 
\[
\int_{(\Bbf_r(y)\setminus \Bbf_\rho (y))\cap K_{k,i}} \frac{|u^k_i (z) \ominus \mathbf{p}_{\alpha^k_i}^\perp (q_k)|^2}{|z-y|^{m+2-\kappa}}\, dz
\leq C(\kappa) \Ebb (T_k, \Sbf_k, \Bbf_1)\, ,
\]
for every positive $\kappa$, and for $k$ sufficiently large, depending on $\rho$, $\kappa$.

Again recall that $|u^k_i\ominus \boldsymbol{\eta} \circ u^k_i|\leq |u^k_i \ominus \mathbf{p}_{\alpha^k_i}^\perp (q_k)|$, and so in particular we conclude that
\[
\int_{(\Bbf_r (y)\setminus \Bbf_\rho(y))\cap K_{k,i}} \frac{|u^k_i (z) \ominus \boldsymbol{\eta}\circ u^k_i (z)|^2}{|z-y|^{m+2-\kappa}}\, dz
\leq C(\kappa) \Ebb (T_k, \Sbf_k, \Bbf_1)\, .
\]
Dividing by $\Ebb (T_k, \Sbf_k, \Bbf_1)$, taking the limiting in $k$ and noting that the measure of $(\Bbf_r\setminus \Bbf_\rho )\cap (\alpha^k_i \setminus K_{k,i})$ converges to $0$ (cf. \eqref{e:coherent-3}), we arrive at 
\[
\int_{(\Bbf_r (y)\setminus \Bbf_\rho (y)) \cap \alpha_i} \frac{|\bar u_i (z)\ominus \boldsymbol{\eta}\circ \bar u_i (z)|^2}{|z-y|^{m+2-\kappa}}\, dz \leq C (\kappa)\, .
\]
On the other hand letting $\rho\downarrow 0$ we then get 
\begin{equation}\label{e:finite-for-kappa}
\int_{\Bbf_r (y)\cap \alpha_i } \frac{|\bar u_i (z)\ominus \boldsymbol{\eta}\circ \bar u_i (z)|^2}{|z-y|^{m+2-\kappa}}\, dz \leq C (\kappa)
\end{equation}
Now let $\gamma := I_{y,\bar{u}_i\ominus \boldsymbol{\eta}\circ \bar u_i} (0)$. It follows from \cite{DLS_MAMS}*{Corollary 3.18} that for every $\gamma'> \gamma$ there is a positive constant $C$ (depending on $\gamma'$ and $\bar{u}_i$) such that 
\[
\int_{\partial \Bbf_\sigma (y) \cap \alpha_1} |\bar{u}_i\ominus \boldsymbol{\eta}\circ \bar{u}_i|^2 \geq C \sigma^{m-1+2\gamma'}\, ,
\]
for every $\sigma \in (0,r)$. Combined with \eqref{e:finite-for-kappa}, we conclude that
\[
\int_0^r \sigma^{2\gamma'+\kappa -3}\, d\sigma < \infty
\]
which implies that $2\gamma' + \kappa > 2$. Letting $\kappa\downarrow 0$ and $\gamma'\downarrow \gamma =I_{y,\bar{u}_i\ominus \boldsymbol{\eta}\circ \bar u_i} (0)$ we then conclude that $I_{y,\bar{u}_i\ominus \boldsymbol{\eta}\circ \bar u_i} (0) \geq 1$. Note that one could alternatively conclude that $I_{y,\bar{u}_i\ominus \boldsymbol{\eta}\circ \bar u_i} (0) \geq 1$ via the Hardt-Simon inequality, as done in \cite{Simon_cylindrical}.

We next come to point (d). Observe that if $\zeta$ is in $V^\perp$ then
\begin{equation}\label{e:yet-again-lin-alg}
|\zeta| \leq C (|\mathbf{p}_{\alpha_1}^\perp (\zeta)| + |\mathbf{p}_{\alpha_N}^\perp (\zeta)|)\, ,
\end{equation}
(where the constant $C$ in particular depends on $\varepsilon^\star_c$). 
We next observe that we have (from \eqref{e:beta-bar}, \eqref{e:average-u})
\begin{equation}\label{e:beta-bar-u-average}
\int_{\Bbf_r\cap \alpha_i} \frac{|\mathbf{p}_{\alpha_i}^\perp (\bar\beta) - \boldsymbol{\eta}\circ \bar{u}_i|^2}{\dist (z,V)^{7/2}}\, dz < \infty\, .
\end{equation}
For each $\rho \in (0,r)$ consider the functions defined on $V$ which result averaging $\bar\beta$ and $\boldsymbol{\eta}\circ \bar{u}_i$ respectively, over $2$-dimensional disks in $\alpha_i$ (for any $i\in \{1,\dotsc,N\}$) of radius $\rho$ perpendicular to $V$ and centered at $y \in V$:
\begin{align*}\label{e:averages}
\bar\beta_\rho (y) &= \frac{1}{\pi \rho^2} \int_{\Bbf_\rho (y) \cap (y+V^\perp) \cap \alpha_i} \bar\beta\, , \\
g_\rho (y) &= \frac{1}{\pi \rho^2} \int_{\Bbf_\rho (y) \cap (y+V^\perp) \cap \alpha_i} \boldsymbol{\eta} \circ \bar{u}_i\, .
\end{align*}
We clearly have, from \eqref{e:beta-bar-u-average}, 
\[
\int_{V\cap \Bbf_{\sqrt{r^2-\rho^2}}} |\mathbf{p}_{\alpha_i}^\perp (\bar\beta_\rho) - g_\rho|^2 \leq C \rho^{3/2}\, .
\]
In particular $\mathbf{p}_{\alpha_i}^\perp (\bar\beta_\rho)$ converges, as $\rho\downarrow 0$, in $L^2 (V \cap\Bbf_r)$ to $\lim_{\rho\downarrow 0}g_\rho = \boldsymbol{\eta} \circ \bar{u}_i$. But then for any given $\rho\in(0,r)$,
\[
\int_{V\cap \Bbf_{\sqrt{r^2-\rho^2}}} |\mathbf{p}_{\alpha_i}^\perp (\bar\beta_\rho) - \mathbf{p}_{\alpha_i}^\perp (\bar\beta_{\rho'})|^2 \leq C \rho^{3/2} + \int_{V\cap \Bbf_{\sqrt{r^2-\rho^2}}} |g_\rho - g_{\rho'}|^2\qquad \forall \rho'<\rho
\]
Hence, recalling that $\bar\beta$ takes values in $V^\perp$, applying \eqref{e:yet-again-lin-alg} and the above inequality with $i=1,N$, we conclude
\[
\int_{V\cap \Bbf_{\sqrt{r^2-\rho^2}}} |\bar\beta_\rho - \bar\beta_{\rho'}|^2 \leq C \rho^{3/2} + \int_{V\cap \Bbf_{\sqrt{r^2-\rho^2}}} |g_\rho - g_{\rho'}|^2 \qquad \forall \rho'<\rho\, .
\]
In particular $\{\bar\beta_\rho\}_{\rho>0}$ converges strongly in $L^2 (V\cap \Bbf_r)$ as $\rho\downarrow 0$ to some function $\beta\in L^2(V\cap \Bbf_r)$ and clearly $\mathbf{p}_{\alpha_i}^\perp (\beta) = \boldsymbol{\eta}\circ \bar{u}_i$ for each $i=1,\dotsc,N$. The smoothness of $\beta$ follows easily from the smoothness of the $\boldsymbol{\eta}\circ\bar{u}_i$, which in turn is obvious from the fact that they are the traces on $V$ of classical harmonic functions. This proves (d).

Coming to the estimate \eqref{e:est-u} in the non-collapsed case, by construction (namely, the $L^2$ non-concentration estimates \eqref{e:no-concentration-2}, \eqref{e:no-concentration-3}), $\|\bar u_i\|_{W^{1,2}} \leq C$ for some constant $C = C(Q,m,n,\bar{n})$, which in fact does not depend on $\eps_c^\star$. Next, as $\mathbf{p}_{\alpha_i}^\perp(\beta) = \boldsymbol{\eta}\circ\bar{u}_i$ and the cone $\Sbf$ is well-separated (cf. \eqref{e:yet-again-lin-alg}), $\beta$ is determined as a linear combination of the $\mathbf{p}_{\alpha_i}^\perp (\beta)$ for $i=1,N$, which in turn are the traces of $\boldsymbol{\eta}\circ \bar{u}_i$. The latter are harmonic functions and they enjoy the same $W^{1,2}$ bound on $\bar{u}_i$. Hence, the $C^2$ estimate follows from the classical theory of harmonic functions.

\medskip
The conclusion (e) is more subtle than (a). As above, we wish to argue that $\bar{w}_i$ is Dir-minimizing on every $\Omega\subset\subset \Bbf_r\cap \alpha_1\setminus V$ and invoke Proposition \ref{p:remove-spine-2} to argue that therefore it is Dir-minimizing on $\Bbf_r\cap \alpha_1$. First of all we assume, without loss of generality, that the double-sided excess $\mathbb{E} (T_k, \mathbf{S}_k, \Bbf_1)$ and the one sided excess $\hat{\Ebf} (T_k, \mathbf{S}_k, \Bbf_1)$ are comparable, since the alternative would be that the latter is infinitesimal compared to the former, and in that case $\bar{w}_i$ would vanish. Then, we notice that in order to argue for the minimality in $\Omega\subset\subset \Bbf_r\cap \alpha_1\setminus V$ we cannot invoke directly the argument of \cite{DLS14Lp}*{Theorem 2.6}. However, since the mismatch in mass between the current $T$ and the graphs of the multi-valued maps $\sum_i w^k_i\oplus A^k_i$ is controlled by $E_k^{1+\gamma} := \mathbb{E} (T_k, \mathbf{S}_k, \Bbf_1)^{1+\gamma}$, the area-minimizing property of $T$ and the arguments in \cite{DLS14Lp}*{Section 5} shows the following minimality property for the map $w^k_i \oplus A^k_i$:
\begin{itemize}
\item[(Min)] If $\tilde{w}^k_i$ is a Lipschitz map which coincides with $w^k_i$ outside of $\Omega$ and its Lipschitz constant is controlled by $1$, then
\begin{equation}\label{e:almost-minimality}
\|\mathbf{G}_{\tilde{w}^k_i\oplus A^k_i}\| (\Omega \times \alpha_1^\perp) 
\geq \|\mathbf{G}_{w^k_i\oplus A^k_i}\| (\Omega \times \alpha_1^\perp) - C E_k^{1+\gamma}\, .
\end{equation}
\end{itemize}
Our aim is to show that, if $\bar{w}_i$ is not Dir-minimizing, then \eqref{e:almost-minimality} is violated for $k$ sufficiently large. Assuming it is not Dir-minimizing, the Lipschitz truncation and ``cut-and-paste'' arguments of \cite{DLS14Lp}*{Theorem 2.6} show the existence of a sequence $\tilde{w}^k_i$ of multi-valued maps such that 
\begin{itemize}
    \item[(i)] $\tilde{w}^k_i = w^k_i$ outside $\Omega$;
    \item[(ii)] $\Lip (\tilde{w}^k_i) \leq C E_k^\gamma$;
    \item[(iii)] The Dirichlet energy of $\tilde{w}^k_i$ has the following gain:
    \begin{equation}\label{e:gain}
    \int_\Omega |D\tilde{w}^k_i|^2 \leq \int_\Omega |Dw^k_i|^2 - \vartheta E_k\, ,
    \end{equation}
    for some positive $\vartheta>0$ independent of $k$.
\end{itemize}
We now wish to show that (i)-(iii) contradict \eqref{e:almost-minimality}. We let $\mathcal{A} (Du)$ be the area integrand for the graph of a single-valued function $u$. More precisely, if we denote by $\mathcal{M} (B)$ the set of all $k\times k$ minors of the matrix $B$ with $k\geq 2$, then\footnote{Note that, when $u$ is a scalar function, $Du$ only has $1\times 1$ minors, namely $\mathcal{M} (Du)$ is empty, and we would use the convention that the formula reduces to $\mathcal{A} (B) = \sqrt{1+|B|^2}$. However in our case $u$ is necessarily vector-valued.}
\[
\mathcal{A} (B) = \sqrt{1+ |B|^2 + \sum_{M\in \mathcal{M} (B)} \det (M)^2}\, .
\]
We next use the notation $w^k_i = \sum_j \llbracket w^k_{i,j}\rrbracket$ and $\tilde{w}^k_i = \sum_j \llbracket \tilde{w}^k_{i,j}\rrbracket$. Using the area formula in \cite{DLS_multiple_valued}*{Corollary 1.11} we can compute
\[
\|\mathbf{G}_{\tilde{w}^k_i\oplus A^k_i}\| (\Omega \times \alpha_1^\perp) =
\int_\Omega \sum_j \mathcal{A} (A^k_i + D\tilde{w}^k_{i,j})
\]
and
\[
\|\mathbf{G}_{w^k_i\oplus A^k_i}\| (\Omega \times \alpha_1^\perp) =
\int_\Omega \sum_j \mathcal{A} (A^k_i + Dw^k_{i,j})\, .
\]
In particular, defining 
\[
\Delta := \|\mathbf{G}_{\tilde{w}^k_i\oplus A^k_i}\| (\Omega \times \alpha_1^\perp)-\|\mathbf{G}_{w^k_i\oplus A^k_i}\| (\Omega \times \alpha_1^\perp)\, ,
\]
we arrive at the expression
\begin{equation}\label{e:mismatch}
\Delta = \int_\Omega \left(\sum_j \mathcal{A} (A^k_i + D\tilde{w}^k_{i,j}) - \sum_j \mathcal{A} (A^k_i + Dw^k_{i,j})\right)\, .
\end{equation}
We now wish to make a Taylor expansion of the function $\mathcal{A}$ at the constant $A^k_i$. In particular we can write
\[
\mathcal{A} (A^k_i + B) = \mathcal{A} (A^k_i) + \mathcal{L}_{k,i} (B) + \mathcal{Q}_{k,i} (B) + O (|B|^3)
\]
where $\mathcal{L}_{k,i}$ are appropriate linear functions and $\mathcal{Q}_{k,i}$ are appropriate quadratic forms. 
However note that at $A^k_i=0$ the quadratic form would be $\frac{|B|^2}{2}$, and given that $|A^k_i|\leq \boldsymbol{\mu} (\mathbf{S}_k)$ we can further write 
\[
\mathcal{A} (A^k_i + B) = \mathcal{A} (A^k_i) + \mathcal{L}^k_i (B) + \frac{|B|^2}{2} + O ((\boldsymbol{\mu} (\mathbf{S}_k) + |B|)|B|^2)\, .
\]
Now, inserting the latter expansion in our expression \eqref{e:mismatch} and using that $\|D\tilde{w}^k_i\|_\infty + \|D w^k_i\|_\infty \leq CE_k^\gamma$ and $\|D\tilde{w}^k_i\|_{L^2} + \|Dw^k_i\|_{L^2}\leq CE_k$ we arrive at
\begin{align*}
\Delta &= \int_\Omega \left(\sum_j \left(\mathcal{L}^k_i (D\tilde{w}^k_{i,j}) + \frac{|D\tilde{w}^k_{i,j}|^2}{2}\right) - \sum_j \left(\mathcal{L}^k_i (D w^k_{i,j}) + \frac{|D w^k_{i,j}|^2}{2}\right)\right) \\
&\qquad +O ((\boldsymbol{\mu} (\mathbf{S}_k)+E_k^\gamma)E_k)\\
&= Q_i \underbrace{\int_\Omega \mathcal{L}^k_i (D (\boldsymbol{\eta}\circ \tilde{w}^k_i - \boldsymbol{\eta}\circ w^k_i))}_{=:\, \text{(L)}}
+ \frac{1}{2} \underbrace{\int_\Omega (|D\tilde{w}^k_i|^2 - |Dw^k_i|^2)}_{=:(\mathscr{Q})} + O ((\boldsymbol{\mu} (\mathbf{S}_k)+E_k^\gamma)E_k)\, ,
\end{align*}
where we have used the linearity of $\mathcal{L}^k_i$. Now observe that the function 
\[
f_k := \boldsymbol{\eta}\circ \tilde{w}^k_i - \boldsymbol{\eta}\circ w^k_i
\]
is a single-valued Lipschitz function that vanishes on $\partial \Omega$. On the other hand $\mathcal{L}^k_i$ is a linear function. In particular, if we write $f_k = (f_k^1, \ldots, f_k^n)$ for the components of the functions $f_k$, there are vectors $v_k^1, \ldots v_k^n\in \mathbb{R}^n$ (determined by $\mathcal{L}^k_i$) such that  
\begin{align*}
\text{(L)} &= \sum_l \int_\Omega v_k^l \cdot \nabla f_k^l =
\sum_l \int_\Omega {\rm div}\, (v_k^l f_k^l) = \sum_l \int_{\partial \Omega} f_k^l \nu \cdot v_k^l = 0\, ,
\end{align*}
where $\nu$ denotes the unit normal determined by the Stokes' orientation of $\partial\Omega$. Moreover, by \eqref{e:gain} we have
\[
(\mathscr{Q}) \leq - \vartheta E_k\, .
\]
We can thus write 
\[
\Delta \leq - \frac{\vartheta}{2} E_k + O ((\boldsymbol{\mu} (\mathbf{S}_k)+E_k^\gamma)E_k)\, .
\]
Since $E_k+ \boldsymbol{\mu} (\mathbf{S}_k)\to 0$, the latter clearly contradicts \eqref{e:almost-minimality}. This concludes the proof of (e). 

\medskip

The argument for (f) is entirely analogous to the argument for (b), where we use \eqref{e:no-concentration-5} in place of \eqref{e:no-concentration-3}: indeed \eqref{e:no-concentration-5} leads to the conclusion that 
\[
\int_{\Bbf_r\cap \alpha_1} \frac{|\bar{w}_i \ominus \boldsymbol{\eta}\circ \bar{w}_i|^2}{\dist (z, V)^{7/2}} < \infty\, ,
\]
which in turn clearly implies (f) using the continuity of $\bar{w}_i$. The argument for (g) is entirely analogous to the argument for (c): using Corollary \ref{c:HS-patch} with $\kappa=\frac{1}{4}$ we derive
\[
    \int_{\Bbf_r\cap \alpha_1} \frac{|\bar w_i (z)|^2}{|z|^{m+7/4}}\, dz < \infty\, ,
\]
which yields $\bar{w}_i(0)=\boldsymbol{\eta}\circ\bar{w}_i(0)=0$, while Proposition \ref{p:HS-3} gives
\begin{equation}\label{e:avg-free-w}
\int_{\Bbf_r(y)\cap \alpha_1} \frac{|\bar{w}_i \ominus \boldsymbol{\eta}\circ \bar{w}_i (z)|^2}{|z-y|^{m+2-\kappa}}\, dz \leq C (\kappa) 
\end{equation}
for $y\in V\cap \Bbf_r$. We then use this 
in place of \eqref{e:finite-for-kappa} to conclude the desired lower bound on the frequency.

\medskip

We finally come to (h). Recall the notation $\beta^k (L)$ for the nail of $L$ when the current is $T_k$. Recall that from \eqref{e:HS-4} that
\begin{align*}
|\mathbf{p}^\perp_{\alpha_1} (\beta^k (L))|& \leq C \mathbb{E} (T_k, \mathbf{S}_k, \Bbf_1)^{1/2}\, ,\\
\boldsymbol{\mu} (\mathbf{S}_k)|\mathbf{p}_{V^\perp\cap \alpha_1} (\beta^k (L))| &\leq C \mathbb{E} (T_k, \mathbf{S}_k, \Bbf_1)^{1/2}\, .
\end{align*}
Arguing as in (d), we assume, upon extraction of a subsequence, that there are $\bar \beta^\perp$ and $\bar \beta^\parallel$ such that 
\begin{align*}
\frac{\mathbf{p}^\perp_{\alpha_1} (\beta^k (L))}{\mathbb{E} (T_k, \mathbf{S}_k, \Bbf_1)^{1/2}} &\to \bar \beta^\perp (L)\\
\frac{\boldsymbol{\mu} (\mathbf{S}_k) \mathbf{p}_{V^\perp\cap\alpha_1} (\beta^k (L))}{\mathbb{E} (T_k, \mathbf{S}_k, \Bbf_1)^{1/2}} &\to \bar \beta_\parallel (L)\, .
\end{align*}
In analogy with the argument for (d) we consider both functions as defined over $\Bbf_r\cap \alpha_1$. Since $\bar{A}^k_i = \boldsymbol{\mu} (\mathbf{S}_k)^{-1} A^k_i$, by passing into the limit in \eqref{e:no-concentration-5} we get to, for each $i=1,\dotsc,N$
\begin{equation}\label{e:nailing-it-2}
\int_{\Bbf_r\cap \alpha_1} \frac{|\bar{w}_i \ominus (\bar\beta^\perp + \bar{A}_i (\bar \beta_\parallel))|^2}{\dist (z,V)^{7/2}}\, dz < \infty\, .
\end{equation}
Combining with \eqref{e:avg-free-w}, in turn these estimates imply
\begin{equation}\label{e:nailing-it-3}
\int_{\Bbf_r\cap \alpha_1} \frac{|\boldsymbol{\eta} \circ \bar{w}_i  - (\bar\beta^\perp + \bar{A}_i (\bar \beta_\parallel))|^2}{\dist (z,V)^{7/2}}\, dz < \infty\, .
\end{equation}
As $\bar{A}_1 =0$, we immediately conclude that $\bar{\beta}^\perp = \boldsymbol{\eta}\circ\bar{w}_1$ on $V\cap \Bbf_r$, and by the triangle inequality that 
\begin{equation}\label{e:nailing-it-4}
\int_{\Bbf_r\cap \alpha_1} \frac{|\boldsymbol{\eta} \circ \bar{w}_i  - (\boldsymbol{\eta} \circ \bar{w}_1 + \bar{A}_i (\bar \beta_\parallel))|^2}{\dist (z,V)^{7/2}}\, dz < \infty\, .
\end{equation}
Next, notice that $\sum_i \llbracket \bar{A}_i\rrbracket$ is Dir-minimizing (indeed, this will simply be the usual blow-up of $T_k$ relative to $\alpha_1$). Moreover, because $\max_i |A^k_i|\geq C^{-1} \boldsymbol{\mu} (\mathbf{S}_k)$, we have that $\max_i |\bar{A}_i| \geq C^{-1}>0$ for some constant $C = C(Q,m,n,\bar{n})$; we also know that this maximum is achieved by $\bar{A}_N$. Since $\bar{A}_1 =0$, the map $\llbracket \bar{A}_N\rrbracket + \llbracket 0 \rrbracket$ is Dir minimizing and hence, by subtracting the average and rescaling by a factor $2$, so is 
$\llbracket \bar{A}_N\rrbracket + \llbracket -\bar{A}_N \rrbracket$. In particular we can apply Lemma \ref{l:quasi-conformal} and conclude that, if we let $W$ be the image of $\bar{A}_N$,  $\bar{A}_N : V^\perp \to W$ is invertible and its inverse $B$ satisfies $|B|\leq C$ for some $C = C(Q,m,n,\bar{n})$.

We therefore have the identity $\bar \beta_\parallel = B (\bar{A}_N (\bar \beta_\parallel))$. For $y\in V\cap\Bbf_r$, define now the functions $(\bar\beta_\parallel)_\rho$ and $h_\rho$ as follows, analogously to those the proof of (d): for $y\in V\cap\Bbf_r$,
\begin{align}\label{e:averages-2}
(\bar\beta_\parallel)_\rho (y) &= \frac{1}{\pi \rho^2} \int_{\Bbf_\rho (y) \cap (y+V^\perp) \cap \alpha_1} \bar\beta_\parallel\, , \\
h_\rho(y) & = \frac{1}{\pi \rho^2} \int_{\Bbf_\rho (y) \cap (y+V^\perp) \cap \alpha_1} (\boldsymbol{\eta}\circ\bar{w}_i - \boldsymbol{\eta}\circ\bar{w}_1)\, .
\end{align}
Arguing as in the proof of (d) we use \eqref{e:nailing-it-4} to conclude that $\bar{A}_i ((\bar\beta_\parallel)_\rho)$ converges in $L^2 (V\cap\Bbf_r)$ to the function $\boldsymbol{\eta}\circ \bar{w}_i - \boldsymbol{\eta}\circ \bar{w}_1 = \lim_{\rho\downarrow 0} h_\rho$, which in particular gives us the conclusion that $\boldsymbol{\eta}\circ \bar{w}_N - \boldsymbol{\eta}\circ \bar{w}_1$ is in the image of $\bar{A}_N$. Hence, if we define $\beta_\parallel := B (\boldsymbol{\eta}\circ \bar{w}_N - \boldsymbol{\eta}\circ \bar{w}_1)$ and $\beta^\perp:= \boldsymbol{\eta}\circ\bar{w}_1$, we see immediately that we have the identity
\[
\boldsymbol{\eta}\circ \bar{w}_i = \boldsymbol{\eta}\circ \bar{w}_1 + \bar{A}_i (\beta_\parallel) = \beta^\perp + \bar{A}_i (\beta_\parallel)\, .
\]
In order to conclude the proof we need to show the desired estimates \eqref{e:est-w} on $\|\bar{w}_i\|_{W^{1,2}}$, $\|\beta^\perp\|_{C^2}$, and $\|\beta_\parallel\|_{C^2}$. The first is obvious because $\|\bar{w}^k_i\|_{W^{1,2}}$ is bounded by a universal constant. The second is also obvious because $\beta^\perp$ is the restriction to $V$ of the harmonic function $\boldsymbol{\eta}\circ \bar{w}_1$ whose $W^{1,2}$ norm is controlled by $\|\bar{w}_1\|_{W^{1,2}}$. Finally, the estimate on $\|\beta_\parallel\|_{C^2}$ follows from the same argument because $\beta_\parallel = B(\boldsymbol{\eta}\circ \bar{w}_N - \boldsymbol{\eta}\circ \bar{w}_1)$ and the norm of the linear map $B$ is bounded by a universal constant. Thus, the proof of Proposition \ref{p:blowup2} is complete.
\end{proof}

\subsection{Final argument}

In this section we are going to show Proposition \ref{p:decay-collapsed} and Proposition \ref{p:decay-noncollapsed}. We fix the decay scale $\varsigma_1$ and we will show that this will be reached at a certain radius, $r_c$ or $r_{nc}$, whether we are in the collapsed or non-collapsed setting, respectively, via a contradiction argument. We start with Proposition \ref{p:decay-collapsed}; we fix a contradiction sequence $T_k$, $\mathbf{S}_k$, and $\Sigma_k$ as in the previous section and use Proposition \ref{p:blowup2}(e)-(h) to extract the blow-up limits $\bar{A}_i$, $\bar{w}_i$, $\beta^\perp$, and $\beta_\parallel$. As before, without loss of generality we have rotated so that the planes $\alpha^k_1$ all coincide with the same plane $\alpha_1$. We next observe that, by (e)-(h), the functions $\beta^\perp$ and $\beta_\parallel$ both equal $0$ at the origin. We therefore linearize them and let 
$\gamma^\perp$ and $\gamma_\parallel$ be their respective linearizations at $0$. Observe that the $C^2$-regularity of $\beta^\perp$ and $\beta_\parallel$ guarantees that
\begin{align}
|\beta^\perp (y) - \gamma^\perp (y)|&\leq C |y|^2\\
|\beta_\parallel (y) - \gamma_\parallel (y)| & \leq C |y|^2\, .
\end{align}
The constant $C$ depends only on the $C^2$ norm of $\beta^\perp$ and $\beta_\parallel$, which in turn is bounded by a constant depending only upon on $Q, m, n, \bar{n}$, by Proposition \ref{p:blowup2}. Observe that $\gamma_\parallel: V \to V^\perp\cap \alpha_1$ and let $\gamma_\parallel^T : V^\perp\cap \alpha_1 \to V$ be its transpose. We build a skew-symmetric map of $\alpha_1$ onto itself by mapping
\[
V\oplus (V^\perp\cap\alpha_1) \ni y+x \mapsto \gamma_\parallel (y) - \gamma_\parallel^T (x)\, .
\]
This skew-symmetric map generates a one-parameter family $R [t]$ of rotations of $\alpha_1$, which we may extend to all of $\R^{m+\bar{n}}$ by setting it to be the identity on $\alpha_1^\perp$ and extended linearly. We next define the rotations
\[
R_k := R \left[\frac{\mathbb{E} (T_k, \mathbf{S}_k, \Bbf_1)^{1/2}}{\boldsymbol{\mu} (\mathbf{S}_k)}\right]
\]
and observe that these rotations map $\alpha_1$ and $\alpha_1^\perp$ onto themselves. 

The rotated cones $\mathbf{S}'_k := R_k (\mathbf{S}_k)$ are thus a first step towards the cones which will have the desired decay at the radius $r_c$.
Next consider the Dir-minimizing map $\bar{w}_i \ominus \boldsymbol{\eta}\circ \bar{w}_i$ and the (single-valued) harmonic function $\zeta_i := \boldsymbol{\eta}\circ \bar{w}_i - \gamma^\perp - \bar{A}_i (\gamma_\parallel)$, where the latter two linear maps are extended in the $V^\perp\cap \alpha_1$ directions as constant (in particular, being linear, they are harmonic). To the map $\bar{w}_i \ominus \boldsymbol{\eta}\circ \bar{w}_i$ we apply Theorem \ref{t:decay} (namely \eqref{e:decay-2}, which we can do by Proposition \ref{p:blowup2}(g)): for a fixed $\delta$, which will be chosen later, we find a radius $\bar{r} = \bar{r}(Q,m,n,\bar{n},\delta)$ such that for every $\rho<\bar{r}$ we can find a $1$-homogeneous Dir-minimizer $h_{i, \rho}\in \mathscr{L}_1$ with the property that
\[
\int_{\Bbf_\rho\cap \alpha_1} \mathcal{G} (\bar{w}_i \ominus \boldsymbol{\eta}\circ \bar{w}_i, h_{i, \rho})^2 \leq \delta \rho^{m+2}\, 
\]
(indeed, $\int_{\Bbf_1\cap\alpha_1}|\bar{w}_i|^2\leq 1$ by construction). As for the classical harmonic part $\zeta_i$, since $D\zeta_i(0) = 0$, we find a linear map $\xi_i$ (namely, the linearization of $\zeta_i$) which vanishes on $V$ such that 
\[
|\zeta_i (z)-\xi_i (z)|\leq C |z|^2\, .
\]
We fix now a radius $r_c<\bar{r}$. We are now ready to define a new sequence of cones $\mathbf{S}''_k$. We take the linear functions
$A^k_i$ whose graphs over $\alpha_1$ give the planes $\alpha^k_i$, hence the linear functions $\xi_i$, and construct the maps 
\[
A^k_i + \mathbb{E} (T_k, \mathbf{S}_k, \Bbf_1)^{1/2} \xi_i\, ,
\]
and then we add to them the multi-valued functions $\mathbb{E} (T_k, \mathbf{S}_k, \Bbf_1)^{1/2} h_{i, \rho}$, and compose the resulting $Q$-valued function with $R_k^{-1}$. The formula for this $Q$-valued function over $\alpha_1$ is thus given by
\[
\sum_i (\mathbb{E} (T_k, \mathbf{S}_k, \Bbf_1)^{1/2} h_{i, \rho} \oplus (A^k_i + \mathbb{E} (T_k, \mathbf{S}_k, \Bbf_1)^{1/2} (\xi_i+\gamma^\perp))) \circ R_k^{-1}\, .
\]
By construction, since the support of the graph of any element $h\in\Lscr_1$ with $h(0) = Q\llbracket 0 \rrbracket$ lies in $\Cscr(Q, 0)$, the graphs of these functions give new cones $\mathbf{S}''_k$ which belong to $\mathscr{C} (Q, 0)$. From the estimates that we have, we can check that, if $r_c = \rho \leq \bar{r}$ 
\[
\lim_{k\to \infty} \frac{\mathbb{E} (T_k, \mathbf{S}_k'', \Bbf_{r_c})}{\mathbb{E} (T_k, \mathbf{S}_k, \Bbf_1)} 
\leq C \delta + C r_c^2\, ,
\]
where $C$ is just a geometric constant. In particular, we choose $\delta$ sufficiently small so that $C\delta \leq \frac{\varsigma_1}{2}$, which in turn fixes $\bar r$, and hence we fix $r_c \leq \bar{r}$ so that $C r_c^2 \leq \frac{\varsigma_1}{4}$. With this choice we conclude that 
\[
\lim_{k\to \infty} \frac{\mathbb{E} (T_k, \mathbf{S}_k'', \Bbf_{r_c})}{\mathbb{E} (T_k, \mathbf{S}_k, \Bbf_1)} \leq \frac{3\varsigma_1}{4}\, .
\]
Now, with this particular choice of $r_c$, which depends only on $\varsigma_1$, we get for $k$ large enough a contradiction to the absence of decay with factor $\varsigma_1$. This proves Proposition \ref{p:decay-collapsed}.

\medskip

The proof of Proposition \ref{p:decay-noncollapsed} works in a very similar way. We again assume that $\varsigma_1$ and $\varepsilon_{c}^\star$ are given, that $r_{nc}$ is fixed, and that there is absence of decay by $\varsigma_1$ for sequences $T_k$, $\Sigma_k$, and $\mathbf{S}_k$. We then apply Proposition \ref{p:blowup2} and get the maps $\bar{u}_i$ and $\beta$. Arguing as above, we linearize the map $\beta$ (which vanishes at $0$) to a map $\gamma$. Again we note that 
\[
|\beta (y) - \gamma (y)|\leq C |y|^2\, .
\]
However, this time the constant $C$ depends on $\varepsilon^\star_{c}$ as well as the $C^2$ norm of $\beta$. First of all we consider $\gamma$ as a map from $V$ to $V^\perp$, we let $\gamma^T: V^\perp \to V$ be its transpose, we again build the skew-symmetric map
\[
V \oplus V^\perp \ni y+x \mapsto \gamma (y) - \gamma^T (x)
\]
and hence we consider the one-parameter family of rotations $R[t]$ generated by it. We then introduce the multi-valued functions $\bar{u}_i \ominus \boldsymbol{\eta}\circ \bar{u}_i$ and the harmonic functions $\boldsymbol{\eta}\circ \bar{u}_i - \mathbf{p}_{\alpha_i}^\perp (\gamma)$, where we assume that $\gamma$ is extended in the $V^\perp$ directions as a constant map. 

As above, we fix $\delta>0$ (whose choice will be specified later) and we appeal to Theorem \ref{t:decay} to find a threshold $\bar r =\bar{r}(Q,m,n,\bar{n},\delta)>0$ with the property that, for every $\rho< \bar{r}$ we can find a $1$-homogeneous map $h_{i, \rho} \in \mathscr{L}_1$ (again, this map lies in $\mathscr{L}_1$ due to \eqref{e:decay-2}, which is applicable due to Proposition \ref{p:blowup2}(c)) with the property that
\[
\int_{\Bbf_\rho\cap \alpha_i} \mathcal{G} (\bar{u}_i \ominus \boldsymbol{\eta}\circ \bar{u}_i, h_{i, \rho})^2 \leq \delta \rho^{m+2}\, .
\]
Likewise we find a linear map $\xi_i$ which vanishes on $V$ and such that 
\[
|(\boldsymbol{\eta}\circ \bar{u}_i - \mathbf{p}_{\alpha_i}^\perp (\gamma)) (z) - \xi_i (z)|\leq C |z|^2\, ,
\]
where the constant $C$ depends again on $\varepsilon_{c}^\star$. 

We are now ready to find the desired new cones. First of all we define
\[
R_k := R [ \mathbb{E} (T_k, \mathbf{S}_k, \Bbf_1)^{1/2}]
\]
and we consider the first adjustment to the cones as
\[
\mathbf{S}'_k := R_k (\mathbf{S}_k)\, .
\]
Next recall that along the sequence we are fixing $\alpha^k_1$ to be always the same plane (by applying a suitable rotation), and we are assuming that $\alpha^k_i$ converges to $\alpha_i$. Moreover, for each $k$ and $i\neq 1$, we fix a rotation $O_{k,i}$ which maps the approximating planes $\alpha^k_i$ onto $\alpha_i$ (where we use Lemma \ref{l:rotations} to determine the $O_{k,i}$). In particular, we now consider the maps $\mathbb{E} (T_k, \mathbf{S}_k, \Bbf_1)^{1/2} (h_{i, \rho}\oplus \xi_i)$ over $\alpha_i$, compose them with 
$O_{k,i}^{-1} \circ R_k^{-1}$ on the right and with $R_k \circ O_{k,i}$ on the left.

 We thus find the following multi-valued maps over $(\alpha^k_i)' = R_k (\alpha^k_i)$, namely
 \[
 R_k \circ O_{k,i}\circ (\mathbb{E} (T_k, \mathbf{S}_k, \Bbf_1)^{1/2} (h_{i, \rho}\oplus \xi_i)) \circ O_{k,i}^{-1} \circ R_k^{-1}\, .
 \]
 The graphs of these maps give the new cone $\mathbf{S}''_k$, which are easily seen to belong to $\mathscr{C} (Q, 0)$. We now arrive at the same conclusion of the argument for Proposition \ref{p:decay-collapsed}, namely, under the assumption that $\rho= r_{nc} \leq \bar{r}$,
\[
\lim_{k\to \infty} \frac{\mathbb{E} (T_k, \mathbf{S}_k'', \Bbf_{r_{nc}})}{\mathbb{E} (T_k, \mathbf{S}_k, \Bbf_1)} 
\leq C \delta + C r_{nc}^2\, .
\]
The only difference is that in this case the constants $C$ depend upon $\varepsilon^\star_{c}$ as well. However, since the $\varepsilon^\star_c$ and $\varsigma_1$ are both fixed, the constants above are also fixed, and we just choose $\delta$ so that $C\delta \leq \frac{\varsigma_1}{2}$. This in turn fixes $\bar{r}$, and we can further choose $r_{nc}\leq \bar{r}$ so that $C r_{nc}^2 \leq \frac{\varsigma_1}{4}$. As in the proof of Proposition \ref{p:decay-collapsed} we conclude that 
\[
\lim_{k\to \infty} \frac{\mathbb{E} (T_k, \mathbf{S}_k'', \Bbf_{r_{nc}})}{\mathbb{E} (T_k, \mathbf{S}_k, \Bbf_1)} 
\leq \frac{3\varsigma_1}{4}\, .
\]
Since $r_{nc}$ is now fixed, this now contradicts the assumption that there was no decay by a factor $\varsigma_1$ at that radius for any of the currents $T_k$. Thus the proof of Proposition \ref{p:decay-noncollapsed} is complete.
\qed

\medskip

To summarize, we have now established Theorem \ref{c:decay}.

\part{Uniqueness of Tangent Cones and Rectifiability}\label{p:rect}

\section{Uniqueness and Rectifiability}

\subsection{Rectifiability}

In this section we will use Theorem \ref{c:decay} to prove Theorem \ref{con:stronger}(i), namely that the set $\flatS_{Q,1}$ has $\Hcal^{m-2}$-measure zero. Before proceeding with the proof that Theorem \ref{c:decay} implies Theorem \ref{con:stronger}(i), we begin with some preliminaries.

First of all, we would like to show that the cones in the class $\Cscr(Q,0)\setminus\Pscr(0)$ are the ``prevalent" fine blow-ups appearing in the compactness procedure in \cite{DLSk1}. This can be thought of as the analogue of \cite{Simon_cylindrical}*{Lemma 2.4} and \cite{KW}*{Lemma 4.3} for the present setting, and is the key (and, in fact, the only) ingredient needed from the analysis in \cite{DLSk1}.

\begin{lemma}\label{l:dimension-drop}
     For each $\eps \in (0,1]$, the following holds. Suppose that $T$ and $\Sigma$ are as in Assumption \ref{a:main}. Then, for each $p \in \flatS_{Q,1} (T) \cap\Bbf_1$ there exists $\bar\rho=\bar\rho(p,\eps)>0$ such that the following dichotomy holds for each $\rho\in (0,\bar\rho]$: 
    \begin{itemize}
        \item[(a)] There exists $\Sbf\in \Cscr(Q,p)\setminus\Pscr(p)$ with
        \[
            (\rho\Abf)^2 + \Ebb(T,\Sbf,\Bbf_\rho(p)) \leq \eps^2 \Ebf^p(T, \Bbf_\rho(p));
        \]
        \item[(b)] There exists an $(m-3)$-dimensional affine subspace $V\subset T_p\Sigma$ (depending on $\rho$) such that
        \[
            \flatS_{Q,1}\cap {\Bbf}_\rho(p) \subset \{q:\dist(q,V)< \eps\rho \}.
        \]
    \end{itemize}
\end{lemma}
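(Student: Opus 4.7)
The plan is to argue by contradiction via a coarse blow-up. Suppose the statement fails for some $p \in \flatS_{Q,1}(T) \cap \Bbf_1$, so that for some $\eps \in (0,1]$ and some sequence $\rho_k \downarrow 0$ neither alternative (a) nor (b) holds at the scale $\rho_k$. After translating I may take $p = 0$. The failure of (b) produces, for each $k$, points $q_{k,1}, \ldots, q_{k,m-2} \in \flatS_{Q,1}(T) \cap \Bbf_{\rho_k}$ whose rescalings $\tilde q_{k,j} := q_{k,j}/\rho_k \in \bar\Bbf_1$ cannot all be $\eps$-close to any fixed $(m-3)$-dimensional affine subspace. Passing to a subsequence, $\tilde q_{k,j} \to \tilde q_j$, with the $\tilde q_j$ affinely spanning an $(m-2)$-dimensional subspace $V_*$.

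I would then apply the coarse blow-up from Proposition \ref{p:coarse-blow-ups} to the rescalings $T_{0,\rho_k}$ (along a further subsequence on which $\mathbf{E}(T, \Bbf_{6\sqrt m \rho_k}) \to 0$, which is provided for now by the frequency-$1$ flat singular structure at $p$) to obtain a $1$-homogeneous Dir-minimizer $\bar f: B_1(0,\pi_0) \to \Acal_Q(\pi_0^\perp)$ with $\boldsymbol{\eta} \circ \bar f \equiv 0$ and positive Dirichlet energy. The points $\tilde q_j$ persist to the limit: their projections $\mathbf{p}_{\pi_0}(\tilde q_j)$ (which coincide with $\tilde q_j$ up to vanishing corrections) satisfy $\bar f(\mathbf{p}_{\pi_0}(\tilde q_j)) = Q\llbracket 0\rrbracket$ and $I_{\mathbf{p}_{\pi_0}(\tilde q_j), \bar f}(0) \geq 1$, by upper semi-continuity of density and of Almgren's frequency, combined with the fact that each $q_{k,j}$ is itself a density-$Q$, frequency-$1$ point.

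A standard argument (of the type used in the proof of Lemma \ref{l:compactness}) now shows that the combination of $1$-homogeneity with the existence of $m-1$ points (the origin together with the $\tilde q_j$) where $\bar f$ takes value $Q\llbracket 0\rrbracket$ with frequency $\geq 1$, affinely spanning $V_*$, forces $\bar f$ to be invariant under translations in the $(m-2)$ directions of $V_*$. Combined with $1$-homogeneity, $\bar f$ descends to a $1$-homogeneous Dir-minimizer on $V_*^\perp \cong \R^2$ of the form $\sum_i Q_i \llbracket L_i \rrbracket$, where the $L_i : V_*^\perp \to \pi_0^\perp$ are distinct linear maps satisfying $\sum_i Q_i L_i = 0$; positive energy gives at least two distinct $L_i$. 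Almgren's bound of dimension $\leq m-2$ on the singular set of $Q$-valued Dir-minimizers then forces $\ker(L_i - L_j) = \{0\}$ on $V_*^\perp$ for $i \neq j$, for otherwise the singular set of $\bar f$ would contain an $(m-1)$-dimensional subspace. Consequently $\bigcup_i \mathrm{graph}(L_i) \in \mathscr{C}(Q,0) \setminus \mathscr{P}(0)$; rotating to align with $T_p\Sigma$ produces $\Sbf_k \in \mathscr{C}(Q,p) \setminus \mathscr{P}(p)$ for each large $k$. The strong convergence of the graphical approximations inherent in the blow-up then yields $\mathbb{E}(T, \Sbf_k, \Bbf_{\rho_k}(p)) = o(\mathbf{E}^p(T, \Bbf_{\rho_k}(p)))$, while Proposition \ref{p:coarse-blow-ups}(i) absorbs the $(\rho_k \mathbf{A})^2$ term. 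Together these contradict the failure of (a).

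The main obstacle will be handling the possibility that $\mathbf{E}(T, \Bbf_{6\sqrt m \rho_k})$ does not tend to zero along the contradiction sequence, so that Proposition \ref{p:coarse-blow-ups} is not directly applicable. In this regime $T_{0,\rho_k}$ subconverges to a non-planar area-minimizing tangent cone at $p$ with density $Q$ at the origin, and I would need a separate argument — exploiting the frequency-$1$ hypothesis, the non-degeneracy of the $(m-2)$-dimensional span of the $\tilde q_j$, and a Federer-style dimension reduction — to produce a cone in $\mathscr{C}(Q,p) \setminus \mathscr{P}(p)$ realizing (a) at scale $\rho_k$. The other technical subtlety, addressed above via Almgren's singular-set dimension bound, is that the linear pieces produced by the blow-up must assemble into a cone with spine of dimension \emph{exactly} $m-2$, rather than some larger subspace.
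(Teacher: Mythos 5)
Your argument in the flat case --- where the coarse blow-up of \cite{DLSk1} is available --- is essentially the paper's, differing only in its logical organization. You argue directly: extract $m-1$ points from the failure of (b), pass them to the blow-up $\bar f$, deduce $(m-2)$-translation-invariance, and conclude the graph of $\bar f$ is supported on an element of $\mathscr{C}(Q,0)\setminus\mathscr{P}(0)$, contradicting the failure of (a). The paper instead splits on the structure of the $1$-homogeneous blow-up $f$: either $f$ is invariant under $m-2$ independent translations (giving (a)), or its spine $V$ has dimension $\leq m-3$, and then the set $Z$ of accumulation points of density-$Q$ points is shown to lie in $V$ (via the Hardt--Simon inequality and the rigidity of frequency-$1$ points, the same mechanism you invoke), so (b) holds for $k$ large. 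These are two sides of the same coin; both use upper semi-continuity of density, $I_{z,f}(0)\geq 1$ at density-$Q$ points, and the translation-invariance forced by frequency $1$. Your use of the dimension bound on Almgren's singular set to rule out a nontrivial kernel of $L_i-L_j$ is correct once the definition of regularity is unpacked (two sheets coinciding on a proper subset is singular behavior), and the paper tacitly relies on the same structural fact. One small imprecision: the frequency-$1$ flat singular structure at $p$ does \emph{not} by itself guarantee that $\mathbf{E}(T,\Bbf_{6\sqrt m \rho_k}(p))\to 0$ along your particular contradiction sequence $\rho_k$; you must simply pass to a subsequence on which it either does or does not, which brings us to the real issue.

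The genuine gap is the non-flat case, which you correctly flag as the ``main obstacle'' but resolve too elaborately. No Federer-style dimension reduction is needed; the paper dispatches this case in a few lines. Along any subsequence on which $T_{p,\bar\rho_k}$ converges to a non-flat tangent cone $T_\infty$, the spine $V(T_\infty)$ either has dimension $m-2$ --- in which case its support lies in $\mathscr{C}(Q,p)\setminus\mathscr{P}(p)$ and (a) holds for $k$ large (the $(\rho_k\mathbf{A})^2$ term is harmless since $\mathbf{E}^p$ stays bounded away from zero) --- or has dimension $\leq m-3$, in which case $\{\Theta(T_{p,\bar\rho_k},\cdot)\geq Q\}\cap\bar{\Bbf}_1$ converges in Hausdorff distance to a subset of $\{\Theta(T_\infty,\cdot)\geq Q\}$, which equals $V(T_\infty)$, so (b) holds for $k$ large. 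Either way the assumed failure of both alternatives is contradicted. In fact your own extracted $m-1$ points close this case with no extra machinery: in the limit they become density-$Q$ points of $T_\infty$, hence lie on the spine, and your quantitative spanning bound forces the spine to be exactly $(m-2)$-dimensional --- the first alternative. But there is no need to track them; the unconditional dichotomy on $\dim V(T_\infty)$ suffices.
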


\begin{remark}
    The subspace $V$ in alternative (b) of Lemma \ref{l:dimension-drop} arises as either
    \begin{itemize}
        \item the spine of a non-flat area-minimizing cone;
        \item the set of multiplicity $Q$ points of a 1-homogeneous $Q$-valued Dir-minimizer arising as a coarse blow-up at $p$ (cf. Proposition \ref{p:coarse-blow-ups}).
    \end{itemize}
\end{remark}

\begin{proof}[Proof of Lemma \ref{l:dimension-drop}]
	We argue by contradiction. Namely, suppose that there exists some $\eps \in (0,1]$ for which the following holds. There exist $T$, $\Sigma$, a point $p \in \flatS_{Q,1}(T)\cap\Bbf_1$, and a corresponding sequence of scales $\bar{\rho}_k \downarrow 0$ such that both (a) and (b) fail on balls $\Bbf_{\bar\rho_k}(p)$, for this choice of $\eps$. 
 
    Up to extracting a further subsequence, we have two possibilities. Either $T_{p, \bar\rho_k}$ converges to a flat tangent cone in $\Bbf_{6\sqrt{m}}$, namely $Q \llbracket \pi \rrbracket$ for some $m$-dimensional plane $\pi$, or it converges to a non-flat tangent cone $T_\infty$. In the latter case we let $V$ be the spine of $T_\infty$, and note that either $V$ is $(m-2)$-dimensional, and hence its support is a cone $\Sbf\in \mathscr{C} (Q,0)\setminus \mathscr{P}(0)$, or the dimension of $V$ is at most $m-3$. On the other hand, in the latter case $\{\Theta (T_k, \cdot) \geq Q\}\cap \overline{\Bbf}_1$ converges in the sense of Hausdorff to a subset of $\{\Theta (T_\infty, \cdot)\geq Q\}$ (by upper semi-continuity of the density) and the latter set is in fact the spine $V$. Hence, in the latter case, alternative (b) of the lemma holds for all sufficiently large $k$, giving a contradiction. In the former case clearly alternative (a) holds for all sufficiently large $k$, again giving the desired contradiction (note that since $\lim_{k\to\infty}\Ebf^p(T_{p,\bar\rho_k},\Bbf_{6\sqrt{m}}) > 0$ in this case, we clearly have $(\bar\rho_k\Abf)^2\leq \eps^2 \Ebf^p(T_{p,\bar\rho_k},\Bbf_{6\sqrt{m}})$ for all $k$ sufficiently large).
    
    So we are left with contradicting the case where $T_{p,\bar{\rho}_k}$ converges to a flat tangent cone; in this situation $\Ebf^p(T,\Bbf_{6\sqrt{m}\bar\rho_k}(p)) \downarrow 0$. Proposition \ref{p:coarse-blow-ups} then tells us that 
    \begin{equation}\label{e:A-decay}
        \Ebf^p(T,\Bbf_{6\sqrt{m}\bar\rho_k}(p))^{-1} \cdot \rho_k^2 \Abf_k^2\to 0.
    \end{equation}
    Now let $\pi_k$ be a plane such that $\Ebf^p(T,\Bbf_{6\sqrt{m}\bar\rho_k}(p))= \hat{\Ebf} (T, \pi_k,\Bbf_{6\sqrt{m}\bar \rho_k}(p))$ and, without loss of generality, we can rotate to assume that $\pi_k$ is a fixed plane $\pi$ for all $k$. We now consider a coarse blow-up $f$ as defined in \cite{DLSk1}; we know that $f$ is non-trivial and 1-homogeneous by Proposition \ref{p:coarse-blow-ups}. We then have two possibilities: either $f$ is translation invariant in $m-2$ independent directions, or its spine has dimension at most $m-3$. In the former case, notice that in combination with the fact that $I_{0,f}(0) = 1$ (as $f$ is $1$-homogeneous), the support of the (multi-valued) graph of $f$ is a superposition of planes $\Sbf\in \Cscr(Q,0)\setminus \Pscr(0)$. In light of the estimates in \cite{DLS14Lp}*{Theorem 2.4}, combined with \eqref{e:A-decay} and the strong $L^2$-convergence of the normalizations of the Lipschitz approximations of $T_{p,\bar\rho_k}\mres \Bbf_{6\sqrt{m}}$ to $f$, we contradict (a).

    Otherwise, consider the set $Z$ of points $z$ which are limits of $\mathbf{p}_{\pi} (p_k)$ with $\Theta (T_{p, \bar \rho_k}, p_k) = Q$ and $|p_k|\leq 1$. By \cite{DLS14Lp}*{Theorem 2.7} $f (z) = Q \llbracket \boldsymbol{\eta}\circ f (z) \rrbracket$ for any such point $z$ (this also follows by the Hardt--Simon inequality). Moreover, since by Proposition \ref{p:coarse-blow-ups} we know that $\boldsymbol{\eta}\circ f \equiv 0$, we in fact have $f(z) = Q \llbracket 0 \rrbracket$ at such $z$. By the $1$-homogeneity of $f$ and the upper semi-continuity of the frequency function, we know that $I_{z, f} (0) \leq 1$. On the other hand by the Hardt-Simon inequality, as in \cite{DLSk1}*{Section 3}, $I_{z,f} (0) \geq 1$. Therefore, $I_{z,f} (0) =1$ and hence $f$ is translation invariant along any $z\in Z$. In particular $Z\subset V$, which, being at most $(m-3)$-dimensional, would imply that (b) would hold for $\rho = \bar{\rho}_k$ when $k$ is large enough. This contradiction therefore proves the result. 
\end{proof}

It will become convenient to subdivide $\flatS_{Q,1} (T)\cap\Bbf_1$ as follows.

\begin{definition}\label{d:pieces}
    Fix $\eps^\dagger > 0$. Suppose that $T$, $\Sigma$ are as in Assumption \ref{a:main}. For $r>0$, we let $\flatS_{Q,1,r} (T)$ denote the set of all points $p\in \flatS_{Q,1} (T)\cap \Bbf_1$ for which the conclusions of Lemma \ref{l:dimension-drop} hold for all scales $\rho \in (0,r]$ with $\eps^\dagger$ in place of $\eps$, and moreover $\|T\|(\Bbf_\rho (p)) \leq (Q+\frac{1}{4}) \omega_m \rho^m$ for all $\rho \in (0, r]$.
\end{definition}

 Notice that by Lemma \ref{l:dimension-drop} we may write 
 \begin{equation}\label{e:decomposition}
 \flatS_{Q,1}(T)\cap \Bbf_1 = \bigcup_{k} \flatS_{Q,1,2^{-k}} (T)\cap\Bbf_1\, .
 \end{equation}

The fact that $\iota_{0,r} (\flatS_{Q,1,r} (T)) \subset \flatS_{Q,1,1} (T_{0, r})$ for any $r>0$, combined with a translation, together imply that it suffices to prove Theorem \ref{con:stronger}(i) for $\flatS_{Q,1,1}(T)$.

We will now proceed to show that a set satisfying a number of properties at every scale around every point is, up to a $\mathcal{H}^k$-negligible set, a countable union of $k$-dimensional $C^{1,\alpha}$ graphs. This is only one possible abstract formulation of what the arguments used by Simon in the proof of \cite{Simon_cylindrical}*{Theorem 1} imply for general sets. 

\begin{assumption}\label{a:Simon}
$k\in \N$ and $\delta,\alpha, \varepsilon > 0$ are fixed. $E$ is a Borel subset of $\Bbf_2 \subset \Rbb^{m+n}$ such that for every $p\in E\cap\Bbf_1$ and every $r \in (0, 1-|q|]$ there is a choice of a $k$-dimensional affine subspace $V(p, r)$ satisfying the following properties: 
\begin{itemize}
    \item[(1)] $E\cap \Bbf_r (p) \subset B_{\varepsilon r} (V(p, r))\, . $
\item[(2)] if $0\leq a < b\leq 1-|p|$ are radii for which the condition
\begin{equation}\label{e:ng}
V (p, r) \cap \Bbf_{r/2} \subset B_{\delta r} (E)
\end{equation}
holds for every $r \in (a,b]$, then the following estimate holds for every $a<s<r\leq b$:
\begin{equation}\label{e:C1alpha}
|\mathbf{p}_{V (p,r)}- \mathbf{p}_{V (p, s)}| \leq \varepsilon \left(\frac{r}{b}\right)^\alpha\, .
\end{equation}
\end{itemize}
\end{assumption}

We now state the result concerning how a set satisfying Assumption \ref{a:Simon} decomposes into a union of a $\Hcal^k$-negligible set and a countable union of $k$-dimensional $C^{1,\alpha}$ submanifolds:

\begin{theorem}\label{t:Simon}
Let $k\in \Nbb$ and $\alpha, \delta>0$ be fixed numbers. There exists $\varepsilon_s = \varepsilon_s (k,m,n, \delta)>0$ such that, if $E$ satisfies Assumption \ref{a:Simon} with $\eps\leq\eps_s$, then $E\cap \Bbf_{1/2}$ can be decomposed as a disjoint union $\tilde{E}\cup R$ where 
\begin{itemize}
\item[(i)] $\mathcal{H}^k (\tilde{E}) =0$;
\item[(ii)] $\tilde{E}$ consists of all points $p\in E\cap \Bbf_{1/2}$ for which there is a sequence $\rho_k\downarrow 0$ violating condition \eqref{e:ng};
\item[(iii)] $R$ is contained in a countable union of $C^{1,\alpha}$ graphs (each defined on an open cube in a $k$-dimensional affine subspace of $\R^{m+n}$).
\end{itemize}  
\end{theorem}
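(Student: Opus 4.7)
The plan is to split $E \cap \Bbf_{1/2}$ into the set $\tilde{E}$ described in (ii) and its complement $R$; by construction, every $p \in R$ admits a threshold $\bar r(p) > 0$ such that the no-gap condition \eqref{e:ng} holds at $p$ at every scale $r \in (0, \bar r(p)]$. The theorem will then follow from showing $\mathcal{H}^k(\tilde E) = 0$ and covering $R$ by countably many $C^{1, \alpha}$ graphs.

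For $\mathcal{H}^k(\tilde E) = 0$: at any scale $\rho$ at which \eqref{e:ng} fails at some $p \in \tilde E$, there is $\xi \in V(p, \rho) \cap \Bbf_{\rho/2}(p)$ with $\dist(\xi, E) \geq \delta \rho$, while by (1) the set $E \cap \Bbf_\rho(p)$ lies in the $\eps_s \rho$-tube around $V(p, \rho)$. Taking $\eps_s$ small enough relative to $\delta$ ensures that the projection of $E \cap \Bbf_{\rho/2}(p)$ onto $V(p, \rho)$ misses the $k$-dimensional disk $V(p, \rho) \cap B_{\delta \rho / 2}(\xi)$, so this projection has $k$-measure at most $\omega_k (\rho/2)^k (1 - c(k, \delta))$. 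Pulling this shadow deficit back via the tube structure gives a cover of $\tilde E \cap \Bbf_{\rho/2}(p)$ by balls with total $k$-content strictly less than that of a single ball of radius $\rho/2$. Plugging such ``bad scales'' into a Vitali-type covering of $\tilde E$ (available at arbitrarily small $\rho$ at every point of $\tilde E$ by definition) and iterating the refinement across nested scales then yields $\mathcal{H}^k(\tilde E) = 0$, following the scheme of \cite{Simon_cylindrical}.

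For the structure of $R$: the Cauchy inequality \eqref{e:C1alpha}, applied on $(0, \bar r(p)]$, implies that $V(p, r)$ admits a limit $V(p) := \lim_{r \downarrow 0} V(p, r)$, with
\[
|\mathbf{p}_{V(p, r)} - \mathbf{p}_{V(p)}| \leq \eps_s r^\alpha \qquad \forall r \in (0, \bar r(p)].
\]
I would partition $R = \bigcup_{j \in \Nbb} R_j$ with $R_j := \{p \in R : \bar r(p) > 2^{-j}\}$ and cover each $R_j$ by countably many balls $\Bbf_{2^{-j-2}}(p_0)$ centered at points $p_0 \in R_j$. For any two points $p, q \in R_j \cap \Bbf_{2^{-j-2}}(p_0)$, set $r := |p - q| \leq 2^{-j-1}$, which is smaller than both $\bar r(p)$ and $\bar r(q)$; property (1) places $p \in B_{\eps_s r}(V(q, r))$ and $q \in B_{\eps_s r}(V(p, r))$, while the displayed estimate, applied at both $p$ and $q$, forces $|\mathbf{p}_{V(p)} - \mathbf{p}_{V(q)}| \leq C \eps_s |p - q|^\alpha$. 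These conditions are precisely those required by the standard Reifenberg-type $C^{1, \alpha}$ graphical criterion, so $R_j \cap \Bbf_{2^{-j-2}}(p_0)$ is contained in a $C^{1, \alpha}$ graph over an open subset of $V(p_0)$, and running $j$ and $p_0$ through countable collections completes the cover.

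The principal obstacle I anticipate is the first step: the shadow-hole estimate is purely local and scale-by-scale, but converting ``at every point of $\tilde E$ a deficit event occurs at some arbitrarily small scale'' into vanishing of $\mathcal{H}^k$ requires a careful Besicovitch/Vitali refinement across nested scales, in which the smallness of $\eps_s$ relative to $\delta$ must be used quantitatively to promote the $k$-dimensional shadow deficit on $V(p, \rho)$ to a genuine covering gain for $E$ itself. The $C^{1, \alpha}$ graph step of the third paragraph is, by contrast, a fairly routine consequence of the quantitative Cauchy condition \eqref{e:C1alpha}, at which point Assumption (1) plays only the role of anchoring $E$ to its approximating planes.
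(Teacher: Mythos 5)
The paper itself gives no proof of Theorem \ref{t:Simon}; it simply defers to the arguments in \cite{Simon_cylindrical}. Your sketch correctly identifies the two pillars of that argument: a ``gap-measure'' estimate at bad scales to kill $\mathcal{H}^k(\tilde E)$, and a Cauchy/Reifenberg argument on the good set $R$. The first part is described accurately (modulo minor geometry such as ensuring the missing disk stays inside the projected shadow and requiring $\eps_s < \delta/2$, both of which are routine), so your overall route matches the one the paper intends.

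There is, however, a genuine gap in your treatment of $R$. You claim that property (1) alone --- placing $p \in B_{\eps_s r}(V(q,r))$ and $q \in B_{\eps_s r}(V(p,r))$ --- together with the Cauchy estimate forces $|\mathbf{p}_{V(p)} - \mathbf{p}_{V(q)}| \leq C\eps_s|p-q|^\alpha$. This does not follow: a one-sided containment $E \cap \Bbf_r(q) \subset B_{\eps_s r}(V(q,r))$ puts no constraint on the tilt of $V(q,r)$ if $E$ is sparse near $q$ (think of $E$ being two points). The ingredient that rescues the argument is precisely condition \eqref{e:ng} --- the bilateral approximation --- which is available for $p,q \in R_j$ at every scale $r \leq 2^{-j}$ by your own definition of $R_j$. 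With \eqref{e:ng} in hand one concludes that $V(p,r)$ and $V(q,r)$ both bilaterally approximate $E$ near $p$ (or $q$), and a tilt-comparison lemma (exactly Lemma \ref{l:tilt} in the paper) then forces $|\mathbf{p}_{V(p,r)} - \mathbf{p}_{V(q,r)}| \leq C\eps_s$; only after that does the Cauchy estimate carry you from the planes at scale $r = |p-q|$ to the limits $V(p)$, $V(q)$ with the $\alpha$-Hölder modulus. You should insert this step; as written, the tilt estimate is asserted but not derived from the stated hypotheses. Separately, your displayed Cauchy estimate omits the normalization by $b = \bar r(p)$: the correct form from \eqref{e:C1alpha} is $|\mathbf{p}_{V(p,r)} - \mathbf{p}_{V(p)}| \leq \eps_s (r/\bar r(p))^\alpha$. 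This is harmless within each $R_j$ where $\bar r(p) > 2^{-j}$, but it is worth stating correctly since the uniformity of the Hölder constant on $R_j$ depends on it.
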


{We omit the proof of Theorem \ref{t:Simon}, since it is a simple consequence of the arguments given in \cite{Simon_cylindrical}.} We next demonstrate how Theorem \ref{con:stronger}(i) follows from it. In order to do this, we require the following lemma (in place of \cite{Simon_cylindrical}*{Lemma 1}), which is a simple consequence of Theorem \ref{c:decay}. It verifies that not only does the ratio of double-sided excess and planar excess under the assumptions of Theorem \ref{c:decay}, but in fact so does the maximum of this ratio and the ratio between $r^2 \Abf^2$ and the planar excess.

\begin{lemma}\label{c:decay-10-1}
	There are positive constants $\varepsilon_{\mathrm{f}}= \eps_{\mathrm{f}} (Q,m,n, \bar{n})$, $\theta = \theta (Q,m,n,\bar{n})\leq\frac{1}{2}$, and $C=C(Q,m,n,\bar{n})$ with the following property. Let $T$ and $\Sigma$ be as in Assumption \ref{a:main} and $p\in \flatS_{Q,1,1} (T)$. Assume $0<r\leq 2-|p|$ and 
	\begin{itemize}
		\item $\Sbf\in \mathscr{C} (Q, p)\setminus \Pscr(p)$ satisfies $\Ebb (T, \Sbf, \Bbf_r(p)) = \inf \{\mathbb{E} (T, \bar{\Sbf}, \Bbf_r(p)) : \bar{\Sbf}\in \mathscr{C} (Q, p)\}$;
		\item $\max \{\eps_{\mathrm{f}}^{-2} \Abf^2 r^2, \Ebb (T, \Sbf, \Bbf_r(p))\} \leq \varepsilon_{\mathrm{f}}^2 \Ebf^p (T, \Bbf_r(p))$;
		\item $\Bbf_{\varepsilon_{\mathrm{f}} r} (q) \cap \flatS_{Q,1,1}(T) \neq \emptyset$ for every $q\in V (\Sbf)\cap \Bbf_{r/4}(p)$. 
	\end{itemize}
	Then there is a cone $\Sbf^\prime\in\mathscr{C}(Q,p)\setminus \Pscr(p)$ such that 
	\begin{equation}\label{e:decay-10-i}
	\frac{\max \{\eps_{\mathrm{f}}^{-2} \Abf^2\theta^2 r^2, \Ebb (T, \Sbf^\prime, \Bbf_{\theta r}(p))\}}{\Ebf^p (T, \Bbf_{\theta r}(p))}
	\leq \frac{1}{4} \frac{\max \{\eps_{\mathrm{f}}^{-2} \Abf^2 r^2, \Ebb (T, \Sbf, \Bbf_r(p))\}}{\Ebf^p (T, \Bbf_r(p))}\, ,
	\end{equation}
    and
    \begin{equation}\label{e:decay-10-ii}
        \dist^2 (V (\Sbf)\cap \Bbf_r(p), V (\Sbf')\cap \Bbf_r(p)) \leq C \frac{\max\{\eps_{\mathrm{f}}^{-2}\mathbf{A}^2r^2, \mathbb{E} (T, \Sbf, \Bbf_r(p))}{\mathbf{E}^p (T, \Bbf_r(p))}\, .
    \end{equation}
\end{lemma}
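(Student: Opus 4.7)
The plan is to derive Lemma \ref{c:decay-10-1} as a direct consequence of Theorem \ref{c:decay}, augmented by Lemma \ref{l:a->bcd} to transfer the excess decay into the maximum form involving the $\Abf^2 r^2$ term. First I would use the scale invariance of the excess quantities and translation to reduce to $p = 0$ and $r = 1$: this preserves all excess ratios, turns $\Abf^2 r^2$ into $\Abf^2$, and, by Definition \ref{d:pieces}, yields both $\|T\|(\Bbf_1) \leq (Q+\frac{1}{2})\omega_m$ and density-$Q$ accumulation on $V(\Sbf)\cap \Bbf_{1/4}$.

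The key step is to apply Theorem \ref{c:decay} to the further rescaled current $T_{0,1/2}$, so that the theorem's $\Bbf_{1/2}$ no-gap condition matches our $\Bbf_{1/4}$ hypothesis. Hypotheses (i)-(ii) of Theorem \ref{c:decay} are immediate, the smallness in (iii) follows from $\mathbb{E}(T,\Sbf,\Bbf_1) \leq \eps_{\mathrm{f}}^2\Ebf^p(T,\Bbf_1)$ once $\eps_{\mathrm{f}} \leq \eps_0$, and the density condition follows from the inclusion $\flatS_{Q,1,1}(T) \subset \{\Theta(T,\cdot) = Q\}$. Condition (iv), reduced via the minimality of $\Sbf$ over $\mathscr{C}(Q,0)$ to $\Abf^2 \leq \eps_0^2\mathbb{E}(T,\Sbf,\Bbf_1)$, is the delicate point when $\mathbb{E}(T,\Sbf,\Bbf_1)\ll \Abf^2$; I would handle this regime by the balancing/pruning machinery used in the reduction of Theorem \ref{c:decay} in Section \ref{s:reduction}, namely by replacing $\Sbf$ with a balanced subcone via Proposition \ref{p:balancing} and Lemma \ref{l:pruning}, at the cost of increasing $\mathbb{E}$ by a controlled factor that is reabsorbed by adjusting the constants. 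Invoking Theorem \ref{c:decay} with decay factor $\varsigma = 1/16$ then produces $\Sbf' \in \mathscr{C}(Q,0)\setminus \mathscr{P}(0)$ satisfying conclusions (a)-(d) therein at some radius $\theta := r_0(\varsigma) \leq 1/2$ with $\theta = \theta(Q,m,n,\bar n)$.

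Finally, Lemma \ref{l:a->bcd} applied with $r_0 = \theta$ (its hypotheses being strictly weaker than those of Theorem \ref{c:decay}) yields $C^{-1}\Ebf^p(T,\Bbf_1) \leq \Ebf^p(T,\Bbf_\theta) \leq C\Ebf^p(T,\Bbf_1)$. Combining with Theorem \ref{c:decay}(b) gives $\mathbb{E}(T,\Sbf',\Bbf_\theta)/\Ebf^p(T,\Bbf_\theta) \leq 2\varsigma\cdot \mathbb{E}(T,\Sbf,\Bbf_1)/\Ebf^p(T,\Bbf_1)$, while the trivial identity $\eps_{\mathrm{f}}^{-2}\Abf^2\theta^2 = \theta^2\cdot \eps_{\mathrm{f}}^{-2}\Abf^2$ together with the above comparability of $\Ebf^p$ gives $\eps_{\mathrm{f}}^{-2}\Abf^2\theta^2/\Ebf^p(T,\Bbf_\theta) \leq C\theta^2\cdot \eps_{\mathrm{f}}^{-2}\Abf^2/\Ebf^p(T,\Bbf_1)$. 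Shrinking $\theta$ so that $C\theta^2 \leq 1/8$ and choosing $\varsigma = 1/16$, taking the maximum of the two left-hand ratios against the max on the right-hand side yields \eqref{e:decay-10-i}. The spine-distance bound \eqref{e:decay-10-ii} then follows from Theorem \ref{c:decay}(d) together with the trivial inequality $\mathbb{E}(T,\Sbf,\Bbf_1) \leq \max\{\eps_{\mathrm{f}}^{-2}\Abf^2, \mathbb{E}(T,\Sbf,\Bbf_1)\}$, after rescaling back to $\Bbf_r(p)$ (the Hausdorff distance picks up a factor $r\leq 2$, so $r^2 \leq 4$ is absorbed into $C$). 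The principal technical obstacle is thus verifying (iv) in the $\mathbb{E}(T,\Sbf,\Bbf_1)\ll \Abf^2$ regime, which is resolved by the pruning argument above.
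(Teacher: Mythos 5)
Your overall plan is right -- reduce to Theorem \ref{c:decay} plus Lemma \ref{l:a->bcd}, choose a small enough $\varsigma$, and absorb the $\Abf^2 r^2$ term -- and this is indeed the route the paper takes. But there is a genuine gap at exactly the point you yourself flagged as ``the delicate point.''

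You correctly identify that, because $\Sbf$ realizes $\inf_{\bar\Sbf\in\Cscr(Q,p)}\Ebb(T,\bar\Sbf,\Bbf_r(p))$, condition (iv) of Theorem \ref{c:decay} reduces to $\Abf^2\leq\eps_0^2\,\Ebb(T,\Sbf,\Bbf_1)$, and that the hard regime is $\Ebb(T,\Sbf,\Bbf_1)\ll\Abf^2$. But the tool you propose for this regime -- pruning/balancing via Proposition \ref{p:balancing} and Lemma \ref{l:pruning} -- cannot work here, for two reasons. First, Proposition \ref{p:balancing} itself \emph{requires} $\Abf^2\leq\eps_0^2\,\Ebb(T,\Sbf,\Bbf_1)$ as a hypothesis (see \eqref{e:smallexcess}), so in the very regime you are trying to handle you are not even permitted to invoke it. Second, and more fundamentally, pruning and balancing only replace $\Sbf$ by a cone $\Sbf'$ with $\Ebb(T,\Sbf',\Bbf_1)\leq C\,\Ebb(T,\Sbf,\Bbf_1)$ for a fixed constant $C$; this is useless when the ratio $\Ebb(T,\Sbf,\Bbf_1)/\Abf^2$ can be arbitrarily small (think $\Ebb\sim10^{-100}$, $\Abf^2\sim10^{-15}$). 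No ``adjustment of constants'' can close a gap of unbounded size.

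The correct mechanism -- and the one the paper uses -- is the \emph{enlarging} step in Step 1 of Lemma \ref{l:a->bcd} (distinct from the balancing and pruning steps you cite): one connects $\Sbf$ to a single plane $\pi\supset V(\Sbf)$ by a continuous path $t\mapsto\Sbf(t)$ in the Grassmannian keeping the spine fixed, observes that $\Ebb(T,\Sbf(t),\Bbf_1)$ is continuous for $t\in[0,1)$ and that $\lim_{t\uparrow1}\hat{\Ebf}(\Sbf(t),T,\Bbf_1)=N\hat{\Ebf}(\pi,T,\Bbf_1)\geq\Ebf^p(T,\Bbf_1)>\eps_{\mathrm{f}}^{-2}\Abf^2$ by your second hypothesis, and applies the intermediate value theorem to produce $\Sbf_e$ with $\eps_{\mathrm{f}}^{-2}\Abf^2r^2\leq\Ebb(T,\Sbf_e,\Bbf_r)\leq2\eps_{\mathrm{f}}^{-2}\Abf^2r^2$ and $V(\Sbf_e)=V(\Sbf)$. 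This gives a cone whose excess is genuinely promoted to scale $\eps_{\mathrm{f}}^{-2}\Abf^2r^2$, not merely multiplied by a constant, which is what is needed to feed into Theorem \ref{c:decay} (with $\varsigma=\tfrac{1}{16}$). The spine-preserving feature of the deformation is also what makes \eqref{e:decay-10-ii} come out cleanly. The remainder of your argument -- the use of Lemma \ref{l:a->bcd} to compare $\Ebf^p$ at the two scales, shrinking $\theta$ so that $C\theta^2\leq\tfrac18$, and obtaining \eqref{e:decay-10-ii} from Theorem \ref{c:decay}(d) -- is sound once $\Sbf_e$ is in hand.
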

\begin{proof}
	Let $\varsigma = \frac{1}{8}$ in Theorem \ref{c:decay}, and let $\eps_{\mathrm{f}}\coloneqq \eps_0$ and $\theta\coloneqq r_0$ be as in Theorem \ref{c:decay} for this choice of $\varsigma$. The decay \eqref{e:decay-10-i} is obvious by Theorem \ref{c:decay} if $\varepsilon_{\mathrm{f}}^{-2} \Abf^2 r^2 \leq \Ebb (T, \Sbf, \Bbf_r)$. In the other case we apply the continuity argument in Step 1 of Lemma \ref{l:a->bcd} to find a cone $\Sbf_e$ such that $\varepsilon_{\mathrm{f}}^{-2} \Abf r^2 \leq \Ebb (T, \Sbf_e, \Bbf_r) \leq 2 \varepsilon_{\mathrm{f}}^{-2} \Abf^2 r^2$ and obeying the conditions therein, at which point we may once again apply Theorem \ref{c:decay}, now with $\varsigma=\frac{1}{16}$, and again choose $\eps_{\mathrm{f}}\coloneqq \eps_0$ and $\theta\coloneqq r_0$ to be as in Theorem \ref{c:decay} for this choice of $\varsigma$. The conclusion \eqref{e:decay-10-ii} is an immediate consequence of Theorem \ref{c:decay}(d), \eqref{e:decay-10-i}, and the reasoning above.
\end{proof}

We will also require the following lemma, which gives control on the tilting between different $k$-dimensional subspaces that a given set $E$ satisfying Assumption \ref{a:Simon} is bilaterally close to. It is analogous to \cite{DavidSemmes-Asterisque}*{Lemma 5.13}. We thus refer the reader to the proof therein, and do not include it here.
\begin{lemma}\label{l:tilt}
	Let $k\in \Nbb$ and $\alpha, \delta>0$ be fixed numbers. There are positive constants $C_0 = C_0 (m,n,k)$, $\delta_0=\delta_0 (m,n,k)$, and $\varepsilon_0 = \varepsilon_0 (m,n,k)$ such that the following holds. Assume $E\subset \mathbb R^{m+n}$, $p\in E$, and $r>0$ are radii such that Assumption \ref{a:Simon}(1) and \eqref{e:ng} hold for some $V = V (p, r)$ and $\varepsilon \leq \varepsilon_0$ and $\delta\leq \delta_0$. Assume moreover that $V'$ is another $k$-dimensional affine subspace for which 
	\[
	E \cap \Bbf_r (p) \subset B_{\varepsilon r} (V')\, .
	\]
	Then
	\begin{equation}\label{e:tilt}
		|\mathbf{p}_V - \mathbf{p}_{V'}|\leq C_0 \varepsilon\, 
	\end{equation}
	and, if we set $\delta' := 2\delta + C_0 \varepsilon$,
	\begin{equation}\label{e:tilt-no-gap}
		V'\cap \Bbf_{r/2} (p) \subset B_{\delta' r} (E)\, .
	\end{equation}
\end{lemma}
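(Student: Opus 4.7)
\textbf{Proof plan for Lemma \ref{l:tilt}.} The strategy is standard: use the no-gap condition \eqref{e:ng} to produce $k+1$ affinely independent well-spaced points in $V$, each of which is close to a point of $E$, and therefore close to \emph{both} $V$ and $V'$. The key observation for \eqref{e:tilt} is that, although each chosen point of $V$ is only at distance $O(\delta+\varepsilon)r$ from $V'$ through $E$, the \emph{closest point in} $V$ and the \emph{closest point in} $V'$ to a given point of $E$ differ by at most $2\varepsilon r$, with no $\delta$-contribution. This allows one to extract an approximate linear basis of the linear part of $V$ whose images under $\mathbf{p}_{(V')^\perp}$ are $O(\varepsilon)$, which yields the tilt bound by elementary linear algebra. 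The estimate \eqref{e:tilt-no-gap} then follows by using \eqref{e:tilt} to transfer the covering property from $V$ to $V'$, at the cost of a small enlargement of the constant.

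\emph{Construction.} Since $p\in E\subset B_{\varepsilon r}(V)\cap B_{\varepsilon r}(V')$, pick $p_V\in V$ with $|p-p_V|\leq\varepsilon r$ and an orthonormal basis $e_1,\ldots,e_k$ of the linear part $W$ of $V$. Set $q_0:=p_V$ and $q_i:=p_V+\tfrac{r}{4}e_i$ for $i=1,\ldots,k$. For $\varepsilon_0$ small, $q_i\in V\cap\Bbf_{r/2}(p)$, so \eqref{e:ng} produces $\tilde q_i\in E$ with $|\tilde q_i-q_i|\leq\delta r$; for $\delta_0$ small, $\tilde q_i\in E\cap\Bbf_r(p)$. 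Let $q_i^V\in V$ and $q_i^{V'}\in V'$ realize the distance of $\tilde q_i$ to $V$ and $V'$; both distances are at most $\varepsilon r$, so
\[
|q_i^V-q_i^{V'}|\leq 2\varepsilon r\qquad\text{for every }i=0,1,\ldots,k.
\]

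\emph{Tilt estimate \eqref{e:tilt}.} Let $W'$ be the linear part of $V'$ and define $v_i:=\tfrac{4}{r}(q_i^V-q_0^V)\in W$ and $u_i:=\tfrac{4}{r}(q_i^{V'}-q_0^{V'})\in W'$ for $i=1,\ldots,k$. Since $|q_i^V-q_i|\leq(\delta+\varepsilon)r$, we have $|v_i-e_i|\leq 8(\delta+\varepsilon)$, so $\{v_1,\ldots,v_k\}$ is, for $\varepsilon_0,\delta_0$ small, a basis of $W$ with Gram matrix having eigenvalues in a fixed compact subset of $(0,\infty)$; hence any unit $w\in W$ admits a representation $w=\sum_i a_i v_i$ with $|a_i|\leq C(k)$. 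On the other hand, $|v_i-u_i|\leq 8\varepsilon$ by the displayed inequality above, and $\mathbf{p}_{(W')^\perp}(u_i)=0$, so $|\mathbf{p}_{(W')^\perp}(v_i)|\leq 8\varepsilon$. Summing, $|\mathbf{p}_{(W')^\perp}(w)|\leq C(k)\cdot 8k\varepsilon$, i.e.\ $|\mathbf{p}_W-\mathbf{p}_{W'}|\leq C_0\varepsilon$, which (since the difference of the affine projections coincides, up to a translation, with the difference of the linear parts) gives \eqref{e:tilt}.

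\emph{Covering estimate \eqref{e:tilt-no-gap}.} Fix $v'\in V'\cap\Bbf_{r/2}(p)$ and let $v:=\mathbf{p}_V(v')\in V$. Writing $v'=p_{V'}+w'$ with $p_{V'}\in V'$ closest to $p$ and $w'\in W'$, and decomposing against $W,W^\perp$ via \eqref{e:tilt}, one obtains $|v-v'|\leq C_0\varepsilon r$, so $|v-p|\leq r/2+C_0\varepsilon r$. If $v\notin\Bbf_{r/2}(p)$, slide $v$ along the ray from $p_V$ to $v$ until it lands on $\partial\Bbf_{r/2}(p)$, producing $v^\ast\in V\cap\Bbf_{r/2}(p)$ with $|v^\ast-v|\leq C_0\varepsilon r$ (by the computation $|v-p_V|\leq r/2+2C_0\varepsilon r$). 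Then \eqref{e:ng} yields $w\in E$ with $|w-v^\ast|\leq\delta r$, and the triangle inequality gives $\dist(v',E)\leq\delta r+2C_0\varepsilon r$, which is \eqref{e:tilt-no-gap} after absorbing the factor of $2$ into the stated $\delta'=2\delta+C_0\varepsilon$.

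\emph{Main obstacle.} The tilt estimate is the essential point, and the subtlety is avoiding a spurious $\delta$ on the right-hand side. This is accomplished by comparing nearest points in $V$ and $V'$ to a \emph{common} point of $E$ (so that both distances are controlled by $\varepsilon r$), rather than comparing directly across $V\to E\to V'$. The remainder of the proof is routine bookkeeping with constants and the minor annoyance of handling points that land just outside $\Bbf_{r/2}(p)$ when applying \eqref{e:ng}.
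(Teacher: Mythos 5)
Your proof is correct and follows the standard argument that David and Semmes use for the analogous statement (Lemma 5.13 in their Ast\'erisque memoir), which is exactly what the paper cites in lieu of giving a proof. The central idea — projecting the approximate net $\tilde q_i\in E$ simultaneously onto $V$ and onto $V'$, so that the $\delta$-error from \eqref{e:ng} drops out of the difference $q_i^V-q_i^{V'}$ and only an $O(\varepsilon)$ tilt survives — is the right one, and the subsequent linear algebra (near-orthonormal basis of $W$ with images near $W'$) is a correct way to convert the pointwise closeness into the projection bound. Two cosmetic remarks: (1) the constant in $|v_i-u_i|$ should be $16\varepsilon$ rather than $8\varepsilon$, since $(q_i^V-q_0^V)-(q_i^{V'}-q_0^{V'})$ has two terms each bounded by $2\varepsilon r$; this obviously does not affect the conclusion since $C_0$ is a free geometric constant. (2) In the final bound $\dist(v',E)\leq\delta r+2C_0\varepsilon r$ versus the stated $\delta'=(2\delta+C_0\varepsilon)$, the passage is justified not by literally ``absorbing the factor of~$2$'' (that inequality fails when $C_0\varepsilon>\delta$) but simply by enlarging the constant $C_0$ in the statement to dominate both the tilt constant and twice it; the lemma's $C_0$ is a single free constant, so this is harmless, but you should say so explicitly. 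The sliding argument for $v\notin\Bbf_{r/2}(p)$ does work once one notes that the entry time $t^*$ of the ray from $p_V$ (which lies well inside $\Bbf_{r/2}(p)$) through $v$ into $\partial\Bbf_{r/2}(p)$ satisfies $t^*\geq r/2-2\varepsilon r$, together with your bound $|v-p_V|\leq r/2+C\varepsilon r$; spelling out that lower bound on $t^*$ would make the step self-contained.
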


We are now in a position to conclude the proof of the fact that $\flatS_{Q,1}(T)$ is $\Hcal^{m-2}$-null.

\begin{proof}[Proof of Theorem \ref{con:stronger}(i)]
    Let $E= \flatS_{Q,1,1}(T)$, let $k=m-2$, and in Definition \ref{d:pieces} and fix $\eps=\min\{\eps_s,\eps_{\mathrm{f}}\}$, {where $\eps_{\mathrm{f}}$ is the constant of Lemma \ref{c:decay-10-1} and $\eps_s$ is the constant of Theorem \ref{t:Simon}} Choose $\eps^\dagger = \eps$. We will proceed to verify that Assumption \ref{a:Simon} holds for this choice of $E$ and $k$, and with a choice of $\delta=\delta(\eps)$ sufficiently small. First of all, notice that Lemma \ref{l:dimension-drop} implies that property (1) of Assumption \ref{a:Simon} holds for each $p\in E\cap \Bbf_1$, with $V(p,r)$ defined to be the spine of $\Sbf$ (defined therein, which implicitly depends on $p$ and $r$) if alternative (a) of Lemma \ref{l:dimension-drop} holds in $\Bbf_r(p)$, and any $(m-2)$-dimensional affine subspace containing $V$ of alternative (b) of Lemma \ref{l:dimension-drop} if that holds in $\Bbf_r(p)$ instead. To see that (2) of Assumption \ref{a:Simon} holds, we proceed as follows. Suppose that \eqref{e:ng} holds for a given point $p\in E\cap\Bbf_1$ and for all scales $r\in (a,b]$, for a given pair of radii $a<b$ as in (2). Then for all such scales $r$, the alternative (a) of Lemma \ref{l:dimension-drop} must hold in $\Bbf_r(p)$, and in addition, Lemma \ref{l:tilt} tells us that provided that $\delta$ is chosen sufficiently small (depending on $\eps_{\mathrm{f}}$), all of the hypotheses of Lemma \ref{c:decay-10-1} {hold with the ball $\Bbf_{r/2} (p)$ replacing $\Bbf_r (p)$}. Now fix any such $r$ and find $j\in \N$ such that $r\in (\theta^j b,\theta^{j-1} b]$. Applying Lemma \ref{c:decay-10-1} $j$ times successively, starting from $\Bbf_b(p)$, yields
    \[
    	\dist(V(\Sbf_{j-1})\cap\Bbf_b(p),V(\Sbf_j)\cap\Bbf_b(p))\leq C 2^{-j} \eps_{\mathrm{f}} \leq \eps \left(\frac{r}{b}\right)^\alpha,
    \]
    for an appropriate choice of $\alpha=\alpha(Q,m,n,\bar{n},\theta)$, where $\Sbf_j,\Sbf_{j-1}$ are the cones $\Sbf,\Sbf'$ in Lemma \ref{c:decay-10-1} when $r$ is replaced by $\theta^{j-1} r$. This verifies that property (2) of Assumption \ref{a:Simon} holds for this choice of $E$, and thus allows us to apply Theorem \ref{t:Simon} with the above choice of parameters. As observed above for each point $p\in R$ all of the hypotheses of Lemma \ref{c:decay-10-1} are satisfied in $\Bbf_r(p)$ for each sufficiently small scale, so one may iteratively apply this lemma to deduce that any tangent cone at each such point $p$ will be supported in a unique element $\Sbf\in\Cscr(Q,p)\setminus\Pscr(p)$. This, however, is in contradiction with the fact that $p\in \flatS_{Q,1,1}(T)$. Therefore, $R=\emptyset$ here. Combined with \eqref{e:decomposition}, this completes the proof.
\end{proof}

\subsection{Uniqueness of tangent cones}

Having proved the rectifiability, we now turn to the conclusion of Theorem \ref{con:stronger}, namely the $\Hcal^{m-2}$-a.e. uniqueness of tangent cones. Recalling \cite{DLSk1}*{Theorem 2.10}, we know that the tangent cone is unique at every flat singular point in $\flatS_Q(T)\setminus \flatS_{Q,1}(T)$, and given the previous section, it is therefore unique at $\mathcal{H}^{m-2}$-a.e. point in $\flatS_Q(T)$. Since $Q$ is arbitrary, it remains to show that the tangent cone is unique at $\Hcal^{m-2}$-a.e. point $p\in \mathcal{S}^{(m-2)}$. Although countable $(m-2)$-rectifiability of $\mathcal{S}^{(m-2)}\setminus\Scal^{(m-3)}$ follows from \cite{NV_varifolds}, we will achieve this independently as an additional consequence of the arguments in this section, together with the $\Hcal^{m-2}$-a.e. uniqueness of tangent cones, following an argument analogous to that in the previous section. First of all recall that every point $p\in \mathcal{S}^{(m-2)}\setminus \mathcal{S}^{(m-3)}$ has integer density and that $\mathcal{S}^{(m-3)}$ has Hausdorff dimension at most $m-3$. Then we have the following counterpart of Lemma \ref{l:dimension-drop} above.

\begin{lemma}\label{l:dimension-drop-2}
     For each $\eps \in (0,1]$, the following holds. Suppose that $T$ and $\Sigma$ are as in Assumption \ref{a:main}. For each $p \in \mathcal{S}^{(m-2)}\setminus \mathcal{S}^{(m-3)} \cap\Bbf_1$ with $\Theta (T, p) = Q$, there exists $\bar\rho=\bar\rho(p,\eps)>0$ such that the following dichotomy holds for each $\rho\in (0,\bar\rho]$: 
    \begin{itemize}
        \item[(a)] There exists $\Sbf\in \Cscr(Q,p)\setminus\Pscr(p)$ and
        \[
            (\rho\Abf)^2 +\Ebb(T,\Sbf,\Bbf_\rho(p)) \leq \eps^2 \Ebf^p(T, \Bbf_\rho(p));
        \]
        \item[(b)] There exists an $(m-3)$-dimensional affine subspace $V\subset T_p\Sigma$ (depending on $\rho$) such that
        \[
            \{\Theta(T,\cdot)\geq Q\}\cap\bar{\Bbf}_\rho(p) \subset \{q:\dist(q,V)< \eps\rho \}.
        \]
    \end{itemize}
\end{lemma}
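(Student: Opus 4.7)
I would run the same contradiction scheme as in the proof of Lemma \ref{l:dimension-drop}, but I expect a considerably shorter argument because the ``flat'' branch that required the coarse blow-up analysis of \cite{DLSk1} does not occur here. Suppose the conclusion fails for some $\eps \in (0,1]$: there exist $T$, $\Sigma$, a point $p \in (\mathcal{S}^{(m-2)} \setminus \mathcal{S}^{(m-3)}) \cap \Bbf_1$ with $\Theta(T,p)=Q$, and a sequence $\bar\rho_k \downarrow 0$ along which both (a) and (b) fail. Passing to a subsequence, $T_{p,\bar\rho_k}$ converges in $\Bbf_{6\sqrt{m}}$ to a tangent cone $T_\infty$. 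Since $p \in \mathcal{S}^{(m-2)}$, no tangent cone at $p$ can be flat, so $T_\infty$ is a non-flat area-minimizing cone of density $Q$ at the origin. Let $V$ denote its spine; then $\dim V \leq m-2$, and I would split according to whether $\dim V \leq m-3$ or $\dim V = m-2$, exactly as in the non-flat branch of the proof of Lemma \ref{l:dimension-drop}.

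If $\dim V \leq m-3$, the set $\{\Theta(T_\infty,\cdot) \geq Q\}$ coincides with $V$ (the standard rigidity statement for minimizing cones obtained from monotonicity), hence has dimension at most $m-3$. Upper semi-continuity of density combined with $T_{p,\bar\rho_k} \rightharpoonup T_\infty$ yields upper Hausdorff convergence of $\{\Theta(T_{p,\bar\rho_k},\cdot) \geq Q\} \cap \bar{\Bbf}_1$ into $V$; choosing any $(m-3)$-dimensional affine subspace of $T_p\Sigma$ containing $V$ and rescaling back to $\Bbf_{\bar\rho_k}(p)$ then yields alternative (b) for all $k$ sufficiently large, contradicting the failure of (b). If instead $\dim V = m-2$, the splitting theorem expresses $T_\infty = C \times \llbracket V \rrbracket$ for some $2$-dimensional area-minimizing integral cone $C$ in $V^\perp$ with density $Q$ at the origin, and the classification of such integer-density cones forces $C$ to be a sum of distinct $2$-planes through the origin with positive integer multiplicities. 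Consequently $\spt(T_\infty)$ coincides with some $\Sbf \in \mathscr{C}(Q,0) \setminus \mathscr{P}(0)$. The convergence then gives $\mathbb{E}(T_{p,\bar\rho_k},\Sbf,\Bbf_1) \to 0$; since $T_\infty$ is non-flat we have $\mathbf{E}^p(T_\infty,\Bbf_1) > 0$, so $\mathbf{E}^p(T_{p,\bar\rho_k},\Bbf_1)$ stays bounded below along the sequence, while $\bar\rho_k^2 \mathbf{A}^2 \to 0$. All the hypotheses of alternative (a) are therefore satisfied at scale $\bar\rho_k$ for $k$ large, contradicting the failure of (a).

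The only step that is not entirely routine is the classification invoked when $\dim V = m-2$, which reduces via the cone construction to the assertion that a length-minimizing integral $1$-current in $S^{1+\bar n}$ with integer density at every point is a sum of great circles counted with positive integer multiplicities; the balancing of tangent vectors at any putative branching point forces incoming half-geodesics to pair up into full great circles. This is exactly the classification already invoked without further comment in the non-flat branch of the proof of Lemma \ref{l:dimension-drop}, so no new difficulty arises here; I therefore expect the proof of Lemma \ref{l:dimension-drop-2} to be essentially a simplified copy of the non-flat part of that proof.
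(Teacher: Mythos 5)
Your argument is correct and essentially coincides with the paper's. The paper compresses the whole proof into a single paragraph: it observes that $\liminf_{\rho\downarrow 0}\mathbf{E}^p(T,\Bbf_\rho(p))>0$ (since $p\in\mathcal{S}^{(m-2)}$ forbids a flat tangent cone), and then invokes exactly the dichotomy on the spine dimension ($m-2$ yielding a cone in $\mathscr{C}(Q,p)\setminus\mathscr{P}(p)$, $\leq m-3$ yielding alternative (b)) that you have worked out in more detail.
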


\begin{proof}
The proof is by contradiction and is much simpler than the one of Lemma \ref{l:dimension-drop} since in this case 
\[
\liminf_{\rho\downarrow 0} \Ebf^p(T, \Bbf_\rho(p)) > 0\, .
\]
Therefore, any tangent cone to $T$ at $p$ either has an $(m-2)$-dimensional spine, and hence its support is an element of $\mathscr{C} (Q, p)\setminus \mathscr{P}(p)$, or it has an $(m-3)$-dimensional spine, leading to the desired contradiction in either case. 
\end{proof}

We now subdivide $\mathcal{S}^{(m-2)}\setminus \mathcal{S}^{(m-3)}$ into countably many pieces (analogously to the subdivision of $\flatS_{Q,1}(T)$ carried out in Definition \ref{d:pieces}) in the following way:

\begin{definition}\label{d:pieces-2}
    Let $\eps^\dagger > 0$ and $\delta>0$ be fixed. Suppose that $T$, $\Sigma$ are as in Assumption \ref{a:main}. For $r > 0$ we let $\mathcal{S}_{Q, \delta, r} (T)$ be the set of points $q\in (\mathcal{S}^{(m-2)}\setminus \mathcal{S}^{(m-3)})\cap\Bbf_1$ of density $\Theta (q, T) = Q$ such that 
    \begin{itemize}
        \item The dichotomy of Lemma \ref{l:dimension-drop-2} applies at every scale $\rho\in (0,r]$ with $\eps=\eps^\dagger$;
        \item $\Ebf^p (T, \Bbf_\rho) \geq \delta$ for every $\rho\in(0,r]$.
    \end{itemize}
\end{definition}

First observe that we have the decomposition
\[
\mathcal{S}^{(m-2)}\setminus \mathcal{S}^{(m-3)} = \bigcup_{Q, j, \ell} \mathcal{S}_{Q,1/j, 1/\ell}\, .
\]
Thus, in order to conclude the $(m-2)$-rectifiability and $\Hcal^{m-2}$-a.e. uniqueness of tangent cones, it suffices to prove that each piece $\mathcal{S}_{Q,1/j, 1/\ell}$ in the above decomposition is rectifiable and the tangent cone is unique at $\mathcal{H}^{m-2}$-a.e. point $q\in\mathcal{S}_{Q,1/j, 1/\ell}$. By scaling we assume $\ell =1$, and we may without loss of generality further assume that $j=1$, in order to simplify notation.

The proof of the $(m-2)$-rectifiability of $\mathcal{S}^{(m-2)}\setminus \mathcal{S}^{(m-3)}$ and uniqueness of tangent cones at $\Hcal^{m-2}$-a.e. point $p \in \Scal^{(m-2)}$ 
{will once again be concluded from Theorem \ref{t:Simon}. In order to do this, we first require the following analogue of Lemma \ref{c:decay-10-1}, which is proved in exactly the same way.

\begin{lemma}\label{c:decay-11-1}
There are positive constants $\varepsilon_{\mathrm{\mathrm{nf}}} = \eps_{\mathrm{nf}} (Q,m,n, \bar{n})$, $\theta = \theta (Q,m,n,\bar{n})\leq \frac{1}{2}$ with the following property. Let $T$ and $\Sigma$ be as in Assumption \ref{a:main} and let $p\in \mathcal{S}_{Q,1,1} (T)$. Assume $0<r\leq 2-|p|$ and 
\begin{itemize}
    \item $\Sbf\in \mathscr{C} (Q, p)\setminus \Pscr(p)$ satisfies $\Ebb (T, \Sbf, \Bbf_r (p)) = \inf \{\mathbb{E} (T, \bar{\Sbf}, \Bbf_r(p)) : \bar{\Sbf}\in \mathscr{C} (Q, p)\}$;
    \item $\max \{\eps_{\mathrm{nf}}^{-2} r^2 \Abf^2, \Ebb (T, \Sbf, \Bbf_r(p))\} \leq \varepsilon_{\mathrm{nf}}^2 \Ebf^p (T, \Bbf_r(p)))$;
    \item $\Bbf_\varepsilon (q) \cap \mathcal{S}_{Q,1,1}(T) \neq \emptyset$ for every $q\in V (\Sbf)\cap \Bbf_{r/4}(p)$. 
\end{itemize}
Then there is a $\Sbf^\prime\in\mathscr{C}(Q,0)$ such that 
\[
\frac{\max \{\eps_{\mathrm{nf}}^{-2} \theta^2 r^2 \Abf^2, \Ebb (T, \Sbf^\prime, \Bbf_{\theta r})\}}{\Ebf^p (T, \Bbf_{\theta r})}
\leq \frac{1}{4} \frac{\max \{\eps_{\mathrm{nf}}^{-2} r^2 \Abf^2, \Ebb (T, \Sbf, \Bbf_r)\}}{\Ebf^p (T, \Bbf_r)}\, ,
\]
and
\[
    \dist^2 (V (\Sbf)\cap \Bbf_r(p), V (\Sbf')\cap \Bbf_r(p)) \leq C \frac{\max\{\eps_{\mathrm{f}}^{-2}\mathbf{A}^2r^2, \mathbb{E} (T, \Sbf, \Bbf_r(p))}{\mathbf{E}^p (T, \Bbf_r(p))}\, .
\]
\end{lemma}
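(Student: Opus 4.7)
The statement is structurally identical to Lemma \ref{c:decay-10-1}, with $\flatS_{Q,1,1}(T)$ replaced by $\mathcal{S}_{Q,1,1}(T)$, so the strategy is to mimic that proof verbatim once one verifies that Theorem \ref{c:decay} can still be invoked. The only substantive point to check is hypothesis (iii) of Theorem \ref{c:decay}, namely the density-gap condition $\Bbf_{\varepsilon_0}(\xi)\cap\{\Theta(T,\cdot)\geq Q\}\neq\emptyset$ for all $\xi\in V(\Sbf)\cap \Bbf_{1/2}$ (after rescaling $\Bbf_r(p)$ to $\Bbf_1$). This is immediate: by Definition \ref{d:pieces-2}, every point of $\mathcal{S}_{Q,1,1}(T)$ has density exactly $Q$, so the third bullet of the hypothesis (once $\varepsilon_{\mathrm{nf}}$ is chosen $\leq \varepsilon_0$) provides the required accumulation of $Q$-density points along the spine, and rescaling $\Bbf_r(p)$ to $\Bbf_1$ converts the condition into precisely \eqref{e:no-gaps} in Theorem \ref{c:decay}.

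\textbf{Main dichotomy.} Fix $\varsigma = \tfrac{1}{8}$ and let $\varepsilon_0,r_0$ be the constants of Theorem \ref{c:decay} associated to this $\varsigma$. Set $\theta := r_0$ and take $\varepsilon_{\mathrm{nf}} \leq \varepsilon_0$. First consider the case
\[
\varepsilon_{\mathrm{nf}}^{-2}\, r^2\mathbf{A}^2 \;\leq\; \mathbb{E}(T,\Sbf,\Bbf_r(p)).
\]
Then hypotheses (i)--(iv) of Theorem \ref{c:decay} hold for the rescaled current $T_{p,r}$, the rescaled manifold $\Sigma_{p,r}$, and the cone $\Sbf$, and Theorem \ref{c:decay}(a) furnishes a cone $\Sbf^\prime\in\mathscr{C}(Q,p)\setminus\mathscr{P}(p)$ with
\[
\mathbb{E}(T,\Sbf^\prime,\Bbf_{\theta r}(p)) \;\leq\; \tfrac{1}{8}\mathbb{E}(T,\Sbf,\Bbf_r(p)),
\]
and Theorem \ref{c:decay}(b) gives the matching decay of the normalized double-sided excess by a factor $\tfrac{1}{4}$. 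Since $\theta\leq \tfrac{1}{2}$ one has $\theta^2\mathbf{A}^2r^2\leq \tfrac{1}{4}\mathbf{A}^2r^2$, so the maximum on the left-hand side of the claimed inequality decays by a factor at most $\tfrac{1}{4}$ as well.

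\textbf{The case where $\mathbf{A}^2$ dominates.} If instead $\varepsilon_{\mathrm{nf}}^{-2}r^2\mathbf{A}^2 > \mathbb{E}(T,\Sbf,\Bbf_r(p))$, we borrow the continuous-deformation argument from Step 1 of the proof of Lemma \ref{l:a->bcd}: we construct a continuous path of cones $\Sbf(t)\in\mathscr{C}(Q,p)$, all with spine $V(\Sbf)$, connecting $\Sbf$ to an $m$-plane through $V(\Sbf)$, and invoke the intermediate value theorem to select $\Sbf_e\in\mathscr{C}(Q,p)\setminus\mathscr{P}(p)$ obeying
\[
\tfrac{1}{2}\varepsilon_{\mathrm{nf}}^{-2} r^2\mathbf{A}^2 \;<\; \mathbb{E}(T,\Sbf_e,\Bbf_r(p)) \;<\; \varepsilon_{\mathrm{nf}}^{-2}r^2\mathbf{A}^2.
\]
The density-gap condition along $V(\Sbf_e)=V(\Sbf)$ is unchanged, and the new cone now falls into the regime of the previous paragraph (after possibly perturbing slightly inside $\mathscr{C}(Q,p)$, as in Lemma \ref{l:a->bcd}). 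Applying Theorem \ref{c:decay} to the pair $(T,\Sbf_e)$ with $\varsigma = \tfrac{1}{16}$ instead of $\tfrac{1}{8}$ delivers $\Sbf^\prime$ with the required decay of $\max\{\varepsilon_{\mathrm{nf}}^{-2}\theta^2r^2\mathbf{A}^2,\mathbb{E}(T,\Sbf^\prime,\Bbf_{\theta r}(p))\}$ by a factor $\tfrac{1}{4}$ relative to $\max\{\varepsilon_{\mathrm{nf}}^{-2}r^2\mathbf{A}^2,\mathbb{E}(T,\Sbf,\Bbf_r(p))\}$.

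\textbf{Spine estimate.} Finally, the distance bound between the spines $V(\Sbf)$ and $V(\Sbf^\prime)$ is read off directly from Theorem \ref{c:decay}(d) combined with the definition of $\Sbf$ as an (almost-)minimizer of $\mathbb{E}(T,\cdot,\Bbf_r(p))$ over $\mathscr{C}(Q,p)$; the ratio $\mathbb{E}(T,\Sbf,\Bbf_r(p))/\mathbf{E}^p(T,\Bbf_r(p))$ appearing there is controlled by the right-hand side of our claim. No new obstacle arises: the only subtlety is the $\mathbf{A}^2$-dominant regime, which is dispatched by the same enlarging trick as in Lemma \ref{l:a->bcd}. Selecting $\varepsilon_{\mathrm{nf}} = \varepsilon_{\mathrm{nf}}(Q,m,n,\bar n)$ small enough to accommodate both the threshold in Theorem \ref{c:decay} and the smallness requirements of the enlarging step completes the proof.
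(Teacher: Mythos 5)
Your proof is correct and takes essentially the same approach as the paper's, which simply states that Lemma \ref{c:decay-11-1} "is proved in exactly the same way" as Lemma \ref{c:decay-10-1}; you have spelled out that shared argument (dichotomy on whether $\eps^{-2}r^2\mathbf{A}^2$ or the conical excess dominates, the continuity/enlarging trick from Step~1 of Lemma \ref{l:a->bcd} in the $\mathbf{A}^2$-dominant case, Theorem \ref{c:decay}(d) for the spine estimate) and additionally verified explicitly that the no-gaps hypothesis of Theorem \ref{c:decay} is supplied by the third bullet, since points of $\mathcal{S}_{Q,1,1}(T)$ have density exactly $Q$ by Definition \ref{d:pieces-2}.
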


\begin{proof}[Proof of Theorem \ref{con:stronger}(ii) and rectifiability of $\mathcal{S}^{(m-2)}\setminus \mathcal{S}^{(m-3)}$]
    Let $E= \Scal_{Q,1,1}(T)$, let $k=m-2$, and in Definition \ref{d:pieces} and fix $\eps=\min\{\eps_s,\eps_{\mathrm{nf}}\}$, where $\eps_{\mathrm{nf}}$ is the constant in Lemma \ref{c:decay-11-1} and $\eps_s$ is the constant in Theorem \ref{t:Simon}. Choose $\eps^\dagger = \eps$. Arguing exactly as in the proof of Theorem \ref{con:stronger}(i), with Lemma \ref{l:dimension-drop-2} and Lemma \ref{c:decay-11-1} applied in place of Lemma \ref{l:dimension-drop} and \ref{c:decay-10-1} respectively, we verify that Assumption \ref{a:Simon} holds for this choice of $E$ and $k$, and with a choice of $\delta=\delta(\eps)$ sufficiently small. We may thus apply Theorem \ref{t:Simon} with the above choice of parameters. At each point $p\in R$, all of the hypotheses of Lemma \ref{c:decay-11-1} are satisfied in $\Bbf_r(p)$ for each sufficiently small scale $r>0$. This in turn implies that the tangent cone is supported in a unique element $\Sbf\in \Cscr(Q,p)\setminus\Pscr(p)$ at each such point, and the supports of the rescalings $T_{p,r}$ decay towards $\Sbf$ with a power law for all $r$ sufficiently small. But, because of this decay and the graphical approximation in Proposition \ref{p:refined} applied to $T_{p,r}$ and this unique $\Sbf$, we can select $r$ sufficiently small to see that the origin is in fact in the closure of the outer region. Conclusion (i) of Proposition \ref{p:refined} ensures that the multiplicities assigned to the planes in $\Sbf\cap \Bbf_\rho (0)$ with $\rho\leq r$ do not change with the radius. Thus, not only is the support of the tangent cone unique, but so is the tangent cone itself. Combined with \eqref{e:decomposition}, this completes the proof.
\end{proof}}

\appendix

\section{Proofs of combinatorial results in Section \ref{s:comb-lemmas}}\label{ap:combinatorics}

\subsection{Proof of Lemma \ref{l:clusters-1}}
First of all, note that if $N=2$ the conclusion of the lemma is trivially true for $P'=P$, so we may assume $N\geq 3$. %We fix a real number $\lambda$ sufficiently large.
Then we define inductively the sets $P_j$, with $j= \{0, \ldots, N-2\}$ and the elements $p_j$ in the following way. $P_0=P$ and, for every $j\geq 1$, we select $p_j\in P_{j-1}$ with the property that 
\begin{align*}
M_{j-1} := &\max \{|p-q|: p,q\in P_{j-1}\} = \max \{|p-q|: p,q\in P_{j-1}\setminus \{p_j\}\}\\
m_{j-1} := &\min \{|p-q|:p,q\in P_{j-1}, \ p\neq q\} \leq \min \{|p-q|: p,q\in P_{j-1}\setminus \{p_j\}, \ p\neq q\}\, 
\end{align*}
Since the cardinality of $P_{j-1}$ is always at least three it is clear that such an element exists: we first select $p,q$ such that $|p-q|=\min \{|a-b|:a\neq b\in P_{j-1}\}$ and then $p',q'$ such that $|p'-q'|=\max \{|a-b|:a,b\in P_{j-1}\}$. If the pairs $(p,q)$ and $(p',q')$ are the same, then obviously all points in $P_{j-1}$ are equidistant and we can define $p_j$ to be any of them. Otherwise the two pairs have at most one element in common, and we define $p_j$ to be an element in $\{q,p\}\setminus \{q',p'\}$. We then let $P_j \coloneqq P_{j-1}\setminus \{p_j\} = P\setminus\{p_1,\dots,p_j\}$. By construction, we have
\begin{equation}\label{e:m_j}
    m_{j-1} = \dist(p_j,P_{j-1}\setminus\{p_j\}) = \dist(p_j,P_{j})\, ,
\end{equation}\
while at the same time $M_j =M_0$ and $m_j \geq m_{j-1}$. Observe in particular that the requirements (iii) is satisfied if we terminate this process at any stage $P_J$.% choice of $\bar P = P_J$

Now let $\lambda= \lambda(\bar\delta)\geq 1$ be a large number whose choice will be specified later and let $J$ be the smallest number $j$ such that $M_j\leq \lambda^{N-2-j} m_j$. Since $P_{N-2}$ consists of two elements, $M_{N-2}= m_{N-2}$ and thus such a number $J$ exists. Obviously $P'=P_J$ satisfies (i) with $C= \lambda^{N-2}$.  We claim that for (ii) to hold we just need $\lambda$ to be large enough.
Note that
\[
\min \{|p-q|:p,q\in P', \ p\neq q\} = m_J\geq \lambda^{-(N-2-J)} M_J\, ,
\]
while, for every $p\in \{p_1, \ldots, p_J\}$, by our definition of $J$ and by \eqref{e:m_j}, we have 
\begin{align*}
\dist (p, P') &\leq \sum_{i=1}^J \dist (p_{i}, P_{i}) = \sum_{i=1}^J m_{i-1} \leq M_J \lambda^{-(N-2-J)} \sum_{k\geq 0} \lambda^{-k}\\
&= \frac{M_J \lambda^{-(N-2-J)}}{\lambda -1}
\leq \frac{m_J}{\lambda-1}\, .
\end{align*}
In particular, it suffices to choose $\frac{1}{\lambda-1}= \bar\delta$.

\subsection{Proof of Lemma \ref{l:clusters-2}}
If $\varepsilon \leq \delta \min \{|q_1-q_2|: q_1, q_2\in P,\, q_1\neq q_2\}$, we then set $\tilde P =P$. Otherwise, pick a pair of points $p_0, p_1 \in P$ not satisfying this property and set $P_1:= P\setminus \{p_1\}$. By construction, $\dist (p_1, P_1)\leq \delta^{-1} \varepsilon$. If $N=2$, $\tilde P := P_1$ contains a single element and hence (ii) holds. 
On the other hand, if $N\geq 3$ and $\delta^{-1} \varepsilon \leq \delta \min \{|q_1-q_2|:q_1,q_2\in P_1, \ q_1\neq q_2\}$, we define $\tilde P = P_1$, which satisfies (ii) because $\dist (p_1, P_1) \leq \delta^{-1} \varepsilon$.  Otherwise we discard yet another point $p_2$ from $P$ in the same way as above, with $P_1$ in place of $P$, and set $P_2 := P_1\setminus \{p_2\}$. This time, we notice that 
\[
\dist (p_2, P_2) \leq \delta^{-2} \varepsilon\, ,
\] 
while
\[
\dist (p_1, P_2) \leq \dist (p_1, P_1) + \dist (p_2, P_2) \leq (\delta^{-1}+\delta^{-2}) \varepsilon\, .
\]
Iterating this procedure, we generate a family of sets $P_j$, $j=1,\dots,J$. We stop if either $P_J$ is a singleton, or if 
 \[
\delta^{-1} (1+\delta^{-1})^{J-1}
\varepsilon \leq \delta \min \{|q_1-q_2|: q_1, q_2\in P_J, \ q_1\neq q_2\} \, .
\]
For every point $p\in P$, since $\dist(p_j,P_j) \leq \delta^{-2}(1+\delta^{-1})^{j-2}$ for each $j\geq 2$, we therefore inductively have
\[
\dist (p, P_J) \leq \sum_{j=1}^J \dist(p_j,P_j) \leq \delta^{-1}(\delta^{-1}+1)^{J-1} \varepsilon\, .
\]
In particular, when we stop the set $\tilde P= P_J$ satisfies (ii). 
Since $P_{N-1}$ would necessarily be a singleton, we must stop at a $J\leq N=1$, which makes the estimate (i) trivially true.

\subsection{Proof of Lemma \ref{l:clusters-3}}
When $N=2$ and $P=\{p_1,p_2\}$, we can clearly just let $P_j:=\{p_j\}$ for $j=1,2$. We now argue by induction on $N$. Fix $N\geq 3$, and suppose the conclusion of the lemma holds for $N'\leq N-1$. Select $p\in P$ such that $\max \{|q-q'|:q,q'\in P\setminus \{p\}\} = M = \max \{|q-q'|: q,q'\in P\}$. By our inductive hypothesis, we can then decompose $P\setminus \{p\}$ into $P'_1\cup P'_2$ such that 
\[
    \min \{|p_1-p_2|: p_1\in P'_1, p_2\in P'_2\} \geq \frac{M}{2^{N-3}}\, .
\] 
Let $p_i\in P'_i$ be such that $\dist (p, p_i)= \dist (p, P'_i)$ for $i=1,2$. Since $\dist (p_1, p_2)\geq \frac{M}{2^{N-3}}$, at least one among $\dist (p_i,p)$ has to be $\geq \frac{M}{2^{N-2}}$. Assuming, upon relabeling, that the latter happens for $i=1$, we set $P_1=P'_1$ and $P_2 = P'_2\cup \{p\}$, proving the conclusion of the Lemma.

\bibliographystyle{alpha}
\bibliography{references}

\end{document}